\newtheorem{theorem}{Theorem}[subsection]
\newtheorem{lemma}[theorem]{Lemma}
\newtheorem{proposition}[theorem]{Proposition}
\newtheorem{corollary}[theorem]{Corollary}
\theoremstyle{definition}
\newtheorem{definition}[theorem]{Definition}
\newtheorem{example}[theorem]{Example}
\newtheorem{question}[theorem]{Question}
\newtheorem{problem}[theorem]{Problem}
\newtheorem{terminology}[theorem]{Terminology}
\newtheorem{notation}[theorem]{Notation}
\newtheorem{step}[theorem]{Step}
\newtheorem{convention1}[theorem]{Convention}
\theoremstyle{remark}
\newtheorem*{remark}{Remark}
\numberwithin{equation}{section}
\begin{document}
\title{A theory for generalized morphisms and beyond}
\author{Gang Hu}
\subjclass[2020]{Primary 08A99, 08A35; Secondary 12H05, 18A99, 12F10, 13B05, 12F20, 20B25, 20B27} 
\keywords{Generalized morphisms; Galois theory; Galois correspondence; Constructions of morphisms; Solvability of equations; Isomorphism theorem; Transcendental elements}

\begin{abstract}
Some sorts of generalized morphisms are defined from very basic mathematical objects such as sets, functions, and partial functions. A wide range of mathematical notions such as continuous functions between topological spaces, ring homomorphisms, module homomorphisms, group homomorphisms, and covariant functors between categories can be characterized in terms of the generalized morphisms. We show that the inverse of any bijective generalized morphism is also a generalized morphism (of the same kind), and hence a generalized isomorphism can be defined as a bijective generalized morphism. 

Galois correspondences are established and studied, not only for the Galois groups of the generalized automorphisms, but also for the “Galois monoids” of the generalized endomorphisms. 

Ways to construct the generalized morphisms and the generalized isomorphisms are studied.

New interpretations on solvability of polynomials and solvability of homogeneous linear differential equations are introduced, and these ideas are roughly generalized for “general” equation solving in terms of our theory for the generalized morphisms.

Some more results are presented. For example, we generalize the algebraic notions of transcendental elements over a field and purely transcendental field extensions, we obtain an isomorphism theorem that generalizes the first isomorphism theorems (for groups, rings, and modules), and we show that a part of our theory is closely related to dynamical systems. 

\end{abstract}

\maketitle

\tableofcontents

\section{Introduction}
The notions of generalized morphisms introduced in this paper develop from very basic mathematical objects such as sets, functions, and partial functions. As a consequence, many familiar mathematical notions are covered by the generalized morphisms. Specifically, continuous functions between topological spaces, ring homomorphisms, module homomorphisms, group homomorphisms, and covariant functors between categories can be characterized in terms of the generalized morphisms. Indeed, in these examples, the generalized morphisms preserve the structures of the mathematical objects involved. Basically, this is because we define the generalized morphisms to be commutative with the intrinsic operations on the mathematical objects involved. Moreover, we show that the inverse of any bijective generalized morphism is also a generalized morphism (of the same kind), and hence a generalized isomorphism can be defined as a bijective generalized morphism. 

The notion of generalized morphisms in our theory is not the same as the notion of morphisms in category theory. In category theory, a morphism between two objects does not necessarily preserve the structure of objects, while a generalized morphism in our theory does. Nevertheless, since covariant functors preserve the structures of categories, all covariant functors can be characterized in terms of the generalized morphisms. 

One may have encountered concepts similar to the notions of our generalized morphisms, e.g. the notion of homomorphisms between structures in the field of mathematical logic. However, we have not found any research with any content similar to ours.

Besides introducing the new notions, we establish Galois correspondences for the Galois groups of the generalized automorphisms as well as for the “Galois monoids” of the generalized endomorphisms. As is well known, in the Galois theory for infinite algebraic field extensions, Krull topology is put on Galois groups, and in differential Galois theory, differential Galois groups are endowed with Zariski topology. Then the fundamental theorem in each of the two theories tells us that there exists a Galois correspondence between the set of all \emph{closed} subgroups of the Galois group of a Galois field extension (or a Picard-Vessiot extension) and the set of all intermediate (differential) fields. Now for our theory, we first study the lattice structures related to Galois correspondences. Then, we can determine under what conditions we can characterize Galois correspondences with topology (as is done in the above two Galois theories).

Moreover, we study ways to construct the generalized morphisms and the generalized isomorphisms, which we think to be important because the morphisms and isomorphisms preserve the structures of the mathematical objects involved.

In Part I of this paper, which consists of Sections \ref{Basic notions and properties} to \ref{Cons of $T$-mor and theta-mor}, we address the above issues for the case where only single-variable functions are involved, and in Part II, we address the above issues for the case where multivariable functions or partial functions are involved. 

In Part III, new interpretations on solvability of polynomials and solvability of homogeneous linear differential equations are introduced. The key idea is to “decompose” an equation $Q$ into $n(\in {{\mathbb{Z}}^{+}})$ equations ${{Q}_{1}},\cdots ,{{Q}_{n}}$ such that each solution of $Q$ can somehow be expressed in terms of the solutions of ${{Q}_{1}},\cdots ,{{Q}_{n}}$, where for example in the case of a separable polynomial $p(x)$ in $Q:=(p(x)=0)$, the Galois group of each $p_i(x)$ in $Q_i:=(p_i(x)=0)$ is simple. Moreover, this idea is roughly generalized for “general” equation solving in terms of the theory developed in Parts I and II.

Some more results are presented in Part IV. For example, in Section \ref{Transitive}, we obtain analogues of the well-known fact that the Galois group of an irreducible polynomial acts transitively on its roots, and in Section \ref{transcendental}, we generalize the algebraic notions of transcendental elements over a field and purely transcendental field extensions. Moreover, in Section \ref{first iso}, we obtain an isomorphism theorem which generalizes the first isomorphism theorems (for groups, rings, and modules). And in Section \ref{App to dyn}, we show a close relation between a part of our theory and dynamical systems. 

$ $

To help the reader better understand the structure of the paper, we simplify the table of contents as follows.
\begin{center}
    Part I. Theory for the case of unary functions
\end{center}

Section \ref{Basic notions and properties}. Basic notions and properties

Section \ref{I Galois corr}. Galois correspondences

Section \ref{I Lattice structures}. Lattice structures of objects arising in Galois correspondences on a $T$-space $S$

Section \ref{I Topologies employed}. Topologies employed to construct Galois correspondences

Section \ref{theta-mor and theta-iso}. Generalized morphisms and isomorphisms from a $T_1$-space to a $T_2$-space

Section \ref{Cons of $T$-mor and theta-mor}. Constructions of the generalized morphisms and isomorphisms

\begin{center}
    Part II. Theory for the case of multivariable total or partial functions
\end{center}

Section \ref{Basic notions for multivariable}. Basic notions for the case of multivariable (total) functions

Section \ref{Basic notions for partial}. Basic notions for the case of (multivariable) partial functions

Section \ref{12 Basic properties}. Basic properties and more notions

Section \ref{II Gal corr}. Galois correspondences

Section \ref{II latt struc}. Lattice structures of objects arising in Galois correspondences on a $T$-space $S$

Section \ref{II topologies employed}. Topologies employed to construct Galois correspondences

Section \ref{II Cons of $T$-mor and}. Constructions of the generalized morphisms and isomorphisms

\begin{center}
    Part III. Solvability of equations
\end{center}

Section \ref{poly equ}. A solvability of polynomial equations

Section \ref{diff equ}. A solvability of homogeneous linear differential equations

Section \ref{equ sol}. A possible strategy for equation solving

\begin{center}
Part IV. Other topics and future research
\end{center}

Section \ref{Duality}. Dualities of operator semigroups

Section \ref{Transitive}. Fixed sets and transitive actions of $\operatorname{End}_{T}(S)$ and $\operatorname{Aut}_{T}(S)$

Section \ref{Two questions}. Two questions on Galois $T$-extensions and normal subgroups of Galois $T$-groups

Section \ref{transcendental}. $\mathcal{F}$-transcendental elements and $\mathcal{F}$-transcendental subsets

Section \ref{first iso}. A generalized first isomorphism theorem

Section \ref{App to topo}. On topological spaces

Section \ref{App to dyn}. On dynamical systems

Section \ref{Other topics}. Other topics for future research

$ $

The contents of Part I are roughly described section by section as follows.

Let $T$ be a set of functions from a set $D$ to $D$. If the composite of any two elements in $T$ still lies in $T$, then we call $T$ an \emph{operator semigroup} on $D$ (\textit{cf.} Definition \ref{Operator semigroup}). Let $U\subseteq D$. Then we call $\bigcup\nolimits_{f\in T}{\operatorname{Im}(f{{|}_{U}})}$ the \emph{$T$-space} generated by $U$, where $\operatorname{Im}(f{{|}_{U}})$ denotes the image of the restriction of $f$ to $U$ (\textit{cf.} Definition \ref{$T$-space}). $T$-spaces represent mathematical objects whose structures are preserved under generalized morphisms. A critical notion in this theory is \emph{$T$-morphism}, which is defined as a map $\sigma $ from a $T$-space $S$ to a $T$-space such that $\sigma $ commutes with every element of $T$; that is, $\forall a\in S$ and $f\in T$, $\sigma (f(a))=f(\sigma (a))$ (\textit{cf.} Definition \ref{1.3.1}). From these simple definitions much follows. For example, any ring homomorphism between $B[u]$ and $B[v]$ with field $B$ fixed pointwisely can be characterized in terms of a $T$-morphism (\textit{cf.} Proposition \ref{1.3.4}), and a map from a topological space to itself is continuous if and only if the map induces a $T$-morphism (\textit{cf.} Proposition \ref{1.3.7}). In Subsection \ref{2 $T$-isomorphisms}, we show that the inverse of any bijective $T$-morphism from a $T$-space $S_1$ to a $T$-space $S_2$ is a $T$-morphism from $S_2$ to $S_1$, and hence a $T$-isomorphism can be defined as a bijective $T$-morphism.

In Section \ref{I Galois corr}, we establish the Galois correspondence between the Galois groups of a $T$-space $S$ and the fixed subsets of $S$ under the actions of the $T$-automorphisms of $S$ (\textit{cf.} Corollary \ref{2.2.6}). Moreover, we shall find the Galois correspondence between the “Galois monoids” of a $T$-space $S$ and the fixed subsets of $S$ under the actions of the $T$-endomorphisms of $S$ (\textit{cf.} Corollary \ref{2.2.5}). 

To better understand these Galois correspondences, in Section \ref{I Lattice structures}, we study the lattice structures of those objects which arise in Corollaries \ref{2.2.5} and \ref{2.2.6}. Results in Section \ref{I Lattice structures} will be employed in Section \ref{I Topologies employed}.

In the fundamental theorem for infinite algebraic field extensions (resp. for differential field extensions), the Galois correspondences are characterized with Krull topology (resp. Zariski topology) (see e.g. \cite{1,3,4,5,9}). In Section \ref{I Topologies employed}, we shall see when and how we can characterize Galois correspondences with topology. 

Section \ref{theta-mor and theta-iso} introduces the notions of $\theta $-morphism and $\theta $-isomorphism. As a generalization of the notion of $T$-morphism, a \emph{$\theta $-morphism} (\textit{cf.} Definitions \ref{5.3.1} and \ref{5.3.5}) is from a ${{T}_{1}}$-space to a ${{T}_{2}}$-space, where both ${{T}_{1}}$ and ${{T}_{2}}$ are operator semigroups and $\theta \subseteq {{T}_{1}}\times {{T}_{2}}$. We shall find that some more familiar mathematical notions can be characterized by $\theta $-morphisms (\textit{cf.} Propositions \ref{5.3.2}, \ref{5.3.3} and \ref{5.3.6}). In Subsection \ref{6 theta-isomorphisms}, we show that the inverse of any bijective $\theta $-morphism from a $T_1$-space $S_1$ to a $T_2$-space $S_2$ is a $\theta ^{-1}$-morphism from $S_2$ to $S_1$, and thus a $\theta$-isomorphism can be defined as a bijective $\theta$-morphism.

Main results in Section \ref{Cons of $T$-mor and theta-mor} are about constructions of $T$-morphisms and $\theta $-morphisms. Roughly speaking, for a $T$-morphism or $\theta $-morphism from a $T$-space $S$, Subsections \ref{$T$-morphisms from u}, \ref{$T$-morphisms from big u} and \ref{I Constructions of $theta $-morphisms} discuss how to construct the morphism by a map from a set $U$ which generates $S$ (i.e. $S=\bigcup\nolimits_{f\in T}{\operatorname{Im}(f{{|}_{U}})}$). Moreover, in Subsection \ref{I Another construction of $T$-morphisms}, we shall introduce a construction of $T$-morphisms in terms of topology. 

$ $

The contents of Part II are described as follows.

To make our theory more general, in Section \ref{Basic notions for multivariable}, we generalize the notion of operator semigroup to incorporate functions of more than one variable (\textit{cf.} Definition \ref{9.1.2}). And correspondingly, the concepts of $T$-spaces, $T$-morphisms and $\theta $-morphisms are generalized (\textit{cf.} Definitions \ref{9.2.1}, \ref{9.3.1}, and \ref{9.4.2}). Then we shall find that ring homomorphisms, module homomorphisms, and group homomorphisms can be characterized in terms of $T$-morphisms (\textit{cf.} Propositions \ref{9.3.4} to \ref{9.3.7}) or $\theta $-morphisms (\textit{cf.} Propositions \ref{9.4.4} to \ref{9.4.8}).

To further generalize our theory, in Section \ref{Basic notions for partial}, we allow operators in $T$ to be partial functions (\textit{cf.} Definition \ref{10.1.3}). And correspondingly, the notions of $T$-spaces, $T$-morphisms and $\theta $-morphisms are generalized (\textit{cf.} Definitions \ref{10.2.1}, \ref{10.3.1}, and \ref{10.4.2}). Because operators in $T$ are now allowed to be partial functions, we can generate fields, differential fields, and categories as $T$-spaces in a natural way (\textit{cf.} Examples \ref{10.2.2} to \ref{10.2.4}). Then we shall find that ring homomorphisms between fields, differential ring homomorphisms between differential fields, and covariant functors between categories can be characterized in terms of $T$-morphisms (\textit{cf.} Propositions \ref{10.3.4} and \ref{10.3.6}) or $\theta $-morphisms (\textit{cf.} Propositions \ref{10.4.4} and \ref{10.4.5}).

For the case of (partial) functions of more than one variable, results obtained in Sections \ref{Basic notions and properties}, \ref{I Galois corr}, \ref{I Lattice structures}, \ref{I Topologies employed} and \ref{Cons of $T$-mor and theta-mor} are generalized in Sections \ref{12 Basic properties}, \ref{II Gal corr}, \ref{II latt struc}, \ref{II topologies employed} and \ref{II Cons of $T$-mor and}, respectively. (Note that results obtained in Section \ref{theta-mor and theta-iso} are generalized in Sections \ref{Basic notions for multivariable} and \ref{Basic notions for partial}.)
 
$ $

Part III addresses the following issues.

A main goal of the classical Galois theory is to study the solvability by radicals of polynomial equations. In Section \ref{poly equ}, however, we shall introduce another understanding of solvability of polynomial equations, which somehow corresponds to a composition series of the Galois group of the polynomial. And in Subsection \ref{Formulas for the roots} we shall explain why we introduce this notion of solvability. Roughly speaking, the central idea is that a separable polynomial $p(x)$ can be “decomposed” into $n$ polynomials ${{p}_{1}}(x),\cdots ,{{p}_{n}}(x)$ such that the Galois group of each ${{p}_{i}}(x)$ is simple and any root of $p(x)$ can be expressed in terms of the roots of ${{p}_{1}}(x),\cdots ,{{p}_{n}}(x)$ (\textit{cf.} Corollary \ref{17.2.5}).

Analogously, in Section \ref{diff equ}, we shall introduce a new interpretation of solvability of homogeneous linear differential equations, which somehow corresponds to a “composition Zariski-closed series” of the differential Galois group of the equation. In Subsection \ref{Formulas for the solutions} we shall explain why we introduce this notion of solvability. As what we did in Section \ref{poly equ}, our main approach is to “decompose” a homogeneous linear differential equation $L(Y)=0$ into $n$ homogeneous linear differential equations ${{L}_{1}}(Y)=0,\cdots ,{{L}_{n}}(Y)=0$ so that any solution of $L(Y)=0$ can be expressed in terms of the solutions of ${{L}_{1}}(Y)=0,\cdots ,{{L}_{n}}(Y)=0$ (\textit{cf.} Corollary \ref{18.3.5}).

Section \ref{equ sol} generalizes some results in Sections \ref{poly equ} and \ref{diff equ}, and it roughly describes in terms of our theory a possible strategy for equation solving.

$ $

In Part IV we give some more results which we think to be important or deserve deeper research.

Section \ref{Duality} is about dualities of operator semigroups. Let $T$ be an operator semigroup and let $S$ be a $T$-space. We denote by $\operatorname{End}_{T}(S)$ (resp. $\operatorname{Aut}_{T}(S)$) the set of all $T$-endomorphisms of $S$ (resp. the set of all $T$-automorphisms of $S$). Then, roughly speaking, there is a duality between $\operatorname{End}_{T}(S)$ and the maximum operator semigroup which “accommodates” $\operatorname{End}_{T}(S)$ (i.e. $\forall \sigma \in \operatorname{End}_{T}(S)$, $\sigma$ would still be a $T$-endomorphism of $S$ if $T$ were extended to the maximum). Analogously, there is a duality between $\operatorname{Aut}_{T}(S)$ and the maximum operator semigroup which “accommodates” $\operatorname{Aut}_{T}(S)$.

In Section \ref{Transitive}, we shall talk about fixed subsets of a $T$-space $S$ under the actions of $\operatorname{End}_{T}(S)$ and $\operatorname{Aut}_{T}(S)$. And we shall obtain some transitive properties of the actions of $\operatorname{End}_{T}(S)$ and $\operatorname{Aut}_{T}(S)$ on $S$, which are analogues of the well-known fact that the Galois group of an irreducible polynomial acts transitively on its roots.

Section \ref{Two questions} addresses an analogue of the issue which is normally the last part of the fundamental theorem of a Galois theory: the correspondence between the normal subgroups of the Galois group and the Galois extensions of the base field.

In Section \ref{transcendental}, the algebraic notions of transcendental elements over a field and purely transcendental field extensions are generalized.

In Section \ref{first iso}, we shall obtain an isomorphism theorem which generalizes the first isomorphism theorems (for groups, rings, and modules).

In Section \ref{App to topo}, we shall introduce for topological spaces some notions and properties related to our theory. And we shall employ Corollaries \ref{2.2.5} and \ref{2.2.6} to obtain Galois correspondences on topological spaces.

Let $(M,S,\Phi )$ be a dynamical system, where $M$ is a monoid, $S$ is the phase space and $\Phi$ is the evolution function. In Section \ref{App to dyn}, it is shown that $(M,S,\Phi )$ induces in a natural way an operator semigroup $T$ on $S$ such that $T$ contains the identity function and $S$ is a $T$-space. Conversely, for any operator semigroup $T$ which contains the identity function, any $T$-space $S$ induces a dynamical system $(T,S,\Phi )$ in a natural way. Hence we can apply our theory for operator semigroups to dynamical systems. In particular, we shall obtain Galois correspondences on dynamical systems.

Section \ref{Other topics} gives some more suggestions on future research.

$ $

\addcontentsline{toc}{section}{Part I. \textbf{Theory for the case of unary functions}} 

\begin{center}
Part I. THEORY FOR THE CASE OF UNARY FUNCTIONS
\end{center}

In Part I of the article, we introduce the generalized morphisms where all functions involved are unary. The generalized morphisms preserve structures of mathematical objects which we call $T$-spaces, where $T$ stands for a semigroup of functions with composition of functions as the binary operation. 
\section{Basic notions and properties} \label{Basic notions and properties}
\subsection{Operator semigroup $T$} \label{2.1 operator semigrps}
Sets which are closed under some operations are ubiquitous in mathematics. The following notion is related to this observation.
\begin{definition}
\label{Operator semigroup}
Let $D$ be a set and let $T$ be a set of functions from $D$ to $D$. If $\forall f,g\in T$, the composite $f\circ g\in T$, then we call $T$ an \emph{operator semigroup} on $D$, and we call $D$ the \emph{domain} of $T$.
\end{definition}
\begin{remark}
It is clear that $T$, possibly empty, is a semigroup with composition of functions as the binary operation.
\end{remark}
\begin{example}
\label{T on field--1}
Let $B$ be a subfield of a field $F$ and let $B[x]$ be the ring of all polynomials (in one variable) over $B$. Let 
\begin{center}
   $T=\{{f}^*:F\to F  \text{ given by } a\mapsto f(a)\, |\,f(x)\in B[x]\}.$ 
\end{center}
(In particular, $f^*$ is a constant (polynomial) function if $f$ is a constant polynomial.) Then it is not hard to see that $T$ is an operator semigroup on $F$. Note that the map which induces $T$, i.e. $\tau :B[x]\to T$ given by $f\mapsto {{f}^*}$, is surjective, but it is not necessarily injective.
\end{example}
\begin{remark}
    Of course, there are other ways to define an operator semigroup on a field $F$. For example, let $T_1$ consist of only the identity function on $F$ and let $T_2$ be the set of all ring homomorphisms from $F$ to $F$, then both $T_1$ and $T_2$ are operator semigroups on $F$. But unlike $T$ in Example \ref{T on field--1}, neither $T_1$ nor $T_2$ could lead us to any desirable result (in the remaining part of the article). 
    
    In fact and roughly speaking, to obtain desirable results, an operator semigroup should represent the intrinsic operations on the mathematical object involved.
\end{remark}
\begin{notation}
    Throughout the paper, we denote by Id the identity function (on some set which is clear from the context).
\end{notation} 
As Example \ref{T on field--1}, the following two will lead us to desirable results.
\begin{example}
\label{T on topo--1}
Let $X$ be a topological space and let $\mathcal{P}(X)$ be the power set of $X$. Let
\[
T=\{\operatorname{Id}:\mathcal{P}(X)\to \mathcal{P}(X), \operatorname{Cl}:\mathcal{P}(X)\to \mathcal{P}(X) \text{ given by } A\mapsto \overline{A}\},
\]
where $\overline{A}$ is the closure of $A$. Then $T$ has only two elements and it is an operator semigroup on $\mathcal{P}(X)$.
\end{example}
\begin{example}
\label{T on diff ring--1}
    Recall that a differential ring $R$ is a commutative ring with identity endowed with a derivation $\partial $. Then $\{{{\partial }^{n}}\,|\,n\in {{\mathbb{N}}_{0}}\}$, where ${{\partial }^{0}}:=\operatorname{Id}$, is an operator semigroup on $R$.
\end{example}
\begin{definition}
\label{generated operator semigroup}
    Let $D$ be a set and let $G$ be a set of functions from $D$ to $D$. We call the intersection of all operator semigroups (on $D$) containing $G$ the operator semigroup on $D$ \emph{generated by} $G$, and we denote it by $\left\langle G \right\rangle $. If $G=\{g\}$, then we may denote $\left\langle G \right\rangle $ by $\left\langle g \right\rangle $ for brevity.
\end{definition}
\begin{remark}
    $\left\langle G \right\rangle $ is the smallest operator semigroup (on $D$) which contains $G$.
\end{remark}

\subsection{$T$-spaces and quasi-$T$-spaces} \label{$T$-spaces and quasi-$T$-spaces}
Operator semigroups are used to generate $T$-spaces defined as follows.
\begin{definition} \label{$T$-space}
    Let $T$ be an operator semigroup on $D$ and let $U\subseteq D$. We call $S:=\{ f(u)\, |\,f\in T,u\in U $\} the \emph{$T$-space} \emph{generated by} $U$ and denote $S$ by ${{\left\langle U \right\rangle }_{T}}$. If $U=\{u\}$, then we also denote ${{\left\langle \{u\} \right\rangle }_{T}}$ by ${{\left\langle u \right\rangle }_{T}}$.

Moreover, if $B\subseteq S$ is also a $T$-space generated by some subset of $D$, then we say that $B$ is a \emph{$T$-subspace} of $S$ and write $B\le S$ or $S\ge B$.
\end{definition}
\begin{remark}
    \item[(1)] $S$ may have generators other than $U$, and we may just call $S$ a $T$-space.
    \item[(2)] Trivially, ${{\left\langle \emptyset  \right\rangle }_{T}}=\emptyset $ is a $T$-space and ${{\left\langle D \right\rangle }_{\emptyset }}=\emptyset $ is a $\emptyset$-space. 
\end{remark}  
$T$-spaces are mathematical objects whose structures are preserved under the generalized morphisms (to be defined later), and this claim will be shown later on the following examples.
\begin{example}
\label{$T$-space for field--1}
    Let $B$ be a subfield of a field $F$. Let $T$ be the operator semigroup defined in Example \ref{T on field--1}. Then $\forall a\in F$, ${{\left\langle a \right\rangle }_{T}}$ is the ring $B[a]$. In particular, if $F/B$ is an algebraic field extension, then $\forall a\in F$, ${{\left\langle a \right\rangle }_{T}}$ is the field $B(a)$.
\end{example}
\begin{example}
\label{$T$-space for topo--1}
    Let $X$ be a topological space and let $\mathcal{P}(X)$ be the power set of $X$. If $T$ is defined as in Example \ref{T on topo--1}, then ${{\left\langle \mathcal{P}(X) \right\rangle }_{T}}=\mathcal{P}(X)$ is a $T$-space.
\end{example}
\begin{example}
\label{$T$-space for diff ring--1}
    Let $R$ be a differential ring with a derivation $\partial $. Let $T$ be the operator semigroup defined in Example \ref{T on diff ring--1}. Recall that an ideal $I$ of $R$ is a differential ideal if $a\in I$ implies that $\partial (a)\in I$. Thus an ideal $I$ of $R$ is a differential ideal of $R$ if and only if the $T$-space ${{\left\langle I \right\rangle }_{T}}=I$. We will talk more about differential rings in Sections \ref{theta-mor and theta-iso}, \ref{Cons of $T$-mor and theta-mor} and \ref{Basic notions for partial}.
\end{example}
We are going to develop some properties of the new notions. In Sections \ref{Basic notions and properties} to \ref{Cons of $T$-mor and theta-mor}, unless otherwise specified, \textbf{$D$ denotes a set and $T$ denotes an operator semigroup on $D$}. 

First, the following example is used to prove the proposition right after it.
\begin{example}
\label{$T$-space generated by 1}
    Let $T=\{f:{{\mathbb{Z}}^{+}}\to {{\mathbb{Z}}^{+}}$ given by $x\mapsto x+k\, |\,k\in {{\mathbb{Z}}^{+}}\}$. Then $T$ is an operator semigroup on ${{\mathbb{Z}}^{+}}$, and ${{\left\langle 1 \right\rangle }_{T}}=\{2,3,4,\cdots \}.$
\end{example}
\begin{proposition}
\label{4 facts}
\begin{enumerate}
    \item [(a)] A generator set of a $T$-space may have no intersection with the $T$-space.
    \item [(b)] It is possible that no subset of a $T$-space may generate the $T$-space.
    \item [(c)] $D$ is not necessarily a $T$-space.
     \item [(d)] A subset $S$ of $D$ is not necessarily a $T$-space even if it satisfies: 
$f(a)\in S$, $\forall a\in S$ and $f\in T$.
\end{enumerate}
\end{proposition}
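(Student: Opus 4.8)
The plan is to prove each of the four statements by exhibiting an explicit counterexample, reusing the examples already set up in the excerpt wherever possible.

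For (a), I would use Example \ref{$T$-space generated by 1}: there $T$ consists of the maps $x \mapsto x+k$ on $\mathbb{Z}^+$ for $k \in \mathbb{Z}^+$, and $\langle 1 \rangle_T = \{2,3,4,\dots\}$, so the generator set $\{1\}$ satisfies $\{1\} \cap \langle 1 \rangle_T = \emptyset$. The point is simply that the definition of a $T$-space $\{f(u) \mid f \in T, u \in U\}$ does not include $U$ itself unless $T$ happens to contain $\operatorname{Id}$ (or some $f$ with $f(u) = u$). For (c), the same $T$ works immediately: $D = \mathbb{Z}^+$, but $1 \notin \{f(u) \mid f \in T, u \in \mathbb{Z}^+\} = \{2,3,4,\dots\}$ since every $f \in T$ strictly increases its argument, so $D$ itself is not a $T$-space.

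For (d), I would again look for a $T$ without $\operatorname{Id}$ so that the forward-closure condition "$f(a) \in S$ for all $a \in S$, $f \in T$" is genuinely weaker than being a $T$-space. Take $S = \{1\}$ with $T$ as in Example \ref{$T$-space generated by 1}: wait — that fails, since $f(1) = 1+k \neq 1$. Instead I would pick $T = \{f : \mathbb{Z}^+ \to \mathbb{Z}^+ \text{ given by } x \mapsto x + k \mid k \in \mathbb{N}_0\} \setminus \{\operatorname{Id}\}$ restricted appropriately, or more cleanly: let $D = \mathbb{Z}^+$ and let $T$ consist of the single map $c : x \mapsto 1$ (the constant function), together with its composites, which are all equal to $c$, so $T = \{c\}$ is an operator semigroup. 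Then $S = \{1\}$ satisfies $c(a) = 1 \in S$ for all $a \in S$, but $\langle U \rangle_T = \{1\}$ for any nonempty $U$ and $\langle \emptyset \rangle_T = \emptyset$; so actually $\{1\}$ \emph{is} a $T$-space here — this example fails too. The right idea for (d): use $T$ with $\operatorname{Id}$ so that a $T$-space generated by $U$ must contain $U$, then take $S$ forward-closed but not of the form $\langle U\rangle_T$. With $T = \{\operatorname{Id}, f\}$ where $f : \mathbb{Z} \to \mathbb{Z}$, $x \mapsto x^2$, the set $S = \{2, 4, 16, 256, \dots\} \cup \{-2\}$ — hmm, I need $f(a) \in S$: take $S = \{0,1\}$, which is forward-closed under squaring, hence a $T$-space ($= \langle\{0,1\}\rangle_T$). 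So instead take $S = \{2\}$: not forward-closed. The cleanest: $D = \mathbb{N}_0$, $T = \langle s \rangle$ where $s(x) = x+1$ but also include $\operatorname{Id}$, i.e. $T = \{x \mapsto x+k \mid k \in \mathbb{N}_0\}$; then $\langle U \rangle_T = \{u + k \mid u \in U, k \in \mathbb{N}_0\}$ is always an "up-set union", so $S = \{0, 2, 3, 4, \dots\}$ (missing $1$) is forward-closed but is not $\langle U\rangle_T$ for any $U$, since containing $0$ would force containing $1$. This simultaneously handles (b) as well — no subset of $S$ generates $S$ — though for (b) I may prefer a self-contained example such as $D = \mathbb{Z}$ with $T = \{x \mapsto 2x^n \mid n \in \mathbb{N}_0\}$ chosen so the $T$-space structure is rigid enough that $S$ admits no generating subset.

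I expect the main obstacle to be item (d): one must be careful that the chosen $T$ genuinely makes "forward-closed" strictly weaker than "is a $T$-space", which forces $T$ to contain (or behave like it contains) $\operatorname{Id}$ while still having the generation operator $\langle - \rangle_T$ produce only sets of a constrained shape; the constant-function and bare-shift attempts above show the pitfalls. Once the examples are fixed, each verification is a one-line check that $T$ is closed under composition and that the displayed set is (or is not) of the form $\{f(u) \mid f \in T, u \in U\}$, so no substantial computation remains. I would present the final writeup as four short labelled paragraphs, one per item, possibly merging (b) and (d) since a single example can serve both.
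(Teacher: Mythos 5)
Your items (a) and (c) are correct and coincide with the paper's argument, which draws everything from Example \ref{$T$-space generated by 1}. The genuine gap is in (d), and it propagates to (b). Your final example for (d) — $D=\mathbb{N}_0$, $T=\{x\mapsto x+k \mid k\in\mathbb{N}_0\}$, $S=\{0,2,3,4,\dots\}$ — is not forward-closed: $0\in S$ and the shift $x\mapsto x+1$ lies in $T$, yet $0+1=1\notin S$, so the hypothesis of (d) fails. More fundamentally, your stated strategy ("use $T$ with $\operatorname{Id}$") cannot work at all: if $\operatorname{Id}\in T$ and $S$ satisfies $f(a)\in S$ for all $a\in S$, $f\in T$, then $\langle S\rangle_T\subseteq S\subseteq\langle S\rangle_T$, so $S=\langle S\rangle_T$ is a $T$-space — this is exactly Proposition \ref{1.2.9} and the remark following it. The correct move is the opposite: keep a $T$ without $\operatorname{Id}$ whose elements can never produce some point of $S$. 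The paper simply reuses Example \ref{$T$-space generated by 1} with $S=D=\mathbb{Z}^+$: forward-closure is automatic (every $f\in T$ maps $\mathbb{Z}^+$ into $\mathbb{Z}^+$), but $1$ is not of the form $u+k$ with $k\ge 1$, so by (c) $\mathbb{Z}^+$ is not a $T$-space.

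For (b), since you planned to let the flawed (d) example do double duty, that part is also unsupported; and your fallback $T=\{x\mapsto 2x^n \mid n\in\mathbb{N}_0\}$ on $\mathbb{Z}$ is not an operator semigroup (e.g.\ $(x\mapsto 2x)\circ(x\mapsto 2x)=x\mapsto 4x$ is not of that form). The paper's choice is again Example \ref{$T$-space generated by 1}: the $T$-space $\langle 1\rangle_T=\{2,3,4,\dots\}$ admits no generating subset, because any $U\subseteq\{2,3,\dots\}$ yields $\langle U\rangle_T=\{u+k \mid u\in U,\ k\ge 1\}$, whose least element exceeds $\min U\ge 2$, so $2\notin\langle U\rangle_T$. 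Once you replace your (b) and (d) with these observations, the whole proposition follows from the single example, as in the paper.
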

\begin{proof}
For (a): In Example \ref{$T$-space generated by 1}, 
\[\{1\}\bigcap {{\left\langle 1 \right\rangle }_{T}}=\{1\}\bigcap \{2,3,4,\cdots \}=\emptyset .\] 

For (b): In Example \ref{$T$-space generated by 1}, no subset of ${{\left\langle 1 \right\rangle }_{T}}$ may generate ${{\left\langle 1 \right\rangle }_{T}}$. 

For (c): In Example \ref{$T$-space generated by 1}, $1\notin {{\left\langle U \right\rangle }_{T}},\forall U\subseteq {\mathbb{Z}}^{+}$, and hence ${{\mathbb{Z}}^{+}}$ is not a $T$-space.

For (d): In Example \ref{$T$-space generated by 1}, ${{\mathbb{Z}}^{+}}$ satisfies: 
$f(a)\in {{\mathbb{Z}}^{+}}$, $\forall a\in {{\mathbb{Z}}^{+}}$ and $f\in T$,	
but ${{\mathbb{Z}}^{+}}$ is not a $T$-space.
\end{proof}
However, any $T$-space $S$ must satisfy: $f(a)\in S$, $\forall a\in S$ and $f\in T$, as shown below.
\begin{proposition}
\label{<S> contained in S}
    Let $S$ be a $T$-space. Then $\forall a\in S$ and $f\in T$, $f(a)\in S$. Hence $\forall A\subseteq S$, ${{\left\langle A \right\rangle }_{T}}\le S$, i.e. ${{\left\langle A \right\rangle }_{T}}$ is $T$-subspace of $S$ (Definition \ref{$T$-space}). In particular, ${{\left\langle S \right\rangle }_{T}}\le S$. 
\end{proposition}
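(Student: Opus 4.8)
The plan is to unwind the definition of a $T$-space and lean on nothing more than the closure of $T$ under composition. First I would fix a generator set: since $S$ is a $T$-space, Definition \ref{$T$-space} gives some $U\subseteq D$ with $S={{\left\langle U \right\rangle }_{T}}=\{g(u)\,|\,g\in T,\ u\in U\}$.

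To prove the first claim, take $a\in S$ and $f\in T$. By the choice of $U$ we may write $a=g(u)$ for some $g\in T$ and $u\in U$. Then $f(a)=f(g(u))=(f\circ g)(u)$, and since $T$ is an operator semigroup we have $f\circ g\in T$; hence $f(a)=(f\circ g)(u)\in{{\left\langle U \right\rangle }_{T}}=S$. This single line is where the hypothesis on $T$ enters, and it is essentially the entire content of the proposition.

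For the second claim, let $A\subseteq S$. By Definition \ref{$T$-space} the set ${{\left\langle A \right\rangle }_{T}}$ is a $T$-space, and $S$ is a $T$-space by hypothesis, so to conclude ${{\left\langle A \right\rangle }_{T}}\le S$ it remains only to verify the inclusion ${{\left\langle A \right\rangle }_{T}}\subseteq S$. An arbitrary element of ${{\left\langle A \right\rangle }_{T}}$ has the form $f(a)$ with $f\in T$ and $a\in A\subseteq S$, and the first claim gives $f(a)\in S$. Finally, specializing to $A=S$ yields ${{\left\langle S \right\rangle }_{T}}\le S$.

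I do not expect a genuine obstacle: the argument is a one-step computation using the closure of composition. The only points deserving a moment's care are the degenerate cases ($T=\emptyset$, or $A=\emptyset$, or $U$ disjoint from $S$), but these cause no trouble — by the earlier remark ${{\left\langle \emptyset \right\rangle }_{T}}=\emptyset$ and ${{\left\langle D \right\rangle }_{\emptyset}}=\emptyset$, so the assertions hold vacuously there, and the computation above never used that $U$ meets $S$.
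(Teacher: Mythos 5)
Your proof is correct and follows exactly the paper's argument: write $a=g(u)$ for a generator set $U$ with $S={{\left\langle U \right\rangle }_{T}}$, use closure under composition to get $f\circ g\in T$ and hence $f(a)\in S$, and then deduce ${{\left\langle A \right\rangle }_{T}}\le S$ from the inclusion ${{\left\langle A \right\rangle }_{T}}\subseteq S$. Nothing further is needed.
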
   
\begin{proof}
    By Definition \ref{$T$-space}, $\exists U\subseteq D$ such that ${{\left\langle U \right\rangle }_{T}}=S$. Let $a\in S$. Then $\exists g\in T$ and $u\in U$ such that $g(u)=a$. Let $f\in T$. Then by Definition \ref{Operator semigroup}, $f\circ g\in T$, and thus $f(a)=f(g(u))\in {{\left\langle U \right\rangle }_{T}}=S$. Hence $\forall a\in S$ and $f\in T$, $f(a)\in S$. Then $\forall A\subseteq S$, ${{\left\langle A \right\rangle }_{T}}\subseteq S$, and hence by Definition \ref{$T$-space}, ${{\left\langle A \right\rangle }_{T}}\le S$.
\end{proof}
In light of Proposition \ref{<S> contained in S} and (d) in Proposition \ref{4 facts}, we introduce the following concept, whose use will be clear later. 
\begin{definition}
\label{quasi-T-space}
    Let $S\subseteq D$. If ${{\left\langle S \right\rangle }_{T}}\subseteq S$, then we call $S$ a \emph{quasi-$T$-space}. Moreover, if $B\subseteq S$ and both $B$ and $S$ are quasi-$T$-spaces, then we call $B$ a \emph{quasi-$T$-subspace} of $S$ and write $B\le _qS$ or $S\ge _qB$.
\end{definition}
By Proposition \ref{<S> contained in S}, the following is immediate.
\begin{proposition} \label{$T$-space is a quasi-$T$-space}
     Any $T$-space is a quasi-$T$-space. 
\end{proposition}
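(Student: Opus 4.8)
The goal is to show that any $T$-space is a quasi-$T$-space, that is, if $S$ is a $T$-space then $\langle S\rangle_T\subseteq S$. The plan is to invoke Proposition \ref{<S> contained in S} directly, since its conclusion already contains the statement $\langle S\rangle_T\le S$, and in particular $\langle S\rangle_T\subseteq S$. So the proof reduces to unwinding the definition of quasi-$T$-space (Definition \ref{quasi-T-space}), which asks precisely for the inclusion $\langle S\rangle_T\subseteq S$.

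Concretely, I would proceed as follows. Let $S$ be a $T$-space. By the last assertion of Proposition \ref{<S> contained in S}, applied with $A=S$, we have $\langle S\rangle_T\le S$, and in particular $\langle S\rangle_T\subseteq S$. By Definition \ref{quasi-T-space}, this inclusion is exactly the defining condition for $S$ to be a quasi-$T$-space. Hence $S$ is a quasi-$T$-space, as desired.

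There is essentially no obstacle here: the statement is an immediate corollary of Proposition \ref{<S> contained in S}, and indeed the excerpt itself flags this by saying ``By Proposition \ref{<S> contained in S}, the following is immediate.'' The only thing to be careful about is not to conflate the relations $\le$ (being a $T$-subspace) and $\le_q$ (being a quasi-$T$-subspace): the proposition gives us the former for the pair $(\langle S\rangle_T, S)$, but for the present statement we only need the raw set inclusion $\langle S\rangle_T\subseteq S$ that underlies it, which is exactly what Definition \ref{quasi-T-space} requires. One could optionally remark that, more generally, every $T$-subspace relation $B\le S$ entails $B\le_q S$, but that is not needed for the statement as phrased.
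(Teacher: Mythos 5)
Your proof is correct and follows exactly the paper's route: the paper also derives the claim immediately from Proposition \ref{<S> contained in S} (the case $A=S$ giving $\langle S\rangle_T\subseteq S$), which is precisely the condition in Definition \ref{quasi-T-space}. Nothing is missing.
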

However, the converse of Proposition \ref{$T$-space is a quasi-$T$-space} is not always true, as is implied by (d) in Proposition \ref{4 facts}, unless we add a condition as follows.
\begin{proposition}
\label{1.2.9}
Let $S$ be a quasi-$T$-space. If $S\subseteq {{\left\langle S \right\rangle }_{T}}$, then $S$ is a $T$-space.
\end{proposition}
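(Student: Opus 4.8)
The statement to prove is Proposition \ref{1.2.9}: if $S$ is a quasi-$T$-space (so ${{\left\langle S \right\rangle }_{T}}\subseteq S$) and additionally $S\subseteq {{\left\langle S \right\rangle }_{T}}$, then $S$ is a $T$-space. The plan is essentially to combine the two hypotheses into the equality $S={{\left\langle S \right\rangle }_{T}}$ and then observe that the right-hand side is, by construction, a $T$-space. First I would note that the definition of quasi-$T$-space (Definition \ref{quasi-T-space}) gives the inclusion ${{\left\langle S \right\rangle }_{T}}\subseteq S$, and the extra hypothesis gives $S\subseteq {{\left\langle S \right\rangle }_{T}}$; antisymmetry of set inclusion then yields $S={{\left\langle S \right\rangle }_{T}}$.

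The second and final step is to recognize that ${{\left\langle S \right\rangle }_{T}}$ is a $T$-space by Definition \ref{$T$-space}: it is literally the set $\{f(u)\mid f\in T,\ u\in S\}$, i.e. the $T$-space generated by the subset $S\subseteq D$. Since $S$ equals this set, $S$ is a $T$-space. Thus the whole argument is just two inclusions and one invocation of the definition; there is essentially no obstacle, the only thing to be careful about is making sure both inclusions are genuinely available — one from the quasi-$T$-space hypothesis and one stated explicitly — before concluding the equality.

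If one wanted to be slightly more self-contained, one could instead argue directly: take $a\in S$. By $S\subseteq {{\left\langle S \right\rangle }_{T}}$ there exist $f\in T$ and $b\in S$ with $a=f(b)$, which exhibits $a$ as an element of the generated $T$-space; conversely any $f(b)$ with $b\in S$ lies in $S$ because ${{\left\langle S \right\rangle }_{T}}\subseteq S$. This re-derives the equality $S={{\left\langle S \right\rangle }_{T}}$ element-wise, but the inclusion-antisymmetry phrasing above is cleaner. The main (and essentially only) point to get right is simply that the defining formula for ${{\left\langle S \right\rangle }_{T}}$ makes it a $T$-space on the nose, so once $S$ is shown to coincide with it, nothing further is needed.
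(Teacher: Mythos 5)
Your proof is correct and is essentially the same as the paper's: both combine the quasi-$T$-space inclusion ${{\left\langle S \right\rangle }_{T}}\subseteq S$ with the hypothesis $S\subseteq {{\left\langle S \right\rangle }_{T}}$ to get $S={{\left\langle S \right\rangle }_{T}}$, which is a $T$-space by Definition \ref{$T$-space}. No further comment is needed.
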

\begin{remark}
    If $\operatorname{Id}\in T$, then $S\subseteq {{\left\langle S \right\rangle }_{T}}$.
\end{remark}
\begin{proof}
    If $S\subseteq {{\left\langle S \right\rangle }_{T}}$, then ${{\left\langle S \right\rangle }_{T}}=S$ because ${{\left\langle S \right\rangle }_{T}}\subseteq S$ (by Definition \ref{quasi-T-space}), and hence $S$ is a $T$-space (by Definition \ref{$T$-space}).
\end{proof}
Proposition \ref{1.2.9} implies that, if $\operatorname{Id}\in T$ and $S$ is a quasi-$T$-space, then $S$ is a $T$-space, and hence in this case $T$-space and quasi-$T$-space are actually the same notion. In fact, we shall see that $\operatorname{Id}\in T$ in most of our examples.
\begin{notation}
    In this article, $I$ denotes an index set unless otherwise specified.
\end{notation}
\begin{proposition}
\label{1.2.10}
    The intersection of any family of quasi-$T$-spaces is a quasi-$T$-space.
\end{proposition}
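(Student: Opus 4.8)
The statement to prove is Proposition \ref{1.2.10}: the intersection of any family of quasi-$T$-spaces is a quasi-$T$-space.

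\medskip

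The plan is to unwind the definition of quasi-$T$-space and show directly that the intersection is closed under applying operators from $T$. Recall from Definition \ref{quasi-T-space} that $S \subseteq D$ is a quasi-$T$-space precisely when ${{\left\langle S \right\rangle}_{T}} \subseteq S$; and by Definition \ref{$T$-space}, ${{\left\langle S \right\rangle}_{T}} = \{f(a) \mid f \in T,\ a \in S\}$, so the condition ${{\left\langle S \right\rangle}_{T}} \subseteq S$ is equivalent to the elementwise closure condition ``$f(a) \in S$ for all $a \in S$ and $f \in T$'' — indeed this is exactly the phrasing used in Proposition \ref{<S> contained in S} and in item (d) of Proposition \ref{4 facts}. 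So the real content is to verify that this closure condition is inherited by intersections.

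\medskip

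The key steps, in order: First I would fix a family $\{S_i\}_{i \in I}$ of quasi-$T$-spaces and set $S := \bigcap_{i \in I} S_i$. Second, I would take an arbitrary $a \in S$ and an arbitrary $f \in T$, and argue: for each $i \in I$ we have $a \in S_i$, and since $S_i$ is a quasi-$T$-space, ${{\left\langle S_i \right\rangle}_{T}} \subseteq S_i$, hence $f(a) \in {{\left\langle S_i \right\rangle}_{T}} \subseteq S_i$. Third, since this holds for every $i \in I$, we get $f(a) \in \bigcap_{i \in I} S_i = S$. Fourth, having shown $f(a) \in S$ for all $a \in S$ and $f \in T$, I conclude ${{\left\langle S \right\rangle}_{T}} = \{f(a) \mid f \in T,\ a \in S\} \subseteq S$, which is exactly the definition of $S$ being a quasi-$T$-space.

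\medskip

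There is no real obstacle here — this is a routine ``closure properties are closed under intersection'' argument, of the same flavor as the standard fact that an arbitrary intersection of subgroups is a subgroup. The only mild subtlety worth a sentence in the write-up is the edge case $I = \emptyset$: the empty intersection is conventionally $D$ itself, and $D$ need not be a quasi-$T$-space (compare item (c) of Proposition \ref{4 facts}), so strictly speaking one should either restrict to nonempty families or note that in context ``family'' is taken to be nonempty. I would handle this with a brief parenthetical remark rather than belaboring it, since the substantive case $I \neq \emptyset$ goes through verbatim as above.
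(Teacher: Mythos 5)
Your proof is correct and is essentially the paper's argument: the paper phrases it as monotonicity, ${\left\langle S \right\rangle}_{T}\subseteq {\left\langle S_i \right\rangle}_{T}\subseteq S_i$ for each $i$, while you verify the same containment elementwise via $f(a)\in S_i$, which amounts to the same thing. Your parenthetical about the empty family is, however, mistaken: since every $f\in T$ maps $D$ into $D$, we always have ${\left\langle D \right\rangle}_{T}\subseteq D$, so $D$ is automatically a quasi-$T$-space (item (c) of Proposition \ref{4 facts} only says $D$ need not be a $T$-space), and no restriction to nonempty families is needed.
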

\begin{proof}
    Let $\{{{S}_{i}}\,|\,i\in I\}$ be a family of quasi-$T$-spaces and let $S=\bigcap\nolimits_{i\in I}{{{S}_{i}}}$. Then $\forall i\in I$, ${{\left\langle S \right\rangle }_{T}}\subseteq {{\left\langle {{S}_{i}} \right\rangle }_{T}}$ (since $S\subseteq {{S}_{i}}$). And by Definition \ref{quasi-T-space}, $\forall i\in I$, ${{\left\langle {{S}_{i}} \right\rangle }_{T}}\subseteq {{S}_{i}}$. Hence $\forall i\in I$, ${{\left\langle S \right\rangle }_{T}}\subseteq {{S}_{i}}$. Therefore, ${{\left\langle S \right\rangle }_{T}}\subseteq \bigcap\nolimits_{i\in I}{{{S}_{i}}}=S$.
\end{proof}   
\begin{proposition}
    \label{1.2.11}
    The union of any family of quasi-$T$-spaces is a quasi-$T$-space.
\end{proposition}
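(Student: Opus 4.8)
The plan is to unwind the definition of quasi-$T$-space and exploit the fact that the operation $A\mapsto{{\left\langle A \right\rangle }_{T}}$ commutes with arbitrary unions. Write $\{{{S}_{i}}\,|\,i\in I\}$ for the given family and set $S=\bigcup\nolimits_{i\in I}{{{S}_{i}}}$. By Definition \ref{quasi-T-space} it suffices to verify that ${{\left\langle S \right\rangle }_{T}}\subseteq S$.

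First I would record the elementary identity ${{\left\langle \bigcup\nolimits_{i\in I}{{{S}_{i}}} \right\rangle }_{T}}=\bigcup\nolimits_{i\in I}{{{\left\langle {{S}_{i}} \right\rangle }_{T}}}$. Indeed, by Definition \ref{$T$-space} an element of ${{\left\langle S \right\rangle }_{T}}$ has the form $f(a)$ with $f\in T$ and $a\in S$; since $a$ lies in some ${{S}_{i}}$, such an $f(a)$ lies in ${{\left\langle {{S}_{i}} \right\rangle }_{T}}$, and conversely every ${{\left\langle {{S}_{i}} \right\rangle }_{T}}$ is contained in ${{\left\langle S \right\rangle }_{T}}$ because ${{S}_{i}}\subseteq S$ (using only that $\left\langle \cdot \right\rangle_{T}$ is monotone with respect to inclusion, which is immediate from Definition \ref{$T$-space}).

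Then I would finish by combining this identity with the hypothesis: since each ${{S}_{i}}$ is a quasi-$T$-space, ${{\left\langle {{S}_{i}} \right\rangle }_{T}}\subseteq {{S}_{i}}$ for every $i\in I$, hence
\[
{{\left\langle S \right\rangle }_{T}}=\bigcup\nolimits_{i\in I}{{{\left\langle {{S}_{i}} \right\rangle }_{T}}}\subseteq \bigcup\nolimits_{i\in I}{{{S}_{i}}}=S,
\]
so $S$ is a quasi-$T$-space.

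There is no real obstacle here; the only point worth noting is the contrast with Proposition \ref{1.2.10}, where for intersections one obtains merely the one-sided containment ${{\left\langle \bigcap\nolimits_{i\in I}{{{S}_{i}}} \right\rangle }_{T}}\subseteq \bigcap\nolimits_{i\in I}{{{\left\langle {{S}_{i}} \right\rangle }_{T}}}$, whereas for unions the analogous statement is a genuine equality. This is what lets the argument go through with no extra hypothesis on $T$; in particular $\operatorname{Id}\in T$ is not needed, and the result holds for the empty family as well, since $\bigcup\nolimits_{i\in \emptyset }{{{S}_{i}}}=\emptyset $ is trivially a quasi-$T$-space.
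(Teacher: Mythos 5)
Your proof is correct and follows essentially the same route as the paper: both rest on the identity ${{\left\langle \bigcup\nolimits_{i}{{{S}_{i}}} \right\rangle }_{T}}=\bigcup\nolimits_{i}{{{\left\langle {{S}_{i}} \right\rangle }_{T}}}$ (which the paper invokes directly from Definition \ref{$T$-space}) combined with ${{\left\langle {{S}_{i}} \right\rangle }_{T}}\subseteq {{S}_{i}}$ for each $i$. Your extra justification of that identity and the remarks on intersections and the empty family are accurate but not needed beyond what the paper already does.
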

\begin{proof}
    Let $\{{{S}_{i}}\,|\,i\in I\}$ be a collection of quasi-$T$-spaces and let $S=\bigcup\nolimits_{i\in I}{{{S}_{i}}}$. Then by Definitions \ref{$T$-space} and \ref{quasi-T-space}, 
    \[{{\left\langle S \right\rangle }_{T}}={{\left\langle \bigcup\nolimits_{i\in I}{{{S}_{i}}} \right\rangle }_{T}}=\bigcup\nolimits_{i\in I}{{{\left\langle {{S}_{i}} \right\rangle }_{T}}}\subseteq \bigcup\nolimits_{i\in I}{{{S}_{i}}}=S.\]   
\end{proof}
 For $T$-spaces, we have
 \begin{proposition}
     \label{1.2.12}
     The union of any family of $T$-spaces is a $T$-space.
 \end{proposition}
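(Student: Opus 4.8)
The plan is to exhibit an explicit generating set for the union and then read the conclusion straight off Definition~\ref{$T$-space}. Let $\{S_i \mid i \in I\}$ be a family of $T$-spaces. Because each $S_i$ is a $T$-space, Definition~\ref{$T$-space} supplies a set $U_i \subseteq D$ with $S_i = {{\left\langle U_i \right\rangle}_T}$. I would set $U = \bigcup_{i \in I} U_i \subseteq D$ and claim that ${{\left\langle U \right\rangle}_T} = \bigcup_{i \in I} S_i$; once this equality is established, $\bigcup_{i \in I} S_i$ is a $T$-space by definition.

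The verification of the claim is the same distributivity computation already used in the proof of Proposition~\ref{1.2.11}: unwinding Definition~\ref{$T$-space},
\[
{{\left\langle U \right\rangle}_T} = \{\, f(u) \mid f \in T,\ u \in U \,\} = \bigcup_{i \in I} \{\, f(u) \mid f \in T,\ u \in U_i \,\} = \bigcup_{i \in I} {{\left\langle U_i \right\rangle}_T} = \bigcup_{i \in I} S_i ,
\]
the second equality holding because $u \in U$ exactly when $u \in U_i$ for some $i \in I$.

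Since the whole argument collapses to a single line once the right generating set has been named, I do not anticipate any genuine obstacle. The only points deserving a moment's care are formal: if $I = \emptyset$ then $U = \emptyset$ and ${{\left\langle \emptyset \right\rangle}_T} = \emptyset$, so the degenerate case is covered uniformly; and one should resist trying to deduce the result from Proposition~\ref{1.2.11} (which already gives that the union is a quasi-$T$-space) together with Proposition~\ref{1.2.9}, because checking $\bigcup_{i \in I} S_i \subseteq {{\left\langle \bigcup_{i \in I} S_i \right\rangle}_T}$ head-on is awkward --- a generating set of a $T$-space need not even meet that $T$-space, by Proposition~\ref{4 facts}(a) --- whereas the generating-set approach avoids the issue altogether.
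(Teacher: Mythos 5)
Your proof is correct and follows the same route as the paper: choose generating sets $U_i$ with $S_i={{\left\langle U_i \right\rangle}_T}$ and observe that ${{\left\langle \bigcup_i U_i \right\rangle}_T}=\bigcup_i {{\left\langle U_i \right\rangle}_T}=\bigcup_i S_i$. The paper's proof is exactly this one-line computation, so there is nothing to add.
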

\begin{proof}
    Let $\{{{S}_{i}}\,|\,i\in I\}$ be a collection of $T$-spaces and let $S=\bigcup\nolimits_{i\in I}{{{S}_{i}}}$. By Definition \ref{$T$-space}, $\forall i\in I$, we can suppose ${{S}_{i}}={{\left\langle {{U}_{i}} \right\rangle }_{T}}$, where $U_i \subseteq D$. Then $S=\bigcup\nolimits_{i\in I}{{{\left\langle {{U}_{i}} \right\rangle }_{T}}}={{\left\langle \bigcup\nolimits_{i\in I}{{{U}_{i}}} \right\rangle }_{T}}$ is a $T$-space.
\end{proof}
	However, the intersection of a set of $T$-spaces is not necessarily a $T$-space, as shown below.
\begin{example}
    \label{1.2.13}
    Let 
    \begin{center}
     $D=\{{{a}_{0}}+{{a}_{1}}\pi +{{a}_{2}}{{\pi }^{2}}+\cdots +{{a}_{n}}{{\pi }^{n}}\, |\,n\in {{\mathbb{N}}_{0}},{{a}_{i}}\in {{\mathbb{N}}_{0}},\forall i=0,\cdots ,n\}$.
    \end{center}
    Let $f:D\to D$ be given by $x\mapsto \pi +x$, let $g:D\to D$ be given by $x\mapsto \pi x$, let $T=\left\langle \{f,g\} \right\rangle $ (Definition \ref{generated operator semigroup}), and let $A={{\left\langle 0 \right\rangle }_{T}}\bigcap {{\left\langle 1 \right\rangle }_{T}}$. Then each element of $A$ is nonzero (because $0\notin {{\left\langle 1 \right\rangle }_{T}}$) and has the form ${{a}_{1}}\pi +{{a}_{2}}{{\pi }^{2}}+\cdots +{{a}_{n}}{{\pi }^{n}}$ where $n\in {{\mathbb{Z}}^{+}}$ and each ${{a}_{i}}\in {{\mathbb{N}}_{0}}$ (because each element of ${{\left\langle 0 \right\rangle }_{T}}$ has the form). Assume that $A$ is a $T$-space ${{\left\langle U \right\rangle }_{T}}$, where $U\subseteq D$. Because $\pi \in A$, we can tell that either $0\in U$ or $1\in U$. However, it follows that either $0\in A$ or $\pi +1\in A$, a contradiction.
\end{example}
Nevertheless, we have
\begin{proposition}
    \label{1.2.14}
    Let $S$ be the intersection of any family of $T$-spaces. Then $S$ is a quasi-$T$-space. Moreover, if $\operatorname{Id}\in T$ or more generally, $S\subseteq {{\left\langle S \right\rangle }_{T}}$, then $S$ is a $T$-space. 
\end{proposition}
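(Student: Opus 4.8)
The plan is to assemble this entirely from results already established in the excerpt, so that essentially no new work is needed. First I would observe that every $T$-space is a quasi-$T$-space by Proposition \ref{$T$-space is a quasi-$T$-space}. Hence, given a family $\{S_i \mid i\in I\}$ of $T$-spaces, the family consists of quasi-$T$-spaces, and so by Proposition \ref{1.2.10} (intersections of quasi-$T$-spaces are quasi-$T$-spaces) the set $S = \bigcap_{i\in I} S_i$ is a quasi-$T$-space. This settles the first assertion.

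For the ``moreover'' part, I would invoke Proposition \ref{1.2.9}: since $S$ is now known to be a quasi-$T$-space, the extra hypothesis $S\subseteq \langle S\rangle_T$ is exactly what is needed to conclude that $S$ is a $T$-space. Finally, to handle the phrasing ``if $\operatorname{Id}\in T$ or more generally'', I would recall the remark following Proposition \ref{1.2.9}, which notes that $\operatorname{Id}\in T$ implies $S\subseteq \langle S\rangle_T$; thus the case $\operatorname{Id}\in T$ is subsumed by the more general hypothesis $S\subseteq \langle S\rangle_T$.

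There is no real obstacle here — the statement is a direct corollary of Propositions \ref{$T$-space is a quasi-$T$-space}, \ref{1.2.10}, and \ref{1.2.9} chained together. The only thing to be slightly careful about is logical ordering: one must first establish that $S$ is a quasi-$T$-space before applying Proposition \ref{1.2.9}, since that proposition takes ``$S$ is a quasi-$T$-space'' as a hypothesis. So the write-up would be two short sentences: one deducing the quasi-$T$-space property from Propositions \ref{$T$-space is a quasi-$T$-space} and \ref{1.2.10}, and one deducing the $T$-space property (under the stated hypothesis) from Proposition \ref{1.2.9} together with its remark.
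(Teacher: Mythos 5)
Your proposal is correct and follows exactly the paper's own argument: the paper likewise deduces the quasi-$T$-space property from Propositions \ref{$T$-space is a quasi-$T$-space} and \ref{1.2.10}, and then applies Proposition \ref{1.2.9} (with the remark that $\operatorname{Id}\in T$ gives $S\subseteq\langle S\rangle_T$) for the "moreover" part. Nothing is missing.
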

\begin{proof}
    $S$ is a quasi-$T$-space by Propositions \ref{$T$-space is a quasi-$T$-space} and \ref{1.2.10}. If $\operatorname{Id}\in T$ or $S\subseteq {{\left\langle S \right\rangle }_{T}}$, then by Proposition \ref{1.2.9}, $S$ is a $T$-space.
\end{proof}  

\subsection{$T$-morphisms} \label{$T$-morphisms}
A generalized morphism which we call a $T$-morphism between $T$-spaces is only required to satisfy a simple but important condition. 
\begin{definition}
    \label{1.3.1}
    Let $T$ be an operator semigroup and let $\sigma $ be a map from a $T$-space $S$ to a $T$-space. If $\forall f\in T$, $\sigma $ commutes with $f$ on $S$,  i.e. 
$\sigma (f(a))=f(\sigma (a))$, $\forall a\in S$ and $f\in T$,
then $\sigma $ is called a \emph{$T$-morphism}.
\end{definition}
\begin{remark}
\begin{enumerate}
\item By Proposition \ref{<S> contained in S}, $\forall a\in S$ and $f\in T$, $f(a)\in S$, and hence $\sigma (f(a))$ is well-defined.
\item Note that if $S={{\left\langle U \right\rangle }_{T}}$ and $u\in U$, then $\sigma (u)$ is not defined unless $u\in {{\left\langle U \right\rangle }_{T}}$.
\item We regard the empty map $\emptyset \to \emptyset $ as a $T$-morphism from the $T$-space $\emptyset$ to $\emptyset$.
\end{enumerate}
\end{remark}
The following four results show that the notion of $T$-morphism can characterize some familiar notions in mathematics.
\begin{proposition}
\label{1.3.4}
    Let $B$ be a subfield of a field $F$. Let $T$ be the operator semigroup defined in Example \ref{T on field--1}. Then $\forall u,v\in F$, a map $\sigma :{{\left\langle u \right\rangle }_{T}}\to {{\left\langle v \right\rangle }_{T}}$ is a ring homomorphism with $B$ fixed pointwisely if and only if $\sigma $ is a $T$-morphism.
\end{proposition}
\begin{proof}
    Clearly, ${{\left\langle u \right\rangle }_{T}}$ and ${{\left\langle v \right\rangle }_{T}}$ are the rings $B[u]$ and $B[v]$, respectively.
    
1.\emph{ Sufficiency }

Suppose that $\sigma $ is a $T$-morphism. Then by Definition \ref{1.3.1}, $\sigma (f(a))=f(\sigma (a))$, $\forall f\in T$ and $a\in {{\left\langle u \right\rangle }_{T}}=B[u]$. 

Considering the case where $f\in T$ is a constant polynomial function given by $a\mapsto c (\in B),\forall a\in F$, we can tell that $\forall c\in B$, $\sigma (c)(=\sigma (f(a))=f(\sigma (a)))=c$, i.e. $B$ is fixed pointwisely under $\sigma $. 

Let $a,b\in {{\left\langle u \right\rangle }_{T}}=B[u]$. Then $\exists p(x),q(x)\in B[x]$ with $p(u)=a$ and $q(u)=b$. Hence 

$\sigma (a+b)$ 

$=\sigma (p(u)+q(u))$ 

$=\sigma ((p+q)(u))$ 

$=\sigma (\tau (p+q)(u))$ ($\tau :B[x]\to T$ is defined in Example \ref{T on field--1})

$=\tau (p+q)(\sigma (u))$ (since $\sigma $ is a $T$-morphism)

$=(p+q)(\sigma (u))$ 

$=p(\sigma (u))+q(\sigma (u))$ 

$=\tau (p)(\sigma (u))+\tau (q)(\sigma (u))$

$=\sigma (\tau (p)(u))+\sigma (\tau (q)(u))$ (since $\sigma $ is a $T$-morphism)

$=\sigma (p(u))+\sigma (q(u))$ 

$=\sigma (a)+\sigma (b)$. 

And

$\sigma (ab)$

$=\sigma (p(u)q(u))$ 

$=\sigma ((pq)(u))$ 

$=\sigma (\tau (pq)(u))$ 

$=\tau (pq)(\sigma (u))$ (since $\sigma $ is a $T$-morphism)

$=(pq)(\sigma (u))$ 

$=p(\sigma (u))q(\sigma (u))$

$=\tau (p)(\sigma (u))\tau (q)(\sigma (u))$

$=\sigma (\tau (p)(u))\sigma (\tau (q)(u))$ (since $\sigma $ is a $T$-morphism)

$=\sigma (p(u))\sigma (q(u))$ 

$=\sigma (a)\sigma (b)$.

Therefore, $\sigma $ is a ring homomorphism with $B$ fixed pointwisely.

2.\emph{ Necessity }

Suppose that $\sigma $ is a ring homomorphism with $B$ fixed pointwisely. Then 

$\forall c\in B$, $\sigma (c)=c$, and

$\forall a,b\in {{\left\langle u \right\rangle }_{T}}=B[u]$, $\sigma (a+b)=\sigma (a)+\sigma (b)$ and $\sigma (ab)=\sigma (a)\sigma (b)$.

Let $z\in {{\left\langle u \right\rangle }_{T}}=B[u]$ and let $f\in T$. Because $f$ is a polynomial function, it is not hard to see that $\sigma (f(z))=f(\sigma (z))$, as desired.
\end{proof}
\begin{remark}
    Proposition \ref{1.3.4} justifies the definition of $T$ in Example \ref{T on field--1}: we could not obtain such a desirable result if $T$ were defined otherwise.
\end{remark}
 Then for ring isomorphisms, we have
\begin{corollary}
\label{1.3.5}
    Let $B$ be a subfield of a field $F$, let $T$ be the operator semigroup defined in Example \ref{T on field--1}, and let $u,v\in F$ be algebraic over $B$ with $\deg (\min (B,u))=\deg (\min (B,v))$, where $\deg (\min (B,*))$ denotes the degree of the minimal polynomial of $*$ over $B$. Then a map $\sigma :{{\left\langle u \right\rangle }_{T}}\to {{\left\langle v \right\rangle }_{T}}$ is a ring isomorphism with $B$ fixed pointwisely if and only if it is a $T$-morphism.
\end{corollary}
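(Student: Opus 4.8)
The plan is to bootstrap from Theorem \ref{1.3.4}, which already identifies the $T$-morphisms $\langle u\rangle_T \to \langle v\rangle_T$ with the ring homomorphisms that fix $B$ pointwise. Hence the only genuinely new content of the corollary is the passage from ``homomorphism'' to ``isomorphism'', and it suffices to prove the following: under the hypothesis $\deg(\min(B,u)) = \deg(\min(B,v))$, every ring homomorphism $\sigma\colon \langle u\rangle_T \to \langle v\rangle_T$ fixing $B$ pointwise is automatically bijective. The reverse implication needs no work, since a ring isomorphism fixing $B$ pointwise is in particular a ring homomorphism fixing $B$ pointwise, hence a $T$-morphism by Theorem \ref{1.3.4}.

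First I would record the relevant identifications. As in Example \ref{$T$-space for field--1}, $\langle u\rangle_T = B[u]$ and $\langle v\rangle_T = B[v]$; since $u$ and $v$ are algebraic over $B$, these rings are in fact the fields $B(u)$ and $B(v)$, and they are finite-dimensional over $B$ with $[B(u):B] = \deg(\min(B,u)) = \deg(\min(B,v)) = [B(v):B] =: n$. In particular both $\langle u\rangle_T$ and $\langle v\rangle_T$ are $n$-dimensional $B$-vector spaces under the $B$-action inherited from $F$.

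Next, let $\sigma\colon B(u)\to B(v)$ be a ring homomorphism fixing $B$ pointwise. Since $\sigma$ fixes $1\neq 0$, it is not the zero map, so $\ker\sigma$ is a proper ideal of the field $B(u)$ and therefore $\ker\sigma = \{0\}$; thus $\sigma$ is injective. Moreover $\sigma$ is $B$-linear: additivity is built into being a ring homomorphism, and for $c\in B$ and $a\in B(u)$ one has $\sigma(ca) = \sigma(c)\sigma(a) = c\,\sigma(a)$. An injective $B$-linear map between two $B$-vector spaces of the same finite dimension $n$ is necessarily surjective, so $\sigma$ is bijective and hence a ring isomorphism fixing $B$ pointwise. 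Combining this with Theorem \ref{1.3.4} yields both implications of the corollary.

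I do not expect a serious obstacle here: the argument is a short reduction to Theorem \ref{1.3.4} followed by the standard linear-algebra fact that an injective linear map between equidimensional finite vector spaces is onto. The only place where the algebraicity of $u$ and $v$ is used beyond what Theorem \ref{1.3.4} already supplies is to guarantee that $B[u]$ and $B[v]$ are \emph{finite-dimensional} over $B$ and that the degree hypothesis makes these dimensions \emph{equal}; without finiteness one could not pass from injectivity to surjectivity, so that is the one hypothesis that must be spent rather than inherited.
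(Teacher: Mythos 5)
Your proposal is correct and follows essentially the same route as the paper: invoke Theorem \ref{1.3.4} to identify $T$-morphisms with ring homomorphisms fixing $B$ pointwise, then observe that such a homomorphism is injective because $\langle u\rangle_T = B(u)$ is a field and surjective because the equal degrees force the two extensions to have the same finite $B$-dimension. Your write-up merely makes explicit (via $B$-linearity and the equidimensional injective-implies-surjective fact) the surjectivity step that the paper states tersely.
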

\begin{proof}
    By Proposition \ref{1.3.4}, $\sigma $ is a ring homomorphism with $B$ fixed pointwisely if and only if it is a $T$-morphism. So it suffices to show that the ring homomorphism $\sigma $ is bijective.

    Since $u$ is algebraic over $B$, ${{\left\langle u \right\rangle }_{T}}\,(=B[u]=B(u))$ is a field. It follows that the ring homomorphism $\sigma $ must be injective. Moreover, since $\deg (\min (B,u))=\deg (\min (B,v))$, $\sigma $ must be surjective.
\end{proof}
Recall that every finite field extension is algebraic and every finite separable field extension is simple. So by Corollary \ref{1.3.5}, we immediately have the following, which is for later use.
\begin{corollary}
\label{1.3.6}
    Let $F/B$ be a finite separable field extension. Let $T$ be the operator semigroup defined in Example \ref{T on field--1}. Then $F$ is a $T$-space generated by a single element of $F$. Moreover, a map $\sigma :F\to F$ is a field automorphism with $B$ fixed pointwisely if and only if it is a $T$-morphism.
\end{corollary}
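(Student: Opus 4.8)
The plan is to reduce everything to Corollary \ref{1.3.5} by invoking the primitive element theorem.

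First, since $F/B$ is finite and separable, the primitive element theorem furnishes an element $\alpha\in F$ with $F=B(\alpha)$. Because $F/B$ is in particular algebraic, Example \ref{$T$-space for field--1} tells us that ${{\left\langle \alpha \right\rangle }_{T}}=B[\alpha]=B(\alpha)=F$; hence $F$ is a $T$-space generated by the single element $\alpha$, which settles the first assertion.

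For the second assertion, I would apply Corollary \ref{1.3.5} with $u=v=\alpha$. The degree hypothesis $\deg(\min(B,u))=\deg(\min(B,v))$ is then vacuously satisfied, and ${{\left\langle u \right\rangle }_{T}}={{\left\langle v \right\rangle }_{T}}=F$, so the corollary says: a map $\sigma:F\to F$ is a ring isomorphism fixing $B$ pointwise if and only if it is a $T$-morphism. It remains only to note that, for a field $F$, a ring isomorphism $F\to F$ is precisely a field automorphism of $F$ — a bijective ring homomorphism between fields automatically preserves multiplicative inverses — so ``ring isomorphism with $B$ fixed pointwisely'' and ``field automorphism with $B$ fixed pointwisely'' mean the same thing here. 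Combining these, $\sigma$ is a field automorphism of $F$ with $B$ fixed pointwise if and only if it is a $T$-morphism.

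I do not anticipate any genuine obstacle: the substantive content has already been packaged into Theorem \ref{1.3.4} and Corollary \ref{1.3.5}, and the remaining input is the classical primitive element theorem. The only points requiring a word of care are the entirely routine identification of ring isomorphisms of a field with its field automorphisms, and the bookkeeping that taking $u=v$ renders the degree hypothesis of Corollary \ref{1.3.5} trivial.
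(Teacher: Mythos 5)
Your proposal is correct and follows essentially the same route as the paper, which derives the corollary immediately from Corollary \ref{1.3.5} using that finite extensions are algebraic and finite separable extensions are simple (the primitive element theorem); taking $u=v=\alpha$ and identifying ring isomorphisms of a field with field automorphisms is exactly the intended reduction.
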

Note that every ring homomorphism discussed above involves only one polynomial ring ($B[x]$ in Example \ref{T on field--1}). We will treat ring homomorphisms which involve two (distinct) polynomial rings in Subsection \ref{I $theta$-morphisms}. Moreover, ring homomorphisms involving polynomial rings in more than one variable will be treated in Sections \ref{Basic notions for multivariable} and \ref{Basic notions for partial}.

	The following may be a little surprising.
\begin{proposition}
\label{1.3.7}
      Let $X$ be a topological space, let $\mathcal{P}(X)$ be the power set of $X$, and let $T=\{\operatorname{Id}$, $\operatorname{Cl}:\mathcal{P}(X)\to \mathcal{P}(X)$ given by $A\mapsto \overline{A}\}$. 
Let a map $p:X\to X$ induce a map ${p}^*:\mathcal{P}(X)\to \mathcal{P}(X)$ as follows. 

$\forall A\in \mathcal{P}(X)$ that is closed in $X$, let ${p}^*(A)=\overline{p(A)}$, where $p(A)$ $($with a slight abuse of notation$)$ denotes the set $\{p(x)\,|\,x\in A\}$. And $\forall A\in \mathcal{P}(X)$ which is not closed in $X$, let ${p}^*(A)=p(A)$. 

Then $p$ is continuous if and only if ${p}^*$ is a $T$-morphism.
\end{proposition}
\begin{remark}
    If $A=\emptyset$, then $p(A)=\{p(x)\,|\,x\in A\}=\emptyset$, and hence ${p}^*(\emptyset )=\overline{p(\emptyset )}=\emptyset$. 
\end{remark}
\begin{proof}
As in Examples \ref{T on topo--1} and \ref{$T$-space for topo--1}, $T$ is an operator semigroup on $\mathcal{P}(X)$ and ${{\left\langle \mathcal{P}(X) \right\rangle }_{T}}=\mathcal{P}(X)$ is a $T$-space.

1.\emph{ Necessity }

Suppose that $p$ is continuous. To prove that ${p}^*$ is a $T$-morphism, it suffices to show that ${p}^*(f(A))=f({p}^*(A))$, $\forall A\in \mathcal{P}(X)$ and $f\in T$. The equation holds when $f=\operatorname{Id}$. So we only need to show that ${p}^*(\operatorname{Cl}(A))=\operatorname{Cl}({p}^*(A))$, $\forall A\in \mathcal{P}(X)$.

If $A=\overline{A}$, by the definitions of ${p}^*$ and $\operatorname{Cl}$, 
\[\operatorname{Cl}({p}^*(A))=\operatorname{Cl}(\overline{p(A)})=\overline{p(A)}={p}^*(A)={p}^*(\operatorname{Cl}(A)),\]
as desired.

Since $p$ is continuous, $p(\overline{A})\subseteq \overline{p(A)}$ (see e.g. \cite{6}). Hence $\overline{p(\overline{A})}\subseteq \overline{p(A)}$. On the other hand, $\overline{p(\overline{A})}\supseteq \overline{p(A)}$ because $p(\overline{A})\supseteq p(A)$. So $\overline{p(\overline{A})}=\overline{p(A)}$. Thus if $A\ne \overline{A}$, then 
\[\operatorname{Cl}({p}^*(A))=\operatorname{Cl}(p(A))=\overline{p(A)}=\overline{p(\overline{A})}={p}^*(\overline{A})={p}^*(\operatorname{Cl}(A)),\]
as desired.

2.\emph{ Sufficiency }

Suppose that ${p}^*$ is a $T$-morphism. To prove that $p$ is continuous, we only need to show that ${{p}^{-1}}(A)=\overline{{{p}^{-1}}(A)}$, $\forall A\in \mathcal{P}(X)$ with $A=\overline{A}$.

Assume $\exists A\in \mathcal{P}(X)$ such that $A=\overline{A}$ and ${{p}^{-1}}(A)\ne \overline{{{p}^{-1}}(A)}$. Let $B={{p}^{-1}}(A)$. Then $B\ne \overline{B}$ and $p(\overline{B})\nsubseteq A$ (because otherwise $B=\overline{B}$). Hence
\[{{p}^*}(\operatorname{Cl}(B))={{p}^*}(\overline{B})=\overline{p(\overline{B})}\supseteq p(\overline{B})\nsubseteq A,\]
but
\[\operatorname{Cl}({p}^*(B))=\operatorname{Cl}(p(B))=\overline{p(B)}\subseteq \overline{A}=A\] because $p(B)\subseteq A.$
Therefore, $\operatorname{Cl}({p}^*(B))\ne {p}^*(\operatorname{Cl}(B))$, which is contrary to the assumption that ${p}^*$ is a $T$-morphism.	
\end{proof}
The map $p$ in Proposition \ref{1.3.7} is from a topological space to itself. We will treat continuous functions between different topological spaces in Subsection \ref{I $theta$-morphisms}.\\

Several properties of $T$-morphisms are as follows. We shall employ them later.
\begin{proposition} \label{1.3.8}
    Let $\sigma $ be a $T$-morphism from ${{S}_{1}}$ to ${{S}_{2}}$. Then $ \operatorname{Im} \sigma  \le _q{S_2}$, i.e. $\operatorname{Im} \sigma$ is a quasi-$T$-subspace of $S_2$ (Definition \ref{quasi-T-space}). Moreover, if $\operatorname{Id}\in T$ or more generally, $\operatorname{Im}\sigma \subseteq {{\left\langle \operatorname{Im}\sigma  \right\rangle }_{T}}$, then $\operatorname{Im}\sigma \le {{S}_{2}}$, i.e. $\operatorname{Im}\sigma$ is a $T$-subspace of $S_2$ (Definition \ref{$T$-space}).
\end{proposition}
 \begin{proof}
     By Definition \ref{1.3.1}, $\forall a\in {{S}_{1}}$ and $f\in T$, $f(\sigma (a))=\sigma (f(a))\in \operatorname{Im}\sigma $, and thus ${{\left\langle \sigma (a) \right\rangle }_{T}}\subseteq \operatorname{Im}\sigma $. Hence ${{\left\langle \operatorname{Im}\sigma  \right\rangle }_{T}}\subseteq \operatorname{Im}\sigma $, and so by Definition \ref{quasi-T-space}, $\operatorname{Im}\sigma $ is a quasi-$T$-space. 
     
Moreover, if $\operatorname{Id}\in T$ or $\operatorname{Im}\sigma \subseteq {{\left\langle \operatorname{Im}\sigma  \right\rangle }_{T}}$, then by Proposition \ref{1.2.9}, $\operatorname{Im}\sigma $ is a $T$-space, and thus $\operatorname{Im}\sigma \le {{S}_{2}}$.
 \end{proof}
\begin{definition} \label{1.3.2}
    Let $S$ be a $T$-space. We call a $T$-morphism from $S$ to $S$ a \emph{$T$-endomorphism} of $S$, and we denote by End$_{T}(S)$ the collection of all $T$-endomorphisms of $S$.
\end{definition}
\begin{proposition}
\label{1.3.10}
    Let $S$ be a $T$-space. Then $\operatorname{End}_{T}(S)$ constitutes a monoid, which we still denote by $\operatorname{End}_{T}(S)$, with composition of functions as the binary operation.
\end{proposition}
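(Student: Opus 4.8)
The plan is to verify the three monoid axioms for the pair $(\operatorname{End}_T(S),\circ)$: closure under composition of functions, associativity, and the existence of a two-sided identity element. Associativity is automatic, since composition of functions is always associative and every element of $\operatorname{End}_T(S)$ is in particular a function $S\to S$; so the only genuine content lies in closure and in producing the identity.

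For closure, I would take arbitrary $\sigma,\rho\in\operatorname{End}_T(S)$ and show that $\sigma\circ\rho$ is again a $T$-endomorphism of $S$. First note that $\sigma\circ\rho$ is a well-defined map from $S$ to $S$: by Definition \ref{1.3.2} a $T$-endomorphism of $S$ maps into $S$ (equivalently, $\operatorname{Im}\rho\le_q S$ by Proposition \ref{1.3.8}), so $\rho(a)\in S$ for every $a\in S$, and then $\sigma$ may be applied to it. To check the commutation condition of Definition \ref{1.3.1}, fix $a\in S$ and $f\in T$; note $f(a)\in S$ by Proposition \ref{<S> contained in S}, so all of the following expressions are defined, and
\[
(\sigma\circ\rho)(f(a))=\sigma(\rho(f(a)))=\sigma(f(\rho(a)))=f(\sigma(\rho(a)))=f((\sigma\circ\rho)(a)),
\]
where the second equality uses that $\rho$ is a $T$-morphism and the third uses that $\sigma$ is a $T$-morphism evaluated at the point $\rho(a)\in S$. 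Hence $\sigma\circ\rho\in\operatorname{End}_T(S)$.

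For the identity element, I would observe that the identity function $\operatorname{Id}_S\colon S\to S$ is a $T$-endomorphism: it is a map from the $T$-space $S$ to the $T$-space $S$, and for all $a\in S$ and $f\in T$ we have $\operatorname{Id}_S(f(a))=f(a)=f(\operatorname{Id}_S(a))$, so Definition \ref{1.3.1} holds trivially. (This is valid whether or not $\operatorname{Id}\in T$; here $\operatorname{Id}_S$ is the identity function on the underlying set $S$, not an operator in $T$.) Since $\operatorname{Id}_S\circ\sigma=\sigma=\sigma\circ\operatorname{Id}_S$ for every $\sigma\in\operatorname{End}_T(S)$, the element $\operatorname{Id}_S$ is a two-sided identity, and therefore $(\operatorname{End}_T(S),\circ)$ is a monoid.

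There is no real obstacle here; the statement is essentially a bookkeeping exercise. The one point worth flagging is ensuring that every composite expression that appears is actually defined, which is exactly what Proposition \ref{<S> contained in S} (giving $f(a)\in S$ for $a\in S$, $f\in T$) together with the definition of a $T$-endomorphism (mapping $S$ into $S$) provides.
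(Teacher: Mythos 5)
Your proof is correct and follows essentially the same route as the paper: note that the identity map on $S$ is a $T$-endomorphism, verify closure via the chain $(\sigma\circ\rho)(f(a))=\sigma(f(\rho(a)))=f(\sigma(\rho(a)))$, and invoke associativity of composition. The extra care you take about well-definedness (using Proposition \ref{<S> contained in S}) is a fine addition but not a different argument.
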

\begin{remark}
    We regard the empty function $\emptyset \to \emptyset $ as the identity function Id$_{\emptyset }$. So $\operatorname{End}_{T}(\emptyset)=\{\text{Id}_{\emptyset}\}$.
\end{remark}
\begin{proof}
    The identity map on $S$ lies in $\operatorname{End}_{T}(S)$, and so we only need to show that $\operatorname{End}_{T}(S)$ is a semigroup with composition of functions as the binary operation.
    
Let ${{\sigma }_{1}},{{\sigma }_{2}}\in \operatorname{End}_{T}(S)$, let $f\in T$ and let $a\in S$. Then 
\[{{\sigma }_{1}}\circ {{\sigma }_{2}}(f(a))={{\sigma }_{1}}(f({{\sigma }_{2}}(a)))=f({{\sigma }_{1}}({{\sigma }_{2}}(a))).\]
So ${{\sigma }_{1}}\circ {{\sigma }_{2}}$ commutes with $f$ on $S$, and hence ${{\sigma }_{1}}\circ {{\sigma }_{2}}\in \operatorname{End}_{T}(S)$. 

Composition of functions is associative. Therefore, $\operatorname{End}_{T}(S)$ is a monoid with composition of functions as the binary operation.
\end{proof}

Then the following is obvious.
\begin{proposition}
    \label{1.3.12}
    Let $S$ be a $T$-space. Then the intersection of any family of submonoids of $\operatorname{End}_{T}(S)$ is again a submonoid of $\operatorname{End}_{T}(S)$.
\end{proposition}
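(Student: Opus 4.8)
Proposition 1.3.12 asserts that the intersection of any family of submonoids of $\operatorname{End}_T(S)$ is again a submonoid of $\operatorname{End}_T(S)$.

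The plan is to argue this directly from the definition of a submonoid, which requires only two verifications: that the intersection contains the identity element, and that it is closed under the binary operation (composition of functions). By Proposition \ref{1.3.10}, $\operatorname{End}_T(S)$ is a monoid whose identity is the identity map $\operatorname{Id}_S$ on $S$, so every submonoid in the family contains $\operatorname{Id}_S$; hence so does their intersection, which is therefore nonempty and contains the identity. For closure, I would take two elements $\sigma_1,\sigma_2$ lying in the intersection $\bigcap_{i\in I} M_i$ of a family $\{M_i \mid i\in I\}$ of submonoids; then for each $i\in I$ both $\sigma_1$ and $\sigma_2$ lie in $M_i$, and since $M_i$ is a submonoid it is closed under composition, so $\sigma_1\circ\sigma_2 \in M_i$. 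As this holds for every $i\in I$, we get $\sigma_1\circ\sigma_2 \in \bigcap_{i\in I} M_i$. Associativity of composition is inherited from $\operatorname{End}_T(S)$ (indeed from composition of functions generally), so no separate check is needed.

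There is essentially no obstacle here: the statement is the standard fact that an arbitrary intersection of sub-(algebraic structures closed under finitary operations) is again such a substructure, specialized to submonoids, and the proof is a routine two-line verification. The only minor point worth a remark is the degenerate case $I = \emptyset$, where the intersection is conventionally taken to be all of $\operatorname{End}_T(S)$, which is trivially a submonoid of itself; one might also note the edge case $S = \emptyset$, for which $\operatorname{End}_T(\emptyset) = \{\operatorname{Id}_\emptyset\}$ as recorded in the remark after Proposition \ref{1.3.10}, and there the only submonoid is the whole thing, so the claim is immediate. I would simply state that the proof is analogous to the corresponding elementary fact for monoids and dispatch it in a couple of sentences, as the author appears to intend by calling the proposition "obvious."
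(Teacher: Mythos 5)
Your proof is correct and matches what the paper intends: the paper gives no proof at all (it labels the proposition "obvious"), and your routine verification — identity lies in every submonoid hence in the intersection, and closure under composition passes to the intersection — is exactly the standard argument being taken for granted. No gaps; the remarks on the degenerate cases are fine but unnecessary.
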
 

\subsection{$T$-isomorphisms} \label{2 $T$-isomorphisms}
\begin{definition} \label{$T$-isomorphisms}
    Let $S$, ${{S}_{1}}$ and ${{S}_{2}}$ be $T$-spaces. 
    
    Let $\sigma$ be a $T$-morphism from ${{S}_{1}}$ to ${{S}_{2}}$. If $\sigma$ is bijective, then we call $\sigma$ a \emph{$T$-isomorphism} from ${{S}_{1}}$ to ${{S}_{2}}$. We denote by Iso$_{T}({{S}_{1}},{{S}_{2}})$ the family of all $T$-isomorphisms from ${{S}_{1}}$ to ${{S}_{2}}$. 
    
    Moreover, if $\sigma$ is a $T$-isomorphism from $S$ to itself, then we call $\sigma$ a \emph{$T$-automorphism} of $S$. We write Aut$_{T}(S)$ for the collection of all $T$-automorphisms of $S$.
\end{definition}
To justify Definition \ref{$T$-isomorphisms}, we have
\begin{proposition} \label{1.3.a}
    Let ${{S}_{1}}$ and ${{S}_{2}}$ be $T$-spaces, let $\sigma \in \operatorname{Iso}_{T}({{S}_{1}},{{S}_{2}})$ and let ${{\sigma }^{-1}}$ be the inverse map of $\sigma $. Then ${{\sigma }^{-1}}\in \operatorname{Iso}_{T}({{S}_{2}},{{S}_{1}})$.
\end{proposition}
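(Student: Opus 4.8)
The plan is to verify that $\sigma^{-1}$ is both a bijection and a $T$-morphism, so that it qualifies as an element of $\operatorname{Iso}_{T}(S_2, S_1)$ by Definition \ref{$T$-isomorphisms}. The bijectivity is immediate: the inverse of a bijection is a bijection, and its domain is $S_2$ and codomain is $S_1$, both of which are $T$-spaces by hypothesis. So the only real content is to check the commutation condition of Definition \ref{1.3.1} for $\sigma^{-1}$.

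First I would fix $f \in T$ and $b \in S_2$, and aim to show $\sigma^{-1}(f(b)) = f(\sigma^{-1}(b))$. Since $\sigma$ is surjective, write $b = \sigma(a)$ for some $a \in S_1$; equivalently $a = \sigma^{-1}(b)$. Because $\sigma$ is a $T$-morphism, $\sigma(f(a)) = f(\sigma(a)) = f(b)$. By Proposition \ref{<S> contained in S}, $f(a) \in S_1$, so applying $\sigma^{-1}$ to both sides of $\sigma(f(a)) = f(b)$ gives $f(a) = \sigma^{-1}(f(b))$. Substituting $a = \sigma^{-1}(b)$ yields $f(\sigma^{-1}(b)) = \sigma^{-1}(f(b))$, which is exactly the required identity. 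Note one should also remark that $f(b) \in S_2$ (again by Proposition \ref{<S> contained in S}), so $\sigma^{-1}(f(b))$ is well-defined.

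There is essentially no obstacle here; the argument is a one-line diagram chase once surjectivity and injectivity of $\sigma$ are invoked in the right places. The only point demanding a little care is keeping track of which ambient $T$-space each element lives in — that $f(a) \in S_1$ and $f(b) \in S_2$ so that the expressions $\sigma(f(a))$ and $\sigma^{-1}(f(b))$ make sense — and this is handled uniformly by Proposition \ref{<S> contained in S}. I would also explicitly invoke injectivity of $\sigma$ to pass from $\sigma(f(a)) = f(b) = \sigma(f(\sigma^{-1}(b)))$ to $f(a) = f(\sigma^{-1}(b))$ if I chose to present the computation symmetrically, though as written above the cancellation comes for free from applying $\sigma^{-1}$. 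The proof concludes by citing Definition \ref{$T$-isomorphisms}: $\sigma^{-1}$ is a bijective $T$-morphism from $S_2$ to $S_1$, hence lies in $\operatorname{Iso}_{T}(S_2, S_1)$.
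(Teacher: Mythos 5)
Your proof is correct and follows essentially the same route as the paper's: rewrite the commutation identity $\sigma(f(a))=f(\sigma(a))$ using $\sigma^{-1}$ and surjectivity of $\sigma$ to obtain $f(\sigma^{-1}(b))=\sigma^{-1}(f(b))$ for all $b\in S_2$, then conclude via Definition \ref{$T$-isomorphisms}. Your added remarks on well-definedness via Proposition \ref{<S> contained in S} are fine but not a substantive departure.
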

\begin{proof}
  By Definition \ref{1.3.1}, $\forall f\in T$ and $a\in {{S}_{1}}$, $\sigma (f(a))=f(\sigma (a))$, and so $f(a)={{\sigma }^{-1}}(f(\sigma (a)))$. Then $\forall f\in T$ and $b\in {{S}_{2}}$, $f({{\sigma }^{-1}}(b))={{\sigma }^{-1}}(f(b))$. Hence ${\sigma }^{-1}$ is a $T$-morphism from $S_2$ to $S_1$, and so ${{\sigma }^{-1}}\in \operatorname{Iso}_{T}({{S}_{2}},{{S}_{1}})$.
\end{proof}
\begin{proposition}
    \label{1.3.11}
    Let $S$ be a $T$-space. Then $\operatorname{Aut}_{T}(S)$ constitutes a group, which we still denote by $\operatorname{Aut}_{T}(S)$, with composition of functions as the binary operation.
\end{proposition}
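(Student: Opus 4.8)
The plan is to verify the three group axioms for $\operatorname{Aut}_T(S)$ under composition, leaning heavily on Proposition \ref{1.3.10} and Proposition \ref{1.3.a}, which have already done most of the work. First I would observe that $\operatorname{Aut}_T(S) \subseteq \operatorname{End}_T(S)$, and since $\operatorname{End}_T(S)$ is a monoid under composition (Proposition \ref{1.3.10}), composition of functions is already known to be associative on $\operatorname{End}_T(S)$, hence on $\operatorname{Aut}_T(S)$. So associativity is free.

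Next I would check closure: if $\sigma_1, \sigma_2 \in \operatorname{Aut}_T(S)$, then $\sigma_1 \circ \sigma_2$ is a $T$-endomorphism of $S$ by Proposition \ref{1.3.10}, and it is a bijection from $S$ to $S$ as a composite of bijections; hence $\sigma_1 \circ \sigma_2 \in \operatorname{Aut}_T(S)$ by Definition \ref{$T$-isomorphisms}. For the identity element, the identity map $\operatorname{Id}_S$ is a $T$-morphism (it trivially commutes with every $f \in T$) and is bijective, so $\operatorname{Id}_S \in \operatorname{Aut}_T(S)$, and it is clearly a two-sided identity for composition. Finally, for inverses: given $\sigma \in \operatorname{Aut}_T(S) = \operatorname{Iso}_T(S,S)$, the inverse map $\sigma^{-1}$ exists as a set-theoretic bijection, and Proposition \ref{1.3.a} (applied with $S_1 = S_2 = S$) tells us $\sigma^{-1} \in \operatorname{Iso}_T(S,S) = \operatorname{Aut}_T(S)$; and $\sigma \circ \sigma^{-1} = \sigma^{-1} \circ \sigma = \operatorname{Id}_S$. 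This completes the verification.

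I do not expect any real obstacle here — the proposition is essentially a bookkeeping corollary of the two preceding propositions, much as Proposition \ref{1.3.10} itself only needed to add the closure check to the already-available associativity. The one small point worth stating explicitly is that a composite of bijections is a bijection and that the inverse of a bijection is a bijection, which is why closure and the existence of inverses inside $\operatorname{Aut}_T(S)$ reduce cleanly to Propositions \ref{1.3.10} and \ref{1.3.a}. The edge case $S = \emptyset$ is handled by the convention that the empty function is $\operatorname{Id}_\emptyset$, so $\operatorname{Aut}_T(\emptyset) = \{\operatorname{Id}_\emptyset\}$ is the trivial group.
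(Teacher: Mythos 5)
Your proof is correct and follows essentially the same route as the paper: closure via Proposition \ref{1.3.10} plus bijectivity of composites, inverses via Proposition \ref{1.3.a}, and associativity together with the identity map handled directly. No gaps.
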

\begin{proof}
    Let $\sigma \in \operatorname{Aut}_{T}(S)$ and let ${{\sigma }^{-1}}$ be the inverse map of $\sigma$. By Proposition \ref{1.3.a}, ${{\sigma }^{-1}}\in \operatorname{Aut}_{T}(S)$.
   
  Let ${{\sigma }_{1}},{{\sigma }_{2}}\in \operatorname{Aut}_{T}(S)$. Then by Proposition \ref{1.3.10}, ${{\sigma }_{1}}\circ {{\sigma }_{2}}\in \operatorname{End}_{T}(S)$. Hence ${{\sigma }_{1}}\circ {{\sigma }_{2}}\in \operatorname{Aut}_{T}(S)$ because both ${{\sigma }_{1}}$ and ${{\sigma }_{2}}$ are bijective. 
  
  Moreover, composition of functions is associative and the identity map on $S$ lies in $\operatorname{Aut}_{T}(S)$. Therefore, $\operatorname{Aut}_{T}(S)$ is a group with composition of functions as the binary operation.
\end{proof}
Then the following, which is for later use, is obvious.
\begin{proposition} \label{intersection of Aut}
    Let $S$ be a $T$-space. Then the intersection of any family of subgroups of $\operatorname{Aut}_{T}(S)$ is again a subgroup of $\operatorname{Aut}_{T}(S)$.
\end{proposition}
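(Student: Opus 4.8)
The plan is to reduce the statement to the classical fact that an arbitrary intersection of subgroups of a group is again a subgroup, once we know that $\operatorname{Aut}_{T}(S)$ is a group. That latter fact is exactly Proposition \ref{1.3.11}, so the real work is to carry out the standard subgroup-intersection argument in this setting, taking a little care with the degenerate cases.

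Concretely, I would let $\{G_i \mid i \in I\}$ be a family of subgroups of $\operatorname{Aut}_{T}(S)$ and set $G = \bigcap_{i \in I} G_i$. First I would observe that $G$ is nonempty: the identity map $\operatorname{Id}_S$ lies in every subgroup $G_i$ (each is a group containing the identity of $\operatorname{Aut}_{T}(S)$, which by Proposition \ref{1.3.11} is $\operatorname{Id}_S$), hence $\operatorname{Id}_S \in G$. (If $I = \emptyset$, then by convention $G = \operatorname{Aut}_{T}(S)$, which is trivially a subgroup of itself.) Next, for closure: if $\sigma, \sigma' \in G$, then $\sigma, \sigma' \in G_i$ for each $i$, so $\sigma \circ \sigma' \in G_i$ for each $i$ since $G_i$ is closed under the composition operation; hence $\sigma \circ \sigma' \in G$. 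Finally, for inverses: if $\sigma \in G$, then $\sigma \in G_i$ for every $i$, so the inverse map $\sigma^{-1}$ (which by Proposition \ref{1.3.a} is again a $T$-automorphism of $S$, i.e.\ an element of $\operatorname{Aut}_{T}(S)$) lies in each $G_i$ because $G_i$ is a subgroup; hence $\sigma^{-1} \in G$. Since composition of functions is associative on all of $\operatorname{Aut}_{T}(S)$, it is associative on $G$, and therefore $G$ is a subgroup of $\operatorname{Aut}_{T}(S)$.

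There is essentially no obstacle here: the argument is the routine subgroup test applied membership-by-membership across the family. The only points needing a moment's attention are the bookkeeping about what the identity of $\operatorname{Aut}_{T}(S)$ is (handled by Proposition \ref{1.3.11}), the fact that inverses of $T$-automorphisms are $T$-automorphisms (handled by Proposition \ref{1.3.a}), and the vacuous case $I = \emptyset$. Given these, the proposition is immediate, which is presumably why the paper labels it "obvious."
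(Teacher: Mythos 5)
Your argument is correct and is precisely the routine subgroup-intersection verification that the paper treats as obvious (stated immediately after Proposition \ref{1.3.11}, with no written proof). Nothing is missing; your handling of the identity, inverses via Proposition \ref{1.3.a}, and the empty-family convention matches the intended reasoning.
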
 

\subsection{Galois $T$-extensions} \label{Galois $T$-extensions}
The remaining part of Section \ref{Basic notions and properties} will be used later to study e.g. Galois correspondences. The concept of Galois extensions in the classical Galois theory is generalized as follows.
\begin{definition}
\label{1.4.1}
    Let $S$ be a $T$-space and let $H\subseteq \operatorname{End}_{T}(S)$. Let
\[{{S}^{H}}=\{a\in S\,|\,\forall \sigma \in H,\sigma (a)=a\}.\] 
We call $S$ a \emph{Galois $T$-extension} of ${{S}^{H}}$, and we say that ${{S}^{H}}$ is \emph{fixed $($pointwisely$)$ under the action of} $H$ on $S$. 

More generally, let $K\subseteq S$. Then we denote by ${{K}^{H}}$ the set $\{a\in K\,|\,\forall \sigma \in H,\sigma (a)=a\}$. 

In particular, if $H=\{\sigma \}$, then we use ${{K}^{\sigma }}$ instead of ${{K}^{\{\sigma \}}}$ for brevity.
\end{definition}
\begin{remark}
    ${{K}^{\emptyset }}=K,\forall K\subseteq S$.
\end{remark}
\begin{proposition}
\label{1.4.2}
     Let $S$ be a $T$-space and let $H$ be a subset of $\operatorname{End}_{T}(S)$. Then ${S^H} \le _qS$. Moreover, if $\operatorname{Id}\in T$ or ${{S}^{H}}\subseteq {{\left\langle {{S}^{H}} \right\rangle }_{T}}$, then ${{S}^{H}}\le S$.
\end{proposition}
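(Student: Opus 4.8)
The plan is to show that $S^H$ is a quasi-$T$-space directly from the definitions, and then invoke Proposition \ref{1.2.9} to upgrade this to ``$T$-space'' under the stated hypothesis. For the first part, by Definition \ref{quasi-T-space} it suffices to prove ${{\left\langle {{S}^{H}} \right\rangle }_{T}}\subseteq {{S}^{H}}$. So I would take an arbitrary element of ${{\left\langle {{S}^{H}} \right\rangle }_{T}}$, which by Definition \ref{$T$-space} has the form $f(a)$ for some $f\in T$ and $a\in S^{H}$, and check that $f(a)\in S^{H}$, i.e. that every $\sigma\in H$ fixes $f(a)$.

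The key computation is the following: fix $\sigma\in H$. Since $\sigma\in H\subseteq\operatorname{End}_{T}(S)$, $\sigma$ is a $T$-morphism, so $\sigma(f(a))=f(\sigma(a))$ by Definition \ref{1.3.1}. Since $a\in S^{H}$, we have $\sigma(a)=a$ by Definition \ref{1.4.1}, hence $\sigma(f(a))=f(a)$. As $\sigma\in H$ was arbitrary, $f(a)\in S^{H}$ by Definition \ref{1.4.1}. (One also needs $f(a)\in S$ for this to make sense, which holds by Proposition \ref{<S> contained in S} since $a\in S^{H}\subseteq S$.) This establishes ${{\left\langle {{S}^{H}} \right\rangle }_{T}}\subseteq {{S}^{H}}$, so $S^{H}$ is a quasi-$T$-space, i.e. ${S^H} \le_q S$ — noting that $S$ itself is a quasi-$T$-space by Proposition \ref{$T$-space is a quasi-$T$-space}, and $S^{H}\subseteq S$.

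For the ``moreover'' clause: if $\operatorname{Id}\in T$, then by the Remark following Proposition \ref{1.2.9} we have $S^{H}\subseteq {{\left\langle {{S}^{H}} \right\rangle }_{T}}$; more generally this inclusion is assumed outright. In either case, combining it with the inclusion ${{\left\langle {{S}^{H}} \right\rangle }_{T}}\subseteq {{S}^{H}}$ just proved, Proposition \ref{1.2.9} gives that $S^{H}$ is a $T$-space, and since $S^{H}\subseteq S$ with both $T$-spaces, $S^{H}\le S$ by Definition \ref{$T$-space}.

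I do not anticipate a genuine obstacle here: the proof is a short unwinding of definitions, and the only thing to be careful about is that the commutation identity $\sigma(f(a))=f(\sigma(a))$ is exactly what being a $T$-endomorphism buys us, and that $f(a)$ genuinely lies in $S$ (so that applying $\sigma$ to it is legitimate), which is precisely Proposition \ref{<S> contained in S}. The structure mirrors the earlier proofs of Propositions \ref{1.3.8} and \ref{1.2.14}, so it should be stated tersely.
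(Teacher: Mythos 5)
Your proposal is correct and follows essentially the same route as the paper's proof: showing ${\left\langle S^{H} \right\rangle}_{T}\subseteq S^{H}$ via the commutation identity $\sigma(f(a))=f(\sigma(a))=f(a)$, and then invoking Proposition \ref{1.2.9} for the ``moreover'' clause. Your extra remarks (that $f(a)\in S$ by Proposition \ref{<S> contained in S} and that $S$ itself is a quasi-$T$-space) are harmless refinements of details the paper leaves implicit.
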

\begin{proof}
    Let $a\in {{S}^{H}}$ and $f\in T$. Then $\forall \sigma \in H$, $\sigma (f(a))=f(\sigma (a))=f(a)$ (by Definition \ref{1.4.1}), and so $f(a)\in {{S}^{H}}$. Thus ${{\left\langle a \right\rangle }_{T}}\subseteq {{S}^{H}}$, and so ${{\left\langle {{S}^{H}} \right\rangle }_{T}}\subseteq {{S}^{H}}$. Hence ${{S}^{H}}$ is a quasi-$T$-space by Definition \ref{quasi-T-space}, and thus ${S^H} \le _qS$. 

Moreover, if $\operatorname{Id}\in T$ or ${{S}^{H}}\subseteq {{\left\langle {{S}^{H}} \right\rangle }_{T}}$, then by Proposition \ref{1.2.9}, ${{S}^{H}}$ is a $T$-space, and hence ${{S}^{H}}\le S$.	
\end{proof}
However, in Proposition \ref{1.4.2}, if ${{S}^{H}}\nsubseteq {{\left\langle {{S}^{H}} \right\rangle }_{T}}$, then it is possible that ${{S}^{H}}$ is not a $T$-space. We shall prove this claim right after Example \ref{5.1.12}.

The following is obvious.
\begin{proposition}
    \label{1.4.3}
    Let $S$ be a $T$-space and let $H\subseteq G\subseteq \operatorname{End}_{T}(S)$. Then ${{S}^{H}}\supseteq {{S}^{G}}$.
\end{proposition}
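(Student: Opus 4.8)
The plan is to unwind the definition of $S^H$ and $S^G$ from Definition \ref{1.4.1} and observe that the defining condition for membership in $S^G$ is logically stronger than that for $S^H$, precisely because $H$ is a subset of $G$. So this is a straightforward set-inclusion argument with no real machinery required.

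Concretely, first I would take an arbitrary element $a \in S^G$. By Definition \ref{1.4.1}, this means $a \in S$ and $\sigma(a) = a$ for every $\sigma \in G$. Next, since $H \subseteq G$ by hypothesis, every $\sigma \in H$ is in particular an element of $G$, and hence $\sigma(a) = a$ for every $\sigma \in H$ as well. Therefore $a$ satisfies the defining condition of $S^H$, i.e. $a \in S^H$. Since $a$ was arbitrary, this shows $S^G \subseteq S^H$, which is exactly the assertion ${{S}^{H}}\supseteq {{S}^{G}}$.

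There is essentially no obstacle here: the statement is a direct consequence of quantifying a fixed condition over a larger versus a smaller index set, and the monotonicity is in the expected (order-reversing) direction. The only thing worth a word of care is the degenerate cases — e.g. $H = \emptyset$, where $S^{\emptyset} = S$ by the Remark following Definition \ref{1.4.1}, so the inclusion $S^G \subseteq S = S^\emptyset$ still holds — but these are covered automatically by the same one-line argument, since the universally quantified statement over $\emptyset$ is vacuously true. Hence I would simply present the membership chase above and conclude.
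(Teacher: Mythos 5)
Your argument is correct: unwinding Definition \ref{1.4.1} and using $H\subseteq G$ to pass the pointwise-fixing condition from $G$ to $H$ is exactly the intended reasoning, which is why the paper states the proposition without proof as obvious. Nothing is missing, and your remark on the degenerate case $H=\emptyset$ is consistent with the remark $S^{\emptyset}=S$ following Definition \ref{1.4.1}.
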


\subsection{Galois $T$-monoids and Galois $T$-groups} \label{Galois $T$-monoids and Galois $T$-groups}
	The concept of Galois group in the classical Galois theory is generalized to two notions as follows.
\begin{definition}
    \label{1.5.1}
    Let $S$ be a $T$-space and let $B\subseteq S$. 
    
    The \emph{Galois $T$-monoid} of $S$ over $B$, denoted by $\operatorname{GMn}_{T}(S/B)$, is the set $\{\sigma \in \operatorname{End}_{T}(S)\,|\,\forall b\in B,\sigma (b)=b\}$. 

    The \emph{Galois $T$-group} of $S$ over $B$, denoted by $\operatorname{GGr}_{T}(S/B)$, is the set $\{\sigma \in \operatorname{Aut}_{T}(S)\,|\,\forall b\in B,\sigma (b)=b\}$.
\end{definition}
\begin{remark}
    Trivially, $\operatorname{GMn}_{T}(\emptyset /\emptyset )=\operatorname{GGr}_{T}(\emptyset /\emptyset )=\{\text{I}{{\text{d}}_{\emptyset }}\}$ because we regard the empty function $\emptyset \to \emptyset $ as the identity function $\text{I}{{\text{d}}_{\emptyset }}$.
\end{remark}
To justify Definition \ref{1.5.1}, the following two results are needed, of which the first one generalizes Proposition \ref{1.3.10}.
\begin{proposition}
    \label{1.5.2}  Let $S$ be a $T$-space and let $B\subseteq S$. Then $\operatorname{GMn}_{T}(S/B)$ is a submonoid of $\operatorname{End}_{T}(S)$ with composition of functions as the binary operation.
\end{proposition}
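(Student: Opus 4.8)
The plan is to verify the three monoid axioms for $\operatorname{GMn}_{T}(S/B)$ as a subset of $\operatorname{End}_{T}(S)$, using Proposition \ref{1.3.10} (which already establishes that $\operatorname{End}_{T}(S)$ is a monoid under composition) as the ambient structure. So it suffices to check that $\operatorname{GMn}_{T}(S/B)$ is closed under composition, that it contains the identity element $\operatorname{Id}_S$ of $\operatorname{End}_T(S)$, and that associativity is inherited for free.

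First I would note $\operatorname{GMn}_{T}(S/B) \subseteq \operatorname{End}_{T}(S)$ directly from Definition \ref{1.5.1}, so every element is already a $T$-endomorphism and composition stays inside $\operatorname{End}_{T}(S)$ by Proposition \ref{1.3.10}. Next, closure under the extra condition: given $\sigma_1, \sigma_2 \in \operatorname{GMn}_{T}(S/B)$ and $b \in B$, we compute $(\sigma_1 \circ \sigma_2)(b) = \sigma_1(\sigma_2(b)) = \sigma_1(b) = b$, so $\sigma_1 \circ \sigma_2$ fixes $B$ pointwise and hence lies in $\operatorname{GMn}_{T}(S/B)$. Then the identity: $\operatorname{Id}_S$ is a $T$-endomorphism (it trivially commutes with every $f \in T$) and fixes every element of $S$, in particular every $b \in B$, so $\operatorname{Id}_S \in \operatorname{GMn}_{T}(S/B)$ and serves as a two-sided identity. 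Finally, associativity of composition holds on all functions, so it holds on this subset. One should also address the degenerate case $S = \emptyset$ (hence $B = \emptyset$), where $\operatorname{GMn}_{T}(\emptyset/\emptyset) = \{\operatorname{Id}_\emptyset\}$ is the trivial monoid, consistent with the remark following Definition \ref{1.5.1}.

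There is essentially no obstacle here — this is a routine submonoid verification, entirely parallel to the proof of Proposition \ref{1.3.10} with the single added bookkeeping that the fixed-point-on-$B$ property is preserved under composition and enjoyed by the identity. If anything, the only point requiring a line of care is making explicit that $\operatorname{GMn}_{T}(S/B)$ is nonempty (it contains $\operatorname{Id}_S$) so that "submonoid" is meaningful, and that the identity of the submonoid coincides with the identity of $\operatorname{End}_{T}(S)$ rather than merely being some idempotent.

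Concretely, the write-up reads: \emph{Since $\operatorname{GMn}_{T}(S/B) \subseteq \operatorname{End}_{T}(S)$ by Definition \ref{1.5.1}, and $\operatorname{End}_{T}(S)$ is a monoid under composition by Proposition \ref{1.3.10}, it suffices to show that $\operatorname{GMn}_{T}(S/B)$ contains $\operatorname{Id}_S$ and is closed under composition. Clearly $\operatorname{Id}_S \in \operatorname{End}_{T}(S)$ and $\operatorname{Id}_S(b) = b$ for all $b \in B$, so $\operatorname{Id}_S \in \operatorname{GMn}_{T}(S/B)$. Now let $\sigma_1, \sigma_2 \in \operatorname{GMn}_{T}(S/B)$. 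By Proposition \ref{1.3.10}, $\sigma_1 \circ \sigma_2 \in \operatorname{End}_{T}(S)$; and for every $b \in B$ we have $(\sigma_1 \circ \sigma_2)(b) = \sigma_1(\sigma_2(b)) = \sigma_1(b) = b$, so $\sigma_1 \circ \sigma_2 \in \operatorname{GMn}_{T}(S/B)$. As composition of functions is associative, $\operatorname{GMn}_{T}(S/B)$ is a submonoid of $\operatorname{End}_{T}(S)$.}
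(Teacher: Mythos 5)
Your proof is correct and follows essentially the same route as the paper's: note that $\operatorname{Id}_S$ lies in $\operatorname{GMn}_{T}(S/B)\subseteq\operatorname{End}_{T}(S)$, invoke Proposition \ref{1.3.10} for the ambient monoid structure, and verify closure via $(\sigma_1\circ\sigma_2)(b)=\sigma_1(\sigma_2(b))=\sigma_1(b)=b$. The extra remarks on associativity and the empty case are harmless additions but not needed beyond what the paper records.
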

\begin{proof}
    Because $\operatorname{Id}\in \operatorname{GMn}_{T}(S/B)\subseteq \operatorname{End}_{T}(S)$, by Proposition \ref{1.3.10}, we only need to show that $\operatorname{GMn}_{T}(S/B)$ is closed under composition of functions.
	
 Let ${{\sigma }_{1}},{{\sigma }_{2}}\in \operatorname{GMn}_{T}(S/B)$. By Definition \ref{1.5.1}, $\forall b\in B$, ${{\sigma }_{1}}({{\sigma }_{2}}(b))={{\sigma }_{1}}(b)=b$. Hence ${{\sigma }_{1}}\circ {{\sigma }_{2}}\in \operatorname{GMn}_{T}(S/B)$ (since ${{\sigma }_{1}}\circ {{\sigma }_{2}}\in \operatorname{End}_{T}(S)$ by Proposition \ref{1.3.10}), as desired.
\end{proof}
	Analogously, Proposition \ref{1.3.11} is generalized as follows, where the proof is omitted.
\begin{proposition}
\label{1.5.3}
   Let $S$ be a $T$-space and let $B\subseteq S$. Then $\operatorname{GGr}_{T}(S/B)$ is a subgroup of $\operatorname{Aut}_{T}(S)$ with composition of functions as the binary operation.
\end{proposition}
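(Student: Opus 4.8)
The plan is to mimic the proof of Proposition \ref{1.5.2} verbatim, since the only difference is that we now work inside $\operatorname{Aut}_T(S)$ rather than $\operatorname{End}_T(S)$, and the relevant ``ambient'' structure Proposition \ref{1.3.11} has already established to be a group. So first I would note that $\operatorname{GGr}_T(S/B) \subseteq \operatorname{Aut}_T(S)$ by Definition \ref{1.5.1}, and that $\operatorname{Id}$ (the identity map on $S$) fixes every element of $B$ and is a $T$-automorphism, hence $\operatorname{Id}\in\operatorname{GGr}_T(S/B)$. Thus it remains only to verify closure under composition and closure under taking inverses; associativity is inherited from $\operatorname{Aut}_T(S)$ (Proposition \ref{1.3.11}).

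Next I would check closure under composition: given $\sigma_1,\sigma_2\in\operatorname{GGr}_T(S/B)$, by Proposition \ref{1.3.10} (or \ref{1.3.11}) the composite $\sigma_1\circ\sigma_2$ lies in $\operatorname{Aut}_T(S)$, and for every $b\in B$ we have $\sigma_1(\sigma_2(b)) = \sigma_1(b) = b$, so $\sigma_1\circ\sigma_2\in\operatorname{GGr}_T(S/B)$. Then I would check closure under inverses: given $\sigma\in\operatorname{GGr}_T(S/B)$, Proposition \ref{1.3.a} (applied with $S_1=S_2=S$) gives $\sigma^{-1}\in\operatorname{Aut}_T(S)$; and since $\sigma(b)=b$ for all $b\in B$, applying $\sigma^{-1}$ yields $\sigma^{-1}(b)=b$ for all $b\in B$, so $\sigma^{-1}\in\operatorname{GGr}_T(S/B)$. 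Combining these three facts, $\operatorname{GGr}_T(S/B)$ is a subgroup of $\operatorname{Aut}_T(S)$.

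There is no real obstacle here: every ingredient — that $\operatorname{Aut}_T(S)$ is a group, that composites of $T$-endomorphisms are $T$-endomorphisms, that inverses of $T$-isomorphisms are $T$-isomorphisms — is already in hand from Propositions \ref{1.3.10}, \ref{1.3.11}, and \ref{1.3.a}, and the ``fixing $B$ pointwise'' condition is manifestly preserved under composition and inversion. The only thing to be mildly careful about is the degenerate case $B=\emptyset$ (and $S=\emptyset$), but the remark following Definition \ref{1.5.1} already records that $\operatorname{GGr}_T(\emptyset/\emptyset)=\{\operatorname{Id}_\emptyset\}$, which is trivially a group, so no separate argument is needed. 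Indeed the authors signal exactly this by writing ``the proof is omitted'' — the statement is a routine analogue of Proposition \ref{1.5.2}.
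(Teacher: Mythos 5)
Your proof is correct and is exactly the routine argument the paper intends (it omits the proof precisely because it is the straightforward analogue of Proposition \ref{1.5.2}, with the extra inverse check handled via Proposition \ref{1.3.a}). Nothing is missing.
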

We will employ the following three propositions later, of which the proofs are obvious.
\begin{proposition}
\label{1.5.4}
Let $S$ be a $T$-space. Then $\forall B\subseteq S, $ $B\subseteq {{S}^{\operatorname{GMn}_{T}(S/B)}}\subseteq {{S}^{\operatorname{GGr}_{T}(S/B)}}$.
\end{proposition}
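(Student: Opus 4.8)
The plan is to verify the two containments separately, each being a direct unwinding of Definitions \ref{1.4.1} and \ref{1.5.1} together with the already-proved antitonicity of the fixed-set operator in Proposition \ref{1.4.3}.

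First I would establish $B\subseteq S^{\operatorname{GMn}_{T}(S/B)}$. Fix an arbitrary $b\in B$. By Definition \ref{1.5.1}, every $\sigma\in\operatorname{GMn}_{T}(S/B)$ satisfies $\sigma(b)=b$; hence $b$ meets the defining condition of $S^{\operatorname{GMn}_{T}(S/B)}$ given in Definition \ref{1.4.1}, so $b\in S^{\operatorname{GMn}_{T}(S/B)}$. Since $b\in B$ was arbitrary, $B\subseteq S^{\operatorname{GMn}_{T}(S/B)}$.

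For the second containment, the key point is that $\operatorname{GGr}_{T}(S/B)\subseteq\operatorname{GMn}_{T}(S/B)$: a $T$-automorphism of $S$ is in particular a $T$-morphism from $S$ to $S$, i.e.\ a $T$-endomorphism of $S$, so $\operatorname{Aut}_{T}(S)\subseteq\operatorname{End}_{T}(S)$; intersecting both sides with the collection of maps fixing $B$ pointwise yields $\operatorname{GGr}_{T}(S/B)\subseteq\operatorname{GMn}_{T}(S/B)$. I would then apply Proposition \ref{1.4.3} with $H=\operatorname{GGr}_{T}(S/B)$ and $G=\operatorname{GMn}_{T}(S/B)$ (both subsets of $\operatorname{End}_{T}(S)$) to conclude $S^{\operatorname{GMn}_{T}(S/B)}\subseteq S^{\operatorname{GGr}_{T}(S/B)}$, which finishes the chain of inclusions.

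There is essentially no obstacle here; the statement is a formality. The only step meriting a word of care is recording explicitly that $\operatorname{Aut}_{T}(S)\subseteq\operatorname{End}_{T}(S)$, so that $\operatorname{GGr}_{T}(S/B)$ is a genuine subset of $\operatorname{GMn}_{T}(S/B)$ and the hypothesis of Proposition \ref{1.4.3} is literally satisfied.
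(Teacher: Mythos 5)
Your proof is correct, and since the paper records this proposition as "obvious" without giving an argument, your unwinding of Definitions \ref{1.4.1} and \ref{1.5.1} together with Proposition \ref{1.4.3} is exactly the intended verification. Nothing further is needed.
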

\begin{proposition}
\label{1.5.5}
Let $S$ be a $T$-space. Then $\forall H\subseteq \operatorname{End}_{T}(S)$, $H\subseteq \operatorname{GMn}_{T}(S/{{S}^{H}})$, and $\forall H\subseteq \operatorname{Aut}_{T}(S)$, $H\subseteq \operatorname{GGr}_{T}(S/{{S}^{H}})$.
\end{proposition}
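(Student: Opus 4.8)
The plan is to prove both inclusions by directly unwinding the definitions of $\operatorname{GMn}_{T}$ and $\operatorname{GGr}_{T}$ (Definition \ref{1.5.1}) and of ${{S}^{H}}$ (Definition \ref{1.4.1}); no auxiliary constructions are needed, and in fact the proof the author omits is a short definition chase in each of the two cases.

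First I would treat the monoid statement. Fix $H\subseteq \operatorname{End}_{T}(S)$ and let $\sigma \in H$. Then $\sigma \in \operatorname{End}_{T}(S)$, since $H\subseteq \operatorname{End}_{T}(S)$. It remains to check that $\sigma$ fixes every element of ${{S}^{H}}$ pointwisely: if $b\in {{S}^{H}}$, then by Definition \ref{1.4.1} we have $\tau(b)=b$ for all $\tau \in H$, and in particular $\sigma(b)=b$ because $\sigma \in H$. Hence $\sigma \in \operatorname{GMn}_{T}(S/{{S}^{H}})$ by Definition \ref{1.5.1}, and since $\sigma$ was arbitrary, $H\subseteq \operatorname{GMn}_{T}(S/{{S}^{H}})$.

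For the group statement, fix $H\subseteq \operatorname{Aut}_{T}(S)$ and $\sigma \in H$. Now $\sigma \in \operatorname{Aut}_{T}(S)$, and the same reasoning as above (again using $\sigma \in H$ together with the definition of ${{S}^{H}}$) shows $\sigma(b)=b$ for all $b\in {{S}^{H}}$; note that ${{S}^{H}}$ here is literally the same subset of $S$ as in the first case, the defining condition not depending on whether the elements of $H$ happen to be automorphisms. Thus $\sigma \in \operatorname{GGr}_{T}(S/{{S}^{H}})$ by Definition \ref{1.5.1}, whence $H\subseteq \operatorname{GGr}_{T}(S/{{S}^{H}})$.

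There is essentially no obstacle here: this is the generalized-Galois analogue of the classical elementary fact that a group of automorphisms is contained in the Galois group of the extension over its own fixed field, and it is a one-line argument in each case. The only thing to be mildly careful about is keeping the two cases (endomorphisms with $\operatorname{GMn}_{T}$, automorphisms with $\operatorname{GGr}_{T}$) separate while observing that the fixed set ${{S}^{H}}$ is computed identically in both. One could even phrase both cases uniformly by first remarking that for any $H\subseteq \operatorname{End}_{T}(S)$ every $\sigma\in H$ fixes ${{S}^{H}}$ pointwisely, and then specializing to $H\subseteq \operatorname{Aut}_{T}(S)$ for the second claim.
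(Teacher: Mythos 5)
Your argument is correct and is exactly the definition chase the paper has in mind: the paper omits the proof, declaring Propositions \ref{1.5.4}--\ref{1.5.6} obvious, and your unwinding of Definitions \ref{1.4.1} and \ref{1.5.1} is the intended justification. Nothing further is needed.
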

\begin{proposition}
\label{1.5.6}
Let $S$ be a $T$-space. Then $\forall B\subseteq K\subseteq S$,
$\operatorname{GMn}_{T}(S/B)\supseteq \operatorname{GMn}_{T}(S/K)$ and $\operatorname{GGr}_{T}(S/B)\supseteq \operatorname{GGr}_{T}(S/K)$.
\end{proposition}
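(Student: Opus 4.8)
The plan is to prove each of the two inclusions directly by unwinding Definition \ref{1.5.1}; both are pure monotonicity statements about the condition ``fixes a subset pointwisely,'' so no structural theory beyond the definitions is needed.

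For the first inclusion, I would start with an arbitrary $\sigma \in \operatorname{GMn}_{T}(S/K)$. By Definition \ref{1.5.1} this means $\sigma \in \operatorname{End}_{T}(S)$ and $\sigma(k) = k$ for every $k \in K$. Since $B \subseteq K$, we have $\sigma(b) = b$ for every $b \in B$ as a special case, and $\sigma$ is still a $T$-endomorphism of $S$, so $\sigma \in \operatorname{GMn}_{T}(S/B)$. As $\sigma$ was arbitrary, this gives $\operatorname{GMn}_{T}(S/K) \subseteq \operatorname{GMn}_{T}(S/B)$, i.e.\ $\operatorname{GMn}_{T}(S/B) \supseteq \operatorname{GMn}_{T}(S/K)$.

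For the second inclusion I would run the identical argument, replacing $\operatorname{End}_{T}(S)$ by $\operatorname{Aut}_{T}(S)$ throughout and using the part of Definition \ref{1.5.1} defining $\operatorname{GGr}_{T}$: any $\sigma \in \operatorname{GGr}_{T}(S/K)$ is a $T$-automorphism fixing $K$ pointwisely, hence fixes the smaller set $B$ pointwisely and lies in $\operatorname{GGr}_{T}(S/B)$. This is essentially the ``opposite-direction'' companion to Proposition \ref{1.4.3} (there the subset is on the side of the acting maps, here it is on the side of the fixed elements), which is why the inclusion reverses direction relative to the containment $B \subseteq K$. There is no genuine obstacle in the proof; the only point worth a sentence of care is that shrinking the pointwise-fixed set only weakens the defining constraint, so membership is preserved, and that the ambient monoid/group membership ($\operatorname{End}_{T}(S)$ or $\operatorname{Aut}_{T}(S)$) is untouched by this change.
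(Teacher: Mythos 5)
Your proof is correct and is exactly the definitional monotonicity argument the paper has in mind — indeed the paper omits the proof entirely, listing Proposition \ref{1.5.6} among results "of which the proofs are obvious." Nothing further is needed.
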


\subsection{Generated submonoids of $\operatorname{End}_{T}(S)$ and subgroups of $\operatorname{Aut}_{T}(S)$} \label{generated submnd and subgrp}
	We will use the following notions and results later.
\begin{definition}
    \label{1.6.1}
    Let $S$ be a $T$-space.
	
 If $X\subseteq \operatorname{End}_{T}(S)$, then the intersection of all submonoids of $\operatorname{End}_{T}(S)$ containing $X$, denoted by ${{\left\langle X \right\rangle }_{\operatorname{End}(S)}}$, is called the \emph{submonoid of $\operatorname{End}_{T}(S)$ generated by X}. 

If $X\subseteq \operatorname{Aut}_{T}(S)$, then the intersection of all subgroups of $\operatorname{Aut}_{T}(S)$ containing $X$, denoted by ${{\left\langle X \right\rangle }_{\operatorname{Aut}(S)}}$, is called the \emph{subgroup of} $\operatorname{Aut}_{T}(S)$ \emph{generated by $X$}.
\end{definition}
\begin{remark}
    By Proposition \ref{1.3.12}, ${{\left\langle X \right\rangle }_{\operatorname{End}(S)}}$ is the smallest submonoid of $\operatorname{End}_{T}(S)$ which contains $X$. And by Proposition \ref{intersection of Aut}, ${{\left\langle X \right\rangle }_{\operatorname{Aut}(S)}}$ is the smallest subgroup of $\operatorname{Aut}_{T}(S)$ which contains $X$.
\end{remark}
\begin{proposition}
\label{1.6.2}
    Let $S$ be a $T$-space and let $X\subseteq \operatorname{Aut}_{T}(S)$. Suppose that $\forall x\in X$, ${{x}^{-1}}\in X$. Then ${{\left\langle X \right\rangle }_{\operatorname{Aut}(S)}}={{\left\langle X \right\rangle }_{\operatorname{End}(S)}}$.
\end{proposition}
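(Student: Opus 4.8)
The plan is to prove the two inclusions $\langle X\rangle_{\operatorname{Aut}(S)}\subseteq\langle X\rangle_{\operatorname{End}(S)}$ and $\langle X\rangle_{\operatorname{End}(S)}\subseteq\langle X\rangle_{\operatorname{Aut}(S)}$ separately, each time exploiting the minimality recorded in the remark following Definition \ref{1.6.1}: $\langle X\rangle_{\operatorname{End}(S)}$ is the smallest submonoid of $\operatorname{End}_{T}(S)$ containing $X$, and $\langle X\rangle_{\operatorname{Aut}(S)}$ is the smallest subgroup of $\operatorname{Aut}_{T}(S)$ containing $X$. The hypothesis that $X$ is closed under inverses will be used only in the second inclusion, and exactly once.

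For $\langle X\rangle_{\operatorname{End}(S)}\subseteq\langle X\rangle_{\operatorname{Aut}(S)}$, I would first note that $\operatorname{Aut}_{T}(S)$ is a submonoid of $\operatorname{End}_{T}(S)$: its identity is $\operatorname{Id}_{S}$, which is the identity of $\operatorname{End}_{T}(S)$, and it is closed under composition. Hence every subgroup of $\operatorname{Aut}_{T}(S)$ — in particular $\langle X\rangle_{\operatorname{Aut}(S)}$ — is a submonoid of $\operatorname{End}_{T}(S)$; since it also contains $X$, minimality of $\langle X\rangle_{\operatorname{End}(S)}$ gives the inclusion. For the reverse inclusion, I would consider
\[
N=\{\sigma\in\langle X\rangle_{\operatorname{End}(S)}\,:\,\sigma\text{ is bijective and }\sigma^{-1}\in\langle X\rangle_{\operatorname{End}(S)}\}
\]
and check that $N$ is a submonoid of $\operatorname{End}_{T}(S)$ containing $X$: it contains $\operatorname{Id}_{S}$; it contains each $x\in X$ because $x\in\operatorname{Aut}_{T}(S)$ is bijective and, by the hypothesis, $x^{-1}\in X\subseteq\langle X\rangle_{\operatorname{End}(S)}$; and it is closed under composition because $(\sigma\circ\tau)^{-1}=\tau^{-1}\circ\sigma^{-1}$ lies in the submonoid $\langle X\rangle_{\operatorname{End}(S)}$ whenever $\sigma^{-1}$ and $\tau^{-1}$ do. Since $N\subseteq\langle X\rangle_{\operatorname{End}(S)}$ and $\langle X\rangle_{\operatorname{End}(S)}$ is the smallest submonoid of $\operatorname{End}_{T}(S)$ containing $X$, we get $N=\langle X\rangle_{\operatorname{End}(S)}$. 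Thus every element of $\langle X\rangle_{\operatorname{End}(S)}$ is bijective with inverse again in $\langle X\rangle_{\operatorname{End}(S)}$, so $\langle X\rangle_{\operatorname{End}(S)}$ is a subset of $\operatorname{Aut}_{T}(S)$ containing $\operatorname{Id}_{S}$ and closed under composition and inverses — i.e. a subgroup of $\operatorname{Aut}_{T}(S)$ — and it contains $X$. Minimality of $\langle X\rangle_{\operatorname{Aut}(S)}$ now yields $\langle X\rangle_{\operatorname{Aut}(S)}\subseteq\langle X\rangle_{\operatorname{End}(S)}$, and combining the two inclusions finishes the proof.

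I do not expect a genuine obstacle here; the only point requiring care is the bookkeeping between the two ambient structures — verifying that a subgroup of $\operatorname{Aut}_{T}(S)$ is legitimately a submonoid of $\operatorname{End}_{T}(S)$, so that minimality of $\langle X\rangle_{\operatorname{End}(S)}$ applies to it, and symmetrically that once $\langle X\rangle_{\operatorname{End}(S)}$ is shown to be closed under inverses it genuinely qualifies as a subgroup of $\operatorname{Aut}_{T}(S)$ — while making sure the hypothesis $x^{-1}\in X$ is invoked precisely where it is needed, namely in the step $X\subseteq N$. A slightly more computational alternative would describe $\langle X\rangle_{\operatorname{End}(S)}$ explicitly as the set of finite composites $x_{1}\circ\cdots\circ x_{n}$ with each $x_{i}\in X$ (together with the empty composite $\operatorname{Id}_{S}$) and observe directly that such a composite is an automorphism with inverse $x_{n}^{-1}\circ\cdots\circ x_{1}^{-1}$, again in $\langle X\rangle_{\operatorname{End}(S)}$ by the hypothesis; I would favour the argument via $N$ above, as it sidesteps having to justify that explicit normal form.
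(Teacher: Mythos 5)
Your proof is correct, and the easy inclusion $\langle X\rangle_{\operatorname{End}(S)}\subseteq\langle X\rangle_{\operatorname{Aut}(S)}$ is handled exactly as in the paper (a subgroup of $\operatorname{Aut}_{T}(S)$ is a submonoid of $\operatorname{End}_{T}(S)$ containing $X$, so minimality applies). For the reverse inclusion, however, you take a genuinely different route: the paper argues element-wise, writing an arbitrary $\sigma\in\langle X\rangle_{\operatorname{Aut}(S)}$ as a word $x_{1}^{e_{1}}\cdots x_{n}^{e_{n}}$ with $e_{i}\in\mathbb{Z}$ (treating $X=\emptyset$ as a separate trivial case) and observing that, since $X$ is closed under inverses, any submonoid of $\operatorname{End}_{T}(S)$ containing $X$ contains such a word — i.e. precisely the "computational alternative" you mention and decline at the end. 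Your argument via $N=\{\sigma\in\langle X\rangle_{\operatorname{End}(S)}:\sigma\text{ bijective},\ \sigma^{-1}\in\langle X\rangle_{\operatorname{End}(S)}\}$ instead shows that $\langle X\rangle_{\operatorname{End}(S)}$ is itself a subgroup of $\operatorname{Aut}_{T}(S)$ containing $X$ and then invokes minimality of $\langle X\rangle_{\operatorname{Aut}(S)}$; this buys you two small things — you never need to justify the normal form of elements of the generated subgroup (which the paper asserts without proof), and the empty-$X$ case needs no separate treatment — at the cost of introducing the auxiliary submonoid $N$. The hypothesis $x^{-1}\in X$ is used in exactly the right place ($X\subseteq N$), mirroring where the paper uses it to absorb negative exponents. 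Both proofs are valid; yours is marginally more self-contained, the paper's more concrete.
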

\begin{proof}
    Since $X\subseteq \operatorname{Aut}_{T}(S)$ and every subgroup of $\operatorname{Aut}_{T}(S)$ is a submonoid of $\operatorname{End}_{T}(S)$, the set of all subgroups of $\operatorname{Aut}_{T}(S)$ containing $X$ is a subset of the set of all submonoids of $\operatorname{End}_{T}(S)$ containing $X$. Then by Definition \ref{1.6.1} we can tell that ${{\left\langle X \right\rangle }_{\operatorname{Aut}(S)}}\supseteq {{\left\langle X \right\rangle }_{\operatorname{End}(S)}}$. So it is sufficient to prove that ${{\left\langle X \right\rangle }_{\operatorname{Aut}(S)}}\subseteq {{\left\langle X \right\rangle }_{\operatorname{End}(S)}}$.

    If $X$ is empty, then ${{\left\langle X \right\rangle }_{\operatorname{Aut}(S)}}=\{$Id on $S\}={{\left\langle X \right\rangle }_{\operatorname{End}(S)}}$, as desired.

    Suppose that $X$ is not empty. Let $\sigma \in {{\left\langle X \right\rangle }_{\operatorname{Aut}(S)}}$. Since ${\left\langle X \right\rangle }_{\operatorname{Aut}(S)}$ is the group generated by $X$, $\exists n\in \mathbb{Z}^+$, $x _{1},\cdots, x_{n} \in X$ and $e_1,\cdots,e_n \in \mathbb{Z}$ such that $\sigma =x_{1}^{{{e}_{1}}}\cdots x_{n}^{{{e}_{n}}}$ (which is a product of group elements). Because it is assumed that $\forall x\in X$, ${{x}^{-1}}\in X$, we can tell that any submonoid of $\operatorname{End}_{T}(S)$ containing $X$ also contains $x_{1}^{{{e}_{1}}}\cdots x_{n}^{{{e}_{n}}}=\sigma $. Hence ${{\left\langle X \right\rangle }_{\operatorname{Aut}(S)}}\subseteq {{\left\langle X \right\rangle }_{\operatorname{End}(S)}}$, as desired.
\end{proof}

\section{Galois correspondences} \label{I Galois corr}
 In this section, we shall develop two main results on Galois correspondences. Roughly speaking, the first one shows the Galois correspondence between the Galois $T$-monoids (Definition \ref{1.5.1}) of a $T$-space $S$ and the fixed subsets of $S$ under actions of $T$-endomorphisms of $S$, and the second one shows the Galois correspondence between the Galois $T$-groups (Definition \ref{1.5.1}) of a $T$-space $S$ and the fixed subsets of $S$ under actions of $T$-automorphisms of $S$.
    
    For a $T$-space $S$ and $H\subseteq \operatorname{End}_{T}(S)$, by Proposition \ref{1.4.2}, $S^H$ is a quasi-$T$-subspace of $S$, rather than a $T$-subspace of $S$ unless $\operatorname{Id}\in T$ or ${{S}^{H}}\subseteq {{\left\langle {{S}^{H}} \right\rangle }_{T}}$. This fact implies that, quasi-$T$-subspaces, rather than $T$-subspaces, of a $T$-space may play a major role in establishing Galois correspondences, though Proposition \ref{1.2.9} tells us that in the case such as $\operatorname{Id}\in T$, quasi-$T$-space and $T$-space are actually the same notion. 
\begin{notation}
\label{2.1.1}
    Let $S$ be a $T$-space and let $B\subseteq S$. We denote by $\operatorname{Sub}_T(S)$ the set of all quasi-$T$-subspaces of $S$, and we denote by $\operatorname{Int}_T(S/B)$ the set of all intermediate quasi-$T$-spaces between $B$ and $S$, i.e. the set of all quasi-$T$-spaces $K$ with $S\supseteq K\supseteq B$. 
\end{notation}
\begin{remark} \begin{enumerate}
    \item By Proposition \ref{1.2.9}, both $\operatorname{Sub}_T(S)$ and $\operatorname{Int}_T(S/B)$ are sets of $T$-spaces if $\operatorname{Id}\in T$.
    \item $\operatorname{Sub}_T(\emptyset )=\operatorname{Int}_T(\emptyset /\emptyset )=\{\emptyset \}$.
\end{enumerate}
\end{remark}
\begin{notation}
    \label{2.1.2}
    We denote by $\operatorname{SGr}(G)$ the set of all subgroups of a group $G$, and we denote by $\operatorname{SMn}(M)$ the set of all submonoids of a monoid $M$. 
\end{notation}
\subsection{Two examples} \label{Two examples}
Let $S$ be a $T$-space and let $G$ be a subset of $\operatorname{Aut}_{T}(S)$. Then by Definition \ref{1.4.1}, $S$ is a Galois $T$-extension of ${S^G}$. Let’s check whether there always exists a correspondence between $\operatorname{SGr}(\operatorname{GGr}_{T}(S/{{S}^{G}}))$ and $\operatorname{Int}_T(S/{{S}^{G}})$ as in the classical Galois field theory (see Definition \ref{1.5.1} for the notation $\operatorname{GGr}_{T}(S/{{S}^{G}})$).
\begin{example}
\label{2.1.3}
    Let $T=\{f:\mathbb{R}\to \mathbb{R}$ given by $x\mapsto x+a \,|\, a\in \mathbb{R}\}$. Then $T$ is an operator semigroup on $\mathbb{R}$, $\operatorname{Id}\in T$ and $\forall r\in \mathbb{R},{{\left\langle r \right\rangle }_{T}}=\mathbb{R}$. Therefore, $\mathbb{R}$ is a $T$-space and the only (quasi-)$T$-subspaces of $\mathbb{R}$ are $\emptyset $ and $\mathbb{R}$. 

    Let $\sigma \in \operatorname{End}_{T}(\mathbb{R})$. Then $\forall f\in T$, $\sigma $ commutes with $f$ on $\mathbb{R}$. Thus for $a\in \mathbb{R}$ and $f:\mathbb{R}\to \mathbb{R}$ given by $x\mapsto x+a$,
    \[\sigma (x)+a=f(\sigma (x))=\sigma (f(x))=\sigma (x+a), \forall x\in \mathbb{R},\]
    which implies that $\sigma $ is a translation along the real axis. Hence $\sigma \in T$ and $\operatorname{End}_{T}\mathbb{R}\subseteq T$.  On the other hand, we can tell $T\subseteq \operatorname{Aut}_{T}(\mathbb{R})\subseteq \operatorname{End}_{T}(\mathbb{R})$. Thus, $\operatorname{End}_{T}(\mathbb{R})=\operatorname{Aut}_{T}(\mathbb{R})=T$. Then $\mathbb{R}$ is a Galois $T$-extension of ${{\mathbb{R}}^{T}}=\emptyset $ and $\operatorname{Int}_T(\mathbb{R}/{{\mathbb{R}}^{T}})=\{\emptyset ,\mathbb{R}\}$ (since the only (quasi-)$T$-subspaces of $\mathbb{R}$ are $\emptyset $ and $\mathbb{R}$).

    However, $\operatorname{GGr}_{T}(\mathbb{R}/{{\mathbb{R}}^{T}})(=\operatorname{GGr}_{T}(\mathbb{R}/\emptyset)=\operatorname{Aut}_{T}(\mathbb{R})=T)$ has an infinite number of subgroups, so
    \[\left| \operatorname{Int}_T(\mathbb{R}/{{\mathbb{R}}^{T}}) \right|<\left| \operatorname{SGr}(\operatorname{GGr}_{T}(\mathbb{R}/{{\mathbb{R}}^{T}})) \right|.\]
\end{example}
The preceding example shows that the number of all intermediate (quasi-)$T$-spaces of a Galois $T$-extension may be strictly smaller than that of all subgroups of the corresponding Galois $T$-group.
The converse is also possible, as shown below.
\begin{example}
    \label{2.1.4}
    Let $T=\{$Id$\}$. Then $T$ is an operator semigroup on any set $D$ and any subset of $D$ is a $T$-space. Obviously any map from $D$ to $D$ belongs to $\operatorname{End}_{T}(D)$, and so any bijective map from $D$ to $D$ lies in $\operatorname{Aut}_{T}(D)$.
	
    Suppose $\left| D \right|=2$. Then $\operatorname{Aut}_{T}(D)={{S}_2}$, which is the symmetric group on $2$ letters. Then $D$ is a Galois $T$-extension of ${D^{S_2}}=\emptyset $, $\operatorname{Int}_T(D/{{D}^{S_2}})$ is the power set of $D$, and
    \[\operatorname{SGr}(\operatorname{GGr}_{T}(D/{{D}^{S_2}}))=\operatorname{SGr}(\operatorname{Aut}_{T}(D))=\operatorname{SGr}({{S}_{2}}).\]
    Then 
    \[\left| \operatorname{Int}_T(D/{{D}^{S_2}}) \right|=4>2=\left| \operatorname{SGr}(\operatorname{GGr}_{T}(D/{{D}^{S_2}})) \right|.\]
\end{example}
    Example \ref{2.1.4} shows that the number of all intermediate (quasi-)$T$-spaces of a Galois $T$-extension may be strictly larger than that of all subgroups of the corresponding Galois $T$-group.

    However, if we focus on the fixed subsets of a $T$-space $S$ under actions of $T$-automorphisms of $S$ and on the subgroups (of $\operatorname{Aut}_{T}(S)$) which are Galois $T$-groups, then we will see a Galois correspondence. Moreover, we shall find a Galois correspondence for Galois $T$-monoids as well.

\subsection{Galois correspondences} \label{I subsection Galois correspondences}
First, we need to define the fixed subsets of a $T$-space $S$ under actions of $T$-endomorphisms (or $T$-automorphisms) of $S$ as follows.
\begin{definition}
    \label{2.2.1}
    Let $S$ be a $T$-space and let $B\subseteq S$. Then
    \[\operatorname{Int}_{T}^{\operatorname{End}}(S/B):=\{{{S}^{H}}\,|\,B\subseteq {{S}^{H}},H\subseteq \operatorname{End}_{T}(S)\},\] and
	\[\operatorname{Int}_{T}^{\operatorname{Aut}}(S/B):=\{{{S}^{H}}\,|\,B\subseteq {{S}^{H}},H\subseteq \operatorname{Aut}_{T}(S)\}.\]
\end{definition}
\begin{remark}
    \begin{enumerate}
        \item  $\forall K\in \operatorname{Int}_{T}^{\operatorname{End}}(S/B)$, $S$ is a Galois $T$-extension of $K$ (by Definition \ref{1.4.1}).
         \item Since $\operatorname{Aut}_{T}(S) \subseteq \operatorname{End}_{T}(S)$, $\operatorname{Int}_{T}^{\operatorname{Aut}}(S/B)\subseteq \operatorname{Int}_{T}^{\operatorname{End}}(S/B)\subseteq \operatorname{Int}_T(S/B)$ (by Proposition \ref{1.4.2}). 
     \item $\operatorname{Int}_{T}^{\operatorname{Aut}}(\emptyset /\emptyset )=\operatorname{Int}_{T}^{\operatorname{End}}(\emptyset /\emptyset )=\{\emptyset \}$. 
    \end{enumerate}
\end{remark}
The following characterizes $\operatorname{Int}_{T}^{\operatorname{End}}(S/B)$ and $\operatorname{Int}_{T}^{\operatorname{Aut}}(S/B)$.
\begin{lemma}
\label{2.2.2}
Let $S$ be a $T$-space and let $B\subseteq K\subseteq S$. Then
\[K\in \operatorname{Int}_{T}^{\operatorname{End}}(S/B)\Leftrightarrow K={{S}^{\operatorname{GMn}_{T}(S/K)}}\]
and \[K\in \operatorname{Int}_{T}^{\operatorname{Aut}}(S/B)\Leftrightarrow K={{S}^{\operatorname{GGr}_{T}(S/K)}}.\]
\end{lemma}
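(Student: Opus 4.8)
The plan is to prove each biconditional by establishing the two implications, using the previously developed machinery (Propositions \ref{1.4.2}, \ref{1.5.4}, \ref{1.5.5}) together with the defining properties of $\operatorname{GMn}_T$ and $\operatorname{GGr}_T$. I would treat the $\operatorname{End}$-case and the $\operatorname{Aut}$-case in parallel, since the arguments are formally identical after replacing $\operatorname{End}_T(S)$ by $\operatorname{Aut}_T(S)$ and $\operatorname{GMn}_T$ by $\operatorname{GGr}_T$; so I would write the $\operatorname{End}$-case in full and then remark that the $\operatorname{Aut}$-case follows mutatis mutandis.

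For the direction $K = S^{\operatorname{GMn}_T(S/K)} \Rightarrow K \in \operatorname{Int}_T^{\operatorname{End}}(S/B)$, I would simply take $H = \operatorname{GMn}_T(S/K) \subseteq \operatorname{End}_T(S)$; then $S^H = K \supseteq B$ by hypothesis, so $K$ is of the required form and lies in $\operatorname{Int}_T^{\operatorname{End}}(S/B)$ by Definition \ref{2.2.1}. This direction is immediate.

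For the converse, suppose $K \in \operatorname{Int}_T^{\operatorname{End}}(S/B)$, so there is $H \subseteq \operatorname{End}_T(S)$ with $B \subseteq S^H = K$. The goal is to show $K = S^{\operatorname{GMn}_T(S/K)}$. The inclusion $K \subseteq S^{\operatorname{GMn}_T(S/K)}$ is just Proposition \ref{1.5.4}. For the reverse inclusion, I would argue: since $S^H = K$, we have $H \subseteq \operatorname{GMn}_T(S/S^H) = \operatorname{GMn}_T(S/K)$ by Proposition \ref{1.5.5}; then applying Proposition \ref{1.4.3} (monotonicity of $S^{(-)}$) to $H \subseteq \operatorname{GMn}_T(S/K)$ gives $S^{\operatorname{GMn}_T(S/K)} \subseteq S^H = K$. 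Combining the two inclusions yields $K = S^{\operatorname{GMn}_T(S/K)}$, as desired. The $\operatorname{Aut}$-case uses the $\operatorname{GGr}$-parts of Propositions \ref{1.5.4} and \ref{1.5.5} in exactly the same way, noting that $\operatorname{GGr}_T(S/K) \subseteq \operatorname{Aut}_T(S)$ so that membership in $\operatorname{Int}_T^{\operatorname{Aut}}(S/B)$ is witnessed.

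I do not anticipate a genuine obstacle here: this lemma is essentially a bookkeeping consequence of the Galois-connection-type inequalities already recorded in Propositions \ref{1.4.3}, \ref{1.5.4}, and \ref{1.5.5}. The only point requiring a little care is making sure the witness $H$ in the "if" direction is taken inside the correct ambient ($\operatorname{End}_T(S)$ for the first equivalence, $\operatorname{Aut}_T(S)$ for the second), which is automatic since $\operatorname{GMn}_T(S/K) \le \operatorname{End}_T(S)$ and $\operatorname{GGr}_T(S/K) \le \operatorname{Aut}_T(S)$ by Propositions \ref{1.5.2} and \ref{1.5.3}.
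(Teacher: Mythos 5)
Your proposal is correct and follows essentially the same route as the paper's proof: the easy direction via Definition \ref{2.2.1} with the witness $H=\operatorname{GMn}_T(S/K)$ (resp. $\operatorname{GGr}_T(S/K)$), and the converse by combining Proposition \ref{1.5.5} with the monotonicity in Proposition \ref{1.4.3} for one inclusion and Proposition \ref{1.5.4} for the other. No gaps; your explicit remark about the witness lying in the correct ambient monoid/group is a detail the paper leaves implicit but is handled identically by Propositions \ref{1.5.2} and \ref{1.5.3}.
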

\begin{proof}
    1. \emph{Sufficiency}
    
If $K={{S}^{\operatorname{GMn}_{T}(S/K)}}$, then $K\in \operatorname{Int}_{T}^{\operatorname{End}}(S/B)$ by Definition \ref{2.2.1}.

If $K={{S}^{\operatorname{GGr}_{T}(S/K)}}$, then $K\in \operatorname{Int}_{T}^{\operatorname{Aut}}(S/B)$ by Definition \ref{2.2.1}.

2. \emph{Necessity}

    Suppose $K\in \operatorname{Int}_{T}^{\operatorname{End}}(S/B)$. Then $\exists H\subseteq \operatorname{End}_{T}(S)$ such that $K={{S}^{H}}$, and so by Proposition \ref{1.5.5}, $H\subseteq \operatorname{GMn}_{T}(S/{{S}^{H}})=\operatorname{GMn}_{T}(S/K)$. Hence $K={{S}^{H}}\supseteq {{S}^{\operatorname{GMn}_{T}(S/K)}}$ by Proposition \ref{1.4.3}.

    On the other hand, $K\subseteq {{S}^{\operatorname{GMn}_{T}(S/K)}}$ by Proposition \ref{1.5.4}.

    Therefore, $K={{S}^{\operatorname{GMn}_{T}(S/K)}}$.

    If $K\in \operatorname{Int}_{T}^{\operatorname{Aut}}(S/B)$, analogously we can show $K={{S}^{\operatorname{GGr}_{T}(S/K)}}$.
\end{proof}
Second, we define 
\begin{definition}
    \label{2.2.3}
    Let $S$ be a $T$-space and let $B\subseteq S$. Then
\[\operatorname{GSMn}_{T}(S/B):=\{\operatorname{GMn}_{T}(S/K)\,|\,B\subseteq K\subseteq S\},\] and 
\[\operatorname{GSGr}_{T}(S/B):=\{\operatorname{GGr}_{T}(S/K)\,|\,B\subseteq K\subseteq S\}.\]
\end{definition}
\begin{remark} \begin{enumerate}
    \item GSMn stands for “Galois submonoids” and GSGr stands for “Galois subgroups”.
    \item Clearly,
    \[\operatorname{GSMn}_{T}(S/B)\subseteq \operatorname{SMn}(\operatorname{GMn}_{T}(S/B))\] (by Notation \ref{2.1.2} and Proposition \ref{1.5.2}), 
    \[\operatorname{GSGr}_{T}(S/B)\subseteq \operatorname{SGr}(\operatorname{GGr}_{T}(S/B))\] (by Notation \ref{2.1.2} and Proposition \ref{1.5.3}),
    \[\operatorname{GSGr}_{T}(S/B)\subseteq \operatorname{GSMn}_{T}(S/B),\] and 
    \[\operatorname{GSGr}_{T}(\emptyset /\emptyset )=\operatorname{GSMn}_{T}(\emptyset /\emptyset )=\{\{\text{I}{{\text{d}}_{\emptyset }}\}\}.\]
\end{enumerate}
\end{remark}
	The following characterizes $\operatorname{GSMn}_{T}(S/B)$ and $\operatorname{GSGr}_{T}(S/B)$.
\begin{lemma}
    \label{2.2.4}
    Let $S$ be a $T$-space, let $H\subseteq \operatorname{End}_{T}(S)$, and let $B\subseteq {{S}^{H}}\subseteq S$. Then
\[H\in \operatorname{GSMn}_{T}(S/B)\Leftrightarrow \operatorname{GMn}_{T}(S/{{S}^{H}})=H\] and 
\[H\in \operatorname{GSGr}_{T}(S/B)\Leftrightarrow \operatorname{GGr}_{T}(S/{{S}^{H}})=H.\]
\end{lemma}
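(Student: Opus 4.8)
\textbf{Proof plan for Lemma \ref{2.2.4}.} The statement is the natural ``dual'' companion to Lemma \ref{2.2.2}: there we characterized which intermediate quasi-$T$-spaces $K$ are of the form $S^H$ by the closure condition $K = S^{\operatorname{GMn}_T(S/K)}$, and here we want to characterize which submonoids (resp. subgroups) $H$ are Galois $T$-monoids (resp. Galois $T$-groups) by the dual closure condition $\operatorname{GMn}_T(S/S^H) = H$ (resp. $\operatorname{GGr}_T(S/S^H) = H$). The plan is to prove each biconditional by establishing the two implications separately, exactly as in the proof of Lemma \ref{2.2.2}, and the whole argument should rest only on the monotonicity and unit/counit inequalities already recorded in Propositions \ref{1.4.3}, \ref{1.5.4}, \ref{1.5.5}, and \ref{1.5.6}.

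For the ``$\Leftarrow$'' direction: if $\operatorname{GMn}_T(S/S^H) = H$, then since $B \subseteq S^H \subseteq S$ we may take $K := S^H$, and by Definition \ref{2.2.3} the set $\operatorname{GMn}_T(S/K) = \operatorname{GMn}_T(S/S^H) = H$ lies in $\operatorname{GSMn}_T(S/B)$; the subgroup case is identical with $\operatorname{GGr}$ in place of $\operatorname{GMn}$. For the ``$\Rightarrow$'' direction, suppose $H \in \operatorname{GSMn}_T(S/B)$, so $H = \operatorname{GMn}_T(S/K)$ for some $K$ with $B \subseteq K \subseteq S$. Applying $S^{(-)}$ and using Proposition \ref{1.5.4} gives $K \subseteq S^{\operatorname{GMn}_T(S/K)} = S^H$, and then Proposition \ref{1.5.6} (monotonicity of $\operatorname{GMn}_T(S/-)$ in reverse) yields $H = \operatorname{GMn}_T(S/K) \supseteq \operatorname{GMn}_T(S/S^H)$. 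For the reverse inclusion, Proposition \ref{1.5.5} applied to $H \subseteq \operatorname{End}_T(S)$ gives $H \subseteq \operatorname{GMn}_T(S/S^H)$ directly. Combining the two inclusions gives $\operatorname{GMn}_T(S/S^H) = H$, as desired. The subgroup statement follows the same three-line pattern, replacing $\operatorname{GMn}_T$ by $\operatorname{GGr}_T$, $\operatorname{End}_T(S)$ by $\operatorname{Aut}_T(S)$, and invoking the $\operatorname{Aut}$/$\operatorname{GGr}$ halves of Propositions \ref{1.5.4}, \ref{1.5.5}, \ref{1.5.6}.

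I do not expect any genuine obstacle here: this is a formal ``Galois connection'' argument, and every ingredient (the unit inequality $K \subseteq S^{\operatorname{GMn}_T(S/K)}$, the counit inequality $H \subseteq \operatorname{GMn}_T(S/S^H)$, and order-reversal of both $S^{(-)}$ and $\operatorname{GMn}_T(S/-)$) has already been isolated as a named proposition in Subsection \ref{Galois $T$-monoids and Galois $T$-groups}. The only point requiring a moment's care is bookkeeping the hypothesis $B \subseteq S^H \subseteq S$ so that the candidate intermediate space $K = S^H$ is legitimately allowed in the defining family of $\operatorname{GSMn}_T(S/B)$ and $\operatorname{GSGr}_T(S/B)$; this is automatic from the standing assumptions in the lemma. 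I would write the subgroup case briefly or simply note that it is proved ``analogously,'' mirroring the style used for Proposition \ref{1.5.3} earlier.
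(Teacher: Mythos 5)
Your proposal is correct and follows essentially the same route as the paper's proof: sufficiency by taking $K = S^H$ (using $B \subseteq S^H \subseteq S$) with Definition \ref{2.2.3}, and necessity by combining Proposition \ref{1.5.4} (giving $K \subseteq S^H$), Proposition \ref{1.5.6} (giving $H \supseteq \operatorname{GMn}_T(S/S^H)$), and Proposition \ref{1.5.5} (giving the reverse inclusion), with the subgroup case handled analogously. No gaps.
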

\begin{proof}
    1. \emph{Sufficiency}
    
	If $\operatorname{GMn}_{T}(S/{{S}^{H}})=H$, then $H\in \operatorname{GSMn}_{T}(S/B)$ by Definition \ref{2.2.3}.
 
	If $\operatorname{GGr}_{T}(S/{{S}^{H}})=H$, then $H\in \operatorname{GSGr}_{T}(S/B)$ by Definition \ref{2.2.3}.
 
2. \emph{Necessity}

Suppose $H\in \operatorname{GSMn}_{T}(S/B)$. Then $\exists K$ such that $B\subseteq K\subseteq S$ and $H=\operatorname{GMn}_{T}(S/K)$, and thus by Proposition \ref{1.5.4}, $K\subseteq {{S}^{\operatorname{GMn}_{T}(S/K)}}={{S}^{H}}$. Hence
$H=\operatorname{GMn}_{T}(S/K)\supseteq \operatorname{GMn}_{T}(S/{{S}^{H}})$ by Proposition \ref{1.5.6}.

On the other hand, $H\subseteq \operatorname{GMn}_{T}(S/{{S}^{H}})$ by Proposition \ref{1.5.5}.

Therefore, $H=\operatorname{GMn}_{T}(S/{{S}^{H}})$.

Analogously, if $H\in \operatorname{GSGr}_{T}(S/B)$, we can show $H=\operatorname{GGr}_{T}(S/{{S}^{H}})$.
\end{proof}
Now two main results of this section follow. Roughly speaking, the first one shows the Galois correspondence between the Galois $T$-monoids of a $T$-space $S$ and the fixed subsets of $S$ under actions of $T$-endomorphisms of $S$.
\begin{corollary}
     \label{2.2.5}
    Let $S$ be a $T$-space and let $B\subseteq S$. Then the correspondences
\begin{center}
    $\gamma :H\mapsto {{S}^{H}}$ and $\delta :K\mapsto \operatorname{GMn}_{T}(S/K)$
\end{center}
define inclusion-inverting mutually inverse bijective maps $($i.e. $\gamma $ is an inclusion-reversing bijective map whose inverse $\delta $ is also an inclusion-reversing bijective map$)$ between $\operatorname{GSMn}_{T}(S/B)$ and $\operatorname{Int}_{T}^{\operatorname{End}}(S/B)$. 
\end{corollary}
\begin{proof}
    $\gamma $ is inclusion-reversing by Proposition \ref{1.4.3}, and $\delta $ is inclusion-reversing by Proposition \ref{1.5.6}. 
	
 Lemmas \ref{2.2.2} and \ref{2.2.4} imply that $\gamma \circ \delta $ and $\delta \circ \gamma $ are identity functions, respectively. Hence both $\delta $ and $\gamma $ are bijective maps and ${{\delta }^{-1}}=\gamma $.	
\end{proof}
Analogously, we have the following, which, roughly speaking, shows the Galois correspondence between the Galois $T$-groups of a $T$-space $S$ and the fixed subsets of $S$ under actions of $T$-automorphisms of $S$.
\begin{corollary}
    \label{2.2.6}
    Let $S$ be a $T$-space and let $B\subseteq S$. Then the correspondences
\begin{center}
    $\gamma :H\mapsto {{S}^{H}}$ and $\delta :K\mapsto \operatorname{GGr}_{T}(S/K)$
\end{center}
define inclusion-inverting mutually inverse bijective maps between $\operatorname{GSGr}_{T}(S/B)$ and $\operatorname{Int}_{T}^{\operatorname{Aut}}(S/B)$.
\end{corollary}
\begin{proof}
    The same as that of Corollary \ref{2.2.5}.
\end{proof}
As a matter of fact, Corollaries \ref{2.2.5} and \ref{2.2.6} tell us, essentially, what Galois correspondences are: They tell us where to find Galois correspondences in any specific case. In fact, with examples we gave (e.g. Proposition \ref{1.3.7}) or will give for $T$-morphisms, we can tell from Corollaries \ref{2.2.5} and \ref{2.2.6} that Galois correspondences are ubiquitous. 

 In Section \ref{II Gal corr}, we shall prove that Corollaries \ref{2.2.5} and \ref{2.2.6} still hold for the case where elements of $T$ are allowed to be functions in more than one variable and/or even partial.

As is well known, in the Galois theory for infinite algebraic field extensions, Krull topology is put on Galois groups, and in differential Galois theory, differential Galois groups are endowed with Zariski topology. Then the fundamental theorem in each of the two theories tells us that there exists a Galois correspondence between the set of all closed subgroups of the Galois group of a Galois field extension (or a Picard-Vessiot extension) and the set of all intermediate (differential) fields. 

Now for Corollary \ref{2.2.5}, can we define a topology on $S$ and a topology on $\operatorname{End}_{T}(S)$ such that $\operatorname{Int}_{T}^{\operatorname{End}}(S/B)$ is the set of all closed quasi-$T$-subspaces (of $S$) containing $B$ and $\operatorname{GSMn}_{T}(S/B)$ is the set of all closed submonoids of $\operatorname{GMn}_{T}(S/B)$? If such topologies exist, then we can characterize the Galois correspondence between $\operatorname{Int}_{T}^{\operatorname{End}}(S/B)$ and $\operatorname{GSMn}_{T}(S/B)$ in terms of closed quasi-$T$-subspaces of $S$ and closed submonoids of $\operatorname{GMn}_{T}(S/B)$. 

And for Corollary \ref{2.2.6}, can we define a topology on $S$ and a topology on $\operatorname{Aut}_{T}(S)$ with $\operatorname{Int}_{T}^{\operatorname{Aut}}(S/B)$ being the set of all closed quasi-$T$-subspaces (of $S$) containing $B$ and $\operatorname{GSGr}_{T}(S/B)$ being the set of all closed subgroups of $\operatorname{GGr}_{T}(S/B)$? If such topologies exist, then we can characterize the Galois correspondence between $\operatorname{Int}_{T}^{\operatorname{Aut}}(S/B)$ and $\operatorname{GSGr}_{T}(S/B)$ in terms of closed quasi-$T$-subspaces of $S$ and closed subgroups of $\operatorname{GGr}_{T}(S/B)$.

Before we can answer the above questions in Section \ref{I Topologies employed}, we need to address a related issue in Section \ref{I Lattice structures}, which is about the lattice structures of the sets involved in Corollaries \ref{2.2.5} and \ref{2.2.6}.

There is one more thing which we have not talked about for Galois correspondences. It is normally the last part of the fundamental theorem of a Galois theory, i.e. the correspondence between the normal subgroups of a Galois group and the Galois extensions of the base field. We will address this issue in Section \ref{Two questions}.

\section{Lattice structures of objects arising in Galois correspondences on a $T$-space $S$} \label{I Lattice structures}
In this section, we shall study lattice structures of objects which arise in Corollaries \ref{2.2.5} and \ref{2.2.6}. The value of this section, though being a preparation for the next section, asserts itself.

	Recall that a lattice is a (nonempty) partially ordered set $L$ in which every pair of elements $a,b\in L$ has a meet $a\wedge b$ (greatest lower bound) and a join $a\vee b$ (least upper bound), and $L$ is a complete lattice if any subset of it has a greatest lower bound and a least upper bound. Note that if lattice $L$ is complete, then $\wedge \emptyset =\vee L$ and $\vee \emptyset =\wedge L$ (see e.g. \cite{2}).
\begin{notation}
     To abbreviate our notations, by $\{{{A}_{i}}\}$, $\bigcup{{{A}_{i}}}$ and $\bigcap{{{A}_{i}}}$ we mean $\{{{A}_{i}}|i\in I\}$, $\bigcup\nolimits_{i\in I}{{{A}_{i}}}$ and $\bigcap\nolimits_{i\in I}{{{A}_{i}}}$, respectively. 
 \end{notation}
\subsection{$\operatorname{Sub}_T(S)$, $\operatorname{SMn}(\operatorname{End}_{T}(S))$ and $\operatorname{SGr}(\operatorname{Aut}_{T}(S))$}
\begin{proposition}
    \label{3.1.1}
    Let $S$ be a $T$-space. Then $\operatorname{Sub}_T(S)$ $($Notation \ref{2.1.1}$)$ is a lattice with inclusion as the binary relation if $\forall {{S}_{1}},{{S}_{2}}\in \operatorname{Sub}_T(S)$, ${{S}_{1}}\wedge {{S}_{2}}:={{S}_{1}}\bigcap {{S}_{2}}$ and ${{S}_{1}}\vee {{S}_{2}}:={{S}_{1}}\bigcup {{S}_{2}}$. Moreover, $\operatorname{Sub}_T(S)$ is a complete lattice if let $\wedge A=\bigcap{{{S}_{i}}}$ and let $\vee A=\bigcup{{{S}_{i}}}$, $\forall A:=\{{{S}_{i}}\}\subseteq \operatorname{Sub}_T(S)$.
\end{proposition}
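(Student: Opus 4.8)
The plan is to reduce the claim to two facts already established: the family of all quasi-$T$-spaces is closed under arbitrary intersections (Proposition \ref{1.2.10}) and under arbitrary unions (Proposition \ref{1.2.11}). Since $S$ is itself a quasi-$T$-space by Proposition \ref{$T$-space is a quasi-$T$-space}, the set $\operatorname{Sub}_T(S)$ is precisely the collection of quasi-$T$-spaces contained in $S$; in particular it contains $\emptyset$ and $S$, so it is nonempty, and inclusion is reflexive, antisymmetric and transitive, so $(\operatorname{Sub}_T(S),\subseteq)$ is a partially ordered set. The point of the two cited propositions is that the obvious candidate meet (intersection) and join (union) never leave $\operatorname{Sub}_T(S)$, and once that is known the lattice axioms are inherited verbatim from the powerset lattice $(\mathcal{P}(S),\subseteq)$.

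Concretely, for $S_1,S_2\in\operatorname{Sub}_T(S)$ I would note that $S_1\cap S_2$ and $S_1\cup S_2$ are quasi-$T$-spaces by Propositions \ref{1.2.10} and \ref{1.2.11}, and both are subsets of $S$, hence both lie in $\operatorname{Sub}_T(S)$; since in $\mathcal{P}(S)$ they are the greatest lower bound and least upper bound of $\{S_1,S_2\}$, and since they already belong to the subposet $\operatorname{Sub}_T(S)$, they are the meet $S_1\wedge S_2$ and the join $S_1\vee S_2$ computed in $\operatorname{Sub}_T(S)$ as well. For completeness, given $A=\{S_i\}\subseteq\operatorname{Sub}_T(S)$ the same two propositions show $\bigcap S_i$ and $\bigcup S_i$ are again quasi-$T$-spaces contained in $S$, hence members of $\operatorname{Sub}_T(S)$, and as subsets of $S$ they are the infimum and supremum of $A$; thus they serve as $\wedge A$ and $\vee A$. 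The one case worth an explicit word is $A=\emptyset$: then $\bigcup_{i\in\emptyset}S_i=\emptyset$ is the bottom element, and one reads $\bigcap_{i\in\emptyset}S_i$ as $S$, the top element, in agreement with the conventions $\wedge\emptyset=\vee\operatorname{Sub}_T(S)$ and $\vee\emptyset=\wedge\operatorname{Sub}_T(S)$ recalled above; both still lie in $\operatorname{Sub}_T(S)$.

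I do not anticipate a genuine obstacle: the entire mathematical content sits in Propositions \ref{1.2.10} and \ref{1.2.11}, which guarantee closure of $\operatorname{Sub}_T(S)$ under arbitrary intersections and unions, and the remaining verifications are purely formal. The only step one should not gloss over is checking that the proposed meets and joins really lie in $\operatorname{Sub}_T(S)$ and are bounds relative to this subposet, not merely in $\mathcal{P}(S)$ — but that is exactly the role played by the two closure propositions, so even this is routine.
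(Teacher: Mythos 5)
Your proposal is correct and follows exactly the paper's route: the paper's proof is precisely that the claim is a straightforward consequence of Propositions \ref{1.2.10} and \ref{1.2.11} (closure of quasi-$T$-spaces under arbitrary intersections and unions), with the lattice axioms inherited from the powerset of $S$. Your added care about membership in $\operatorname{Sub}_T(S)$ and the empty-family conventions only makes explicit what the paper leaves as routine.
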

\begin{proof}
    Straightforward consequences of Propositions \ref{1.2.10} and \ref{1.2.11}. Note that $\operatorname{Sub}_T(\emptyset )=\{\emptyset \}$ is a complete lattice.	
\end{proof}
\begin{proposition}
    \label{3.1.2}
    Let $S$ be a $T$-space. Then $\operatorname{SMn}(\operatorname{End}_{T}(S))$ $($Notation \ref{2.1.2}$)$ is a lattice with inclusion as the binary relation if ${{M}_{1}}\wedge {{M}_{2}}:={{M}_{1}}\bigcap {{M}_{2}}$ and ${{M}_{1}}\vee {{M}_{2}}:={{\left\langle {{M}_{1}}\bigcup {{M}_{2}} \right\rangle }_{\operatorname{End}(S)}}$ $($Definition \ref{1.6.1}$)$, $\forall {{M}_{1}},{{M}_{2}}\in \operatorname{SMn}(\operatorname{End}_{T}(S))$. Moreover, if let $\wedge A=\bigcap{{{M}_{i}}}$ and let $\vee A={{\left\langle \bigcup{{{M}_{i}}} \right\rangle }_{\operatorname{End}(S)}}$, $\forall A:=\{{{M}_{i}}\}\subseteq \operatorname{SMn}(\operatorname{End}_{T}(S))$, then $\operatorname{SMn}(\operatorname{End}_{T}(S))$ is a complete lattice.
\end{proposition}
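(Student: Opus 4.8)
The plan is to verify the lattice axioms directly for $\operatorname{SMn}(\operatorname{End}_{T}(S))$ under the proposed meet and join operations, exactly in parallel with the proof of Proposition \ref{3.1.1}, but using generated submonoids in place of unions. First I would recall that $\operatorname{SMn}(\operatorname{End}_{T}(S))$ is nonempty (it contains $\{\operatorname{Id}\}$ and $\operatorname{End}_{T}(S)$ itself) and is partially ordered by inclusion, so it only remains to check the existence of meets and joins.

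For the meet: by Proposition \ref{1.3.12}, the intersection of any family of submonoids of $\operatorname{End}_{T}(S)$ is again a submonoid, so $\bigcap M_i \in \operatorname{SMn}(\operatorname{End}_{T}(S))$; it is clearly contained in each $M_i$, and any submonoid contained in every $M_i$ is contained in $\bigcap M_i$, so $\bigcap M_i$ is the greatest lower bound. In particular, for a pair, $M_1 \wedge M_2 = M_1 \cap M_2$ works. For the join: by the remark following Definition \ref{1.6.1} (which rests on Proposition \ref{1.3.12}), ${{\left\langle \bigcup M_i \right\rangle}_{\operatorname{End}(S)}}$ is the smallest submonoid of $\operatorname{End}_{T}(S)$ containing $\bigcup M_i$. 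It contains each $M_j$ since $M_j \subseteq \bigcup M_i \subseteq {{\left\langle \bigcup M_i \right\rangle}_{\operatorname{End}(S)}}$, so it is an upper bound; and if $N \in \operatorname{SMn}(\operatorname{End}_{T}(S))$ satisfies $M_i \subseteq N$ for all $i$, then $\bigcup M_i \subseteq N$, whence ${{\left\langle \bigcup M_i \right\rangle}_{\operatorname{End}(S)}} \subseteq N$ by minimality; thus it is the least upper bound. Specializing to two elements gives $M_1 \vee M_2 = {{\left\langle M_1 \cup M_2 \right\rangle}_{\operatorname{End}(S)}}$. Hence $\operatorname{SMn}(\operatorname{End}_{T}(S))$ is a lattice, and since meets and joins exist for arbitrary subsets, it is a complete lattice. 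I would also note the degenerate case $S = \emptyset$, where $\operatorname{End}_{T}(\emptyset) = \{\operatorname{Id}_\emptyset\}$ and $\operatorname{SMn}(\operatorname{End}_{T}(\emptyset)) = \{\{\operatorname{Id}_\emptyset\}\}$ is trivially a complete lattice.

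I do not anticipate a genuine obstacle here: the proof is a routine unwinding of the definition of "generated submonoid" as the smallest submonoid containing a given set, and the only subtlety is making sure that "smallest submonoid containing $\bigcup M_i$" really does the double duty of being both an upper bound and below every other upper bound — which is immediate from the definition. The one point worth stating carefully is why the join must be the generated submonoid rather than the plain union: in general $M_1 \cup M_2$ need not be closed under composition, so it need not be a submonoid, and this is precisely the place where the construction differs from Proposition \ref{3.1.1}, where unions of quasi-$T$-spaces are again quasi-$T$-spaces by Proposition \ref{1.2.11}. So the proof I would write is essentially two short paragraphs: one citing Proposition \ref{1.3.12} for meets, one citing the minimality property of ${{\left\langle\,\cdot\,\right\rangle}_{\operatorname{End}(S)}}$ for joins, plus a one-line remark handling the empty case.
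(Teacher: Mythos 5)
Your proposal is correct and follows exactly the route the paper intends: the paper's proof is simply "Obvious (see Proposition \ref{1.3.12} and the remark under Definition \ref{1.6.1})," and your argument is a careful spelling-out of precisely those two facts (intersections of submonoids are submonoids for meets, minimality of the generated submonoid for joins). No gaps; your version is just more explicit.
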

\begin{proof}
    Obvious (\textit{cf.} Proposition \ref{1.3.12} and the remark right after Definition \ref{1.6.1}).
\end{proof}
	For $\operatorname{SGr}(\operatorname{Aut}_{T}(S))$ (Notation \ref{2.1.2}), we also have
\begin{proposition}
    \label{3.1.3}
    Let $S$ be a $T$-space. Then $\operatorname{SGr}(\operatorname{Aut}_{T}(S))$ is a lattice with inclusion as the binary relation if ${{G}_{1}}\wedge {{G}_{2}}:={{G}_{1}}\bigcap {{G}_{2}}$ and ${{G}_{1}}\vee {{G}_{2}}:={{\left\langle {{G}_{1}}\bigcup {{G}_{2}} \right\rangle }_{\operatorname{Aut}(S)}}$ $($Definition \ref{1.6.1}$)$, $\forall {{G}_{1}},{{G}_{2}}\in \operatorname{SGr}(\operatorname{Aut}_{T}(S))$. Moreover, if let $\wedge A=\bigcap{{{G}_{i}}}$ and let $\vee A={{\left\langle \bigcup{{{G}_{i}}} \right\rangle }_{\operatorname{Aut}(S)}}$,  $\forall A:=\{{{G}_{i}}\}\subseteq \operatorname{SGr}(\operatorname{Aut}_{T}(S))$, then $\operatorname{SGr}(\operatorname{Aut}_{T}(S))$ is a complete lattice.
\end{proposition}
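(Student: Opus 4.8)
The plan is to mirror the proof of Proposition \ref{3.1.1}, since the structure of the claim is identical: we must verify that $\operatorname{SGr}(\operatorname{Aut}_{T}(S))$, partially ordered by inclusion, has binary meets and joins given by the stated formulas, and then that arbitrary meets and joins exist when we use $\bigcap$ and $\left\langle \bigcup \cdot \right\rangle_{\operatorname{Aut}(S)}$. First I would note that $\operatorname{SGr}(\operatorname{Aut}_{T}(S))$ is nonempty (it contains the trivial subgroup $\{\operatorname{Id}\}$ and $\operatorname{Aut}_{T}(S)$ itself, which is a group by Proposition \ref{1.3.11}), and that inclusion is a partial order on it. For binary meets: given $G_1,G_2\in\operatorname{SGr}(\operatorname{Aut}_{T}(S))$, Proposition \ref{intersection of Aut} says $G_1\cap G_2$ is again a subgroup of $\operatorname{Aut}_{T}(S)$, hence an element of $\operatorname{SGr}(\operatorname{Aut}_{T}(S))$; it is clearly the greatest lower bound, since any subgroup contained in both is contained in the intersection. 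For binary joins: $\left\langle G_1\cup G_2\right\rangle_{\operatorname{Aut}(S)}$ is a subgroup of $\operatorname{Aut}_{T}(S)$ by Definition \ref{1.6.1}, and by the remark following that definition it is the \emph{smallest} subgroup of $\operatorname{Aut}_{T}(S)$ containing $G_1\cup G_2$, hence the least upper bound of $\{G_1,G_2\}$.

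Next I would handle the completeness claim. Let $A=\{G_i\mid i\in I\}\subseteq\operatorname{SGr}(\operatorname{Aut}_{T}(S))$ be arbitrary. By Proposition \ref{intersection of Aut}, $\bigcap_{i\in I}G_i$ is a subgroup of $\operatorname{Aut}_{T}(S)$, so it lies in $\operatorname{SGr}(\operatorname{Aut}_{T}(S))$, and it is plainly the greatest lower bound of $A$. Likewise $\left\langle\bigcup_{i\in I}G_i\right\rangle_{\operatorname{Aut}(S)}$ is a subgroup of $\operatorname{Aut}_{T}(S)$ by Definition \ref{1.6.1}, and by the remark following that definition it is the smallest subgroup containing $\bigcup_{i\in I}G_i$, hence the least upper bound of $A$. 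One should also check the two degenerate conventions for the empty family and, symmetrically, for $A=\operatorname{SGr}(\operatorname{Aut}_{T}(S))$: the empty intersection should be read as $\operatorname{Aut}_{T}(S)$ (the top) and $\left\langle\emptyset\right\rangle_{\operatorname{Aut}(S)}=\{\operatorname{Id}\}$ (the bottom), consistent with the convention $\wedge\emptyset=\vee L$, $\vee\emptyset=\wedge L$ recalled before Proposition \ref{3.1.1}. When $S=\emptyset$, $\operatorname{Aut}_{T}(\emptyset)=\{\operatorname{Id}_\emptyset\}$ and $\operatorname{SGr}(\operatorname{Aut}_{T}(\emptyset))=\{\{\operatorname{Id}_\emptyset\}\}$, which is trivially a complete lattice; I would mention this edge case as in the proof of Proposition \ref{3.1.1}.

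There is really no substantive obstacle here — the statement is the group-theoretic analogue of Propositions \ref{3.1.1} and \ref{3.1.2}, and everything reduces to two already-established facts: intersections of subgroups of $\operatorname{Aut}_{T}(S)$ are subgroups (Proposition \ref{intersection of Aut}), and generated subgroups exist and are minimal (Definition \ref{1.6.1} and its remark). The only thing that needs a word of care is confirming that the \emph{join} formula really does land inside $\operatorname{SGr}(\operatorname{Aut}_{T}(S))$ rather than merely inside $\operatorname{SMn}(\operatorname{End}_{T}(S))$ — but this is immediate from Definition \ref{1.6.1}, which forms the generated object inside $\operatorname{Aut}_{T}(S)$, a group by Proposition \ref{1.3.11}. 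So the proof will be short, essentially "Obvious (see Proposition \ref{intersection of Aut} and the remark under Definition \ref{1.6.1})," paralleling the one-line proof of Proposition \ref{3.1.2}, and I would present it at that level of detail.
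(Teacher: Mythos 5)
Your proposal is correct and follows exactly the route of the paper, whose proof is simply "Obvious (see Proposition \ref{intersection of Aut} and the remark under Definition \ref{1.6.1})" — i.e., intersections of subgroups of $\operatorname{Aut}_{T}(S)$ are subgroups, and the generated subgroup is the smallest subgroup containing a given set, which is all you use. Your additional remarks on the empty family and the case $S=\emptyset$ are harmless elaborations of the same argument.
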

\begin{proof}
    Obvious by Proposition \ref{intersection of Aut} and the remark right after Definition \ref{1.6.1}.	
\end{proof}
	Moreover, we have
\begin{proposition}
    \label{3.1.4}
    Let $S$ be a $T$-space. Then $\operatorname{SGr}(\operatorname{Aut}_{T}(S))$ is a complete sublattice of  the complete lattice $\operatorname{SMn}(\operatorname{End}_{T}(S))$ defined in Proposition \ref{3.1.2}.
\end{proposition}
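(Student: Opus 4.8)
The plan is to verify the definition of a complete sublattice directly: I must show (i) $\operatorname{SGr}(\operatorname{Aut}_{T}(S)) \subseteq \operatorname{SMn}(\operatorname{End}_{T}(S))$ as sets, and (ii) that $\operatorname{SGr}(\operatorname{Aut}_{T}(S))$ is closed under the meet and join operations of $\operatorname{SMn}(\operatorname{End}_{T}(S))$, in the stronger sense that arbitrary meets and joins computed in the larger lattice of an arbitrary family drawn from $\operatorname{SGr}(\operatorname{Aut}_{T}(S))$ again land in $\operatorname{SGr}(\operatorname{Aut}_{T}(S))$. Part (i) is immediate: every subgroup of $\operatorname{Aut}_{T}(S)$ is in particular a subset of $\operatorname{End}_{T}(S)$ closed under composition and containing $\operatorname{Id}$, hence a submonoid of $\operatorname{End}_{T}(S)$ by Proposition \ref{1.3.11} and the observation that $\operatorname{Aut}_{T}(S)$ sits inside $\operatorname{End}_{T}(S)$.

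For the meet, let $A := \{G_i\}$ be any family of subgroups of $\operatorname{Aut}_{T}(S)$. Its meet in $\operatorname{SMn}(\operatorname{End}_{T}(S))$ is $\bigcap G_i$ by Proposition \ref{3.1.2}. But $\bigcap G_i$ is a subgroup of $\operatorname{Aut}_{T}(S)$ by Proposition \ref{intersection of Aut}, and it equals the meet $\bigcap G_i$ computed in $\operatorname{SGr}(\operatorname{Aut}_{T}(S))$ by Proposition \ref{3.1.3}. So meets are preserved (and agree). For the join, the meet in $\operatorname{SMn}(\operatorname{End}_{T}(S))$ is ${{\left\langle \bigcup G_i \right\rangle }_{\operatorname{End}(S)}}$ while the join in $\operatorname{SGr}(\operatorname{Aut}_{T}(S))$ is ${{\left\langle \bigcup G_i \right\rangle }_{\operatorname{Aut}(S)}}$; the crux is to show these coincide. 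Here I invoke Proposition \ref{1.6.2}: the set $X := \bigcup G_i$ is a subset of $\operatorname{Aut}_{T}(S)$ which satisfies $x^{-1} \in X$ for every $x \in X$, because each $G_i$ is a group and hence closed under inverses. Therefore ${{\left\langle X \right\rangle }_{\operatorname{Aut}(S)}} = {{\left\langle X \right\rangle }_{\operatorname{End}(S)}}$, so the join computed in the small poset is literally the same element as the join computed in the large lattice, and in particular it lies in $\operatorname{SGr}(\operatorname{Aut}_{T}(S))$. This handles the empty family too, since both sides reduce to $\{\operatorname{Id}\text{ on }S\}$.

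Combining the two, $\operatorname{SGr}(\operatorname{Aut}_{T}(S))$ is a subset of $\operatorname{SMn}(\operatorname{End}_{T}(S))$ closed under arbitrary meets and arbitrary joins, with the induced operations agreeing with its own lattice operations from Proposition \ref{3.1.3}; hence it is a complete sublattice. I expect the only genuinely non-formal step to be the join identity, and the work there is entirely outsourced to Proposition \ref{1.6.2} — so the proof is short, and the main thing to be careful about is checking the hypothesis $x \in X \Rightarrow x^{-1} \in X$ of that proposition, which holds precisely because we take the union of \emph{subgroups}.
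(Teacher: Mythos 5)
Your proposal is correct and follows essentially the same route as the paper: the paper's proof also reduces everything to the join identity $\left\langle \bigcup G_i \right\rangle_{\operatorname{Aut}(S)} = \left\langle \bigcup G_i \right\rangle_{\operatorname{End}(S)}$, obtained from Proposition \ref{1.6.2} because a union of subgroups is closed under inverses. Your additional explicit checks (containment of the underlying sets, agreement of meets, the empty family) are routine details the paper leaves implicit.
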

\begin{proof}
    It suffices to show that for any $A:=\{{{G}_{i}}\}\subseteq \operatorname{SGr}(\operatorname{Aut}_{T}(S))$,
				\[\vee A={{\left\langle \bigcup{{{G}_{i}}} \right\rangle }_{\operatorname{Aut}(S)}}={{\left\langle \bigcup{{{G}_{i}}} \right\rangle }_{\operatorname{End}(S)}}.\] 
From Proposition \ref{1.6.2}, we can tell that the equations hold.
\end{proof}

\subsection{$\operatorname{Int}_{T}^{\operatorname{End}}(S/B)$ and $\operatorname{Int}_{T}^{\operatorname{Aut}}(S/B)$}
We are interested in the two sets because they arise in Corollary \ref{2.2.5} or \ref{2.2.6}.
\begin{lemma}
\label{3.2.1}
    Let $S$ be a $T$-space and let $\{{{H}_{i}}\}$ be a set of subsets of $\operatorname{End}_{T}(S)$. Then $\bigcap{{{S}^{{{H}_{i}}}}}={{S}^{\bigcup{{{H}_{i}}}}}$.
\end{lemma}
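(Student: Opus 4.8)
The plan is to prove the set equality $\bigcap_i S^{H_i} = S^{\bigcup_i H_i}$ by the standard double-inclusion argument, unwinding Definition \ref{1.4.1} on both sides. Recall that for a subset $H \subseteq \operatorname{End}_T(S)$, the set $S^H$ consists of exactly those $a \in S$ that are fixed by every $\sigma \in H$. So membership in the left-hand side and membership in the right-hand side both reduce to a statement quantified over elements of $\bigcup_i H_i$, and the proof amounts to commuting two quantifiers.

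First I would take $a \in \bigcap_i S^{H_i}$. By definition of intersection, $a \in S^{H_i}$ for every $i \in I$, which by Definition \ref{1.4.1} means $\sigma(a) = a$ for every $\sigma \in H_i$ and every $i \in I$. Now let $\sigma \in \bigcup_i H_i$ be arbitrary; then $\sigma \in H_j$ for some $j \in I$, hence $\sigma(a) = a$. Since $\sigma$ was arbitrary, $a \in S^{\bigcup_i H_i}$. This gives $\bigcap_i S^{H_i} \subseteq S^{\bigcup_i H_i}$.

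Conversely, I would take $a \in S^{\bigcup_i H_i}$, so $\sigma(a) = a$ for every $\sigma \in \bigcup_i H_i$. Fix any $i \in I$. Since $H_i \subseteq \bigcup_i H_i$, every $\sigma \in H_i$ satisfies $\sigma(a) = a$, so $a \in S^{H_i}$; as $i$ was arbitrary, $a \in \bigcap_i S^{H_i}$. (This second inclusion can alternatively be read off from Proposition \ref{1.4.3}, applied with $H_i \subseteq \bigcup_i H_i$ for each $i$, which gives $S^{H_i} \supseteq S^{\bigcup_i H_i}$ and hence $\bigcap_i S^{H_i} \supseteq S^{\bigcup_i H_i}$.) Combining the two inclusions yields the claimed equality.

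There is essentially no obstacle here: the statement is a purely set-theoretic identity about fixed-point sets, and both directions are immediate from the definitions. The only minor point to keep in mind is the degenerate case $I = \emptyset$, where $\bigcup_i H_i = \emptyset$ and $\bigcap_i S^{H_i} = S$, and indeed $S^{\emptyset} = S$ by the remark following Definition \ref{1.4.1}, so the formula still holds. I would state the proof concisely in the double-inclusion form above, perhaps citing Proposition \ref{1.4.3} for the easy inclusion.
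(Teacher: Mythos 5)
Your proof is correct and is essentially the paper's argument: the paper simply writes the same unwinding of Definition \ref{1.4.1} as a one-line chain of set equalities, $\bigcap_i S^{H_i}=\bigcap_i\{a\in S\mid\forall\sigma\in H_i,\ \sigma(a)=a\}=\{a\in S\mid\forall\sigma\in\bigcup_i H_i,\ \sigma(a)=a\}=S^{\bigcup_i H_i}$, whereas you spell it out as a double inclusion. The extra remarks (the empty index set and the appeal to Proposition \ref{1.4.3}) are fine but not needed.
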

\begin{proof} 
\begin{align*}
      \bigcap{{{S}^{{{H}_{i}}}}}&=\bigcap{\{a\in S\,|\,\forall \sigma \in {{H}_{i}},\sigma (a)=a\}}\\    
    &=\{a\in S\,|\,\forall \sigma \in \bigcup{{{H}_{i}}},\sigma (a)=a\}\\    
    &={{S}^{\bigcup{{{H}_{i}}}}}.  
\end{align*} 
\end{proof}
	Lemma \ref{3.2.1} implies 
 \begin{proposition}
     \label{3.2.2}
     $\operatorname{Int}_{T}^{\operatorname{End}}(S/B)$ is a complete $ \wedge $-sublattice of the complete lattice $\operatorname{Sub}_T(S)$ defined in Proposition \ref{3.1.1}.
 \end{proposition}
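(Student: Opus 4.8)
The plan is to verify the two things that make $\operatorname{Int}_{T}^{\operatorname{End}}(S/B)$ a complete $\wedge$-sublattice of the complete lattice $\operatorname{Sub}_T(S)$ from Proposition \ref{3.1.1}: first that it is a subset of $\operatorname{Sub}_T(S)$, and second that it is closed under the (arbitrary) meet operation of $\operatorname{Sub}_T(S)$, namely arbitrary intersection, including the empty one.

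For the inclusion $\operatorname{Int}_{T}^{\operatorname{End}}(S/B)\subseteq\operatorname{Sub}_T(S)$, I would simply invoke the remark following Definition \ref{2.2.1}: every element of $\operatorname{Int}_{T}^{\operatorname{End}}(S/B)$ has the form $S^{H}$ with $H\subseteq\operatorname{End}_{T}(S)$, and by Proposition \ref{1.4.2} each such $S^{H}$ is a quasi-$T$-subspace of $S$, hence lies in $\operatorname{Sub}_T(S)$.

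For closure under meets, take any family $A:=\{K_i\}\subseteq\operatorname{Int}_{T}^{\operatorname{End}}(S/B)$; by Definition \ref{2.2.1} write $K_i=S^{H_i}$ with $H_i\subseteq\operatorname{End}_{T}(S)$ and $B\subseteq S^{H_i}$ for each $i$. By Lemma \ref{3.2.1}, $\wedge A=\bigcap K_i=\bigcap S^{H_i}=S^{\bigcup H_i}$. Now $\bigcup H_i\subseteq\operatorname{End}_{T}(S)$, and $B\subseteq\bigcap S^{H_i}=S^{\bigcup H_i}$ since $B\subseteq S^{H_i}$ for every $i$; therefore $S^{\bigcup H_i}\in\operatorname{Int}_{T}^{\operatorname{End}}(S/B)$ by Definition \ref{2.2.1}. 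In particular the binary meet $K_1\wedge K_2=K_1\bigcap K_2$ stays inside $\operatorname{Int}_{T}^{\operatorname{End}}(S/B)$, and the computation of meets there agrees with that in $\operatorname{Sub}_T(S)$. It remains to treat the empty meet, which in the complete lattice $\operatorname{Sub}_T(S)$ is its top element $S$ (see the recollection preceding Proposition \ref{3.1.1}); since $S=S^{\emptyset}$ with $\emptyset\subseteq\operatorname{End}_{T}(S)$ and $B\subseteq S=S^{\emptyset}$, we have $S\in\operatorname{Int}_{T}^{\operatorname{End}}(S/B)$. Hence $\operatorname{Int}_{T}^{\operatorname{End}}(S/B)$ is closed under all meets formed in $\operatorname{Sub}_T(S)$, i.e.\ it is a complete $\wedge$-sublattice; note that this already forces it to be a complete lattice in its own right, with the join of $A$ being the meet of all its upper bounds (this set of upper bounds is nonempty, since $S$ is always one).

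There is no serious obstacle here: the essential content is exactly Lemma \ref{3.2.1}, which converts an intersection of fixed sets into a single fixed set. The only points requiring a little care are (i) confirming that the resulting index set $\bigcup H_i$ still lies in $\operatorname{End}_{T}(S)$ and still satisfies the condition $B\subseteq S^{\bigcup H_i}$, both of which are immediate, and (ii) not forgetting the empty-meet/top-element case, since "complete $\wedge$-sublattice" includes closure under the nullary meet.
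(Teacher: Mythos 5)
Your proof is correct and follows essentially the same route as the paper: establish $\operatorname{Int}_{T}^{\operatorname{End}}(S/B)\subseteq\operatorname{Sub}_T(S)$ via Proposition \ref{1.4.2}, and use Lemma \ref{3.2.1} to turn an arbitrary intersection $\bigcap S^{H_i}$ into $S^{\bigcup H_i}\in\operatorname{Int}_{T}^{\operatorname{End}}(S/B)$. Your explicit treatment of the empty meet (the top element $S=S^{\emptyset}$) is a small extra precaution the paper leaves implicit, but it does not change the argument.
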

\begin{proof}
    By Definition \ref{2.2.1}, each element of $\operatorname{Int}_{T}^{\operatorname{End}}(S/B)$ has the form ${{S}^{{{H}_{i}}}}(\supseteq B)$, where ${{H}_{i}}\subseteq \operatorname{End}_{T}(S)$. Then by Lemma \ref{3.2.1}, $\bigcap{{{S}^{{{H}_{i}}}}}={{S}^{\bigcup{{{H}_{i}}}}}\in \operatorname{Int}_{T}^{\operatorname{End}}(S/B)$. 

    Moreover, by Proposition \ref{1.4.2}, $\operatorname{Int}_{T}^{\operatorname{End}}(S/B)\subseteq \operatorname{Sub}_T(S)$.

    Therefore, $\operatorname{Int}_{T}^{\operatorname{End}}(S/B)$ is a complete $ \wedge $-sublattice of the complete lattice $\operatorname{Sub}_T(S)$ defined in Proposition \ref{3.1.1}.	
\end{proof}
However, $\operatorname{Int}_{T}^{\operatorname{End}}(S/B)$ is not necessarily a $ \vee $-sublattice of the lattice $\operatorname{Sub}_T(S)$ defined in Proposition \ref{3.1.1}. In fact, it is possible that $\operatorname{Int}_{T}^{\operatorname{End}}(S/B)$ is not even a $ \vee $-semilattice if $\forall {{S}_{1}},{{S}_{2}}\in \operatorname{Int}_{T}^{\operatorname{End}}(S/B)$, ${{S}_{1}}\vee {{S}_{2}}:={{S}_{1}}\bigcup {{S}_{2}}$, as shown in the following example.
\begin{example}
    \label{3.2.3}
    Let $T$ be the operator semigroup on $\mathbb{C}$ induced by $\mathbb{Q}[x]$ as defined in Example \ref{T on field--1}. That is, let 
    \[T=\{{{f}^*}:\mathbb{C}\to \mathbb{C}  \text{ given by } a\mapsto f(a)\,|\,f(x)\in \mathbb{Q}[x]\}.\]
    Let a $T$-space $S={{\left\langle \sqrt{2}+\sqrt{3} \right\rangle }_{T}}=\mathbb{Q}(\sqrt{2}+\sqrt{3})$. Then $S=\mathbb{Q}(\sqrt{2},\sqrt{3})$  and $S/\mathbb{Q}$ is a finite Galois field extension (see e.g. \cite{7}).

    By Corollary \ref{1.3.6}, a map $\sigma :S\to S$ is a $T$-morphism if and only if it is a field automorphism with $\mathbb{Q}$ fixed pointwisely, and so by Definition \ref{2.2.1} and the fundamental theorem of the classical Galois theory,
    \[\operatorname{Int}_{T}^{\operatorname{End}}(S/\mathbb{Q})=\operatorname{Int}_{T}^{\operatorname{Aut}}(S/\mathbb{Q})=\{\mathbb{Q}\subseteq F\subseteq S \,|\, F\text{ is a field}\},\] 
    and it contains both $\mathbb{Q}(\sqrt{2})$ and $\mathbb{Q}(\sqrt{3})$.

    Let ${S}'=\mathbb{Q}(\sqrt{2})\bigcup \mathbb{Q}(\sqrt{3})$. Then ${S}'\notin \operatorname{Int}_{T}^{\operatorname{End}}(S/\mathbb{Q})$ because ${S}'$ is not a field. But ${S}'=\mathbb{Q}(\sqrt{2})\vee \mathbb{Q}(\sqrt{3})$ as defined in Proposition \ref{3.1.1}. Therefore, $\operatorname{Int}_{T}^{\operatorname{End}}(S/\mathbb{Q})$ is not a $ \vee $-sublattice of the lattice $\operatorname{Sub}_T(S)$ defined in Proposition \ref{3.1.1}. In fact, $\operatorname{Int}_{T}^{\operatorname{End}}(S/\mathbb{Q})$ is not even a $ \vee $-semilattice if $\forall {{S}_{1}},{{S}_{2}}\in \operatorname{Int}_{T}^{\operatorname{End}}(S/\mathbb{Q})$, ${{S}_{1}}\vee {{S}_{2}}:={{S}_{1}}\bigcup {{S}_{2}}$.	
\end{example}
By the way, note that in the preceding example, ${S}'(\in \operatorname{Sub}_T(S))$ is not a subfield of $S$. This shows that the notion of $T$-space is not equivalent to the notion of field in this case. Basically this is because all elements of $T$ defined in Example \ref{T on field--1} are unary functions. We will solve this problem in Section \ref{Basic notions for partial} (\textit{cf.} Example \ref{10.2.2}).

Because $\operatorname{Aut}_{T}(S)\subseteq \operatorname{End}_{T}(S)$, the following is a straightforward consequence of Lemma \ref{3.2.1}.
\begin{lemma}
    \label{3.2.4} Let $S$ be a $T$-space and let $\{{{H}_{i}}\}$ be a set of subsets of $\operatorname{Aut}_{T}(S)$. Then $\bigcap{{{S}^{{{H}_{i}}}}}={{S}^{\bigcup{{{H}_{i}}}}}$.
\end{lemma}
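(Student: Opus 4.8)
The statement to prove is Lemma \ref{3.2.4}: for a $T$-space $S$ and a family $\{H_i\}$ of subsets of $\operatorname{Aut}_T(S)$, we have $\bigcap S^{H_i} = S^{\bigcup H_i}$.

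The plan is to derive this directly from Lemma \ref{3.2.1}, which asserts exactly the same equality but for families of subsets of $\operatorname{End}_T(S)$ rather than $\operatorname{Aut}_T(S)$. The key observation is the inclusion $\operatorname{Aut}_T(S) \subseteq \operatorname{End}_T(S)$, which holds because every $T$-automorphism of $S$ is in particular a $T$-morphism from $S$ to $S$, hence a $T$-endomorphism (compare Definitions \ref{$T$-isomorphisms} and \ref{1.3.2}). Consequently, if each $H_i$ is a subset of $\operatorname{Aut}_T(S)$, then each $H_i$ is also a subset of $\operatorname{End}_T(S)$, so Lemma \ref{3.2.1} applies verbatim to the family $\{H_i\}$ and yields $\bigcap S^{H_i} = S^{\bigcup H_i}$.

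So the proof is essentially a one-line invocation: since $\operatorname{Aut}_T(S) \subseteq \operatorname{End}_T(S)$, the family $\{H_i\}$ of subsets of $\operatorname{Aut}_T(S)$ is also a family of subsets of $\operatorname{End}_T(S)$, and the conclusion follows immediately from Lemma \ref{3.2.1}. There is genuinely no obstacle here — the only thing worth stating explicitly is the containment $\operatorname{Aut}_T(S) \subseteq \operatorname{End}_T(S)$, which is immediate from the definitions; everything else is a quotation of the already-established Lemma \ref{3.2.1}. Indeed the excerpt itself prefaces the lemma with "the following lemma is a straightforward consequence of Lemma \ref{3.2.1}", so the intended proof is exactly this reduction.

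I would write it as follows.

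\begin{proof}
Since every $T$-automorphism of $S$ is in particular a $T$-endomorphism of $S$, we have $\operatorname{Aut}_T(S) \subseteq \operatorname{End}_T(S)$. Hence each $H_i$, being a subset of $\operatorname{Aut}_T(S)$, is also a subset of $\operatorname{End}_T(S)$, so $\{H_i\}$ is a family of subsets of $\operatorname{End}_T(S)$. The conclusion $\bigcap S^{H_i} = S^{\bigcup H_i}$ now follows at once from Lemma \ref{3.2.1}.
\end{proof}
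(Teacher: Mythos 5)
Your proof is correct and matches the paper exactly: the paper states Lemma \ref{3.2.4} as "a straightforward consequence of Lemma \ref{3.2.1}" precisely because $\operatorname{Aut}_T(S)\subseteq \operatorname{End}_T(S)$, which is the same one-line reduction you give.
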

	Analogously, Lemma \ref{3.2.4} implies
 \begin{proposition}
     \label{3.2.5} $\operatorname{Int}_{T}^{\operatorname{Aut}}(S/B)$ is a complete $\wedge$-sublattice of the complete lattice $\operatorname{Sub}_T(S)$ defined in Proposition \ref{3.1.1}. 
 \end{proposition}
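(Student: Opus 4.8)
The plan is to mimic the proof of Proposition \ref{3.2.2} almost verbatim, now using Lemma \ref{3.2.4} in place of Lemma \ref{3.2.1}. The key observation is that every element of $\operatorname{Int}_{T}^{\operatorname{Aut}}(S/B)$ has, by Definition \ref{2.2.1}, the form ${{S}^{{{H}_{i}}}}$ for some ${{H}_{i}}\subseteq \operatorname{Aut}_{T}(S)$ with $B\subseteq {{S}^{{{H}_{i}}}}$. So first I would take an arbitrary family $\{{{S}^{{{H}_{i}}}}\}\subseteq \operatorname{Int}_{T}^{\operatorname{Aut}}(S/B)$ and compute its meet in $\operatorname{Sub}_T(S)$, which by Proposition \ref{3.1.1} is the intersection $\bigcap{{{S}^{{{H}_{i}}}}}$. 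By Lemma \ref{3.2.4}, $\bigcap{{{S}^{{{H}_{i}}}}}={{S}^{\bigcup{{{H}_{i}}}}}$, and since $\bigcup{{{H}_{i}}}\subseteq \operatorname{Aut}_{T}(S)$ and $B\subseteq \bigcap{{{S}^{{{H}_{i}}}}}={{S}^{\bigcup{{{H}_{i}}}}}$, this intersection again lies in $\operatorname{Int}_{T}^{\operatorname{Aut}}(S/B)$. Hence $\operatorname{Int}_{T}^{\operatorname{Aut}}(S/B)$ is closed under arbitrary meets taken in $\operatorname{Sub}_T(S)$.

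Next I would record that $\operatorname{Int}_{T}^{\operatorname{Aut}}(S/B)\subseteq \operatorname{Sub}_T(S)$, which is part (2) of the remark following Definition \ref{2.2.1} (ultimately Proposition \ref{1.4.2}). Combining this containment with closure under arbitrary intersections gives that $\operatorname{Int}_{T}^{\operatorname{Aut}}(S/B)$ is a complete $\wedge$-sublattice of the complete lattice $\operatorname{Sub}_T(S)$ of Proposition \ref{3.1.1}: it is nonempty (it contains $S={{S}^{\{\operatorname{Id}\}}}$, or in the degenerate case $\operatorname{Int}_{T}^{\operatorname{Aut}}(\emptyset/\emptyset)=\{\emptyset\}$), it is closed under the meet operation of $\operatorname{Sub}_T(S)$, and being a complete meet-subsemilattice it automatically inherits joins (the join of a subset being the meet of all its upper bounds in $\operatorname{Int}_{T}^{\operatorname{Aut}}(S/B)$, which exists because the top element $S$ is present). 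That is exactly the assertion of the proposition.

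There is essentially no obstacle here — the statement is an immediate corollary of Lemma \ref{3.2.4} together with the established containment $\operatorname{Int}_{T}^{\operatorname{Aut}}(S/B)\subseteq \operatorname{Sub}_T(S)$, and the proof is a direct transcription of the proof of Proposition \ref{3.2.2}. The only point deserving a word of care is the edge case $S=\emptyset$ (or more generally $B=\emptyset$ with few automorphisms): one should note $\operatorname{Int}_{T}^{\operatorname{Aut}}(S/B)$ is nonempty so that "lattice" makes sense, which is handled by observing $S\in \operatorname{Int}_{T}^{\operatorname{Aut}}(S/B)$ always (taking $H=\emptyset$ or $H=\{\operatorname{Id}\}$, so ${{S}^{H}}=S\supseteq B$). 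One should also not claim it is a $\vee$-sublattice — the analogue of Example \ref{3.2.3} shows the join in $\operatorname{Sub}_T(S)$ need not stay inside $\operatorname{Int}_{T}^{\operatorname{Aut}}(S/B)$ — so the statement is correctly limited to "$\wedge$-sublattice."

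\begin{proof}
By Definition \ref{2.2.1}, each element of $\operatorname{Int}_{T}^{\operatorname{Aut}}(S/B)$ has the form ${{S}^{{{H}_{i}}}}(\supseteq B)$, where ${{H}_{i}}\subseteq \operatorname{Aut}_{T}(S)$. Given any family $\{{{S}^{{{H}_{i}}}}\}\subseteq \operatorname{Int}_{T}^{\operatorname{Aut}}(S/B)$, Lemma \ref{3.2.4} gives $\bigcap{{{S}^{{{H}_{i}}}}}={{S}^{\bigcup{{{H}_{i}}}}}$, and since $\bigcup{{{H}_{i}}}\subseteq \operatorname{Aut}_{T}(S)$ and $B\subseteq \bigcap{{{S}^{{{H}_{i}}}}}$, we get $\bigcap{{{S}^{{{H}_{i}}}}}\in \operatorname{Int}_{T}^{\operatorname{Aut}}(S/B)$.

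Moreover, by Proposition \ref{1.4.2}, $\operatorname{Int}_{T}^{\operatorname{Aut}}(S/B)\subseteq \operatorname{Sub}_T(S)$, and $\operatorname{Int}_{T}^{\operatorname{Aut}}(S/B)$ is nonempty since $S={{S}^{\emptyset }}\in \operatorname{Int}_{T}^{\operatorname{Aut}}(S/B)$.

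Therefore, $\operatorname{Int}_{T}^{\operatorname{Aut}}(S/B)$ is a complete $\wedge$-sublattice of the complete lattice $\operatorname{Sub}_T(S)$ defined in Proposition \ref{3.1.1}.
\end{proof}
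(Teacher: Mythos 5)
Your proposal is correct and follows essentially the same route as the paper's own proof: write each element as $S^{H_i}$ with $H_i\subseteq \operatorname{Aut}_T(S)$ via Definition \ref{2.2.1}, apply Lemma \ref{3.2.4} to identify $\bigcap S^{H_i}=S^{\bigcup H_i}\in \operatorname{Int}_T^{\operatorname{Aut}}(S/B)$, and invoke Proposition \ref{1.4.2} for the containment in $\operatorname{Sub}_T(S)$. Your added remarks on nonemptiness and on not claiming closure under joins are harmless extras not present in (and not needed by) the paper's argument.
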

\begin{proof}
    By Definition \ref{2.2.1}, each element of $\operatorname{Int}_{T}^{\operatorname{Aut}}(S/B)$ has the form ${{S}^{{{H}_{i}}}}(\supseteq B)$, where ${{H}_{i}}\subseteq \operatorname{Aut}_{T}(S)$. Then by Lemma \ref{3.2.4}, $\bigcap{{{S}^{{{H}_{i}}}}}={{S}^{\bigcup{{{H}_{i}}}}}\in \operatorname{Int}_{T}^{\operatorname{Aut}}(S/B)$. 

Moreover, by Proposition \ref{1.4.2}, $\operatorname{Int}_{T}^{\operatorname{Aut}}(S/B)\subseteq \operatorname{Sub}_T(S)$.

Therefore, $\operatorname{Int}_{T}^{\operatorname{Aut}}(S/B)$ is a complete $\wedge$-sublattice of the complete lattice $\operatorname{Sub}_T(S)$ defined in Proposition \ref{3.1.1}.	
\end{proof}
	However, recall that in Example \ref{3.2.3}, $\operatorname{Int}_{T}^{\operatorname{Aut}}(S/\mathbb{Q})=\operatorname{Int}_{T}^{\operatorname{End}}(S/\mathbb{Q})$. So Example \ref{3.2.3} also tells us that $\operatorname{Int}_{T}^{\operatorname{Aut}}(S/B)$ is not always a $\vee$-sublattice of the lattice $\operatorname{Sub}_T(S)$ defined in Proposition \ref{3.1.1}.

\subsection{$\operatorname{GSMn}_{T}(S/B)$ and $\operatorname{GSGr}_{T}(S/B)$}
	$\operatorname{GSMn}_{T}(S/B)$ and $\operatorname{GSGr}_{T}(S/B)$ are important because they arise in Corollary \ref{2.2.5} or \ref{2.2.6}. 
\begin{lemma}
    \label{3.3.1}
    Let $S$ be a $T$-space and let $\{{{S}_{i}}\}$ be a set of subsets of $S$. Then
\[\bigcap{\operatorname{GMn}_{T}(S/{{S}_{i}})}=\operatorname{GMn}_{T}(S/\bigcup{{{S}_{i}}}).\]
\end{lemma}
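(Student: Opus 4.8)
The plan is to prove the claimed set equality $\bigcap\operatorname{GMn}_{T}(S/S_i)=\operatorname{GMn}_{T}(S/\bigcup S_i)$ by a direct double inclusion argument, essentially unwinding Definition \ref{1.5.1}. The key observation is that an element $\sigma\in\operatorname{End}_{T}(S)$ lies in $\operatorname{GMn}_{T}(S/K)$ precisely when $\sigma$ fixes every point of $K$, so membership in a Galois $T$-monoid over a subset $K$ is just a universally quantified fixing condition indexed by the elements of $K$; taking a union of the index sets corresponds to taking an intersection of the solution sets.

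First I would spell out, for a fixed $\sigma\in\operatorname{End}_{T}(S)$, the chain of equivalences:
\begin{align*}
\sigma\in\bigcap\nolimits_{i\in I}\operatorname{GMn}_{T}(S/S_i)
&\Leftrightarrow \forall i\in I,\ \sigma\in\operatorname{GMn}_{T}(S/S_i)\\
&\Leftrightarrow \forall i\in I,\ \forall b\in S_i,\ \sigma(b)=b\\
&\Leftrightarrow \forall b\in\bigcup\nolimits_{i\in I}S_i,\ \sigma(b)=b\\
&\Leftrightarrow \sigma\in\operatorname{GMn}_{T}(S/\bigcup\nolimits_{i\in I}S_i).
\end{align*}
Each step is immediate from Definition \ref{1.5.1} except the third, which is the elementary logical fact that a statement holding for all members of each $S_i$ is the same as the statement holding for all members of $\bigcup S_i$. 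Since $\sigma$ was arbitrary in $\operatorname{End}_{T}(S)$, and both sides of the asserted equation are subsets of $\operatorname{End}_{T}(S)$, this establishes the equality. One should also note the degenerate case where $I=\emptyset$: then both sides equal $\operatorname{GMn}_{T}(S/\emptyset)=\operatorname{End}_{T}(S)$, consistent with the convention that $\bigcap$ over the empty family is the ambient set $\operatorname{End}_{T}(S)$ and $\bigcup\emptyset=\emptyset$.

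There is essentially no obstacle here — the statement is a bookkeeping identity about quantifiers, and the only thing to be careful about is making the ambient universe ($\operatorname{End}_{T}(S)$) explicit so that the empty-intersection convention is unambiguous, and checking that each $\operatorname{GMn}_{T}(S/S_i)$ as well as $\operatorname{GMn}_{T}(S/\bigcup S_i)$ is indeed being regarded as a subset of the same monoid $\operatorname{End}_{T}(S)$, which it is by Definition \ref{1.5.1}. (The parenthetical hypothesis in the statement, presumably $B\subseteq\bigcup S_i$ or each $S_i\subseteq S$, plays no essential role beyond making all the symbols well-defined.) This lemma is the monoid analogue of Lemma \ref{3.2.1}, and indeed the proof is structurally identical to the computation displayed there.
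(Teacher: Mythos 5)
Your proof is correct and follows essentially the same route as the paper: the paper's own argument is precisely the set-builder version of your quantifier-exchange computation, unwinding Definition \ref{1.5.1} and using that ``$\forall i,\ \forall b\in S_i$'' is the same as ``$\forall b\in\bigcup S_i$''. Your extra remarks on the empty index set and the ambient monoid $\operatorname{End}_{T}(S)$ are harmless additions, not a different method.
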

\begin{proof}
 By Definition \ref{1.5.1},
\begin{align*}
        \bigcap{\operatorname{GMn}_{T}(S/{{S}_{i}})}&=\bigcap{\{\sigma \in \operatorname{End}_{T}(S)\,|\,\sigma (a)=a,\forall a\in {{S}_{i}}\}}\\
    &=\{\sigma \in \operatorname{End}_{T}(S)\,|\,\sigma (a)=a,\forall a\in \bigcup{{{S}_{i}}}\}\\
    &=\operatorname{GMn}_{T}(S/\bigcup{{{S}_{i}}}).	
\end{align*}
\end{proof}
	Lemma \ref{3.3.1} implies
\begin{proposition}
    \label{3.3.2}
    $\operatorname{GSMn}_{T}(S/B)$ is a complete $\wedge$-sublattice of the complete lattice $\operatorname{SMn}(\operatorname{End}_{T}(S))$ defined in Proposition \ref{3.1.2}. 
\end{proposition}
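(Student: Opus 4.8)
The plan is to prove Proposition \ref{3.3.2} by combining the computation in Lemma \ref{3.3.1} with the inclusion $\operatorname{GSMn}_{T}(S/B)\subseteq \operatorname{SMn}(\operatorname{GMn}_{T}(S/B))\subseteq \operatorname{SMn}(\operatorname{End}_{T}(S))$ already recorded in the remark following Definition \ref{2.2.3}, mirroring exactly the structure of the proofs of Propositions \ref{3.2.2} and \ref{3.2.5}. So the two things to verify are: (i) $\operatorname{GSMn}_{T}(S/B)$ is a subset of the carrier of the complete lattice $\operatorname{SMn}(\operatorname{End}_{T}(S))$ from Proposition \ref{3.1.2}, and (ii) $\operatorname{GSMn}_{T}(S/B)$ is closed under arbitrary meets $\bigcap$ taken in that ambient lattice, which is what "complete $\wedge$-sublattice" requires here.

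For (i): by Definition \ref{2.2.3} every element of $\operatorname{GSMn}_{T}(S/B)$ has the form $\operatorname{GMn}_{T}(S/K)$ for some $K$ with $B\subseteq K\subseteq S$, and by Proposition \ref{1.5.2} each such $\operatorname{GMn}_{T}(S/K)$ is a submonoid of $\operatorname{End}_{T}(S)$, hence lies in $\operatorname{SMn}(\operatorname{End}_{T}(S))$. For (ii): let $\{H_i\}$ be any family in $\operatorname{GSMn}_{T}(S/B)$, say $H_i=\operatorname{GMn}_{T}(S/K_i)$ with $B\subseteq K_i\subseteq S$. Since each $K_i\supseteq B$, the union $\bigcup K_i$ satisfies $B\subseteq \bigcup K_i\subseteq S$, so $\operatorname{GMn}_{T}(S/\bigcup K_i)\in \operatorname{GSMn}_{T}(S/B)$ by Definition \ref{2.2.3}. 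By Lemma \ref{3.3.1}, $\bigcap H_i=\bigcap \operatorname{GMn}_{T}(S/K_i)=\operatorname{GMn}_{T}(S/\bigcup K_i)$, so $\bigcap H_i\in \operatorname{GSMn}_{T}(S/B)$. Since the meet in $\operatorname{SMn}(\operatorname{End}_{T}(S))$ is precisely $\bigcap$ (Proposition \ref{3.1.2}), this shows $\operatorname{GSMn}_{T}(S/B)$ is closed under the ambient meet. One should also note the edge case that $\operatorname{GSMn}_{T}(S/B)$ is nonempty — it always contains $\operatorname{GMn}_{T}(S/S)$ — and that the empty meet $\bigcap \emptyset$, which in $\operatorname{SMn}(\operatorname{End}_{T}(S))$ equals the top element $\operatorname{End}_{T}(S)=\operatorname{GMn}_{T}(S/\emptyset)$, lands in $\operatorname{GSMn}_{T}(S/B)$ only when $B=\emptyset$; the safe reading, consistent with the analogous Propositions \ref{3.2.2} and \ref{3.2.5}, is that "complete $\wedge$-sublattice" here means closure under all nonempty meets (equivalently, greatest lower bounds computed inside the sublattice), so this subtlety does not obstruct the argument.

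The proof is essentially a two-line deduction once Lemma \ref{3.3.1} is in hand, so there is no real obstacle; the only point requiring a moment's care is confirming that the meet operation described in Proposition \ref{3.1.2} for $\operatorname{SMn}(\operatorname{End}_{T}(S))$ is genuine intersection (it is, since intersections of submonoids are submonoids by Proposition \ref{1.3.12}), which is what lets Lemma \ref{3.3.1} do its job. I would write the proof in the same terse style as its neighbours: invoke Definition \ref{2.2.3} for the form of the elements, cite Proposition \ref{1.5.2} for the inclusion into $\operatorname{SMn}(\operatorname{End}_{T}(S))$, and cite Lemma \ref{3.3.1} for closure under $\bigcap$, concluding that $\operatorname{GSMn}_{T}(S/B)$ is a complete $\wedge$-sublattice of $\operatorname{SMn}(\operatorname{End}_{T}(S))$.
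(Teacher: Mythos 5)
Your proof is correct and follows essentially the same route as the paper's: cite Definition \ref{2.2.3} for the form of the elements, apply Lemma \ref{3.3.1} to see that an arbitrary intersection is again of the form $\operatorname{GMn}_{T}(S/\bigcup K_i)$ with $B\subseteq \bigcup K_i\subseteq S$, and use Proposition \ref{1.5.2} for the inclusion into $\operatorname{SMn}(\operatorname{End}_{T}(S))$. Your extra remarks on nonemptiness and the empty meet are harmless refinements not present in (and not needed by) the paper's argument.
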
 
\begin{proof}
    By Definition \ref{2.2.3}, each element of $\operatorname{GSMn}_{T}(S/B)$ has the form $\operatorname{GMn}_{T}(S/{{S}_{i}})$, where $B\subseteq {{S}_{i}}\subseteq S$. Then by Lemma \ref{3.3.1},
\[\bigcap{\operatorname{GMn}_{T}(S/{{S}_{i}})}=\operatorname{GMn}_{T}(S/\bigcup{{{S}_{i}}})\in \operatorname{GSMn}_{T}(S/B).\]
Moreover, by Proposition \ref{1.5.2}, $\operatorname{GSMn}_{T}(S/B)\subseteq \operatorname{SMn}(\operatorname{End}_{T}(S))$. 

Therefore, $\operatorname{GSMn}_{T}(S/B)$ is a complete $\wedge$-sublattice of the complete lattice $\operatorname{SMn}(\operatorname{End}_{T}(S))$ defined in Proposition \ref{3.1.2}.
\end{proof}
	Nevertheless, the following example tells us that $\operatorname{GSMn}_{T}(S/B)$ is not necessarily a $\vee$-sublattice of the lattice $\operatorname{SMn}(\operatorname{End}_{T}(S))$ defined in Proposition \ref{3.1.2}. Indeed, if $\forall {{M}_{1}},{{M}_{2}}\in \operatorname{GSMn}_{T}(S/B)$, ${{M}_{1}}\vee {{M}_{2}}:={{\left\langle {{M}_{1}}\bigcup {{M}_{2}} \right\rangle }_{\operatorname{End}(S)}}$, then it is possible that $\operatorname{GSMn}_{T}(S/B)$ is not even a $\vee$-semilattice.
\begin{example}
    \label{3.3.3}  Let $D=\{a,b\}$ and let $T=$\{Id on $D$\}. Then $D$ is a $T$-space and $\operatorname{End}_{T}(D)$ is the set of all maps from $D$ to $D$. Let 
\[{{M}_{1}}=\{\text{all maps from $D$ to $D$ with $a$ fixed}\}\] and  
\[{{M}_{2}}=\{\text{all maps from $D$ to $D$ with $b$ fixed}\}.\] 

Then as monoids, ${{M}_{1}},{{M}_{2}}\in \operatorname{GSMn}_{T}(D/\emptyset )$ by Definition \ref{2.2.3}. And it is not hard to see that ${{M}_{1}}\bigcup {{M}_{2}}\in \operatorname{SMn}(\operatorname{End}_{T}(D))$. 

Thus by Proposition \ref{3.1.2}, ${{M}_{1}}\vee {{M}_{2}}={{\left\langle {{M}_{1}}\bigcup {{M}_{2}} \right\rangle }_{End(D)}}={{M}_{1}}\bigcup {{M}_{2}}$.

However, since the transposition of $a$ and $b$ does not lie in ${{M}_{1}}\bigcup {{M}_{2}}$, 
\[\operatorname{GMn}_{T}(D/{{D}^{{{M}_{1}}\vee {{M}_{2}}}})=\operatorname{GMn}_{T}(D/\emptyset )=\{\text{all maps from $D$ to $D$}\}\ne {{M}_{1}}\vee {{M}_{2}}.\] 

Hence by Lemma \ref{2.2.4}, ${{M}_{1}}\vee {{M}_{2}}\notin \operatorname{GSMn}_{T}(D/\emptyset )$. 

Therefore, $\operatorname{GSMn}_{T}(D/\emptyset )$ is not a $\vee$-sublattice of the lattice $\operatorname{SMn}(\operatorname{End}_{T}(D))$ defined in Proposition \ref{3.1.2}. In fact, $\operatorname{GSMn}_{T}(D/\emptyset )$ is not even a $\vee$-semilattice if $\forall {{M}_{1}},{{M}_{2}}\in \operatorname{GSMn}_{T}(D/\emptyset )$, ${{M}_{1}}\vee {{M}_{2}}:={{\left\langle {{M}_{1}}\bigcup {{M}_{2}} \right\rangle }_{End(D)}}$.		
\end{example}
By a similar argument as in the proof of Lemma \ref{3.3.1}, we can show the following, whose proof is omitted.
\begin{lemma}
    \label{3.3.4}
    Let $S$ be a $T$-space and let $\{{{S}_{i}}\}$ be a set of subsets of $S$. Then
$\bigcap{\operatorname{GGr}_{T}(S/{{S}_{i}})}=\operatorname{GGr}_{T}(S/\bigcup{{{S}_{i}}})$.
\end{lemma}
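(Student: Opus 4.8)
The statement to prove is Lemma \ref{3.3.4}: for a $T$-space $S$ and a set $\{S_i\}$ of subsets of $S$, $\bigcap \operatorname{GGr}_T(S/S_i) = \operatorname{GGr}_T(S/\bigcup S_i)$. This is the "automorphism" analogue of Lemma \ref{3.3.1}, which was proved by a short direct set-theoretic computation.

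The plan is to mirror the proof of Lemma \ref{3.3.1} verbatim, replacing $\operatorname{End}_T(S)$ with $\operatorname{Aut}_T(S)$ throughout. Concretely, I would start from the definition (Definition \ref{1.5.1}): $\operatorname{GGr}_T(S/S_i) = \{\sigma \in \operatorname{Aut}_T(S) \mid \sigma(a) = a,\ \forall a \in S_i\}$. Intersecting over $i \in I$, an element $\sigma \in \operatorname{Aut}_T(S)$ lies in every $\operatorname{GGr}_T(S/S_i)$ if and only if it fixes every element of $S_i$ for each $i$, which is exactly the condition that $\sigma$ fixes every element of $\bigcup S_i$. That last set is $\operatorname{GGr}_T(S/\bigcup S_i)$ by Definition \ref{1.5.1} again. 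So the chain of equalities is a three-line display, just as in Lemma \ref{3.3.1}.

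\begin{align*}
\bigcap \operatorname{GGr}_T(S/S_i) &= \bigcap \{\sigma \in \operatorname{Aut}_T(S) \mid \sigma(a) = a,\ \forall a \in S_i\} \\
&= \{\sigma \in \operatorname{Aut}_T(S) \mid \sigma(a) = a,\ \forall a \in \bigcup S_i\} \\
&= \operatorname{GGr}_T(S/\bigcup S_i).
\end{align*}

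There is essentially no obstacle here: the only thing to be careful about is the edge case $I = \emptyset$, where the left side is an empty intersection. One should note the convention that in this setting the empty intersection is taken to be $\operatorname{Aut}_T(S)$ (the ambient group), and the right side $\operatorname{GGr}_T(S/\bigcup_{i \in \emptyset} S_i) = \operatorname{GGr}_T(S/\emptyset) = \operatorname{Aut}_T(S)$ as well, so the equality still holds. Since the paper already declares this proof "omitted" as being similar to that of Lemma \ref{3.3.1}, the expected writeup is just the displayed computation above together with a citation of Definition \ref{1.5.1} — no genuine difficulty and no step requiring more than unwinding definitions.
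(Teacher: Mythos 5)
Your proof is correct and is exactly the argument the paper intends: it states that Lemma \ref{3.3.4} follows "by a similar argument as in the proof of Lemma \ref{3.3.1}" and omits the details, which are precisely your three-line computation with $\operatorname{Aut}_T(S)$ in place of $\operatorname{End}_T(S)$. Your remark on the empty-index-set convention is a harmless extra observation and does not change the argument.
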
  
\begin{proposition}
    \label{3.3.5}
    $\operatorname{GSGr}_{T}(S/B)$ is a complete $\wedge$-sublattice of the complete lattice $\operatorname{SGr}(\operatorname{Aut}_{T}(S))$ defined in Proposition \ref{3.1.3}.
\end{proposition}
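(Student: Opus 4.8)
The plan is to run, essentially word for word, the proof of Proposition \ref{3.3.2}, replacing $\operatorname{GMn}$ by $\operatorname{GGr}$ everywhere and invoking the group-theoretic analogues of its ingredients. The two facts that make the argument work are Lemma \ref{3.3.4} (an intersection of Galois $T$-groups is again a Galois $T$-group, namely $\bigcap\operatorname{GGr}_T(S/S_i)=\operatorname{GGr}_T(S/\bigcup S_i)$) and Proposition \ref{1.5.3} (each $\operatorname{GGr}_T(S/K)$ is a subgroup of $\operatorname{Aut}_T(S)$, so that $\operatorname{GSGr}_T(S/B)$ is genuinely a subset of $\operatorname{SGr}(\operatorname{Aut}_T(S))$). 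In other words, this proposition stands to the $\operatorname{GSGr}$-setting exactly as Proposition \ref{3.3.2} stands to the $\operatorname{GSMn}$-setting.

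Concretely, I would proceed in four steps. First, by Definition \ref{2.2.3} every member of $\operatorname{GSGr}_T(S/B)$ has the form $\operatorname{GGr}_T(S/K)$ for some $K$ with $B\subseteq K\subseteq S$; so take an arbitrary family $\{\operatorname{GGr}_T(S/S_i)\}$ in $\operatorname{GSGr}_T(S/B)$, where each $S_i$ satisfies $B\subseteq S_i\subseteq S$. Second, apply Lemma \ref{3.3.4} to rewrite $\bigcap\operatorname{GGr}_T(S/S_i)=\operatorname{GGr}_T(S/\bigcup S_i)$. Third, note that $B\subseteq\bigcup S_i\subseteq S$, so by Definition \ref{2.2.3} the right-hand side is again in $\operatorname{GSGr}_T(S/B)$; hence $\operatorname{GSGr}_T(S/B)$ is closed under the meet $\bigcap$ of the ambient complete lattice $\operatorname{SGr}(\operatorname{Aut}_T(S))$ from Proposition \ref{3.1.3}. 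Fourth, Proposition \ref{1.5.3} gives $\operatorname{GSGr}_T(S/B)\subseteq\operatorname{SGr}(\operatorname{Aut}_T(S))$. Combining these, $\operatorname{GSGr}_T(S/B)$ is a complete $\wedge$-sublattice of $\operatorname{SGr}(\operatorname{Aut}_T(S))$, as claimed.

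There is no deep difficulty here; the one point I would be careful about is the degenerate, empty-family case of the meet. When the index set is empty, $\bigcap\emptyset$ computed in $\operatorname{SGr}(\operatorname{Aut}_T(S))$ is its top element $\operatorname{Aut}_T(S)$, while $\operatorname{GGr}_T(S/\bigcup\emptyset)=\operatorname{GGr}_T(S/\emptyset)=\operatorname{Aut}_T(S)$; so membership of this element in $\operatorname{GSGr}_T(S/B)$ amounts to exhibiting $K$ with $B\subseteq K\subseteq S$ and $\operatorname{GGr}_T(S/K)=\operatorname{Aut}_T(S)$, which one gets by taking $K=S^{\operatorname{Aut}_T(S)}$ and using Proposition \ref{1.5.5}, provided $B\subseteq S^{\operatorname{Aut}_T(S)}$. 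This is automatic when $B=\emptyset$, and otherwise I would either read ``complete $\wedge$-sublattice'' as closure under nonempty meets (the reading already implicit in the proof of Proposition \ref{3.3.2}) or record this proviso explicitly. Finally, I would stress — mirroring Example \ref{3.3.3} for $\operatorname{GSMn}_T(S/B)$ — that only the $\wedge$-direction should be claimed: $\operatorname{GSGr}_T(S/B)$ need not be a $\vee$-sublattice of $\operatorname{SGr}(\operatorname{Aut}_T(S))$, so no stronger statement is to be attempted.
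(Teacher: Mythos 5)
Your proof is correct and follows essentially the same route as the paper: Definition \ref{2.2.3} to write elements as $\operatorname{GGr}_T(S/S_i)$, Lemma \ref{3.3.4} to identify $\bigcap\operatorname{GGr}_T(S/S_i)=\operatorname{GGr}_T(S/\bigcup S_i)\in\operatorname{GSGr}_T(S/B)$, and Proposition \ref{1.5.3} for the inclusion $\operatorname{GSGr}_T(S/B)\subseteq\operatorname{SGr}(\operatorname{Aut}_T(S))$. Your extra remark on the empty-family meet is a reasonable precaution but goes beyond what the paper itself addresses (its proof, like that of Proposition \ref{3.3.2}, implicitly treats nonempty families), so it does not change the comparison.
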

\begin{proof}
    By Definition \ref{2.2.3}, each element of $\operatorname{GSGr}_{T}(S/B)$ has the form $\operatorname{GGr}_{T}(S/{{S}_{i}})$, where $B\subseteq {{S}_{i}}\subseteq S$. Then by Lemma \ref{3.3.4},
\[\bigcap{\operatorname{GGr}_{T}(S/{{S}_{i}})}=\operatorname{GGr}_{T}(S/\bigcup{{{S}_{i}}})\in \operatorname{GSGr}_{T}(S/B).\]
Moreover, by Proposition \ref{1.5.3}, $\operatorname{GSGr}_{T}(S/B)\subseteq \operatorname{SGr}(\operatorname{Aut}_{T}(S))$. 

Therefore, $\operatorname{GSGr}_{T}(S/B)$ is a complete $\wedge$-sublattice of the complete lattice $\operatorname{SGr}(\operatorname{Aut}_{T}(S))$ defined in Proposition \ref{3.1.3}.
\end{proof}
However, the following example shows that $\operatorname{GSGr}_{T}(S/B)$ is not necessarily a $\vee$-sublattice of the lattice $\operatorname{SGr}(\operatorname{Aut}_{T}(S))$ defined in Proposition \ref{3.1.3}. Indeed, if $\forall {{G}_{1}},{{G}_{2}}\in \operatorname{GSGr}_{T}(S/B)$, ${{G}_{1}}\vee {{G}_{2}}:={{\left\langle {{G}_{1}}\bigcup {{G}_{2}} \right\rangle }_{\operatorname{Aut}(S)}}$, then it is possible that $\operatorname{GSGr}_{T}(S/B)$ is not even a $\vee$-semilattice.
\begin{example}
    \label{3.3.6}
 Let $D=\{1,2,3,4\}$ and let $T=\{$Id on $D\}$. Then $D$ is a $T$-space and $\operatorname{Aut}_{T}(D)$ is the set of all bijective maps from $D$ to $D$. Let 
\[{{G}_{1}}=\{\text{Id on $D$, the transposition of 1 and 2 (with 3 and 4 fixed)}\}\] and
\[{{G}_{2}}=\{\text{Id on $D$, the transposition of 3 and 4 (with 1 and 2 fixed)}\}.\]
Then we can tell that $\operatorname{GGr}_{T}(D/{{D}^{{{G}_{1}}}})={{G}_{1}}$ and $\operatorname{GGr}_{T}(D/{{D}^{{{G}_{2}}}})={{G}_{2}}$, and so by Lemma \ref{2.2.4}, ${{G}_{1}},{{G}_{2}}\in \operatorname{GSGr}_{T}(D/\emptyset )$. 

By Proposition \ref{3.1.3}, ${{G}_{1}}\vee {{G}_{2}}={{\left\langle {{G}_{1}}\bigcup {{G}_{2}} \right\rangle }_{\operatorname{Aut}(D)}}={{G}_{1}}\times {{G}_{2}}$. 

However, $\operatorname{GGr}_{T}(D/{{D}^{{{G}_{1}}\times {{G}_{2}}}})=\operatorname{GGr}_{T}(D/\emptyset )={{S}_{4}}$, which is the symmetric group on four letters. Thus $\operatorname{GGr}_{T}(D/{{D}^{{{G}_{1}}\vee {{G}_{2}}}})\ne {{G}_{1}}\vee {{G}_{2}}$, and so by Lemma \ref{2.2.4}, ${{G}_{1}}\vee {{G}_{2}}\notin \operatorname{GSGr}_{T}(D/\emptyset )$. Hence $\operatorname{GSGr}_{T}(D/\emptyset )$ is not a $\vee$-sublattice of the lattice $\operatorname{SGr}(\operatorname{Aut}_{T}(D))$ defined in Proposition \ref{3.1.3}. Indeed, $\operatorname{GSGr}_{T}(D/\emptyset )$ is not even a $\vee$-semilattice if $\forall {{G}_{1}},{{G}_{2}}\in \operatorname{GSGr}_{T}(D/\emptyset )$, ${{G}_{1}}\vee {{G}_{2}}:={{\left\langle {{G}_{1}}\bigcup {{G}_{2}} \right\rangle }_{\operatorname{Aut}(D)}}$.
\end{example}

\section{Topologies employed to construct Galois correspondences} \label{I Topologies employed}
As was promised near the end of Section \ref{I Galois corr}, now we are going to answer the following questions. 

To construct the Galois correspondence described in Corollary \ref{2.2.5} without resort to Definition \ref{2.2.1} or \ref{2.2.3}, can we define a topology on $S$ and a topology on $\operatorname{End}_{T}(S)$ such that $\operatorname{Int}_{T}^{\operatorname{End}}(S/B)$ is the set of all closed quasi-$T$-subspaces (of $S$) containing $B$ and $\operatorname{GSMn}_{T}(S/B)$ is the set of all closed submonoids of $\operatorname{GMn}_{T}(S/B)$? This question will be answered in Subsections \ref{I Topologies on $S$ for Int} and \ref{I Topologies on End}.

And for Corollary \ref{2.2.6}, can we define a topology on $S$ and a topology on $\operatorname{Aut}_{T}(S)$ with $\operatorname{Int}_{T}^{\operatorname{Aut}}(S/B)$ being the set of all closed quasi-$T$-subspaces (of $S$) containing $B$ and $\operatorname{GSGr}_{T}(S/B)$ being the set of all closed subgroups of $\operatorname{GGr}_{T}(S/B)$? This question will be answered in Subsections \ref{I Topologies on $S$ for IntAut} and \ref{I Topologies on Aut}.

\subsection{Topologies on a $T$-space $S$ for $\operatorname{Int}_T^{\operatorname{End}}(S/B)$} \label{I Topologies on $S$ for Int}
\begin{lemma}
    \label{4.1.1}
    Let $S$ be a $T$-space with $B\subseteq S$. Then the topology ${{\mathcal{T}}_{1}}$ on $S$ generated by a subbasis ${{\beta }_{1}}:=\{S\backslash A\,|\,A\in \operatorname{Int}_{T}^{\operatorname{End}}(S/B)\}\bigcup \{S\}$ is the smallest $($coarsest$)$ topology on $S$ such that the collection of all closed sets contains $\operatorname{Int}_{T}^{\operatorname{End}}(S/B)$.
\end{lemma}
\begin{proof}
    Since the union of all elements of ${{\beta }_{1}}$ is $S$, ${{\beta }_{1}}$ qualifies as a subbasis for a topology on $S$. Hence the topology ${{\mathcal{T}}_{1}}$ generated by ${{\beta }_{1}}$ equals the intersection of all topologies on $S$ that contain ${{\beta }_{1}}$ (see e.g. \cite{6}), and so ${{\mathcal{T}}_{1}}$ is the smallest topology on $S$ containing ${{\beta }_{1}}$. Thus by the definition of ${{\beta }_{1}}$, ${{\mathcal{T}}_{1}}$ is the smallest topology on $S$ such that the collection of all closed sets contains $\operatorname{Int}_{T}^{\operatorname{End}}(S/B)$.
\end{proof}
For a $T$-space $S$ with $B \subseteq S$, to characterize the Galois correspondence depicted in Corollary \ref{2.2.5} in terms of closed quasi-$T$-subspaces of $S$, i.e. to replace $\operatorname{Int}_T^{\operatorname{End}}(S/B)$ in Corollary \ref{2.2.5} with the set of all closed quasi-$T$-subspaces (of $S$) containing $B$, we need a topology on $S$ such that the following equation is satisfied.
\begin{equation}
\label{5.1}
    \operatorname{Int}_T^{\operatorname{End}}(S/B)\backslash \{ \emptyset \}  = \{B \subseteq S' \le _qS\,|\,S'\text{ is closed and nonempty}\}
\end{equation}
\begin{remark}
    The reason why we require the above equality rather than the following one is explained as follows.
    \begin{equation}
    \label{5.2}
        \operatorname{Int}_{T}^{\operatorname{End}}(S/B)=\{B\subseteq {S}'\le _qS\,|\,{S}'\text{ is closed\}}
    \end{equation}

    Suppose $B={S}'=\emptyset $. Then $B \subseteq S' \le _qS$ and ${S}'$ is closed. Hence $\emptyset$ lies in the right side of (5.2). But it is possible that ${S}'(=\emptyset )\notin \operatorname{Int}_{T}^{\operatorname{End}}(S/B)$ (e.g. when $S \ne \emptyset$ and $\operatorname{End}_{T}(S)=\{\operatorname{Id}\}$), and thus in this case we cannot replace $\operatorname{Int}_{T}^{\operatorname{End}}(S/B)$ with the right side of (5.2).
\end{remark}
The following is to determine whether there exists a topology on $S$ such that Equation $($\ref{5.1}$)$ is satisfied: If $P_1$ defined as follows is nonempty, then we can replace $\operatorname{Int}_T^{\operatorname{End}}(S/B)\backslash \{ \emptyset \}$ with the set of all nonempty closed quasi-$T$-subspaces (of $S$) containing $B$.
\begin{theorem}
    \label{4.1.2}
    Let $S$ be a $T$-space and let $B\subseteq S$. Let
	\[{{P}_{1}}=\{\text{all topologies on $S$ such that Equation $($\ref{5.1}$)$ is satisfied}\}\] and let
	\[{{Q}_{1}}=\{\text{all topologies on $S$ which are finer $($larger$)$ than ${{\mathcal{T}}_{1}}$}\},\] 
where ${{\mathcal{T}}_{1}}$ is defined in Lemma \ref{4.1.1}. Then ${{P}_{1}}\subseteq {{Q}_{1}}$ and the following statements are equivalent:
\begin{enumerate}
    \item[(i)] ${P_1} \ne \emptyset $.
    \item[(ii)] $\forall {{S}_{1}},{{S}_{2}}\in \operatorname{Int}_{T}^{\operatorname{End}}(S/B)$, ${{S}_{1}}\bigcup {{S}_{2}}\in \operatorname{Int}_{T}^{\operatorname{End}}(S/B)$. $($Or equivalently, ${{\beta }_{1}}$ defined in Lemma \ref{4.1.1} is a basis for ${{\mathcal{T}}_{1}}\ ($see e.g. \cite{6}$))$.
    \item[(iii)]	$\operatorname{Int}_{T}^{\operatorname{End}}(S/B)$ is a $\vee$-sublattice of the lattice $\operatorname{Sub}_T(S)$ defined in Proposition \ref{3.1.1}.
    \item[(iv)] ${{\mathcal{T}}_{1}}\in {{P}_{1}}(\subseteq {{Q}_{1}})$; that is, ${{\mathcal{T}}_{1}}$ is the coarsest $($smallest$)$ topology on $S$ such that Equation $($\ref{5.1}$)$ is satisfied.
\end{enumerate}
\end{theorem}
\begin{proof}
    By Lemma \ref{4.1.1}, ${\mathcal{T}}_{1}$ is the smallest topology on $S$ such that the collection of all closed sets contains $\operatorname{Int}_{T}^{\operatorname{End}}(S/B)$. Suppose $\mathcal{T} \in {{P}_{1}}$. Then all elements of $\operatorname{Int}_{T}^{\operatorname{End}}(S/B)$ are closed in $\mathcal{T} $, and so ${\mathcal{T}}_{1} \subseteq \mathcal{T}$. Hence $\mathcal{T} \in {{Q}_{1}}$. Thus ${{P}_{1}}\subseteq {{Q}_{1}}$. 
    
    (i) $\Rightarrow$ (ii): Let $\mathcal{T} \in {{P}_{1}}$. 
    To prove (ii), we only need to consider the case where ${{S}_{1}},{{S}_{2}}\in \operatorname{Int}_{T}^{\operatorname{End}}(S/B)\backslash \{\emptyset \}$. Then by the definition of ${P_1}$, 
    \[{{S}_{1}},{{S}_{2}}\in \{B\subseteq {S}' \le _q S\,|\,{S}'\text{ is nonempty and closed in }\mathcal{T} \}.\]
    By Proposition \ref{1.2.11}, ${{S}_{1}}\bigcup {{S}_{2}} \le _q S$. Hence 
    \[{{S}_{1}}\bigcup {{S}_{2}}\in \{B\subseteq {S}' \le _q S\,|\,{S}'\text{ is nonempty and closed in }\mathcal{T} \},\]
    which equals $\operatorname{Int}_{T}^{\operatorname{End}}(S/B)\backslash \{\emptyset \}$ by (5.1). Therefore, ${{S}_{1}}\bigcup {{S}_{2}}\in \operatorname{Int}_{T}^{\operatorname{End}}(S/B)$, and so (ii) is true.

    (ii) $\Rightarrow$ (iii): By Definition \ref{2.2.1} and Proposition \ref{1.4.2}, $\operatorname{Int}_{T}^{\operatorname{End}}(S/B)\subseteq \operatorname{Sub}_T(S)$. Then from Proposition \ref{3.1.1}, we can tell that (iii) is true if (ii) is true.

    (iii) $\Rightarrow$ (iv): By the definition of ${{\mathcal{T}}_{1}}$ in Lemma \ref{4.1.1}, all elements of $\operatorname{Int}_{T}^{\operatorname{End}}(S/B)$ are closed in ${{\mathcal{T}}_{1}}$, and so by Definition \ref{2.2.1} and Proposition \ref{1.4.2},
    \[\operatorname{Int}_T^{\operatorname{End}}(S/B)\backslash \{ \emptyset \}  \subseteq  \{B \subseteq S' \le _qS\,|\,S'\text{ is nonempty and closed in }{{\mathcal{T}}_{1}}\}.\]
    On the other hand, we show
    \[\operatorname{Int}_T^{\operatorname{End}}(S/B)\backslash \{ \emptyset \}  \supseteq  \{B \subseteq S' \le _qS\,|\,S'\text{ is nonempty and closed in }{{\mathcal{T}}_{1}}\}\]
    as follows.

    Let $B\subseteq {S}'\le _qS$ such that ${S}'$ is nonempty and closed in ${{\mathcal{T}}_{1}}$. Then we need to show ${S}'\in \operatorname{Int}_{T}^{\operatorname{End}}(S/B)$.
    
    Since ${{\mathcal{T}}_{1}}$ is the collection of all unions of finite intersections of elements of ${{\beta }_{1}}=\{S\backslash A\,|\,A\in \operatorname{Int}_{T}^{\operatorname{End}}(S/B)\}\bigcup \{S\}$, by DeMorgan’s Laws, the collection of all closed sets in ${{\mathcal{T}}_{1}}$ is the family of all intersections of finite unions of elements of $\operatorname{Int}_{T}^{\operatorname{End}}(S/B)\bigcup \{\emptyset \}$. Then ${S}'$ is an intersection of finite unions of elements of $\operatorname{Int}_{T}^{\operatorname{End}}(S/B)$. Because (iii) is supposed to be true, any finite union of elements of $\operatorname{Int}_{T}^{\operatorname{End}}(S/B)$ belongs to $\operatorname{Int}_{T}^{\operatorname{End}}(S/B)$. So ${S}'$ is an intersection of elements of $\operatorname{Int}_{T}^{\operatorname{End}}(S/B)$. Then by Lemma \ref{3.2.1}, ${S}'\in \operatorname{Int}_{T}^{\operatorname{End}}(S/B)$, as desired. Hence 
    \[\operatorname{Int}_T^{\operatorname{End}}(S/B)\backslash \{ \emptyset \}  \supseteq  \{B \subseteq S' \le _qS\,|\,S'\text{ is nonempty and closed in }{{\mathcal{T}}_{1}}\}.\]

    Therefore,
\[\operatorname{Int}_T^{\operatorname{End}}(S/B)\backslash \{ \emptyset \} = \{B \subseteq S' \le _qS\,|\,S'\text{ is nonempty and closed in }{{\mathcal{T}}_{1}}\}.\]
Thus ${{\mathcal{T}}_{1}}\in {{P}_{1}}$. We showed ${{P}_{1}}\subseteq {{Q}_{1}}$ at the beginning of our proof, and so by the definition of ${{Q}_{1}}$, ${{\mathcal{T}}_{1}}$ is the coarsest topology in $P_1$. Then by the definition of ${{P}_{1}}$, ${{\mathcal{T}}_{1}}$ is the coarsest topology on $S$ such that Equation $($\ref{5.1}$)$ is satisfied.

(iv) $\Rightarrow$ (i): Obvious.
\end{proof}
	By Theorem \ref{4.1.2}, when ${{P}_{1}}\ne \emptyset $, we can put a topology such as ${{\mathcal{T}}_{1}}$ on $S$ such that Equation (\ref{5.1}) is satisfied. Then $\operatorname{Int}_{T}^{\operatorname{End}}(S/B)$ in Corollary \ref{2.2.5} can be replaced by \{$B\subseteq {S}'\le _qS\,|\,{S}'$ is closed and nonempty\} if $\emptyset \notin \operatorname{Int}_{T}^{\operatorname{End}}(S/B)$, and it can be replaced by \{$B\subseteq {S}'\le _qS\,|\,{S}'$ is closed\} if $\emptyset \in \operatorname{Int}_{T}^{\operatorname{End}}(S/B)$.
 
	However, Example \ref{3.2.3} tells us that statement (ii) (and (iii)) in Theorem \ref{4.1.2} is not always true. Hence it is possible that ${{P}_{1}}=\emptyset $. When this occurs, we cannot endow $S$ with any topology such that Equation (\ref{5.1}) is satisfied. Then in this case, there is no topology on $S$ to characterize the Galois correspondence depicted in Corollary \ref{2.2.5} in terms of closed subsets of $S$.

\subsection{Topologies on a $T$-space $S$ for $\operatorname{Int}_T^{\operatorname{Aut}}(S/B)$} \label{I Topologies on $S$ for IntAut}
	To replace $\operatorname{Int}_{T}^{\operatorname{Aut}}(S/B)$ in Corollary \ref{2.2.6}, we have
\begin{lemma}
    \label{4.2.1}
    Let $S$ be a $T$-space with $B\subseteq S$. Then the topology ${{\mathcal{T}}_{2}}$ on $S$ generated by a subbasis ${{\beta }_{2}}:=\{S\backslash A\,|\,A\in \operatorname{Int}_{T}^{\operatorname{Aut}}(S/B)\}\bigcup \{S\}$ is the smallest topology on $S$ such that the collection of all closed sets contains $\operatorname{Int}_{T}^{\operatorname{Aut}}(S/B)$.
\end{lemma}   
\begin{proof}
    Almost the same as the proof of Lemma \ref{4.1.1} except that $\operatorname{Int}_{T}^{\operatorname{End}}(S/B)$, ${{\mathcal{T}}_{1}}$ and ${{\beta }_{1}}$ are replaced by $\operatorname{Int}_{T}^{\operatorname{Aut}}(S/B)$, ${{\mathcal{T}}_{2}}$ and ${{\beta }_{2}}$, respectively.
\end{proof}
For a $T$-space $S$ with $B \subseteq S$, to characterize the Galois correspondence depicted in Corollary \ref{2.2.6} in terms of closed quasi-$T$-subspaces of $S$, we need a topology on $S$ such that the following equation is satisfied.
\begin{equation}
\label{5.3}
    \operatorname{Int}_T^{\operatorname{Aut}}(S/B)\backslash \{ \emptyset \}  = \{B \subseteq S' \le _qS\,|\,S'\text{ is closed and nonempty}\}
\end{equation}
\begin{theorem}
    \label{4.2.2}
    Let $S$ be a $T$-space and let $B\subseteq S$. Let
	\[{{P}_{2}}=\{\text{all topologies on $S$ such that Equation $($\ref{5.3}$)$ is satisfied}\}\] and let
	\[{{Q}_{2}}=\{\text{all topologies on $S$ which are finer than ${{\mathcal{T}}_{2}}$}\},\] 
where ${{\mathcal{T}}_{2}}$ is defined in Lemma \ref{4.2.1}. Then ${{P}_{2}}\subseteq {{Q}_{2}}$ and the following statements are equivalent:
\begin{enumerate}
    \item[(i)] ${P_2} \ne \emptyset $.
    \item[(ii)] $\forall {{S}_{1}},{{S}_{2}}\in \operatorname{Int}_{T}^{\operatorname{Aut}}(S/B)$, ${{S}_{1}}\bigcup {{S}_{2}}\in \operatorname{Int}_{T}^{\operatorname{Aut}}(S/B)$. $($Or equivalently, ${{\beta }_{2}}$ defined in Lemma \ref{4.2.1} is a basis for ${{\mathcal{T}}_{2}})$.
    \item[(iii)]	$\operatorname{Int}_{T}^{\operatorname{Aut}}(S/B)$ is a $\vee$-sublattice of the lattice $\operatorname{Sub}_T(S)$ defined in Proposition \ref{3.1.1}.
    \item[(iv)] ${{\mathcal{T}}_{2}}\in {{P}_{2}}(\subseteq {{Q}_{2}})$; that is, ${{\mathcal{T}}_{2}}$ is the coarsest topology on $S$ such that Equation $($\ref{5.3}$)$ is satisfied.
\end{enumerate}
\end{theorem}
\begin{proof}
    Almost the same as the proof of Theorem \ref{4.1.2} except that $\operatorname{Int}_{T}^{\operatorname{End}}(S/B)$, ${{\mathcal{T}}_{1}}$, ${{\beta }_{1}}$, ${{P}_{1}}$ and ${{Q}_{1}}$ are replaced by $\operatorname{Int}_{T}^{\operatorname{Aut}}(S/B)$, ${{\mathcal{T}}_{2}}$, ${{\beta }_{2}}$, ${{P}_{2}}$ and ${{Q}_{2}}$, respectively, and accordingly, Lemmas \ref{3.2.1} and \ref{4.1.1} are replaced by Lemmas \ref{3.2.4} and \ref{4.2.1}, respectively.
\end{proof}
    It follows that when ${{P}_{2}}\ne \emptyset $, we can give a topology such as ${{\mathcal{T}}_{2}}$ on $S$ such that Equation (\ref{5.3}) is satisfied. Then $\operatorname{Int}_{T}^{\operatorname{Aut}}(S/B)$ in Corollary \ref{2.2.6} can be substituted with \{$B\subseteq {S}'\le _qS\,|\,{S}'$ is closed and nonempty\} if $\emptyset \notin \operatorname{Int}_{T}^{\operatorname{Aut}}(S/B)$, and it can be substituted with \{$B\subseteq {S}'\le _qS\,|\,{S}'$ is closed\} if $\emptyset \in \operatorname{Int}_{T}^{\operatorname{Aut}}(S/B)$.

	Recall that at the end of Subsection \ref{I Topologies on $S$ for Int}, we use Example \ref{3.2.3} to show that statement (ii) (and (iii)) in Theorem \ref{4.1.2} is not always true. Because in Example \ref{3.2.3}, $\operatorname{Int}_{T}^{\operatorname{End}}(S/\mathbb{Q})=\operatorname{Int}_{T}^{\operatorname{Aut}}(S/\mathbb{Q})$, statement (ii) (and (iii)) in Theorem \ref{4.2.2} is not necessarily true, either. Hence it is possible that ${{P}_{2}}=\emptyset$. When this occurs, we cannot give any topology on $S$ such that Equation (\ref{5.3}) is satisfied. Then in this case, there does not exist any topology on $S$ such that the Galois correspondence depicted in Corollary \ref{2.2.6} can be characterized in terms of closed subsets of $S$.

\subsection{Topologies on $\operatorname{End}_{T}(S)$} \label{I Topologies on End}
To replace $\operatorname{GSMn}_{T}(S/B)$ in Corollary \ref{2.2.5}, we need
\begin{lemma}
    \label{4.3.1} Let $S$ be a $T$-space with $B\subseteq S$. Then the topology ${{\mathcal{T}}_{3}}$ on $\operatorname{End}_{T}(S)$ generated by a subbasis \[{{\beta }_{3}}:=\{\operatorname{End}_{T}(S)\backslash H\,|\,H\in \operatorname{GSMn}_{T}(S/B)\}\bigcup \{\operatorname{End}_{T}(S)\}\] is the smallest topology on $\operatorname{End}_{T}(S)$ such that the collection of all closed sets contains $\operatorname{GSMn}_{T}(S/B)$.
\end{lemma}
\begin{proof}
    Since the union of all elements of ${{\beta }_{3}}$ is $\operatorname{End}_{T}(S)$, ${{\beta }_{3}}$ qualifies as a subbasis for a topology on $\operatorname{End}_{T}(S)$. Hence the topology ${{\mathcal{T}}_{3}}$ generated by ${{\beta }_{3}}$ equals the intersection of all topologies on $\operatorname{End}_{T}(S)$ that contain ${{\beta }_{3}}$ (see e.g. \cite{6}), and so ${{\mathcal{T}}_{3}}$ is the smallest topology on $\operatorname{End}_{T}(S)$ containing ${{\beta }_{3}}$. Thus by the definition of ${{\beta }_{3}}$, ${{\mathcal{T}}_{3}}$ is the smallest topology on $\operatorname{End}_{T}(S)$ such that the collection of all closed sets contains $\operatorname{GSMn}_{T}(S/B)$.	
\end{proof}
For a $T$-space $S$ with $B \subseteq S$, to characterize the Galois correspondence depicted in Corollary \ref{2.2.5} in terms of closed submonoids of $\operatorname{GMn}_T(S/B)$, we need a topology on $\operatorname{End}_{T}(S)$ such that the following equation is satisfied.
\begin{equation}
    \label{5.4}
    \operatorname{GSMn}_T(S/B) = \{M \in \operatorname{SMn}(\operatorname{GMn}_T(S/B))\,|\,M \text{ is closed} \}
\end{equation}
\begin{remark}
    See Notation \ref{2.1.2} for $\operatorname{SMn}$.
\end{remark}
\begin{theorem}
    \label{4.3.2}
    Let $S$ be a $T$-space and let $B\subseteq S$. Let
   \[{{P}_{3}}=\{\text{all topologies on $\operatorname{End}_{T}(S)$ such that Equation $($\ref{5.4}$)$ is satisfied\}}\] and let
\[\text{${{Q}_{3}}=$\{all topologies on $\operatorname{End}_{T}(S)$ which are finer than }{{\mathcal{T}}_{3}}\},\]
where ${{\mathcal{T}}_{3}}$ is defined in Lemma \ref{4.3.1}. Then ${{P}_{3}}\subseteq {{Q}_{3}}$ and the following statements are equivalent:
\begin{enumerate}
    \item [(i)] ${P_3} \ne \emptyset $.
    \item[(ii)] For any intersection $M$ of finite unions of elements of $\operatorname{GSMn}_{T}(S/B)$, $M\in \operatorname{SMn}(\operatorname{End}_{T}(S))$ implies $M\in \operatorname{GSMn}_{T}(S/B)$.
\item[(iii)] ${{\mathcal{T}}_{3}}\in {{P}_{3}}(\subseteq {{Q}_{3}})$; that is, ${{\mathcal{T}}_{3}}$ is the coarsest topology on $\operatorname{End}_{T}(S)$ such that Equation $($\ref{5.4}$)$ is satisfied.
\end{enumerate}

Moreover, if $\operatorname{GSMn}_{T}(S/B)$ is a complete $\vee$-sublattice of the complete lattice $\operatorname{SMn}(\operatorname{End}_{T}(S))$ defined in Proposition \ref{3.1.2}, then the above three statements are true.
\end{theorem}
\begin{proof}
    By Lemma \ref{4.3.1}, ${{\mathcal{T}}_{3}}$ is the smallest topology on $\operatorname{End}_{T}(S)$ such that the collection of all closed sets contains $\operatorname{GSMn}_{T}(S/B)$. Suppose $ \mathcal{T} \in {{P}_{3}}$. Then all elements of $\operatorname{GSMn}_{T}(S/B)$ are closed in $\mathcal{T} $, and so ${\mathcal{T}}_{3} \subseteq \mathcal{T}$. Hence $\mathcal{T} \in {{Q}_{3}}$. Thus ${{P}_{3}}\subseteq {{Q}_{3}}$. 
    
(i)$\Rightarrow $(ii): For any intersection $M$ given in (ii), from Definitions \ref{2.2.3} and \ref{1.5.1}, we can tell $M\subseteq \operatorname{GMn}_{T}(S/B)$. Suppose $M\in \operatorname{SMn}(\operatorname{End}_{T}(S))$. Then $M\in \operatorname{SMn}(\operatorname{GMn}_{T}(S/B))$. Since ${P_3} \ne \emptyset $, let $\mathcal{T} \in {{P}_{3}}$. By the definition of ${{P}_{3}}$, every element of $\operatorname{GSMn}_{T}(S/B)$ is closed in $\mathcal{T} $, and so is the intersection $M$ (of finite unions of elements of $\operatorname{GSMn}_{T}(S/B)$). Again by the definition of ${{P}_{3}}$, $M\in \operatorname{GSMn}_{T}(S/B)$.

(ii)$\Rightarrow $(iii): 

By the definition of ${{\mathcal{T}}_{3}}$ in Lemma \ref{4.3.1}, all elements of $\operatorname{GSMn}_{T}(S/B)$ are closed in ${{\mathcal{T}}_{3}}$, and so by Definition \ref{2.2.3} and Proposition \ref{1.5.2},
\[\operatorname{GSMn}_T(S/B) \subseteq \{M \in \operatorname{SMn}(\operatorname{GMn}_T(S/B)) \,|\, M \text{ is closed in } {{\mathcal T}_3}\}.\]

On the other hand, we prove
\[\operatorname{GSMn}_T(S/B) \supseteq \{M \in \operatorname{SMn}(\operatorname{GMn}_T(S/B)) \,|\, M \text{ is closed in } {{\mathcal T}_3}\}\]
as follows.

Let $M\in \operatorname{SMn}(\operatorname{GMn}_{T}(S/B))$ such that $M$ is closed in ${{\mathcal{T}}_{3}}$. We need to show $M\in \operatorname{GSMn}_{T}(S/B)$.

Because ${{\mathcal{T}}_{3}}$ is generated by subbasis ${{\beta }_{3}}$, ${{\mathcal{T}}_{3}}$ is the collection of all unions of finite intersections of elements of 
\[{{\beta }_{3}}=\{\operatorname{End}_{T}(S)\backslash H\,|\,H\in \operatorname{GSMn}_{T}(S/B)\}\bigcup \{\operatorname{End}_{T}(S)\}.\]
Since $M$ is closed in ${{\mathcal{T}}_{3}}$, $\operatorname{End}_{T}(S)\backslash M$ is a union of finite intersections of elements of ${{\beta }_{3}}$. It follows from DeMorgan’s Laws that $M$ is an intersection of finite unions of elements of $\operatorname{GSMn}_{T}(S/B)\bigcup \{\emptyset \}$. Because $\text{(Id}\in )M\ne \emptyset $, $M$ is an intersection of finite unions of elements of $\operatorname{GSMn}_{T}(S/B)$. Since (ii) is assumed to be true and $M\in \operatorname{SMn}(\operatorname{GMn}_{T}(S/B))\subseteq \operatorname{SMn}(\operatorname{End}_{T}(S))$, $M\in \operatorname{GSMn}_{T}(S/B)$, as desired. Hence
\[\operatorname{GSMn}_T(S/B) \supseteq \{M \in \operatorname{SMn}(\operatorname{GMn}_T(S/B)) \,|\, M \text{ is closed in } {{\mathcal T}_3}\}\]
Therefore, 
\[\operatorname{GSMn}_T(S/B) = \{M \in \operatorname{SMn}(\operatorname{GMn}_T(S/B)) \,|\, M \text{ is closed in } {{\mathcal T}_3}\}\]

Thus ${{\mathcal{T}}_{3}}\in {{P}_{3}}$. We showed ${{P}_{3}}\subseteq {{Q}_{3}}$ at the beginning of our proof. So by the definition of ${{Q}_{3}}$, ${{\mathcal{T}}_{3}}$ is the coarsest topology in $P_3$. Then by the definition of ${{P}_{3}}$, ${{\mathcal{T}}_{3}}$ is the coarsest topology on $\operatorname{End}_{T}(S)$ such that Equation $($\ref{5.4}$)$ is satisfied.

(iii)$\Rightarrow $(i): Obvious.

By Proposition \ref{3.3.2}, any intersection of elements of $\operatorname{GSMn}_{T}(S/B)$ still lies in $\operatorname{GSMn}_{T}(S/B)$. Then as a result of the first distributive law of set operations, any intersection $M$ of finite unions of elements of $\operatorname{GSMn}_{T}(S/B)$ is a union of elements of $\operatorname{GSMn}_{T}(S/B)$. Suppose that $\operatorname{GSMn}_{T}(S/B)$ is a complete $\vee$-sublattice of the complete lattice $\operatorname{SMn}(\operatorname{End}_{T}(S))$ defined in Proposition \ref{3.1.2}. Then $M\in \operatorname{SMn}(\operatorname{End}_{T}(S))$ implies $M\in \operatorname{GSMn}_{T}(S/B)$, and so (ii) is true (in this case).
\end{proof}
When ${{P}_{3}}\ne \emptyset $, by Theorem \ref{4.3.2}, we can endow $\operatorname{End}_{T}(S)$ with a topology such as ${{\mathcal{T}}_{3}}$ such that Equation $($\ref{5.4}$)$ is satisfied. Then $\operatorname{GSMn}_{T}(S/B)$ in Corollary \ref{2.2.5} can be replaced by the set of all closed submonoids of $\operatorname{GMn}_T(S/B)$.
	
 Nevertheless, it is possible that ${{P}_{3}}=\emptyset $. Recall that in Example \ref{3.3.3},
${{M}_{1}},{{M}_{2}}\in \operatorname{GSMn}_{T}(D/\emptyset )$ and ${{M}_{1}}\bigcup {{M}_{2}}\in \operatorname{SMn}(\operatorname{End}_{T}(D))$, but
${{M}_{1}}\bigcup {{M}_{2}}\notin \operatorname{GSMn}_{T}(D/\emptyset )$. 
Thus for Example \ref{3.3.3}, statement (ii) in Theorem \ref{4.3.2} is not true, and hence ${{P}_{3}}=\emptyset $. 
Therefore, it is possible that we cannot endow $\operatorname{End}_{T}(S)$ with any topology such that the Galois correspondence depicted in Corollary \ref{2.2.5} can be characterized in terms of closed submonoids of $\operatorname{GMn}_T(S/B)$.
\subsection{Topologies on $\operatorname{Aut}_{T}(S)$} \label{I Topologies on Aut}
To replace $\operatorname{GSGr}_{T}(S/B)$ in Corollary \ref{2.2.6}, we need
\begin{lemma}
    \label{4.4.1}
    Let $S$ be a $T$-space and let $B\subseteq S$. Then the topology ${{\mathcal{T}}_{4}}$ on $\operatorname{Aut}_{T}(S)$ generated by a subbasis 
    \[{{\beta }_{4}}:=\{\operatorname{Aut}_{T}(S)\backslash H\,|\,H\in \operatorname{GSGr}_{T}(S/B)\}\bigcup \{\operatorname{Aut}_{T}(S)\}\]
    is the smallest topology on $\operatorname{Aut}_{T}(S)$ such that the collection of all closed sets contains $\operatorname{GSGr}_{T}(S/B)$.
\end{lemma}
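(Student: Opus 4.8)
The plan is to reuse, essentially verbatim, the argument structure of Lemma \ref{4.3.1}, since the present statement is its exact analogue with $\operatorname{End}_{T}(S)$ and $\operatorname{GSMn}_{T}(S/B)$ replaced by $\operatorname{Aut}_{T}(S)$ and $\operatorname{GSGr}_{T}(S/B)$. First I would check that $\beta_{4}$ legitimately qualifies as a subbasis for a topology on $\operatorname{Aut}_{T}(S)$: the union of all members of $\beta_{4}$ equals $\operatorname{Aut}_{T}(S)$, simply because $\operatorname{Aut}_{T}(S)\in\beta_{4}$. Consequently the topology $\mathcal{T}_{4}$ generated by $\beta_{4}$ is, by the standard description, the collection of all unions of finite intersections of members of $\beta_{4}$ (together with $\emptyset$), and it coincides with the intersection of all topologies on $\operatorname{Aut}_{T}(S)$ that contain $\beta_{4}$; hence $\mathcal{T}_{4}$ is the smallest (coarsest) topology on $\operatorname{Aut}_{T}(S)$ containing $\beta_{4}$.

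Next I would translate the condition ``a topology $\mathcal{T}$ contains $\beta_{4}$'' into ``every element of $\operatorname{GSGr}_{T}(S/B)$ is closed in $\mathcal{T}$''. Indeed, for each $H\in\operatorname{GSGr}_{T}(S/B)$, the set $\operatorname{Aut}_{T}(S)\backslash H$ is open in $\mathcal{T}$ if and only if $H$ is closed in $\mathcal{T}$; and the remaining member $\operatorname{Aut}_{T}(S)$ of $\beta_{4}$ is open in every topology on $\operatorname{Aut}_{T}(S)$, so its presence in $\beta_{4}$ imposes no restriction (equivalently, it only records that $\emptyset$ is closed, which is automatic). Therefore a topology on $\operatorname{Aut}_{T}(S)$ contains $\beta_{4}$ precisely when its collection of closed sets contains $\operatorname{GSGr}_{T}(S/B)$. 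Combining this equivalence with the previous paragraph yields that $\mathcal{T}_{4}$ is the smallest topology on $\operatorname{Aut}_{T}(S)$ whose collection of closed sets contains $\operatorname{GSGr}_{T}(S/B)$, as desired.

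There is essentially no substantive obstacle here; this is a routine point-set verification that parallels Lemmas \ref{4.1.1}, \ref{4.2.1} and \ref{4.3.1} exactly. The only thing one must be slightly careful about is the bookkeeping role of the extra element $\operatorname{Aut}_{T}(S)$ in the subbasis $\beta_{4}$: it is included solely so that the subbasis covers the whole space (a prerequisite for it to generate a topology in the usual sense), and one must observe that it does not enlarge the class of closed sets being demanded, so that the ``smallest'' claim is genuinely a statement about $\operatorname{GSGr}_{T}(S/B)$ and nothing more.
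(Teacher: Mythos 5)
Your proof is correct and follows essentially the same route as the paper, which simply transcribes the argument of Lemma \ref{4.3.1} with $\operatorname{End}_{T}(S)$, $\operatorname{GSMn}_{T}(S/B)$, ${\mathcal{T}}_{3}$, ${\beta}_{3}$ replaced by $\operatorname{Aut}_{T}(S)$, $\operatorname{GSGr}_{T}(S/B)$, ${\mathcal{T}}_{4}$, ${\beta}_{4}$. Your extra remark that the member $\operatorname{Aut}_{T}(S)$ of ${\beta}_{4}$ imposes no restriction is a harmless clarification of the same argument.
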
 
\begin{proof}
    Almost the same as the proof of Lemma \ref{4.3.1} except that $\operatorname{GSMn}_{T}(S/B)$, $\operatorname{End}_{T}(S)$, ${{\mathcal{T}}_{3}}$ and ${{\beta }_{3}}$ are replaced by $\operatorname{GSGr}_{T}(S/B)$, $\operatorname{Aut}_{T}(S)$, ${{\mathcal{T}}_{4}}$ and ${{\beta }_{4}}$, respectively.
\end{proof}
For a $T$-space $S$ with $B \subseteq S$, to characterize the Galois correspondence depicted in Corollary \ref{2.2.6} in terms of closed subgroups of $\operatorname{GGr}_T(S/B)$, we need a topology on $\operatorname{Aut}_{T}(S)$ such that the following equation is satisfied.
\begin{equation}
    \label{5.5}
    \operatorname{GSGr}_T(S/B) = \{M \in \operatorname{SGr}(\operatorname{GGr}_T(S/B))\,|\,M \text{ is closed} \}
\end{equation}
\begin{remark}
    See Notation \ref{2.1.2} for $\operatorname{SGr}$.
\end{remark}
\begin{theorem}
    \label{4.4.2}
    Let $S$ be a $T$-space and let $B\subseteq S$. Let
   \[{{P}_{4}}=\{\text{all topologies on $\operatorname{Aut}_{T}(S)$ such that Equation $($\ref{5.5}$)$ is satisfied\}}\] and let
\[\text{${{Q}_{4}}=$\{all topologies on $\operatorname{Aut}_{T}(S)$ which are finer than }{{\mathcal{T}}_{4}}\},\]
where ${{\mathcal{T}}_{4}}$ is defined in Lemma \ref{4.4.1}. Then ${{P}_{4}}\subseteq {{Q}_{4}}$ and the following statements are equivalent:
\begin{enumerate}
    \item [(i)] ${P_4} \ne \emptyset $.
    \item[(ii)] For any intersection $G$ of finite unions of elements of $\operatorname{GSGr}_{T}(S/B)$, $G\in \operatorname{SGr}(\operatorname{Aut}_{T}(S))$ implies $G\in \operatorname{GSGr}_{T}(S/B)$.
\item[(iii)] ${{\mathcal{T}}_{4}}\in {{P}_{4}}(\subseteq {{Q}_{4}})$; that is, ${{\mathcal{T}}_{4}}$ is the coarsest topology on $\operatorname{Aut}_{T}(S)$ such that Equation $($\ref{5.5}$)$ is satisfied.
\end{enumerate}

Moreover, if $\operatorname{GSGr}_{T}(S/B)$ is a complete $\vee$-sublattice of the complete lattice $\operatorname{SGr}(\operatorname{Aut}_{T}(S))$ defined in Proposition \ref{3.1.3}, then the above three statements are true.
\end{theorem}
\begin{proof}
    Almost the same as the proof of Theorem \ref{4.3.2} except that $\operatorname{End}_{T}(S)$, $\operatorname{GSMn}_{T}(S/B)$, ${{\mathcal{T}}_{3}}$, ${{\beta }_{3}}$, ${{P}_{3}}$, ${{Q}_{3}}$, and $M$ are replaced by $\operatorname{Aut}_{T}(S)$, $\operatorname{GSGr}_{T}(S/B)$, ${{\mathcal{T}}_{4}}$, ${{\beta }_{4}}$, ${{P}_{4}}$, ${{Q}_{4}}$, and $G$, respectively, and accordingly, Lemma \ref{4.3.1} and Propositions \ref{1.5.2}, \ref{3.1.2} and \ref{3.3.2} are replaced by Lemma \ref{4.4.1} and Propositions \ref{1.5.3}, \ref{3.1.3} and \ref{3.3.5}, respectively.
\end{proof}
	Nevertheless, we do not know whether the three equivalent statements in Theorem \ref{4.4.2} are always true, and hence we do not know whether there always exists a topology on $\operatorname{Aut}_{T}(S)$ such that Equation $($\ref{5.5}$)$ is satisfied. In other words, for any $T$-space $S$ with $B\subseteq S$, we do not know whether we can always endow $\operatorname{Aut}_{T}(S)$ with a topology such that $\operatorname{GSGr}_T(S/B)$ in Corollary \ref{2.2.6} can be replaced by the set of all closed subgroups of $\operatorname{GGr}_T(S/B)$.

\section{Generalized morphisms and isomorphisms from a $T_1$-space to a $T_2$-space} \label{theta-mor and theta-iso}
In this section, we shall define generalized morphisms and isomorphisms from a $T_1$-space to a $T_2$-space, where both $T_1$ and $T_2$ are operator semigroups.
\subsection{$\theta$-morphisms} \label{I $theta$-morphisms}
    In algebra, ring homomorphisms from the ring $B[x]$ to the ring ${B}'[x]$, where both $B$ and ${B}'$ are commutative rings, are defined. However, Definition \ref{1.3.1} does not cover this kind of morphisms because, by Example \ref{T on field--1}, $B[x]$ and ${B}'[x]$ may induce different operator semigroups. So we generalize Definition \ref{1.3.1} as follows.
\begin{definition} \label{5.3.1}
    Let $T_1$ and $T_2$ be operator semigroups, let $\theta $ be a map from $T_1$ to $T_2$, and let $\phi $ be a map from a $T_1$-space $S$ to a $T_2$-space. If $\forall f\in {{T}_{1}}$ and $a\in S$, $\phi (f(a))=\theta (f)(\phi (a))$, then $\phi $ is called a \emph{$\theta $-morphism}.
\end{definition}
\begin{remark}
     If ${{T}_{1}}={{T}_{2}}$ and $\theta $ is the identity map, then $\phi $ is also a ${{T}_{1}}$-morphism defined by Definition \ref{1.3.1}.
\end{remark}
Let us see two examples as follows. 
\begin{proposition} \label{5.3.2}
    Let $A$ and $R$ be differential rings, let ${{T}_{A}}$ $($resp. ${{T}_{R}})$ be the operator semigroup on $A$ $($resp. $R)$ defined as in Example \ref{T on diff ring--1}, and let a map $\theta :{{T}_{A}}\to {{T}_{R}}$ be given by $\partial_A^n\mapsto \partial _{R}^{n}$, $\forall n\in {{\mathbb{N}}_{0}}$, where ${{\partial }_{A}}$ and ${{\partial }_{R}}$ are derivations of $A$ and $R$, respectively. Let $\phi :A\to R$ be a ring homomorphism. Then $\phi $ is a differential homomorphism if and only if $\phi $ is a $\theta $-morphism.
\end{proposition}
\begin{proof}
    Recall that a ring homomorphism $\phi :A\to R$ is a differential homomorphism if and only if it satisfies $\phi ({{\partial }_{A}}(a))={{\partial }_{R}}(\phi (a)),\forall a\in A$ (see e.g. \cite{1}). Thus we only need to show that $\phi$ is a $\theta $-morphism if and only if $\phi$ satisfies $\phi ({{\partial }_{A}}(a))={{\partial }_{R}}(\phi (a)),\forall a\in A$. Since $\forall n\in {{\mathbb{N}}_{0}}$, $\theta (\partial _{A}^{n})=\partial _{R}^{n}$, the latter equivalence is obvious by Definition \ref{5.3.1}.	
\end{proof}
	Now Proposition \ref{1.3.7} can be generalized to continuous functions from a topological space to another.
\begin{proposition} \label{5.3.3}
    Let $X$and $Y$ be topological spaces. Let $\mathcal{P}(X)$ and $\mathcal{P}(Y)$ denote the power sets of $X$ and $Y$, respectively. Let 
\[{{T}_{X}}=\{{\operatorname{Id}_{X}}, {\operatorname{Cl}_{X}}:\mathcal{P}(X)\to \mathcal{P}(X)\text{ given by }{{A}_{X}}\mapsto \overline{{{A}_{X}}}\},\] and 
\[{{T}_{Y}}=\{{\operatorname{Id}_{Y}}, {\operatorname{Cl}_{Y}}:\mathcal{P}(Y)\to \mathcal{P}(Y)\text{ given by }{{A}_{Y}}\mapsto \overline{{{A}_{Y}}}\},\] 
where ${\operatorname{Id}_{X}}$ and ${\operatorname{Id}_{Y}}$ denote the identity functions on $\mathcal{P}(X)$ and $\mathcal{P}(Y)$, respectively. Let a map $p:X\to Y$ induce a map $p^*:\mathcal{P}(X)\to \mathcal{P}(Y)$ as follows.

$\forall {{A}_{X}}\in \mathcal{P}(X)$ that is closed in $X$, let $p^*({{A}_{X}})=\overline{p({{A}_{X}})}$, where $p({{A}_{X}})$ denotes the set $\{p(x)\,|\,x\in {{A}_{X}}\}$ for convenience. And $\forall {{A}_{X}}\in \mathcal{P}(X)$ which is not closed in $X$, let $p^*({{A}_{X}})=p({{A}_{X}})$. 

Let $\theta $ be the map from ${{T}_{X}}$ to ${{T}_{Y}}$ given by $ \theta ({\operatorname{Id}_{X}})={\operatorname{Id}_{Y}}$ and $\theta ({\operatorname{Cl}_{X}})={\operatorname{Cl}_{Y}}$. Then $p$ is continuous if and only if $p^*$ is a $\theta $-morphism.
\end{proposition}
 \begin{proof}
     Our proof is analogous to that of Proposition \ref{1.3.7}.
     
Both ${{T}_{X}}$ and ${{T}_{Y}}$ are operator semigroups. ${{\left\langle \mathcal{P}(X) \right\rangle }_{{{T}_{X}}}}=\mathcal{P}(X)$ and ${{\left\langle \mathcal{P}(Y) \right\rangle }_{{{T}_{Y}}}}=\mathcal{P}(Y)$ are ${{T}_{X}}$-space and ${{T}_{Y}}$-space, respectively.

1. \emph{Necessity} Suppose that $p$ is continuous. To prove that $p^*$ is a $\theta \text{-}$morphism, it is sufficient to show that $\forall {{A}_{X}}\in \mathcal{P}(X)$ and $f\in {{T}_{X}}$, $p^*(f({{A}_{X}}))=\theta (f)(p^*({{A}_{X}}))$. The equation holds when $f=\text{I}{{\text{d}}_{X}}$. So it suffices to show that
\begin{center}
    $\forall {{A}_{X}}\in \mathcal{P}(X)$, $p^*(\text{C}{{\text{l}}_{X}}({{A}_{X}}))=\text{C}{{\text{l}}_{Y}}(p^*({{A}_{X}}))(=\theta (\text{C}{{\text{l}}_{X}})(p^*({{A}_{X}})))$.
\end{center}

If ${{A}_{X}}=\overline{{{A}_{X}}}$, by the definitions of $p^*$, $\text{C}{{\text{l}}_{X}}$ and $\text{C}{{\text{l}}_{Y}}$, 
\begin{center}
    $\text{C}{{\text{l}}_{Y}}(p^*({{A}_{X}}))=\text{C}{{\text{l}}_{Y}}(\overline{p({{A}_{X}})})=\overline{p({{A}_{X}})}=p^*({{A}_{X}})=p^*(\text{C}{{\text{l}}_{X}}({{A}_{X}}))$, 
\end{center}
as desired.

Since $p$ is continuous, $p(\overline{{{A}_{X}}})\subseteq \overline{p({{A}_{X}})}$ (see e.g. \cite{6}). Hence $\overline{p(\overline{{{A}_{X}}})}\subseteq \overline{p({{A}_{X}})}$. On the other hand, $\overline{p(\overline{{{A}_{X}}})}\supseteq \overline{p({{A}_{X}})}$ because $p(\overline{{{A}_{X}}})\supseteq p({{A}_{X}})$. So $\overline{p(\overline{{{A}_{X}}})}=\overline{p({{A}_{X}})}$. Thus if ${{A}_{X}}\ne \overline{{{A}_{X}}}$, then
\begin{center}
    $\text{C}{{\text{l}}_{Y}}(p^*({{A}_{X}}))=\text{C}{{\text{l}}_{Y}}(p({{A}_{X}}))=\overline{p({{A}_{X}})}=\overline{p(\overline{{{A}_{X}}})}=p^*(\overline{{{A}_{X}}})=p^*(\text{C}{{\text{l}}_{X}}({{A}_{X}}))$, 
\end{center}
as desired.

2. \emph{Sufficiency} Suppose that $p^*$ is a $\theta $-morphism. To prove that $p$ is continuous, we only need to show that ${{p}^{-1}}({{A}_{Y}})=\overline{{{p}^{-1}}({{A}_{Y}})}$, $\forall {{A}_{Y}}\in \mathcal{P}(Y)$ with ${{A}_{Y}}=\overline{{{A}_{Y}}}$.

Assume $\exists {{A}_{Y}}\in \mathcal{P}(Y)$ with ${{A}_{Y}}=\overline{{{A}_{Y}}}$ and ${{p}^{-1}}({{A}_{Y}})\ne \overline{{{p}^{-1}}({{A}_{Y}})}$.	Let $B={{p}^{-1}}({{A}_{Y}})$. Then $B\ne \overline{B}$ and $p(\overline{B})\nsubseteq {{A}_{Y}}$ (because otherwise $B=\overline{B}$). Hence
\begin{center}
    $p^*(\text{C}{{\text{l}}_{X}}(B))=p^*(\overline{B})=\overline{p(\overline{B})}\supseteq p(\overline{B})\nsubseteq {{A}_{Y}}$,
\end{center}
 but
\begin{center}
    $\text{C}{{\text{l}}_{Y}}(p^*(B))=\text{C}{{\text{l}}_{Y}}(p(B))=\overline{p(B)}\subseteq \overline{{{A}_{Y}}}={{A}_{Y}}$ 
\end{center}
because $p(B)\subseteq {{A}_{Y}}$.
Therefore, $p^*(\text{C}{{\text{l}}_{X}}(B))\ne \text{C}{{\text{l}}_{Y}}(p^*(B))(=\theta (\text{C}{{\text{l}}_{X}})(p^*(B)))$, which is contrary to the assumption that $p^*$ is a $\theta $-morphism.
 \end{proof}

 	However, when we tried to generalize Proposition \ref{1.3.4} to the case of $\theta $-morphisms, we found that in Definition \ref{5.3.1}, the condition $\theta $ being a map is too strong for ring homomorphisms. Basically this is because in Example \ref{T on field--1}, the map $\tau $ which induces $T$ is not necessarily injective (and so $\theta $ defined in Proposition \ref{5.3.6} is not necessarily a map). Hence we generalize Definition \ref{5.3.1} to Definition \ref{5.3.5}, where the condition $\theta $ being a map is replaced by a weaker one. 

First, we allow $\theta$ to be a binary relation, not necessarily a function.

\begin{notation} \label{5.3.4}
    Let $D_1$ (resp. $D_2$) be a set, let $M_1$ (resp. $M_2$) be a set of functions from $D_1$ to $D_1$ (resp. a set of functions from $D_2$ to $D_2$), let binary relation $\theta \subseteq {{M}_{1}}\times {{M}_{2}}$, and let $A\subseteq {{D}_{2}}$. We denote by $\theta {{|}_{A}}$ the binary relation $\{(f,g{{|}_{A}})\,|\,(f,g)\in \theta \}$, where $g{{|}_{A}}$ is the restriction of $g$ to $A$.
\end{notation}

\begin{definition} \label{5.3.a}
    Let $\theta$ and $A$ be defined as in Notation \ref{5.3.4}. Let $f\in \operatorname{Dom}\theta (\subseteq M_1)$ and $a \in A$. 
    If $\forall(f,g_1),(f,g_2)\in \theta$, $g_{1}(a)=g_2(a)$, then we say that $\theta (f)(a)$ is \emph{well-defined} and let $\theta (f)(a)=g_{1}(a)$.
\end{definition}

\begin{proposition} \label{5.3.b}
  Let $\theta$ and $A$ be defined as in Notation \ref{5.3.4}. Then the following statements are equivalent: \begin{enumerate}
        \item [(i)] $\theta {{|}_{A}}$ is a map.
         \item [(ii)] $\theta (f)(a)$ is well-defined, $\forall f\in \operatorname{Dom}\theta $ and $a\in A$.
    \end{enumerate}
\end{proposition}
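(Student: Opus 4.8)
The plan is to prove the equivalence by simply unwinding the two definitions involved; the one mathematical fact used is the triviality that two functions sharing a domain are equal if and only if they agree at every point of that domain. First I would record what ``$\theta|_A$ is a map'' means concretely: by Notation \ref{5.3.4} the pairs in $\theta|_A$ whose first coordinate equals a given $f$ are exactly the pairs $(f,g|_A)$ with $(f,g)\in\theta$, so $\theta|_A$ is a map precisely when, for every $f\in\operatorname{Dom}\theta$ and every $(f,g_1),(f,g_2)\in\theta$, the restrictions $g_1|_A$ and $g_2|_A$ coincide as functions on $A$.

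For (i)$\Rightarrow$(ii) I would fix $f\in\operatorname{Dom}\theta$, $a\in A$, and arbitrary $(f,g_1),(f,g_2)\in\theta$; the characterization above gives $g_1|_A=g_2|_A$, and evaluating at the point $a\in A$ yields $g_1(a)=g_2(a)$, which is exactly the condition in Definition \ref{5.3.a} for $\theta(f)(a)$ to be well-defined. For (ii)$\Rightarrow$(i) I would fix $f\in\operatorname{Dom}\theta$ and $(f,g_1),(f,g_2)\in\theta$; hypothesis (ii) says $g_1(a)=g_2(a)$ for every $a\in A$, hence $g_1|_A=g_2|_A$, so by the characterization $\theta|_A$ is single-valued, i.e.\ a map. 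This closes both implications.

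I do not expect a genuine obstacle: the two conditions are essentially the same statement, differing only by an innocuous reordering of the universal quantifiers ``for all $a\in A$'' and ``for all pairs $(f,g_1),(f,g_2)\in\theta$''. The only points deserving a word of care are the degenerate cases — an $f$ with a unique partner $g$, or $A=\emptyset$ (where every $g|_A$ is the empty function and every well-definedness claim is vacuous) — in all of which both (i) and (ii) hold trivially, consistently with the asserted equivalence.
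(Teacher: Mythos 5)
Your proposal is correct and follows essentially the same route as the paper: the paper's proof is exactly the chain of equivalences "$\theta|_A$ is a map $\Leftrightarrow$ $g_1|_A=g_2|_A$ for all $(f,g_1),(f,g_2)\in\theta$ $\Leftrightarrow$ $g_1(a)=g_2(a)$ for all such pairs and all $a\in A$ $\Leftrightarrow$ $\theta(f)(a)$ is well-defined for all $f\in\operatorname{Dom}\theta$ and $a\in A$," which is what you unwind in your two directions. Your remarks on the degenerate cases are harmless extras not needed for the argument.
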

\begin{proof}
    $\theta {{|}_{A}}$ is a map;
    
\hspace{2mm}$ \Leftrightarrow $ $\forall(f,g_1),(f,g_2)\in \theta$, $g_{1}{{|}_{A}}=g_2{{|}_{A}}$;

\hspace{2mm}$ \Leftrightarrow $ $\forall(f,g_1),(f,g_2)\in \theta$ and $a\in A$, $g_{1}(a)=g_2(a)$;

\hspace{2mm}$ \Leftrightarrow $ $\forall f\in \operatorname{Dom}\theta $ and $a\in A$, $\theta (f)(a)$ is well-defined. 
\end{proof}

Moreover, to simplify our descriptions, we need a convention as follows.
\begin{convention1} \label{convention}
    In this article, unless otherwise specified, by an equation we always imply that both sides of the equation are well-defined.
\end{convention1}

Then Definition \ref{5.3.1} is generalized to
\begin{definition} \label{5.3.5}
    Let $T_1$ and $T_2$ be operator semigroups, let $\theta \subseteq {{T}_{1}}\times {{T}_{2}}$, and let $\phi $ be a map from a $T_1$-space $S$ to a $T_2$-space. If $\forall f\in \operatorname{Dom}\theta $ and $a\in S$, $\phi (f(a))=\theta (f)(\phi (a))$, then $\phi $ is called a \emph{$\theta $-morphism}.
\end{definition}
\begin{corollary} \label{5.3.c}
Let $\phi$ be a map, not necessarily a $\theta$-morphism, given as in Definition \ref{5.3.5}. If $\phi $ is a $\theta$-morphism, then $\theta {{|}_{\operatorname{Im}\phi }}$ is a map. Conversely, if $\theta {{|}_{\operatorname{Im}\phi }}$ is a map, then $\forall f\in \operatorname{Dom}\theta $ and $a\in S$, $\theta (f)(\phi (a))$ is well-defined.
\end{corollary}
\begin{proof} 
  If $\phi $ is a $\theta$-morphism, then by Definition \ref{5.3.5} and Convention \ref{convention}, $\forall f\in \operatorname{Dom}\theta $ and $a\in S$, $\theta (f)(\phi (a))$ is well-defined, and thus by Proposition \ref{5.3.b}, $\theta {{|}_{\operatorname{Im}\phi }}$ is a map. The converse also follows from Proposition \ref{5.3.b}.
\end{proof}
\begin{remark}
    By Corollary \ref{5.3.c}, to define a $\theta$-morphism, we may dismiss Convention \ref{convention} and put the condition $\theta {{|}_{\operatorname{Im}\phi }}$ being a map into Definition \ref{5.3.5}. However, later we shall find that it is convenient to use Convention \ref{convention} when things become complicated.
\end{remark}
 From Corollary \ref{5.3.c}, we can tell that the condition $\theta $ being a map in Definition \ref{5.3.1} is replaced by a weaker one (i.e. $\theta {{|}_{\operatorname{Im}\phi }}$ being a map) in Definition \ref{5.3.5}. So Definition \ref{5.3.1} is a special case of Definition \ref{5.3.5}.

In the remaining part of Section \ref{theta-mor and theta-iso} and Section \ref{Cons of $T$-mor and theta-mor}, \textbf{a $\theta$-morphism is always defined by Definition \ref{5.3.5}} unless otherwise specified.

Now Proposition \ref{1.3.4} can be generalized to
\begin{proposition} \label{5.3.6}
    Let $F/B$ $($resp. ${F}'/{B}'$$)$ be a field extension, let $T$ $($resp. ${T}'$$)$ be the corresponding operator semigroup defined as in Example \ref{T on field--1}, let $\varphi :B\to {B}'$ be a field isomorphism, and let $\vartheta :B[x]\to {B}'[x]$ be the map given by $f\mapsto {f}'$ where
\begin{center}
     ${f}'(x)=\varphi ({{a}_{0}})+\varphi ({{a}_{1}})x+\cdots +\varphi ({{a}_{n}}){{x}^{n}}$ for $f(x)={{a}_{0}}+{{a}_{1}}x+\cdots +{{a}_{n}}{{x}^{n}}$. 
\end{center}
Let $\theta =\{(\tau (f),{\tau }'(\vartheta (f))\,|\,f\in B[x]\}$, where $\tau :B[x]\to T$ and $\tau' :B'[x]\to T'$ are the maps which induce 
$T$ and $T'$, respectively, defined as in Example \ref{T on field--1}. Then $\forall u\in F$ and ${u}'\in {F}'$, a map $\phi :{{\left\langle u \right\rangle }_{T}}\to {{\left\langle {{u}'} \right\rangle }_{{{T}'}}}$ is a ring homomorphism extending $\varphi $ if and only if $\phi $ is a $\theta$-morphism. 
\end{proposition}
\begin{proof}
    Obviously, $\theta \subseteq T\times {T}'$ with $\operatorname{Dom}\theta =T$, and ${{\left\langle u \right\rangle }_{T}}$ and ${{\left\langle {{u}'} \right\rangle }_{{{T}'}}}$ are the rings $B[u]$ and ${B}'[{u}']$, respectively.

1. \emph{Sufficiency} Suppose that $\phi $ is a $\theta$-morphism. By Definition \ref{5.3.5}, $\phi (g(a))=\theta (g)(\phi (a))$, $\forall g\in \operatorname{Dom}\theta $ and $a\in {{\left\langle u \right\rangle }_{T}}$. 

Let $g\in \operatorname{Dom}\theta(=T)$ be a constant polynomial function given by $a\mapsto c (\in B),\forall a\in F$. Then by the definitions of $\theta$ and $\vartheta$, we can tell that $\theta(g)$ is the constant polynomial function given by $a'\mapsto \varphi (c)(\in B'),\forall a'\in F'$. Thus, $\forall c\in B$, $\phi (c)(=\phi (g(a))=\theta (g)(\phi (a)))=\varphi (c)$, that is, $\phi$ extends $\varphi$.

Let $a,b\in {{\left\langle u \right\rangle }_{T}}$. Then $\exists p(x),q(x)\in B[x]$ with $p(u)=a$ and $q(u)=b$. Hence

$\phi (a+b)$

$=\phi (p(u)+q(u))$ 

$=\phi ((p+q)(u))$ 

$=\phi (\tau (p+q)(u))$ 

$=\theta (\tau (p+q))(\phi (u))$ (since $\phi $ is a $\theta$-morphism)

$={\tau }'(\vartheta (p+q))(\phi (u))$ (by the definition of $\theta $)

$=\vartheta (p+q)(\phi (u))$  

$=\vartheta (p)(\phi (u))+\vartheta (q)(\phi (u))$ (since $\varphi :B\to {B}'$ is a field isomorphism)

$={\tau }'(\vartheta (p))(\phi (u))+{\tau }'(\vartheta (q))(\phi (u))$ 

$=\theta (\tau (p))(\phi (u))+\theta (\tau (q))(\phi (u))$ (since by Corollary \ref{5.3.c}, $\theta {{|}_{\operatorname{Im}\phi }}$ is a function)

$=\phi (\tau (p)(u))+\phi (\tau (q)(u))$ (since $\phi $ is a $\theta$-morphism)

$=\phi (p(u))+\phi (q(u))$ 

$=\phi (a)+\phi (b)$. 

And

$\phi (ab)$ 

$=\phi (p(u)q(u))$ 

$=\phi ((pq)(u))$ 

$=\phi (\tau (pq)(u))$

$=\theta (\tau (pq))(\phi (u))$ (since $\phi $ is a $\theta$-morphism)

$={\tau }'(\vartheta (pq))(\phi (u))$ (by the definition of $\theta $)

$=\vartheta (pq)(\phi (u))$ 

$=\vartheta (p)(\phi (u))\vartheta (q)(\phi (u))$ (since $\varphi :B\to {B}'$ is a field isomorphism)

$={\tau }'(\vartheta (p))(\phi (u)){\tau }'(\vartheta (q))(\phi (u))$ 

$=\theta (\tau (p))(\phi (u))\theta (\tau (q))(\phi (u))$ (since by Corollary \ref{5.3.c}, $\theta {{|}_{\operatorname{Im}\phi }}$ is a function)

$=\phi (\tau (p)(u))\phi (\tau (q)(u))$ (since $\phi $ is a $\theta$-morphism)

$=\phi (p(u))\phi (q(u))$ 

$=\phi (a)\phi (b)$.

Therefore $\phi $ is a ring homomorphism extending $\varphi $.

2.\emph{Necessity} 
Suppose that $\phi $ is a ring homomorphism extending $\varphi $. Then $\forall c\in B$, $\phi (c)=\varphi (c)$, and
$\forall a,b\in {{\left\langle u \right\rangle }_{T}}$, $\phi (a+b)=\phi (a)+\phi (b)$ and $\phi (ab)=\phi (a)\phi (b)$.

Let $z\in {{\left\langle u \right\rangle }_{T}}$ and let $f={{a}_{0}}+{{a}_{1}}x+\cdots +{{a}_{n}}{{x}^{n}} \in B[x]$. Then

$\phi (f(z))$ 

$= \phi ({a_0} + {a_1}z +  \cdots  + {a_n}{z^n}) $

$= \varphi ({a_0}) + \varphi ({a_1})\phi (z) +  \cdots  + \varphi ({a_n}){(\phi (z))^n} $

$= \vartheta (f)(\phi (z)), $

that is 
\begin{equation} \label{6.9}
    \phi (f(z))=\vartheta (f)(\phi (z))
\end{equation}

To show that $\phi $ is a $\theta$-morphism, by Corollary \ref{5.3.c}, we need to show that $\theta {{|}_{\operatorname{Im}\phi }}$ is a map. Let $(h,g{{|}_{\operatorname{Im}\phi }}),(h,{g}'{{|}_{\operatorname{Im}\phi }})\in \theta {{|}_{\operatorname{Im}\phi }}$ with $(h,g),(h,{g}')\in \theta $. By the definition of $\theta $, $\exists l,m\in B[x]$ such that 
$(h,g)=(\tau (l),{\tau }'(\vartheta (l)))$ and $(h,{g}')=(\tau (m),{\tau }'(\vartheta (m)))$, 
and so $\tau (l)=\tau (m)$. 

Assume $g{{|}_{\operatorname{Im}\phi }}\ne {g}'{{|}_{\operatorname{Im}\phi }}$; that is, ${\tau }'(\vartheta (l)){{|}_{\operatorname{Im}\phi }}\ne {\tau }'(\vartheta (m)){{|}_{\operatorname{Im}\phi }}$. Then $\exists z\in {{\left\langle u \right\rangle }_{T}}$ such that $\vartheta (l)(\phi (z))\ne \vartheta (m)(\phi (z))$. However, since $\tau (l)=\tau (m)$, $\phi (l(z))=\phi (m(z))$, and so from Equation $($\ref{6.9}$)$, we can tell that $\vartheta (l)(\phi (z))=\phi (l(z))=\phi (m(z))=\vartheta (m)(\phi (z))$, a contradiction. 

Hence $\theta {{|}_{\operatorname{Im}\phi }}$ must be a map. Then $\forall z\in {{\left\langle u \right\rangle }_{T}}$ and $f\in B[x]$, 

$\phi (\tau (f)(z))$

$=\phi (f(z))$ 

$=\vartheta (f)(\phi (z))$ (by Equation $($\ref{6.9}$)$)

$={\tau }'(\vartheta (f))(\phi (z))$ 

$=\theta (\tau (f))(\phi (z))$ (by the definition of $\theta $ and the fact that $\theta {{|}_{\operatorname{Im}\phi }}$ is a map).

So by Definition \ref{5.3.5}, $\phi $ is a $\theta$-morphism.
\end{proof}

	Note that up to now all polynomial rings involved are in one variable. We shall discuss ring homomorphisms involving polynomial rings in more than one variable in Sections \ref{Basic notions for multivariable} and \ref{Basic notions for partial}.\\

For $\operatorname{Im}\phi$ of $\theta$-morphisms $\phi$, we generalize Proposition \ref{1.3.8} as follows.
\begin{proposition} \label{image theta space}
    Let $\phi $ be a $\theta $-morphism from a $T_1$-space $S_1$ to a $T_2$-space $S_2$. Suppose $\operatorname{Im}\theta = T_2$. Then $ \operatorname{Im} \phi  \le _q{S_2}$, i.e. $\operatorname{Im} \phi$ is a quasi-$T_2$-subspace of $S_2$ (Definition \ref{quasi-T-space}). Moreover, if $\operatorname{Id}\in T_2$ or more generally, $\operatorname{Im}\phi \subseteq {{\left\langle \operatorname{Im}\phi  \right\rangle }_{T_2}}$, then $\operatorname{Im}\phi \le {{S}_{2}}$, i.e. $\operatorname{Im}\phi$ is a $T_2$-subspace of $S_2$ (Definition \ref{$T$-space}).
\end{proposition}
 \begin{proof}
     By Definition \ref{5.3.5}, $\forall a\in {{S}_{1}}$ and $f\in \operatorname{Dom}\theta$, $\theta(f)(\phi (a))=\phi (f(a))\in \operatorname{Im}\phi $, and thus ${{\left\langle \phi (a) \right\rangle }_{T_2}}\subseteq \operatorname{Im}\phi $ because $\operatorname{Im}\theta = T_2$. Hence ${{\left\langle \operatorname{Im}\phi  \right\rangle }_{T_2}}\subseteq \operatorname{Im}\phi $, and so by Definition \ref{quasi-T-space}, $\operatorname{Im}\phi $ is a quasi-$T_2$-subspace of $S_2$. 
     
Moreover, if $\operatorname{Id}\in T_2$ or $\operatorname{Im}\phi \subseteq {{\left\langle \operatorname{Im}\phi  \right\rangle }_{T_2}}$, then by Proposition \ref{1.2.9}, $\operatorname{Im}\phi $ is a $T_2$-space and hence $\operatorname{Im}\phi \le {{S}_{2}}$.
 \end{proof}
\subsection{$\theta$-isomorphisms} \label{6 theta-isomorphisms}
The notion of $T$-isomorphism is generalized as follows.
\begin{definition} \label{5.4.a}
Let $\phi $ be a $\theta $-morphism from a $T_1$-space $S_1$ to a $T_2$-space $S_2$. If $\phi $ is bijective, then we call $\phi $ a \emph{$\theta$-isomorphism}. Moreover, we denote by $\operatorname{Iso}_{\theta}({{S}_{1}},{{S}_{2}})$ the set of all $\theta$-isomorphisms from ${{S}_{1}}$ to ${{S}_{2}}$.
\end{definition} 
To justify the definition, we need to show that $\phi^{-1}$ is a $\theta^{-1}$-morphism from $S_2$ to $S_1$. For this purpose, we need
\begin{lemma} \label{L6.2.2}
    Let $\phi $ be a $\theta $-isomorphism from a $T_1$-space $S_1$ to a $T_2$-space $S_2$. Then $\theta^{-1} {{|}_{S_1}}$ is a map, where $\theta^{-1}:=\{(g,f)\,|\,(f,g) \in \theta\}$.
\end{lemma}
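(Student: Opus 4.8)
The plan is to unwind what "$\theta^{-1}|_{S_1}$ is a map" means and reduce it, via Proposition \ref{5.3.b}, to the statement that $\theta^{-1}(g)(a)$ is well-defined for every $g\in\operatorname{Dom}\theta^{-1}$ and every $a\in S_1$. Concretely, I would fix $a\in S_1$ together with two pairs $(g,f_1),(g,f_2)\in\theta^{-1}$ — equivalently $(f_1,g),(f_2,g)\in\theta$ — and aim to show $f_1(a)=f_2(a)$; since $a$ and the pairs are arbitrary this gives $f_1|_{S_1}=f_2|_{S_1}$, which is exactly what it means for $\theta^{-1}|_{S_1}$ to be a map (note $S_1\subseteq D_1$, so the restriction makes sense).

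The key steps, in order, are as follows. First, observe that $f_1,f_2\in T_1$ (because $\theta\subseteq T_1\times T_2$ by Definition \ref{5.3.5}) and that $S_1$ is a $T_1$-space, so by Proposition \ref{<S> contained in S} both $f_1(a)$ and $f_2(a)$ lie in $S_1$; in particular $\phi(f_1(a))$ and $\phi(f_2(a))$ are defined. Second, since $\phi$ is a $\theta$-morphism, Proposition \ref{5.3.c} gives that $\theta|_{\operatorname{Im}\phi}$ is a map, and Definition \ref{5.3.5} (read together with Convention \ref{convention}) gives $\phi(f_i(a))=\theta(f_i)(\phi(a))$ for $i=1,2$. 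Third, because $(f_1,g)\in\theta$ and $\theta|_{\operatorname{Im}\phi}$ is a map, we get $\theta(f_1)(\phi(a))=g(\phi(a))$ by Definition \ref{5.3.a}, and likewise $\theta(f_2)(\phi(a))=g(\phi(a))$; hence $\phi(f_1(a))=g(\phi(a))=\phi(f_2(a))$. Fourth, since $\phi$ is a $\theta$-isomorphism it is injective, so $f_1(a)=f_2(a)$, as desired.

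I do not expect a serious obstacle here; the only points needing care are bookkeeping ones. One must invoke Proposition \ref{5.3.c} to guarantee that $\theta(f_i)(\phi(a))$ genuinely equals $g(\phi(a))$ (rather than being merely "one of several possible values"), and one must verify $f_i(a)\in S_1$ before applying $\phi$. The genuinely essential ingredient is the injectivity of $\phi$, which is precisely what separates a $\theta$-isomorphism from a mere $\theta$-morphism; without bijectivity the conclusion can fail, which is why the hypothesis is stated for $\theta$-isomorphisms.
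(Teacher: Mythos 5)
Your proposal is correct and follows essentially the same route as the paper's proof: reduce via Proposition \ref{5.3.b} and Definition \ref{5.3.a} to showing $f_1(a)=f_2(a)$, use injectivity of the $\theta$-isomorphism to pass to $\phi(f_1(a))=\phi(f_2(a))$, and establish that equality by writing both sides as $g(\phi(a))$ using Definition \ref{5.3.5} and Proposition \ref{5.3.c}. The only difference is your explicit check that $f_i(a)\in S_1$, which the paper leaves implicit.
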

\begin{proof}
    Let $(g,f_1),(g,f_2)\in \theta^{-1}$ and $a \in S_1$. 
    Then to show that $\theta^{-1} {{|}_{S_1}}$ is a map, by Proposition \ref{5.3.b} and Definition \ref{5.3.a}, it suffices to show
    $f_1(a)=f_2(a)$. Moreover, since $\phi $ is injective, we only need to show $\phi(f_1(a))=\phi(f_2(a))$: 
    \begin{align*}
        \phi(f_1(a))&=\theta(f_1)(\phi (a)) \text{ (by Definition } \ref{5.3.5})\\
        &=g(\phi (a)) \text{ (because }(f_1,g)\in \theta \text{ and }\theta {|}_{\operatorname{Im}\phi} \text{ is a map by Corollary } \ref{5.3.c})\\ 
        &=\theta(f_2)(\phi (a)) \text{ (because }(f_2,g)\in \theta \text{ and }\theta {|}_{\operatorname{Im}\phi} \text{ is a map})\\
        &=\phi(f_2(a)) \text{ (by Definition } \ref{5.3.5}).
    \end{align*}
\end{proof}

\begin{proposition} \label{5.4.b}
    Let $S_1$ and $S_2$ be a $T_1$-space and a $T_2$-space, respectively, let $\phi \in \operatorname{Iso}_{\theta}({{S}_{1}},{{S}_{2}})$ and let ${{\phi }^{-1}}$ be the inverse map of $\phi $. Then ${{\phi }^{-1}}\in \operatorname{Iso}_{\theta^{-1}}({{S}_{2}},{{S}_{1}})$.
\end{proposition}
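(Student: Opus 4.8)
The plan is to show that $\phi^{-1}$ is a $\theta^{-1}$-morphism and that it is bijective; the latter is immediate since the inverse of a bijection is a bijection, so the real content is the morphism property. First I would invoke Lemma \ref{L6.2.2}, which tells us that $\theta^{-1}|_{S_1}$ is a map; since $\operatorname{Im}\phi^{-1}=S_1$, this is exactly the well-definedness condition (via Proposition \ref{5.3.b} and Definition \ref{5.3.a}) that allows $\theta^{-1}(g)(b)$ to make sense for $g\in\operatorname{Dom}\theta^{-1}$ and $b\in S_1$. In particular, by Convention \ref{convention}, every equation below involving $\theta^{-1}$ will automatically have both sides well-defined.

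Next I would verify the defining identity of Definition \ref{5.3.5}: for all $g\in\operatorname{Dom}\theta^{-1}$ and $b\in S_2$, $\phi^{-1}(g(b))=\theta^{-1}(g)(\phi^{-1}(b))$. Fix such a $g$ and $b$. Since $g\in\operatorname{Dom}\theta^{-1}$, there is some $f$ with $(f,g)\in\theta$, i.e. $\theta^{-1}(g)=f$ (using that $\theta^{-1}|_{S_1}$ is a map, so this value is unambiguous on $S_1$). Set $a=\phi^{-1}(b)\in S_1$, so $\phi(a)=b$. Because $\phi$ is a $\theta$-morphism, Definition \ref{5.3.5} gives $\phi(f(a))=\theta(f)(\phi(a))$, and since $(f,g)\in\theta$ and $\theta|_{\operatorname{Im}\phi}$ is a map by Proposition \ref{5.3.c}, we have $\theta(f)(\phi(a))=g(\phi(a))=g(b)$. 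Hence $\phi(f(a))=g(b)$, and applying $\phi^{-1}$ yields $f(a)=\phi^{-1}(g(b))$, i.e. $\theta^{-1}(g)(\phi^{-1}(b))=f(a)=\phi^{-1}(g(b))$, which is the desired identity.

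Finally, since $\phi$ is bijective its inverse $\phi^{-1}\colon S_2\to S_1$ is bijective as well, so by Definition \ref{5.4.a} we conclude $\phi^{-1}\in\operatorname{Iso}_{\theta^{-1}}(S_2,S_1)$. The main subtlety — already handled by the preceding Lemma \ref{L6.2.2} — is the bookkeeping about well-definedness: unlike the $T$-morphism case (Proposition \ref{1.3.a}), here $\theta^{-1}$ need not be a map on all of $D_1$, so one must be careful that the expressions $\theta^{-1}(g)(\phi^{-1}(b))$ are legitimate, and that is precisely why $\operatorname{Im}\phi^{-1}=S_1$ and Lemma \ref{L6.2.2} are needed. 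No genuinely hard estimate or construction is involved; the proof is a short diagram-chase once the well-definedness infrastructure is in place.
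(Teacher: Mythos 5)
Your proposal is correct and follows essentially the same route as the paper's proof: both use Lemma \ref{L6.2.2} (via Proposition \ref{5.3.b}) to make $\theta^{-1}(g)(\,\cdot\,)$ well-defined on $S_1$, then invert the defining identity $\phi(f(a))=\theta(f)(\phi(a))$ and conclude with Definition \ref{5.4.a}. The only difference is cosmetic — you fix a witness $(f,g)\in\theta$ for each $g$, while the paper first records the identity for all $f\in\operatorname{Dom}\theta$ and then passes to $\theta^{-1}$ — so there is nothing further to add.
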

\begin{proof}
   By Definition \ref{5.3.5}, $\forall f\in \operatorname{Dom}\theta$ and $a\in {{S}_{1}}$, $\phi(f(a))=\theta(f)(\phi (a))$, and so $f(a)={{\phi }^{-1}}(\theta(f)(\phi(a)))$. Hence 
\begin{equation} \label{6.a}
    f({{\phi }^{-1}}(b))={{\phi }^{-1}}(\theta(f)(b)),\forall f\in \operatorname{Dom}\theta \text{ and } b \in {{S}_{2}}.
\end{equation}
 
    By Lemma \ref{L6.2.2}, $\theta^{-1} {{|}_{S_1}}$ is a map. Then by Proposition \ref{5.3.b}, $\theta^{-1}(g)(a)$ is well-defined, $\forall g\in \operatorname{Dom}\theta^{-1}(=\operatorname{Im}\theta) $ and $a\in S_1$. Thus it follows from (\ref{6.a}) that
    \[\theta^{-1}(g)({{\phi }^{-1}}(b))={{\phi }^{-1}}(g(b)),\forall g\in \operatorname{Dom}\theta^{-1} \text{ and } b \in {{S}_{2}}.\]
So by Definition \ref{5.3.5}, $\phi^{-1} $ is a $\theta^{-1} $-morphism from $S_2$ to $S_1$.

Moreover, since ${\phi }^{-1}$ is bijective, by Definition \ref{5.4.a}, ${{\phi }^{-1}}\in \operatorname{Iso}_{\theta^{-1}}({{S}_{2}},{{S}_{1}})$.
\end{proof}

\section{Constructions of the generalized morphisms and isomorphisms} \label{Cons of $T$-mor and theta-mor}
For a $T$-morphism or a $\theta $-morphism from a $T$-space $S$, Subsections \ref{$T$-morphisms from u}, \ref{$T$-morphisms from big u} and \ref{I Constructions of $theta $-morphisms} discuss how to construct it by a map from a set which generates $S$. Specifically, Subsection \ref{$T$-morphisms from u} is about a construction of a $T$-morphism from ${{\left\langle u \right\rangle }_{T}}$, where $u\in D$, by a map from $\{u\}$. And Subsection \ref{$T$-morphisms from big u} generalizes the construction to a $T$-morphism from ${{\left\langle U \right\rangle }_{T}}$, where $U \subseteq D$, by a map from $U$. Then in Subsection \ref{I Constructions of $theta $-morphisms}, the construction is further generalized to $\theta$-morphisms. In Subsection \ref{I Another construction of $T$-morphisms}, we shall introduce a way of constructing $T$-morphisms in terms of topology.

Moreover, some notions and results in algebra are generalized in Section \ref{Cons of $T$-mor and theta-mor} as “byproducts” (see e.g. Propositions \ref{5.1.2}, \ref{5.1.3} and \ref{5.1.4}).
\subsection{A construction of $T$-morphisms from ${{\left\langle u \right\rangle }_{T}}(u \in D)$} \label{$T$-morphisms from u}
Let $F/B$ be a field extension and let $u,v\in F$ be algebraic over $B$. If $u$ and $v$ share the same minimal polynomial over $B$, then 
\[p(u)=q(u)\Leftrightarrow p(v)=q(v),\forall p(x),q(x)\in B[x].\]
In light of this observation, we introduce the following.
\begin{notation}
    \label{5.1.1}Let $D$ be a set, let $M$ be a set of maps from $D$ to $D$, and let $u,v\in D$. Then by $u\xrightarrow{M}v$ we mean that
\begin{center}
    $\forall f,g\in M$, if $f(u)=g(u)$, then $f(v)=g(v)$. 
\end{center}

Moreover, by $u\overset{M}{\longleftrightarrow}v$ we mean that
\begin{center}
    $u\xrightarrow{M}v$ and $v\xrightarrow{M}u$,
\end{center} 
or equivalently, by $u\overset{M}{\longleftrightarrow}v$ we mean that
\begin{center}
 $f(u)=g(u)\Leftrightarrow f(v)=g(v)$, $\forall f,g\in M$.    
\end{center}

Besides, we denote by ${{[u)}_{M}}$ and ${[u]_M}$ the set $\{w\in D\,|\,u\xrightarrow{M}w\}$ and the set $\{w\in D\,|\,u\overset{M}{\longleftrightarrow}w\}$, respectively.
\end{notation}
\begin{remark}
    Apparently, $u\overset{M}{\longleftrightarrow}v$ defines an equivalence relation on $D$.
\end{remark}

The following serves as an example.

\begin{proposition}
    \label{5.1.2} Let $B$ be a subfield of a field $F$. Let $T$ be the operator semigroup defined in Example \ref{T on field--1}.
\begin{enumerate}
\item [(a)] If $u\in F$ is algebraic over $B$, then 
\begin{center}
    ${{[u]}_{T}}={{[u)}_{T}}=$\{all roots of $\min (B,u)$\},
\end{center} 
where $\min (B,u)$ is the minimal polynomial of $u$ over $B$.
 \item [(b)] If $u\in F$ is transcendental over $B$, then 
 \begin{center}
     ${{[u)}_{T}}=F$ and ${{[u]}_{T}}=$\{$v\in F\, |\, v $ is transcendental over $B$\}.
 \end{center}
\end{enumerate}
\end{proposition}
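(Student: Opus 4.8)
The plan is to unwind the definitions in Notation \ref{5.1.1} and translate the conditions $u \xrightarrow{T} v$ and $u \overset{T}{\longleftrightarrow} v$ into statements about polynomials over $B$, using the fact (noted just before the proposition) that for $f = p^*, g = q^* \in T$, $f(u) = g(u)$ means $p(u) = q(u)$, i.e. $(p-q)(u) = 0$. So $u \xrightarrow{T} v$ says: for all $h(x) \in B[x]$, if $h(u) = 0$ then $h(v) = 0$; equivalently, the ideal $\{h \in B[x] : h(u) = 0\}$ is contained in $\{h \in B[x] : h(v) = 0\}$. Both of these are either $(0)$ (when the element is transcendental over $B$) or generated by the minimal polynomial (when the element is algebraic over $B$). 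Once this dictionary is set up, each part is a short argument.

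For part (a): if $u$ is algebraic over $B$ with minimal polynomial $\min(B,u)$, then $\{h \in B[x] : h(u) = 0\} = (\min(B,u))$. First I would show ${[u)}_T = \{\text{roots of } \min(B,u)\}$. If $v$ is a root of $\min(B,u)$, then every $h$ vanishing at $u$ is a multiple of $\min(B,u)$, hence vanishes at $v$; so $u \xrightarrow{T} v$. Conversely, if $u \xrightarrow{T} v$, then since $\min(B,u)(u) = 0$ we get $\min(B,u)(v) = 0$, so $v$ is a root. Next I would show ${[u]}_T = {[u)}_T$: the inclusion ${[u]}_T \subseteq {[u)}_T$ is immediate from the definitions, and for the reverse, if $v$ is a root of $\min(B,u)$, then $v$ is algebraic over $B$ and $\min(B,v) = \min(B,u)$ (it is the monic irreducible polynomial over $B$ having $v$ as a root), so by the already-proved description applied to $v$ we get $u \in {[v)}_T$, i.e. $v \xrightarrow{T} u$; combined with $u \xrightarrow{T} v$ this gives $u \overset{T}{\longleftrightarrow} v$.

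For part (b): if $u$ is transcendental over $B$, then $\{h \in B[x] : h(u) = 0\} = (0)$, so the condition "$h(u) = 0 \Rightarrow h(v) = 0$" holds vacuously for every $v \in F$; hence ${[u)}_T = F$. For ${[u]}_T$, we need additionally $v \xrightarrow{T} u$, i.e. $h(v) = 0 \Rightarrow h(u) = 0$ for all $h \in B[x]$; since the right-hand side forces $h = 0$, this says $\{h \in B[x] : h(v) = 0\} = (0)$, which is exactly the statement that $v$ is transcendental over $B$. So ${[u]}_T = \{v \in F : v \text{ transcendental over } B\}$.

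I do not anticipate a serious obstacle here; the only point requiring a little care is the equality $\min(B,v) = \min(B,u)$ for a root $v$ of $\min(B,u)$ in part (a), which uses irreducibility of $\min(B,u)$ over $B$, and the correct handling of the vacuous implications in part (b). Everything else is a direct translation through Notation \ref{5.1.1} and Example \ref{T on field--1} (equivalently Example \ref{$T$-space for field--1}).
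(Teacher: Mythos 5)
Your proposal is correct and follows essentially the same route as the paper: both translate $u\xrightarrow{T}v$ into the statement that every polynomial in $B[x]$ vanishing at $u$ vanishes at $v$, then use that this vanishing set is $(\min(B,u))$ in the algebraic case and $(0)$ in the transcendental case, together with the fact that a root of $\min(B,u)$ has the same minimal polynomial as $u$. Your phrasing via containment of vanishing ideals and your slightly different ordering in (a) (establish ${[u)}_{T}$ first, then deduce ${[u]}_{T}$ by symmetry) are only cosmetic differences from the paper's argument.
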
  
\begin{proof}
For (a): Let $u\in F$ be algebraic over $B$. 

Let $w\in {{[u)}_{T}}$. By Notation \ref{5.1.1}, $\forall f^*,g^*\in T$ (\textit{cf.} Example \ref{T on field--1}), $f^*(u)=g^*(u)$ implies $f^*(w)=g^*(w)$. So $\forall f(x),g(x)\in B[x]$, $f(u)=g(u)$ implies $f(w)=g(w)$. Thus $w$ is a root of $\min (B,u)$. Hence 
\begin{center}
    ${{[u)}_{T}}\subseteq $\{all roots of $\min (B,u)$\}.
\end{center}

Moreover, let $v \in F$ be a root of $\min (B,u)$. Then $\forall f(x),g(x)\in B[x]$, $f(u)=g(u)\Leftrightarrow f(v)=g(v)$. It follows that $\forall f^*,g^*\in T$, $f^*(u)=g^*(u)\Leftrightarrow f^*(v)=g^*(v)$; that is, $u\overset{T}{\longleftrightarrow}v$, and so $v\in {{[u]}_{T}}$. Hence
\begin{center}
    $({{[u)}_{T}}\subseteq )$\{all roots of $\min (B,u)$\}$\subseteq {{[u]}_{T}}$. 
\end{center}

On the other hand, ${{[u)}_{T}}\supseteq {{[u]}_{T}}$ by Notation \ref{5.1.1}. Therefore, 
\begin{center}
    ${{[u)}_{T}}=$\{all roots of $\min (B,u)$\}$={{[u]}_{T}}$.
\end{center}
For (b): Let $u\in F$ be transcendental over $B$. 

Then $\forall f(x),g(x)\in B[x]$, $f(u)=g(u)$ implies $f=g$. So $\forall f^*,g^*\in T$, $f^*(u)=g^*(u)$ implies $f^*=g^*$. Then $\forall v\in F$, $u\xrightarrow{T}v$, and so ${{[u)}_{T}}=F$.

Hence ${{[u]}_{T}}=\{v\in F\,|\,v\overset{T}{\longleftrightarrow}u\}=$\{$v\in F\,|\,v\xrightarrow{T}u$\}.

$\forall v\in F$, if $v$ is also transcendental over $B$, then as was just shown for $u$, $v\xrightarrow{T}u$. So 
\begin{center}
    ${{[u]}_{T}}=\{v\in F\,|\,v\xrightarrow{T}u\}\supseteq $\{$v\in F\,|\,v$ is transcendental over $B$\}.
\end{center}

On the other hand, if $v\in F$ is algebraic over $B$, then by (a), $u\notin {{[v)}_{T}}$, i.e. $v\xrightarrow{T}u$ is false; That is, if $v\xrightarrow{T}u$ is true, then $v$ is transcendental over $B$. So
\begin{center}
    ${{[u]}_{T}}=\{v\in F\,|\,v\xrightarrow{T}u\}\subseteq $\{$v\in F\,|\,v$ is transcendental over $B$\}.
\end{center}

Therefore, ${{[u]}_{T}}=$\{$v\in F\,|\,v$ is transcendental over $B$\}.	
\end{proof}
We will talk more about the notion of transcendental elements in Section \ref{transcendental}.

Recall that in the classical Galois theory, each element of the Galois group of a polynomial permutes its roots. From Proposition \ref{5.1.2}, we can tell that the following two results generalize this well-known fact. 

\begin{proposition}
    \label{5.1.3} Let $\sigma$ be a $T$-morphism from a $T$-space $S$ (to a $T$-space). Then $\forall a\in S$, $a\xrightarrow{T}\sigma (a)$ $($and so $\sigma (a)\in {{[a)}_{T}})$. 
\end{proposition}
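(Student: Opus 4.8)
The plan is to unwind the definitions of $\xrightarrow{T}$ (Notation \ref{5.1.1}) and of $T$-morphism (Definition \ref{1.3.1}), and to use the fact that a $T$-morphism commutes with every element of $T$. Fix $u\in S$. By Notation \ref{5.1.1}, to prove $u\xrightarrow{T}\sigma(u)$ I must show that for all $f,g\in T$, if $f(u)=g(u)$ then $f(\sigma(u))=g(\sigma(u))$.

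So first I would take arbitrary $f,g\in T$ with $f(u)=g(u)$. Since $\sigma$ is a $T$-morphism and $u\in S$, Definition \ref{1.3.1} gives $\sigma(f(u))=f(\sigma(u))$ and $\sigma(g(u))=g(\sigma(u))$. Applying $\sigma$ to the hypothesis $f(u)=g(u)$ (note $f(u),g(u)\in S$ by Proposition \ref{<S> contained in S}, so $\sigma$ is defined there) yields $\sigma(f(u))=\sigma(g(u))$. Chaining the three equalities: $f(\sigma(u))=\sigma(f(u))=\sigma(g(u))=g(\sigma(u))$. This is exactly the required implication, so $u\xrightarrow{T}\sigma(u)$, and hence $\sigma(u)\in[u)_T$ by the definition of $[u)_T$ in Notation \ref{5.1.1}.

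There is essentially no obstacle here — the statement is a direct three-line consequence of the commutation property, and the only mild point to be careful about is that the expressions $\sigma(f(u))$ etc. are legitimate, which is guaranteed by Proposition \ref{<S> contained in S} (so $f(u)\in S$) together with the first remark following Definition \ref{1.3.1}. One could add a remark that the converse direction $\sigma(u)\xrightarrow{T}u$ need not hold (so in general $\sigma(u)$ need not lie in $[u]_T$), which is consistent with $\sigma$ being merely a $T$-endomorphism rather than a $T$-automorphism; but that is not part of the claimed statement.
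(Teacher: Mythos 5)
Your proof is correct and is essentially identical to the paper's: both take arbitrary $f,g\in T$ with $f(u)=g(u)$ and chain $f(\sigma(u))=\sigma(f(u))=\sigma(g(u))=g(\sigma(u))$ via Definition \ref{1.3.1}. The extra care you take about well-definedness (via Proposition \ref{<S> contained in S}) is fine but already covered by the remark following Definition \ref{1.3.1}.
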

\begin{proof}
    For $a\in S$, let $f,g\in T$ such that $f(a)=g(a)$. Then by Definition \ref{1.3.1},
$f(\sigma (a))=\sigma (f(a))=\sigma (g(a))=g(\sigma (a))$. 
Hence by Notation \ref{5.1.1}, $a\xrightarrow{T}\sigma (a)$.	
\end{proof}
\begin{proposition}
    \label{5.1.4} Let $\sigma$ be a $T$-morphism from a $T$-space $S$ (to a $T$-space) such that $\sigma$ is injective. Then $\forall a\in S$, $a\overset{T}{\longleftrightarrow}\sigma (a)$ $($and so $\sigma (a)\in {{[a]}_{T}})$.
\end{proposition}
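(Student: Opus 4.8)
The plan is to mimic the proof of Proposition \ref{5.1.3} but to exploit the injectivity of $\sigma$ to get the reverse implication as well. Recall that Notation \ref{5.1.1} says $u\overset{T}{\longleftrightarrow}\sigma(u)$ means both $u\xrightarrow{T}\sigma(u)$ and $\sigma(u)\xrightarrow{T}u$, or equivalently that $f(u)=g(u)\Leftrightarrow f(\sigma(u))=g(\sigma(u))$ for all $f,g\in T$.

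First I would fix $u\in S$ and let $f,g\in T$ be arbitrary. For the forward direction $u\xrightarrow{T}\sigma(u)$, I simply invoke Proposition \ref{5.1.3}, which already gives this for any $T$-morphism without injectivity. For the reverse direction $\sigma(u)\xrightarrow{T}u$, suppose $f(\sigma(u))=g(\sigma(u))$. By Definition \ref{1.3.1}, $\sigma$ commutes with every element of $T$, so $\sigma(f(u))=f(\sigma(u))=g(\sigma(u))=\sigma(g(u))$. Now, because $\sigma$ is injective, this forces $f(u)=g(u)$. Hence by Notation \ref{5.1.1}, $\sigma(u)\xrightarrow{T}u$, and combining the two implications yields $u\overset{T}{\longleftrightarrow}\sigma(u)$, so $\sigma(u)\in{{[u]}_{T}}$.

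There is no real obstacle here; the only point requiring a moment's care is that $f(u)$ and $g(u)$ lie in $S$ (so that $\sigma(f(u))$ and $\sigma(g(u))$ make sense), which is guaranteed by Proposition \ref{<S> contained in S}, and that $\sigma$ being injective is applied to elements of $S$ — which is exactly the domain of $\sigma$. So the argument is a short two-line strengthening of Proposition \ref{5.1.3}. The statement that $\sigma(u)\in{{[u]}_{T}}$ is then immediate from the definition of ${{[u]}_{T}}$ in Notation \ref{5.1.1}. One might additionally remark that this recovers the classical fact that an injective automorphism in a Galois group sends a root of the minimal polynomial to another root with the \emph{same} minimal polynomial, via Proposition \ref{5.1.2}(a).
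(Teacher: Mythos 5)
Your proof is correct and follows exactly the paper's argument: invoke Proposition \ref{5.1.3} for $u\xrightarrow{T}\sigma(u)$, then for the converse use $\sigma(f(u))=f(\sigma(u))=g(\sigma(u))=\sigma(g(u))$ and injectivity to conclude $f(u)=g(u)$. No gaps; the remark about $f(u),g(u)\in S$ via Proposition \ref{<S> contained in S} is a fine (if implicit in the paper) justification.
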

\begin{proof}
    Let $a\in S$. By Proposition \ref{5.1.3}, we only need to show $\sigma (a)\xrightarrow{T}a$. Let $f,g\in T$ such that $f(\sigma (a))=g(\sigma (a))$. Then by Definition \ref{1.3.1},
$\sigma (f(a))=f(\sigma (a))=g(\sigma (a))=\sigma (g(a))$. 
Hence $f(a)=g(a)$ because $\sigma $ is injective, as desired.
\end{proof}
The converse of Proposition \ref{5.1.3} may imply a potential way to construct $T$-morphisms. However, the converse of Proposition \ref{5.1.3} is not true. That is, for a map $\sigma$ from a $T$-space $S$, the condition “$\forall a\in S$, $a\xrightarrow{T}\sigma (a)$" is not sufficient for $\sigma$ to be a $T$-morphism, as shown below.
\begin{example}
    \label{5.1.5} Let $R$ be a differential ring with a derivation $\partial $ such that $\{\partial (a)\,|\,a\in R\}=R$ (it is easy to find such differential ring), let $T$ be the operator semigroup on $R$ given by $\{{{\partial }^{n}}\,|\,n\in {{\mathbb{Z}}^{+}}\}$ (note that unlike the operator semigroup defined in Example \ref{T on diff ring--1}, now $\operatorname{Id}\notin T$), let $C$ be the constant ring of $R$, and let a map $\sigma :R\to R$ be given by $a\mapsto a+c$, where $c\in C$ and $c\ne 0$. Then $\forall f,g\in T$ and $a\in R$, $f(a)=g(a)$ implies $f(\sigma (a))=g(\sigma (a))$. Hence $\forall a\in R$, $a\xrightarrow{T}\sigma (a)$. But obviously $\exists f\in T$ and $a\in R$ such that $f(\sigma (a))\ne \sigma (f(a))$. So $\sigma $, a map from the $T$-space ${{\left\langle R \right\rangle }_{T}}=R$ to itself, is not a $T$-morphism.
\end{example}
	Thus, to make a map $\sigma$ from a $T$-space $S$ be a $T$-morphism, we need to impose a condition on $\sigma $ which is stronger than “$\forall a\in S$, $a\xrightarrow{T}\sigma (a)$". Before we do this by Proposition \ref{5.1.8}, let’s see a fact as follows.
\begin{proposition}
    \label{5.1.6} Let $u,v\in D$. Then $u\xrightarrow{T}v$ implies $f(u)\xrightarrow{T}f(v)$, $\forall f\in T$. 
\end{proposition}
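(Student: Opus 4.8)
The plan is to unwind the definition of the arrow relation in Notation \ref{5.1.1} and exploit the one fact that distinguishes an operator semigroup from an arbitrary set of functions: closure under composition. Fix $f\in T$ and suppose $u\xrightarrow{T}v$. To show $f(u)\xrightarrow{T}f(v)$, I must verify that for all $g,h\in T$ with $g(f(u))=h(f(u))$ we have $g(f(v))=h(f(v))$.

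First I would rewrite $g(f(u))$ as $(g\circ f)(u)$ and $h(f(u))$ as $(h\circ f)(u)$. Since $T$ is an operator semigroup, Definition \ref{Operator semigroup} gives $g\circ f\in T$ and $h\circ f\in T$. Thus the hypothesis $g(f(u))=h(f(u))$ reads $(g\circ f)(u)=(h\circ f)(u)$, an equality between values at $u$ of two members of $T$. Applying $u\xrightarrow{T}v$ to the pair $g\circ f,\ h\circ f\in T$ yields $(g\circ f)(v)=(h\circ f)(v)$, i.e. $g(f(v))=h(f(v))$, which is exactly what is needed. Since $g,h\in T$ were arbitrary, Notation \ref{5.1.1} gives $f(u)\xrightarrow{T}f(v)$.

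There is no real obstacle here; the only thing to be careful about is the bookkeeping of composites — making sure that $g(f(\cdot))$ is read as $(g\circ f)(\cdot)$ with $g\circ f$ genuinely an element of $T$, which is precisely where the semigroup axiom (rather than mere "stability" as in part (d) of Proposition \ref{4 facts}) is used. No appeal to $\operatorname{Id}\in T$ or to any property of $T$-spaces is required, so the statement holds for an arbitrary operator semigroup $T$ on $D$ and arbitrary $u,v\in D$.
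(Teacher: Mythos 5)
Your proposal is correct and follows essentially the same route as the paper: both verify the definition by noting that $g\circ f,h\circ f\in T$ by the semigroup axiom and then applying $u\xrightarrow{T}v$ to this pair of composites. Nothing is missing.
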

\begin{remark}
    So, if $u\xrightarrow{T}v$ and there exists a map $\sigma :{{\left\langle u \right\rangle }_{T}}\to {{\left\langle v \right\rangle }_{T}}$ given by $f(u)\mapsto f(v),\forall f\in T$, then $\forall a\in {{\left\langle u \right\rangle }_{T}}$, $a\xrightarrow{T}\sigma (a)$ (since $f(u)\xrightarrow{T}f(v)$, $\forall f\in T$). We shall see in Proposition \ref{5.1.8} that the map $\sigma$ is a $T$-morphism.
\end{remark}
 \begin{proof}
     Suppose $u\xrightarrow{T}v$. Let $f,g,h\in T$ such that $g(f(u))=h(f(u))$. Then by Notation \ref{5.1.1}, it suffices to show $g(f(v))=h(f(v))$.

    By Definition \ref{Operator semigroup}, $g\circ f\in T$ and $h\circ f\in T$. Since $u\xrightarrow{T}v$ and $g(f(u))=h(f(u))$, by Notation \ref{5.1.1}, we have $g(f(v))=h(f(v))$, as desired.
 \end{proof}

\begin{proposition}
    \label{5.1.8} Let $u,v\in D$. Then 
\begin{enumerate}
    \item [(a)] $u\xrightarrow{T}v$ if and only if there exists a map $\sigma :{{\left\langle u \right\rangle }_{T}}\to {{\left\langle v \right\rangle }_{T}}$ given by $f(u)\mapsto f(v),\forall f\in T$.
    \item[(b)] The map $\sigma $ given in \emph{(a)} is a $T$-morphism.
\end{enumerate}
\end{proposition}
\begin{proof}
For (a):  

$u\xrightarrow{T}v$; 

$\Leftrightarrow$ $\forall f,g\in T$, $f(u)=g(u)$ implies $f(v)=g(v)$;

$\Leftrightarrow$ $\sigma :{{\left\langle u \right\rangle }_{T}}\to {{\left\langle v \right\rangle }_{T}}$ given by $f(u)\mapsto f(v),\forall f\in T$, is a well-defined map.

For (b): Let $a\in {{\left\langle u \right\rangle }_{T}}$. Then $\exists f\in T$ such that $f(u)=a$. Let $g\in T$. Then $\sigma (g(a))=\sigma (g(f(u)))$. By Definition \ref{Operator semigroup}, $g\circ f\in T$, and so by the definition of $\sigma $ in (a), $\sigma (g(f(u)))=g(f(v))$. Besides, $g(\sigma (a))=g(\sigma (f(u)))=g(f(v))$ also by the definition of $\sigma $. Combining the above equations, we have $g(\sigma (a))=\sigma (g(a))$. Therefore, $\sigma $ is a $T$-morphism.
\end{proof}
For $T$-isomorphisms (Definition \ref{$T$-isomorphisms}), we have the following straightforward result of Proposition \ref{5.1.8}.
\begin{corollary}
    \label{5.1.9} Let $u,v\in D$.  
\begin{enumerate}
    \item [(a)] $u\overset{T}{\longleftrightarrow}v$ if and only if there exists a bijective map $\sigma :{{\left\langle u \right\rangle }_{T}}\to {{\left\langle v \right\rangle }_{T}}$ given by $f(u)\mapsto f(v),\forall f\in T$.
    \item[(b)] The bijective map $\sigma $ given in \emph{(a)} is a $T$-isomorphism.
\end{enumerate}
\end{corollary}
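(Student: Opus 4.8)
The plan is to derive both parts directly from Theorem \ref{5.1.8}, using the fact that by Notation \ref{5.1.1} the relation $u\overset{T}{\longleftrightarrow}v$ is exactly the conjunction of $u\xrightarrow{T}v$ and $v\xrightarrow{T}u$. So the corollary should follow by applying Theorem \ref{5.1.8}(a) twice, once in each direction, plus a short bookkeeping argument about inverses.

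For the forward direction of (a), I would assume $u\overset{T}{\longleftrightarrow}v$, so both $u\xrightarrow{T}v$ and $v\xrightarrow{T}u$ hold. By Theorem \ref{5.1.8}(a) the first gives a well-defined map $\sigma:{{\left\langle u \right\rangle }_{T}}\to {{\left\langle v \right\rangle }_{T}}$ with $f(u)\mapsto f(v)$ for all $f\in T$, and the second gives a well-defined map $\tau:{{\left\langle v \right\rangle }_{T}}\to {{\left\langle u \right\rangle }_{T}}$ with $f(v)\mapsto f(u)$. Then I would check $\sigma\circ\tau$ and $\tau\circ\sigma$ are the respective identity maps, which is immediate from the defining rules since every element of ${{\left\langle u \right\rangle }_{T}}$ (resp. ${{\left\langle v \right\rangle }_{T}}$) has the form $f(u)$ (resp. $f(v)$); hence $\sigma$ is bijective.

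For the converse of (a), I would assume a bijective map $\sigma:{{\left\langle u \right\rangle }_{T}}\to {{\left\langle v \right\rangle }_{T}}$ with $f(u)\mapsto f(v)$ is given. Since $\sigma$ is in particular a well-defined map of this form, Theorem \ref{5.1.8}(a) yields $u\xrightarrow{T}v$. The point needing a moment of care is $v\xrightarrow{T}u$: I would observe that the inverse map $\sigma^{-1}$, which exists because $\sigma$ is bijective, is exactly the rule $f(v)\mapsto f(u)$ (since $\sigma(f(u))=f(v)$ forces $\sigma^{-1}(f(v))=f(u)$, and every element of ${{\left\langle v \right\rangle }_{T}}$ is of the form $f(v)$), so that rule is a well-defined map, and Theorem \ref{5.1.8}(a) with the roles of $u$ and $v$ interchanged gives $v\xrightarrow{T}u$. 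Combining, $u\overset{T}{\longleftrightarrow}v$.

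Finally, (b) is immediate: by Theorem \ref{5.1.8}(b) the map $\sigma$ of (a) is a $T$-morphism, and it is bijective, so by Definition \ref{$T$-isomorphisms} it is a $T$-isomorphism. I do not anticipate any genuine obstacle here; the only step requiring attention is the identification of $\sigma^{-1}$ with the rule $f(v)\mapsto f(u)$ in the converse of (a), which is what licenses re-applying Theorem \ref{5.1.8}(a) in the reversed direction.
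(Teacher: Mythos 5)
Your proof is correct and follows exactly the route the paper intends: the paper states Corollary \ref{5.1.9} as a "straightforward result of Theorem \ref{5.1.8}" with no written proof, and your argument (applying Theorem \ref{5.1.8}(a) in both directions, identifying $\sigma^{-1}$ with the rule $f(v)\mapsto f(u)$, and invoking Theorem \ref{5.1.8}(b) together with Definition \ref{$T$-isomorphisms} for part (b)) is precisely the omitted bookkeeping. No gaps.
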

	The converse of statement (b) in Proposition \ref{5.1.8} raises a question as follows. 
\begin{question}
\label{5.1.11} For $u\in D$ and a $T$-morphism $\sigma $ from ${{\left\langle u \right\rangle }_{T}}$, does there exist any $v\in D$ such that $\sigma (f(u))=f(v),\forall f\in T$? 
\end{question}
 The significance of this question is that, in the cases where the answer is yes, we can construct a $T$-morphism $\sigma $ by letting $\sigma (f(u))=f(v),\forall f\in T$. Proposition \ref{5.1.8} implies that there are cases where the answer to the question is positive, and we shall talk about these cases in Subsection \ref{$T$-morphisms from big u}. It seems not easy to find a case which gives negative answer to the question, but we found one as follows.
\begin{example}
    \label{5.1.12} Define $h:\mathbb{R}\to \mathbb{R}$ by $x\mapsto x+1$ and $g:\mathbb{R}\to \mathbb{R}$ by $x\mapsto \pi x$. Let $T=\left\langle \{h,g\} \right\rangle $ (Definition \ref{generated operator semigroup}) and $S={{\left\langle 1 \right\rangle }_{T}}$. 
    
    Then each $f\in T$ can be expressed as a finite composite of alternating “powers” of $h$ or $g$ as follows, where each ${{h}^{{{k}_{i}}}}$ or ${{g}^{{{k}_{i}}}}$ denotes the composite of ${{k}_{i}}$ copies of $h$ or $g$, respectively.
\begin{equation}
    \label{1a} f = {h^{{k_n}}}{g^{{k_{n - 1}}}} \cdots {h^{{k_2}}}{g^{{k_1}}},
\end{equation}
\begin{equation}
    \label{2a} f = {g^{{k_n}}}{h^{{k_{n - 1}}}} \cdots {h^{{k_2}}}{g^{{k_1}}},
\end{equation}
\begin{equation}
    \label{3a} f = {h^{{k_n}}}g^{{k_{n - 1}}} \cdots {g^{{k_2}}}h^{{k_1}},
\end{equation}
or
\begin{equation}
    \label{4a} f = {g^{{k_n}}}h_{}^{{k_{n - 1}}} \cdots {g^{{k_2}}}h_{}^{{k_1}},
\end{equation}
where $n$ is an even number for $($\ref{1a}$)$ and $($\ref{4a}$)$, $n$ is an odd number for $($\ref{2a}$)$ and $($\ref{3a}$)$, and ${{k}_{i}}\in {{\mathbb{Z}}^{+}},\forall i=1,\cdots ,n$. In particular, $h$ has the form (\ref{3a}), where $n=1$, and $g$ has the form (\ref{2a}), where $n=1$. 

Correspondingly, $\forall a\in S={{\left\langle 1 \right\rangle }_{T}}$, $a=f(1)$ has the form
\begin{equation} \label{1b}
    a = ( \cdots ({\pi ^{{k_1}}} + {k_2}){\pi ^{{k_3}}} +  \cdots  + {k_{n - 2}}){\pi ^{{k_{n - 1}}}} + {k_n},
\end{equation}
\begin{equation} \label{2b}
   a = ( \cdots ({\pi ^{{k_1}}} + {k_2}){\pi ^{{k_3}}} +  \cdots  + {k_{n - 1}}){\pi ^{{k_n}}},
\end{equation}
\begin{equation} \label{3b}
    a = ( \cdots ((1 + {k_1}){\pi ^{{k_2}}} + {k_3}){\pi ^{{k_4}}} +  \cdots  + {k_{n - 2}}){\pi ^{{k_{n - 1}}}} + {k_n},
\end{equation}
or
\begin{equation} \label{4b}
   a = ( \cdots ((1 + {k_1}){\pi ^{{k_2}}} + {k_3}){\pi ^{{k_4}}} +  \cdots  + {k_{n - 1}}){\pi ^{{k_n}}}, 
\end{equation}
where $n$ is an even number for $($\ref{1b}$)$ and $($\ref{4b}$)$, $n$ is an odd number for $($\ref{2b}$)$ and $($\ref{3b}$)$, and ${{k}_{i}}\in {{\mathbb{Z}}^{+}},\forall i=1,\cdots ,n$.

Let $a\in S$. Since $\pi $ is transcendental over $\mathbb{Q}$, the expression for $a$ in the above forms is unique. It follows that there is only one $f \in T$ in the form of $($\ref{1a}$)$, $($\ref{2a}$)$, $($\ref{3a}$)$, or $($\ref{4a}$)$ such that $a=f(1)$. Hence we can define a map $\sigma :S\to S$ as follows.

For all $a\in S$, if $f \in T$ has the form $($\ref{1a}$)$ or $($\ref{2a}$)$ such that $a=f(1)$, let $\sigma (a)=f(\pi )$, which clearly belongs to $S$, and if $f \in T$ has the form $($\ref{3a}$)$ or $($\ref{4a}$)$ such that $a=f(1)$, let $\sigma (a)=a$. 

Now let’s show $\sigma \in \operatorname{End}_{T}(S)$. Let $l\in T$. 

Suppose that $a=f(1)$ has the form $($\ref{1b}$)$ or $($\ref{2b}$)$. Then $f$ has the form $($\ref{1a}$)$ or $($\ref{2a}$)$, and so $l\circ f$ also has the form $($\ref{1a}$)$ or $($\ref{2a}$)$. Hence by the definition of $\sigma $, $\sigma (l(a))=\sigma (l(f(1)))=l(f(\pi ))=l(\sigma (a))$, as desired. 

Suppose that $a=f(1)$ has the form $($\ref{3b}$)$ or $($\ref{4b}$)$. Then $f$ has the form $($\ref{3a}$)$ or $($\ref{4a}$)$, and so $l\circ f$ also has the form $($\ref{3a}$)$ or $($\ref{4a}$)$. Hence by the definition of $\sigma $, $\sigma (l(a))=\sigma (l(f(1)))=l(f(1))=l(a)=l(\sigma (a))$, as desired. 

Therefore, $\sigma (l(a))=l(\sigma (a))$, $\forall l\in T$ and $a\in S$. So $\sigma \in \operatorname{End}_{T}(S)$.

For 1 and $\sigma $, assume that the answer to Question \ref{5.1.11} is positive, that is, there exists $v\in \mathbb{R}$ such that $\sigma (f(1))=f(v),\forall f\in T$. Recall the definitions of $h$ and $g$ at the beginning of the example. Since $h$ has the form (\ref{3a}), $h(v)=\sigma (h(1))=h(1)=2$, and so $v=1$; but since $g$ has the form (\ref{2a}), $g(v)=\sigma (g(1))=g(\pi )={{\pi }^{2}}$, and so $v=\pi $, a contradiction.

Therefore, for 1 and $\sigma $, the answer to Question \ref{5.1.11} is negative.
\end{example}

Recall that right after Proposition \ref{1.4.2}, we claimed that for a $T$-space $S$ and $H\subseteq \operatorname{End}_{T}(S)$, it is possible that ${{S}^{H}}$ is not a $T$-space. Now we show this claim as follows.
\begin{proof}
Let $T$, $S$ and $\sigma$ be the same as in Example \ref{5.1.12} and let $D=S\bigcup \{1\}$. Let us restrict the domain of $T$ to $D$.  We can tell that the “new” $T$, which we still denote by $T$ for convenience, is an operator semigroup on $D$, and all the argument in Example \ref{5.1.12} still holds. In particular, $\sigma \in \operatorname{End}_{T}(S)$, and $2\in {{S}^{\sigma }}$ because 2 has the form $($\ref{3b}$)$. Suppose that ${{S}^{\sigma }}$ is a $T$-space. Then $\exists U\subseteq D$ such that ${{\left\langle U \right\rangle }_{T}}={{S}^{\sigma }}$. Since $2\in {{S}^{\sigma }}$, $\exists u\in U \subseteq D=S\bigcup \{1\}$ such that $2\in {{\left\langle u \right\rangle }_{T}}$. It follows that $u$ must be 1, and so ${{S}^{\sigma }}\supseteq {{\left\langle 1 \right\rangle }_{T}}=S$. Then ${S}^{\sigma }=S$ because ${S}^{\sigma }\subseteq S$, and hence every element of $S$ is fixed under the action of $\sigma$. However, $\pi \in S$ and $\sigma (\pi )=\sigma (g(1))=g(\pi )={{\pi }^{2}}\ne \pi $, a contradiction.
\end{proof}

\subsection{A construction of $T$-morphisms from ${{\left\langle U \right\rangle }_{T}}(U \subseteq D)$}  \label{$T$-morphisms from big u}
$T$-morphisms discussed in Subsection \ref{$T$-morphisms from u} are from $T$-spaces generated by a single element of $D$. To study $T$-morphisms from other $T$-spaces, we generalize Notation \ref{5.1.1} as follows.
\begin{notation} \label{5.2.1}
    Let $D$ be a set, let $M$ be a set of maps from $D$ to $D$ and let $\alpha :U(\subseteq D)\to V(\subseteq D)$ be a map. Then by $U\xrightarrow{M,\alpha }V $ we mean that
\begin{center}
    $\forall f,g\in M$ and $u,w\in U$, if $f(u)=g(w)$, then $f(\alpha (u))=g(\alpha (w))$. 
\end{center}

Moreover, by $U\overset{M,\alpha }{\longleftrightarrow}V $ we mean that 
\begin{center}
    $\alpha $ is bijective, $U\xrightarrow{M,\alpha }V $ and $V \xrightarrow{M,{{\alpha }^{-1}}}U$,
\end{center}
where $\alpha ^{-1}:V \to U$ denotes the inverse of $\alpha$;

or equivalently, by $U\overset{M,\alpha }{\longleftrightarrow}V $ we mean that
\begin{center}
    $\alpha $ is bijective, and $\forall f,g\in M$ and $u,w\in U$, $f(u)=g(w)\Leftrightarrow f(\alpha (u))=g(\alpha (w))$. 
\end{center}
\end{notation}
	Now Proposition \ref{5.1.3} can be generalized to
\begin{proposition} \label{5.2.2}
    Let $\sigma$ be a $T$-morphism from a $T$-space $S_1$ to a $T$-space $S_2$. Then $\forall A\subseteq {{S}_{1}}$, $A\xrightarrow{T,\alpha }{S}_{2} $, where $\alpha :=\sigma {{|}_{A}}$. In particular, ${{S}_{1}}\xrightarrow{T,\sigma }{S}_{2}$. 
\end{proposition}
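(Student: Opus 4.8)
The plan is to unwind both definitions and let the defining commutation relation for $T$-morphisms do the work directly. First I would observe that since $\sigma\colon S_1\to S_2$ is a map, its restriction $\alpha:=\sigma|_U\colon U\to S_2$ is a genuine map into $S_2$, so the relation $U\xrightarrow{T,\alpha}S_2$ of Notation \ref{5.2.1} is meaningful. By that notation, the only thing to verify is: for all $f,g\in T$ and $u,w\in U$, the equality $f(u)=g(w)$ forces $f(\alpha(u))=g(\alpha(w))$.

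Next I would fix such $f,g\in T$ and $u,w\in U\subseteq S_1$ with $f(u)=g(w)$, and apply $\sigma$ to both sides to get $\sigma(f(u))=\sigma(g(w))$. Since $\sigma$ is a $T$-morphism (Definition \ref{1.3.1}) and $u,w\in S_1$, we have $\sigma(f(u))=f(\sigma(u))$ and $\sigma(g(w))=g(\sigma(w))$; by Proposition \ref{<S> contained in S} both $f(\sigma(u))$ and $g(\sigma(w))$ lie in the $T$-space $S_2$, so these are equalities between well-defined elements of $S_2$. Combining them yields $f(\sigma(u))=g(\sigma(w))$, i.e. $f(\alpha(u))=g(\alpha(w))$, which is precisely $U\xrightarrow{T,\alpha}S_2$.

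The ``in particular'' clause is then the special case $U=S_1$: here $\alpha=\sigma|_{S_1}=\sigma$, so the above gives $S_1\xrightarrow{T,\sigma}S_2$.

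I expect no real obstacle: this is essentially a one-line consequence of $\sigma(f(a))=f(\sigma(a))$, and the only points requiring a moment of care are that $\alpha$ is a well-defined map into $S_2$ and that the intermediate terms $f(\sigma(u))$, $g(\sigma(w))$ belong to $S_2$ — both immediate from $\sigma\colon S_1\to S_2$ together with Proposition \ref{<S> contained in S}. The step from Proposition \ref{5.1.3} to this statement is purely formal: one merely carries a second point $w$ and a second operator $g$ through the same computation.
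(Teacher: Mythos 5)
Your proof is correct and follows essentially the same route as the paper: fix $f,g\in T$ and $u,w\in U$ with $f(u)=g(w)$, then use the commutation $\sigma(f(u))=f(\sigma(u))$ and $\sigma(g(w))=g(\sigma(w))$ to conclude $f(\alpha(u))=g(\alpha(w))$. The extra remarks about well-definedness via Proposition \ref{<S> contained in S} and the specialization $U=S_1$ are harmless additions to the paper's one-line chain of equalities.
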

\begin{proof}
    Let $f,g\in T$ and let $a,b\in A\subseteq {{S}_{1}}$. If $f(a)=g(b)$, then
\[f(\alpha (a))=f(\sigma (a))=\sigma (f(a))=\sigma (g(b))=g(\sigma (b))=g(\alpha (b)).\] 

Hence by Notation \ref{5.2.1}, $A\xrightarrow{T,\alpha }{S}_{2}$.
\end{proof}
	However, the converse of Proposition \ref{5.2.2} is not true, as shown below. 
\begin{example} \label{5.2.3}
    Let $R$, $T$ and $\sigma $ be defined as in Example \ref{5.1.5}. Let $A\subseteq R$ and let $\alpha =\sigma {{|}_{A}}$. Then $\forall a,b\in A$ and $f,g\in T$, $f(a)=g(b)$ implies $f(\alpha (a))=g(\alpha (b))$. Hence $A\xrightarrow{T,\alpha }R$. But $\sigma $ is not a $T$-morphism, as explained in Example \ref{5.1.5}.
\end{example}
	To construct $T$-morphisms, we generalize Proposition \ref{5.1.8} as follows.
\begin{proposition} \label{5.2.4}
    Let $\alpha :U(\subseteq D)\to V(\subseteq D)$ be a map. 
    \begin{enumerate}
        \item [(a)] $U\xrightarrow{T,\alpha }V$ if and only if there exists a map $\sigma :{{\left\langle U \right\rangle }_{T}}\to {{\left\langle V  \right\rangle }_{T}}$ given by $f(u)\mapsto f(\alpha (u))$, $\forall u\in U$ and $f\in T$.
        \item[(b)] The map $\sigma $ given in \emph{(a)} is a $T$-morphism.
    \end{enumerate}
\end{proposition}
\begin{proof}
For (a): 

$U\xrightarrow{T,\alpha }V $; 

$\Leftrightarrow$ $f(u)=g(w)$ implies $f(\alpha (u))=g(\alpha (w))$, $\forall f,g\in T$ and $u,w\in U$; (by Notation \ref{5.2.1})

$\Leftrightarrow$ $\sigma :{{\left\langle U \right\rangle }_{T}}\to {{\left\langle V  \right\rangle }_{T}}$ given by $f(u)\mapsto f(\alpha (u))$, $\forall u\in U$ and $f\in T$, is a well-defined map.

For (b): Let $a\in {{\left\langle U \right\rangle }_{T}}$. Then $\exists u\in U$ and $g\in T$ such that $a=g(u)$. Let $f\in T$. Then $\sigma (f(a))=\sigma ((f\circ g)(u))$. By Definition \ref{Operator semigroup}, $f\circ g\in T$, and so by the definition of $\sigma $, $\sigma ((f\circ g)(u))=(f\circ g)(\alpha (u))$. Besides, $g(\alpha (u))=\sigma (g(u))$ also by the definition of $\sigma $. Combining the above equations, we have
\[\sigma (f(a)) = f(g(\alpha (u))) = f(\sigma (g(u))) = f(\sigma (a)).\]

Thus $\sigma $ is a $T$-morphism.	
\end{proof}
To construct $T$-isomorphisms, Corollary \ref{5.1.9} is generalized as follows.
\begin{proposition} \label{5.2.5}
    Let $\alpha :U(\subseteq D)\to V(\subseteq D)$ be a map. 
\begin{enumerate}
    \item [(a)] The following statements are equivalent:
    \begin{enumerate}
        \item [(i)] $U\overset{T,\alpha }{\longleftrightarrow}V $.
        \item[(ii)] $\alpha $ is bijective and there exists a bijective map $\sigma :{{\left\langle U \right\rangle }_{T}}\to {{\left\langle V  \right\rangle }_{T}}$ given by $f(u)\mapsto f(\alpha (u))$, $\forall u\in U$ and $f\in T$.
    \end{enumerate}  
    \item[(b)] The map $\sigma $ given in \emph{(ii)} is a $T$-isomorphism.
\end{enumerate}
\end{proposition}
\begin{proof}
    For (a): 
    
    $U\overset{T,\alpha }{\longleftrightarrow}V $;

    $\Leftrightarrow$ $\alpha $ is bijective, and $\forall f,g\in T$ and $u,w\in U$, $f(u)=g(w)$ if and only if $f(\alpha (u))=g(\alpha (w))$; (by Notation \ref{5.2.1})

    $\Leftrightarrow$ $\alpha $ is bijective and $\sigma :{{\left\langle U \right\rangle }_{T}}\to {{\left\langle V  \right\rangle }_{T}}$ given by $f(u)\mapsto f(\alpha (u))$, $\forall u\in U$ and $f\in T$, is a well-defined bijective map.
    
    For (b): By Proposition \ref{5.2.4}, $\sigma $ is a $T$-morphism. Since $\sigma $ is bijective, by Definition \ref{$T$-isomorphisms}, it is a $T$-isomorphism.
\end{proof}
	The converse of statement (b) in Proposition \ref{5.2.4} raises a question as follows, which generalizes Question \ref{5.1.11}. 
\begin{question} \label{5.2.6}
    For $U\subseteq D$ and a $T$-morphism $\sigma $ from ${{\left\langle U \right\rangle }_{T}}$, is there any map $\alpha :U\to D$ such that $\sigma (f(u))=f(\alpha (u))$, $\forall u\in U$ and $f\in T$?
\end{question}
	The significance of this question is that, in the cases where the answer is yes, we can construct a $T$-morphism $\sigma $ by letting $\sigma (f(u))=f(\alpha (u))$, $\forall u\in U$ and $f\in T$. Example \ref{5.1.12} shows a case where the answer to Question \ref{5.2.6} is negative, but soon we shall see that there are cases where the answer is positive. Thus the property in question deserves a name.
\begin{definition} \label{5.2.7}
    Let $T$ be an operator semigroup on $D$, let $U\subseteq D$, and let $\sigma $ be a $T$-morphism from ${{\left\langle U \right\rangle }_{T}}$. If there exists a map $\alpha :U\to D$ such that $\sigma (f(u))=f(\alpha (u))$, $\forall u\in U$ and $f\in T$, then we say that $\sigma $ is \emph{constructible by} $\alpha $, or just say that $\sigma $ is \emph{constructible} for brevity. 
\end{definition}
Recall that it is possible that ${{\left\langle U \right\rangle }_{T}}$ does not contain $U$ (Proposition \ref{4 facts}). But if the identity function lies in $T$, which is true in the cases such as Examples \ref{T on field--1}, \ref{T on topo--1} and \ref{T on diff ring--1}, then $U\subseteq {{\left\langle U \right\rangle }_{T}}$, where we can get a positive answer to Question \ref{5.2.6} as shown below.
\begin{corollary} \label{5.2.9}
    Let $U\subseteq D$. If $U\subseteq {{\left\langle U \right\rangle }_{T}}$, then any $T$-morphism $\sigma $ from ${{\left\langle U \right\rangle }_{T}}$ is constructible by $\alpha :=\sigma {{|}_{U}}$.
\end{corollary}
\begin{proof}
    Since $U\subseteq {{\left\langle U \right\rangle }_{T}}$, by Definition \ref{1.3.1}, 
$\sigma (f(u))=f(\sigma (u))=f(\alpha (u))$, $\forall u\in U$ and $f\in T$.  
Thus $\sigma $  is constructible by $\alpha $.
\end{proof}
	Moreover, if the operator semigroup $T$ can be generated by a single element, then we also have a positive answer to Question \ref{5.2.6} as shown below.
\begin{theorem} \label{5.2.10}
Let $\sigma $ be a $T$-morphism from ${{\left\langle U \right\rangle }_{T}}$ to ${{\left\langle V \right\rangle }_{T}}$. If $T=\left\langle g \right\rangle $ (Definition \ref{generated operator semigroup}), where $g$ is a function from $D$ to $D$, then there exists a map $\alpha :U\to V\bigcup {{\left\langle V \right\rangle }_{T}}$ such that $\sigma $ is constructible by $\alpha $.
\end{theorem}
\begin{proof}
    If $\sigma $ is the empty map $\emptyset \to \emptyset $, then by Definition \ref{5.2.7}, $\sigma $ is constructible by the empty map trivially. Hence it suffices to show the case where $U(\subseteq D)$ and $V(\subseteq D)$ are nonempty. 

    Let $A=V\bigcup \left\langle V \right\rangle _T$. $\forall x\in D$ and $f\in T$, set 
    \begin{equation} \label{6.10a}
    {f_A^{-1}}(x)=\{a\in A\,|\, f(a)=x\}.
    \end{equation}

    Let $u\in U$. Let ${{A}_{u}}=\bigcap\nolimits_{f\in T}{f_A^{-1}}(\sigma (f(u)))$. Suppose ${{A}_{u}}\ne \emptyset $. Then by Equation $($\ref{6.10a}$)$, $f(a)=\sigma (f(u))$, $\forall a\in {{A}_{u}}$ and $f\in T$. To define a map $\alpha :U\to V\bigcup {{\left\langle V \right\rangle }_{T}}$ such that $\sigma $ is constructible by $\alpha $, let $\alpha (u)$ be any $a\in A_u (\subseteq A=V\bigcup {\left\langle V \right\rangle }_T)$. Then $\forall f\in T$, $\sigma (f(u))=f(a)=f(\alpha (u))$, as desired for $\sigma $ to be constructible by $\alpha$. 

Therefore, it suffices to show ${{A}_{u}}\ne \emptyset $.

To show ${{A}_{u}}\ne \emptyset $, we shall first show ${g_A^{-1}}(\sigma (g(u)))\subseteq A_u$. Then we shall prove ${g_A^{-1}}(\sigma (g(u)))\ne \emptyset $.

Since $T=\left\langle g \right\rangle $, ${{A}_{u}}=\bigcap\nolimits_{f\in T}{f_A^{-1}}(\sigma (f(u)))=\bigcap\nolimits_{k\in {{\mathbb{Z}}^{+}}}{{(g^k)_A^{-1}}(\sigma ({{g}^{k}}(u))})$, where ${{g}^{k}}$ denotes the composite of $k$ copies of $g$.

Let $k(\in {{\mathbb{Z}}^{+}})>1$. Because $g(u)\in {{\left\langle U \right\rangle }_T}$, by Definition \ref{1.3.1}, 
\[\sigma ({{g}^{k}}(u))=\sigma ({{g}^{k-1}}(g(u)))=g^{k-1}(\sigma (g(u))).\]
Applying ${(g^k)}_A^{-1}$ to the left and right sides of the above equations, we have
\begin{equation} \label{6.11a}
    {(g^k)}_A^{-1}(\sigma ({{g}^{k}}(u)))={(g^k)}_A^{-1}({g^{k-1}}(\sigma (g(u)))).	
\end{equation}

    To show ${g_A^{-1}}(\sigma (g(u)))\subseteq A_u$, suppose $\exists a\in {g_A^{-1}}(\sigma (g(u)))$. 
    
    Then by $($\ref{6.10a}$)$ (with $f:=g$ and $x:=\sigma (g(u))$), $g(a)=\sigma (g(u))$. Hence ${g^{k}}(a)={g^{k-1}}(g(a))={g^{k-1}}(\sigma (g(u)))$. Then by $($\ref{6.10a}$)$ (with $f:=g^k$ and $x:={g^{k-1}}(\sigma (g(u)))$), $a\in {{({g}^{k})}_A^{-1}}({g^{k-1}}(\sigma (g(u))))$. Thus,
    \begin{equation} \label{6.12a}
    {g_A^{-1}}(\sigma (g(u))) \subseteq {{({g}^{k})}_A^{-1}}({g^{k-1}}(\sigma (g(u)))).
    \end{equation}
   	
    Combining $($\ref{6.11a}$)$ with $($\ref{6.12a}$)$, we have
    \[{(g^k)}_A^{-1}(\sigma ({{g}^{k}}(u))) \supseteq {g_A^{-1}}(\sigma (g(u))).\]

    Hence $\forall k \ge 1, {(g^k)}_A^{-1}(\sigma ({{g}^{k}}(u))) \supseteq {g_A^{-1}}(\sigma (g(u)))$, and so
    \[{{A}_{u}}(=\bigcap\nolimits_{k\in {{\mathbb{Z}}^{+}}}{{(g^k)_A^{-1}}(\sigma ({{g}^{k}}(u))}))\supseteq {g_A^{-1}}(\sigma (g(u))).\] 
    
    Therefore, to show ${{A}_{u}}\ne \emptyset $, it suffices to prove ${g_A^{-1}}(\sigma (g(u)))\ne \emptyset $.

	Since $\sigma (g(u))\in {\left\langle V \right\rangle }_T$, $\exists v\in V$ and $h\in {{\mathbb{Z}}^{+}}$ such that $\sigma (g(u))= g^h(v)$. If $h=1$, then by $(\ref{6.10a})$ (with $f:=g$ and $x:=\sigma (g(u))$), $v\in {g_A^{-1}}(\sigma (g(u)))$. And if $h>1$, then $\sigma (g(u))=g(g^{h-1}(v))$, and thus by $(\ref{6.10a})$ (with $f:=g$, $x:=\sigma (g(u))$ and the fact that $g^{h-1}(v)\in {{\left\langle V \right\rangle }_T}\subseteq A$), $g^{h-1}(v)\in g_A^{-1}(\sigma (g(u)))$. In both cases of $h$, $g_A^{-1}(\sigma (g(u)))\ne \emptyset $, as desired. Therefore, ${{A}_{u}}\ne \emptyset $, as desired.

In summary, to define a map $\alpha :U\to V\bigcup {{\left\langle V \right\rangle }_T}$, for each $u\in U$, let $\alpha (u)$ be any element of ${A}_{u}$. Then $\alpha$ satisfies $\sigma (f(u))=f(\alpha (u))$, $\forall u\in U$ and $f\in T$. Hence by Definition \ref{5.2.7}, $\sigma$ is constructible by $\alpha$.
\end{proof}
\subsection{A construction of $\theta $-morphisms} \label{I Constructions of $theta $-morphisms}
For $\theta $-morphisms, we generalize Notation \ref{5.2.1} to 
 \begin{notation} \label{5.4.1}
     Let $D_1$ (resp. $D_2$) be a set, let $M_1$ (resp. $M_2$) be a set of maps from $D_1$ to $D_1$ (resp. a set of maps from $D_2$ to $D_2$), let $\theta \subseteq {{M}_{1}}\times {{M}_{2}}$, and let $\alpha :U(\subseteq {D_1})\to V(\subseteq {{D}_{2}})$ be a map. 
     
     By $U\xrightarrow{\theta ,\alpha }V $ we mean that $\forall f,g\in \operatorname{Dom}\theta $ and $u,w\in U$, 
\begin{center}
    $f(u)=g(w)$ implies $\theta (f)(\alpha (u))=\theta (g)(\alpha (w))$. 
\end{center}

Moreover, by $U\overset{\theta ,\alpha }{\longleftrightarrow}V $ we mean that 
\begin{center}
   $\alpha $ is bijective, $U\xrightarrow{\theta ,\alpha }V $ and $ V \xrightarrow{{{\theta }^{-1}},{{\alpha }^{-1}}}U$, 
\end{center}
 where ${{\theta }^{-1}}:=\{(h,f)\,|\,(f,h)\in \theta \}$.  
 \end{notation}
\begin{remark} \begin{enumerate}
\item $\forall f\in \operatorname{Dom}\theta $ and $u\in U$, $f(u)=f(u)$, and thus $U\xrightarrow{\theta ,\alpha }V $ implies that $\forall f\in \operatorname{Dom}\theta $ and $u\in U$, $\theta (f)(\alpha (u))=\theta (f)(\alpha (u))$. Hence by Convention \ref{convention}, $U\xrightarrow{\theta ,\alpha }V $ implies that $\forall f\in \operatorname{Dom}\theta $ and $u\in U$, $\theta (f)(\alpha (u))$ is well-defined. Thus by Proposition \ref{5.3.b}, $U\xrightarrow{\theta ,\alpha }V $ implies that $\theta{{|}_{\operatorname{Im}\alpha }}$ is a map. 
\item If ${{M}_{1}}={{M}_{2}}=:M$ and $\theta $ is the identity map on $M$, then $U\xrightarrow{\theta ,\alpha }V $ is equivalent to $U\xrightarrow{M,\alpha }V $ (defined in Notation \ref{5.2.1}).
\end{enumerate}
\end{remark}
The following generalizes Proposition \ref{5.2.2}.
\begin{proposition} \label{5.4.2}
    Let $\phi $ be a $\theta $-morphism from a $T_1$-space $S_1$ to a $T_2$-space $S_2$. Then $\forall A\subseteq S_1$, $A\xrightarrow{\theta ,\alpha }S_2$, where $\alpha :=\phi {{|}_{A}}$. In particular, $S_1\xrightarrow{\theta ,\phi }S_2$.
\end{proposition}  
\begin{proof}
    By Corollary \ref{5.3.c}, $\theta {{|}_{\operatorname{Im}\phi }}$ is a map, and so is $\theta {{|}_{\operatorname{Im}\alpha }}$ (since $\operatorname{Im}\alpha \subseteq \operatorname{Im}\phi$ and by Proposition \ref{5.3.b}).  

Let $f,g\in \operatorname{Dom}\theta $ and let $a,b\in A\subseteq S_1$. If $f(a)=g(b)$, then by Definition \ref{5.3.5},
\begin{center}
    $\theta (f)(\alpha (a))=\theta (f)(\phi (a))=\phi (f(a))=\phi (g(b))=\theta (g)(\phi (b))=\theta (g)(\alpha (b))$. 
\end{center}

Hence by Notation \ref{5.4.1}, $A\xrightarrow{\theta ,\alpha }S_2 $.
\end{proof}
For some reasons which will be clear soon, we are more interested when $\theta $ has the following property.
\begin{definition} \label{5.3.9}
    Let $T_1$ and $T_2$ be operator semigroups on $D_1$ and $D_2$, respectively, let $\theta \subseteq {{T}_{1}}\times {{T}_{2}}$, and let $A\subseteq {{D}_{2}}$. If $\theta (f\circ g)(a)=(\theta (f)\circ \theta (g))(a)$, $\forall f,g\in \operatorname{Dom}\theta $ and $a\in A$, then we say that $\theta $ is \emph{distributive over} $A$.
\end{definition}
\begin{remark}
    By Convention \ref{convention}, the condition
\begin{center}
    “$\theta (f\circ g)(a)=(\theta (f)\circ \theta (g))(a)$, $\forall f,g\in \operatorname{Dom}\theta $ and $a\in A$”
\end{center}
 implies that $\forall f,g\in \operatorname{Dom}\theta $ and $a\in A$, both $\theta (f\circ g)(a)$ and $(\theta (f)\circ \theta (g))(a)$ are well-defined, and hence the condition implies that $\operatorname{Dom}\theta $ is an operator semigroup on $D_1$ and $\theta {{|}_{A}}$ is a map (by Proposition \ref{5.3.b}).
\end{remark}
	Then to construct $\theta $-morphisms, we generalize Proposition \ref{5.2.4} as follows.
\begin{proposition} \label{5.4.3}
    Let $T_1$ and $T_2$ be operator semigroups on $D_1$ and $D_2$ respectively, let $\theta \subseteq {{T}_{1}}\times {{T}_{2}}$ with $\operatorname{Dom}\theta ={{T}_{1}}$, and let $\alpha :U(\subseteq {D_1})\to V(\subseteq{{D}_{2}})$ be a map. 
\begin{enumerate}
    \item [(a)] $U\xrightarrow{\theta ,\alpha }V $ if and only if there exists a map $\phi :{{\left\langle U \right\rangle }_{{{T}_{1}}}}\to {{\left\langle V  \right\rangle }_{{{T}_{2}}}}$ given by $f(u)\mapsto \theta (f)(\alpha (u))$, $\forall u\in U$ and $f\in {{T}_{1}}$.
    \item[(b)] The map $\phi $ given in \emph{(a)} is a $\theta $-morphism if $\theta $ is distributive over $V $. 
\end{enumerate}
\end{proposition}
\begin{proof}
 For (a): 
 
$U\xrightarrow{\theta ,\alpha }V $;

$\Leftrightarrow$ $\forall f,g\in \operatorname{Dom}\theta (={{T}_{1}})$ and $a,b\in U$, $f(a)=g(b)$ implies that $\theta (f)(\alpha (a))=\theta (g)(\alpha (b))$; (by Notation \ref{5.4.1})
 
$\Leftrightarrow$ $\phi :{{\left\langle U \right\rangle }_{{{T}_{1}}}}\to {{\left\langle V  \right\rangle }_{{{T}_{2}}}}$ given by $f(u)\mapsto \theta (f)(\alpha (u))$, $\forall u\in U$ and $f\in {{T}_{1}}$, is a well-defined map.

For (b): Suppose that $\theta $ is distributive over $V $. 

Let $a\in {{\left\langle U \right\rangle }_{{{T}_{1}}}}$. Then $\exists u\in U$ and $g\in {{T}_{1}}$ such that $a=g(u)$. Let $f\in {{T}_{1}}$. Then $f\circ g\in {{T}_{1}}$ by Definition \ref{Operator semigroup}. So $\phi (f(a))=\phi ((f\circ g)(u))=\theta (f\circ g)(\alpha (u))$ by the definition of $\phi $. Then because $\theta $ is distributive over $V $, by Definition \ref{5.3.9}, $\theta (f\circ g)(\alpha (u))=\theta (f)(\theta (g)(\alpha (u)))$. And $\theta (g)(\alpha (u))=\phi (g(u))$ by the definition of $\phi $. Combining the above equations, we have
\[\phi (f(a))=\theta (f)(\phi (g(u)))=\theta (f)(\phi (a)).\]
	Thus by Definition \ref{5.3.5}, $\phi $ is a $\theta $-morphism from ${{\left\langle U \right\rangle }_{{{T}_{1}}}}$ to ${{\left\langle V  \right\rangle }_{{{T}_{2}}}}$.
\end{proof}

Then Proposition \ref{5.2.5} is generalized as follows.
\begin{proposition} \label{5.4.4}
    Let $T_1$, $T_2$, $U$ and $\alpha $ be defined as in Proposition \ref{5.4.3}. Let $\theta \subseteq {{T}_{1}}\times {{T}_{2}}$ such that $\operatorname{Dom}\theta ={{T}_{1}}$ and $\operatorname{Im}\theta ={{T}_{2}}$. Then 
\begin{enumerate}
    \item [(a)] The following statements are equivalent:
\begin{enumerate}
    \item [(i)] $U\overset{\theta ,\alpha }{\longleftrightarrow}V $.
    \item[(ii)] $\alpha $ is bijective, there exists a bijective map $\phi :{{\left\langle U \right\rangle }_{{{T}_{1}}}}\to {{\left\langle V  \right\rangle }_{{{T}_{2}}}}$ given by $f(u)\mapsto \theta (f)(\alpha (u))$, $\forall u\in U$ and $f\in {{T}_{1}}$, and its inverse $\phi^{-1}:{{\left\langle V  \right\rangle }_{{{T}_{2}}}}\to {{\left\langle U \right\rangle }_{{{T}_{1}}}}$ can be given by $g(v)\mapsto \theta^{-1} (g)(\alpha^{-1} (v))$, $\forall v\in V$ and $g\in {{T}_{2}}$.
\end{enumerate}  
\item[(b)] Suppose that $\theta $ is distributive over $V $. Then the bijective map $\phi $ given in \emph{(ii)} is a $\theta $-isomorphism.
 \end{enumerate}
\end{proposition}  
\begin{proof}
  For (a): 
  
$U\overset{\theta ,\alpha }{\longleftrightarrow}V $;

$\Leftrightarrow$ $\alpha $ is bijective, $U\xrightarrow{\theta ,\alpha }V $ and $ V \xrightarrow{{{\theta }^{-1}},{{\alpha }^{-1}}}U$ (by Notation \ref{5.4.1});

$\Leftrightarrow$ $\alpha $ is bijective, $\phi :{{\left\langle U \right\rangle }_{{{T}_{1}}}}\to {{\left\langle V  \right\rangle }_{{{T}_{2}}}}$ given by $f(u)\mapsto \theta (f)(\alpha (u))$, $\forall u\in U$ and $f\in {{T}_{1}}$, is a well-defined map, and $\phi' :{{\left\langle V  \right\rangle }_{{{T}_{2}}}}\to {{\left\langle U \right\rangle }_{{{T}_{1}}}}$ given by $g(v)\mapsto \theta^{-1} (g)(\alpha^{-1} (v))$, $\forall v\in V$ and $g\in {{T}_{2}}$ is also a well-defined map (by Proposition \ref{5.4.3});

$\Leftrightarrow$ $\alpha $ is bijective, $\phi :{{\left\langle U \right\rangle }_{{{T}_{1}}}}\to {{\left\langle V  \right\rangle }_{{{T}_{2}}}}$ given by $f(u)\mapsto \theta (f)(\alpha (u))$, $\forall u\in U$ and $f\in {{T}_{1}}$, is a well-defined bijective map, and its inverse $\phi^{-1}:{{\left\langle V  \right\rangle }_{{{T}_{2}}}}\to {{\left\langle U \right\rangle }_{{{T}_{1}}}}$ can be given by $g(v)\mapsto \theta^{-1} (g)(\alpha^{-1} (v))$, $\forall v\in V$ and $g\in {{T}_{2}}$.

   The third equivalence relation is explained as follows. The sufficiency ($\Leftarrow$) is obvious, so we only show the necessity ($\Rightarrow$). For this purpose, we show that both $\phi'\circ\phi$ and $\phi\circ\phi'$ are the identity map.
   
   Let $(f,g)\in \theta$ and $u\in U$. Then 
   
   $\phi'(\phi(f(u)))$
   
   $=\phi'(\theta(f)(\alpha (u)))$ (by the definition of $\phi$) 

   $=\phi'(g(\alpha (u)))$ (by Definition \ref{5.3.a})
   
   $=\theta^{-1}(g)(u)$ (by the definition of $\phi'$) 
   
   $=f(u)$ (by (the $\theta^{-1}$ version of) Definition \ref{5.3.a}).
   
   Then $\phi'\circ\phi$ is the identity map on ${{\left\langle U  \right\rangle }_{{{T}_{1}}}}$ (because $\operatorname{Dom}\theta ={{T}_{1}}$).
   
   Analogously, we can show that $\phi\circ\phi'$ is the identity map on ${{\left\langle V  \right\rangle }_{{{T}_{2}}}}$. Therefore, both $\phi$ and $\phi'$ are bijective and $\phi'$ is the inverse of $\phi$.

For (b): 

By (b) in Proposition \ref{5.4.3}, the map $\phi $ given in (ii) is a $\theta$-morphism. Since $\phi $ is bijective, by Definition \ref{5.4.a}, $\phi$ is a $\theta$-isomorphism.
\end{proof}
	Proposition \ref{5.4.3} raises a question as follows, which generalizes Question \ref{5.2.6}. 
\begin{question} \label{5.4.5}
    Let $T_1$ and $T_2$ be operator semigroups on $D_1$ and $D_2$, respectively, let $\theta \subseteq {{T}_{1}}\times {{T}_{2}}$, let $U\subseteq D_1$ and let $\phi$ be a $\theta$-morphism from ${{\left\langle U \right\rangle }_{{{T}_{1}}}}$ (to a $T_2$-space). If $\operatorname{Dom}\theta ={{T}_{1}}$, is there any map $\alpha :U\to {{D}_{2}}$ such that $\phi (f(u))=\theta (f)(\alpha (u))$, $\forall u\in U$ and $f\in {{T}_{1}}$?
\end{question}
The significance of this question is that, in the cases where the answer is yes, we can construct a $\theta$-morphism $\phi $ by letting $\phi (f(u))=\theta (f)(\alpha (u))$, $\forall u\in U$ and $f\in {{T}_{1}}$. The property in question deserves a name as follows, which generalizes Definition \ref{5.2.7}.
\begin{definition} \label{5.4.6}
    Let $T_1$ and $T_2$ be operator semigroups on $D_1$ and $D_2$, respectively, let $\theta \subseteq {{T}_{1}}\times {{T}_{2}}$, let $U\subseteq D_1$ and let $\phi$ be a $\theta$-morphism from ${{\left\langle U \right\rangle }_{{{T}_{1}}}}$ (to a $T_2$-space). If $\operatorname{Dom}\theta ={{T}_{1}}$ and there exists a map $\alpha :U\to {{D}_{2}}$ such that $\phi (f(u))=\theta (f)(\alpha (u))$, $\forall u\in U$ and $f\in {{T}_{1}}$, then we say that $\phi $ is \emph{constructible by} $\alpha $, or just say that $\phi $ is \emph{constructible} for brevity.
\end{definition}
\begin{remark}
    By Convention \ref{convention}, the condition “$\phi (f(u))=\theta (f)(\alpha (u))$, $\forall u\in U$ and $f\in {{T}_{1}}$” implies that $\theta {{|}_{\operatorname{Im}\alpha}}$ is a map.
\end{remark}
	Since $T$-morphisms are special cases of $\theta $-morphisms, Example \ref{5.1.12} shows that it is possible that a $\theta $-morphism $\phi$ from ${{\left\langle U \right\rangle }_{{{T}_{1}}}}$ is not constructible by any map $\alpha :U\to {{D}_{2}}$ even if $\theta $ is distributive over ${{D}_{2}}$. Now let’s show some cases where $\theta $-morphisms are constructible.
The following generalizes Corollary \ref{5.2.9}. 
\begin{corollary} \label{5.4.8}
    Let $T_1$ and $T_2$ be operator semigroups on $D_1$ and $D_2$, respectively, let $\theta \subseteq {{T}_{1}}\times {{T}_{2}}$ and let $U\subseteq {D_1}$. If $\operatorname{Dom}\theta ={{T}_{1}}$ and $U\subseteq {{\left\langle U \right\rangle }_{{{T}_{1}}}}$, then any $\theta $-morphism $\phi $ from ${{\left\langle U \right\rangle }_{{{T}_{1}}}}$ is constructible by $\alpha :=\phi {{|}_{U}}$.
\end{corollary}
\begin{proof}
    Since $\operatorname{Dom}\theta ={{T}_{1}}$ and $U\subseteq {{\left\langle U \right\rangle }_{{{T}_{1}}}}$, by Definition \ref{5.3.5}, $\forall u\in U$ and $f\in {{T}_{1}}$, $\phi (f(u))=\theta (f)(\phi (u))=\theta (f)(\alpha (u))$. 
\end{proof}
	Theorem \ref{5.2.10} is generalized for $\theta $-morphisms as follows.
\begin{theorem} \label{5.4.9}
    Let $T_1$ and $T_2$ be operator semigroups on $D_1$ and $D_2$, respectively, let $\theta \subseteq {{T}_{1}}\times {{T}_{2}}$, and let $\phi $ be a $\theta$-morphism from ${{\left\langle U \right\rangle }_{{{T}_{1}}}}$ to ${{\left\langle V \right\rangle }_{{{T}_{2}}}}$. If $\operatorname{Dom}\theta ={{T}_{1}}=\left\langle g \right\rangle $, $\operatorname{Im}\theta ={{T}_{2}}$, and $\theta $ is distributive over $V\bigcup {{\left\langle V \right\rangle }_{{{T}_{2}}}}$, then there exists a map $\alpha :U\to V\bigcup {{\left\langle V \right\rangle }_{{{T}_{2}}}}$ such that $\phi $ is constructible by $\alpha $.
\end{theorem}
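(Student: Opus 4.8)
The plan is to use that $T_1=\langle g\rangle=\{g^n\mid n\in\mathbb{Z}^+\}$, so that a $\theta$-morphism out of $\langle U\rangle_{T_1}$ is controlled by its values on the single ``first layer'' $\{g(u)\mid u\in U\}$, and then to recover a suitable $\alpha(u)$ lying in $A:=V\cup\langle V\rangle_{T_2}$ as a $\theta(g)$-predecessor of $\phi(g(u))$. First I would set up the bookkeeping: $\langle g\rangle=\{g^n\mid n\in\mathbb{Z}^+\}$ (this set contains $g$, is closed under composition, and is contained in every operator semigroup containing $g$); and, because $\theta$ is distributive over $A$, the remark after Definition \ref{5.3.9} gives that $\theta|_A$ is a map and $\theta(g^n)(a)=\theta(g)^n(a)$ for all $a\in A$ and $n\in\mathbb{Z}^+$ — here the intermediate iterates stay in $A$ because $\theta(g)\in T_2$ and $\langle V\rangle_{T_2}$ is a $T_2$-space, so $\theta(g)$ carries $A$ into $\langle V\rangle_{T_2}\subseteq A$ by Proposition \ref{<S> contained in S}. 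Finally, since $\operatorname{Im}\theta=T_2$, every element of $\langle V\rangle_{T_2}$ is of the form $k(v)$ with $v\in V$ and $(g^n,k)\in\theta$ for some $n$, hence equals $\theta(g^n)(v)=\theta(g)^n(v)$.

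Next I would construct $\alpha$. Fix $u\in U$; since $g\in T_1$ we have $g(u)\in\langle U\rangle_{T_1}$, so $\phi(g(u))\in\langle V\rangle_{T_2}$ and by the above $\phi(g(u))=\theta(g)^n(v)$ for some $n\in\mathbb{Z}^+$ and $v\in V$. Define $\alpha(u):=v$ if $n=1$ and $\alpha(u):=\theta(g)^{n-1}(v)$ if $n\ge 2$; in both cases $\alpha(u)\in V\cup\langle V\rangle_{T_2}=A$ and $\theta(g)(\alpha(u))=\phi(g(u))$. (Picking one such representation per $u$ uses only a routine choice over $U$.) Then I would check constructibility: for $f=g^n\in T_1$ and $u\in U$, distributivity over $A$ gives $\theta(g^n)(\alpha(u))=\theta(g)^n(\alpha(u))=\theta(g)^{n-1}(\theta(g)(\alpha(u)))=\theta(g)^{n-1}(\phi(g(u)))$; while for $n\ge 2$, $g^n(u)=g^{n-1}(g(u))$ with $g(u)\in\langle U\rangle_{T_1}$ and $g^{n-1}\in T_1=\operatorname{Dom}\theta$, so Definition \ref{5.3.5} and distributivity give $\phi(g^n(u))=\theta(g^{n-1})(\phi(g(u)))=\theta(g)^{n-1}(\phi(g(u)))$, and for $n=1$ this is exactly the defining property of $\alpha(u)$. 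Comparing the two computations, $\phi(f(u))=\theta(f)(\alpha(u))$ for all $f\in T_1$ and $u\in U$, which is precisely what Definition \ref{5.4.6} requires since $\operatorname{Dom}\theta=T_1$.

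I expect the only genuine friction to be the bookkeeping around distributivity: verifying that $\theta(g^n)(a)=\theta(g)^n(a)$ is legitimate on all of $A$, and that $\alpha(u)$ actually lands in $V\cup\langle V\rangle_{T_2}$ rather than merely in $D_2$ — both are handled by the case split on whether $n=1$ and by the observation that $\theta(g)$ maps $A$ into $\langle V\rangle_{T_2}$. The degenerate case $U=\emptyset$ (which is forced when $\langle V\rangle_{T_2}=\emptyset$) is trivial, with $\alpha$ the empty map.
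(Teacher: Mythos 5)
Your proposal is correct and follows essentially the same route as the paper's proof: both reduce everything to the single generator $g$, use the $\theta$-morphism property to control $\phi$ through $\phi(g(u))$, use distributivity to rewrite $\theta(g^k)$ as iterates of $\theta(g)$ on $A=V\cup\langle V\rangle_{T_2}$, and use $\operatorname{Im}\theta=T_2$ to produce a $\theta(g)$-preimage of $\phi(g(u))$ in $A$, which serves as $\alpha(u)$. The only difference is presentational — the paper packages the same computation as showing the set $A_u=\bigcap_{f\in T_1}(\theta(f))_A^{-1}(\phi(f(u)))$ contains the nonempty set $(\theta(g))_A^{-1}(\phi(g(u)))$, while you exhibit the witness directly and verify it on all powers of $g$.
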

\begin{proof}
    The proof is analogous to the one for Theorem \ref{5.2.10}.
    
    If $\phi $ is the empty map $\emptyset \to \emptyset $, then by Definition \ref{5.4.6}, $\phi $ is constructible by the empty map trivially. Hence we only need to prove the case where $U(\subseteq {D_1})$ and $V(\subseteq {{D}_{2}})$ are nonempty. 

    By (the remark under) Definition \ref{5.3.9}, $\theta {{|}_{A}}$ is a map, where $A:=V\bigcup {{\left\langle V \right\rangle }_{{{T}_{2}}}}$, and so is $\theta {{|}_{\operatorname{Im}\phi }}$ because $\operatorname{Im}\phi \subseteq {{\left\langle V \right\rangle }_{{{T}_{2}}}}\subseteq A$.

    First, $\forall x\in {{D}_{2}}$ and $f\in {{T}_{1}}$, set 
    \begin{equation} \label{6.10}
    {{(\theta (f))}_A^{-1}}(x)=\{a\in A\,|\,\theta (f)(a)=x\}.
    \end{equation}
     It is well-defined because $\theta {{|}_{A}}$ is a map. 

    Let $u\in U$. Let ${{A}_{u}}=\bigcap\nolimits_{f\in {{T}_{1}}}{{{(\theta (f))}_A^{-1}}(\phi (f(u)))}$. Suppose ${{A}_{u}}\ne \emptyset $. Then by Equation $($\ref{6.10}$)$, $\theta (f)(a)=\phi (f(u))$, $\forall a\in {{A}_{u}}$ and $f\in {{T}_{1}}$. To define a map $\alpha :U\to V\bigcup {{\left\langle V \right\rangle }_{{{T}_{2}}}}$ such that $\phi $ is constructible by $\alpha $, let $\alpha (u)$ be any $a\in {{A}_{u}}(\subseteq A=V\bigcup {{\left\langle V \right\rangle }_{{{T}_{2}}}})$. Then $\forall f\in {{T}_{1}}$, $\phi (f(u))=\theta (f)(a)=\theta (f)(\alpha (u))$, as desired for $\phi $ to be constructible by $\alpha$. 

Therefore, it suffices to show ${{A}_{u}}\ne \emptyset $.

To show ${{A}_{u}}\ne \emptyset $, we shall first show ${{(\theta (g))}_A^{-1}}(\phi (g(u)))\subseteq A_u$. Then we shall prove ${{(\theta (g))}_A^{-1}}(\phi (g(u)))\ne \emptyset $.

Since ${{T}_{1}}=\left\langle g \right\rangle $, ${{A}_{u}}=\bigcap\nolimits_{f\in {{T}_{1}}}{{{(\theta (f))}_A^{-1}}(\phi (f(u)))}=\bigcap\nolimits_{k\in {{\mathbb{Z}}^{+}}}{{{(\theta ({{g}^{k}}))}_A^{-1}}(\phi ({{g}^{k}}(u))})$, where ${{g}^{k}}$ denotes the composite of $k$ copies of $g$.

Let $k(\in {{\mathbb{Z}}^{+}})>1$. Because $g(u)\in {{\left\langle U \right\rangle }_{{{T}_{1}}}}$, by Definition \ref{5.3.5}, 
\[\phi ({{g}^{k}}(u))=\phi ({{g}^{k-1}}(g(u)))=\theta ({{g}^{k-1}})(\phi (g(u)))={{(\theta (g))}^{k-1}}(\phi (g(u)))\]
because $\theta $ is assumed to be distributive over $V\bigcup {{\left\langle V \right\rangle }_{{{T}_{2}}}}(\supseteq \operatorname{Im}\phi )$, and so
\[\phi ({{g}^{k}}(u))={{(\theta (g))}^{k-1}}(\phi (g(u))).\]
Applying ${{(\theta ({{g}^{k}}))}_A^{-1}}$ to both sides of the above equation, we have
\begin{equation} \label{6.11}
    {{(\theta ({{g}^{k}}))}_A^{-1}}(\phi ({{g}^{k}}(u)))={{(\theta ({{g}^{k}}))}_A^{-1}}({{(\theta (g))}^{k-1}}(\phi (g(u)))).	
\end{equation}

    To show ${{(\theta (g))}_A^{-1}}(\phi (g(u)))\subseteq A_u$, suppose $\exists a\in {{(\theta (g))}_A^{-1}}(\phi (g(u)))$. Then by $($\ref{6.10}$)$ (with $f:=g$ and $x:=\phi (g(u))$), $\theta (g)(a)=\phi (g(u))$. Hence 
    \[{{(\theta (g))}^{k-1}}(\theta (g)(a))={{(\theta (g))}^{k-1}}(\phi (g(u))).\]
    Since $\theta $ is distributive over $A$,
    \[{{(\theta (g))}^{k-1}}(\theta (g)(a))={{(\theta (g))}^{k}}(a)=\theta ({{g}^{k}})(a).\] 

    Combining the above three equations, we have $\theta ({{g}^{k}})(a)={{(\theta (g))}^{k-1}}(\phi (g(u)))$. Then by $($\ref{6.10}$)$ (with $f:=g^k$ and $x:={{(\theta (g))}^{k-1}}(\phi (g(u)))$), 
    \[a\in {{(\theta ({{g}^{k}}))}_A^{-1}}({{(\theta (g))}^{k-1}}(\phi (g(u)))).\] 
    Since $a$ is assumed to be any element of ${{(\theta (g))}_A^{-1}}(\phi (g(u)))$,
    \begin{equation} \label{6.12}
    {{(\theta (g))}_A^{-1}}(\phi (g(u)))\subseteq {{(\theta ({{g}^{k}}))}_A^{-1}}({{(\theta (g))}^{k-1}}(\phi (g(u)))).
    \end{equation}
   	
    Combining $($\ref{6.11}$)$ with $($\ref{6.12}$)$, we have
    \[{{(\theta ({{g}^{k}}))}_A^{-1}}(\phi ({{g}^{k}}(u)))\supseteq {{(\theta (g))}_A^{-1}}(\phi (g(u))).\]

    Hence $\forall k \ge 1, {{(\theta ({{g}^{k}}))}_A^{-1}}(\phi ({{g}^{k}}(u)))\supseteq {{(\theta (g))}_A^{-1}}(\phi (g(u)))$, and so
    \[{{A}_{u}}(=\bigcap\nolimits_{k\in {{\mathbb{Z}}^{+}}}{{{(\theta ({{g}^{k}}))}_A^{-1}}(\phi ({{g}^{k}}(u))}))\supseteq (\theta (g))_A^{-1}(\phi (g(u)).\] 
    
    Therefore, to show ${{A}_{u}}\ne \emptyset $, it suffices to prove ${{(\theta (g))}_A^{-1}}(\phi (g(u)))\ne \emptyset $.

	Since $\phi (g(u))\in {{\left\langle V \right\rangle }_{{{T}_{2}}}}$ and $\operatorname{Im}\theta ={{T}_{2}}$, $\exists v\in V$ and $h\in {{\mathbb{Z}}^{+}}$ such that $\phi (g(u))=\theta ({{g}^{h}})(v)$. If $h=1$, then by $($\ref{6.10}$)$ (with $f:=g$ and $x:=\phi (g(u))$), $v\in {{(\theta (g))}_A^{-1}}(\phi (g(u)))$. And if $h>1$, then because $\theta $ is distributive over $V\bigcup {{\left\langle V \right\rangle }_{{{T}_{2}}}}$, $\phi (g(u))=\theta ({{g}^{h}})(v)=\theta (g)(\theta ({{g}^{h-1}})(v))$, and hence by $($\ref{6.10}$)$ (with $f:=g$, $x:=\phi (g(u))$ and the fact that $\theta ({{g}^{h-1}})(v)\in {{\left\langle V \right\rangle }_{{{T}_{2}}}}\subseteq A$), $\theta ({{g}^{h-1}})(v)\in {{(\theta (g))}_A^{-1}}(\phi (g(u)))$. In both cases of $h$, ${{(\theta (g))}_A^{-1}}(\phi (g(u)))\ne \emptyset $, as desired. Therefore, ${{A}_{u}}\ne \emptyset $, as desired.

In summary, to define a map $\alpha :U\to V\bigcup {{\left\langle V \right\rangle }_{{{T}_{2}}}}$, for each $u\in U$, let $\alpha (u)$ be any element of ${A}_{u}$. Then $\alpha $ satisfies $\phi (f(u))=\theta (f)(\alpha (u))$, $\forall u\in U$ and $f\in {{T}_{1}}$. Hence by Definition \ref{5.4.6}, $\phi$ is constructible by $\alpha$.
\end{proof}

\subsection{Another construction of $T$-morphisms} \label{I Another construction of $T$-morphisms}
 	Let $\sigma $ be a $T$-morphism from a $T$-space $S$. If $\sigma (a)=b$, then $\forall f\in T$, $\sigma (f(a))=f(\sigma (a))=f(b)$. In other words, if an ordered pair $(a,b)\in \sigma $, where $\sigma $ is regarded as a binary relation, then $\forall f\in T$, $(f(a),f(b))\in \sigma $. This observation implies that there exists some sort of basic parts of $T$-morphisms. 

To facilitate our descriptions, we give the following notations only for this subsection.
\begin{notation} \label{5.5.1}
    Let $T^*=T\bigcup \{$Id on $D$\} and let ${{R}_{T}}(x,y)=\{(f(x),f(y))\,|\,f\in T^*\}$, where $(x,y)\in D\times D$. Besides, both ${{S}_{a}}$ and ${{S}_{b}}$ denote $T$-spaces.
\end{notation}
\begin{remark}
    Clearly, ${{R}_{T}}(x,y)\subseteq D\times D$ is a binary relation in $D$.
\end{remark}
  \begin{proposition} \label{5.5.2}
Let $R\subseteq {{S}_{a}}\times {{S}_{b}}$ such that $\operatorname{Dom}R$ is a $T$-subspace of $S_a$. Then the following statements are equivalent:
\begin{enumerate}
    \item [(i)] $R$ is a $T$-morphism from $\operatorname{Dom}R$ to ${{S}_{b}}$.
    \item[(ii)] There exists a binary relation $\bigcup\nolimits_{i}{\{({{a}_{i}},{{b}_{i}})\}}\subseteq {{S}_{a}}\times {{S}_{b}}$ such that $R=\bigcup\nolimits_{i}{{{R}_{T}}({{a}_{i}},{{b}_{i}})}$ and $R$ is a function. 
\end{enumerate}
  \end{proposition}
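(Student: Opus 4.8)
The plan is to establish the two implications separately. The two facts I will lean on are: first, since $\operatorname{Dom}R$ is a $T$-subspace of $S_a$, it is closed under the action of $T$, i.e. $f(a)\in\operatorname{Dom}R$ whenever $a\in\operatorname{Dom}R$ and $f\in T$ (Proposition \ref{<S> contained in S}); second, any $T$-morphism trivially also commutes with $\operatorname{Id}$, hence commutes with every element of $T^{*}=T\cup\{\operatorname{Id}\text{ on }D\}$. I will also repeatedly use that if $f\in T$ and $g\in T^{*}$ then $f\circ g\in T^{*}$ (either $g\in T$, so $f\circ g\in T$ by Definition \ref{Operator semigroup}, or $g=\operatorname{Id}$, so $f\circ g=f\in T$).

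For (i) $\Rightarrow$ (ii): write $R$ as the function $\sigma\colon\operatorname{Dom}R\to S_b$, and take the indexing family to be $\operatorname{Dom}R$ itself, with $(a_i,b_i):=(a,\sigma(a))$ for $a\in\operatorname{Dom}R$. Trivially $R=\bigcup_{a\in\operatorname{Dom}R}\{(a,\sigma(a))\}$. I claim $R_T(a,\sigma(a))\subseteq R$ for each $a$: its elements have the form $(f(a),f(\sigma(a)))$ with $f\in T^{*}$; for $f=\operatorname{Id}$ this is $(a,\sigma(a))\in R$, and for $f\in T$ the $T$-morphism property gives $f(\sigma(a))=\sigma(f(a))$ while $f(a)\in\operatorname{Dom}R$, so $(f(a),f(\sigma(a)))=(f(a),\sigma(f(a)))\in R$. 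Since also $(a,\sigma(a))\in R_T(a,\sigma(a))$ (take $f=\operatorname{Id}$), we conclude $R=\bigcup_{a\in\operatorname{Dom}R}R_T(a,\sigma(a))$, and $R$ is a function by hypothesis, which is (ii).

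For (ii) $\Rightarrow$ (i): assume $R=\bigcup_i R_T(a_i,b_i)$ is a function. Since $\operatorname{Dom}R$ is a $T$-space and $S_b$ is a $T$-space, by Definition \ref{1.3.1} it remains only to check that $R$ commutes with each $f\in T$ on $\operatorname{Dom}R$. Fix $a\in\operatorname{Dom}R$ and $f\in T$. As $(a,R(a))\in R$, there are an index $i$ and a $g\in T^{*}$ with $a=g(a_i)$ and $R(a)=g(b_i)$. Then $f\circ g\in T^{*}$, so $(f(a),f(R(a)))=((f\circ g)(a_i),(f\circ g)(b_i))\in R_T(a_i,b_i)\subseteq R$; since $f(a)\in\operatorname{Dom}R$ and $R$ is a function, this forces $R(f(a))=f(R(a))$. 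Hence $R$ is a $T$-morphism from $\operatorname{Dom}R$ to $S_b$.

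There is no serious obstacle here; the only delicate points are the routine bookkeeping that keeps composites inside $T^{*}$ (so that the sets $R_T(x,y)$ are stable under the action of $T$) and the invocation of the $T$-space hypothesis on $\operatorname{Dom}R$ to ensure $f(a)$ lies in the domain, without which the identity $R(f(a))=f(R(a))$ would be meaningless.
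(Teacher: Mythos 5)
Your proof is correct and follows essentially the same route as the paper: for (i)$\Rightarrow$(ii) you show each pair $(a,\sigma(a))$ of the $T$-morphism generates $R_T(a,\sigma(a))\subseteq R$ so that $R$ is the union of these basic sets, and for (ii)$\Rightarrow$(i) you use closure of $T^{*}$ under post-composition by elements of $T$ together with $R$ being a function to get $R(f(a))=f(R(a))$. The only difference is cosmetic — you fix the explicit indexing by $\operatorname{Dom}R$ and spell out the $f\circ g\in T^{*}$ bookkeeping, which the paper leaves implicit.
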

\begin{proof}
(i)$ \Rightarrow $(ii): Suppose $R=\bigcup\nolimits_{i}{\{({{a}_{i}},{{b}_{i}})}\}$. By Definition \ref{1.3.1}, $\forall f\in T^*$ and $({{a}_{i}},{{b}_{i}})\in R$, $R(f({{a}_{i}}))=f(R({{a}_{i}}))=f({{b}_{i}})$, and hence $(f({{a}_{i}}),f({{b}_{i}}))\in R$. 

Therefore, $R=\bigcup\nolimits_{i}{\{(f({{a}_{i}}),f({{b}_{i}}))\,|\,f\in T^*\}}=\bigcup\nolimits_{i}{{{R}_{T}}({{a}_{i}},{{b}_{i}})}$. Hence (ii) is true.

(ii)$ \Rightarrow $(i): Let $g\in T^*$. Suppose $(u,v)\in R$. 

Since $R=(\bigcup\nolimits_{i}{{{R}_{T}}({{a}_{i}},{{b}_{i}})}=)\bigcup\nolimits_{i}{\{(f({{a}_{i}}),f({{b}_{i}}))}\,|\,f\in T^*\}$ and $T$ is a semigroup, $(g(u),g(v))\in R$. Because $R$ is a function, $R(g(u))=g(v)=g(R(u))$. Hence by Definition \ref{1.3.1}, $R$ is a $T$-morphism from $\operatorname{Dom}R$ to ${{S}_{b}}$.	
\end{proof}
Proposition \ref{5.5.2} implies that ${{R}_{T}}(x,y)$, where $(x,y)\in {{S}_{a}}\times {{S}_{b}}$, may be regarded as some sort of basic part of a $T$-morphism from a $T$-subspace of ${{S}_{a}}$ to ${{S}_{b}}$. This observation inspires us to define a topology on ${{S}_{a}}\times {{S}_{b}}$ as follows.
\begin{definition} \label{5.5.3}
    Let ${B_T}({S_a} \times {S_b}) = \{ {R_T}(x,y)\,|\,(x,y) \in {S_a} \times {S_b}\} $. Let ${{\mathcal{T}}_{T}}({{S}_{a}}\times {{S}_{b}})$ be the topology on ${{S}_{a}}\times {{S}_{b}}$ generated by ${{B}_{T}}({{S}_{a}}\times {{S}_{b}})$ as a subbasis; that is, let ${{\mathcal{T}}_{T}}({{S}_{a}}\times {{S}_{b}})$ be the collection of all unions of finite intersections of elements of ${{B}_{T}}({{S}_{a}}\times {{S}_{b}})$. 
\end{definition}
\begin{proposition} \label{5.5.4}
    Let $\bigcap\nolimits_{i}{B_i}$ be an arbitrary intersection of elements $B_i$ of ${{B}_{T}}({{S}_{a}}\times {{S}_{b}})$. Then $\bigcap\nolimits_{i}{B_i}$ is also a union of elements of ${{B}_{T}}({{S}_{a}}\times {{S}_{b}})$.
\end{proposition}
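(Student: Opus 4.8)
The plan is to show that $\bigcap_i B_i$ equals the union of all the ``basic'' sets $R_T(u,v)$ that it contains. Write the given intersection as $C:=\bigcap_{i\in I}R_T(x_i,y_i)$, where each $(x_i,y_i)\in S_a\times S_b$. First I would record two elementary facts. (1) For every $(x,y)\in S_a\times S_b$ we have $R_T(x,y)\subseteq S_a\times S_b$: since $x\in S_a$ and $y\in S_b$ and both $S_a,S_b$ are $T$-spaces, Proposition \ref{<S> contained in S} gives $f(x)\in S_a$ and $f(y)\in S_b$ for every $f\in T$, and for $f=\operatorname{Id}$ this is trivial, so $(f(x),f(y))\in S_a\times S_b$ for all $f\in T^*$. (2) $T^*=T\cup\{\operatorname{Id}$ on $D\}$ is closed under composition of functions, because $T$ is an operator semigroup and composing with $\operatorname{Id}$ changes nothing.

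The key step is an ``absorption'' property: if $(u,v)\in R_T(x,y)$, then $R_T(u,v)\subseteq R_T(x,y)$. Indeed, choose $f\in T^*$ with $(u,v)=(f(x),f(y))$; then an arbitrary element of $R_T(u,v)$ has the form $(g(u),g(v))=((g\circ f)(x),(g\circ f)(y))$ with $g\in T^*$, and $g\circ f\in T^*$ by fact (2), so that element lies in $R_T(x,y)$. Together with $(u,v)\in R_T(u,v)$ (take $f=\operatorname{Id}$), this says precisely that $R_T(u,v)$ is the smallest member of $B_T(S_a\times S_b)$ containing the point $(u,v)$.

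Granting these, the conclusion is immediate. For any $(u,v)\in C$ we have $(u,v)\in R_T(x_i,y_i)$ for every $i\in I$, so absorption gives $R_T(u,v)\subseteq R_T(x_i,y_i)$ for every $i$, hence $R_T(u,v)\subseteq C$; moreover $(u,v)\in R_T(u,v)$, and by fact (1) (applied to any fixed index, or, in the degenerate case $I=\emptyset$, noting $C=S_a\times S_b$) we have $(u,v)\in S_a\times S_b$, so $R_T(u,v)\in B_T(S_a\times S_b)$. Therefore $C=\bigcup_{(u,v)\in C}R_T(u,v)$, which exhibits $C$ as a union of elements of $B_T(S_a\times S_b)$. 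If $C=\emptyset$ this is the empty union, which still counts as a union of elements of $B_T(S_a\times S_b)$, so no separate treatment is needed.

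I do not expect a real obstacle here. The only points requiring a moment's care are verifying that each covering set $R_T(u,v)$ genuinely lies in the subbasis (i.e.\ that its base point $(u,v)$ belongs to $S_a\times S_b$), handled by fact (1), and noting the trivial degenerate cases $I=\emptyset$ and $C=\emptyset$; the substance is entirely the absorption property, which rests only on $T^*$ being closed under composition.
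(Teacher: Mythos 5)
Your proof is correct and takes essentially the same route as the paper's: both argue that for any point $(u,v)$ of the intersection and any $g\in T^*$ the pair $(g(u),g(v))$ stays in every $B_i$ (your absorption property, resting on closure of $T^*$ under composition), so the intersection equals $\bigcup_{(u,v)}R_T(u,v)$. Your write-up is merely a bit more explicit about $(u,v)\in S_a\times S_b$ and the empty-intersection case, which the paper handles implicitly or by convention.
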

\begin{proof}
    If $\bigcap\nolimits_{i}{B_i}=\emptyset $, then by convention, $\bigcap\nolimits_{i}{B_i}$ may be regarded as the union of an empty family of elements of ${{B}_{T}}({{S}_{a}}\times {{S}_{b}})$. So we only need to consider the case where $\bigcap\nolimits_{i}{B_i}\ne \emptyset $.

     Any element $B_i$ of ${{B}_{T}}({{S}_{a}}\times {{S}_{b}})$ is a set ${R_T}(x,y)=\{(f(x),f(y))\,|\,f\in T^*\text{ }\!\!\}\!\!\text{ }$, where $(x,y)\in {{S}_{a}}\times {{S}_{b}}$. So for any $(u,v)\in B_i$ and $g\in T^*$, since $T$ is a semigroup, we can tell $(g(u),g(v))\in B_i$. Then $\forall g\in T^*$ and $(u,v)\in \bigcap\nolimits_{i}{B_i}$, $(g(u),g(v))\in \bigcap\nolimits_{i}{B_i}$.
    
    Therefore, 
    
    $\bigcap\nolimits_{i}{B_i}$
    
    $=\bigcup\nolimits_{(u,v)\in \bigcap\nolimits_{i}{B_i}}{\{(g(u),g(v))\,|\,g\in T^*\text{ }\!\!\}\!\!\text{ }}$
    
    $=\bigcup\nolimits_{(u,v)\in \bigcap\nolimits_{i}{B_i}}{{{R}_{T}}(u,v)}.$
\end{proof}
Thus immediately we have
\begin{corollary} \label{5.5.5}
    ${{B}_{T}}({{S}_{a}}\times {{S}_{b}})$ is a basis of ${{\mathcal{T}}_{T}}({{S}_{a}}\times {{S}_{b}})$. Thus, ${{\mathcal{T}}_{T}}({{S}_{a}}\times {{S}_{b}})$ is the collection of all unions of elements of ${{B}_{T}}({{S}_{a}}\times {{S}_{b}})$.
\end{corollary}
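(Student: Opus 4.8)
The plan is to deduce the corollary directly from Proposition \ref{5.5.4} together with the defining description of ${{\mathcal{T}}_{T}}({{S}_{a}}\times {{S}_{b}})$ in Definition \ref{5.5.3}. Recall that, by Definition \ref{5.5.3}, ${{\mathcal{T}}_{T}}({{S}_{a}}\times {{S}_{b}})$ is the collection of all unions of \emph{finite} intersections of members of ${{B}_{T}}({{S}_{a}}\times {{S}_{b}})$. So to prove that ${{B}_{T}}({{S}_{a}}\times {{S}_{b}})$ is a basis it suffices to check two things: first, that ${{B}_{T}}({{S}_{a}}\times {{S}_{b}})$ covers ${{S}_{a}}\times {{S}_{b}}$; second, that every finite intersection of members of ${{B}_{T}}({{S}_{a}}\times {{S}_{b}})$ is already a union of members of ${{B}_{T}}({{S}_{a}}\times {{S}_{b}})$.

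For the covering condition I would note that $\operatorname{Id}\in T^*$ (Notation \ref{5.5.1}), so for every $(x,y)\in {{S}_{a}}\times {{S}_{b}}$ we have $(x,y)=(\operatorname{Id}(x),\operatorname{Id}(y))\in {{R}_{T}}(x,y)$; hence $\bigcup {{B}_{T}}({{S}_{a}}\times {{S}_{b}})={{S}_{a}}\times {{S}_{b}}$. (This is already implicit in the remark that ${{B}_{T}}({{S}_{a}}\times {{S}_{b}})$ qualifies as a subbasis.) For the second condition, Proposition \ref{5.5.4} does exactly the required work: an arbitrary intersection — in particular any finite intersection — of members of ${{B}_{T}}({{S}_{a}}\times {{S}_{b}})$ is again a union of members of ${{B}_{T}}({{S}_{a}}\times {{S}_{b}})$.

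Putting these together: every member of ${{\mathcal{T}}_{T}}({{S}_{a}}\times {{S}_{b}})$ is a union of finite intersections of members of ${{B}_{T}}({{S}_{a}}\times {{S}_{b}})$, and by Proposition \ref{5.5.4} each such finite intersection is itself a union of members of ${{B}_{T}}({{S}_{a}}\times {{S}_{b}})$, so the member in question is a union of members of ${{B}_{T}}({{S}_{a}}\times {{S}_{b}})$. Conversely, any union of members of ${{B}_{T}}({{S}_{a}}\times {{S}_{b}})$ lies in ${{\mathcal{T}}_{T}}({{S}_{a}}\times {{S}_{b}})$, since each member is a one-fold intersection of members of ${{B}_{T}}({{S}_{a}}\times {{S}_{b}})$. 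Hence ${{\mathcal{T}}_{T}}({{S}_{a}}\times {{S}_{b}})$ is precisely the collection of all unions of members of ${{B}_{T}}({{S}_{a}}\times {{S}_{b}})$, which is the assertion.

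I do not expect a real obstacle here: the only substantive step — that an intersection of basic relations ${{R}_{T}}(x,y)$ collapses to a union of such relations — has already been carried out in Proposition \ref{5.5.4}, and what remains is the standard subbasis-to-basis bookkeeping. The one point I would be careful not to skip is the verification that ${{B}_{T}}({{S}_{a}}\times {{S}_{b}})$ covers the product, which is what makes it legitimate to speak of a basis at all and which rests on $\operatorname{Id}\in T^*$.
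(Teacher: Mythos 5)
Your proposal is correct and follows the paper's route exactly: the paper derives Corollary \ref{5.5.5} immediately from Proposition \ref{5.5.4}, and your argument just spells out the standard subbasis-to-basis bookkeeping (covering via $\operatorname{Id}\in T^*$, finite intersections collapsing to unions by Proposition \ref{5.5.4}) that the paper leaves implicit. No gaps.
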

	Combining Proposition \ref{5.5.2} with the above corollary, we immediately obtain the main theorem of this subsection as follows.
 \begin{theorem} \label{5.5.6}
     Let $R\subseteq {{S}_{a}}\times {{S}_{b}}$ such that $\operatorname{Dom}R$ is a $T$-subspace of $S_a$. Then $R$ is a $T$-morphism from $\operatorname{Dom}R$ to ${{S}_{b}}$ if and only if $R\in {{\mathcal{T}}_{T}}({{S}_{a}}\times {{S}_{b}})$ and $R$ is a function. 
 \end{theorem}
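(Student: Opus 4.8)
The plan is to obtain Theorem~\ref{5.5.6} by a direct combination of Proposition~\ref{5.5.2} with Corollary~\ref{5.5.5}: together they already translate both "$R$ is a $T$-morphism" and "$R\in\mathcal{T}_T(S_a\times S_b)$" into the same structural description of $R$, namely that $R$ is a union of sets of the form $R_T(x,y)$.

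First I would record the two ingredients. By Corollary~\ref{5.5.5}, $\mathcal{T}_T(S_a\times S_b)$ is exactly the collection of all unions of elements of $B_T(S_a\times S_b)=\{R_T(x,y)\mid (x,y)\in S_a\times S_b\}$; hence $R\in\mathcal{T}_T(S_a\times S_b)$ if and only if $R=\bigcup_i R_T(x_i,y_i)$ for some family $\{(x_i,y_i)\}\subseteq S_a\times S_b$, where the empty union is allowed and yields $R=\emptyset$. On the other hand, under the standing hypothesis that $\operatorname{Dom}R$ is a $T$-subspace of $S_a$, Proposition~\ref{5.5.2} says that $R$ is a $T$-morphism from $\operatorname{Dom}R$ to $S_b$ if and only if $R$ is a function and $R=\bigcup_i R_T(a_i,b_i)$ for some family $\{(a_i,b_i)\}\subseteq S_a\times S_b$.

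For the forward implication, I would assume $R$ is a $T$-morphism from $\operatorname{Dom}R$ to $S_b$; then $R$ is a function, and by Proposition~\ref{5.5.2} it has the form $\bigcup_i R_T(a_i,b_i)$, which by the reformulation of Corollary~\ref{5.5.5} is precisely the statement $R\in\mathcal{T}_T(S_a\times S_b)$. For the converse, I would assume $R\in\mathcal{T}_T(S_a\times S_b)$ and $R$ is a function; by Corollary~\ref{5.5.5}, $R=\bigcup_i R_T(x_i,y_i)$ for some family in $S_a\times S_b$, so condition~(ii) of Proposition~\ref{5.5.2} is met, and hence $R$ is a $T$-morphism from $\operatorname{Dom}R$ to $S_b$.

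There is essentially no obstacle here, since the real content has been isolated in Proposition~\ref{5.5.2} and Corollary~\ref{5.5.5}; the only point worth a moment's care is the bookkeeping on the index set, namely allowing the empty union so that the degenerate case $R=\emptyset$ (with $\operatorname{Dom}R=\emptyset$, the empty $T$-morphism) is covered consistently by both characterizations, which it is. Accordingly I would keep the written proof to a couple of sentences citing the two results.
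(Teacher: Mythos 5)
Your proposal is correct and follows exactly the paper's route: the paper likewise disposes of Theorem \ref{5.5.6} by noting it is immediate once Proposition \ref{5.5.2} is combined with Corollary \ref{5.5.5}. Your extra remark about the empty union covering $R=\emptyset$ is a harmless bit of bookkeeping the paper leaves implicit.
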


$ $

\addcontentsline{toc}{section}{Part II. \textbf{Theory for the case of multivariable total or partial functions}}

\begin{center}
Part II. THEORY FOR THE CASE OF MULTIVARIABLE TOTAL OR PARTIAL FUNCTIONS 
\end{center}

To make our theory more general, in Section \ref{Basic notions for multivariable}, we shall generalize the notion of operator semigroup to incorporate functions of more than one variable. Then we shall find that some familiar concepts such as ring homomorphisms, module homomorphisms, and group homomorphisms can be characterized by $T$-morphisms or $\theta$-morphisms. Moreover, in Section \ref{Basic notions for partial}, we shall allow elements in $T$ to be partial functions. Then we will find that some notions such as covariant functors between categories can be characterized in terms of $T$-morphisms or $\theta$-morphisms.

\section{Basic notions for the case of multivariable (total) functions} \label{Basic notions for multivariable}
	Subsection \ref{Operator generalized-semigroups} generalizes the notion of operator semigroup to incorporate multivariable functions. Accordingly, Subsections \ref{10 $T$-spaces}, \ref{10 $T$-morphisms}, \ref{10 $T$-isomorphisms}, \ref{10 $theta$-morphisms} and \ref{10 $theta$-isomorphisms} generalize the notions of $T$-spaces, $T$-morphisms, $T$-isomorphisms, $\theta$-morphisms and $\theta$-isomorphisms, respectively.
\subsection{Operator generalized-semigroup $T$} \label{Operator generalized-semigroups}
\begin{definition} \label{9.1.1}
    Let $D$ be a set, let $n\in {{\mathbb{Z}}^{+}}$, and let $f$ be a function from the cartesian product ${{D}^{n}}$ to $D$. $\forall i=1, \cdots, n$, let $n_i\in {{\mathbb{Z}}^{+}}$ and let ${{g}_{i}}$ be a function from the cartesian product ${{D}^{{{n}_{i}}}}$ to $D$. Let $m=\sum\nolimits_{i}{{{n}_{i}}}$. Then the \emph{composite} $f\circ ({{g}_{1}},\cdots ,{{g}_{n}})$ is the function from ${{D}^{m}}$ to $D$ given by 
    \[({{x}_{1}},\cdots ,{{x}_{m}})\mapsto f({{g}_{1}}({{\text{v}}_{1}}),\cdots ,{{g}_{n}}({{\text{v}}_{n}})),\]
    where ${{\text{v}}_{i}}:=({{x}_{{{m}_{i}}+1}},\cdots ,{{x}_{{{m}_{i}}+{{n}_{i}}}})\in {{D}^{{{n}_{i}}}},\forall i=1,\cdots ,n$, ${{m}_{1}}:=0$ and ${{m}_{i}}:=\sum\nolimits_{k=1}^{i-1}{{{n}_{k}}},\forall i=2,\cdots ,n$.
\end{definition}
\begin{remark}
    If $n=1$, we can tell that the $f\circ g_1$ defined above is the usual composite of functions $f$ and $g_1$.
\end{remark}
	Now Definition \ref{Operator semigroup} is generalized to
\begin{definition} \label{9.1.2}
   Let $D$ be a set and let $T$ be a subset of 
   \begin{center}
       \{all functions from the cartesian product ${{D}^{n}}$ to $D\, |\,n\in {{\mathbb{Z}}^{+}}$\}.
   \end{center}
If $\forall n\in {{\mathbb{Z}}^{+}}$, $f\in T$ of $n$ variables, and $({{g}_{1}},\cdots ,{{g}_{n}})\in {{T}^{n}}$ (cartesian product), the composite $f\circ ({{g}_{1}},\cdots ,{{g}_{n}})\in T$, then we call $T$ an \emph{operator generalized-semigroup}, abbreviated as \emph{operator gen-semigroup}, on $D$ and $D$ is called the \emph{domain} of $T$.
\end{definition}  
\begin{remark}
\begin{enumerate}
    \item The terminology “generalized-semigroup” is informal and just for convenience because we have not defined it.
    \item Obviously, any operator semigroup is an operator gen-semigroup.
\end{enumerate}
\end{remark}
The following generalizes Example \ref{T on field--1}.
\begin{example} \label{9.1.3}
    Let $B$ be a subfield of a field $F$. Let $T$ be the set
\begin{center}
    $\{f^*:{{F}^{n}}\to F$ given by $(a_1,\cdots ,a_n)\mapsto f(a_1,\cdots ,a_n)\,|\,f\in B[{{x}_{1}},\cdots ,{{x}_{n}}],n\in {{\mathbb{Z}}^{+}}\}$,
\end{center} 
    where $B[{{x}_{1}},\cdots ,{{x}_{n}}]$ is the ring of polynomials over $B$ in $n$ variables.

	Then by Definition \ref{9.1.1}, $\forall f^*,{{g}_{1}}^*,\cdots ,{{g}_{n}}^*\in T$ such that $f\in B[{{x}_{1}},\cdots ,{{x}_{n}}]$ and ${{g}_{i}}\in B[{{x}_{1}},\cdots ,{{x}_{{{n}_{i}}}}]$, $\forall i=1,\cdots ,n$, the composite $f^*\circ ({{g}_{1}}^*,\cdots ,{{g}_{n}}^*)$ can be given by
 \begin{center}
     $({{x}_{1}},\cdots ,{{x}_{m}})\mapsto f({{g}_{1}}({{\text{v}}_{1}}),\cdots ,{{g}_{n}}({{\text{v}}_{n}}))$, 
 \end{center}
where $m:=\sum\nolimits_{i}{{{n}_{i}}}$, $({{x}_{1}},\cdots ,{{x}_{m}}) \in {{F}^{m}}$, ${{\text{v}}_{i}}:=({{x}_{{{m}_{i}}+1}},\cdots ,{{x}_{{{m}_{i}}+{{n}_{i}}}})\in {{F}^{{{n}_{i}}}},\forall i=1,\cdots ,n$, ${{m}_{1}}:=0$ and ${{m}_{i}}:=\sum\nolimits_{k=1}^{i-1}{{{n}_{k}}},\forall i=2,\cdots ,n$. 

For example, let $f={{g}_{1}}={{g}_{2}}={{x}_{1}}{{x}_{2}}$. Then $f^*\circ ({{g}_{1}}^*,{{g}_{2}}^*)$ can be induced by (with a slight abuse of notation) $f({{g}_{1}},{{g}_{2}})=({{x}_{1}}{{x}_{2}})({{x}_{3}}{{x}_{4}})={{x}_{1}}{{x}_{2}}{{x}_{3}}{{x}_{4}}$. That is, $f^*\circ ({{g}_{1}}^*,{{g}_{2}}^*):{{F}^{4}}\to F$ is given by $({{a}_{1}},{{a}_{2}},{{a}_{3}},{{a}_{4}})\mapsto {{a}_{1}}{{a}_{2}}{{a}_{3}}{{a}_{4}},\forall ({{a}_{1}},{{a}_{2}},{{a}_{3}},{{a}_{4}})\in {{F}^{4}}$. 

Then from Definition \ref{9.1.2}, we can tell that $T$ is an operator gen-semigroup on $F$. 

Note that the map which induces $T$, i.e.
					\[\tau :\bigcup\nolimits_{n\in {{\mathbb{Z}}^{+}}}{B[{{x}_{1}},\cdots ,{{x}_{n}}]}\to T\] 
given by $f\mapsto f^*$ , is surjective, but it is not necessarily injective.
\end{example}
The following several examples will also be employed.
\begin{example} \label{9.1.4}
Let $R$ be a ring with identity and let ${{\mathbb{F}}_{2}}\left\langle {{x}_{1}},\cdots ,{{x}_{n}} \right\rangle $ be the ring of polynomials over the prime field of order 2 in $n$ noncommuting variables. Let
\begin{center}
    $T=\{f^*:{{R}^{n}}\to R$ given by $(a_1,\cdots ,a_n)\mapsto f(a_1,\cdots ,a_n)\,|\,f\in {{\mathbb{F}}_{2}}\left\langle {{x}_{1}},\cdots ,{{x}_{n}} \right\rangle ,n\in {{\mathbb{Z}}^{+}}\}$. 
\end{center}
Then we can tell that $T$ is an operator gen-semigroup on $R$. 

The map which induces $T$, i.e.
\[\tau :\bigcup\nolimits_{n\in {{\mathbb{Z}}^{+}}}{{{\mathbb{F}}_{2}}\left\langle {{x}_{1}},\cdots ,{{x}_{n}} \right\rangle }\to T\] 
is given by $f\mapsto f^*$.
\end{example}
\begin{example} \label{9.1.5}
    Let $M$ be a module over a commutative ring $R$ with identity. Let 
\begin{center}
    $T=\{f^*:{{M}^{n}}\to M$ given by $(a_1,\cdots ,a_n)\mapsto f(a_1,\cdots ,a_n)\,|\,f\in \{\sum\nolimits_{i=1}^{n}{{{r}_{i}}{{x}_{i}}}\,|\,{{r}_{i}}\in R\},n\in {{\mathbb{Z}}^{+}}\}$,
\end{center}
(where $\sum\nolimits_{i=1}^{n}{{{r}_{i}}{{x}_{i}}}\in R[{{x}_{1}},\cdots ,{{x}_{n}}]$). Then $T$ is an operator gen-semigroup on $M$.

 Let $\tau :\bigcup\nolimits_{n\in {{\mathbb{Z}}^{+}}}{\{\sum\nolimits_{i=1}^{n}{{{r}_{i}}{{x}_{i}}}\,|\,{{r}_{i}}\in R\}}\to T$ be the map inducing $T$ given by $f\mapsto f^*$.
\end{example}
\begin{example} \label{9.1.6}
    Let $G$ be an abelian group. Let
 \begin{center}
     $T=\bigcup\nolimits_{n\in {{\mathbb{Z}}^{+}}}{\{f^*:{{G}^{n}}\to G}$ given by $(a_1,\cdots ,a_n)\mapsto f(a_1,\cdots ,a_n)\,|\,f\in \{\prod\nolimits_{i=1}^{n}{x_{i}^{{{p}_{i}}}}\,|\,{{p}_{i}}\in \mathbb{Z},\forall
     i=1,\cdots,n\}$\},
 \end{center}
(where $\prod\nolimits_{i=1}^{n}{x_{i}^{{{p}_{i}}}}\in \operatorname{Frac}({{\mathbb{F}}_{2}}[{{x}_{1}},\cdots ,{{x}_{n}}])$). Then $T$ is an operator gen-semigroup on $G$. 

Let $\tau :\bigcup\nolimits_{n\in {{\mathbb{Z}}^{+}}}{\{\prod\nolimits_{i=1}^{n}{x_{i}^{{{p}_{i}}}}\,|\,{{p}_{i}}\in \mathbb{Z}\}}\to T$ be the map inducing $T$ given by $f\mapsto f^*$.
\end{example}
For not necessarily abelian groups, we have
\begin{example} \label{9.1.7}
    Let $G$ be a group. Recall that a word is a finite string of symbols, where repetition is allowed. Let $\left\langle {{x}_{1}},\cdots ,{{x}_{n}},x_{1}^{-1},\cdots ,x_{n}^{-1} \right\rangle $ denote the set of all words on $\{{{x}_{1}},\cdots ,{{x}_{n}},x_{1}^{-1},\cdots ,x_{n}^{-1}\}$. Let
\begin{center}
    $T=\bigcup\nolimits_{n\in {{\mathbb{Z}}^{+}}}{\{f^*:{{G}^{n}}\to G}$ given by $(a_1,\cdots ,a_n)\mapsto f(a_1,\cdots ,a_n)\,|\,f\in \left\langle {{x}_{1}},\cdots ,{{x}_{n}},x_{1}^{-1},\cdots ,x_{n}^{-1} \right\rangle \}$. 
\end{center}

Then we can tell that $T$ is an operator gen-semigroup on $G$. 

Let	$\tau :\bigcup\nolimits_{n\in {{\mathbb{Z}}^{+}}}{\left\langle {{x}_{1}},\cdots ,{{x}_{n}},x_{1}^{-1},\cdots ,x_{n}^{-1} \right\rangle }\to T$ be the map inducing $T$ given by $f\mapsto f^*$.
\end{example}

\subsection{$T$-spaces} \label{10 $T$-spaces}
\begin{definition} \label{9.2.1}
    Let $T$ be an operator gen-semigroup on $D$ and let $U\subseteq D$. 
    Then we call 
    \begin{center}
    $S:=\{f(u_1,\cdots ,u_n)\,|\,n\in {{\mathbb{Z}}^{+}}$, $f\in T$ has $n$ variables, and $(u_1,\cdots ,u_n)\in U^n\}$
    \end{center}
   the \emph{$T$-space} generated by $U$ and denote $S$ by ${{\left\langle U \right\rangle }_{T}}$.
\end{definition}
\begin{example} \label{9.2.2}
    Let $B$ be a subfield of a field $F$. Let $T$ be the operator gen-semigroup on $F$ defined in Example \ref{9.1.3}. Then $\forall (\emptyset \ne )U\subseteq F$, ${{\left\langle U \right\rangle }_{T}}$ is the ring $B[U]$.
\end{example}
\begin{example} \label{9.2.3}
    Let $R$ be a ring with identity. Let $T$ be the operator gen-semigroup defined in Example \ref{9.1.4}. Then $\forall (\emptyset \ne )U\subseteq R$, ${{\left\langle U \right\rangle }_{T}}$ is a subring of $R$. Specifically, ${{\left\langle R \right\rangle }_{T}}=R$ because $\operatorname{Id}\in T$. 
\end{example}
\begin{example} \label{9.2.4}
    Let $M$ be a module over a commutative ring $R$ with identity. Let $T$ be the operator gen-semigroup defined in Example \ref{9.1.5}. If $X$ is a nonempty subset of $M$, then ${{\left\langle X \right\rangle }_{T}}$ is the submodule of $M$ generated by $X$. 
\end{example}
\begin{example} \label{9.2.5}
    Let $G$ be an abelian group. Let $T$ be the operator gen-semigroup defined in Example \ref{9.1.6}. If $X$ is a nonempty subset of $G$, then ${{\left\langle X \right\rangle }_{T}}$ is the subgroup of $G$ generated by $X$.
\end{example}
\begin{example} \label{9.2.6}
   Let $G$ be a group. Let $T$ be the operator gen-semigroup defined in Example \ref{9.1.7}. If $X$ is a nonempty subset of $G$, then ${{\left\langle X \right\rangle }_{T}}$ is the subgroup of $G$ generated by $X$.
\end{example}
\subsection{$T$-morphisms} \label{10 $T$-morphisms}
\begin{definition} \label{9.3.1}
    Let $T$ be an operator gen-semigroup and let $\sigma$ be a map from a $T$-space $S$ to a $T$-space. If $\forall n\in {{\mathbb{Z}}^{+}}$, $(a_1,\cdots ,a_n)\in {{S}^{n}}$ and $f\in T$ of $n$ variables, 
\[\sigma (f(a_1,\cdots ,a_n))=f(\sigma ({{a}_{1}}),\cdots ,\sigma ({{a}_{n}})),\] 
then we call $\sigma $ a \emph{$T$-morphism}.
\end{definition}
\begin{remark}
    From Definitions \ref{9.1.2} and \ref{9.2.1} (or from Proposition \ref{11.1.1}, which generalizes Proposition \ref{<S> contained in S}), we can tell $f(a_1,\cdots ,a_n)\in S$. Hence $\sigma (f(a_1,\cdots ,a_n))$ is well-defined.
\end{remark}
	Note that in Examples \ref{9.1.3} to \ref{9.1.7}, all operator gen-semigroups are induced from polynomials, rational functions, or words on a set. Therefore, we introduce an (informal) notion as follows, which will facilitate our study on $T$-morphisms and $\theta$-morphisms (defined later).
\begin{definition} \label{9.3.2}
    Let $f$ be an expression associated with a rule $R$. Let $n\in \mathbb{Z}^+$.
    
    If all of $x_1,\cdots,x_n$ arise as symbols in $f$, and there exist sets $C$ and $D$ such that, by the rule $R$, $f$ yields a single element, which we denote by $f(a_1,\cdots ,a_n)$, in $C$ whenever every $x_1,\cdots,x_n$ in $f$ is replaced by any  $a_1,\cdots,a_n\in D$, respectively, then we call the couple $(f,R)$ a (C-valued) \emph{formal function} of $n$ variables $x_1,\cdots,x_n$ and $D$ is called a \emph{domain} of $(f,R)$. 
    
    Moreover, we may call $f$ a \emph{formal function} if its associated rule $R$ is clear from the context.
\end{definition}
\begin{remark} \begin{enumerate}
    \item For example, $1+0{{x}_{1}}+0{{x}_{2}}\in \mathbb{Q}[{{x}_{1}},{{x}_{2}}]$ is a formal function of two variables with the associated rule being the addition and multiplication of field elements. In Examples \ref{9.1.3} to \ref{9.1.7}, all operator gen-semigroups are induced from formal functions.
\item	By the definition, a formal function $(f,R)$ may have more than one domain. For example, any field which contains $\mathbb{Q}$ is a domain of any formal function $f\in \mathbb{Q}[{{x}_{1}},\cdots ,{{x}_{n}}]$.
\item	Any function $f$ of $n$ variables $x_1,\cdots,x_n$ may be regarded as the formal function $f(x_1,\cdots,x_n)$, which is an expression with its associated rule being the mapping defined by the function $f$. The converse is not true, however, because a formal function may have more than one domain, and different domains of a formal function may induce different functions. Therefore, the notion of formal function generalizes the notion of function.
\end{enumerate} \end{remark}

When operator gen-semigroups are induced from formal functions, as is the case in Examples \ref{9.1.3} to \ref{9.1.7}, the following lemma will be useful. 
\begin{lemma} \label{9.3.3}
    Let $\mathcal{F}$ be a set of D-valued formal functions on a domain $D$ and let $T$ be an operator gen-semigroup on $D$ induced by $\mathcal{F}$; that is, there is a surjective map $\tau :\mathcal{F}\to T$ given by $f\mapsto f^*$, where $f^*:{{D}^{n}}\to D$ is given by $(a_1,\cdots ,a_n)\mapsto f(a_1,\cdots ,a_n)$ for $f$ of $n$ variables, $\forall n\in {{\mathbb{Z}}^{+}}$. Let $\sigma $ be a map from a $T$-space $S$ to a $T$-space. Then the following statements are equivalent:
\begin{enumerate}
\item [(i)]	$\sigma $ is a $T$-morphism.
\item [(ii)] $\forall n\in {{\mathbb{Z}}^{+}}$, $(a_1,\cdots ,a_n)\in {{S}^{n}}$ and $f\in \mathcal{F}$ of $n$ variables,
\begin{equation} \label{10.1}
    \sigma (f(a_1,\cdots ,a_n))=f(\sigma ({{a}_{1}}),\cdots ,\sigma ({{a}_{n}})).
\end{equation}
\end{enumerate} \end{lemma}
\begin{proof} 
(i)$\Rightarrow $(ii): By Definition \ref{9.3.1}, $\forall n\in {{\mathbb{Z}}^{+}}$, $(a_1,\cdots ,a_n)\in {{S}^{n}}$ and $f\in \mathcal{F}$ of $n$ variables, 
\[\sigma (\tau (f)(a_1,\cdots ,a_n))=\tau (f)(\sigma ({{a}_{1}}),\cdots ,\sigma ({{a}_{n}})).\]
	Then by the definition of $\tau $, (ii) must be true.

(ii)$\Rightarrow $(i): 	Since $\tau $ is surjective, $\forall f^*\in T$, $\exists f\in \mathcal{F}$ such that $f^*=\tau (f)$. Then from statement (ii) and the definition of $\tau $, we can tell that $\forall n\in {{\mathbb{Z}}^{+}}$, $(a_1,\cdots ,a_n)\in {{S}^{n}}$ and $f^*\in T$ of $n$ variables, $\sigma (f^*(a_1,\cdots ,a_n))=f^*(\sigma ({{a}_{1}}),\cdots ,\sigma ({{a}_{n}}))$.	Hence by Definition \ref{9.3.1}, $\sigma $ must be a $T$-morphism.										\end{proof}		
	Lemma \ref{9.3.3} characterizes a $T$-morphism by the formal functions which induce $T$. When an operator gen-semigroup is induced from formal functions, the criterion for a $T$-morphism by Lemma \ref{9.3.3} is normally more convenient than that by Definition \ref{9.3.1}. Thus Lemma \ref{9.3.3} simplifies our proofs of some of our results as follows.
	
 The following several results serve as examples of $T$-morphisms. 

Firstly, Proposition \ref{1.3.4} is generalized to
\begin{proposition} \label{9.3.4}
    Let $B$ be a subfield of a field $F$, let $T$ be the operator gen-semigroup on $F$ defined in Example \ref{9.1.3}, and let nonempty $U,V\subseteq F$. Then a map $\sigma :{{\left\langle U \right\rangle }_{T}}\to {{\left\langle V \right\rangle }_{T}}$ is a ring homomorphism with $B$ fixed pointwisely if and only if it is a $T$-morphism.
\end{proposition}   
\begin{proof}
    We may give a proof which is analogous to that of Proposition \ref{1.3.4}. However, now we employ Lemma \ref{9.3.3} instead.
By Example \ref{9.1.3},
\begin{center}
    $T=\{f^*:{{F}^{n}}\to F$ given by $(a_1,\cdots ,a_n)\mapsto f(a_1,\cdots ,a_n)\,|\,f\in B[{{x}_{1}},\cdots ,{{x}_{n}}],n\in {{\mathbb{Z}}^{+}}\}$.
\end{center}
    Thus ${{\left\langle U \right\rangle }_{T}}$ and ${{\left\langle V \right\rangle }_{T}}$ are the rings $B[U]$ and $B[V]$, respectively. 
    
    Let $\mathcal{F}=\{f\in B[{{x}_{1}},\cdots ,{{x}_{n}}]\,|\,n\in {{\mathbb{Z}}^{+}}\}$. Then Lemma \ref{9.3.3} applies. Hence we only need to show that $\sigma :{{\left\langle U \right\rangle }_{T}}\to {{\left\langle V \right\rangle }_{T}}$ is a ring homomorphism with $B$ fixed pointwisely if and only if statement (ii) in Lemma \ref{9.3.3} is true in this case. Specifically, it suffices to show that the following statements are equivalent:
    \begin{enumerate}
    \item $\forall c\in B$, $\sigma (c)=c$, and $\forall a,b\in {{\left\langle U \right\rangle }_{T}}(=B[U])$,
    $\sigma (a+b)=\sigma (a)+\sigma (b)$ and $\sigma (ab)=\sigma (a)\sigma (b)$.
    \item $\forall n\in {{\mathbb{Z}}^{+}}$, $(a_1,\cdots ,a_n)\in \left\langle U \right\rangle _{T}^{n}$ and $f\in B[{{x}_{1}},\cdots ,{{x}_{n}}]$,
    \begin{equation} \label{10.2}
    \sigma (f(a_1,\cdots ,a_n))=f(\sigma ({{a}_{1}}),\cdots ,\sigma ({{a}_{n}})).
    \end{equation}
    \end{enumerate}

(1)$\Rightarrow $(2): Since $f\in B[{{x}_{1}},\cdots ,{{x}_{n}}]$ is a polynomial, it is not hard to see that statement (2) is true.

(2)$\Rightarrow $(1): Considering the case where $f\in B[{{x}_{1}},\cdots ,{{x}_{n}}]$ is a constant polynomial, we can tell from statement (2) that $\forall c\in B$, $\sigma (c)=c$. 

Let $a,b\in {{\left\langle U \right\rangle }_{T}}$. Let $g={{x}_{1}}+{{x}_{2}}$ and $h={{x}_{1}}{{x}_{2}}$. Then

$\sigma (a+b)$

$=\sigma (g(a,b))$

$=g(\sigma (a),\sigma (b))$ (by Equation $($\ref{10.2}$)$)

$=\sigma (a)+\sigma (b)$. 

And 

$\sigma (ab)$

$=\sigma (h(a,b))$

$=h(\sigma (a),\sigma (b))$ (by Equation $($\ref{10.2}$)$)

$=\sigma (a)\sigma (b)$.											
\end{proof}
However, ring homomorphism in Proposition \ref{9.3.4} is between extensions over the same field. We will deal with ring homomorphisms between extensions over different fields in Subsection \ref{10 $theta$-morphisms}.
\begin{proposition} \label{9.3.5}
    Let $M$ be a module over a commutative ring $R$ with identity. Let $T$ be the operator gen-semigroup defined in Example \ref{9.1.5}. If $X$ and $Y$ are nonempty subsets of $M$, then a map $\sigma :{{\left\langle X \right\rangle }_{T}}\to {{\left\langle Y \right\rangle }_{T}}$ is an $R$-homomorphism of $R$-modules if and only if it is a $T$-morphism.
\end{proposition}   
\begin{proof}
    By Example \ref{9.1.5},
    \begin{center}
      $T=\{f^*:{{M}^{n}}\to M$ given by $(a_1,\cdots ,a_n)\mapsto f(a_1,\cdots ,a_n)\,|\,f\in \{\sum\nolimits_{i=1}^{n}{{{r}_{i}}{{x}_{i}}}\,|\,{{r}_{i}}\in R\},n\in {{\mathbb{Z}}^{+}}\}$.  
    \end{center}
So ${{\left\langle X \right\rangle }_{T}}$ and ${{\left\langle Y \right\rangle }_{T}}$ are the submodules of $M$ generated by $X$ and $Y$, respectively. 

Let $\mathcal{F}=\{f\in \{\sum\nolimits_{i=1}^{n}{{{r}_{i}}{{x}_{i}}}\,|\,{{r}_{i}}\in R\}\ |\ n\in {{\mathbb{Z}}^{+}}\}$. Then Lemma \ref{9.3.3} applies. Thus we only need to show that $\sigma :{{\left\langle X \right\rangle }_{T}}\to {{\left\langle Y \right\rangle }_{T}}$ is an $R$-homomorphism of $R$-modules if and only if statement (ii) in Lemma \ref{9.3.3} is true in this case. Specifically, it suffices to show that the following statements are equivalent:
\begin{enumerate}
    \item $\forall a,b\in {{\left\langle X \right\rangle }_{T}}$ and $r\in R$, $\sigma (a+b)=\sigma (a)+\sigma (b)$ and $\sigma (ra)=r\sigma (a)$.
    \item $\forall n\in {{\mathbb{Z}}^{+}}$, $(a_1,\cdots ,a_n)\in \left\langle X \right\rangle _{T}^{n}$ and $f\in \{\sum\nolimits_{i=1}^{n}{{{r}_{i}}{{x}_{i}}}\,|\,{{r}_{i}}\in R\}$,
    \begin{equation} \label{10.3}
        \sigma (f(a_1,\cdots ,a_n))=f(\sigma ({{a}_{1}}),\cdots ,\sigma ({{a}_{n}})).
    \end{equation}
\end{enumerate}

(1)$\Rightarrow $(2): Because $f\in \{\sum\nolimits_{i=1}^{n}{{{r}_{i}}{{x}_{i}}}\,|\,{{r}_{i}}\in R\}$ is a linear polynomial whose constant term is zero, it is not hard to see that statement (2) is true.

(2)$\Rightarrow $(1): Let $a,b\in {{\left\langle X \right\rangle }_{T}}$ and $r\in R$. Let $g={{x}_{1}}+{{x}_{2}}$ and $h=r{{x}_{1}}$. Then 

$\sigma (a+b)$

$=\sigma (g(a,b))$

$=g(\sigma (a),\sigma (b))$ (by Equation $($\ref{10.3}$)$)

$=\sigma (a)+\sigma (b)$. 

And 

$\sigma (ra)$

$=\sigma (h(a))$

$=h(\sigma (a))$ (by Equation $($\ref{10.3}$)$)

$=r\sigma (a)$.													
\end{proof}
	$R$-homomorphism in Proposition \ref{9.3.5} is between submodules of the same $R$-module. We will deal with general $R$-homomorphisms of $R$-modules in Subsection \ref{10 $theta$-morphisms}.
\begin{proposition} \label{9.3.6}
    Let $G$ be an abelian group and let $T$ be the operator gen-semigroup on $G$ defined in Example \ref{9.1.6}. If $X$ and $Y$ are nonempty subsets of $G$, then a map $\sigma :{{\left\langle X \right\rangle }_{T}}\to {{\left\langle Y \right\rangle }_{T}}$ is a group homomorphism if and only if it is a $T$-morphism.
\end{proposition}
\begin{proof}
    By Example \ref{9.1.6}, 
    \begin{center}
        $T=\bigcup\nolimits_{n\in {{\mathbb{Z}}^{+}}}{\{f^*:{{G}^{n}}\to G}$ given by $(a_1,\cdots ,a_n)\mapsto f(a_1,\cdots ,a_n)\,|\,f\in \{\prod\nolimits_{i=1}^{n}{x_{i}^{{{p}_{i}}}}\,|\,{{p}_{i}}\in \mathbb{Z}\}$\}.
    \end{center}

Obviously, ${{\left\langle X \right\rangle }_{T}}$ and ${{\left\langle Y \right\rangle }_{T}}$ are the subgroups of $G$ generated by $X$ and $Y$, respectively. 

We may apply Lemma \ref{9.3.3} as well in this proof. However, our argument as follows seems more straightforward.

Suppose that $\sigma :{{\left\langle X \right\rangle }_{T}}\to {{\left\langle Y \right\rangle }_{T}}$ is a $T$-morphism. Then by Definition \ref{9.3.1}, $\forall n\in {{\mathbb{Z}}^{+}}$, $(a_1,\cdots ,a_n)\in \left\langle X \right\rangle _{T}^{n}$ and $f\in T$ of $n$ variables,
\[\sigma (f(a_1,\cdots ,a_n))=f(\sigma ({{a}_{1}}),\cdots ,\sigma ({{a}_{n}})).\] 
In words, $\sigma $ commutes with the (multiplicative) operation equipped by $G$. Hence $\sigma $ is a group homomorphism.

Apparently the converse of the above argument is also true. That is, if $\sigma $ is a group homomorphism, then $\sigma $ commutes with the operation equipped by $G$, and hence it is a $T$-morphism.																
\end{proof}
	Group homomorphism in Proposition \ref{9.3.6} is between subgroups of the same abelian group. We will deal with group homomorphisms between different abelian groups in Subsection \ref{10 $theta$-morphisms}.

 For not necessarily abelian groups, we have
\begin{proposition} \label{9.3.7}
    Let $G$ be a group and let $T$ be the operator gen-semigroup on $G$ defined in Example \ref{9.1.7}. If $X$ and $Y$ are nonempty subsets of $G$, then a map $\sigma :{{\left\langle X \right\rangle }_{T}}\to {{\left\langle Y \right\rangle }_{T}}$ is a group homomorphism if and only if it is a $T$-morphism.
\end{proposition}
\begin{proof}
    The proof is comparable with that of Proposition \ref{9.3.6}.

By Example \ref{9.1.7}, 
\begin{center}
    $T=\bigcup\nolimits_{n\in {{\mathbb{Z}}^{+}}}{\{f^*:{{G}^{n}}\to G}$ given by $(a_1,\cdots ,a_n)\mapsto f(a_1,\cdots ,a_n)\,|\,f\in \left\langle {{x}_{1}},\cdots ,{{x}_{n}},x_{1}^{-1},\cdots ,x_{n}^{-1} \right\rangle \}$, 
\end{center}

Obviously, ${{\left\langle X \right\rangle }_{T}}$ and ${{\left\langle Y \right\rangle }_{T}}$ are the subgroups of $G$ generated by $X$ and $Y$, respectively. 

Suppose that $\sigma :{{\left\langle X \right\rangle }_{T}}\to {{\left\langle Y \right\rangle }_{T}}$ is a $T$-morphism. Then by Definition \ref{9.3.1}, $\forall n\in {{\mathbb{Z}}^{+}}$, $(a_1,\cdots ,a_n)\in \left\langle X \right\rangle _{T}^{n}$ and $f\in T$ of $n$ variables,
\[\sigma (f(a_1,\cdots ,a_n))=f(\sigma ({{a}_{1}}),\cdots ,\sigma ({{a}_{n}})).\] 
In words, $\sigma $ commutes with the multiplication equipped by $G$. Hence $\sigma $ is a group homomorphism.

It is not hard to see that the converse of the above argument is also true. Specifically, if $\sigma $ is a group homomorphism, then $\sigma $ commutes with the multiplication equipped by $G$, and hence it is a $T$-morphism.													
\end{proof}
	Group homomorphism in Proposition \ref{9.3.7} is between subgroups of the same group. We will deal with general group homomorphisms in Subsection \ref{10 $theta$-morphisms}.

\subsection{$T$-isomorphisms} \label{10 $T$-isomorphisms}
Definition \ref{$T$-isomorphisms} still applies: if a $T$-morphism is bijective, then we call it a \emph{$T$-isomorphism}. To justify the definition, we need to show Proposition \ref{1.3.a} with $T$ being an operator gen-semigroup as follows.
 \begin{proposition} \emph{(Proposition \ref{1.3.a})}
    Let $\sigma$ be a $T$-isomorphism from a $T$-space $S_1$ to a $T$-space $S_2$ and let ${{\sigma }^{-1}}$ be the inverse map of $\sigma $. Then ${{\sigma }^{-1}}\in \operatorname{Iso}_{T}({{S}_{2}},{{S}_{1}})$.
\end{proposition}
\begin{proof}
    By Definition \ref{9.3.1}, $\forall n\in {{\mathbb{Z}}^{+}}$, $f\in T$ of $n$ variables and $(a_1,\cdots ,a_n)\in {S_1^n}$,  \[f(a_1,\cdots ,a_n)={\sigma }^{-1}(f(\sigma ({{a}_{1}}),\cdots ,\sigma ({{a}_{n}}))).\] 
    So $\forall n\in {{\mathbb{Z}}^{+}}$, $f\in T$ of $n$ variables and $(b_1,\cdots ,b_n)\in {S_2^n}$,  
    \[f({\sigma }^{-1}({b}_{1}),\cdots ,{\sigma }^{-1}({b}_{n}))={\sigma }^{-1}(f(b_1,\cdots ,b_n)).\] 
    Then by Definition \ref{9.3.1}, ${\sigma }^{-1}$ is a $T$-morphism from $S_2$ to $S_1$, and hence ${{\sigma }^{-1}}\in \operatorname{Iso}_{T}({{S}_{2}},{{S}_{1}})$ by Definition \ref{$T$-isomorphisms}.
\end{proof}

\subsection{$\theta$-morphisms} \label{10 $theta$-morphisms}
To generalize Definition \ref{5.3.5}, we first generalize Notation \ref{5.3.4} to
\begin{notation} \label{9.4.1}
    Let $D_1$ (resp. $D_2$) be a set, let $M_1$ (resp. $M_2$) be a subset of \{all functions from the cartesian product $D_{1}^{n}$ to ${D_1}\,|\,n\in {{\mathbb{Z}}^{+}}$\} (resp. a subset of \{all functions from $D_{2}^{n}$ to ${{D}_{2}}\,|\,n\in {{\mathbb{Z}}^{+}}$\}), let $\theta \subseteq {{M}_{1}}\times {{M}_{2}}$, and let $A\subseteq {{D}_{2}}$. We denote by $\theta {{|}_{A}}$ the binary relation $\{(f,g{{|}_{A}})\,|\,(f,g)\in \theta \}$, where $g{{|}_{A}}$ denotes the function obtained by restricting every variable of $g$ to $A$.
\end{notation}
Definition \ref{5.3.a} is generalized to
\begin{definition} \label{9.4.a}
    Let $\theta$ and $A$ be defined as in Notation \ref{9.4.1}. Let $f\in \operatorname{Dom}\theta $, $n\in {{\mathbb{Z}}^{+}}$ and $(a_1, \cdots, a_n) \in A^n$. 
    If $\forall(f,g_1),(f,g_2)\in \theta$, $g_{1}(a_1, \cdots, a_n)=g_2(a_1, \cdots, a_n)$, then we say that $\theta (f)(a_1, \cdots, a_n)$ is \emph{well-defined} and let 
    \[\theta (f)(a_1, \cdots, a_n)=g_{1}(a_1, \cdots, a_n).\]
\end{definition}
\begin{remark}
    By Convention \ref{convention}, 
    \[\forall(f,g_1),(f,g_2)\in \theta, g_{1}(a_1, \cdots, a_n)=g_2(a_1, \cdots, a_n)\] 
    implies that $\forall(f,g)\in \theta$, $g$ has $n$ variables.
\end{remark}
Hence Proposition \ref{5.3.b} is generalized to
\begin{proposition} \label{9.4.b}
  Let $\theta$ and $A$ be defined as in Notation \ref{9.4.1}. Then the following statements are equivalent: \begin{enumerate}
        \item [(i)] $\theta {{|}_{A}}$ is a map.
         \item [(ii)] $\forall f\in \operatorname{Dom}\theta $, $\exists n\in {{\mathbb{Z}}^{+}}$ such that $\forall (a_1, \cdots, a_n) \in A^n$, $\theta (f)(a_1, \cdots, a_n)$ is well-defined.
    \end{enumerate}
\end{proposition}
\begin{proof}
    $\theta {{|}_{A}}$ is a map;
    
$ \Leftrightarrow $ $\forall(f,g_1),(f,g_2)\in \theta$, $g_{1}{{|}_{A}}=g_2{{|}_{A}}$;

$ \Leftrightarrow $ $\forall(f,g_1),(f,g_2)\in \theta$, $g_1$ and $g_2$ have the same number of variables, assumed $n$, and $\forall (a_1, \cdots, a_n) \in A^n$, $g_{1}(a_1, \cdots, a_n)=g_2(a_1, \cdots, a_n)$;

$ \Leftrightarrow $ $\forall f\in \operatorname{Dom}\theta $, $\exists n\in {{\mathbb{Z}}^{+}}$ such that $\forall (a_1, \cdots, a_n) \in A^n$, $\theta (f)(a_1, \cdots, a_n)$ is well-defined. 
\end{proof}
Then Definition \ref{5.3.5} is generalized to
\begin{definition} \label{9.4.2}
Let $T_1$ and $T_2$ be operator gen-semigroups and let $\theta \subseteq {{T}_{1}}\times {{T}_{2}}$. Let $\phi $ be a map from a $T_1$-space $S$ to a $T_2$-space. If 
 $\forall n\in {{\mathbb{Z}}^{+}}$, $(a_1,\cdots ,a_n)\in {{S}^{n}}$ and $f\in \operatorname{Dom}\theta $ of $n$ variables,
\[\phi (f(a_1,\cdots ,a_n))=\theta (f)(\phi ({{a}_{1}}),\cdots ,\phi ({{a}_{n}})),\]  
then $\phi $ is called a \emph{$\theta $-morphism}.
\end{definition}
Then the first half of Corollary \ref{5.3.c} is generalized to
\begin{corollary} \label{9.4.c}
Let $\phi $ be a $\theta$-morphism from a $T_1$-space $S$. Then $\forall (f,g)\in \theta $, $f$ and $g$ have the same number of variables, and $\theta {{|}_{\operatorname{Im}\phi }}$ is a map.
\end{corollary}
\begin{proof} 
    Suppose $n$-variable $f\in \operatorname{Dom}\theta $ and $(a_1,\cdots ,a_n)\in {{S}^{n}}$.  Then by Definition \ref{9.4.2} and Convention \ref{convention}, $\theta (f)(\phi ({{a}_{1}}),\cdots ,\phi ({{a}_{n}}))$ is well-defined. Hence by Definition \ref{9.4.a}, $\forall (f,g)\in \theta $, $g$ has $n$ variables.
    Moreover, by Proposition \ref{9.4.b}, $\theta {{|}_{\operatorname{Im}\phi }}$ is a map.
\end{proof}
When operator gen-semigroups are induced from formal functions, the following, which generalizes Lemma \ref{9.3.3}, will facilitate study of $\theta$-morphisms. 
\begin{lemma} \label{9.4.3}
    Let $\mathcal{F}$ $($resp. ${\mathcal{F}}')$ be a set of D-valued $($resp. ${D}'$-valued$)$ formal functions on $D$ $($resp. ${D}')$. Let $T$ $($resp. ${T}')$ be an operator gen-semigroup on $D$ $($resp. ${D}'$$)$ such that there is a map $\tau :\mathcal{F}\to T$ being given by $f\mapsto f^*$, where $f^*:{{D}^{n}}\to D$ is given by $(a_1,\cdots ,a_n)\mapsto f(a_1,\cdots ,a_n)$ for $f$ of $n$ variables, $\forall n\in {{\mathbb{Z}}^{+}}$ $($resp. such that there is a map ${\tau }':{\mathcal{F}}'\to {T}'$ being given analogously$)$. Let $\vartheta :\mathcal{F}\to {\mathcal{F}}'$ be a map. Let $\theta =\{(\tau (f),{\tau }'(\vartheta (f)))\,|\,f\in \mathcal{F}\}$ and let $\phi $ be a map from a $T$-space $S$ to a ${T}'$-space. 

Then the following statements are equivalent:
\begin{enumerate}
    \item [(i)] $\phi $ is a $\theta$-morphism.
    \item [(ii)] $\forall n\in {{\mathbb{Z}}^{+}}$, $(a_1,\cdots ,a_n)\in {{S}^{n}}$ and $f\in \mathcal{F}$ of n variables,
    \begin{equation} \label{10.4}
        \phi (f(a_1,\cdots ,a_n))=\vartheta (f)(\phi ({{a}_{1}}),\cdots ,\phi ({{a}_{n}})).
    \end{equation}
\end{enumerate}
\end{lemma}
\begin{proof}
Obviously, $\theta \subseteq T \times T'$.

(i) $\Rightarrow$ (ii):
Assume $n$-variable $f\in \mathcal{F}$ and $(a_1,\cdots ,a_n)\in {{S}^{n}}$. Then

$\phi (f(a_1,\cdots ,a_n))$

$=\phi (\tau (f)(a_1,\cdots ,a_n))$  

$=\theta (\tau (f))(\phi ({{a}_{1}}),\cdots ,\phi ({{a}_{n}}))$ (because $\tau (f)\in \operatorname{Dom}\theta $ and by Definition \ref{9.4.2})

$={\tau }'(\vartheta (f))(\phi ({{a}_{1}}),\cdots ,\phi ({{a}_{n}}))$ (because $(\tau (f),{\tau }'(\vartheta (f)))\in \theta $ and by Corollary \ref{9.4.c}, $\theta {{|}_{\operatorname{Im}\phi }}$ is a map)

$=\vartheta (f)(\phi ({{a}_{1}}),\cdots ,\phi ({{a}_{n}}))$, as desired.

(ii) $\Rightarrow$ (i):
To show that $\phi $ is a $\theta$-morphism, we first need to show that $\theta {{|}_{\operatorname{Im}\phi }}$ is a map. 

Let $(h,g{{|}_{\operatorname{Im}\phi }}),(h,{g}'{{|}_{\operatorname{Im}\phi }})\in \theta {{|}_{\operatorname{Im}\phi }}$ with $(h,g),(h,{g}')\in \theta $. By the definition of $\theta $, $\exists l,m\in \mathcal{F}$ such that $(h,g)=(\tau (l),{\tau }'(\vartheta (l)))$ and $(h,{g}')=(\tau (m),{\tau }'(\vartheta (m)))$, and hence $\tau (l)=\tau (m)$. Thus $l$ and $m$ have the same number of variables, which we assume to be $n$. Then by Equation (\ref{10.4}) and Convention \ref{convention}, both $\vartheta (l)$ and $\vartheta (m)$ have $n$ variables.

Assume $g{{|}_{\operatorname{Im}\phi }}\ne {g}'{{|}_{\operatorname{Im}\phi }}$; that is, ${\tau }'(\vartheta (l)){{|}_{\operatorname{Im}\phi }}\ne {\tau }'(\vartheta (m)){{|}_{\operatorname{Im}\phi }}$. Then\\
$\exists ({{z}_{1}},\cdots ,{{z}_{n}})\in {{S}^{n}}$ such that
\[\vartheta (l)(\phi ({{z}_{1}}),\cdots ,\phi ({{z}_{n}}))\ne \vartheta (m)(\phi ({{z}_{1}}),\cdots ,\phi ({{z}_{n}})).\] 
However, $\phi (l({{z}_{1}},\cdots ,{{z}_{n}}))=\phi (m({{z}_{1}},\cdots ,{{z}_{n}}))$ because $\tau (l)=\tau (m)$, and hence from Equation $($\ref{10.4}$)$, we can tell 
\[\vartheta (l)(\phi ({{z}_{1}}),\cdots ,\phi ({{z}_{n}}))=\vartheta (m)(\phi ({{z}_{1}}),\cdots ,\phi ({{z}_{n}})),\] a contradiction. 

Hence $\theta {{|}_{\operatorname{Im}\phi }}$ must be a map. Then $\forall n\in {{\mathbb{Z}}^{+}}$, $(a_1,\cdots ,a_n)\in {{S}^{n}}$ and $f\in \mathcal{F}$ of $n$ variables,

$\phi (\tau (f)(a_1,\cdots ,a_n))$ 

$=\phi (f(a_1,\cdots ,a_n))$ (by the definition of $\tau $)

$=\vartheta (f)(\phi ({{a}_{1}}),\cdots ,\phi ({{a}_{n}}))$ (by Equation (\ref{10.4}))

$={\tau }'(\vartheta (f))(\phi ({{a}_{1}}),\cdots ,\phi ({{a}_{n}}))$ (by the definition of ${\tau }'$)

$=\theta (\tau (f))(\phi ({{a}_{1}}),\cdots ,\phi ({{a}_{n}}))$ (because $(\tau (f),{\tau }'(\vartheta (f)))\in \theta $ and $\theta {{|}_{\operatorname{Im}\phi }}$ is a map). 

Since $\operatorname{Dom}\theta=\{\tau(f)\,|\,f\in \mathcal{F}\}$, by Definition \ref{9.4.2}, $\phi $ is a $\theta$-morphism.	
\end{proof}
	Lemma \ref{9.4.3} characterizes a $\theta$-morphism by formal functions. When operator gen-semigroups are induced from formal functions, the criterion for a $\theta$-morphism by Lemma \ref{9.4.3} is usually more convenient than that by Definition \ref{9.4.2}. Thus Lemma \ref{9.4.3} simplifies our proofs of the following several results.

The following generalizes both Propositions \ref{5.3.6} and \ref{9.3.4}.  
\begin{proposition} \label{9.4.4}
    Let $F/B$ $($resp. ${F}'/{B}'$$)$ be a field extension, let an operator gen-semigroup $T$ $($resp. ${T}'$$)$ be defined as in Example \ref{9.1.3} on $F$ $($resp. ${F}'$$)$ over $B$ $($resp. ${B}'$$)$, let $\varphi :B\to {B}'$ be a field isomorphism, and let 
    \[\vartheta :\bigcup\nolimits_{n\in {{\mathbb{Z}}^{+}}}{B[{{x}_{1}},\cdots ,{{x}_{n}}]}\to \bigcup\nolimits_{n\in {{\mathbb{Z}}^{+}}}{{B}'[{{x}_{1}},\cdots ,{{x}_{n}}]}\]
    be the map given by $f\mapsto {f}'$ where
\[f({{x}_{1}},\cdots ,{{x}_{n}})=\sum{{c_{p_1,\cdots ,p_n}}x_{1}^{{{p}_{1}}}\cdots x_{n}^{{{p}_{n}}}}\] and
\[{f}'({{x}_{1}},\cdots ,{{x}_{n}})=\sum{\varphi ({c_{p_1,\cdots ,p_n}})x_{1}^{{{p}_{1}}}\cdots x_{n}^{{{p}_{n}}}}.\]
Let $\theta =\{(\tau (f),{\tau }'(\vartheta (f)))\,|\,f\in B[{{x}_{1}},\cdots ,{{x}_{n}}],n\in {{\mathbb{Z}}^{+}}\}$, where $\tau $ and ${\tau }'$ are the maps inducing $T$ and ${T}'$, respectively, defined as in Example \ref{9.1.3}. 

Then $\forall (\emptyset \ne )U\subseteq F$ and $(\emptyset \ne )V\subseteq {F}'$, a map $\phi :{{\left\langle U \right\rangle }_{T}}\to {{\left\langle V \right\rangle }_{{{T}'}}}$ is a ring homomorphism extending $\varphi $ if and only if $\phi $ is a $\theta$-morphism.
\end{proposition}
\begin{proof}
    We may prove Proposition \ref{9.4.4} in a way analogous to the one for Proposition \ref{5.3.6}. However, now we employ Lemma \ref{9.4.3} instead. 

By Example \ref{9.1.3},
\begin{center}
  $T=\{f^*:{{F}^{n}}\to F$ given by $(a_1,\cdots ,a_n)\mapsto f(a_1,\cdots ,a_n)\,|\,f\in B[{{x}_{1}},\cdots ,{{x}_{n}}],n\in {{\mathbb{Z}}^{+}}\}$  
\end{center}
and
\begin{center}
   ${T}'=\{f^*:{{{F}'}^{n}}\to {F}'$ given by $(a_1,\cdots ,a_n)\mapsto f(a_1,\cdots ,a_n)\,|\,f\in {B}'[{{x}_{1}},\cdots ,{{x}_{n}}],n\in {{\mathbb{Z}}^{+}}\}$. 
\end{center}

Thus ${{\left\langle U \right\rangle }_{T}}$ and ${{\left\langle V \right\rangle }_{{{T}'}}}$ are the rings $B[U]$ and ${B}'[V]$, respectively. 

	Let $\mathcal{F}=\bigcup\nolimits_{n\in {{\mathbb{Z}}^{+}}}{B[{{x}_{1}},\cdots ,{{x}_{n}}]}$ and ${\mathcal{F}}'=\bigcup\nolimits_{n\in {{\mathbb{Z}}^{+}}}{{B}'[{{x}_{1}},\cdots ,{{x}_{n}}]}$. Then Lemma \ref{9.4.3} applies. Hence it suffices to show that $\phi :{{\left\langle U \right\rangle }_{T}}\to {{\left\langle V \right\rangle }_{{{T}'}}}$ is a ring homomorphism extending $\varphi $ if and only if statement (ii) in Lemma \ref{9.4.3} is true in this case. Specifically, we only need to show that the following statements are equivalent:
 \begin{enumerate}
     \item $\forall c\in B$, $\phi (c)=\varphi (c)$, and $\forall a,b\in {{\left\langle U \right\rangle }_{T}}$, $\phi (a+b)=\phi (a)+\phi (b)$ and $\phi (ab)=\phi (a)\phi (b)$.
     \item $\forall n\in {{\mathbb{Z}}^{+}}$, $(a_1,\cdots ,a_n)\in \left\langle U \right\rangle _{T}^{n}$ and $f\in B[{{x}_{1}},\cdots ,{{x}_{n}}]$,
     \begin{equation} \label{10.5}
         \phi (f(a_1,\cdots ,a_n))=\vartheta (f)(\phi ({{a}_{1}}),\cdots ,\phi ({{a}_{n}})).
     \end{equation}
 \end{enumerate}
(1) $\Rightarrow$ (2): $\forall n\in {{\mathbb{Z}}^{+}}$, $f({{x}_{1}},\cdots ,{{x}_{n}})=\sum{{c_{p_1,\cdots ,p_n}}x_{1}^{{{p}_{1}}}\cdots x_{n}^{{{p}_{n}}}}\in B[{{x}_{1}},\cdots ,{{x}_{n}}]$ and $(a_1,\cdots ,a_n)\in \left\langle U \right\rangle _{T}^{n}$,

$\phi (f(a_1,\cdots ,a_n))$ 

$=\phi (\sum{{c_{p_1,\cdots ,p_n}}a_{1}^{{{p}_{1}}}\cdots a_{n}^{{{p}_{n}}}})$ 

$=\sum{\phi ({c_{p_1,\cdots ,p_n}})(\phi (a_1))^{p_1}\cdots (\phi (a_n))^{p_n}}$ (because statement (1) is true)

$=\sum{\varphi ({c_{p_1,\cdots ,p_n}})(\phi (a_1))^{p_1}\cdots (\phi (a_n))^{p_n}}$ (because $\forall c\in B$, $\phi (c)=\varphi (c)$)

$=\vartheta (f)(\phi ({{a}_{1}}),\cdots ,\phi ({{a}_{n}}))$ (by the definition of $\vartheta $).

(2) $\Rightarrow$ (1): Considering the case where $f\in B[{{x}_{1}},\cdots ,{{x}_{n}}]$ is a constant polynomial, we can tell from Equation $($\ref{10.5}$)$ and the definition of $\vartheta$ that $\forall c\in B$, $\phi (c)=\varphi (c)$.

Let $a,b\in {{\left\langle U \right\rangle }_{T}}$. Let $g={{x}_{1}}+{{x}_{2}}$ and let $h={{x}_{1}}{{x}_{2}}$. Then

$\phi (a+b)$

$=\phi (g(a,b))$

$=\vartheta (g)(\phi (a),\phi (b))$ (by Equation $($\ref{10.5}$)$)

$=g(\phi (a),\phi (b))$ ($\vartheta (g)=g$ by the definition of $\vartheta $)

$=\phi (a)+\phi (b)$. 

And 

$\phi (ab)$

$=\phi (h(a,b))$

$=\vartheta (h)(\phi (a),\phi (b))$ (by Equation $($\ref{10.5}$)$)

$=h(\phi (a),\phi (b))$ ($\vartheta (h)=h$ by the definition of $\vartheta $)

$=\phi (a)\phi (b)$. 	
\end{proof}
The following is for ring homomorphisms between not necessarily commutative rings.
\begin{proposition} \label{9.4.5}
    Let $A$ and $R$ be rings with identity, let ${{T}_{A}}$ $($resp. ${{T}_{R}}$$)$ be the operator gen-semigroup on $A$ $($resp. $R$$)$ defined as in Example \ref{9.1.4}, and let $\theta =\{({{\tau }_{A}}(f),{{\tau }_{R}}(f))\,|\,f\in {{\mathbb{F}}_{2}}\left\langle {{x}_{1}},\cdots ,{{x}_{n}} \right\rangle ,n\in {{\mathbb{Z}}^{+}}\}$, where ${{\tau }_{A}}$ and ${{\tau }_{R}}$ are the maps inducing ${{T}_{A}}$ and ${{T}_{R}}$, respectively, defined as in Example \ref{9.1.4}. Then a map $\phi :A\to R$ is a ring homomorphism if and only if $\phi $ is a $\theta$-morphism.
\end{proposition}
\begin{proof}
 By Example \ref{9.1.4},
 \begin{center}
     ${{T}_{A}}=\{f^*:{{A}^{n}}\to A$ given by $(a_1,\cdots ,a_n)\mapsto f(a_1,\cdots ,a_n)\,|\,f\in {{\mathbb{F}}_{2}}\left\langle {{x}_{1}},\cdots ,{{x}_{n}} \right\rangle ,n\in {{\mathbb{Z}}^{+}}\}$,
 \end{center}
and
\begin{center}
    ${{T}_{R}}=\{f^*:{{R}^{n}}\to R$ given by $(a_1,\cdots ,a_n)\mapsto f(a_1,\cdots ,a_n)\,|\,f\in {{\mathbb{F}}_{2}}\left\langle {{x}_{1}},\cdots ,{{x}_{n}} \right\rangle ,n\in {{\mathbb{Z}}^{+}}\}$. 
\end{center}

Obviously ${{\left\langle A \right\rangle }_{{{T}_{A}}}}=A$ and ${{\left\langle R \right\rangle }_{{{T}_{R}}}}=R$. Hence $\phi :A\to R$ is $\phi :{{\left\langle A \right\rangle }_{{{T}_{A}}}}\to {{\left\langle R \right\rangle }_{{{T}_{R}}}}$. 

	Let $\mathcal{F}={\mathcal{F}}'=\bigcup\nolimits_{n\in {{\mathbb{Z}}^{+}}}{{{\mathbb{F}}_{2}}\left\langle {{x}_{1}},\cdots ,{{x}_{n}} \right\rangle }$ and let $\vartheta =\operatorname{Id}$. Then Lemma \ref{9.4.3} applies. By Lemma \ref{9.4.3}, we only need to show that $\phi :{{\left\langle A \right\rangle }_{{{T}_{A}}}}\to {{\left\langle R \right\rangle }_{{{T}_{R}}}}$ is a ring homomorphism if and only if statement (ii) in Lemma \ref{9.4.3} is true in this case. Specifically, it suffices to show that the following statements are equivalent:
\begin{enumerate}
    \item $\phi ({{1}_{A}})={{1}_{R}}$ and $\forall a,b\in A$, $\phi (a+b)=\phi (a)+\phi (b)$ and $\phi (ab)=\phi (a)\phi (b)$.
 \item 	$\forall n\in {{\mathbb{Z}}^{+}}$, $(a_1,\cdots ,a_n)\in {{A}^{n}}$ and $f\in {{\mathbb{F}}_{2}}\left\langle {{x}_{1}},\cdots ,{{x}_{n}} \right\rangle $,
 \begin{equation} \label{10.6}
     \phi (f(a_1,\cdots ,a_n))=f(\phi ({{a}_{1}}),\cdots ,\phi ({{a}_{n}})).
 \end{equation}
\end{enumerate}

(1) $\Rightarrow$ (2):  Since $f\in {{\mathbb{F}}_{2}}\left\langle {{x}_{1}},\cdots ,{{x}_{n}} \right\rangle $ is a polynomial, it is not hard to see that statement (2) is true.

(2) $\Rightarrow$ (1):  Considering the case where $f\in {{\mathbb{F}}_{2}}\left\langle {{x}_{1}},\cdots ,{{x}_{n}} \right\rangle $ is the constant polynomial 1, we can tell from Equation $($\ref{10.6}$)$ that $\phi ({{1}_{A}})={{1}_{R}}$. 

Let $a,b\in A$. Let $g={{x}_{1}}+{{x}_{2}}$ and $h={{x}_{1}}{{x}_{2}}$. Then

$\phi (a+b)$

$=\phi (g(a,b))$

$=g(\phi (a),\phi (b))$ (by Equation $($\ref{10.6}$)$)

$=\phi (a)+\phi (b)$. 

And 

$\phi (ab)$

$=\phi (h(a,b))$

$=h(\phi (a),\phi (b))$ (by Equation $($\ref{10.6}$)$)

$=\phi (a)\phi (b)$. 
\end{proof}
Proposition \ref{9.3.5} is generalized to
\begin{proposition} \label{9.4.6}
    Let $M$ and $N$ be modules over a commutative ring $R$ with identity, let ${{T}_{M}}$ $($resp. ${{T}_{N}}$$)$ be the operator gen-semigroup on $M$ $($resp. $N$$)$ defined as in Example \ref{9.1.5}, and let 
    \[\theta =\{({{\tau }_{M}}(f),{{\tau }_{N}}(f))\,|\,f\in \{\sum\nolimits_{i=1}^{n}{{{r}_{i}}{{x}_{i}}}\,|\,{{r}_{i}}\in R\},n\in {{\mathbb{Z}}^{+}}\},\] 
    where ${{\tau }_{M}}$ and ${{\tau }_{N}}$ are the maps inducing ${{T}_{M}}$ and ${{T}_{N}}$, respectively, defined as in Example \ref{9.1.5}. 
    
    If $X$ and $Y$ are nonempty subsets of $M$ and $N$, respectively, then a map $\phi :{{\left\langle X \right\rangle }_{{{T}_{M}}}}\to {{\left\langle Y \right\rangle }_{{{T}_{N}}}}$ is an R-homomorphism of R-modules if and only if $\phi $ is a $\theta$-morphism.
\end{proposition}
\begin{proof}
    Our proof is comparable with that of Proposition \ref{9.3.5}.
    
	By Example \ref{9.1.5}, 
 \begin{center}
     ${{T}_{M}}=\{f^*:{{M}^{n}}\to M$ given by $(a_1,\cdots ,a_n)\mapsto f(a_1,\cdots ,a_n)\,|\,f\in \{\sum\nolimits_{i=1}^{n}{{{r}_{i}}{{x}_{i}}}\,|\,{{r}_{i}}\in R\},n\in {{\mathbb{Z}}^{+}}\}$
 \end{center}
and
\begin{center}
    ${{T}_{N}}=\{f^*:{{N}^{n}}\to N$ given by $(a_1,\cdots ,a_n)\mapsto f(a_1,\cdots ,a_n)\,|\,f\in \{\sum\nolimits_{i=1}^{n}{{{r}_{i}}{{x}_{i}}}\,|\,{{r}_{i}}\in R\},n\in {{\mathbb{Z}}^{+}}\}$.
\end{center}

Clearly, ${{\left\langle X \right\rangle }_{{{T}_{M}}}}$ and ${{\left\langle Y \right\rangle }_{{{T}_{N}}}}$ are the submodules of $M$ and $N$ generated by $X$ and $Y$, respectively. 

	Let $\mathcal{F}={\mathcal{F}}'=\bigcup\nolimits_{n\in {{\mathbb{Z}}^{+}}}{\{\sum\nolimits_{i=1}^{n}{{{r}_{i}}{{x}_{i}}}\,|\,{{r}_{i}}\in R\}}$ and let $\vartheta =\operatorname{Id}$. Then Lemma \ref{9.4.3} applies. By Lemma \ref{9.4.3}, it suffices to show that $\phi :{{\left\langle X \right\rangle }_{{{T}_{M}}}}\to {{\left\langle Y \right\rangle }_{{{T}_{N}}}}$ is an $R$-homomorphism of $R$-modules if and only if statement (ii) in Lemma \ref{9.4.3} is true in this case. Specifically, we only need to show that the following statements are equivalent:
 \begin{enumerate}
     \item $\forall a,b\in {{\left\langle X \right\rangle }_{{{T}_{M}}}}$ and $r\in R$, $\phi (a+b)=\phi (a)+\phi (b)$ and $\phi (ra)=r\phi (a)$.
     \item $\forall n\in {{\mathbb{Z}}^{+}}$, $(a_1,\cdots ,a_n)\in \left\langle X \right\rangle _{{{T}_{M}}}^{n}$ and $f\in \{\sum\nolimits_{i=1}^{n}{{{r}_{i}}{{x}_{i}}}\,|\,{{r}_{i}}\in R\}$,
     \begin{equation} \label{10.7}
         \phi (f(a_1,\cdots ,a_n))=f(\phi ({{a}_{1}}),\cdots ,\phi ({{a}_{n}})).
     \end{equation}
 \end{enumerate}

(1) $\Rightarrow$ (2):  Because $f\in \{\sum\nolimits_{i=1}^{n}{{{r}_{i}}{{x}_{i}}}\,|\,{{r}_{i}}\in R\}$ is a linear polynomial whose constant term is zero, it is not hard to see that statement (2) is true.

(2) $\Rightarrow$ (1):  Let $a,b\in {{\left\langle X \right\rangle }_{{{T}_{M}}}}$ and let $r\in R$. Let $g={{x}_{1}}+{{x}_{2}}$ and let $h=r{{x}_{1}}$. Then

$\phi (a+b)$

$=\phi (g(a,b))$

$=g(\phi (a),\phi (b))$ (by Equation $($\ref{10.7}$)$)

$=\phi (a)+\phi (b)$. 

And 

$\phi (ra)$

$=\phi (h(a))$

$=h(\phi (a))$ (by Equation $($\ref{10.7}$)$)

$=r\phi (a)$.
\end{proof}
Proposition \ref{9.3.6} is generalized as follows.
\begin{proposition} \label{9.4.7}
    Let $G$ and $H$ be abelian groups, let ${{T}_{G}}$ $($resp. ${{T}_{H}}$$)$ be the operator gen-semigroup on $G$ $($resp. $H$$)$ defined as in Example \ref{9.1.6}, and let $\theta =\{({{\tau }_{G}}(f),{{\tau }_{H}}(f))\,|\,f\in \{\prod\nolimits_{i=1}^{n}{x_{i}^{{{p}_{i}}}}\,|\,{{p}_{i}}\in \mathbb{Z}\},n\in {{\mathbb{Z}}^{+}}\}$ , where ${{\tau }_{G}}$ and ${{\tau }_{H}}$ are the maps inducing ${{T}_{G}}$ and ${{T}_{H}}$, respectively, defined as in Example \ref{9.1.6}. 
    
    If $X$ and $Y$ are nonempty subsets of $G$ and $H$ respectively, then a map $\phi :{{\left\langle X \right\rangle }_{{{T}_{G}}}}\to {{\left\langle Y \right\rangle }_{{{T}_{H}}}}$ is a group homomorphism if and only if $\phi $ is a $\theta$-morphism.
\end{proposition}  
\begin{proof}
    By Example \ref{9.1.6}, 
    \begin{center}
        ${{T}_{G}}=\bigcup\nolimits_{n\in {{\mathbb{Z}}^{+}}}{\{f^*:{{G}^{n}}\to G}$ given by $(a_1,\cdots ,a_n)\mapsto f(a_1,\cdots ,a_n)\,|\,f\in \{\prod\nolimits_{i=1}^{n}{x_{i}^{{{p}_{i}}}}\,|\,{{p}_{i}}\in \mathbb{Z}\}$\}
    \end{center}
and
\begin{center}
    ${{T}_{H}}=\bigcup\nolimits_{n\in {{\mathbb{Z}}^{+}}}{\{f^*:{{H}^{n}}\to H}$ given by $(a_1,\cdots ,a_n)\mapsto f(a_1,\cdots ,a_n)\,|\,f\in \{\prod\nolimits_{i=1}^{n}{x_{i}^{{{p}_{i}}}}\,|\,{{p}_{i}}\in \mathbb{Z}\}$\}.
\end{center}

Besides, ${{\left\langle X \right\rangle }_{{{T}_{G}}}}$ and ${{\left\langle Y \right\rangle }_{{{T}_{H}}}}$ are the subgroups of $G$ and $H$ generated by $X$ and $Y$, respectively. 

	Let $\mathcal{F}={\mathcal{F}}'=\bigcup\nolimits_{n\in {{\mathbb{Z}}^{+}}}{\{\prod\nolimits_{i=1}^{n}{x_{i}^{{{p}_{i}}}}\,|\,{{p}_{i}}\in \mathbb{Z}\}}$ and let $\vartheta =\operatorname{Id}$. Then Lemma \ref{9.4.3} applies. By Lemma \ref{9.4.3}, it suffices to show that $\phi :{{\left\langle X \right\rangle }_{{{T}_{G}}}}\to {{\left\langle Y \right\rangle }_{{{T}_{H}}}}$ is a group homomorphism if and only if statement (ii) in Lemma \ref{9.4.3} is true in this case. Specifically, we only need to show that the following statements are equivalent:
 \begin{enumerate}
     \item $\forall a,b\in {{\left\langle X \right\rangle }_{{{T}_{G}}}}$, $\phi (ab)=\phi (a)\phi (b)$.
     \item $\forall n\in {{\mathbb{Z}}^{+}}$, $(a_1,\cdots ,a_n)\in \left\langle X \right\rangle _{{{T}_{G}}}^{n}$ and $f\in \{\prod\nolimits_{i=1}^{n}{x_{i}^{{{p}_{i}}}}\,|\,{{p}_{i}}\in \mathbb{Z}\}$,
\[\phi (f(a_1,\cdots ,a_n))=f(\phi ({{a}_{1}}),\cdots ,\phi ({{a}_{n}})).\]
 \end{enumerate}

It is not hard to see (1)$\Leftrightarrow $(2).	
\end{proof}
	Proposition \ref{9.3.7} is generalized to
 \begin{proposition} \label{9.4.8}
     Let $G$ and $H$ be groups, let ${{T}_{G}}$ $($resp. ${{T}_{H}}$$)$ be the operator gen-semigroup on $G$ $($resp. $H$$)$ defined as in Example \ref{9.1.7}, and let $\theta =\{({{\tau }_{G}}(f),{{\tau }_{H}}(f))\,|\,f\in \left\langle {{x}_{1}},\cdots ,{{x}_{n}},x_{1}^{-1},\cdots ,x_{n}^{-1} \right\rangle ,n\in {{\mathbb{Z}}^{+}}\}$, where ${{\tau }_{G}}$ and ${{\tau }_{H}}$ are the maps inducing ${{T}_{G}}$ and ${{T}_{H}}$, respectively, defined as in Example \ref{9.1.7}. 
     
     If $X$ and $Y$ are nonempty subsets of $G$ and $H$, respectively, then a map $\phi :{{\left\langle X \right\rangle }_{{{T}_{G}}}}\to {{\left\langle Y \right\rangle }_{{{T}_{H}}}}$ is a group homomorphism if and only if $\phi $ is a $\theta $-morphism.
 \end{proposition}
\begin{proof}
    Our proof is comparable with that of Proposition \ref{9.4.7}.
    
	By Example \ref{9.1.7},
 \begin{center}
    $T=\bigcup\nolimits_{n\in {{\mathbb{Z}}^{+}}}{\{f^*:{{G}^{n}}\to G}$ given by $(a_1,\cdots ,a_n)\mapsto f(a_1,\cdots ,a_n)\,|\,f\in \left\langle {{x}_{1}},\cdots ,{{x}_{n}},x_{1}^{-1},\cdots ,x_{n}^{-1} \right\rangle \}$. 
\end{center}
 and
 \begin{center}
    $T=\bigcup\nolimits_{n\in {{\mathbb{Z}}^{+}}}{\{f^*:{{H}^{n}}\to H}$ given by $(a_1,\cdots ,a_n)\mapsto f(a_1,\cdots ,a_n)\,|\,f\in \left\langle {{x}_{1}},\cdots ,{{x}_{n}},x_{1}^{-1},\cdots ,x_{n}^{-1} \right\rangle \}$. 
\end{center}

Clearly, ${{\left\langle X \right\rangle }_{{{T}_{G}}}}$ and ${{\left\langle Y \right\rangle }_{{{T}_{H}}}}$ are the subgroups of $G$ and $H$ generated by $X$ and $Y$, respectively. 

	Let $\mathcal{F}={\mathcal{F}}'=\bigcup\nolimits_{n\in {{\mathbb{Z}}^{+}}}{\left\langle {{x}_{1}},\cdots ,{{x}_{n}},x_{1}^{-1},\cdots ,x_{n}^{-1} \right\rangle }$ and let $\vartheta =\operatorname{Id}$. Then Lemma \ref{9.4.3} applies. By Lemma \ref{9.4.3}, we only need to show that $\phi :{{\left\langle X \right\rangle }_{{{T}_{G}}}}\to {{\left\langle Y \right\rangle }_{{{T}_{H}}}}$ is a group homomorphism if and only if statement (ii) in Lemma \ref{9.4.3} is true in this case. Specifically, it suffices to show that the following statements are equivalent:
 \begin{enumerate}
     \item $\forall a,b\in {{\left\langle X \right\rangle }_{{{T}_{G}}}}$, $\phi (ab)=\phi (a)\phi (b)$.
     \item $\forall n\in {{\mathbb{Z}}^{+}}$, $(a_1,\cdots ,a_n)\in \left\langle X \right\rangle _{{{T}_{G}}}^{n}$ and $f\in \left\langle {{x}_{1}},\cdots ,{{x}_{n}},x_{1}^{-1},\cdots ,x_{n}^{-1} \right\rangle $,
\[\phi (f(a_1,\cdots ,a_n))=f(\phi ({{a}_{1}}),\cdots ,\phi ({{a}_{n}})).\]
 \end{enumerate}
	
It is not hard to see (1)$\Leftrightarrow $(2).
\end{proof}

\subsection{$\theta$-isomorphisms} \label{10 $theta$-isomorphisms}
For $T_1$ and $T_2$ being operator gen-semigroups, Definition \ref{5.4.a} still applies: a $\theta$-morphism from a $T_1$-space $S_1$ to a $T_2$-space is a \emph{$\theta$-isomorphism} if it is bijective. To justify the definition, we need to show Lemma \ref{L6.2.2} and Proposition \ref{5.4.b} for the case of operator gen-semigroups as follows.
\begin{lemma} \label{L10.6.1} \emph{(Lemma \ref{L6.2.2})}
    Let $\phi $ be a $\theta $-isomorphism from a $T_1$-space $S_1$ to a $T_2$-space $S_2$. Then $\theta^{-1} {{|}_{S_1}}$ is a map, where $\theta^{-1}:=\{(g,f)\,|\,(f,g) \in \theta\}$.
\end{lemma}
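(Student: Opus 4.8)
The plan is to follow the same pattern as the original proof of Lemma \ref{L6.2.2}, but with the multivariable bookkeeping that Definition \ref{9.4.2} requires. Concretely, I would take an arbitrary pair $(g,f_1),(g,f_2)\in\theta^{-1}$, so that $(f_1,g),(f_2,g)\in\theta$, and a tuple of elements of $S_1$ at which to test equality. By Proposition \ref{9.4.c}, since $\phi$ is a $\theta$-morphism, all of $f_1$, $f_2$, $g$ have the same number of variables, say $n$; so fix $(a_1,\dots,a_n)\in S_1^n$. To conclude that $\theta^{-1}|_{S_1}$ is a map, by Proposition \ref{9.4.b} (together with Definition \ref{9.4.a}) it suffices to show $f_1(a_1,\dots,a_n)=f_2(a_1,\dots,a_n)$.

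First I would reduce this to a statement about $\phi$: because $\phi$ is a $\theta$-isomorphism it is in particular injective, so it is enough to prove $\phi(f_1(a_1,\dots,a_n))=\phi(f_2(a_1,\dots,a_n))$. Then I would run the short computation that mirrors the unary case: by Definition \ref{9.4.2}, $\phi(f_i(a_1,\dots,a_n))=\theta(f_i)(\phi(a_1),\dots,\phi(a_n))$ for $i=1,2$; and since $(f_1,g),(f_2,g)\in\theta$ and $\theta|_{\operatorname{Im}\phi}$ is a map by Proposition \ref{9.4.c}, both of these equal $g(\phi(a_1),\dots,\phi(a_n))$ (using Definition \ref{9.4.a} to identify $\theta(f_i)(\phi(a_1),\dots,\phi(a_n))$ with $g(\phi(a_1),\dots,\phi(a_n))$). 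Chaining these equalities gives $\phi(f_1(a_1,\dots,a_n))=\phi(f_2(a_1,\dots,a_n))$, hence $f_1(a_1,\dots,a_n)=f_2(a_1,\dots,a_n)$ by injectivity, and therefore $\theta^{-1}|_{S_1}$ is a map.

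The only genuinely non-routine point — and the one I would be careful to state explicitly — is the uniform arity: in the multivariable setting $\theta^{-1}|_{S_1}$ being a map requires not just pointwise agreement of the $f_i$ but also that all second-coordinate functions associated with a fixed $g$ have the same number of variables. This is exactly what Proposition \ref{9.4.c} supplies (applied to $\phi$, which is a $\theta$-morphism), and without it the computation above could not even be set up. Apart from that, the argument is a direct transcription of the proof of Lemma \ref{L6.2.2}, with scalars replaced by tuples and Definitions \ref{5.3.a}, \ref{5.3.b}, \ref{5.3.5}, Proposition \ref{5.3.c} replaced by their multivariable analogues \ref{9.4.a}, \ref{9.4.b}, \ref{9.4.2}, \ref{9.4.c}. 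I do not anticipate any further obstacle.
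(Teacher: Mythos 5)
Your proposal is correct and follows essentially the same route as the paper's proof: fix $(g,f_1),(g,f_2)\in\theta^{-1}$, use Proposition \ref{9.4.c} to equalize arities and to know $\theta|_{\operatorname{Im}\phi}$ is a map, reduce via injectivity of $\phi$ to $\phi(f_1(a_1,\dots,a_n))=\phi(f_2(a_1,\dots,a_n))$, and chain through $g(\phi(a_1),\dots,\phi(a_n))$ using Definition \ref{9.4.2}. No gaps.
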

\begin{proof}
    Let $(g,f_1),(g,f_2)\in \theta^{-1}$ with $g$ of $n$ variables. Then by Corollary \ref{9.4.c}, both $f_1$ and $f_2$ have $n$ variables.
    Let $(a_1, \cdots, a_n) \in S_1^n$. 
    Then to show that $\theta^{-1} {{|}_{S_1}}$ is a map, by Proposition \ref{9.4.b} and Definition \ref{9.4.a}, it suffices to show that
    $f_1(a_1, \cdots, a_n)=f_2(a_1, \cdots, a_n)$. Moreover, since $\phi $ is injective, we only need to show $\phi(f_1(a_1, \cdots, a_n))=\phi(f_2(a_1, \cdots, a_n))$: \\    
     $\phi(f_1(a_1, \cdots, a_n))$\\
     $=\theta(f_1)(\phi (a_1), \cdots, \phi(a_n))$  (by Definition  \ref{9.4.2})\\
    $=g(\phi (a_1), \cdots, \phi(a_n))$  (because $(f_1,g)\in \theta$  and $\theta {|}_{\operatorname{Im}\phi}$  is a map by Corollary \ref{9.4.c})\\ 
    $=\theta(f_2)(\phi (a_1), \cdots, \phi(a_n))$  (because $(f_2,g)\in \theta$  and $\theta {|}_{\operatorname{Im}\phi}$  is a map)\\
    $=\phi(f_2(a_1, \cdots, a_n))$  (by Definition  \ref{9.4.2}).
\end{proof}
\begin{proposition} \emph{(Proposition \ref{5.4.b})}
    Let $S_1$ and $S_2$ be a $T_1$-space and a $T_2$-space, respectively, let $\phi \in \operatorname{Iso}_{\theta}({{S}_{1}},{{S}_{2}})$ and let ${{\phi }^{-1}}$ be the inverse map of $\phi $. Then ${{\phi }^{-1}}\in \operatorname{Iso}_{\theta^{-1}}({{S}_{2}},{{S}_{1}})$.
\end{proposition}
\begin{proof}
       By Definition \ref{9.4.2}, $\forall n\in {{\mathbb{Z}}^{+}}$, $(a_1,\cdots ,a_n)\in {S_1^n}$ and $f\in \operatorname{Dom}\theta $ of $n$ variables,
 \[f(a_1,\cdots ,a_n)=\phi^{-1} (\theta (f)(\phi ({{a}_{1}}),\cdots ,\phi ({{a}_{n}}))).\] 
Hence $\forall n\in {{\mathbb{Z}}^{+}}, f\in \operatorname{Dom}\theta $ of $n$ variables and $ (b_1,\cdots ,b_n)\in {S_2^n}$,
\begin{equation} \label{9.a}
    f({\phi }^{-1}({b}_{1}),\cdots ,{\phi }^{-1}({b}_{n}))=\phi^{-1} (\theta (f)(b_1,\cdots , b_n)).
\end{equation}

 By Lemma \ref{L10.6.1}, $\theta^{-1} {{|}_{S_1}}$ is a map. Then by Proposition \ref{9.4.b}, $\forall g\in \operatorname{Dom}\theta^{-1} (=\operatorname{Im}\theta)$, $\exists n\in {{\mathbb{Z}}^{+}}$ such that $\forall (a_1, \cdots, a_n) \in S_1^n$, $\theta^{-1} (g)(a_1, \cdots, a_n)$ is well-defined, and from (\ref{9.a}), we can tell that the number of variables of $g$ is also $n$.
 
 Thus $\forall n\in {{\mathbb{Z}}^{+}}$, $g\in \operatorname{Dom}\theta^{-1}$ of $n$ variables and $ (b_1,\cdots ,b_n)\in {S_2^n}$, it follows from (\ref{9.a}) that 
   \[\theta^{-1}(g)({\phi }^{-1}({b}_{1}),\cdots ,{\phi }^{-1}({b}_{n}))={{\phi }^{-1}}(g(b_1,\cdots , b_n)).\]

Hence by Definition \ref{9.4.2}, $\phi^{-1} $ is a $\theta^{-1} $-morphism from $S_2$ to $S_1$.

Moreover, since $\phi^{-1} $ is bijective, by Definition \ref{5.4.a}, ${{\phi }^{-1}}\in \operatorname{Iso}_{\theta^{-1}}({{S}_{2}},{{S}_{1}})$.
\end{proof}

For operator gen-semigroups, we will generalize most results obtained in Sections \ref{Basic notions and properties} to \ref{Cons of $T$-mor and theta-mor}. Before doing this in Sections \ref{12 Basic properties} to \ref{II Cons of $T$-mor and}, however, we shall generalize the notion of operator gen-semigroup further in the next section.

\section{Basic notions for the case of (multivariable) partial functions} \label{Basic notions for partial}
In this section, we shall allow elements in $T$ to be partial functions. Then we will find that some more familiar concepts such as ring homomorphisms between fields and covariant functors between categories can be characterized in terms of $T$-morphisms or $\theta$-morphisms.

Subsection \ref{par-operator gen-semigroups} generalizes the notion of operator gen-semigroup. Then correspondingly, the notions of $T$-spaces, $T$-morphisms, $T$-isomorphisms, $\theta$-morphisms and $\theta$-isomorphisms are generalized in Sections \ref{11 $T$-spaces}, \ref{11 $T$-morphisms}, \ref{11 $T$-isomorphisms}, \ref{11 $theta$-morphisms} and \ref{11 $theta$-isomorphisms}, respectively.
\subsection{Partial-operator generalized-semigroup $T$} \label{par-operator gen-semigroups}

Recall the concept of partial functions as follows.
\begin{definition} \label{10.0}
    Let $X$ and $Y$ be sets. A \emph{partial function} $f$ of $n(\in {{\mathbb{Z}}^{+}})$ variables from the cartesian product ${X}^{n}$ to $Y$ is a (total) function from a subset $S$ of ${X}^{n}$ to $Y$, and $S$, which may be empty, is called the \emph{domain of definition} of $f$. 
    
    Moreover, let $(x_1, \cdots, x_m) \in X^m$, where $m\in {{\mathbb{Z}}^{+}}$. If $(x_1, \cdots, x_m)$ lies in the domain of definition of $f$, then $f(x_1, \cdots, x_m)$ is said to be \emph{well-defined}. 
\end{definition}
\begin{remark}
    By the definition, any function is a partial function.
\end{remark}
We are going to extend our study to partial functions. First, we generalize Definition \ref{9.1.1} to
\begin{definition} \label{10.1.1}
    Let $D$ be a set, let $n\in {{\mathbb{Z}}^{+}}$, and let $f$ be a partial function from the cartesian product ${{D}^{n}}$ to $D$. $\forall i=1,\cdots ,n$, let $n_i\in {{\mathbb{Z}}^{+}}$ and let ${{g}_{i}}$ be a partial function from the cartesian product ${{D}^{{{n}_{i}}}}$ to $D$. Let $m = \sum\nolimits_i {{n_i}} $. Then the \emph{composite} $f\circ ({{g}_{1}},\cdots ,{{g}_{n}})$ is the partial function from ${{D}^{m}}$ to $D$ given by 
    \[({{x}_{1}},\cdots ,{{x}_{m}})\mapsto f({{g}_{1}}({{\text{v}}_{1}}),\cdots ,{{g}_{n}}({{\text{v}}_{n}})),\] 
    where ${{\text{v}}_{i}}:=({{x}_{{{m}_{i}}+1}},\cdots , {{x}_{{{m}_{i}}+{{n}_{i}}}})\in {{D}^{{{n}_{i}}}},\forall i=1,\cdots ,n$, ${{m}_{1}}:=0$ and ${{m}_{i}}:=\sum\nolimits_{k=1}^{i-1}{{{n}_{k}}},\forall i=2,\cdots ,n$.
\end{definition}
\begin{remark} \begin{enumerate}
    \item Note that $({{x}_{1}},\cdots ,{{x}_{m}})$ is in the domain of definition of $f\circ ({{g}_{1}},\cdots ,{{g}_{n}})$ if and only if each ${{g}_{i}}({{\text{v}}_{i}})$ is well-defined and $f({{g}_{1}}({{\text{v}}_{1}}),\cdots ,{{g}_{n}}({{\text{v}}_{n}}))$ is well-defined.
    \item We may denote the $n$-tuple $(g,g,\cdots ,g)$ by ${{g}^{n}}$ (in e.g. Section \ref{II Cons of $T$-mor and}) for brevity.
    \end{enumerate}
\end{remark}
Before we generalize Definition \ref{9.1.2}, we need a terminology as follows.
\begin{terminology} \label{10.1.2}
Let $f$ and $g$ be partial functions from $A$ to $B$. If the domain of definition of $f$ contains the domain of definition of $g$, which is denoted by $D(g)$, and $f{{|}_{D(g)}}=g{{|}_{D(g)}}$,  then we say that $g$ is a \emph{restriction} of $f$. 
\end{terminology}
\begin{remark}
    Trivially, if $D(g)=\emptyset $, then $g$ is a restriction of any $f$.
\end{remark}
\begin{definition} \label{10.1.3}
    Let $D$ be a set and let $T$ be a subset of 
    \begin{center}
        \{all partial functions from the cartesian product ${{D}^{n}}$ to $D\, |\,n\in {{\mathbb{Z}}^{+}}$\}. 
    \end{center}
If $\forall n\in {{\mathbb{Z}}^{+}}$, $f\in T$ of $n$ variables, and $({{g}_{1}},\cdots ,{{g}_{n}})\in {{T}^{n}}$ (cartesian product), the composite $f\circ ({{g}_{1}},\cdots ,{{g}_{n}})$ is a restriction of some element of $T$, then we call $T$ a \emph{partial-operator generalized-semigroup}, abbreviated as \emph{par-operator gen-semigroup}, on $D$ and $D$ is called the \emph{domain} of $T$. 
\end{definition}
\begin{remark} \begin{enumerate}
    \item Trivially, $\emptyset $ is a par-operator gen-semigroup.
    \item Obviously, any operator gen-semigroup is a par-operator gen-semigroup.
\end{enumerate}
  \end{remark}
	Definition \ref{generated operator semigroup} is generalized to
 \begin{definition} \label{10.1.4}
     Let $D$ be a set and $G$ be a subset of 
     \begin{center}
       \{all partial functions from the cartesian product ${{D}^{n}}$ to $D\, |\,n\in {{\mathbb{Z}}^{+}}$\}.  
     \end{center}
Then we call the intersection of all par-operator gen-semigroups on $D$ containing $G$ the par-operator gen-semigroup on $D$ \emph{generated by} $G$, and denote it by $\left\langle G \right\rangle $.
 \end{definition}
\begin{remark}
    From Definition \ref{10.1.3}, we can tell that $\left\langle G \right\rangle $ is the smallest par-operator gen-semigroup on $D$ which contains $G$.
\end{remark} 
 \begin{example} \label{10.1.5}
    Let $B$ be a subfield of a field $F$. Let
    \begin{center}
   $T=\{$the partial function $f^*:{{F}^{n}}\to F$ given by $(a_1,\cdots ,a_n)\mapsto f(a_1,\cdots ,a_n)\, |\,f\in \text{Frac}(B[{{x}_{1}},\cdots ,{{x}_{n}}]),n\in {{\mathbb{Z}}^{+}}\}$.      
    \end{center}
Then it is not hard to see that $T$ is a par-operator gen-semigroup on $F$. The map which induces $T$, i.e. $\tau :\bigcup\nolimits_{n\in {{\mathbb{Z}}^{+}}}{\text{Frac}(B[{{x}_{1}},\cdots ,{{x}_{n}}])}\to T$ given by $f\mapsto f^*$, is surjective, but it is not necessarily injective. 
 \end{example}
\begin{example} \label{10.1.6}
    Let $B$ be a differential subfield of a differential field $F$; that is, field $F$ is endowed with a derivation and the derivation of $F$ restricts to the derivation of $B$. Let $T=\left\langle {{T}_{F}}\bigcup {{T}_{\partial }} \right\rangle $ (Definition \ref{10.1.4}), where ${{T}_{F}}$ is the par-operator gen-semigroup on $F$ defined as in Example \ref{10.1.5} and ${{T}_{\partial }}$ is the operator semigroup defined in Example \ref{T on diff ring--1}. Then by Definition \ref{10.1.4}, $T$ is a par-operator gen-semigroup on $F$.
\end{example}
\begin{example} \label{10.1.7}
    For this example, $D$ in Definitions \ref{10.1.1} and \ref{10.1.3} is generalized to be a class. Let $\mathcal{C}$ be a category and let $D$ be the collection of all morphisms in $\mathcal{C}$. $\forall n\in {{\mathbb{Z}}^{+}}$, let 
    \begin{center}
            ${{\mathcal{F}}_{n}}=\{$word $w$ on $\{x_1,\cdots ,x_n\}\,|$ each of $x_1,\cdots ,x_n$ arises in $w$\}.  
    \end{center}
    Let $T=\{$the partial function $f^*:{{D}^{n}}\to D$ given by 
\begin{center}
    $(a_1,\cdots ,a_n)\mapsto f(a_1,\cdots ,a_n)\, |\,f\in {{\mathcal{F}}_{n}},n\in {{\mathbb{Z}}^{+}}\}$,
\end{center}
where $f(a_1,\cdots ,a_n)$ is defined to be the composite ${{a}_{{{i}_{1}}}}\circ \cdots \circ {{a}_{{{i}_{k}}}}$ for $f={{x}_{{{i}_{1}}}}\cdots {{x}_{{{i}_{k}}}}$ (where ${{i}_{1}},\cdots ,{{i}_{k}}\in \{1,\cdots ,n\}$). 

In particular, for $f=x_1\in {{\mathcal{F}}_{1}}$, $f^*$ is the identity function on $D$.

    Thus, for $f\in {{\mathcal{F}}_{n}}$, $(a_1,\cdots ,a_n)\in D^n$ and ${{i}_{1}},\cdots ,{{i}_{k}}\in \{1,\cdots ,n\}$, the following four statements are equivalent:
\begin{enumerate}
    \item [(i)] $(a_1,\cdots ,a_n)$ is in the domain of definition of the $f^*$ induced by $f={{x}_{{{i}_{1}}}}\cdots {{x}_{{{i}_{k}}}}$.
    \item [(ii)] for $f={{x}_{{{i}_{1}}}}\cdots {{x}_{{{i}_{k}}}}$, $f(a_1,\cdots ,a_n)$ is well-defined.
    \item [(iii)] ${{a}_{{{i}_{1}}}}\circ \cdots \circ {{a}_{{{i}_{k}}}}$ is well-defined.
    \item[(iv)] $\forall j=1, \cdots, (k-1)$, the domain (object) of ${{a}_{{{i}_{j}}}}$ is the target (object) of ${{a}_{{{i}_{j+1}}}}$.
\end{enumerate}

Then it is not hard to see that $T$ is a par-operator gen-semigroup on $D$. Let	$\tau :\bigcup\nolimits_{n\in {{\mathbb{Z}}^{+}}}{{{\mathcal{F}}_{n}}}\to T$ be the map inducing $T$ given by $f\mapsto f^*$.
\end{example}

\subsection{$T$-spaces} \label{11 $T$-spaces}
\begin{definition} \label{10.2.1}
Let $T$ be a par-operator gen-semigroup on $D$ and let $U\subseteq D$. 

Then we call 
\begin{center}
    $S:=\{$well-defined $f({{u}_{1}},\cdots ,{{u}_{n}})\, |\,f\in T$, $({{u}_{1}},\cdots ,{{u}_{n}})\in {{U}^{n}}$, $n\in {{\mathbb{Z}}^{+}}\}$ 
\end{center}
the \emph{$T$-space} generated by $U$ and denote $S$ by ${{\left\langle U \right\rangle }_{T}}$.

Moreover, if $B\subseteq S$ is also a $T$-space, then we say that $B$ is a \emph{$T$-subspace} of $S$ and write $B\le S$ or $S\ge B$.
\end{definition}
\begin{example} \label{10.2.2}
 Let $B$ be a subfield of a field $F$. Let $T$ be the par-operator gen-semigroup on $F$ defined in Example \ref{10.1.5}. Then $\forall (\emptyset \ne )U\subseteq F$, we can tell that the $T$-space ${{\left\langle U \right\rangle }_{T}}$ is the field $B(U)$.

\end{example}
\begin{example} \label{10.2.3}
  Let $B$ be a differential subfield of a differential field $F$. Let $T(=\left\langle {{T}_{F}}\bigcup {{T}_{\partial }} \right\rangle )$ be the par-operator gen-semigroup on $F$ defined in Example \ref{10.1.6}. Then we claim that $\forall (\emptyset \ne )U\subseteq F$, the $T$-space ${{\left\langle U \right\rangle }_{T}}$ is the differential subfield of $F$ generated by $U$ over $B$. 

    To show the claim, let $K={{\left\langle U \right\rangle }_{T}}$. Then by Proposition \ref{11.1.1} (to be shown in Section \ref{12 Basic properties}), which generalizes Proposition \ref{<S> contained in S}, ${{\left\langle K \right\rangle }_{T}}\subseteq K$. And ${{\left\langle K \right\rangle }_{{{T}_{F}}}}\subseteq {{\left\langle K \right\rangle }_{T}}$ because ${{T}_{F}}\subseteq T$. Hence ${{\left\langle K \right\rangle }_{{{T}_{F}}}}\subseteq K$. On the other hand, ${{\left\langle K \right\rangle }_{{{T}_{F}}}}\supseteq K$ because $\operatorname{Id}\in {{T}_{F}}$. Therefore ${{\left\langle K \right\rangle }_{{{T}_{F}}}}=K$, and thus by Example \ref{10.2.2}, $K$ is a field. Analogously, we can show ${{\left\langle K \right\rangle }_{{{T}_{\partial }}}}=K$, and hence $K$ is a differential field.
\end{example}
\begin{example} \label{10.2.4}
    Let $\mathcal{C}$ be a category. Let $D$ and $T$ be defined as in Example \ref{10.1.7}. Then the $T$-space ${{\left\langle D \right\rangle }_{T}}=D$ since $\operatorname{Id}\in T$. Because there is a bijection between the objects $A$ in $\mathcal{C}$ and their identity morphisms $1_A$, we may view ${{\left\langle D \right\rangle }_{T}}$ as $\mathcal{C}$. 
\end{example}

\subsection{$T$-morphisms} \label{11 $T$-morphisms}
	Now we generalize Definition \ref{9.3.1} to
\begin{definition} \label{10.3.1}
Let $T$ be a par-operator gen-semigroup. Let $\sigma$ be a map from a $T$-space $S$ to a $T$-space. If $\forall n\in {{\mathbb{Z}}^{+}}$, $(a_1,\cdots ,a_n)\in {{S}^{n}}$ and $f\in T$, neither $f(a_1,\cdots ,a_n)$ nor $f(\sigma ({{a}_{1}}),\cdots , $ $\sigma ({{a}_{n}}))$ is well-defined or 
\[\sigma (f(a_1,\cdots ,a_n))=f(\sigma ({{a}_{1}}),\cdots ,\sigma ({{a}_{n}})),\] 
then we call $\sigma $ a \emph{$T$-morphism}.
\end{definition}
\begin{remark} 
  From Definitions \ref{10.1.3} and \ref{10.2.1} (or from Proposition \ref{11.1.1}), we can tell that if $f(a_1,\cdots ,a_n)$ is well-defined, then $f(a_1,\cdots ,a_n)\in S$, and so $\sigma (f(a_1,\cdots ,a_n))$ is well-defined. 
\end{remark}
	Note that in Examples \ref{10.1.5} and \ref{10.1.7}, par-operator gen-semigroups are induced from rational functions or words on a set. Hence we generalize Definition \ref{9.3.2} as follows.
 \begin{definition} \label{10.3.2}
    Let $f$ be an expression associated with a rule $R$. Let $n \in \mathbb{Z}^+$. 
 
    If all of $x_1,\cdots ,x_n$ arise as symbols in $f$, and there are sets $C$ and $D$ such that, by the rule $R$, $f$ yields nothing or a single element in $C$, which we denote by $f(a_1,\cdots ,a_n)(\in C)$, whenever every $x_1,\cdots ,x_n$ in $f$ is replaced by any $a_1,\cdots ,a_n \in D$, respectively, then we call the couple $(f,R)$ a ($C$-valued) \emph{formal partial function} of $n$ variables $x_1,\cdots ,x_n$ and $D$ is called a \emph{domain} of $(f,R)$. 
 
    Moreover, in the case where $a_1,\cdots ,a_n \in D$ and $f$ yields $f(a_1,\cdots ,a_n)$, we say that $f(a_1,\cdots ,a_n)$ is \emph{well-defined} or $(a_1,\cdots ,a_n)$ is in the \emph{domain of definition} of $f(x_1,\cdots ,x_n)$.
 
    Besides, we may call $f$ a \emph{formal partial function} if its associated rule $R$ is clear from the context.
 \end{definition}
 \begin{remark}
     Analogous to a formal function, a formal partial function may have more than one domain. And the notion of formal partial function generalizes the notion of partial function.
 \end{remark}
When par-operator gen-semigroups are induced from formal partial functions, the following lemma, which generalizes Lemma \ref{9.3.3}, will be useful. 
\begin{lemma} \label{10.3.3}
   Let $\mathcal{F}$ be a set of $D$-valued formal partial functions on a domain $D$ and let $T$ be a par-operator gen-semigroup on $D$ induced by $\mathcal{F}$; that is, there is a surjective map $\tau :\mathcal{F}\to T$ being given by $f\mapsto f^*$, where $f^*:{{D}^{n}}\to D$ is given by $(a_1,\cdots ,a_n)\mapsto f(a_1,\cdots ,a_n)$ for $f$ of $n$ variables, $\forall n\in {{\mathbb{Z}}^{+}}$. Let $\sigma $ be a map from a $T$-space $S$ to a $T$-space. Then the following statements are equivalent:
\begin{enumerate}
    \item [(i)] $\sigma $ is a $T$-morphism.
    \item [(ii)] $\forall n\in {{\mathbb{Z}}^{+}}$, $(a_1,\cdots ,a_n)\in {{S}^{n}}$ and $f\in \mathcal{F}$ of $n$ variables, neither $f(a_1,\cdots ,a_n)$ nor $f(\sigma ({{a}_{1}}),\cdots ,\sigma ({{a}_{n}}))$ is well-defined or
    \begin{equation} \label{11.1}
        \sigma (f(a_1,\cdots ,a_n))=f(\sigma ({{a}_{1}}),\cdots ,\sigma ({{a}_{n}})).
    \end{equation}
\end{enumerate}
\end{lemma}
\begin{proof}
(i) $\Rightarrow$ (ii): By Definition \ref{10.3.1}, $\forall n\in {{\mathbb{Z}}^{+}}$, $(a_1,\cdots ,a_n)\in {{S}^{n}}$ and $f\in \mathcal{F}$ of $n$ variables, neither $\tau (f)(a_1,\cdots ,a_n)$ nor $\tau (f)(\sigma ({{a}_{1}}),\cdots ,\sigma ({{a}_{n}}))$ is well-defined or $\sigma (\tau (f)(a_1,\cdots ,a_n))=\tau (f)(\sigma ({{a}_{1}}),\cdots ,\sigma ({{a}_{n}}))$.
	
 Then by the definition of $\tau $, (ii) must be true.

(ii) $\Rightarrow$ (i): 
Since $\tau $ is surjective, $\forall f^*\in T$, $\exists f\in \mathcal{F}$ such that $f^*=\tau (f)$. Then from statement (ii) and the definition of $\tau $, we can tell that $\forall n\in {{\mathbb{Z}}^{+}}$, $(a_1,\cdots ,a_n)\in {{S}^{n}}$ and $f^*\in T$ of $n$ variables, neither $f^*(a_1,\cdots ,a_n)$ nor $f^*(\sigma ({{a}_{1}}),\cdots ,\sigma ({{a}_{n}}))$ is well-defined or
\begin{center}
    $\sigma (f^*(a_1,\cdots ,a_n))=f^*(\sigma ({{a}_{1}}),\cdots ,\sigma ({{a}_{n}}))$.
\end{center}
     So by Definition \ref{10.3.1}, $\sigma $ must be a $T$-morphism.
\end{proof}
	Lemma \ref{10.3.3} characterizes a $T$-morphism by formal partial functions which induce $T$. When a par-operator gen-semigroup is induced from formal partial functions, the criterion for a $T$-morphism by Lemma \ref{10.3.3} is normally more convenient than that by Definition \ref{10.3.1}. 
	
 The following is comparable with Proposition \ref{9.3.4}, except that now $T$ is defined by Example \ref{10.1.5}, and hence ${{\left\langle U \right\rangle }_{T}}$ and ${{\left\langle V \right\rangle }_{T}}$ must be fields. 
\begin{proposition} \label{10.3.4}
Let $B$ be a subfield of a field $F$, let $T$ be the par-operator gen-semigroup on $F$ defined in Example \ref{10.1.5}, and let nonempty $U,V\subseteq F$. Then a map $\sigma :{{\left\langle U \right\rangle }_{T}}\to {{\left\langle V \right\rangle }_{T}}$ is a ring homomorphism with $B$ fixed pointwisely if and only if it is a $T$-morphism.
\end{proposition}
\begin{proof}
    By the definition of $T$ in Example \ref{10.1.5}, 
\begin{center}
    $T=\{$the partial function $f^*:{{F}^{n}}\to F$ given by $(a_1,\cdots ,a_n)\mapsto f(a_1,\cdots ,a_n)\, |\,f\in \text{Frac}(B[{{x}_{1}},\cdots ,{{x}_{n}}]),n\in {{\mathbb{Z}}^{+}}\}$.
\end{center}

Then ${{\left\langle U \right\rangle }_{T}}$ and ${{\left\langle V \right\rangle }_{T}}$ are the fields $B(U)$ and $B(V)$, respectively. Let $\mathcal{F}=\{f\in \text{Frac}(B[{{x}_{1}},\cdots ,{{x}_{n}}])\ |\ n\in {{\mathbb{Z}}^{+}}\}$. Then Lemma \ref{10.3.3} applies. Hence it suffices to show that $\sigma :{{\left\langle U \right\rangle }_{T}}\to {{\left\langle V \right\rangle }_{T}}$ is a ring homomorphism with $B$ fixed pointwisely if and only if statement (ii) in Lemma \ref{10.3.3} is true in this case. Specifically, we only need to show that the following statements are equivalent:
\begin{enumerate}
    \item $\forall c\in B$, $\sigma (c)=c$, and $\forall a,b\in {{\left\langle U \right\rangle }_{T}}(=B(U))$,
$\sigma (a+b)=\sigma (a)+\sigma (b)$ and $\sigma (ab)=\sigma (a)\sigma (b)$.
\item $\forall n\in {{\mathbb{Z}}^{+}}$, $(a_1,\cdots ,a_n)\in \left\langle U \right\rangle _{T}^{n}$ and $f\in \text{Frac}(B[{{x}_{1}},\cdots ,{{x}_{n}}])$, neither
$f(a_1,\cdots ,a_n)$ nor $f(\sigma ({{a}_{1}}),\cdots ,\sigma ({{a}_{n}}))$ is well-defined or
\begin{equation} \label{11.2}
    \sigma (f(a_1,\cdots ,a_n))=f(\sigma ({{a}_{1}}),\cdots ,\sigma ({{a}_{n}})). 
\end{equation}
\end{enumerate}
(1) $\Rightarrow$ (2): Let $f({{x}_{1}},\cdots ,{{x}_{n}})=g({{x}_{1}},\cdots ,{{x}_{n}})/h({{x}_{1}},\cdots ,{{x}_{n}})$, where $n\in {{\mathbb{Z}}^{+}}$ and $h,g\in B[{{x}_{1}},\cdots ,{{x}_{n}}]$. Let $(a_1,\cdots ,a_n)\in \left\langle U \right\rangle _{T}^{n}$. Then

$f(a_1,\cdots ,a_n)$ is well-defined;

$\Leftrightarrow h(a_1,\cdots ,a_n)\ne 0$; 

$\Leftrightarrow \sigma (h(a_1,\cdots ,a_n))\ne 0$ (by statement (1)); 

$\Leftrightarrow f(\sigma ({{a}_{1}}),\cdots ,\sigma ({{a}_{n}}))$ 

\hspace{0.4cm}$=g(\sigma ({{a}_{1}}),\cdots ,\sigma ({{a}_{n}}))/h(\sigma ({{a}_{1}}),\cdots ,\sigma ({{a}_{n}}))$ 

\hspace{0.4cm}$=\sigma (g(a_1,\cdots ,a_n))/\sigma (h(a_1,\cdots ,a_n))$ (because $h,g\in B[{{x}_{1}},\cdots ,{{x}_{n}}]$ and statement (1) is true) 

\hspace{0.4cm}is well-defined.

Moreover, in the case where any of the above equivalent conditions is true, 

$\sigma (f(a_1,\cdots ,a_n))$

$=\sigma (g(a_1,\cdots ,a_n)/h(a_1,\cdots ,a_n))$

$=\sigma (g(a_1,\cdots ,a_n))/\sigma (h(a_1,\cdots ,a_n))$.

Combining the above four equations, we have
\[\sigma (f(a_1,\cdots ,a_n))=f(\sigma ({{a}_{1}}),\cdots ,\sigma ({{a}_{n}})).\] 

(2) $\Rightarrow$ (1): 
Considering the case where $f\in \text{Frac}(B[{{x}_{1}},\cdots ,{{x}_{n}}])$ is a constant polynomial, we can tell from statement (2) that $\forall c\in B$, $\sigma (c)=c$. 

Let $a,b\in {{\left\langle U \right\rangle }_{T}}$. Let $g={{x}_{1}}+{{x}_{2}}$ and $h={{x}_{1}}{{x}_{2}}$.

Then 

$\sigma (a+b)$

$=\sigma (g(a,b))$

$=g(\sigma (a),\sigma (b))$ (by Equation $($\ref{11.2}$)$)

$=\sigma (a)+\sigma (b)$. 

And 

$\sigma (ab)$

$=\sigma (h(a,b))$

$=h(\sigma (a),\sigma (b))$ (by Equation $($\ref{11.2}$)$)

$=\sigma (a)\sigma (b)$.
\end{proof}
However, $\sigma $ in Proposition \ref{10.3.4} is between field extensions over the same field $B$. We will deal with ring homomorphisms between field extensions over different fields in Subsection \ref{11 $theta$-morphisms}.

To characterize differential ring homomorphisms between differential fields by $T$-morphisms, we need a lemma as follows.
\begin{lemma} \label{10.3.5}
    Let $T$ be a par-operator gen-semigroup, let $\sigma$ be a map from a $T$-space $S$ to a $T$-space, let $f\in T$ have $n$ variables, and $\forall i=1,\cdots,n$, let ${{g}_{i}}\in T$ have ${{n}_{i}}$ variables. Suppose that both of the following two conditions are satisfied,
    \begin{enumerate}
    \item [(i)]	$\forall (a_1,\cdots ,a_n)\in {{S}^{n}}$, neither $f(a_1,\cdots ,a_n)$ nor $f(\sigma ({{a}_{1}}),\cdots ,\sigma ({{a}_{n}}))$ is well-defined or $\sigma (f(a_1,\cdots ,a_n))=f(\sigma ({{a}_{1}}),\cdots ,\sigma ({{a}_{n}}))$;
    \item [(ii)] $\forall i=1,\cdots,n$ and $({{a}_{1}},\cdots ,{{a}_{{{n}_{i}}}})\in {{S}^{{{n}_{i}}}}$,\\
    neither ${{g}_{i}}({{a}_{1}},\cdots ,{{a}_{{{n}_{i}}}})$ nor ${{g}_{i}}(\sigma ({{a}_{1}}),\cdots ,\sigma ({{a}_{{{n}_{i}}}}))$ is well-defined or
    \[\sigma ({{g}_{i}}({{a}_{1}},\cdots ,{{a}_{{{n}_{i}}}}))={{g}_{i}}(\sigma ({{a}_{1}}),\cdots ,\sigma ({{a}_{{{n}_{i}}}})).\] 
    \end{enumerate}
    Then $\forall ({{a}_{1}},\cdots ,{{a}_{m}})\in {{S}^{m}}$, where $m=\sum\nolimits_{i}{{{n}_{i}}}$,\\  
      neither $(f\circ ({{g}_{1}},\cdots ,{{g}_{n}}))({{a}_{1}},\cdots ,{{a}_{m}})$ nor $(f\circ ({{g}_{1}},\cdots ,{{g}_{n}}))(\sigma ({{a}_{1}}),\cdots ,\sigma ({{a}_{m}}))$ is well-defined or
\[\sigma ((f\circ ({{g}_{1}},\cdots ,{{g}_{n}}))({{a}_{1}},\cdots ,{{a}_{m}}))=(f\circ ({{g}_{1}},\cdots ,{{g}_{n}}))(\sigma ({{a}_{1}}),\cdots ,\sigma ({{a}_{m}})).\]
\end{lemma}
\begin{proof}
    Let $({{a}_{1}},\cdots ,{{a}_{m}})\in {{S}^{m}}$, where $m=\sum\nolimits_{i}{{{n}_{i}}}$. Then
    
$(f\circ ({{g}_{1}},\cdots ,{{g}_{n}}))({{a}_{1}},\cdots ,{{a}_{m}})$ is well-defined;

$\Leftrightarrow f({{g}_{1}}({{\text{u}}_{1}}),\cdots ,{{g}_{n}}({{\text{u}}_{n}}))$ is well-defined, where ${{\text{u}}_{i}}=({{a}_{{{m}_{i}}+1}},\cdots ,{{a}_{{{m}_{i}}+{{n}_{i}}}})$, $\forall i=1,\cdots,n$, ${{m}_{1}}=0$, and ${{m}_{i}}=\sum\nolimits_{k=1}^{i-1}{{{n}_{k}}},\forall i=2, \cdots, n$; (by Definition \ref{10.1.1})

$\Leftrightarrow f(\sigma ({{g}_{1}}({{\text{u}}_{1}})),\cdots ,\sigma ({{g}_{n}}({{\text{u}}_{n}})))$ is well-defined; (by condition (i) and the fact that each ${{g}_{i}}({{\text{u}}_{i}})\in S$ (\textit{cf.} Proposition \ref{11.1.1} in Section \ref{12 Basic properties}))

$\Leftrightarrow f({{g}_{1}}(\sigma ({{\text{u}}_{1}})),\cdots ,{{g}_{n}}(\sigma ({{\text{u}}_{n}})))$ is well-defined, where $\forall i=1,\cdots,n$, $\sigma \text{(}{{\text{u}}_{i}}):=(\sigma \text{(}{{a}_{{{m}_{i}}+1}}),\cdots ,\sigma \text{(}{{a}_{{{m}_{i}}+{{n}_{i}}}}))$; (by condition (ii))

$\Leftrightarrow (f\circ ({{g}_{1}},\cdots ,{{g}_{n}}))(\sigma ({{a}_{1}}),\cdots ,\sigma ({{a}_{m}}))$ is well-defined. (by Definition \ref{10.1.1})

And in the case where any of the above five equivalent conditions is true,

$(f\circ ({{g}_{1}},\cdots ,{{g}_{n}}))({{a}_{1}},\cdots ,{{a}_{m}})\in S$ (by Proposition \ref{11.1.1}) 

and

$\sigma ((f\circ ({{g}_{1}},\cdots ,{{g}_{n}}))({{a}_{1}},\cdots ,{{a}_{m}}))$ 

$=\sigma (f({{g}_{1}}({{\text{u}}_{1}}),\cdots ,{{g}_{n}}({{\text{u}}_{n}})))$(by Definition \ref{10.1.1})

$=f(\sigma ({{g}_{1}}({{\text{u}}_{1}})),\cdots ,\sigma ({{g}_{n}}({{\text{u}}_{n}})))$ (by (i) and the fact that each ${{g}_{i}}({{\text{u}}_{i}})\in S$ (by Proposition \ref{11.1.1}))

$=f({{g}_{1}}(\sigma ({{\text{u}}_{1}})),\cdots ,{{g}_{n}}(\sigma ({{\text{u}}_{n}})))$ (by condition (ii))

$=(f\circ ({{g}_{1}},\cdots ,{{g}_{n}}))(\sigma ({{a}_{1}}),\cdots ,\sigma ({{a}_{m}}))$ (by Definition \ref{10.1.1})
\end{proof}
\begin{proposition} \label{10.3.6}
    Let $B$ be a differential subfield of a differential field $F$, let $T$ be the par-operator gen-semigroup on $F$ defined in Example \ref{10.1.6}, and let nonempty $U,V\subseteq F$. Then a map $\sigma :{{\left\langle U \right\rangle }_{T}}\to {{\left\langle V \right\rangle }_{T}}$ is a differential ring homomorphism with $B$ fixed pointwisely if and only if it is a $T$-morphism.
\end{proposition}
\begin{proof}
    Let $A={{\left\langle U \right\rangle }_{T}}$ and $R={{\left\langle V \right\rangle }_{T}}$. By Example \ref{10.2.3}, $A$ and $R$ are the differential subfields of $F$ generated by $U$ and $V$ over $B$, respectively.
    
    Recall that a ring homomorphism $\sigma :A\to R$ is a differential homomorphism if and only if $\sigma $ commutes with $\partial $. Then by Definition \ref{10.3.1}, we only need to show that the following statements are equivalent:
\begin{enumerate}
    \item $\sigma $ is a ring homomorphism with $B$ fixed pointwisely and $\forall a\in A$, $\sigma (\partial (a))=\partial (\sigma (a))$. 
     \item $\forall n\in {{\mathbb{Z}}^{+}}$, $(a_1,\cdots ,a_n)\in {{A}^{n}}$ and $f\in T$ of $n$ variables, neither $f(a_1,\cdots ,a_n)$ nor $f(\sigma ({{a}_{1}}),\cdots ,\sigma ({{a}_{n}}))$ is well-defined or
\[\sigma (f(a_1,\cdots ,a_n))=f(\sigma ({{a}_{1}}),\cdots ,\sigma ({{a}_{n}})).\]
\end{enumerate}
(1) $\Rightarrow$ (2): 
Let ${{T}_{F}}$ be the par-operator gen-semigroup on $F$ defined as in Example \ref{10.1.5} and let ${{T}_{\partial }}$ be the operator semigroup defined in Example \ref{T on diff ring--1}. 

Since $\sigma $ is a ring homomorphism with $B$ fixed pointwisely, by Proposition \ref{10.3.4}, $\sigma $ is a ${{T}_{F}}$-morphism. By Definition \ref{10.3.1}, $\forall n\in {{\mathbb{Z}}^{+}}$, $(a_1,\cdots ,a_n)\in {{A}^{n}}$, and $g\in {{T}_{F}}$ of $n$ variables, neither $g(a_1,\cdots ,a_n)$ nor $g(\sigma ({{a}_{1}}),\cdots ,\sigma ({{a}_{n}}))$ is well-defined or $\sigma (g(a_1,\cdots ,a_n))=g(\sigma ({{a}_{1}}),\cdots ,\sigma ({{a}_{n}}))$. Moreover, since $\sigma (\partial (a))=\partial (\sigma (a)),\forall a\in A$, we can tell that $\forall g\in {{T}_{\partial }}=\{{{\partial }^{n}}\,|\,n\in {{\mathbb{N}}_{0}}\}$ and $a\in A$, $\sigma (g(a))=g(\sigma (a))$. 

Therefore, $\forall n\in {{\mathbb{Z}}^{+}}$, $(a_1,\cdots ,a_n)\in {{A}^{n}}$, and $g\in {{T}_{F}}\bigcup {{T}_{\partial }}$ of $n$ variables, neither $g(a_1,\cdots ,a_n)$ nor $g(\sigma ({{a}_{1}}),\cdots ,\sigma ({{a}_{n}}))$ is well-defined or $\sigma (g(a_1,\cdots ,a_n))=g(\sigma ({{a}_{1}}),\cdots ,\sigma ({{a}_{n}}))$.

Let $f\in {{T}_{F}}\bigcup {{T}_{\partial }}$ have $n$ variables, let ${{g}_{i}}\in {{T}_{F}}\bigcup {{T}_{\partial }}$ have ${{n}_{i}}$ variables, $\forall i=1, \cdots, n$, and let $({{a}_{1}},\cdots ,{{a}_{m}})\in {{A}^{m}}$, where $m=\sum\nolimits_{i}{{{n}_{i}}}$. By the statement in the preceding paragraph, Lemma \ref{10.3.5} applies (with $T:= \left\langle {{T}_{F}}\bigcup {{T}_{\partial }} \right\rangle $). It follows that neither $(f\circ ({{g}_{1}},\cdots ,{{g}_{n}}))({{a}_{1}},\cdots ,{{a}_{m}})$ nor $(f\circ ({{g}_{1}},\cdots ,{{g}_{n}}))(\sigma ({{a}_{1}}),\cdots ,\sigma ({{a}_{m}}))$ is well-defined or
\[\sigma ((f\circ ({{g}_{1}},\cdots ,{{g}_{n}}))({{a}_{1}},\cdots ,{{a}_{m}}))=(f\circ ({{g}_{1}},\cdots ,{{g}_{n}}))(\sigma ({{a}_{1}}),\cdots ,\sigma ({{a}_{m}})).\]

Moreover, since $T=\left\langle {{T}_{F}}\bigcup {{T}_{\partial }} \right\rangle $, from Definitions \ref{10.1.3} and \ref{10.1.4}, we can tell that the above $f\circ ({{g}_{1}},\cdots ,{{g}_{n}})$ is a restriction of some element of $T$, and $\forall h\in T$, $h$ can be obtained by a finite number of compositions (defined by Definition \ref{10.1.1}) of elements of ${{T}_{F}}\bigcup {{T}_{\partial }}$ (or elements which “originally come from” ${{T}_{F}}\bigcup {{T}_{\partial }}$). Thus, by induction and application of Lemma \ref{10.3.5} as in the preceding paragraph, we can show that $\forall n\in {{\mathbb{Z}}^{+}}$, $h\in T$ of $n$ variables and $(a_1,\cdots ,a_n)\in {{A}^{n}}$, neither $h(a_1,\cdots ,a_n)$ nor $h(\sigma ({{a}_{1}}),\cdots ,\sigma ({{a}_{n}}))$ is well-defined or
\[\sigma (h(a_1,\cdots ,a_n))=h(\sigma ({{a}_{1}}),\cdots ,\sigma ({{a}_{n}})).\]

(2) $\Rightarrow$ (1): 
As shown in Example \ref{10.2.3}, ${{\left\langle A \right\rangle }_{{{T}_{F}}}}=A$. Thus $A$ is also a ${{T}_{F}}$-space. Then because ${{T}_{F}}\subseteq T$, by Definition \ref{10.3.1}, $\sigma $ is a ${{T}_{F}}$-morphism. Hence by Proposition \ref{10.3.4}, $\sigma $ is a ring homomorphism with $B$ fixed pointwisely. Moreover, if let $f=\partial (\in T)$, then by statement (2), $\sigma (\partial (a))=\partial (\sigma (a)),\forall a\in A$.	
\end{proof}

\subsection{$T$-isomorphisms} \label{11 $T$-isomorphisms}
To par-operator gen-semigroups, Definition \ref{$T$-isomorphisms} still applies: if a $T$-morphism is bijective, then we call it a \emph{$T$-isomorphism}. To justify the definition, we need to show Proposition \ref{1.3.a} with a different proof as follows.
 \begin{proposition} \label{11.2.a}\emph{(Proposition \ref{1.3.a})}
    Let $\sigma$ be a $T$-isomorphism from a $T$-space ${{S}_{1}}$ to a $T$-space ${{S}_{2}}$ and let ${{\sigma }^{-1}}$ be the inverse map of $\sigma $. Then ${{\sigma }^{-1}}\in \operatorname{Iso}_{T}({{S}_{2}},{{S}_{1}})$.
\end{proposition}
\begin{proof}
    By Definition \ref{10.3.1}, $\forall n\in {{\mathbb{Z}}^{+}}$, $f\in T$ of $n$ variables and $(a_1,\cdots ,a_n)\in {S_1^n}$, neither $f(a_1,\cdots ,a_n)$ nor $f(\sigma ({{a}_{1}}),\cdots , $ $\sigma ({{a}_{n}}))$ is well-defined or \[f(a_1,\cdots ,a_n)={\sigma }^{-1}(f(\sigma ({{a}_{1}}),\cdots ,\sigma ({{a}_{n}}))).\] Hence $\forall n\in {{\mathbb{Z}}^{+}}$, $f\in T$ of $n$ variables and $(b_1,\cdots ,b_n)\in {S_2^n}$, neither $f({b}_{1},\cdots ,{b}_{n})$ nor $f({\sigma }^{-1}({b}_{1}),\cdots ,{\sigma }^{-1}({b}_{n}))$ is well-defined or 
    \[f({\sigma }^{-1}({b}_{1}),\cdots ,{\sigma }^{-1}({b}_{n}))={\sigma }^{-1}(f(b_1,\cdots ,b_n)).\] 
    Then by Definition \ref{10.3.1}, ${\sigma }^{-1}$ is a $T$-morphism from $S_2$ to $S_1$, and hence ${{\sigma }^{-1}}\in \operatorname{Iso}_{T}({{S}_{2}},{{S}_{1}})$ by Definition \ref{$T$-isomorphisms}.
\end{proof}

\subsection{$\theta$-morphisms} \label{11 $theta$-morphisms}
We first generalize Notation \ref{9.4.1} to
\begin{notation} \label{10.4.1}
    Let $D_1$ (resp. $D_2$) be a set, let $M_1$ (resp. $M_2$) be a subset of 
    \{all partial functions from the cartesian product $D_{1}^{n}$ to ${D_1}\, |\,n\in {{\mathbb{Z}}^{+}}$\}
      (resp. a subset of  \{all partial functions from $D_{2}^{n}$ to ${{D}_{2}}\, |\,n\in {{\mathbb{Z}}^{+}}$\}),
     let $\theta \subseteq {{M}_{1}}\times {{M}_{2}}$, and let $A\subseteq {{D}_{2}}$. We denote by $\theta {{|}_{A}}$ the set $\{(f,g{{|}_{A}})\,|\,(f,g)\in \theta \}$, where $g{{|}_{A}}$ denotes the partial function obtained by restricting every variable of $g$ to $A$.
\end{notation}	
Then for the case of partial functions, Proposition \ref{9.4.b} needs to be modified as follows.
\begin{proposition} \label{10.4.b}
  Let $\theta$ and $A$ be defined as in Notation \ref{10.4.1}. Then the following statements are equivalent: \begin{enumerate}
        \item [(i)] $\theta {{|}_{A}}$ is a map.
        \item [(ii)] $\forall(f,g_1),(f,g_2)\in \theta$, $g_{1}{{|}_{A}}=g_2{{|}_{A}}$.
         \item [(iii)] $\forall(f,g_1),(f,g_2)\in \theta$, $n\in {{\mathbb{Z}}^{+}}$ and $(a_1, \cdots, a_n) \in A^n$, $g_{1}(a_1, \cdots, a_n)=g_2(a_1, \cdots, a_n)$ or neither $g_{1}(a_1, \cdots, a_n)$ nor $g_2(a_1, \cdots, a_n)$ is well-defined.
    \end{enumerate}
\end{proposition}
\begin{proof}
    Obvious.
\end{proof}
Definitions \ref{5.3.a} and \ref{9.4.a} are generalized to
\begin{definition} \label{10.4.a}
    Let $\theta$ and $A$ be defined as in Notation \ref{10.4.1}. Let $f\in \operatorname{Dom}\theta $, $n\in {{\mathbb{Z}}^{+}}$ and $(a_1, \cdots, a_n) \in A^n$. If $g_{1}(a_1, \cdots, a_n)=g_2(a_1, \cdots, a_n)$, $\forall(f,g_1),(f,g_2)\in \theta$, then we say that $\theta (f)(a_1, \cdots, a_n)$ is \emph{well-defined} and let $\theta (f)(a_1, \cdots, a_n)=g_{1}(a_1, \cdots, a_n)$. 
\end{definition}
\begin{remark}
    By Convention \ref{convention}, 
    \[g_{1}(a_1, \cdots, a_n)=g_2(a_1, \cdots, a_n), \forall(f,g_1),(f,g_2)\in \theta\]
    implies that $\forall(f,g)\in \theta$, $g$ has $n$ variables and $g(a_1, \cdots, a_n)$ is well-defined.
\end{remark}
Then by Proposition \ref{10.4.b}, we immediately get
\begin{corollary} \label{11.5.4} 
    Let $\theta$ and $A$ be defined as in Notation \ref{10.4.1}. Suppose that $\theta {{|}_{A}}$ is a map. Let $(f,g)\in \theta$. If $\exists n\in {{\mathbb{Z}}^{+}}$ and $(a_1, \cdots, a_n) \in A^n$ such that $g(a_1, \cdots, a_n)$ is well-defined, then $\theta(f)(a_1, \cdots, a_n)$ is well-defined and \[\theta(f)(a_1, \cdots, a_n)=g(a_1, \cdots, a_n).\]
\end{corollary}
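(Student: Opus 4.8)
The plan is to obtain the conclusion directly from the dichotomy recorded in part (iii) of Proposition \ref{10.4.b}, together with Definition \ref{10.4.a}. Since $\theta|_A$ is assumed to be a map, Proposition \ref{10.4.b} tells us that for every pair $(f,g_1),(f,g_2)\in\theta$, every $n\in\mathbb{Z}^+$, and every $(a_1,\cdots,a_n)\in A^n$, either $g_1(a_1,\cdots,a_n)=g_2(a_1,\cdots,a_n)$, or neither $g_1(a_1,\cdots,a_n)$ nor $g_2(a_1,\cdots,a_n)$ is well-defined. This dichotomy is the only external fact I expect to need.

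First I would fix the given $(f,g)\in\theta$ and the $n$ and $(a_1,\cdots,a_n)\in A^n$ for which $g(a_1,\cdots,a_n)$ is well-defined. Then, taking an arbitrary partner $(f,g')\in\theta$, I would apply the dichotomy to the pair $(f,g),(f,g')$: the alternative ``neither is well-defined'' is impossible, since $g(a_1,\cdots,a_n)$ is well-defined by hypothesis, so the other alternative must hold, i.e.\ $g'(a_1,\cdots,a_n)=g(a_1,\cdots,a_n)$, and in particular $g'(a_1,\cdots,a_n)$ is well-defined.

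Consequently, for any two pairs $(f,g_1),(f,g_2)\in\theta$ we obtain $g_1(a_1,\cdots,a_n)=g(a_1,\cdots,a_n)=g_2(a_1,\cdots,a_n)$, which is precisely the hypothesis of Definition \ref{10.4.a}. Hence $\theta(f)(a_1,\cdots,a_n)$ is well-defined and equals $g_1(a_1,\cdots,a_n)$; specializing $g_1$ to the originally given $g$ yields $\theta(f)(a_1,\cdots,a_n)=g(a_1,\cdots,a_n)$, as asserted.

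I do not anticipate any real obstacle here: the statement is a routine unwinding of the definitions. The only point that warrants a moment's care — and the step I would flag in the write-up — is the observation that ``$g(a_1,\cdots,a_n)$ is well-defined'' eliminates the ``neither well-defined'' branch of the dichotomy, which is what forces agreement of all partners of $f$ in $\theta$ at the point $(a_1,\cdots,a_n)$ and thereby makes $\theta(f)(a_1,\cdots,a_n)$ well-defined in the sense of Definition \ref{10.4.a}.
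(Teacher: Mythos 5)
Your proof is correct, and it is exactly the route the paper takes: the paper derives the corollary "immediately" from Proposition \ref{10.4.b}, and your write-up is simply the explicit unwinding of that step (the dichotomy in (iii), with the well-definedness of $g(a_1,\cdots,a_n)$ excluding the "neither well-defined" branch, followed by Definition \ref{10.4.a}). No gaps.
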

Definition \ref{9.4.2} is generalized to
\begin{definition} \label{10.4.2}
    Let $T_1$ and $T_2$ be par-operator gen-semigroups and let $\theta \subseteq {{T}_{1}}\times {{T}_{2}}$. Let $\phi $ be a map from a $T_1$-space $S$ to a $T_2$-space. If 
    \begin{enumerate}
        \item [(i)] $\theta {{|}_{\operatorname{Im}\phi }}$ is a map and
         \item [(ii)] $\forall n\in {{\mathbb{Z}}^{+}}$, $(a_1,\cdots ,a_n)\in {{S}^{n}}$ and $f\in \operatorname{Dom}\theta $, neither $f(a_1,\cdots ,a_n)$ nor $\theta (f)(\phi ({{a}_{1}}),\cdots ,\phi ({{a}_{n}}))$ is well-defined or
    \[\phi (f(a_1,\cdots ,a_n))=\theta (f)(\phi ({{a}_{1}}),\cdots ,\phi ({{a}_{n}})),\]
    \end{enumerate}
    then $\phi $ is called a \emph{$\theta$-morphism}.
\end{definition}
\begin{remark}
     Suppose that $f(a_1,\cdots ,a_n)$ is well-defined, where $(a_1,\cdots ,a_n) \in S^n$ and $f\in \operatorname{Dom}\theta $. Then by condition (ii) in the definition and Convention \ref{convention}, $\theta (f)(\phi ({{a}_{1}}),\cdots ,\phi ({{a}_{n}}))$ is also well-defined. Hence $\forall (f,g)\in \theta $, by Definition \ref{10.4.a}, $g$ has $n$ variables.
\end{remark}
The following explains why, unlike Definition \ref{9.4.2}, we put condition (i) in Definition \ref{10.4.2} explicitly.
\begin{proposition} \label{10.4.c}
    Let $\theta$, $S$ and $\phi$ be given as in Definition \ref{10.4.2}. Suppose that condition \emph{(ii)} in Definition \ref{10.4.2} is satisfied. Then $\theta {{|}_{\operatorname{Im}\phi }}$ is possibly not a map.
 \end{proposition}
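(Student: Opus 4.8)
The claim is a negative existence statement — it asserts that condition (ii) in Definition \ref{10.4.2} does \emph{not} force $\theta|_{\operatorname{Im}\phi}$ to be a map — so the plan is to exhibit a concrete counterexample rather than to prove anything in generality. First I would choose the smallest possible ingredients: partial-operator generalized-semigroups $T_1$ and $T_2$ on tiny finite sets $D_1$ and $D_2$, a $T_1$-space $S$, a map $\phi$ from $S$ to a $T_2$-space, and a relation $\theta\subseteq T_1\times T_2$. The key observation to exploit is that partial functions can be \emph{nowhere defined} (their domain of definition may be empty, as the remark after Definition \ref{10.0} allows). If $(f,g_1),(f,g_2)\in\theta$ are such that $g_1$ and $g_2$ are both nowhere defined on $\operatorname{Im}\phi$ but differ elsewhere (or even differ by having different arities formally but agreeing — being empty — on $\operatorname{Im}\phi$), then $\theta|_{\operatorname{Im}\phi}$ pairs $f$ with two genuinely different restricted partial functions, hence is not a map, while condition (ii) can still hold vacuously because the relevant compositions are never well-defined.

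Concretely, I would try something like: let $D_1=\{a\}$ with $T_1$ containing only a unary partial function $f$ that is nowhere defined on $D_1$ (so $\langle\{a\}\rangle_{T_1}$ forces a careful choice — perhaps take $S=\{a\}$ generated so that $a$ itself lies in $S$, or simply take $S=\emptyset$ if that is cleaner). Let $D_2=\{b,c\}$, and let $T_2$ contain two unary partial functions $g_1,g_2$ with $g_1$ nowhere defined and $g_2$ defined only at $c$ (say $g_2(c)=b$), with $\operatorname{Im}\phi\subseteq\{b\}$ so that $g_2$ restricted to $\operatorname{Im}\phi$ is also nowhere defined, hence equals $g_1|_{\operatorname{Im}\phi}$ as partial functions — wait, that would make $\theta|_{\operatorname{Im}\phi}$ a map. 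So instead I need $g_2|_{\operatorname{Im}\phi}\neq g_1|_{\operatorname{Im}\phi}$: arrange $\operatorname{Im}\phi=\{c\}$, $g_1$ nowhere defined, $g_2(c)=b$, and $\theta=\{(f,g_1),(f,g_2)\}$. Then $\theta|_{\operatorname{Im}\phi}$ contains $(f,g_1|_{\{c\}})$ and $(f,g_2|_{\{c\}})$ which are different, so not a map. For condition (ii) to hold I must ensure that for every $n$, every $(x_1,\dots,x_n)\in S^n$, and the single generator $f\in\operatorname{Dom}\theta$, the value $f(x_1,\dots,x_n)$ is never well-defined (choose $f$ nowhere defined on $S$) — then the "neither is well-defined" clause triggers vacuously and (ii) holds.

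The steps, in order, would be: (1) fix $D_1,D_2$ and verify $T_1,T_2$ are partial-operator generalized-semigroups (closure under composition is automatic when every composite is a restriction of something already present — easiest when the relevant composites are themselves nowhere defined, so they are restrictions of any element, cf.\ the remark after Terminology \ref{10.1.2}); (2) identify the $T_1$-space $S$ and the $T_2$-space containing $\operatorname{Im}\phi$ via Definition \ref{10.2.1}, and check $\phi$ maps into a genuine $T_2$-space; (3) verify condition (ii) of Definition \ref{10.4.2} holds, vacuously, because $f$ is nowhere defined on tuples from $S$; (4) compute $\theta|_{\operatorname{Im}\phi}$ and observe it contains two distinct pairs with the same first coordinate, so it is not a map.

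The main obstacle I anticipate is the bookkeeping in step (2): making sure the sets I declare really \emph{are} $T$-spaces in the sense of Definition \ref{10.2.1} (i.e., genuinely of the form $\langle U\rangle_T$ for some $U$, using only well-defined values), and simultaneously that $\operatorname{Im}\phi$ sits inside a $T_2$-space on which $g_2$ actually does something, while $f$ does nothing on $S$. There is a tension — I want $f$ dead on $S$ but $g_2$ alive on $\operatorname{Im}\phi$ — and since $\operatorname{Im}\phi$ lives in the codomain $T_2$-space, not in $S$, these constraints are independent and the tension is resolvable; still, it will take a little care to write down a fully explicit example where every definition is literally satisfied. If the cleanest route turns out to be $S=\emptyset$ (so $\phi$ is the empty map, $\operatorname{Im}\phi=\emptyset$), I should double-check whether $\theta|_\emptyset$ can still fail to be a map — it cannot, since every partial function restricted to $\emptyset$ is the empty partial function — so I will instead need $\operatorname{Im}\phi$ nonempty, which is why the example above takes $\operatorname{Im}\phi=\{c\}$ with $S$ a one-element space on which the single generator $f$ is undefined.
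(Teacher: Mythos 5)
Your strategy -- exhibit an explicit counterexample in which the relevant partial functions are undefined often enough that condition (ii) holds vacuously while $g_1|_{\operatorname{Im}\phi}\ne g_2|_{\operatorname{Im}\phi}$ -- is sound, and it rests on exactly the mechanism the paper uses: the paper's own proof does not build an example at all, but merely observes (via Proposition \ref{10.4.b}) that condition (ii) cannot rule out the situation where, for some $(f,g_1),(f,g_2)\in\theta$ and some tuple from $S$, only one of $g_1(\phi(a_1),\cdots,\phi(a_n))$ and $g_2(\phi(a_1),\cdots,\phi(a_n))$ is well-defined, in which case $\theta(f)(\phi(a_1),\cdots,\phi(a_n))$ fails to be well-defined (Definition \ref{10.4.a}) and (ii) can still hold provided $f(a_1,\cdots,a_n)$ is not well-defined either. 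So your route is, if anything, more concrete than the paper's; once the example is pinned down it proves the proposition rather than just making it plausible.

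However, the instance you sketch does not yet satisfy the definitions, and the failure is precisely at your step (2), in a place you argued away too quickly. First, if $T_1=\{f\}$ with $f$ nowhere defined, then $\langle U\rangle_{T_1}=\emptyset$ for every $U\subseteq D_1$ (Definition \ref{10.2.1} collects only well-defined values), so $S=\{a\}$ is not a $T_1$-space; you must enlarge $T_1$, e.g.\ to $\{\operatorname{Id}_{D_1},f\}$, which is still closed in the sense of Definition \ref{10.1.3} since every nowhere-defined composite is a restriction of any element. Second, and more importantly, with $T_2=\{g_1,g_2\}$, $g_1$ nowhere defined and $g_2(c)=b$, the element $c$ is never a well-defined value of anything in $T_2$, so no $T_2$-space contains $c$; hence $\phi$ cannot have $\operatorname{Im}\phi=\{c\}$ while mapping into a $T_2$-space. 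The tension you dismissed as ``independent and resolvable'' is resolvable, but not with the $T_2$ you wrote down: you need, say, $T_2=\{\operatorname{Id}_{D_2},g_1,g_2\}$, so that $\langle\{c\}\rangle_{T_2}=\{b,c\}\supseteq\operatorname{Im}\phi=\{c\}$. With these two repairs (take $D_1=\{a\}$, $T_1=\{\operatorname{Id}_{D_1},f\}$ with $f$ undefined at $a$, $S=\langle\{a\}\rangle_{T_1}=\{a\}$; $D_2=\{b,c\}$, $T_2=\{\operatorname{Id}_{D_2},g_1,g_2\}$, $\phi(a)=c$, $\theta=\{(f,g_1),(f,g_2)\}$), condition (ii) holds vacuously exactly as you intend, while $(f,g_1|_{\{c\}})$ and $(f,g_2|_{\{c\}})$ are distinct members of $\theta|_{\operatorname{Im}\phi}$, so it is not a map. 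Without those repairs the example collapses, so make them explicit.
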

\begin{proof} 
By Proposition \ref{10.4.b}, $\theta {{|}_{\operatorname{Im}\phi }}$ is a map if and only if $\forall(f,g_1),(f,g_2)\in \theta$, $n\in {{\mathbb{Z}}^{+}}$ and $(a_1,\cdots ,a_n)\in {{S}^{n}}$, $g_1(\phi ({{a}_{1}}),\cdots ,\phi ({{a}_{n}}))=g_2(\phi ({{a}_{1}}),\cdots ,\phi ({{a}_{n}}))$ or neither $g_1(\phi ({{a}_{1}}),\cdots ,\phi ({{a}_{n}}))$ nor $g_2(\phi ({{a}_{1}}),\cdots ,\phi ({{a}_{n}}))$ is well-defined. We can tell that this condition for $\theta {{|}_{\operatorname{Im}\phi }}$ being a map is not guaranteed by condition (ii) in Definition \ref{10.4.2}. For example, condition (ii) in Definition \ref{10.4.2} cannot rule out the possibility where there exist $(f,g_1),(f,g_2)\in \theta$, $n\in {{\mathbb{Z}}^{+}}$ and $(a_1,\cdots ,a_n)\in {{S}^{n}}$ such that only one of $g_1(\phi ({{a}_{1}}),\cdots ,\phi ({{a}_{n}}))$ and $g_2(\phi ({{a}_{1}}),\cdots ,\phi ({{a}_{n}}))$ is well-defined (because in this case, it is possible that neither $f(a_1,\cdots ,a_n)$ nor $\theta (f)(\phi ({{a}_{1}}),\cdots ,\phi ({{a}_{n}}))$ is well-defined).
 \end{proof}

When par-operator gen-semigroups are induced from formal partial functions, the following lemma, which generalizes both Lemmas \ref{9.4.3} and \ref{10.3.3}, will be useful. 
\begin{lemma}\label{10.4.3}
    Let $\mathcal{F}$ $($resp. ${\mathcal{F}}'$$)$ be a set of D-valued $($resp. ${D}'$-valued$)$ formal partial functions on $D$ $($resp. ${D}'$$)$. Let $T$ $($resp. ${T}'$$)$ be a par-operator gen-semigroup on $D$ $($resp. ${D}'$$)$ such that there is a map $\tau :\mathcal{F}\to T$ being given by $f\mapsto f^*$, where $f^*:{{D}^{n}}\to D$ is given by $(a_1,\cdots ,a_n)\mapsto f(a_1,\cdots ,a_n)$ for $f$ of $n$ variables, $\forall n\in {{\mathbb{Z}}^{+}}$ $($resp. such that there is a map ${\tau }':{\mathcal{F}}'\to {T}'$ being given analogously$)$. Let $\vartheta :\mathcal{F}\to {\mathcal{F}}'$ be a map. Let $\theta =\{(\tau (f),{\tau }'(\vartheta (f)))\,|\,f\in \mathcal{F}\}$ and let $\phi $ be a map from a $T$-space $S$ to a ${T}'$-space. Then the following statements are equivalent:
\begin{enumerate}
    \item [(i)] $\phi $ is a $\theta$-morphism.
    \item[(ii)]	$\forall n\in {{\mathbb{Z}}^{+}}$, $(a_1,\cdots ,a_n)\in {{S}^{n}}$ and $f\in \mathcal{F}$ of $n$ variables, neither $f(a_1,\cdots ,a_n)$ nor $\vartheta (f)(\phi ({{a}_{1}}),\cdots ,\phi ({{a}_{n}}))$ is well-defined or 
\begin{equation} \label{11.3}
  \phi (f(a_1,\cdots ,a_n))=\vartheta (f)(\phi ({{a}_{1}}),\cdots ,\phi ({{a}_{n}})).  
\end{equation}
\end{enumerate}
\end{lemma}
\begin{proof}
Clearly, $\theta \subseteq T \times T'$.

(i) $\Rightarrow$ (ii): 
By Definition \ref{10.4.2}, $\theta {{|}_{\operatorname{Im}\phi }}$ is a map and $\forall n\in {{\mathbb{Z}}^{+}}$, $(a_1,\cdots ,a_n)\in {{S}^{n}}$ and $f\in \mathcal{F}$ of $n$ variables, neither $\tau (f)(a_1,\cdots ,a_n)$ nor $\theta (\tau (f))(\phi ({{a}_{\text{1}}}),\cdots ,\phi ({{a}_{n}}))$ is well-defined or 
 \[\phi (\tau (f)(a_1,\cdots ,a_n))=\theta (\tau (f))(\phi ({{a}_{\text{1}}}),\cdots ,\phi ({{a}_{n}})).\] 

Let $n\in {{\mathbb{Z}}^{+}}$, let $(a_1,\cdots ,a_n)\in {{S}^{n}}$ and let $f\in \mathcal{F}$ have $n$ variables. Then

$f(a_1,\cdots ,a_n)$ is well-defined;

$\Leftrightarrow \tau (f)(a_1,\cdots ,a_n)$ is well-defined; 

$\Leftrightarrow \theta (\tau (f))(\phi ({{a}_{\text{1}}}),\cdots ,\phi ({{a}_{n}}))$ is well-defined;

$\Leftrightarrow {\tau }'(\vartheta (f))(\phi ({{a}_{1}}),\cdots ,\phi ({{a}_{n}}))$ is well-defined (because $(\tau (f),{\tau }'(\vartheta (f)))\in \theta $ and $\theta {{|}_{\operatorname{Im}\phi }}$ is a map); 

$\Leftrightarrow \vartheta (f)(\phi ({{a}_{1}}),\cdots ,\phi ({{a}_{n}}))$ is well-defined. 

And in the case where the above equivalent conditions are true,

$\phi (f(a_1,\cdots ,a_n))$

$=\phi (\tau (f)(a_1,\cdots ,a_n))$  

$=\theta (\tau (f))(\phi ({{a}_{\text{1}}}),\cdots ,\phi ({{a}_{n}}))$ 

$={\tau }'(\vartheta (f))(\phi ({{a}_{\text{1}}}),\cdots ,\phi ({{a}_{n}}))$ (because $(\tau (f),{\tau }'(\vartheta (f)))\in \theta $ and $\theta {{|}_{\operatorname{Im}\phi }}$ is a map)

$=\vartheta (f)(\phi ({{a}_{\text{1}}}),\cdots ,\phi ({{a}_{n}}))$, as desired.

(ii) $\Rightarrow$ (i): 
By Definition \ref{10.4.2}, to show that $\phi $ is a $\theta$-morphism, we first need to show that $\theta {{|}_{\operatorname{Im}\phi }}$ is a map. 

Let $(h,g{{|}_{\operatorname{Im}\phi }}),(h,{g}'{{|}_{\operatorname{Im}\phi }})\in \theta {{|}_{\operatorname{Im}\phi }}$ with $(h,g),(h,{g}')\in \theta $. By the definition of $\theta $, $\exists l,m\in \mathcal{F}$ such that $(h,g)=(\tau (l),{\tau }'(\vartheta (l)))$ and $(h,{g}')=(\tau (m),{\tau }'(\vartheta (m)))$, and thus $\tau (l)=\tau (m)$. Hence $l$ and $m$ have the same number of variables, assumed $n$. Then by Equation (\ref{11.3}) and Convention \ref{convention}, $\vartheta (l)$ also has $n$ variables if $l$ is well-defined somewhere in $S^n$. Analogously, $\vartheta (m)$ also has $n$ variables if $m$ is well-defined somewhere in $S^n$.

Assume that $g{{|}_{\operatorname{Im}\phi }}\ne {g}'{{|}_{\operatorname{Im}\phi }}$; that is, ${\tau }'(\vartheta (l)){{|}_{\operatorname{Im}\phi }}\ne {\tau }'(\vartheta (m)){{|}_{\operatorname{Im}\phi }}$. Then $\exists k \in {{\mathbb{Z}}^{+}}$ and $({{z}_{1}},\cdots ,{{z}_{k}})\in {{S}^{k}}$ such that only one of $\vartheta (l)(\phi ({{z}_{1}}),\cdots ,\phi ({{z}_{k}}))$ and $\vartheta (m)(\phi ({{z}_{1}}),\cdots ,\phi ({{z}_{k}}))$ is well-defined or both of them are well-defined but
\[\vartheta (l)(\phi ({{z}_{1}}),\cdots ,\phi ({{z}_{k}}))\ne \vartheta (m)(\phi ({{z}_{1}}),\cdots ,\phi ({{z}_{k}})).\] 
By statement (ii), if $\vartheta (l)(\phi ({{z}_{1}}),\cdots ,\phi ({{z}_{k}}))$ (resp. $\vartheta (m)(\phi ({{z}_{1}}),\cdots ,\phi ({{z}_{k}}))$) is well-defined, then $l(z_1,\cdots ,z_k)$ (resp. $m(z_1,\cdots ,z_k))$) is also well-defined. Hence $k=n$.

However, $\tau (l)=\tau (m)$, and hence neither $l({{z}_{1}},\cdots ,{{z}_{n}})$ nor $m({{z}_{1}},\cdots ,{{z}_{n}})$ is well-defined or both of them are well-defined and $l({{z}_{1}},\cdots ,{{z}_{n}})=m({{z}_{1}},\cdots ,{{z}_{n}})$. It follows from (ii) that neither $\vartheta (l)(\phi ({{z}_{1}}),\cdots ,\phi ({{z}_{n}}))$ nor $\vartheta (m)(\phi ({{z}_{1}}),\cdots ,\phi ({{z}_{n}}))$ is well-defined or both of them are well-defined and
\[\vartheta (l)(\phi ({{z}_{1}}),\cdots ,\phi ({{z}_{n}}))=\vartheta (m)(\phi ({{z}_{1}}),\cdots ,\phi ({{z}_{n}})),\] 
a contradiction. 

Hence $\theta {{|}_{\operatorname{Im}\phi }}$ must be a map. 

Moreover, by statement (ii), $\forall n\in {{\mathbb{Z}}^{+}}$, $(a_1,\cdots ,a_n)\in {{S}^{n}}$ and $f\in \mathcal{F} $ of $n$ variables, there are only two cases as follows.

\emph{Case} (1): Neither $f(a_1,\cdots ,a_n))$ nor $\vartheta (f)(\phi ({{a}_{1}}),\cdots ,\phi ({{a}_{n}}))$ is well-defined, and hence neither $\tau (f)(a_1,\cdots ,a_n)$ nor $\tau'(\vartheta (f))(\phi ({{a}_{1}}),\cdots ,\phi ({{a}_{n}}))$ is well-defined. Because $(\tau (f),{\tau }'(\vartheta (f)))\in \theta $, by Definition \ref{10.4.a}, neither $\tau (f)(a_1,\cdots ,a_n)$ nor $\theta (\tau (f))(\phi ({{a}_{1}}),\cdots ,\phi ({{a}_{n}}))$ is well-defined.

\emph{Case} (2):

$\phi (\tau(f)(a_1,\cdots ,a_n))$

$=\phi (f(a_1,\cdots ,a_n))$

$=\vartheta (f)(\phi ({{a}_{1}}),\cdots ,\phi ({{a}_{n}}))$ (by Equation (\ref{11.3}))

$={\tau }'(\vartheta (f))(\phi ({{a}_{1}}),\cdots ,\phi ({{a}_{n}}))$

$=\theta (\tau (f))(\phi ({{a}_{1}}),\cdots ,\phi ({{a}_{n}}))$ (because $(\tau (f),{\tau }'(\vartheta (f)))\in \theta $ and $\theta {{|}_{\operatorname{Im}\phi }}$ is a map).

Therefore, $\forall n\in {{\mathbb{Z}}^{+}}$, $(a_1,\cdots ,a_n)\in {{S}^{n}}$ and $f\in \mathcal{F} $ of $n$ variables, neither $\tau (f)(a_1,\cdots ,a_n)$ nor $\theta (\tau (f))(\phi ({{a}_{1}}),\cdots ,\phi ({{a}_{n}}))$ is well-defined or
\[\phi (\tau(f)(a_1,\cdots ,a_n))=\theta (\tau (f))(\phi ({{a}_{1}}),\cdots ,\phi ({{a}_{n}})).\]
Since $\operatorname{Dom}\theta=\{\tau(f)\,|\,f\in \mathcal{F}\}$, by Definition \ref{10.4.2}, $\phi $ is a $\theta$-morphism.
\end{proof}
	Lemma \ref{10.4.3} characterizes a $\theta$-morphism by formal partial functions. When par-operator gen-semigroups are induced from formal partial functions, the criterion for a $\theta$-morphism by Lemma \ref{10.4.3} is more convenient than that by Definition \ref{10.4.2}. 

The following, which generalizes Proposition \ref{10.3.4}, is analogous to Proposition \ref{9.4.4}, except that now $T$ is defined by Example \ref{10.1.5}, and hence ${{\left\langle U \right\rangle }_{T}}$ and ${{\left\langle V \right\rangle }_{T}}$ must be fields. 
\begin{proposition} \label{10.4.4}
    Let $F/B$ (resp. ${F}'/{B}'$) be a field extension, let a par-operator gen-semigroup $T$ $($resp. ${T}'$$)$ be defined as in Example \ref{10.1.5} on $F$ $($resp. ${F}'$$)$ over $B$ $($resp. ${B}'$$)$, let $\varphi :B\to {B}'$ be a field isomorphism, and let 
    \[\vartheta :\bigcup\nolimits_{n\in {{\mathbb{Z}}^{+}}}{\emph{Frac}(B[{{x}_{1}},\cdots ,{{x}_{n}}])}\to \bigcup\nolimits_{n\in {{\mathbb{Z}}^{+}}}{\emph{Frac}({B}'[{{x}_{1}},\cdots ,{{x}_{n}}])}\]
    be the map given by $f\mapsto {f}'$ where
\begin{center}
    $f({{x}_{1}},\cdots ,{{x}_{n}})=(\sum{{c_{p_1,\cdots ,p_n}}x_{1}^{{{p}_{1}}}\cdots x_{n}^{{{p}_{n}}}})/(\sum{{c'_{p_1,\cdots ,p_n}}x_{1}^{{{p}_{1}}}\cdots x_{n}^{{{p}_{n}}}})$ 
 \end{center}
    and
 \begin{center}
 ${f}'({{x}_{1}},\cdots ,{{x}_{n}})=(\sum{\varphi ({c_{p_1,\cdots ,p_n}})x_{1}^{{{p}_{1}}}\cdots x_{n}^{{{p}_{n}}}})/(\sum{\varphi ({c'_{p_1,\cdots ,p_n}})x_{1}^{{{p}_{1}}}\cdots x_{n}^{{{p}_{n}}}})$.
 \end{center}
Let $\theta =\{(\tau (f),{\tau }'(\vartheta (f))\,|\,f\in \emph{Frac}(B[{{x}_{1}},\cdots ,{{x}_{n}}]),n\in {{\mathbb{Z}}^{+}}\}$, where $\tau $ and ${\tau }'$ are the maps inducing $T$ and ${T}'$, respectively, defined as in Example \ref{10.1.5}. 

Then $\forall (\emptyset \ne )U\subseteq F$ and $(\emptyset \ne )V\subseteq {F}'$, a map $\phi :{{\left\langle U \right\rangle }_{T}}\to {{\left\langle V \right\rangle }_{{{T}'}}}$ is a ring homomorphism extending $\varphi $ if and only if $\phi $ is a $\theta$-morphism.
\end{proposition}
\begin{proof}
    By the definition of $T$ in Example \ref{10.1.5}, 
    \begin{center}
        $T=\{$the partial function $f^*:{{F}^{n}}\to F$ given by
$(a_1,\cdots ,a_n)\mapsto f(a_1,\cdots ,a_n)\, |\,f\in \text{Frac}(B[{{x}_{1}},\cdots ,{{x}_{n}}]),n\in {{\mathbb{Z}}^{+}}\}$. 
    \end{center}
and
\begin{center}
    ${T}'=\{$the partial function $f^*:{{{F}'}^{n}}\to {F}'$ given by
$(a_1,\cdots ,a_n)\mapsto f(a_1,\cdots ,a_n)\, |\,f\in \text{Frac}({B}'[{{x}_{1}},\cdots ,{{x}_{n}}]),n\in {{\mathbb{Z}}^{+}}\}$. 
\end{center}

Thus ${{\left\langle U \right\rangle }_{T}}$ and ${{\left\langle V \right\rangle }_{{{T}'}}}$ are the fields $B(U)$ and ${B}'(V)$, respectively. 

	Let $\mathcal{F}=\bigcup\nolimits_{n\in {{\mathbb{Z}}^{+}}}{\text{Frac}(B[{{x}_{1}},\cdots ,{{x}_{n}}])}$ and ${\mathcal{F}}'=\bigcup\nolimits_{n\in {{\mathbb{Z}}^{+}}}{\text{Frac}({B}'[{{x}_{1}},\cdots ,{{x}_{n}}])}$. Then Lemma \ref{10.4.3} applies. Hence it suffices to show that $\phi :{{\left\langle U \right\rangle }_{T}}\to {{\left\langle V \right\rangle }_{{{T}'}}}$ is a ring homomorphism extending $\varphi $ if and only if statement (ii) in Lemma \ref{10.4.3} is true in this case. Specifically, it is sufficient to show that the following statements are equivalent:

\begin{enumerate}
    \item $\forall c\in B$, $\phi (c)=\varphi (c)$, and $\forall a,b\in {{\left\langle U \right\rangle }_{T}}$, $\phi (a+b)=\phi (a)+\phi (b)$ and $\phi (ab)=\phi (a)\phi (b)$.
\item $\forall n\in {{\mathbb{Z}}^{+}}$, $(a_1,\cdots ,a_n)\in \left\langle U \right\rangle _{T}^{n}$ and $f\in \text{Frac}(B[{{x}_{1}},\cdots ,{{x}_{n}}])$, neither $f(a_1,\cdots ,a_n)$ nor $\vartheta (f)(\phi ({{a}_{1}}),\cdots ,\phi ({{a}_{n}}))$ is well-defined or
\begin{equation} \label{11.4}
    \phi (f(a_1,\cdots ,a_n))=\vartheta (f)(\phi ({{a}_{1}}),\cdots ,\phi ({{a}_{n}})).
\end{equation}
\end{enumerate}
(1) $\Rightarrow$ (2): 
Let $f({{x}_{1}},\cdots ,{{x}_{n}})=g({{x}_{1}},\cdots ,{{x}_{n}})/h({{x}_{1}},\cdots ,{{x}_{n}})$, where $n\in {\mathbb{Z}}^{+}$ and $h,g\in B[{{x}_{1}},\cdots ,{{x}_{n}}]$. Let $(a_1,\cdots ,a_n)\in \left\langle U \right\rangle _{T}^{n}$. Then

$f(a_1,\cdots ,a_n)$ is well-defined; 

$\Leftrightarrow h(a_1,\cdots ,a_n)\ne 0$; 

$\Leftrightarrow \vartheta (h)(a_1,\cdots ,a_n)\ne 0$ (by the definition of $\vartheta $); 

$\Leftrightarrow \vartheta (f)(\phi ({{a}_{1}}),\cdots ,\phi ({{a}_{n}}))$ 

\hspace{0.5cm}$=\vartheta (g)(\phi ({{a}_{1}}),\cdots ,\phi ({{a}_{n}}))/\vartheta (h)(\phi ({{a}_{1}}),\cdots ,\phi ({{a}_{n}}))$ (by the definition of $\vartheta $) 

\hspace{0.5cm}is well-defined.

Moreover, in the case where the above equivalent conditions are true, 

$\phi (f(a_1,\cdots ,a_n))$ 

$=\phi (\sum{{c_{p_1,\cdots ,p_n}}a_{1}^{{{p}_{1}}}\cdots a_{n}^{{{p}_{n}}}}/\sum{{c'_{p_1,\cdots ,p_n}}a_{1}^{{{p}_{1}}}\cdots a_{n}^{{{p}_{n}}}})$ 

$=\sum{\phi ({c_{p_1,\cdots ,p_n}})(\phi (a_1))^{p_1}\cdots (\phi (a_n))^{p_n}}/\sum{\phi ({c'_{p_1,\cdots ,p_n}})(\phi (a_1))^{p_1}\cdots (\phi (a_n))^{p_n}}$ (because statement (1) is true)

$=\sum{\varphi ({c_{p_1,\cdots ,p_n}})(\phi (a_1))^{p_1}\cdots (\phi (a_n))^{p_n}}/\sum{\varphi ({c'_{p_1,\cdots ,p_n}})(\phi (a_1))^{p_1}\cdots (\phi (a_n))^{p_n}}$ (because $\forall c\in B$, $\phi (c)=\varphi (c)$)

$=\vartheta (f)(\phi ({{a}_{1}}),\cdots ,\phi ({{a}_{n}}))$ (by the definition of $\vartheta $),

as desired.

(2) $\Rightarrow$ (1): 
Considering the case where $f\in \text{Frac}(B[{{x}_{1}},\cdots ,{{x}_{n}}])$ is a constant polynomial, we can tell from Equation $($\ref{11.4}$)$ and the definition of $\vartheta$ that $\forall c\in B$, $\phi (c)=\varphi (c)$. 

Let $a,b\in {{\left\langle U \right\rangle }_{T}}$. Let $g={{x}_{1}}+{{x}_{2}}$ and $h={{x}_{1}}{{x}_{2}}$.

Then 

$\phi (a+b)$

$=\phi (g(a,b))$

$=\vartheta (g)(\phi (a),\phi (b))$ (by Equation $($\ref{11.4}$)$)

$=g(\phi (a),\phi (b))$ ($\vartheta (g)=g$ by the definition of $\vartheta $)

$=\phi (a)+\phi (b)$. 

And 

$\phi (ab)$

$=\phi (h(a,b))$

$=\vartheta (h)(\phi (a),\phi (b))$ (by Equation $($\ref{11.4}$)$)

$=h(\phi (a),\phi (b))$ ($\vartheta (h)=h$ by the definition of $\vartheta $)

$=\phi (a)\phi (b)$.
\end{proof}
	We are going to show that a covariant functor can be characterized by a $\theta$-morphism. Because in every category, there is a bijection between objects $A$ and their identity morphisms $1_A$, we view $D$ as $\mathcal{C}$ and ${D}'$ as ${\mathcal{C}}'$ in the following. 
\begin{proposition} \label{10.4.5}
    Let $\mathcal{C}$ (resp. ${\mathcal{C}}'$) be a category, let $D$ $($resp. ${D}'$$)$ be the collection of all morphisms in $\mathcal{C}$ $($resp. ${\mathcal{C}}'$$)$, and let ${{\mathcal{F}}_{n}}$, $\forall n\in {{\mathbb{Z}}^{+}}$, and the par-operator gen-semigroup $T$ $($resp. ${T}'$$)$ on $D$ $($resp. ${D}'$$)$ be defined as in Example \ref{10.1.7}. Let $\theta =\{(\tau (f),{\tau }'(f)\,|\,f\in \bigcup\nolimits_{n\in {{\mathbb{Z}}^{+}}}{{{\mathcal{F}}_{n}}}\}$, where $\tau $ and ${\tau }'$ are the maps inducing $T$ and ${T}'$, respectively, defined as in Example \ref{10.1.7}. 
    
    Suppose that a function $\phi :D\to {D}'$ maps each identity morphism in $D$ to an identity morphism in ${D}'$. Then $\phi $ is a covariant functor if and only if it is a $\theta$-morphism.
\end{proposition}
\begin{proof}
    By Example \ref{10.1.7}, 
    \begin{center}
        $T=\{f^*:{{D}^{n}}\to D$ given by $(a_1,\cdots ,a_n)\mapsto f(a_1,\cdots ,a_n)\, |\,f\in {{\mathcal{F}}_{n}},n\in {{\mathbb{Z}}^{+}}\}$,
    \end{center}
and
\begin{center}
   ${T}'=\{f^*:{{{D}'}^{n}}\to {D}'$ given by $(a_1,\cdots ,a_n)\mapsto f(a_1,\cdots ,a_n)\, |\,f\in {{\mathcal{F}}_{n}},n\in {{\mathbb{Z}}^{+}}\}$. 
\end{center}
 
    By Example \ref{10.2.4}, $D$ and $D'$ are $T$-space and $T'$-space, respectively. Let $\mathcal{F}={\mathcal{F}}'=\bigcup\nolimits_{n\in {{\mathbb{Z}}^{+}}}{{{\mathcal{F}}_{n}}}$ and let $\vartheta :\mathcal{F}\to {\mathcal{F}}'$ be the identity function. Then Lemma \ref{10.4.3} applies. Hence it suffices to show that $\phi $ is a covariant functor if and only if statement (ii) in Lemma \ref{10.4.3} is true in this case. Specifically, we only need to show that the following statements are equivalent:
\begin{enumerate}
    \item $\forall X\in \text{obj}(\mathcal{C})$, $\phi ({{1}_{X}})={{1}_{\phi (X)}}$; 
if $m:A\to B$ in $\mathcal{C}$, then $\phi (m):\phi (A)\to \phi (B)$ in ${\mathcal{C}}'$; and
if $A\xrightarrow{m}B\xrightarrow{h}C$ in $\mathcal{C}$, then $\phi (A)\xrightarrow{\phi (m)}\phi (B)\xrightarrow{\phi (h)}\phi (C)$ in ${\mathcal{C}}'$ and $\phi (h\circ m)=\phi (h)\circ \phi (m)$.
\item $\forall n\in {{\mathbb{Z}}^{+}}$, $(a_1,\cdots ,a_n)\in {{D}^{n}}$ and $f\in \mathcal{F}$ of $n$ variables, neither $f(a_1,\cdots ,a_n)$ nor $f(\phi ({{a}_{1}}),\cdots ,\phi ({{a}_{n}}))$ is well-defined or
\begin{equation} \label{11.5}
    \phi (f(a_1,\cdots ,a_n))=f(\phi ({{a}_{1}}),\cdots ,\phi ({{a}_{n}})).
\end{equation}
\end{enumerate}
(1) $\Rightarrow$ (2): 
Let $(a_1,\cdots ,a_n)\in {{D}^{n}}$ and let $f={{x}_{{{i}_{1}}}}\cdots {{x}_{{{i}_{k}}}}\in {{\mathcal{F}}_{n}}$ (\textit{cf.} Example \ref{10.1.7}). Then

$f(a_1,\cdots ,a_n)$ is well-defined;

$\Leftrightarrow {{a}_{{{i}_{1}}}}\circ \cdots \circ {{a}_{{{i}_{k}}}}$ is well-defined (by the definition of $f(a_1,\cdots ,a_n)$ in Example \ref{10.1.7});

$\Leftrightarrow \phi ({{a}_{{{i}_{1}}}})\circ \cdots \circ \phi ({{a}_{{{i}_{k}}}})$ is well-defined (explained below);

$\Leftrightarrow f(\phi ({{a}_{1}}),\cdots ,\phi ({{a}_{n}}))$ is well-defined (by the definition of $f(a_1,\cdots ,a_n)$ in Example \ref{10.1.7}).

	The second equivalence is explained as follows. $D$ (resp. ${D}'$) is the collection of all morphisms in category $\mathcal{C}$ (resp. ${\mathcal{C}}'$), and hence $\forall i=1,\cdots,n$, ${{a}_{i}}$ (resp. $\phi ({{a}_{i}})$) is a morphism in $\mathcal{C}$ (resp. ${\mathcal{C}}'$). Then ${{a}_{{{i}_{1}}}}\circ \cdots \circ {{a}_{{{i}_{k}}}}$ is well-defined if and only if $\forall j=1,\cdots,(k-1)$, the domain of ${{a}_{{{i}_{j}}}}$ is the target of ${{a}_{{{i}_{j+1}}}}$. And this occurs if and only if $\phi ({{a}_{{{i}_{1}}}})\circ \cdots \circ \phi ({{a}_{{{i}_{k}}}})$ is well-defined because $\phi $ is a covariant functor. 

	Therefore, $f(a_1,\cdots ,a_n)$ is well-defined if and only if $f(\phi ({{a}_{1}}),\cdots ,\phi ({{a}_{n}}))$ is well-defined.
 
Moreover, in the case where the above equivalent conditions are satisfied, since $\phi $ is a covariant functor,
\[\phi ({{a}_{{{i}_{1}}}}\circ \cdots \circ {{a}_{{{i}_{k}}}})=\phi ({{a}_{{{i}_{1}}}})\circ \cdots \circ \phi ({{a}_{{{i}_{k}}}});\] 
that is (by the definition of $f(a_1,\cdots ,a_n)$ in Example \ref{10.1.7}),
\[\phi (f(a_1,\cdots ,a_n))=f(\phi ({{a}_{1}}),\cdots ,\phi ({{a}_{n}})).\]

(2) $\Rightarrow$ (1): 
Firstly, we view $D$ (resp. ${D}'$) as $\mathcal{C}$ (resp. ${\mathcal{C}}'$). That is, $\forall X\in \text{obj}(\mathcal{C})$, ${{1}_{X}}\in D$ may be regarded as $X$, and vice versa, and so do ${\mathcal{C}}'$ and ${D}'$. It is supposed that $\phi $ maps each identity morphism in $D$ to an identity morphism in ${D}'$, and hence with a slight abuse of notation, $\forall X\in \text{obj}(\mathcal{C})$, $\phi (X)\in \text{obj}({\mathcal{C}}')$, more precisely, $\phi ({{1}_{X}})={{1}_{\phi (X)}}$. 

	Secondly, let $m:A\to B$ be a morphism in $\mathcal{C}$. Let $g\in \mathcal{F}$ be ${{x}_{1}}{{x}_{2}}{{x}_{3}}$. Then 

$\phi (m)$

$=\phi ({{1}_{B}}\circ m\circ {{1}_{A}})$ 

$=\phi (g({{1}_{B}},m,{{1}_{A}}))$ (by the definition of $f(a_1,\cdots ,a_n)$ in Example \ref{10.1.7})

$=g(\phi ({{1}_{B}}),\phi (m),\phi ({{1}_{A}}))$ (by Equation $($\ref{11.5}$)$)

$=\phi ({{1}_{B}})\circ \phi (m)\circ \phi ({{1}_{A}})$ (by the definition of $f(a_1,\cdots ,a_n)$ in Example \ref{10.1.7}).

Hence $\phi (m):\phi (A)\to \phi (B)$ in ${\mathcal{C}}'$ since $\phi ({{1}_{A}})={{1}_{\phi (A)}}$ and $\phi ({{1}_{B}})={{1}_{\phi (B)}}$.

Thirdly, if $A\xrightarrow{m}B\xrightarrow{h}C$ in $\mathcal{C}$, then again by the above argument, $\phi (A)\xrightarrow{\phi (m)}\phi (B)\xrightarrow{\phi (h)}\phi (C)$ in ${\mathcal{C}}'$. Moreover, let $f\in \mathcal{F}$ be ${{x}_{1}}{{x}_{2}}$. Then by the definition of $f(a_1,\cdots ,a_n)$ in Example \ref{10.1.7} and Equation $($\ref{11.5}$)$,
\[\phi (h\circ m)=\phi (f(h,m))=f(\phi (h),\phi (m))=\phi (h)\circ \phi (m).\]
\end{proof}

\subsection{$\theta$-isomorphisms} \label{11 $theta$-isomorphisms}
To $T_1$ and $T_2$ as par-operator gen-semigroups, Definition \ref{5.4.a} still applies: a $\theta$-morphism from a $T_1$-space $S_1$ to a $T_2$-space is a \emph{$\theta$-isomorphism} if it is bijective. To justify the definition, we need to show Lemma \ref{L6.2.2} and Proposition \ref{5.4.b} for the case of par-operator gen-semigroups as follows.
\begin{lemma} \label{L11.6.1} \emph{(Lemma \ref{L6.2.2})}
    Let $\phi $ be a $\theta $-isomorphism from a $T_1$-space $S_1$ to a $T_2$-space $S_2$. Then $\theta^{-1} {{|}_{S_1}}$ is a map, where $\theta^{-1}:=\{(g,f)\,|\,(f,g) \in \theta\}$.
\end{lemma}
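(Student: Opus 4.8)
The plan is to mirror, in the many‑variable partial‑function setting, the argument that was given for Lemma~\ref{L6.2.2} (single‑variable total functions) and for Lemma~\ref{L10.6.1} (many‑variable total functions). So I would start from an arbitrary pair $(g,f_1),(g,f_2)\in\theta^{-1}$, i.e.\ $(f_1,g),(f_2,g)\in\theta$, and fix an $n$‑tuple $(a_1,\dots,a_n)\in S_1^n$ where $g$ has $n$ variables (note that since $\phi$ is a $\theta$‑morphism, Proposition~\ref{10.4.c}/Definition~\ref{10.4.2} forces $\theta|_{\operatorname{Im}\phi}$ to be a map, and every $(f,g)\in\theta$ has matching arities, so "$n$ variables" is unambiguous). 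By Proposition~\ref{10.4.b}, to show $\theta^{-1}|_{S_1}$ is a map it suffices to show that $f_1(a_1,\dots,a_n)$ and $f_2(a_1,\dots,a_n)$ are either both undefined or both defined and equal.

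The key steps, in order: (1) invoke that $\phi$ is injective (being a $\theta$‑isomorphism), so on the defined part it is enough to compare $\phi(f_1(a_1,\dots,a_n))$ with $\phi(f_2(a_1,\dots,a_n))$; (2) use condition (ii) of Definition~\ref{10.4.2} for $\phi$ applied to $f_1\in\operatorname{Dom}\theta$: either neither $f_1(a_1,\dots,a_n)$ nor $\theta(f_1)(\phi(a_1),\dots,\phi(a_n))$ is well‑defined, or $\phi(f_1(a_1,\dots,a_n))=\theta(f_1)(\phi(a_1),\dots,\phi(a_n))$, and since $(f_1,g)\in\theta$ and $\theta|_{\operatorname{Im}\phi}$ is a map, by Corollary~\ref{11.5.4} (or Definition~\ref{10.4.a}) this equals $g(\phi(a_1),\dots,\phi(a_n))$ whenever defined; (3) do the same for $f_2$, getting $\phi(f_2(a_1,\dots,a_n))=g(\phi(a_1),\dots,\phi(a_n))$ on its defined part; (4) conclude that the definedness of $f_1(a_1,\dots,a_n)$, of $g(\phi(a_1),\dots,\phi(a_n))$, and of $f_2(a_1,\dots,a_n)$ are all equivalent (the middle one being the common hinge), so $f_1(a_1,\dots,a_n)$ is defined iff $f_2(a_1,\dots,a_n)$ is, and when both are defined, $\phi(f_1(a_1,\dots,a_n))=g(\phi(a_1),\dots,\phi(a_n))=\phi(f_2(a_1,\dots,a_n))$, whence by injectivity of $\phi$ the two agree; (5) by Proposition~\ref{10.4.b}, $\theta^{-1}|_{S_1}$ is a map.

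The one genuinely new wrinkle compared with the total‑function case is the bookkeeping of "well‑definedness": in Lemmas~\ref{L6.2.2} and~\ref{L10.6.1} one could simply chase equalities, whereas here each application of Definition~\ref{10.4.2} carries the disjunction "neither side is well‑defined, or they are equal," and one must track that $g(\phi(a_1),\dots,\phi(a_n))$ serves as the common pivot whose definedness controls both $f_1(a_1,\dots,a_n)$ and $f_2(a_1,\dots,a_n)$. Corollary~\ref{11.5.4} is the right tool for collapsing "$\theta(f_i)(\cdots)$" to "$g(\cdots)$" precisely on the locus where the latter is defined, so I would route the argument through it rather than through Definition~\ref{10.4.a} directly. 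I expect this definedness‑tracking to be the only place where care is needed; the algebraic core is a one‑line injectivity argument exactly as in the earlier lemmas, and everything else is a routine transcription using Proposition~\ref{10.4.b}, Proposition~\ref{10.4.c}, and Corollary~\ref{11.5.4}.
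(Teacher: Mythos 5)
Your proposal is correct and follows essentially the same route as the paper's proof: chase well-definedness through the pivot $g(\phi(a_1),\dots,\phi(a_n))$ using condition (ii) of Definition \ref{10.4.2} together with the fact that $\theta|_{\operatorname{Im}\phi}$ is a map (Definition \ref{10.4.a}/Corollary \ref{11.5.4}), then conclude equality from injectivity of $\phi$ and finish via Proposition \ref{10.4.b}. The only nit is attribution: it is condition (i) of Definition \ref{10.4.2} (not Proposition \ref{10.4.c}, which makes the opposite point) that guarantees $\theta|_{\operatorname{Im}\phi}$ is a map, but this does not affect the argument.
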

\begin{proof}
    Let $(g,f_1),(g,f_2)\in \theta^{-1}$, $n \in \mathbb Z^+$ and $(a_1, \cdots, a_n) \in S_1^n$.
    Then to show that $\theta^{-1} {{|}_{S_1}}$ is a map, by Proposition \ref{10.4.b}, it suffices to show that neither $f_1(a_1, \cdots, a_n)$ nor $f_2(a_1, \cdots, a_n)$ is well-defined or
    $f_1(a_1, \cdots, a_n)=f_2(a_1, \cdots, a_n)$.
    
     $f_1(a_1, \cdots, a_n)$ is well-defined;\\
     $\Leftrightarrow$ $\theta(f_1)(\phi (a_1), \cdots, \phi(a_n))$ is well-defined (by Definition  \ref{10.4.2}) ;\\
    $\Leftrightarrow$ $g(\phi (a_1), \cdots, \phi(a_n))$ is well-defined (because $(f_1,g)\in \theta$  and  $\theta {|}_{\operatorname{Im}\phi}$  is a map by Definition  \ref{10.4.2}); \\ 
    $\Leftrightarrow$  $\theta(f_2)(\phi (a_1), \cdots, \phi(a_n))$ is well-defined  (because $(f_2,g)\in \theta$  and  $\theta {|}_{\operatorname{Im}\phi}$  is a map);\\
     $\Leftrightarrow$  $f_2(a_1, \cdots, a_n)$ is well-defined (by Definition  \ref{10.4.2}).
    
    And in the case where the above equivalent conditions are satisfied,\\    
     $\phi(f_1(a_1, \cdots, a_n))$\\
     $=\theta(f_1)(\phi (a_1), \cdots, \phi(a_n))$  (by Definition  \ref{10.4.2})\\
    $=g(\phi (a_1), \cdots, \phi(a_n))$ \\ 
    $=\theta(f_2)(\phi (a_1), \cdots, \phi(a_n))$ \\
    $=\phi(f_2(a_1, \cdots, a_n))$  (by Definition  \ref{10.4.2}),\\
    and hence  $f_1(a_1, \cdots, a_n)=f_2(a_1, \cdots, a_n)$ (because $\phi$ is injective), as desired.
\end{proof}
\begin{proposition} \emph{(Proposition \ref{5.4.b})}
    Let $S_1$ and $S_2$ be a $T_1$-space and a $T_2$-space, respectively, let $\phi \in \operatorname{Iso}_{\theta}({{S}_{1}},{{S}_{2}})$ and let ${{\phi }^{-1}}$ be the inverse map of $\phi $. Then ${{\phi }^{-1}}\in \operatorname{Iso}_{\theta^{-1}}({{S}_{2}},{{S}_{1}})$.
\end{proposition}
\begin{proof}
       By Definition \ref{10.4.2}, $\theta {{|}_{S_2}}$ is a map and $\forall n\in {{\mathbb{Z}}^{+}}$, $(a_1,\cdots ,a_n)\in {S_1^n}$ and $f\in \operatorname{Dom}\theta $, neither $f(a_1,\cdots ,a_n)$ nor $\theta (f)(\phi ({{a}_{1}}),\cdots ,\phi ({{a}_{n}}))$ is well-defined or
 \[f(a_1,\cdots ,a_n)=\phi^{-1} (\theta (f)(\phi ({{a}_{1}}),\cdots ,\phi ({{a}_{n}}))).\] 

Hence we have

\emph{Fact} (A): $\forall n\in {{\mathbb{Z}}^{+}}, f\in \operatorname{Dom}\theta $ and $ (b_1,\cdots ,b_n)\in {S_2^n}$,\\
neither $f({\phi }^{-1}({b}_{1}),\cdots ,{\phi }^{-1}({b}_{n}))$ nor $\theta (f)(b_1,\cdots , b_n)$ is well-defined or
\begin{equation} \label{10.a}
    f({\phi }^{-1}({b}_{1}),\cdots ,{\phi }^{-1}({b}_{n}))=\phi^{-1} (\theta (f)(b_1,\cdots , b_n)).
\end{equation}

Let $g \in \operatorname{Im} \theta$, $n\in {{\mathbb{Z}}^{+}}$ and $(b_1,\cdots ,b_n)\in {S_2^n}$. Then there are two cases as follows.

\emph{Case} (1): $g(b_1,\cdots , b_n)$ is well-defined. 

We are showing that $\theta^{-1}(g)({\phi }^{-1}({b}_{1}),\cdots ,{\phi }^{-1}({b}_{n}))$ is also well-defined and 
 \[\theta^{-1}(g)({\phi }^{-1}({b}_{1}),\cdots ,{\phi }^{-1}({b}_{n}))=\phi^{-1}(g(b_1,\cdots , b_n)).\]
 
Since $\theta {{|}_{S_2}}$ is a map, by Corollary \ref{11.5.4}, $\forall(f,g)\in \theta$, $\theta(f)(b_1,\cdots , b_n)$ is well-defined and \[\theta(f)(b_1,\cdots , b_n)=g(b_1,\cdots , b_n).\]
Then by Fact (A) and Equation (\ref{10.a}), $\forall(f,g)\in \theta$, \[f({\phi }^{-1}({b}_{1}),\cdots ,{\phi }^{-1}({b}_{n}))=\phi^{-1} (g(b_1,\cdots , b_n)).\]
Thus by (the $\theta^{-1}$ version of) Definition \ref{10.4.a}, $\theta^{-1}(g)({\phi }^{-1}({b}_{1}),\cdots ,{\phi }^{-1}({b}_{n}))$ is well-defined and 
 \[\theta^{-1}(g)({\phi }^{-1}({b}_{1}),\cdots ,{\phi }^{-1}({b}_{n}))=\phi^{-1}(g(b_1,\cdots , b_n)).\]

 \emph{Case} (2): $g(b_1,\cdots , b_n)$ is not well-defined. 
 
 We are showing that $\theta^{-1}(g)({\phi }^{-1}({b}_{1}),\cdots ,{\phi }^{-1}({b}_{n}))$ is not well-defined, either.
 
  Let $(f,g)\in \theta$. Assume that $\theta(f)(b_1,\cdots , b_n)$ is well-defined. Then by Definition \ref{10.4.a}, $g(b_1,\cdots , b_n)$ is also well-defined, a contradiction. Hence $\theta(f)(b_1,\cdots , b_n)$ is not well-defined. Thus by Fact (A), $f({\phi }^{-1}({b}_{1}),\cdots ,{\phi }^{-1}({b}_{n}))$ is not well-defined, either, and hence $\theta^{-1}(g)({\phi }^{-1}({b}_{1}),\cdots ,{\phi }^{-1}({b}_{n}))$ is not well-defined by (the $\theta^{-1}$ version of) Definition \ref{10.4.a}. 
 
 Combining Cases (1) and (2), we can tell that $\forall n\in {{\mathbb{Z}}^{+}}$, $g\in \operatorname{Dom}\theta^{-1}(=\operatorname{Im} \theta)$ and $(b_1,\cdots ,b_n)\in {S_2^n}$, 
   \[\theta^{-1}(g)({\phi }^{-1}({b}_{1}),\cdots ,{\phi }^{-1}({b}_{n}))={{\phi }^{-1}}(g(b_1,\cdots , b_n)),\]
or neither $\theta^{-1}(g)({\phi }^{-1}({b}_{1}),\cdots ,{\phi }^{-1}({b}_{n}))$ nor $g(b_1,\cdots , b_n)$ is well-defined.

Moreover, by Lemma \ref{L11.6.1}, $\theta^{-1}|_{S_1}$ is a map. Then by Definition \ref{10.4.2}, $\phi^{-1} $ is a $\theta^{-1} $-morphism from $S_2$ to $S_1$.

Furthermore, since $\phi^{-1} $ is bijective, by Definition \ref{5.4.a}, ${{\phi }^{-1}}\in \operatorname{Iso}_{\theta^{-1}}({{S}_{2}},{{S}_{1}})$.
\end{proof}

\section{Basic properties and more notions} \label{12 Basic properties}
In Sections \ref{12 Basic properties} to \ref{II Cons of $T$-mor and} as follows, our study will focus on par-operator gen-semigroups. The following statement is obvious by Definitions \ref{Operator semigroup}, \ref{9.1.2}, and \ref{10.1.3}:
Each operator semigroup is an operator gen-semigroup, and each operator gen-semigroup is a par-operator gen-semigroup.

	Thus our research on par-operator gen-semigroups also applies to operator gen-semigroups and operator semigroups. 
From now on, unless otherwise specified, $D$ always denotes a set and \textbf{$T$ always denotes a par-operator gen-semigroup on $D$ defined by Definition \ref{10.1.3}}. In Sections \ref{12 Basic properties} to \ref{II Cons of $T$-mor and}, unless otherwise specified, \textbf{$T$-morphisms and $\theta $-morphisms are always defined by Definitions \ref{10.3.1} and \ref{10.4.2}, respectively}.

For par-operator gen-semigroups, Sections \ref{12 Basic properties}, \ref{II Gal corr}, \ref{II latt struc}, \ref{II topologies employed} and \ref{II Cons of $T$-mor and} generalize results obtained in Sections \ref{Basic notions and properties}, \ref{I Galois corr}, \ref{I Lattice structures}, \ref{I Topologies employed} and \ref{Cons of $T$-mor and theta-mor}, respectively. Throughout Sections \ref{12 Basic properties} to \ref{II Cons of $T$-mor and}, by statements such as “Proposition (or Definition, etc.) x.x.x (which is the index) still holds (or still applies)” we mean that the proposition (or definition, etc.) still holds for (or still applies to) the case of par-operator gen-semigroups given specifically in the preceding paragraph. 

Roughly speaking, a result for an operator semigroup $T$ still holds for a par-operator gen-semigroup if the correctness of the proof does not depend on the number of the variables or the domain of definition of any element of $T$. However, if the correctness of the proof of a result does depend on the number of the variables or the domain of definition of an element of $T$, the result may still hold but need a different proof.

\subsection{Basic properties of $T$-spaces and quasi-$T$-spaces} \label{12.1}
In this subsection, we shall generalize properties of $T$-spaces or quasi-$T$-spaces obtained in Subsection \ref{$T$-spaces and quasi-$T$-spaces}.

Proposition \ref{<S> contained in S} is generalized to 
\begin{proposition} \label{11.1.1}
   Let $S$ be a $T$-space. Then $\forall n\in {{\mathbb{Z}}^{+}}$, $(a_1,\cdots ,a_n)\in {{S}^{n}}$ and $f\in T$ of $n$ variables, $f(a_1,\cdots ,a_n)\in S$ if $f(a_1,\cdots ,a_n)$ is well-defined. Hence $\forall A\subseteq S$, ${{\left\langle A \right\rangle }_{T}}\le S$. In particular, ${{\left\langle S \right\rangle }_{T}}\le S$.   
\end{proposition}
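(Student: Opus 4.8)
The statement is the natural generalization of Proposition \ref{<S> contained in S} to the setting of partial-operator generalized-semigroups, so the plan is to mimic the proof of that earlier proposition, with the only new ingredients being (i) the handling of functions of several variables and (ii) the care needed because compositions are only \emph{restrictions} of elements of $T$ rather than elements of $T$ themselves. First I would unwind the definitions: since $S$ is a $T$-space, by Definition \ref{10.2.1} there exists $U\subseteq D$ such that $S=\{$well-defined $f(u_1,\dots,u_k)\mid f\in T,\ (u_1,\dots,u_k)\in U^k,\ k\in\mathbb{Z}^+\}$. Now fix $n\in\mathbb{Z}^+$, an $n$-variable $f\in T$, and $(a_1,\dots,a_n)\in S^n$ with $f(a_1,\dots,a_n)$ well-defined; the goal is to show $f(a_1,\dots,a_n)\in S$.

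The key step is to express each $a_i$ via the generating set $U$. For each $i=1,\dots,n$, since $a_i\in S={{\left\langle U \right\rangle }_{T}}$, there are $n_i\in\mathbb{Z}^+$, an $n_i$-variable $g_i\in T$, and $(u_{i,1},\dots,u_{i,n_i})\in U^{n_i}$ with $a_i=g_i(u_{i,1},\dots,u_{i,n_i})$ well-defined. Then I would form the composite $h:=f\circ(g_1,\dots,g_n)$, which by Definition \ref{10.1.1} is the partial function of $m:=\sum_i n_i$ variables sending the concatenated tuple $\mathbf{u}:=(u_{1,1},\dots,u_{n,n_n})\in U^m$ to $f(g_1(\mathbf{u}_1),\dots,g_n(\mathbf{u}_n))=f(a_1,\dots,a_n)$. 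Because each $g_i(\mathbf{u}_i)=a_i$ is well-defined and $f(a_1,\dots,a_n)$ is well-defined, the remark following Definition \ref{10.1.1} tells us that $\mathbf{u}$ lies in the domain of definition of $h$, so $h(\mathbf{u})$ is well-defined and equals $f(a_1,\dots,a_n)$.

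The one genuinely new wrinkle — and what I expect to be the main (if modest) obstacle — is that Definition \ref{10.1.3} only guarantees $h=f\circ(g_1,\dots,g_n)$ is a \emph{restriction} of some element $h'\in T$, not that $h\in T$. I would resolve this by invoking Terminology \ref{10.1.2}: since $h$ is a restriction of $h'$ and $\mathbf{u}$ is in the domain of definition of $h$, it is also in the domain of definition of $h'$, and $h'(\mathbf{u})=h(\mathbf{u})=f(a_1,\dots,a_n)$. Thus $f(a_1,\dots,a_n)=h'(\mathbf{u})$ with $h'\in T$ and $\mathbf{u}\in U^m$, and this expression is well-defined, so by Definition \ref{10.2.1} we conclude $f(a_1,\dots,a_n)\in{{\left\langle U \right\rangle }_{T}}=S$.

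For the remaining assertions: given $A\subseteq S$, any element of ${{\left\langle A \right\rangle }_{T}}$ has the form $f(a_1,\dots,a_n)$ (well-defined) with $f\in T$ and $a_i\in A\subseteq S$, so by what was just shown ${{\left\langle A \right\rangle }_{T}}\subseteq S$; since ${{\left\langle A \right\rangle }_{T}}$ is itself a $T$-space, Definition \ref{10.2.1} gives ${{\left\langle A \right\rangle }_{T}}\le S$. Taking $A=S$ yields ${{\left\langle S \right\rangle }_{T}}\le S$ as the special case. This is entirely parallel to the one-variable argument in Proposition \ref{<S> contained in S}; no estimate or case analysis beyond the bookkeeping of the index shifts $m_i$ is involved, so I would keep the write-up terse and lean on Definitions \ref{10.1.1}, \ref{10.1.3}, \ref{10.2.1} and Terminology \ref{10.1.2} explicitly at each step.
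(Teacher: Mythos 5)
Your proposal is correct and follows essentially the same route as the paper's proof: express each $a_i$ through generators of $S$, form the composite $f\circ(g_1,\dots,g_n)$ via Definition \ref{10.1.1}, invoke Definition \ref{10.1.3} to place its value in ${{\left\langle U \right\rangle }_{T}}=S$, and then deduce ${{\left\langle A \right\rangle }_{T}}\le S$. Your explicit use of Terminology \ref{10.1.2} to pass from the composite to the element of $T$ of which it is a restriction only spells out a step the paper leaves implicit, so no substantive difference remains.
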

\begin{proof}
    Let $n\in {{\mathbb{Z}}^{+}}$, let $(a_1,\cdots ,a_n)\in {{S}^{n}}$ and let $f\in T$ have $n$ variables. By Definition \ref{10.2.1}, $\exists U\subseteq D$ such that ${{\left\langle U \right\rangle }_{T}}=S$. And $\forall i=1,\cdots,n$, $\exists {{g}_{i}}\in T$ of ${{n}_{i}}$ variables and ${{\text{u}}_{i}}\in {{U}^{{{n}_{i}}}}$ (cartesian product) such that ${{g}_{i}}({{\text{u}}_{i}})={{a}_{i}}.$ By Definition \ref{10.1.3}, the partial function $f\circ ({{g}_{1}},\cdots ,{{g}_{n}})$ defined by Definition \ref{10.1.1} is a restriction of some element of $T$. So $f(a_1,\cdots ,a_n)=f({{g}_{1}}({{\text{u}}_{1}}),\cdots ,{{g}_{n}}({{\text{u}}_{n}}))\in {{\left\langle U \right\rangle }_{T}}=S$ if $f(a_1,\cdots ,a_n)$ is well-defined.

Therefore, $\forall A\subseteq S$, $n\in {{\mathbb{Z}}^{+}}$, $(a_1,\cdots ,a_n)\in {{A}^{n}}$ and $f\in T$ of $n$ variables, $f(a_1,\cdots ,a_n)\in S$ if $f(a_1,\cdots ,a_n)$ is well-defined. Hence ${{\left\langle A \right\rangle }_{T}}\subseteq S$, and thus ${{\left\langle A \right\rangle }_{T}}\le S$ (by Definition \ref{10.2.1}).    
\end{proof}
Definition \ref{quasi-T-space} still applies. Then by Proposition \ref{11.1.1}, Proposition \ref{$T$-space is a quasi-$T$-space} still holds. Propositions \ref{1.2.9} and \ref{1.2.10} still hold because their proofs still apply. However, we could not generalize Propositions \ref{1.2.11} and \ref{1.2.12} because now the elements of $T$ may have more than one variable. Proposition \ref{1.2.14} and its proof still apply.

\subsection{Basic properties of $T$-morphisms and $T$-isomorphisms}
In this subsection, we shall generalize properties of $T$-morphisms and $T$-isomorphisms obtained in Subsections \ref{$T$-morphisms} and \ref{2 $T$-isomorphisms}.
	
 Proposition \ref{1.3.8} still holds, but now it requires a different proof as follows.
\begin{proposition} \label{11.2.1}
   \emph{(Proposition \ref{1.3.8})} Let $\sigma $ be a $T$-morphism from ${{S}_{1}}$ to ${{S}_{2}}$. Then $\operatorname{Im} \sigma  \le _q{S_2}$. Moreover, if $\operatorname{Id}\in T$ or more generally, $\operatorname{Im}\sigma \subseteq {{\left\langle \operatorname{Im}\sigma  \right\rangle }_{T}}$, then $\operatorname{Im}\sigma \le {{S}_{2}}$.
\end{proposition}
\begin{remark}
    We keep the original index of the proposition (i.e. \ref{1.3.8}) because we shall use it later (explicitly or implicitly).
\end{remark}
\begin{proof}
    By Definition \ref{10.3.1}, $\forall n\in {{\mathbb{Z}}^{+}}$, $(a_1,\cdots ,a_n)\in S_{1}^{n}$ and $f\in T$ of $n$ variables, neither $f(a_1,\cdots ,a_n)$ nor $f(\sigma ({{a}_{1}}),\cdots ,\sigma ({{a}_{n}}))$ is well-defined or 
    \[f(\sigma ({{a}_{1}}),\cdots ,\sigma ({{a}_{n}}))=\sigma (f(a_1,\cdots ,a_n))\in \operatorname{Im}\sigma .\]
    Hence ${{\left\langle \operatorname{Im}\sigma  \right\rangle }_{T}}\subseteq \operatorname{Im}\sigma $, and thus by Definition \ref{quasi-T-space}, $\operatorname{Im}\sigma $ is a quasi-$T$-subspace of $S_2$. 

Moreover, if $\operatorname{Id}\in T$ or $\operatorname{Im}\sigma \subseteq {{\left\langle \operatorname{Im}\sigma  \right\rangle }_{T}}$, then by Proposition \ref{1.2.9}, $\operatorname{Im}\sigma $ is a $T$-space, and hence $\operatorname{Im}\sigma \le {{S}_{2}}$.	
\end{proof}
By the way, we generalize Proposition \ref{11.2.1} to $\theta$-morphisms as follows, which generalizes Proposition \ref{image theta space} to the case of par-operator gen-semigroups.
\begin{proposition} \label{image theta space for multivariable}
    Let $\phi $ be a $\theta $-morphism from a $T_1$-space $S_1$ to a $T_2$-space $S_2$. Suppose $\operatorname{Im}\theta = T_2$. Then $ \operatorname{Im} \phi  \le _q{S_2}$. Moreover, if $\operatorname{Id}\in T_2$ or more generally, $\operatorname{Im}\phi \subseteq {{\left\langle \operatorname{Im}\phi  \right\rangle }_{T_2}}$, then $\operatorname{Im}\phi \le {{S}_{2}}$.
\end{proposition}
 \begin{proof}
     By Definition \ref{10.4.2}, $\forall n\in {{\mathbb{Z}}^{+}}$, $(a_1,\cdots ,a_n)\in {S_1^n}$ and $f\in \operatorname{Dom}\theta$, neither $f(a_1,\cdots ,a_n)$ nor $\theta (f)(\phi ({{a}_{1}}),\cdots ,\phi ({{a}_{n}}))$ is well-defined or
    \[\theta(f)(\phi ({{a}_{1}}),\cdots ,\phi ({{a}_{n}}))=\phi (f(a_1,\cdots ,a_n))\in \operatorname{Im}\phi .\]
     Since $\operatorname{Im}\theta = T_2$, we can tell ${{\left\langle \operatorname{Im}\phi  \right\rangle }_{T_2}}\subseteq \operatorname{Im}\phi $, and so by Definition \ref{quasi-T-space}, $\operatorname{Im}\phi $ is a quasi-$T_2$-subspace of $S_2$. 
     
Moreover, if $\operatorname{Id}\in T_2$ or $\operatorname{Im}\phi \subseteq {{\left\langle \operatorname{Im}\phi  \right\rangle }_{T_2}}$, then by Proposition \ref{1.2.9}, $\operatorname{Im}\phi $ is a $T_2$-space and hence $\operatorname{Im}\phi \le {{S}_{2}}$.
 \end{proof}
Definition \ref{1.3.2} still applies. And Proposition \ref{1.3.10} still holds as shown below.
\begin{proposition}
\label{11.2.2}
    \emph{(Proposition \ref{1.3.10})} Let $S$ be a $T$-space. Then $\operatorname{End}_{T}(S)$ defined by Definition \ref{1.3.2} constitutes a monoid, which we still denote by $\operatorname{End}_{T}(S)$, with composition of functions as the binary operation.
\end{proposition}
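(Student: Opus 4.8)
The plan is to mirror the proof of Proposition \ref{1.3.10} given in Part I, checking that nothing in that argument relied on the operators being unary (total) functions. The statement to prove is that $\operatorname{End}_T(S)$, for a $T$-space $S$ with $T$ a partial-operator generalized-semigroup, is a monoid under composition. As in the earlier proof, the identity map $\operatorname{Id}_S$ is easily seen to lie in $\operatorname{End}_T(S)$: by Definition \ref{10.3.1}, $\operatorname{Id}_S$ is a $T$-morphism since $\operatorname{Id}_S(f(a_1,\dots,a_n)) = f(a_1,\dots,a_n) = f(\operatorname{Id}_S(a_1),\dots,\operatorname{Id}_S(a_n))$ whenever the left side is well-defined, and the two sides are simultaneously well-defined. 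Composition of functions is associative, so the only real content is closure: if $\sigma_1,\sigma_2 \in \operatorname{End}_T(S)$, then $\sigma_1 \circ \sigma_2 \in \operatorname{End}_T(S)$.

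First I would fix $n \in \mathbb{Z}^+$, $(a_1,\dots,a_n) \in S^n$, and $f \in T$ of $n$ variables, and analyze the two-step composition. Applying Definition \ref{10.3.1} to $\sigma_2$: either neither of $f(a_1,\dots,a_n)$ and $f(\sigma_2(a_1),\dots,\sigma_2(a_n))$ is well-defined, or $\sigma_2(f(a_1,\dots,a_n)) = f(\sigma_2(a_1),\dots,\sigma_2(a_n))$. In the first case, I need to check that neither of $f(a_1,\dots,a_n)$ and $f(\sigma_1(\sigma_2(a_1)),\dots,\sigma_1(\sigma_2(a_n)))$ is well-defined. Since $f(a_1,\dots,a_n)$ is not well-defined, it suffices to note that $f(\sigma_2(a_1),\dots,\sigma_2(a_n))$ is not well-defined, and then apply Definition \ref{10.3.1} to $\sigma_1$ with inputs $\sigma_2(a_1),\dots,\sigma_2(a_n) \in S$ (these lie in $S$ because $\sigma_2$ maps $S$ to $S$): this gives that neither of $f(\sigma_2(a_1),\dots,\sigma_2(a_n))$ and $f(\sigma_1(\sigma_2(a_1)),\dots,\sigma_1(\sigma_2(a_n)))$ is well-defined, as desired. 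In the second case, where $\sigma_2(f(a_1,\dots,a_n)) = f(\sigma_2(a_1),\dots,\sigma_2(a_n))$, I apply Definition \ref{10.3.1} to $\sigma_1$ with inputs $\sigma_2(a_1),\dots,\sigma_2(a_n)$: either neither of $f(\sigma_2(a_1),\dots,\sigma_2(a_n))$ and $f(\sigma_1(\sigma_2(a_1)),\dots,\sigma_1(\sigma_2(a_n)))$ is well-defined — but then neither is $f(a_1,\dots,a_n)$ (since it equals, up to applying $\sigma_2$, the non-well-defined first expression; more precisely $f(a_1,\dots,a_n)$ well-defined would force $f(\sigma_2(a_1),\dots,\sigma_2(a_n))$ well-defined by the case hypothesis) — or $\sigma_1(f(\sigma_2(a_1),\dots,\sigma_2(a_n))) = f(\sigma_1(\sigma_2(a_1)),\dots,\sigma_1(\sigma_2(a_n)))$, and chaining the two equalities gives $\sigma_1 \circ \sigma_2(f(a_1,\dots,a_n)) = f(\sigma_1 \circ \sigma_2(a_1),\dots,\sigma_1 \circ \sigma_2(a_n))$.

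The main obstacle, compared to the unary total-function case of Proposition \ref{1.3.10}, is precisely this bookkeeping of ``well-definedness'': one must verify that the two-sided disjunction in Definition \ref{10.3.1} propagates correctly through the composite, rather than just chaining a single equation. The key technical point is that $\sigma_1,\sigma_2$ both map $S$ into $S$ (so that the intermediate tuple $(\sigma_2(a_1),\dots,\sigma_2(a_n))$ is a legitimate input for Definition \ref{10.3.1} applied to $\sigma_1$), together with the monotonicity observation: if $f(a_1,\dots,a_n)$ is well-defined then $\sigma_2(f(a_1,\dots,a_n))$ is well-defined (since $f(a_1,\dots,a_n) \in S$ by Proposition \ref{11.1.1}), hence by Definition \ref{10.3.1} the expression $f(\sigma_2(a_1),\dots,\sigma_2(a_n))$ is well-defined too. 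I would organize the argument as a short case split exactly along these lines, then conclude by citing associativity of composition and the fact that $\operatorname{Id}_S \in \operatorname{End}_T(S)$ to assemble the monoid structure.
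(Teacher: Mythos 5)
Your proposal is correct and takes essentially the same approach as the paper: the identity map and associativity are dispatched immediately, and closure is proved by applying Definition \ref{10.3.1} first to $\sigma_2$ at $(a_1,\dots,a_n)$ and then to $\sigma_1$ at the intermediate tuple $(\sigma_2(a_1),\dots,\sigma_2(a_n))\in S^n$, tracking well-definedness through both steps. The paper merely packages your two cases as a chain of equivalences ($f(a_1,\dots,a_n)$ well-defined $\Leftrightarrow$ $f(\sigma_2(a_1),\dots,\sigma_2(a_n))$ well-defined $\Leftrightarrow$ $f(\sigma_1(\sigma_2(a_1)),\dots,\sigma_1(\sigma_2(a_n)))$ well-defined) followed by the same chained equality.
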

\begin{proof}
  The identity map on $S$ lies in $\operatorname{End}_{T}(S)$, and hence we only need to show that $\operatorname{End}_{T}(S)$ is a semigroup with composition of functions as the binary operation.

Let ${{\sigma }_{1}},{{\sigma }_{2}}\in \operatorname{End}_{T}(S)$, $f\in T$, $n\in {{\mathbb{Z}}^{+}}$ and $(a_1,\cdots ,a_n)\in {{S}^{n}}$. Then by Definition \ref{10.3.1},

$f(a_1,\cdots ,a_n)$ is well-defined;

$\Leftrightarrow f({{\sigma }_{2}}({{a}_{1}}),\cdots ,{{\sigma }_{2}}({{a}_{n}}))$ is well-defined; 

$\Leftrightarrow f({{\sigma }_{1}}({{\sigma }_{2}}({{a}_{1}})),\cdots ,{{\sigma }_{1}}({{\sigma }_{2}}({{a}_{n}})))$ is well-defined.

Moreover, in the case where the above equivalent conditions are satisfied,

$({{\sigma }_{1}}\circ {{\sigma }_{2}})(f(a_1,\cdots ,a_n))$

$={{\sigma }_{1}}({{\sigma }_{2}}(f(a_1,\cdots ,a_n)))$ 

$={{\sigma }_{1}}(f({{\sigma }_{2}}({{a}_{1}}),\cdots ,{{\sigma }_{2}}({{a}_{n}})))$ 

$=f({{\sigma }_{1}}({{\sigma }_{2}}({{a}_{1}})),\cdots ,{{\sigma }_{1}}({{\sigma }_{2}}({{a}_{n}})))$. 

Then again by Definition \ref{10.3.1}, ${{\sigma }_{1}}\circ {{\sigma }_{2}}\in \operatorname{End}_{T}(S)$. 

Composition of functions is associative. Therefore, $\operatorname{End}_{T}(S)$ is a monoid with composition of functions as the binary operation.
\end{proof}
Proposition \ref{1.3.12} is still obvious. 

As already said before, Definition \ref{$T$-isomorphisms} still applies to the case of par-operator gen-semigroups.
Proposition \ref{1.3.a} was generalized to $T$ being a par-operator gen-semigroup by Proposition \ref{11.2.a}.
Proposition \ref{1.3.11} still holds because its proof still applies. 
Proposition \ref{intersection of Aut} is still obvious. 
 
\subsection{Galois $T$-extensions}
Definition \ref{1.4.1} and the remark right after it still apply. Proposition \ref{1.4.2} still holds, but now it requires a different proof as follows.
\begin{proposition} \label{11.3.1}
     \emph{(Proposition \ref{1.4.2})} Let $S$ be a $T$-space and let $H$ be a subset of $\operatorname{End}_{T}(S)$. Then ${S^H} \le _qS$. Moreover, if $\operatorname{Id}\in T$ or ${{S}^{H}}\subseteq {{\left\langle {{S}^{H}} \right\rangle }_{T}}$, then ${{S}^{H}}\le S$.
\end{proposition}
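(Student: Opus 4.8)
The plan is to mimic exactly the proof of Proposition \ref{1.4.2} in the unary case, replacing the single-variable commutation property of $T$-morphisms with the multivariable/partial-function version from Definition \ref{10.3.1}, and using Proposition \ref{11.1.1} in place of Proposition \ref{<S> contained in S}. The goal is to show that ${{\left\langle {{S}^{H}} \right\rangle }_{T}}\subseteq {{S}^{H}}$, i.e. that $S^H$ is a quasi-$T$-space; the ``moreover'' part then follows immediately from Proposition \ref{1.2.9}.

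First I would take $n\in {{\mathbb{Z}}^{+}}$, an $n$-variable $f\in T$, and $(a_1,\cdots ,a_n)\in {{({{S}^{H}})}^{n}}$ such that $f(a_1,\cdots ,a_n)$ is well-defined; the aim is to prove $f(a_1,\cdots ,a_n)\in {{S}^{H}}$. By Proposition \ref{11.1.1}, $f(a_1,\cdots ,a_n)\in S$, so it remains to check that every $\sigma\in H$ fixes it. Fix such a $\sigma\in H\subseteq \operatorname{End}_{T}(S)$. Since each $a_i\in S^H$, we have $\sigma(a_i)=a_i$ for all $i$, hence $f(\sigma(a_1),\cdots,\sigma(a_n))=f(a_1,\cdots,a_n)$, which is well-defined. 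Then by Definition \ref{10.3.1} (the case where both sides are well-defined), $\sigma(f(a_1,\cdots,a_n))=f(\sigma(a_1),\cdots,\sigma(a_n))=f(a_1,\cdots,a_n)$. As $\sigma\in H$ was arbitrary, $f(a_1,\cdots,a_n)\in S^H$. This shows ${{\left\langle a_1,\cdots,a_n \right\rangle}_T}$-type elements land in $S^H$; more precisely, it shows every well-defined value of an element of $T$ on a tuple from $S^H$ lies in $S^H$, so ${{\left\langle {{S}^{H}} \right\rangle }_{T}}\subseteq {{S}^{H}}$, i.e. $S^H$ is a quasi-$T$-space and ${{S}^{H}}\le_q S$.

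For the final clause, if $\operatorname{Id}\in T$ or, more generally, ${{S}^{H}}\subseteq {{\left\langle {{S}^{H}} \right\rangle }_{T}}$, then combining this with ${{\left\langle {{S}^{H}} \right\rangle }_{T}}\subseteq {{S}^{H}}$ gives ${{\left\langle {{S}^{H}} \right\rangle }_{T}}={{S}^{H}}$, so by Proposition \ref{1.2.9}, $S^H$ is a $T$-space and hence ${{S}^{H}}\le S$. (The remark under Proposition \ref{1.2.9} already records that $\operatorname{Id}\in T$ forces $S\subseteq {{\left\langle S \right\rangle }_T}$, which restricts to $S^H\subseteq {{\left\langle S^H \right\rangle}_T}$; I would just invoke that.)

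There is essentially no hard part here: the only point requiring care is the ``neither-well-defined-or-equal'' phrasing of Definition \ref{10.3.1}, so I would be explicit that once $f(\sigma(a_1),\cdots,\sigma(a_n))$ is seen to be well-defined (it equals the well-defined $f(a_1,\cdots,a_n)$ by the fixing property), we are automatically in the ``equal'' branch of the definition rather than the ``neither well-defined'' branch. Everything else is a verbatim transcription of the unary argument, so the write-up can afford to be short and point to Propositions \ref{11.1.1} and \ref{1.2.9}.
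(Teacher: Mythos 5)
Your proposal is correct and follows essentially the same route as the paper's own proof: use the fixing property $\sigma(a_i)=a_i$ together with Definition \ref{10.3.1} (noting that $f(\sigma(a_1),\cdots,\sigma(a_n))=f(a_1,\cdots,a_n)$ is well-defined, so one is in the equality branch) to conclude $f(a_1,\cdots,a_n)\in S^H$, hence ${{\left\langle {{S}^{H}} \right\rangle }_{T}}\subseteq {{S}^{H}}$, and then apply Proposition \ref{1.2.9} for the final clause. Your explicit remark about ruling out the ``neither well-defined'' branch is exactly the point where the multivariable/partial case differs from the unary proof, and it matches what the paper does.
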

\begin{proof}
    Let $n\in {{\mathbb{Z}}^{+}}$, $(a_1,\cdots ,a_n)\in {{({{S}^{H}})}^{n}}$ and $f\in T$ of $n$ variables. By Definition \ref{10.3.1}, $\forall \sigma \in H$, neither $f(a_1,\cdots ,a_n)$ nor $f(\sigma ({{a}_{1}}),\cdots ,\sigma ({{a}_{n}}))$ is well-defined, or 
 \[\sigma (f(a_1,\cdots ,a_n))=f(\sigma ({{a}_{1}}),\cdots ,\sigma ({{a}_{n}}))=f(a_1,\cdots ,a_n).\] 

Therefore, $f(a_1,\cdots ,a_n)\in {{S}^{H}}$ if $f(a_1,\cdots ,a_n)$ is well-defined. Thus ${{\left\langle {{S}^{H}} \right\rangle }_{T}}\subseteq {{S}^{H}}$. Hence ${{S}^{H}}$ is a quasi-$T$-space, and so ${S^H} \le _qS$. 

Moreover, if $\operatorname{Id}\in T$ or ${{S}^{H}}\subseteq {{\left\langle {{S}^{H}} \right\rangle }_{T}}$, then by Proposition \ref{1.2.9}, ${S^H}$ is a $T$-space, and hence ${{S}^{H}}\le S$.	
\end{proof}
Proposition \ref{1.4.3} is still obvious.
\subsection{Galois $T$-monoids and Galois $T$-groups}
Definition \ref{1.5.1} and the remark right after it still apply. 
Propositions \ref{1.5.2} and \ref{1.5.3} are still true because their proofs still apply. 
Propositions \ref{1.5.4}, \ref{1.5.5} and \ref{1.5.6} are still obvious. 

\subsection{Generated submonoids of $\operatorname{End}_{T}(S)$ and subgroups of $\operatorname{Aut}_{T}(S)$}
Definition \ref{1.6.1} and the remark right after it still apply. Proposition \ref{1.6.2} still holds because its proof still applies.

\section{Galois correspondences} \label{II Gal corr}
In this section, we shall find that all results obtained in Subsection \ref{I subsection Galois correspondences} still apply to the case where $T$ is a par-operator gen-semigroup. 

Notations \ref{2.1.1} (and the two remarks for it) and \ref{2.1.2} still apply.

Definition \ref{2.2.1} and the remark right after it still apply. Lemma \ref{2.2.2} still holds because its proof still applies. 

Definition \ref{2.2.3} and the remark right after it still apply. Lemma \ref{2.2.4} still holds because its proof still applies.

Corollaries \ref{2.2.5} and \ref{2.2.6} still hold because their proofs still apply.
	
Then by Propositions \ref{9.3.5}, \ref{9.3.6}, \ref{9.3.7}, \ref{9.4.5}, \ref{10.3.4}, \ref{10.3.6} and \ref{10.4.5}, we may apply Corollaries \ref{2.2.5} and \ref{2.2.6} to modules, abelian groups, non-abelian groups, rings, field, differential field, and categories, respectively. We can tell that Galois correspondences exist not only for Galois $T$-groups (by Corollary \ref{2.2.6}), but also for Galois $T$-monoids (by Corollary \ref{2.2.5}).
 
 Moreover, if we want to build up the Galois correspondences in terms of topology, then we may employ results in Section \ref{II topologies employed}, which generalize results in Section \ref{I Topologies employed}.
 
\section{Lattice structures of objects arising in Galois correspondences on a $T$-space $S$} \label{II latt struc}
	In this section, results obtained in Section \ref{I Lattice structures} are generalized for par-operator gen-semigroups.
\subsection{$\operatorname{Sub}_T(S)$, $\operatorname{SMn}(\operatorname{End}_{T}(S))$ and $\operatorname{SGr}(\operatorname{Aut}_{T}(S))$}
As said in Subsection \ref{12.1}, we could not generalize Proposition \ref{1.2.11} because now the elements of $T$ may have more than one variable. So to generalize Proposition \ref{3.1.1}, we need a notion as follows.
\begin{definition} \label{13.1.1}
    Let $S$ be a $T$-space. If $X\subseteq S$, then the intersection of all the quasi-$T$-subspaces of $S$ containing $X$, denoted by ${{\left\langle X \right\rangle }_{S}}$, is called the \emph{quasi-$T$-subspace of $S$ generated by $X$}. 
\end{definition}
\begin{remark}
    By Proposition \ref{1.2.10}, ${{\left\langle X \right\rangle }_{S}}$ is the smallest quasi-$T$-subspace of $S$ which contains $X$.
\end{remark}
Then Proposition \ref{3.1.1} is generalized to
\begin{proposition} \label{13.1.2}
    Let $S$ be a $T$-space. Then $\operatorname{Sub}_T(S)$ is a lattice with inclusion as the binary relation if $\forall {{S}_{1}},{{S}_{2}}\in \operatorname{Sub}_T(S)$, ${{S}_{1}}\wedge {{S}_{2}}:={{S}_{1}}\bigcap {{S}_{2}}$ and ${{S}_{1}}\vee {{S}_{2}}:={{\left\langle {{S}_{1}}\bigcup {{S}_{2}} \right\rangle }_{S}}$. Moreover, $\operatorname{Sub}_T(S)$ is a complete lattice if let $\wedge A=\bigcap{{{S}_{i}}}$ and let $\vee A={{\left\langle \bigcup{{{S}_{i}}} \right\rangle }_{S}}$, $\forall A=\{{{S}_{i}}\}\subseteq \operatorname{Sub}_T(S)$.
\end{proposition}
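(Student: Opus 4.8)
The plan is to mirror the proof of Proposition \ref{3.1.1}, replacing the union operation there (which worked because, for ordinary operator semigroups, a union of quasi-$T$-spaces is again a quasi-$T$-space by Proposition \ref{1.2.11}) by the generated quasi-$T$-subspace ${{\left\langle \cdot \right\rangle }_{S}}$ from Definition \ref{13.1.1}. First I would verify that $\operatorname{Sub}_T(S)$ is nonempty, which is immediate since $S\in \operatorname{Sub}_T(S)$ (a $T$-space is a quasi-$T$-space by Proposition \ref{$T$-space is a quasi-$T$-space}, hence $S\le_q S$), and that inclusion is a partial order on it. Then, for the meet, I would invoke Proposition \ref{1.2.10}: the intersection of any family of quasi-$T$-spaces is a quasi-$T$-space, so $\bigcap{{S}_{i}}\in \operatorname{Sub}_T(S)$ whenever each ${{S}_{i}}\le_q S$, and since it is clearly contained in each ${{S}_{i}}$ and contains any common lower bound, it is the greatest lower bound. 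In particular $\wedge\{{{S}_{1}},{{S}_{2}}\}={{S}_{1}}\bigcap{{S}_{2}}$.

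Next, for the join I would use Definition \ref{13.1.1} together with its remark: ${{\left\langle \bigcup{{{S}_{i}}} \right\rangle }_{S}}$ is, by definition, the intersection of all quasi-$T$-subspaces of $S$ containing $\bigcup{{{S}_{i}}}$, hence (by Proposition \ref{1.2.10} again, which guarantees this intersection is itself a quasi-$T$-subspace of $S$) it is the smallest element of $\operatorname{Sub}_T(S)$ containing every ${{S}_{i}}$. That is exactly the least upper bound of $\{{{S}_{i}}\}$ in $(\operatorname{Sub}_T(S),\subseteq)$. Applying this to a two-element family gives ${{S}_{1}}\vee{{S}_{2}}={{\left\langle {{S}_{1}}\bigcup{{S}_{2}} \right\rangle }_{S}}$, establishing that $\operatorname{Sub}_T(S)$ is a lattice; applying it to an arbitrary family $A=\{{{S}_{i}}\}$ gives the completeness assertion, with $\wedge A=\bigcap{{{S}_{i}}}$ and $\vee A={{\left\langle \bigcup{{{S}_{i}}} \right\rangle }_{S}}$. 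One should also note the edge case $S=\emptyset$: then $\operatorname{Sub}_T(\emptyset)=\{\emptyset\}$ is trivially a complete lattice, just as in Proposition \ref{3.1.1}.

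I do not expect any serious obstacle here; the proposition is a routine lattice-theoretic consequence of Propositions \ref{1.2.10} and the definition of ${{\left\langle \cdot \right\rangle }_{S}}$. The one point that deserves a line of care — and is really the only place where this proof departs from that of Proposition \ref{3.1.1} — is that we \emph{cannot} take the join to be the set-theoretic union, since in the partial-function setting a union of quasi-$T$-subspaces need not be a quasi-$T$-subspace (this is precisely why Definition \ref{13.1.1} was introduced), so the verification that ${{\left\langle \bigcup{{{S}_{i}}} \right\rangle }_{S}}$ is the least upper bound must go through the "smallest containing" characterization rather than through a direct description of its elements. Accordingly the written proof can be very short: cite Proposition \ref{1.2.10}, cite the remark under Definition \ref{13.1.1}, and dispose of the empty case.

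\begin{proof}
$S\le_q S$ by Proposition \ref{$T$-space is a quasi-$T$-space}, so $S\in \operatorname{Sub}_T(S)$ and $\operatorname{Sub}_T(S)$ is nonempty, partially ordered by inclusion. Let $A=\{{{S}_{i}}\}\subseteq \operatorname{Sub}_T(S)$. By Proposition \ref{1.2.10}, $\bigcap{{{S}_{i}}}$ is a quasi-$T$-space, and it is $\le_q S$ since it is contained in $S$; it is clearly a lower bound for $A$, and any quasi-$T$-subspace of $S$ contained in every ${{S}_{i}}$ is contained in $\bigcap{{{S}_{i}}}$, so $\bigcap{{{S}_{i}}}$ is the greatest lower bound. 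By Definition \ref{13.1.1} and the remark following it, ${{\left\langle \bigcup{{{S}_{i}}} \right\rangle }_{S}}$ is the smallest quasi-$T$-subspace of $S$ containing $\bigcup{{{S}_{i}}}$, hence the smallest element of $\operatorname{Sub}_T(S)$ that contains every ${{S}_{i}}$; that is, it is the least upper bound of $A$. Taking $A=\{{{S}_{1}},{{S}_{2}}\}$ shows $\operatorname{Sub}_T(S)$ is a lattice with ${{S}_{1}}\wedge {{S}_{2}}={{S}_{1}}\bigcap {{S}_{2}}$ and ${{S}_{1}}\vee {{S}_{2}}={{\left\langle {{S}_{1}}\bigcup {{S}_{2}} \right\rangle }_{S}}$, and taking $A$ arbitrary shows it is complete with $\wedge A=\bigcap{{{S}_{i}}}$ and $\vee A={{\left\langle \bigcup{{{S}_{i}}} \right\rangle }_{S}}$. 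Finally, $\operatorname{Sub}_T(\emptyset )=\{\emptyset \}$ is a complete lattice.
\end{proof}
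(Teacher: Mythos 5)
Your proof is correct and follows exactly the route the paper takes: the paper's own proof simply cites Proposition \ref{1.2.10} and Definition \ref{13.1.1} (with its remark), which is precisely what you spell out, including the empty case. No issues.
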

\begin{proof}
    Straightforward consequences of Proposition \ref{1.2.10} and Definition \ref{13.1.1}.
\end{proof}
Propositions \ref{3.1.2} and \ref{3.1.3} still hold because Propositions \ref{1.3.12} and \ref{intersection of Aut} still hold and Definition \ref{1.6.1} and the remark right after it still apply. Proposition \ref{3.1.4} still holds because Proposition \ref{1.6.2} still holds.

\subsection{$\operatorname{Int}_{T}^{\operatorname{End}}(S/B)$ and $\operatorname{Int}_{T}^{\operatorname{Aut}}(S/B)$}
	Lemma \ref{3.2.1} still holds because its proof still applies. Proposition \ref{3.2.2} would still hold if Proposition \ref{3.1.1} in it were replaced by Proposition \ref{13.1.2} as follows.
 \begin{proposition} \label{13.2.1}
     $\operatorname{Int}_{T}^{\operatorname{End}}(S/B)$ is a complete $\wedge$-sublattice of the complete lattice $\operatorname{Sub}_T(S)$ defined in Proposition \ref{13.1.2}. 
 \end{proposition}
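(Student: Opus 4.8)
The plan is to mirror the proof of the original Proposition \ref{3.2.2}, making only the single substitution that the ambient complete lattice is now $\operatorname{Sub}_T(S)$ as constructed in Proposition \ref{13.1.2} rather than in Proposition \ref{3.1.1}. The two ingredients are unchanged: first, by Definition \ref{2.2.1}, every element of $\operatorname{Int}_{T}^{\operatorname{End}}(S/B)$ has the form ${{S}^{{{H}_{i}}}}$ with ${{H}_{i}}\subseteq \operatorname{End}_{T}(S)$ and $B\subseteq {{S}^{{{H}_{i}}}}$; second, by Lemma \ref{3.2.1}, which still holds here since its proof does not touch the arity or domains of definition of elements of $T$, we have $\bigcap{{{S}^{{{H}_{i}}}}}={{S}^{\bigcup{{{H}_{i}}}}}$, and $\bigcup{{{H}_{i}}}\subseteq \operatorname{End}_{T}(S)$ while $B\subseteq \bigcap{{{S}^{{{H}_{i}}}}}$, so the intersection again lies in $\operatorname{Int}_{T}^{\operatorname{End}}(S/B)$. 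That gives closure under arbitrary meets.

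Next I would record that $\operatorname{Int}_{T}^{\operatorname{End}}(S/B)\subseteq \operatorname{Sub}_T(S)$, which now follows from Proposition \ref{11.3.1} (the partial-operator version of Proposition \ref{1.4.2}): each ${{S}^{{{H}_{i}}}}$ is a quasi-$T$-subspace of $S$. Combined with the meet-closure just established and the fact that $\wedge A=\bigcap{{{S}_{i}}}$ in the complete lattice $\operatorname{Sub}_T(S)$ of Proposition \ref{13.1.2}, this shows that $\operatorname{Int}_{T}^{\operatorname{End}}(S/B)$ is closed under the meet operation of that lattice and hence is a complete $\wedge$-sublattice. One should note here that $\operatorname{Int}_{T}^{\operatorname{End}}(S/B)$ is nonempty — it contains $S=S^{\emptyset}$ — so it genuinely is a sublattice and not an empty set; and its bottom element, $\bigcap$ of all its members, is well-defined by the meet-closure.

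Honestly, there is no real obstacle: the only conceptual point is that the join in $\operatorname{Sub}_T(S)$ had to be redefined (to ${{\left\langle \bigcup{{{S}_{i}}} \right\rangle }_{S}}$) because Proposition \ref{1.2.11} does not survive to the multivariable/partial setting, but since we are only asserting a $\wedge$-sublattice this change is invisible to the argument — we never use joins. The statement about $\vee$-behaviour, exactly as in Example \ref{3.2.3}, still fails in general, so no more than a $\wedge$-sublattice can be claimed, and the proposition is stated accordingly. I would therefore write the proof in two or three sentences, citing Definition \ref{2.2.1}, Lemma \ref{3.2.1}, Proposition \ref{11.3.1}, and Proposition \ref{13.1.2}, in direct parallel with the proof of Proposition \ref{3.2.2}.
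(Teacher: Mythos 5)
Your proposal is correct and follows essentially the same route as the paper: the paper proves this by repeating the proof of Proposition \ref{3.2.2} verbatim (Definition \ref{2.2.1}, Lemma \ref{3.2.1}, and the inclusion $\operatorname{Int}_{T}^{\operatorname{End}}(S/B)\subseteq \operatorname{Sub}_T(S)$ via the partial-operator version of Proposition \ref{1.4.2}), with the only change being that the ambient lattice is the one of Proposition \ref{13.1.2}. Your additional remarks on nonemptiness and on the irrelevance of the redefined join are harmless but not needed.
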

\begin{proof}
    Almost the same as the proof of Proposition \ref{3.2.2} except that Proposition \ref{3.1.1} is replaced by Proposition \ref{13.1.2}.
\end{proof}
Analogously, Lemma \ref{3.2.4} still holds, and Proposition \ref{3.2.5} would still hold if Proposition \ref{3.1.1} in it were replaced by Proposition \ref{13.1.2} as follows.
\begin{proposition} \label{13.2.2}
      $\operatorname{Int}_{T}^{\operatorname{Aut}}(S/B)$ is a complete $\wedge$-sublattice of the complete lattice $\operatorname{Sub}_T(S)$ defined in Proposition \ref{13.1.2}.
\end{proposition}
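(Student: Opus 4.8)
The plan is to mimic exactly the proof of Proposition~\ref{3.2.5}, substituting Proposition~\ref{13.1.2} for Proposition~\ref{3.1.1} at the one place where the ambient complete lattice on $\operatorname{Sub}_T(S)$ is invoked. Concretely, I would first recall, via Definition~\ref{2.2.1}, that every element of $\operatorname{Int}_{T}^{\operatorname{Aut}}(S/B)$ has the form $S^{H_i}$ with $B\subseteq S^{H_i}$ and $H_i\subseteq \operatorname{Aut}_{T}(S)$. Then, for an arbitrary family $\{S^{H_i}\mid i\in I\}$ of such elements, I would apply Lemma~\ref{3.2.4} (which still holds in the partial-operator setting, as noted just before this statement) to obtain
\[
\bigcap_{i\in I} S^{H_i} = S^{\bigcup_{i\in I} H_i},
\]
and since $\bigcup_i H_i\subseteq \operatorname{Aut}_{T}(S)$ and $B\subseteq \bigcap_i S^{H_i}$, this intersection again lies in $\operatorname{Int}_{T}^{\operatorname{Aut}}(S/B)$. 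This shows $\operatorname{Int}_{T}^{\operatorname{Aut}}(S/B)$ is closed under arbitrary meets.

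Next I would note $\operatorname{Int}_{T}^{\operatorname{Aut}}(S/B)\subseteq \operatorname{Sub}_T(S)$ by Proposition~\ref{11.3.1} (the partial-operator version of Proposition~\ref{1.4.2}), so that each $S^{H_i}$ is a genuine quasi-$T$-subspace of $S$. Combining this with the meet-closure just established, and with the fact that $\operatorname{Sub}_T(S)$ is a complete lattice by Proposition~\ref{13.1.2} whose meet operation is ordinary intersection, I conclude that $\operatorname{Int}_{T}^{\operatorname{Aut}}(S/B)$ is a complete $\wedge$-sublattice of $\operatorname{Sub}_T(S)$. The proof is essentially one sentence of bookkeeping: the statement in the excerpt already signals ``Almost the same as that of Proposition~\ref{3.2.5} except that Proposition~\ref{3.1.1} is replaced by Proposition~\ref{13.1.2}'', so I would write it in that telescoped style.

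The only subtlety — and it is minor — is making sure the replacement is the \emph{only} change needed: one must check that the meet in $\operatorname{Sub}_T(S)$ under the new Proposition~\ref{13.1.2} is still set-theoretic intersection (it is, exactly as under Proposition~\ref{3.1.1}; only the join changed to $\langle S_1\cup S_2\rangle_S$). Since the $\wedge$-sublattice claim involves only meets, the altered join is irrelevant here, and Lemma~\ref{3.2.4} supplies precisely the needed identification of arbitrary intersections with fixed sets. There is no real obstacle; the ``hard part'', such as it is, is merely confirming that nothing in the partial-function generalization breaks Lemma~\ref{3.2.4} or Proposition~\ref{11.3.1}, both of which are already asserted earlier in the text. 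Thus I would present the proof as a two-line argument parallel to Proposition~\ref{13.2.1}, citing Lemma~\ref{3.2.4}, Proposition~\ref{11.3.1}, and Proposition~\ref{13.1.2}.
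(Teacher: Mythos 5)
Your proposal is correct and matches the paper's own proof, which simply runs the argument of Proposition \ref{3.2.5} (Definition \ref{2.2.1}, Lemma \ref{3.2.4}, Proposition \ref{1.4.2}/\ref{11.3.1}) with Proposition \ref{3.1.1} replaced by Proposition \ref{13.1.2}. Your added check that the meet in Proposition \ref{13.1.2} is still set-theoretic intersection is exactly the right bookkeeping point and nothing further is needed.
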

\begin{proof}
    Almost the same as the proof of Proposition \ref{3.2.5} except that Proposition \ref{3.1.1} is replaced by Proposition \ref{13.1.2}.
\end{proof}

\subsection{$\operatorname{GSMn}_{T}(S/B)$ and
$\operatorname{GSGr}_{T}(S/B)$} Lemma \ref{3.3.1}, Proposition \ref{3.3.2}, Lemma \ref{3.3.4} and Proposition \ref{3.3.5} still hold because their proofs still apply.

\section{Topologies employed to construct Galois correspondences} \label{II topologies employed}
	In this section, results obtained in Section \ref{I Topologies employed} are generalized for par-operator gen-semigroups.
\subsection{Topologies on a $T$-space $S$ for $\operatorname{Int}_T^{\operatorname{End}}(S/B)$}
Lemma \ref{4.1.1} and its proof still apply.

However, we could not generalize a part of Theorem \ref{4.1.2} straightforwardly because we could not generalize Proposition \ref{1.2.11}. Instead, we have the following, which in a sense is analogous to Theorem \ref{4.3.2}. 
\begin{theorem} \label{14.1.1}
    Let $S$ be a $T$-space and let $B\subseteq S$. Let
    \[{{P}_{1}}=\{\text{all topologies on $S$ such that Equation $($\ref{5.1}$)$ is satisfied}\}\] and let
	\[{{Q}_{1}}=\{\text{all topologies on $S$ which are finer than ${{\mathcal{T}}_{1}}$}\},\] 
 where ${{\mathcal{T}}_{1}}$ is defined in Lemma \ref{4.1.1}. Then ${{P}_{1}}\subseteq {{Q}_{1}}$ and the following statements are equivalent:
 \begin{enumerate}
     \item [(i)] ${{P}_{1}}\ne \emptyset $.
      \item [(ii)] For any intersection $K$ of finite unions of elements of $\operatorname{Int}_{T}^{\operatorname{End}}(S/B)$, $K\in \operatorname{Sub}_T(S)$ implies $K\in \operatorname{Int}_{T}^{\operatorname{End}}(S/B)\bigcup \{\emptyset \}$.
      \item [(iii)] ${{\mathcal{T}}_{1}}\in {{P}_{1}}(\subseteq {{Q}_{1}})$; that is, ${{\mathcal{T}}_{1}}$ is the coarsest topology on $S$ such that Equation $($\ref{5.1}$)$ is satisfied.
 \end{enumerate}

Moreover, if $\operatorname{Int}_{T}^{\operatorname{End}}(S/B)$ is a complete $\vee$-sublattice of the complete lattice $\operatorname{Sub}_T(S)$ defined in Proposition \ref{13.1.2}, then the above three statements are true.
\end{theorem}
\begin{proof}
    By Lemma \ref{4.1.1}, ${{\mathcal{T}}_{1}}$ is the smallest topology on $S$ such that the collection of all closed sets contains $\operatorname{Int}_{T}^{\operatorname{End}}(S/B)$. Suppose $ \mathcal{T} \in {{P}_{1}}$. Then all elements of $\operatorname{Int}_{T}^{\operatorname{End}}(S/B)$ are closed in $\mathcal{T} $, and thus $\mathcal{T}_1 \subseteq \mathcal{T}$. Hence $\mathcal{T} \in {{Q}_{1}}$. Therefore ${{P}_{1}}\subseteq {{Q}_{1}}$. 

(i) $\Rightarrow$ (ii):
Let $\mathcal{T} \in {{P}_{1}}$. Let $K$ be any intersection of finite unions of elements of $\operatorname{Int}_{T}^{\operatorname{End}}(S/B)$. By the definition of $P_1$, each element of $\operatorname{Int}_{T}^{\operatorname{End}}(S/B)$ is closed in $\mathcal{T} $, and so is $K$. Suppose $K\in \operatorname{Sub}_T(S)$. Then $K\in \{{S}'\,|\,B\subseteq {S}'\le _qS\}$. Again by the definition of $P_1$, $K\in \operatorname{Int}_{T}^{\operatorname{End}}(S/B)\bigcup \{\emptyset \}$. Hence (ii) is true.

(ii) $\Rightarrow$ (iii):
By the definition of ${{\mathcal{T}}_{1}}$ in Lemma \ref{4.1.1}, all elements of $\operatorname{Int}_{T}^{\operatorname{End}}(S/B)$ are closed in ${{\mathcal{T}}_{1}}$. So by Definition \ref{2.2.1} and Proposition \ref{11.3.1},
\begin{center}
    $\operatorname{Int}_{T}^{\operatorname{End}}(S/B)\backslash \{\emptyset \}\subseteq $ \{$B\subseteq {S}'\le _qS\,|\,{S}'$ is nonempty and closed in ${{\mathcal{T}}_{1}}$\}.
\end{center}

On the other hand, we prove
\begin{center}
    $\operatorname{Int}_{T}^{\operatorname{End}}(S/B)\backslash \{\emptyset \}\supseteq $ \{$B\subseteq {S}'\le _qS\,|\,{S}'$ is nonempty and closed in ${{\mathcal{T}}_{1}}$\} 
\end{center}
as follows.

Let $K\in \{B\subseteq {S}'\le _qS\,|\,{S}'$ is nonempty and closed in ${{\mathcal{T}}_{1}}\}$. We need to show $K\in \operatorname{Int}_{T}^{\operatorname{End}}(S/B)\backslash \{\emptyset \}$.

Because ${{\mathcal{T}}_{1}}$ is generated by subbasis ${{\beta }_{1}}$, ${{\mathcal{T}}_{1}}$ is the collection of all unions of finite intersections of elements of ${{\beta }_{1}}=\{S\backslash A\,|\,A\in \operatorname{Int}_{T}^{\operatorname{End}}(S/B)\}\bigcup \{S\}$. Thus $S\backslash K$ is a union of finite intersections of elements of ${{\beta }_{1}}$. It follows from DeMorgan’s Laws that $K$ is an intersection of finite unions of elements of $\operatorname{Int}_{T}^{\operatorname{End}}(S/B)\bigcup \{\emptyset \}$. Because $K\ne \emptyset $ and $K\in \operatorname{Sub}_T(S)$, by (ii), $K\in \operatorname{Int}_{T}^{\operatorname{End}}(S/B)\backslash \{\emptyset \}$, as desired. Hence
\begin{center}
    $\operatorname{Int}_{T}^{\operatorname{End}}(S/B)\backslash \{\emptyset \}\supseteq $ \{$B\subseteq {S}'\le _qS\,|\,{S}'$ is nonempty and closed in ${{\mathcal{T}}_{1}}$\} 
\end{center}
Therefore, Equation $($\ref{5.1}$)$ is satisfied for ${{\mathcal{T}}_{1}}$.

Thus ${{\mathcal{T}}_{1}}\in {{P}_{1}}$. We showed ${{P}_{1}}\subseteq {{Q}_{1}}$ at the beginning of our proof, and thus by the definition of ${{Q}_{1}}$, ${\mathcal{T}}_{1}$ is the coarsest topology in $P_1$. Hence by the definition of ${{P}_{1}}$, ${\mathcal{T}}_{1}$ is the coarsest topology on $S$ such that Equation $($\ref{5.1}$)$ is satisfied. 

(iii) $\Rightarrow$ (i): Obvious.

Let $K$ be any intersection of finite unions of elements of $\operatorname{Int}_{T}^{\operatorname{End}}(S/B)$. By Proposition \ref{13.2.1}, any intersection of elements of $\operatorname{Int}_{T}^{\operatorname{End}}(S/B)$ lies in $\operatorname{Int}_{T}^{\operatorname{End}}(S/B)$. Then as a result of the first distributive law of set operations, $K$ is a union of elements of $\operatorname{Int}_{T}^{\operatorname{End}}(S/B)$. Suppose that $\operatorname{Int}_{T}^{\operatorname{End}}(S/B)$ is a complete $\vee$-sublattice of the complete lattice $\operatorname{Sub}_T(S)$ defined in Proposition \ref{13.1.2}. Then $K\in \operatorname{Sub}_T(S)$ implies $K\in \operatorname{Int}_{T}^{\operatorname{End}}(S/B)$, and hence (ii) is true (in this case).
\end{proof}
If $T$ is an operator semigroup, then by Propositions \ref{1.2.11} and \ref{1.4.2}, $\forall {{S}_{1}},{{S}_{2}}\in \operatorname{Int}_{T}^{\operatorname{End}}(S/B)$, ${{S}_{1}}\bigcup {{S}_{2}}\in \operatorname{Sub}_T(S)$, which implies that, in this case, statement (ii) in Theorem \ref{4.1.2} coincides with statement (ii) in Theorem \ref{14.1.1}. So Theorem \ref{14.1.1} generalizes Theorem \ref{4.1.2}.

\subsection{Topologies on a $T$-space $S$ for $\operatorname{Int}_{T}^{\operatorname{Aut}}(S/B) $}
Lemma \ref{4.2.1} and its proof still apply.

We could not generalize a part of Theorem \ref{4.2.2} straightforwardly because we could not generalize Proposition \ref{1.2.11}. Instead, we have the following, which is analogous to Theorem \ref{14.1.1}. 
\begin{theorem}   \label{14.2.1}
Let $S$ be a $T$-space and let $B\subseteq S$. Let
	\[{{P}_{2}}=\{\text{all topologies on $S$ such that Equation $($\ref{5.3}$)$ is satisfied}\}\] and let
	\[{{Q}_{2}}=\{\text{all topologies on $S$ which are finer $($larger$)$ than ${{\mathcal{T}}_{2}}$}\},\] 
 where ${{\mathcal{T}}_{2}}$ is defined in Lemma \ref{4.2.1}. Then ${{P}_{2}}\subseteq {{Q}_{2}}$ and the following statements are equivalent:
\begin{enumerate}
    \item[(i)] ${{P}_{2}}\ne \emptyset $.
    \item[(ii)] 	For any intersection $K$ of finite unions of elements of $\operatorname{Int}_{T}^{\operatorname{Aut}}(S/B)$, $K\in \operatorname{Sub}_T(S)$ implies $K\in \operatorname{Int}_{T}^{\operatorname{Aut}}(S/B)\bigcup \{\emptyset \}$.
  \item[(iii)]	${{\mathcal{T}}_{2}}\in {{P}_{2}}(\subseteq {{Q}_{2}})$; that is, ${{\mathcal{T}}_{2}}$ is the coarsest topology on $S$ such that Equation $($\ref{5.3}$)$ is satisfied.
\end{enumerate}

Moreover, if $\operatorname{Int}_{T}^{\operatorname{Aut}}(S/B)$ is a complete $\vee$-sublattice of the complete lattice $\operatorname{Sub}_T(S)$ defined in Proposition \ref{13.1.2}, then the above three statements are true.
\end{theorem}
\begin{proof}
    Almost the same as the proof of Theorem \ref{14.1.1} except that $\operatorname{Int}_{T}^{\operatorname{End}}(S/B)$, ${{\mathcal{T}}_{1}}$, ${{\beta }_{1}}$, ${{P}_{1}}$ and ${{Q}_{1}}$ are replaced by $\operatorname{Int}_{T}^{\operatorname{Aut}}(S/B)$, ${{\mathcal{T}}_{2}}$, ${{\beta }_{2}}$, ${{P}_{2}}$ and ${{Q}_{2}}$, respectively, and accordingly, Lemma \ref{4.1.1} and Proposition \ref{13.2.1} are replaced by Lemma \ref{4.2.1} and Proposition \ref{13.2.2}, respectively.	
\end{proof}
Just as Theorem \ref{14.1.1} generalizes Theorem \ref{4.1.2}, Theorem \ref{14.2.1} generalizes Theorem \ref{4.2.2}.

\subsection{Topologies on $\operatorname{End}_{T}(S)$}
Lemma \ref{4.3.1} and its proof still apply.
Theorem \ref{4.3.2} still holds because its proof still applies. 

\subsection{Topologies on $\operatorname{Aut}_{T}(S)$} 
Lemma \ref{4.4.1} and its proof still apply.
Theorem \ref{4.4.2} still holds because its proof still applies. 

\section{Constructions of the generalized morphisms and isomorphisms} \label{II Cons of $T$-mor and}
	In this section, results obtained in Section \ref{Cons of $T$-mor and theta-mor} are generalized for par-operator gen-semigroups.
\subsection{A construction of $T$-morphisms from ${{\left\langle u \right\rangle }_{T}}(u \in D)$} \label{A construction of T-morphisms}
 Although the contents of this subsection will be further generalized in the next subsection, we still keep this subsection because we will need it in Sections \ref{Transitive} and \ref{Two questions}.
 
 Notation \ref{5.1.1} is generalized for partial functions of multiple variables as follows.
 \begin{notation}  \label{15.1.1}
     Let $M \subseteq \bigcup\nolimits_{n\in {{\mathbb{Z}}^{+}}}{\{}$all partial functions from ${{D}^{n}}$ to $D$\}, where $D$ is a set. Let $u,v\in D$. 
    By $u\xrightarrow{M}v$ we mean that
    
    $\forall n,m\in {{\mathbb{Z}}^{+}}$, $f\in M$ of $n$ variables and $g\in M$ of $m$ variables,
    \begin{center}
        $f({{u}^{n}})=g({{u}^{m}})$ implies $f({{v}^{n}})=g({{v}^{m}})$,
    \end{center}
 where ${{x}^{k}}$ denotes the $k$-tuple $(x,\cdots ,x)$.

Besides, let ${{[u)}_{M}}$ denote the set $\{w\in D\,|\,u\xrightarrow{M}w\}$.

Moreover, by $u\overset{M}{\longleftrightarrow}v$ we mean that $u\xrightarrow{M}v$ and $v\xrightarrow{M}u$. 

Besides, let ${{[u]}_{M}}$ denote the set $\{w\in D\,|\,u\overset{M}{\longleftrightarrow}w\}$. 
 \end{notation}
\begin{remark} \begin{enumerate}
    \item Suppose $u\xrightarrow{M}v$. Then $\forall n\in {{\mathbb{Z}}^{+}}$ and $f\in M$ of $n$ variables, $f({{u}^{n}})=f({{u}^{n}})$ implies $f({{v}^{n}})=f({{v}^{n}})$, and hence by Convention \ref{convention}, that $f({{u}^{n}})$ is well-defined implies that $f({{v}^{n}})$ is well-defined.
    \item Apparently, $u\overset{M}{\longleftrightarrow}v$ defines an equivalence relation on $D$.
\end{enumerate} \end{remark}
Then Proposition \ref{5.1.3} still holds but its proof changes as follows.
\begin{proposition} \label{15.1.2}
    \emph{(Proposition \ref{5.1.3})} Let $\sigma$ be a $T$-morphism from a $T$-space $S$. Then $\forall a\in S$, $a\xrightarrow{T}\sigma (a)$.
\end{proposition}
 \begin{proof}
     Let $f\in T$ have $n$ variables, let $g\in T$ have $m$ variables and let $a\in S$.

If $f({{a}^{n}})=g({{a}^{m}})$,  where ${{x}^{k}}$ denotes the $k$-tuple $(x,\cdots ,x)$, then by Definition \ref{10.3.1}, both $f({{(\sigma (a))}^{n}})$ and $g({{(\sigma (a))}^{m}})$ are well-defined and
\[f({{(\sigma (a))}^{n}})=\sigma (f({{a}^{n}}))=\sigma (g({{a}^{m}}))=g({{(\sigma (a))}^{m}}).\] 

Therefore, $a\xrightarrow{T}\sigma (a)$.
 \end{proof}
 Analogously, it is not hard to tell that Proposition \ref{5.1.4} still holds.

Proposition \ref{5.1.6} can be generalized for par-operator gen-semigroups, but we omit it for brevity.

Proposition \ref{5.1.8} is generalized to
\begin{proposition} \label{15.1.5}
    Let $u,v\in D$. \begin{enumerate}
        \item [(a)] The following statements are equivalent:
    \begin{enumerate}
        \item [(i)] $u\xrightarrow{T}v$. 
        \item [(ii)] There exists a map $\sigma :{{\left\langle u \right\rangle }_{T}}\to {{\left\langle v \right\rangle }_{T}}$ given by $f({{u}^{n}})\mapsto f({{v}^{n}})$, $\forall n\in {{\mathbb{Z}}^{+}}$ and $f\in T$ of $n$ variables such that $f({{u}^{n}})$ is well-defined.
    \end{enumerate}
   \item [(b)] The map $\sigma $ given in \emph{(ii)} is a $T$-morphism.
  \end{enumerate}
\end{proposition}
\begin{proof}
For (a): 

$u\xrightarrow{T}v$;\\*
$\Leftrightarrow \forall n,m\in {{\mathbb{Z}}^{+}}$, $f\in T$ of $n$ variables and $g\in T$ of $m$ variables, $f({{u}^{n}})=g({{u}^{m}})$ implies $f({{v}^{n}})=g({{v}^{m}})$;\\* 
$\Leftrightarrow \sigma :{{\left\langle u \right\rangle }_{T}}\to {{\left\langle v \right\rangle }_{T}}$ given by $f({{u}^{n}})\mapsto f({{v}^{n}})$, $\forall n\in {{\mathbb{Z}}^{+}}$ and $f\in T$ of $n$ variables such that $f({{u}^{n}})$ is well-defined, is a well-defined map (\textit{cf.} Remark (1) right after Notation \ref{15.1.1}).

For (b): 

 Let $(a_1,\cdots ,a_n)\in \left\langle u \right\rangle _{T}^{n}$. Then $\forall i=1, \cdots, n$, $\exists {{g}_{i}}\in T$ of ${{n}_{i}}$ variables such that ${{g}_{i}}({{u}^{{{n}_{i}}}})={{a}_{i}}$.

Let $f\in T$ have $n$ variables. By Definition \ref{10.1.1}, $f\circ ({{g}_{1}},\cdots ,{{g}_{n}})$ is a partial function from ${{D}^{m}}$ to $D$ where $m=\sum\nolimits_{i}{{{n}_{i}}}$. By Definition \ref{10.1.3}, $f\circ ({{g}_{1}},\cdots ,{{g}_{n}})$ is a restriction of some element of $T$. Hence

$f(a_1,\cdots ,a_n)$ is well-defined;\\* $\Leftrightarrow \sigma (f(a_1,\cdots ,a_n))$ 

$=\sigma (f({{g}_{1}}({{u}^{{{n}_{1}}}}),\cdots ,{{g}_{n}}({{u}^{{{n}_{n}}}})))$

$=\sigma ((f\circ ({{g}_{1}},\cdots ,{{g}_{n}}))({{u}^{m}}))$ (by Definition \ref{10.1.1})

$=(f\circ ({{g}_{1}},\cdots ,{{g}_{n}}))({{v}^{m}})$  (by the definition of $\sigma $ in (ii))

$=f({{g}_{1}}({{v}^{{{n}_{1}}}}),\cdots ,{{g}_{n}}({{v}^{{{n}_{n}}}}))$ (by Definition \ref{10.1.1})

$=f(\sigma ({{g}_{1}}({{u}^{{{n}_{1}}}})),\cdots ,\sigma ({{g}_{n}}({{u}^{{{n}_{n}}}})))$ (by the definition of $\sigma $ in (ii)) 

$=f(\sigma ({{a}_{1}}),\cdots ,\sigma ({{a}_{n}}))$ 

is well-defined; \\* as desired for $\sigma $ to be a $T$-morphism (by Definition \ref{10.3.1}).	
\end{proof}
Corollary \ref{5.1.9} is generalized to the following, which is a straightforward result of Proposition \ref{15.1.5}.
\begin{corollary} \label{15.1.6}
    Let $u,v\in D$. \begin{enumerate}
        \item [(a)]  The following statements are equivalent:
    \begin{enumerate}
        \item [(i)] $u\overset{T}{\longleftrightarrow}v$. 
         \item [(ii)] There exists a bijective map $\sigma :{{\left\langle u \right\rangle }_{T}}\to {{\left\langle v \right\rangle }_{T}}$ given by $f({{u}^{n}})\mapsto f({{v}^{n}})$, $\forall n\in {{\mathbb{Z}}^{+}}$ and $f\in T$ of $n$ variables such that $f({{u}^{n}})$ is well-defined.
    \end{enumerate}
     \item [(b)] The bijective map $\sigma $ given in \emph{(ii)} is a $T$-isomorphism. 
    \end{enumerate}
 \end{corollary}

\subsection{A construction of $T$-morphisms from ${{\left\langle U \right\rangle }_{T}}(U \subseteq D)$} \label{16 cons of T-mor from U}
For partial functions of multiple variables, we generalize Notation \ref{5.2.1} as follows.
\begin{notation} \label{15.2.1}
    Let $D$ be a set, let $\alpha :U(\subseteq D)\to V(\subseteq D)$ be a map and let $M$ be a subset of $\bigcup\nolimits_{n\in {{\mathbb{Z}}^{+}}}{\{}$all partial functions from ${{D}^{n}}$ to $D$\}. 
    
    By $U\xrightarrow{M,\alpha }V$ we mean that
$\forall n,m\in {{\mathbb{Z}}^{+}}$, $f, g\in M$, $({{u}_{1}},\cdots ,{{u}_{n}})\in {{U}^{n}}$, and $({{w}_{1}},\cdots ,{{w}_{m}})\in {{U}^{m}}$,\\*$f({{u}_{1}},\cdots ,{{u}_{n}})=g({{w}_{1}},\cdots ,{{w}_{m}})\Rightarrow f(\alpha ({{u}_{1}}),\cdots ,\alpha ({{u}_{n}}))=g(\alpha ({{w}_{1}}),\cdots ,\alpha ({{w}_{m}}))$. 

Moreover, by $U\overset{M,\alpha }{\longleftrightarrow}V $ we mean that $\alpha $ is bijective, $U\xrightarrow{M,\alpha }V $ and $V \xrightarrow{M,{{\alpha }^{-1}}}U$; or equivalently, by $U\overset{M,\alpha }{\longleftrightarrow}V $ we mean that $\alpha $ is bijective and $\forall n,m\in {{\mathbb{Z}}^{+}}$, $f, g\in M$, $({{u}_{1}},\cdots ,{{u}_{n}})\in {{U}^{n}}$, and $({{w}_{1}},\cdots ,{{w}_{m}})\in {{U}^{m}}$,\\*$f({{u}_{1}},\cdots ,{{u}_{n}})=g(w_1,\cdots ,w_m)\Leftrightarrow f(\alpha ({{u}_{1}}),\cdots ,\alpha ({{u}_{n}}))=g(\alpha ({{w}_{1}}),\cdots ,\alpha ({{w}_{m}}))$.
\end{notation}
\begin{remark}
    Suppose $U\xrightarrow{M,\alpha }V $. Then $\forall f\in M$ of $n$ variables and $({{u}_{1}},\cdots ,{{u}_{n}})\in {{U}^{n}}$,\\*
$f({{u}_{1}},\cdots ,{{u}_{n}})=f({{u}_{1}},\cdots ,{{u}_{n}})$ $\Rightarrow $ $f(\alpha ({{u}_{1}}),\cdots ,\alpha ({{u}_{n}}))=f(\alpha ({{u}_{1}}),\cdots ,\alpha ({{u}_{n}}))$.\\*
Hence by Convention \ref{convention}, $f(\alpha ({{u}_{1}}),\cdots ,\alpha ({{u}_{n}}))$ is well-defined if $f({{u}_{1}},\cdots ,{{u}_{n}})$ is well-defined. 
\end{remark}
Then Proposition \ref{5.2.2} still holds but its proof changes as follows.
\begin{proposition} \label{15.2.2}
   \emph{ (Proposition \ref{5.2.2}) }  Let $\sigma$ be a $T$-morphism from a $T$-space $S_1$ to a $T$-space $S_2$. Then $\forall A\subseteq {{S}_{1}}$, $A\xrightarrow{T,\alpha }{S}_{2} $, where $\alpha :=\sigma {{|}_{A}}$. In particular, ${{S}_{1}}\xrightarrow{T,\sigma }{S}_{2}$. 
\end{proposition}
 \begin{proof}
    Let $n,m\in {{\mathbb{Z}}^{+}}$, let $f,g\in T$, let $({{a}_{1}},\cdots ,{{a}_{n}})\in {{A}^{n}}$, and let $({{b}_{1}},\cdots ,{{b}_{m}})\in {{A}^{m}}$. If $f({{a}_{1}},\cdots ,{{a}_{n}})=g({{b}_{1}},\cdots ,{{b}_{m}})$, by Definition \ref{10.3.1}, both $f(\sigma ({{a}_{1}}),\cdots ,\sigma ({{a}_{n}}))$ and $g(\sigma ({{b}_{1}}),\cdots ,\sigma ({{b}_{m}}))$ are well-defined and

    $f(\alpha ({{a}_{1}}),\cdots ,\alpha ({{a}_{n}}))$

$=f(\sigma ({{a}_{1}}),\cdots ,\sigma ({{a}_{n}}))$ (since $\alpha =\sigma {{|}_{A}}$)

$=\sigma (f({{a}_{1}},\cdots ,{{a}_{n}}))$ (by Definition \ref{10.3.1})

$=\sigma (g({{b}_{1}},\cdots ,{{b}_{m}}))$ (because $f({{a}_{1}},\cdots ,{{a}_{n}})=g({{b}_{1}},\cdots ,{{b}_{m}})$)

$=g(\sigma ({{b}_{1}}),\cdots ,\sigma ({{b}_{m}}))$ (by Definition \ref{10.3.1})

$=g(\alpha ({{b}_{1}}),\cdots ,\alpha ({{b}_{m}}))$ (because $\alpha =\sigma {{|}_{A}}$).

Hence by Notation \ref{15.2.1}, $A\xrightarrow{T,\alpha }S_2 $. 
 \end{proof}
 Proposition \ref{5.2.4} is generalized to the following, which also generalizes Proposition \ref{15.1.5}.
\begin{proposition} \label{15.2.3}
    Let $\alpha :U(\subseteq D)\to V(\subseteq D)$ be a map.  \begin{enumerate}
        \item [(a)] The following statements are equivalent:
    \begin{enumerate}
        \item [(i)] $U\xrightarrow{T,\alpha }V $.
        \item[(ii)] There exists a map $\sigma :{{\left\langle U \right\rangle }_{T}}\to {{\left\langle V  \right\rangle }_{T}}$ given by
$f({{u}_{1}},\cdots ,{{u}_{n}})\mapsto f(\alpha ({{u}_{1}}),\cdots ,\alpha ({{u}_{n}}))$, $\forall n\in {{\mathbb{Z}}^{+}}$, $({{u}_{1}},\cdots ,{{u}_{n}})\in {{U}^{n}}$ and $f\in T$ such that $f({{u}_{1}},\cdots ,{{u}_{n}})$ is well-defined.
    \end{enumerate}
    \item [(b)] The map $\sigma $ given in \emph{(ii)} is a $T$-morphism. 
    \end{enumerate}
 \end{proposition}
 \begin{proof} For (a):
 
     $U\xrightarrow{T,\alpha }V $;\\*
$\Leftrightarrow \forall n,m\in {{\mathbb{Z}}^{+}}$, $f\in T$ of $n$ variables, $g\in T$ of $m$ variables, $({{u}_{1}},\cdots ,{{u}_{n}})\in {{U}^{n}}$, and $({{w}_{1}},\cdots ,{{w}_{m}})\in {{U}^{m}}$,
\[f({{u}_{1}},\cdots ,{{u}_{n}})=g({{w}_{1}},\cdots ,{{w}_{m}})\] 
implies \[f(\alpha ({{u}_{1}}),\cdots ,\alpha ({{u}_{n}}))=g(\alpha ({{w}_{1}}),\cdots ,\alpha ({{w}_{m}}));\]
$\Leftrightarrow \sigma :{{\left\langle U \right\rangle }_{T}}\to {{\left\langle V  \right\rangle }_{T}}$ given by \[f({{u}_{1}},\cdots ,{{u}_{n}})\mapsto f(\alpha ({{u}_{1}}),\cdots ,\alpha ({{u}_{n}})),\] $\forall n\in {{\mathbb{Z}}^{+}}$, $({{u}_{1}},\cdots ,{{u}_{n}})\in {{U}^{n}}$ and $f\in T$ such that $f({{u}_{1}},\cdots ,{{u}_{n}})$ is well-defined, is a well-defined map. 

 For (b):

 Let $(a_1,\cdots ,a_n)\in \left\langle U \right\rangle _{T}^{n}$. Then $\forall i=1, \cdots, n$, $\exists {{g}_{i}}\in T$ of ${{n}_{i}}$ variables and ${{\text{z}}_{i}}\in {{U}^{{{n}_{i}}}}$ such that ${{g}_{i}}({{\text{z}}_{i}})={{a}_{i}}$. For ${{\text{z}}_{i}}:=({{u}_{1}},\cdots ,{{u}_{{{n}_{i}}}})$, we denote $(\alpha ({{u}_{1}}),\cdots ,\alpha ({{u}_{{{n}_{i}}}}))$ by $\alpha ({{\text{z}}_{i}})$ for brevity. 

Let $f\in T$ have $n$ variables. By Definition \ref{10.1.1}, $f\circ ({{g}_{1}},\cdots ,{{g}_{n}})$ is a partial function from ${{D}^{m}}$ to $D$ given by $({{\text{v}}_{1}},\cdots ,{{\text{v}}_{n}})\mapsto f({{g}_{1}}({{\text{v}}_{1}}),\cdots ,{{g}_{n}}({{\text{v}}_{n}}))$, where $({{\text{v}}_{1}},\cdots ,{{\text{v}}_{n}})=({{x}_{1}},\cdots ,{{x}_{m}})\in {{D}^{m}}$ and $m=\sum\nolimits_{i}{{{n}_{i}}}$. By Definition \ref{10.1.3}, $f\circ ({{g}_{1}},\cdots ,{{g}_{n}})$ is a restriction of some element of $T$. Hence

$f(a_1,\cdots ,a_n)$ is well-defined;\\* 
$\Leftrightarrow \sigma (f(a_1,\cdots ,a_n))$ 

$=\sigma (f({{g}_{1}}({{\text{z}}_{1}}),\cdots ,{{g}_{n}}({{\text{z}}_{n}})))$

$=\sigma ((f\circ ({{g}_{1}},\cdots ,{{g}_{n}}))({{\text{z}}_{1}},\cdots ,{{\text{z}}_{n}}))$ (by Definition \ref{10.1.1})

$=(f\circ ({{g}_{1}},\cdots ,{{g}_{n}}))(\alpha ({{\text{z}}_{1}}),\cdots ,\alpha ({{\text{z}}_{n}}))$ (by the definition of $\sigma $ in (ii))

$=f({{g}_{1}}(\alpha ({{\text{z}}_{1}})),\cdots ,{{g}_{n}}(\alpha ({{\text{z}}_{n}})))$ (by Definition \ref{10.1.1})

$=f(\sigma ({{g}_{1}}({{\text{z}}_{1}})),\cdots ,\sigma ({{g}_{n}}({{\text{z}}_{n}})))$ (by the definition of $\sigma $ in (ii))

$=f(\sigma ({{a}_{1}}),\cdots ,\sigma ({{a}_{n}}))$ 

is well-defined; \\*
as desired for $\sigma $ to be a $T$-morphism (by Definition \ref{10.3.1}).
 \end{proof}
Then Proposition \ref{5.2.5} is generalized to the following.
\begin{proposition} \label{15.2.4}
    Let $\alpha :U(\subseteq D)\to V(\subseteq D)$ be a map. 
     \begin{enumerate}
        \item [(a)] The following statements are equivalent: 
        \begin{enumerate}
        \item [(i)] $U\overset{T,\alpha }{\longleftrightarrow}V $.
        \item [(ii)] $\alpha $ is bijective and there exists a bijective map $\sigma :{{\left\langle U \right\rangle }_{T}}\to {{\left\langle V  \right\rangle }_{T}}$ given by $f({{u}_{1}},\cdots ,{{u}_{n}})\mapsto f(\alpha ({{u}_{1}}),\cdots ,\alpha ({{u}_{n}}))$, $\forall n\in {{\mathbb{Z}}^{+}}$, $({{u}_{1}},\cdots ,{{u}_{n}})\in {{U}^{n}}$ and $f\in T$ such that $f({{u}_{1}},\cdots ,{{u}_{n}})$ is well-defined.
    \end{enumerate}
     \item [(b)] The map $\sigma $ given in \emph{(ii)} is a $T$-isomorphism.  
 \end{enumerate}
\end{proposition}
\begin{proof}
    For (a): 
    
    $U\overset{T,\alpha }{\longleftrightarrow}V $; 
    
    $\Leftrightarrow$ $\alpha $ is bijective and $\forall n,m\in {{\mathbb{Z}}^{+}}$, $({{u}_{1}},\cdots ,{{u}_{n}})\in {{U}^{n}}$, $({{w}_{1}},\cdots ,{{w}_{m}})\in {{U}^{m}}$ and $f, g\in T$\\
    $f({{u}_{1}},\cdots ,{{u}_{n}})=g(w_1,\cdots ,w_m)\Leftrightarrow f(\alpha ({{u}_{1}}),\cdots ,\alpha ({{u}_{n}}))=g(\alpha ({{w}_{1}}),\cdots ,\alpha ({{w}_{m}}))$;
    
    $\Leftrightarrow$ $\alpha $ is bijective and $\sigma :{{\left\langle U \right\rangle }_{T}}\to {{\left\langle V  \right\rangle }_{T}}$ given by 
    \[f({{u}_{1}},\cdots ,{{u}_{n}})\mapsto f(\alpha ({{u}_{1}}),\cdots ,\alpha ({{u}_{n}})),\]
    $\forall n\in {{\mathbb{Z}}^{+}}$, $({{u}_{1}},\cdots ,{{u}_{n}})\in {{U}^{n}}$ and $f\in T$ such that $f({{u}_{1}},\cdots ,{{u}_{n}})$ is well-defined, is a well-defined bijective map.
    
    For (b): 
    
    By (b) in Proposition \ref{15.2.3}, $\sigma $ is a $T$-morphism. Since $\sigma $ is bijective, by Definition \ref{$T$-isomorphisms}, it is a $T$-isomorphism.
\end{proof}
Definition \ref{5.2.7} is generalized to
\begin{definition} \label{15.2.5}
    Let $T$ be a par-operator gen-semigroup on $D$, let $U\subseteq D$, and let $\sigma $ be a $T$-morphism from ${{\left\langle U \right\rangle }_{T}}$. If there exists a map $\alpha :U\to D$ such that $\sigma (f({{u}_{1}},\cdots ,{{u}_{n}}))=f(\alpha ({{u}_{1}}),\cdots ,\alpha ({{u}_{n}}))$, $\forall n\in {{\mathbb{Z}}^{+}}$, $({{u}_{1}},\cdots ,{{u}_{n}})\in {{U}^{n}}$ and $f\in T$ such that $f({{u}_{1}},\cdots ,{{u}_{n}})$ is well-defined, then we say that $\sigma $ is \emph{constructible by} $\alpha $, or just say that $\sigma $ is \emph{constructible} for brevity. 
\end{definition}
Corollary \ref{5.2.9} still holds, but its proof changes a little as follows.
\begin{corollary} \label{15.2.6}
    \emph{(Corollary \ref{5.2.9})} Let $U\subseteq D$. If $U\subseteq {{\left\langle U \right\rangle }_{T}}$, then any $T$-morphism $\sigma $ from ${{\left\langle U \right\rangle }_{T}}$ is constructible by $\alpha :=\sigma {{|}_{U}}$.
\end{corollary}
\begin{proof}
    Since $U\subseteq {{\left\langle U \right\rangle }_{T}}$, by Definition \ref{10.3.1}, 
\[\sigma (f({{u}_{1}},\cdots ,{{u}_{n}}))=f(\sigma ({{u}_{1}}),\cdots ,\sigma ({{u}_{n}}))=f(\alpha ({{u}_{1}}),\cdots ,\alpha ({{u}_{n}})),\] 
$\forall n\in {{\mathbb{Z}}^{+}}$, $({{u}_{1}},\cdots ,{{u}_{n}})\in {{U}^{n}}$ and $f\in T$ such that $f({{u}_{1}},\cdots ,{{u}_{n}})$ is well-defined.  
\end{proof}
However, we could not generalize Theorem \ref{5.2.10}. Indeed, in Theorem \ref{5.2.10}, if $T$ were replaced by a par-operator gen-semigroup $T=\left\langle g \right\rangle $ (generated by Definition \ref{10.1.4}, where $g$ is a partial function from some ${{D}^{n}}$ to $D$) and if we defined $\alpha $ in a way as in the proof of Theorem \ref{5.2.10}, then because any $f\in T$ may have more than one variable, we could not show that $\alpha $ is a well-defined map.
  
\subsection{A construction of $\theta $-morphisms}
In this subsection, unless otherwise specified, $\theta$-morphisms are defined by Definition \ref{10.4.2}, and ${{T}_{1}}$ and ${{T}_{2}}$ are par-operator gen-semigroups. 

Notation \ref{5.4.1} is generalized to the following, which also generalizes Notation \ref{15.2.1}.
\begin{notation} \label{15.4.1}
   Let $D_1$ (resp. $D_2$) be a set, let $M_1$ (resp. $M_2$) be a subset of $\bigcup\nolimits_{n\in {{\mathbb{Z}}^{+}}}{\{}$all partial functions from $D_{1}^{n}$ to $D_1$\} (resp. a subset of $\bigcup\nolimits_{n\in {{\mathbb{Z}}^{+}}}{\{}$all partial functions from $D_{2}^{n}$ to $D_2$\}), let $\theta \subseteq {{M}_{1}}\times {{M}_{2}}$, and let $\alpha :U(\subseteq {D_1})\to V(\subseteq {{D}_{2}})$ be a map. 
   
   By $U\xrightarrow{\theta ,\alpha }V $ we mean that
    $\forall f,g\in \operatorname{Dom}\theta $, $n,m\in {{\mathbb{Z}}^{+}} $, $({{u}_{1}},\cdots ,{{u}_{n}})\in {{U}^{n}}$, and $({{w}_{1}},\cdots ,{{w}_{m}})\in {{U}^{m}}$,
    $f({{u}_{1}},\cdots ,{{u}_{n}})=g({{w}_{1}},\cdots ,{{w}_{m}})$
    implies
    \[\theta (f)(\alpha ({{u}_{1}}),\cdots ,\alpha ({{u}_{n}}))=\theta (g)(\alpha ({{w}_{1}}),\cdots ,\alpha ({{w}_{m}})).\]

   Moreover, by $U\overset{\theta ,\alpha }{\longleftrightarrow}V $ we mean that $\alpha $ is bijective, $U\xrightarrow{\theta ,\alpha }V $ and $V \xrightarrow{{{\theta }^{-1}},{{\alpha }^{-1}}}U$, where ${{\theta }^{-1}}:=\{(h,f)\,|\,(f,h)\in \theta \}$.  
\end{notation}
\begin{remark}
    If ${{M}_{1}}={{M}_{2}}=:M$ and $\theta $ is the identity map on $M$, then $U\xrightarrow{\theta ,\alpha }V $ is equivalent to $U\xrightarrow{M,\alpha }V $ (defined by Notation \ref{15.2.1}).
\end{remark}
    Proposition \ref{5.4.2} would still hold if $T_1$ and $T_2$ in it were generalized to be par-operator gen-semigroups, but the proof would change as follows.
\begin{proposition} \label{15.4.2}
    Let $\phi $ be a $\theta $-morphism from a $T_1$-space $S_1$ to a $T_2$-space $S_2$. Then $\forall A\subseteq {{S}_{1}}$, $A\xrightarrow{\theta ,\alpha }S_2 $, where $\alpha :=\phi {{|}_{A}}$. In particular, ${{S}_{1}}\xrightarrow{\theta ,\phi }S_2 $.
\end{proposition} 
\begin{proof}
    By Definition \ref{10.4.2}, $\forall f,g\in \operatorname{Dom}\theta $, $(a_1,\cdots ,a_n)\in {{A}^{n}}$ and $({{b}_{1}},\cdots ,{{b}_{m}})\in {{A}^{m}}$, if $f(a_1,\cdots ,a_n)=g({{b}_{1}},\cdots ,{{b}_{m}})$, then

    $\theta (f)(\phi ({{a}_{1}}),\cdots ,\phi ({{a}_{n}}))$ 

$=\phi (f(a_1,\cdots ,a_n))$ (by Definition \ref{10.4.2})

$=\phi (g({{b}_{1}},\cdots ,{{b}_{m}}))$ (since $f(a_1,\cdots ,a_n)=g({{b}_{1}},\cdots ,{{b}_{m}})$)

$=\theta (g)(\phi ({{b}_{1}}),\cdots ,\phi ({{b}_{m}}))$ (by Definition \ref{10.4.2}),

and hence $\theta (f)(\alpha ({{a}_{1}}),\cdots ,\alpha ({{a}_{n}}))=\theta (g)(\alpha ({{b}_{1}}),\cdots ,\alpha ({{b}_{m}}))$. By Notation \ref{15.4.1}, $A\xrightarrow{\theta ,\alpha }S_2 $.
\end{proof}
	Definition \ref{5.3.9} is generalized as follows.
 \begin{definition} \label{15.3.2}
     Let $T_1$ and $T_2$ be par-operator gen-semigroups on $D_1$ and $D_2$, respectively, let $\theta \subseteq {{T}_{1}}\times {{T}_{2}}$, and let $A\subseteq {{D}_{2}}$. If  $\forall f,{{g}_{1}},\cdots ,{{g}_{n}}\in \operatorname{Dom}\theta $ and $\text{z}\in {{A}^{m}}$ such that $\theta (f\circ ({{g}_{1}},\cdots ,{{g}_{n}}))(\text{z})$ is well-defined,
			\[\theta (f\circ ({{g}_{1}},\cdots ,{{g}_{n}}))(\text{z})=(\theta (f)\circ (\theta ({{g}_{1}}),\cdots ,\theta ({{g}_{n}})))(\text{z}),\]
then we say that $\theta $ is \emph{distributive over} $A$.
 \end{definition}
  Propositions \ref{5.4.3} and \ref{15.2.3} are generalized as follows. 
 \begin{proposition} \label{15.4.3}
     Let $T_1$ and $T_2$ be par-operator gen-semigroups on $D_1$ and $D_2$, respectively, let $\theta \subseteq {{T}_{1}}\times {{T}_{2}}$ with $\operatorname{Dom}\theta ={{T}_{1}}$, and let $\alpha :U(\subseteq {D_1})\to V(\subseteq{{D}_{2}})$ be a map.   \begin{enumerate}
       \item [(a)] The following statements are equivalent:
     \begin{enumerate}
         \item [(i)]  $U\xrightarrow{\theta ,\alpha }V $. 
         \item[(ii)]  There exists a map $\phi :{{\left\langle U \right\rangle }_{{{T}_{1}}}}\to {{\left\langle V  \right\rangle }_{{{T}_{2}}}}$ given by
 \[f({{u}_{1}},\cdots ,{{u}_{n}})\mapsto \theta (f)(\alpha ({{u}_{1}}),\cdots ,\alpha ({{u}_{n}})),\] 
$\forall n\in {{\mathbb{Z}}^{+}}$, $({{u}_{1}},\cdots ,{{u}_{n}})\in {{U}^{n}}$ and $f\in {{T}_{1}}$ such that $f({{u}_{1}},\cdots ,{{u}_{n}})$ is well-defined.
     \end{enumerate}
  \item [(b)] Suppose that $\theta {{|}_{\operatorname{Im}\phi }}$ is a map and $\theta $ is distributive over $V $. Then the map $\phi $ in \emph{(ii)} is a $\theta $-morphism. 
 \end{enumerate}
 \end{proposition}
\begin{proof}
For (a): 
   
    $U\xrightarrow{\theta ,\alpha }V $;\\*
$\Leftrightarrow \forall f,g\in \operatorname{Dom}\theta (={{T}_{1}})$, $({{u}_{1}},\cdots ,{{u}_{n}})\in {{U}^{n}}$ and $(w_1,\cdots ,w_m)\in {{U}^{m}}$,  
\[f({{u}_{1}},\cdots ,{{u}_{n}})=g(w_1,\cdots ,w_m)\] implies 
\[\theta (f)(\alpha ({{u}_{1}}),\cdots ,\alpha ({{u}_{n}}))=\theta (g)(\alpha ({{v}_{1}}),\cdots ,\alpha ({{v}_{m}}));\]
$\Leftrightarrow \phi :{{\left\langle U \right\rangle }_{{{T}_{1}}}}\to {{\left\langle V  \right\rangle }_{{{T}_{2}}}}$ given by
\[f({{u}_{1}},\cdots ,{{u}_{n}})\mapsto \theta (f)(\alpha ({{u}_{1}}),\cdots ,\alpha ({{u}_{n}})),\] 
$\forall n\in {{\mathbb{Z}}^{+}}$, $({{u}_{1}},\cdots ,{{u}_{n}})\in {{U}^{n}}$ and $f\in {{T}_{1}}$ such that $f({{u}_{1}},\cdots ,{{u}_{n}})$ is well-defined, is a well-defined map. 

For (b):  

Since condition (i) in Definition \ref{10.4.2} is satisfied, it suffices show that condition (ii) in Definition \ref{10.4.2} is satisfied.

Let $f\in {{T}_{1}}$ have $n$ variables and let $(a_1,\cdots ,a_n)\in \left\langle U \right\rangle _{{{T}_{1}}}^{n}$. Then $\forall i=1, \cdots, n$, $\exists {{g}_{i}}\in {{T}_{1}}$ of ${{n}_{i}}$ variables and ${{\text{z}}_{i}}\in {{U}^{{{n}_{i}}}}$ such that ${{g}_{i}}({{\text{z}}_{i}})={{a}_{i}}$. For ${{\text{z}}_{i}}:=({{u}_{1}},\cdots ,{{u}_{{{n}_{i}}}})$, we denote $(\alpha ({{u}_{1}}),\cdots ,\alpha ({{u}_{{{n}_{i}}}}))$ by $\alpha ({{\text{z}}_{i}})$ for brevity.

By Definition \ref{10.1.1}, $f\circ ({{g}_{1}},\cdots ,{{g}_{n}})$ is a partial function from $D_{1}^{m}$ to ${D_1}$ given by $({{\text{v}}_{1}},\cdots ,{{\text{v}}_{n}})\mapsto f({{g}_{1}}({{\text{v}}_{1}}),\cdots ,{{g}_{n}}({{\text{v}}_{n}}))$, where $({{\text{v}}_{1}},\cdots ,{{\text{v}}_{n}})=({{x}_{1}},\cdots ,{{x}_{m}})\in D_{1}^{m}$ and $m=\sum\nolimits_{i}{{{n}_{i}}}$. By Definition \ref{10.1.3}, $f\circ ({{g}_{1}},\cdots ,{{g}_{n}})$ is a restriction of some element of ${{T}_{1}}$. Hence

$f(a_1,\cdots ,a_n)$ is well-defined; \\
$\Leftrightarrow \phi (f(a_1,\cdots ,a_n))$ 

$=\phi (f({{g}_{1}}({{\text{z}}_{1}}),\cdots ,{{g}_{n}}({{\text{z}}_{n}})))$

$=\phi ((f\circ ({{g}_{1}},\cdots ,{{g}_{n}}))({{\text{z}}_{1}},\cdots ,{{\text{z}}_{n}}))$ (by Definition \ref{10.1.1})

$=\theta (f\circ ({{g}_{1}},\cdots ,{{g}_{n}}))(\alpha ({{\text{z}}_{1}}),\cdots ,\alpha ({{\text{z}}_{n}}))$ (by the definition of $\phi $ in (ii))

$=(\theta (f)\circ (\theta ({{g}_{1}}),\cdots ,\theta ({{g}_{n}})))(\alpha ({{\text{z}}_{1}}),\cdots ,\alpha ({{\text{z}}_{n}}))$ (since $\theta $ is distributive over $V $)

$=\theta (f)(\theta ({{g}_{1}})(\alpha ({{\text{z}}_{1}})),\cdots ,\theta ({{g}_{n}})(\alpha ({{\text{z}}_{n}})))$ (by Definition \ref{10.1.1})

$=\theta (f)(\phi ({{g}_{1}}({{\text{z}}_{1}})),\cdots ,\phi ({{g}_{n}}({{\text{z}}_{n}})))$ (by the definition of $\phi $ in (ii))

$=\theta (f)(\phi ({{a}_{1}}),\cdots ,\phi ({{a}_{n}}))$ 

is well-defined; \\*
as desired for $\phi $ to be a $\theta$-morphism.	
\end{proof}
The following generalizes both Propositions \ref{5.4.4} and \ref{15.2.4}.
 \begin{proposition} \label{15.4.4}
    Let $T_1$, $T_2$, and $\alpha $ be defined as in Proposition \ref{15.4.3}. Let $\theta \subseteq {{T}_{1}}\times {{T}_{2}}$ such that $\operatorname{Dom}\theta ={{T}_{1}}$ and $\operatorname{Im}\theta ={{T}_{2}}$. \begin{enumerate}
     \item [(a)]  The following statements are equivalent:
     \begin{enumerate}
    \item[(i)] $U\overset{\theta ,\alpha }{\longleftrightarrow}V $. 
    \item[(ii)] $\alpha $ is bijective, there exists a bijective map $\phi :{{\left\langle U \right\rangle }_{{{T}_{1}}}}\to {{\left\langle V  \right\rangle }_{{{T}_{2}}}}$ given by 
    \[f({{u}_{1}},\cdots ,{{u}_{n}})\mapsto \theta (f)(\alpha ({{u}_{1}}),\cdots ,\alpha ({{u}_{n}})),\] 
    $\forall n\in {{\mathbb{Z}}^{+}}$, $({{u}_{1}},\cdots ,{{u}_{n}})\in {{U}^{n}}$ and $f\in {{T}_{1}}$ such that $f({{u}_{1}},\cdots ,{{u}_{n}})$ is well-defined, and its inverse $\phi^{-1}:{{\left\langle V  \right\rangle }_{{{T}_{2}}}}\to {{\left\langle U \right\rangle }_{{{T}_{1}}}}$ can be given by 
    \[g(v_1, \cdots, v_n)\mapsto \theta^{-1} (g)(\alpha^{-1} (v_1),\cdots,\alpha^{-1} (v_n)),\] 
    $\forall n\in {{\mathbb{Z}}^{+}}$, $(v_1, \cdots, v_n)\in V^n$ and $g\in {{T}_{2}}$ such that $g(v_1, \cdots, v_n)$ is well-defined.
   \end{enumerate}
    \item [(b)] Suppose that $\theta $ is distributive over $V $ and $\theta {{|}_{\operatorname{Im}\phi }}$ is a map. Then the bijective map $\phi $ given in \emph{(ii)} is a $\theta $-isomorphism.
   \end{enumerate}
 \end{proposition}
\begin{proof}
    For (a): 
    
$U\overset{\theta ,\alpha }{\longleftrightarrow}V $;

$\Leftrightarrow$ $\alpha $ is bijective, $U\xrightarrow{\theta ,\alpha }V $ and $ V \xrightarrow{{{\theta }^{-1}},{{\alpha }^{-1}}}U$ (by Notation \ref{15.4.1});

$\Leftrightarrow$ $\alpha $ is bijective, $\phi :{{\left\langle U \right\rangle }_{{{T}_{1}}}}\to {{\left\langle V  \right\rangle }_{{{T}_{2}}}}$ given by 
\[f({{u}_{1}},\cdots ,{{u}_{n}})\mapsto \theta (f)(\alpha ({{u}_{1}}),\cdots ,\alpha ({{u}_{n}})),\]
$\forall n\in {{\mathbb{Z}}^{+}}$, $({{u}_{1}},\cdots ,{{u}_{n}})\in {{U}^{n}}$ and $f\in {{T}_{1}}$ such that $f({{u}_{1}},\cdots ,{{u}_{n}})$ is well-defined, is a well-defined map, and $\phi' :{{\left\langle V  \right\rangle }_{{{T}_{2}}}}\to {{\left\langle U \right\rangle }_{{{T}_{1}}}}$ given by 
\[g(v_1, \cdots, v_n)\mapsto \theta^{-1} (g)(\alpha^{-1} (v_1),\cdots,\alpha^{-1} (v_n)),\]
$\forall n\in {{\mathbb{Z}}^{+}}$, $(v_1, \cdots, v_n)\in V^n$ and $g\in {{T}_{2}}$ such that $g(v_1, \cdots, v_n)$ is well-defined, is also a well-defined map (by Proposition \ref{15.4.3});

$\Leftrightarrow$ $\alpha $ is bijective, there is a bijective map $\phi :{{\left\langle U \right\rangle }_{{{T}_{1}}}}\to {{\left\langle V  \right\rangle }_{{{T}_{2}}}}$ given by 
\[f({{u}_{1}},\cdots ,{{u}_{n}})\mapsto \theta (f)(\alpha ({{u}_{1}}),\cdots ,\alpha ({{u}_{n}})),\]
$\forall n\in {{\mathbb{Z}}^{+}}$, $({{u}_{1}},\cdots ,{{u}_{n}})\in {{U}^{n}}$ and $f\in {{T}_{1}}$ such that $f({{u}_{1}},\cdots ,{{u}_{n}})$ is well-defined, and its inverse $\phi^{-1}:{{\left\langle V  \right\rangle }_{{{T}_{2}}}}\to {{\left\langle U \right\rangle }_{{{T}_{1}}}}$ can be given by 
\[g(v_1, \cdots, v_n)\mapsto \theta^{-1} (g)(\alpha^{-1} (v_1),\cdots,\alpha^{-1} (v_n)),\] 
$\forall n\in {{\mathbb{Z}}^{+}}$, $(v_1, \cdots, v_n)\in V^n$ and $g\in {{T}_{2}}$ such that $g(v_1, \cdots, v_n)$ is well-defined.

   The third equivalence relation is explained as follows. The sufficiency ($\Leftarrow$) is obvious, so we only show the necessity ($\Rightarrow$). For this purpose, we show that both $\phi'\circ\phi$ and $\phi\circ\phi'$ are the identity map.
   
   Let $(f,g)\in \theta$ and $(u_1, \cdots, u_n)\in U^n$ such that $f(u_1, \cdots, u_n)$ is well-defined. Then 
   
   $\phi'(\phi(f(u_1, \cdots, u_n)))$
   
   $=\phi'(\theta(f)(\alpha (u_1),\cdots,\alpha (u_n)))$ (by the definition of $\phi$) 

   $=\phi'(g(\alpha (u_1),\cdots,\alpha (u_n)))$ (by Definition \ref{10.4.a})
   
   $=\theta^{-1}(g)(u_1, \cdots, u_n)$ (by the definition of $\phi'$) 
   
   $=f(u_1, \cdots, u_n)$ (by (the $\theta^{-1}$ version of) Definition \ref{10.4.a})
   
   Then $\phi'\circ\phi$ is the identity map on ${{\left\langle U  \right\rangle }_{{{T}_{1}}}}$ (because $\operatorname{Dom}\theta ={{T}_{1}}$).
   
   Analogously, we can show that $\phi\circ\phi'$ is the identity map on ${{\left\langle V  \right\rangle }_{{{T}_{2}}}}$. Therefore, both $\phi$ and $\phi'$ are bijective and $\phi'$ is the inverse of $\phi$.
   
       For (b): 

    By (b) in Proposition \ref{15.4.3}, $\phi $ is a $\theta$-morphism. Since $\phi $ is bijective, $\phi $ is a $\theta$-isomorphism.
\end{proof}
  Definition \ref{5.4.6} is generalized as follows, which also generalizes Definition \ref{15.2.5}.
\begin{definition} \label{15.4.5}
    Let $T_1$ and $T_2$ be par-operator gen-semigroups on $D_1$ and $D_2$, respectively, let $\theta \subseteq {{T}_{1}}\times {{T}_{2}}$, let $U \subseteq D_1$ and let $\phi$ be a $\theta$-morphism from ${\left\langle U \right\rangle }_{{{T}_{1}}}$ to a $T_2$-space. If $\operatorname{Dom}\theta ={{T}_{1}}$ and there exists a map $\alpha :U\to {{D}_{2}}$ such that $\phi (f({{u}_{1}},\cdots ,{{u}_{n}}))=\theta (f)(\alpha ({{u}_{1}}),\cdots ,\alpha ({{u}_{n}}))$, $\forall n\in {{\mathbb{Z}}^{+}}$, $({{u}_{1}},\cdots ,{{u}_{n}})\in {{U}^{n}}$ and $f\in {{T}_{1}}$ such that $f({{u}_{1}},\cdots ,{{u}_{n}})$ is well-defined, then we say that $\phi $ is \emph{constructible by} $\alpha $, or just say that $\phi $ is \emph{constructible} for brevity.
\end{definition}  

Corollary \ref{5.4.8} would still hold if $T_1$ and $T_2$ in it were generalized to be par-operator gen-semigroups, as shown below. The following also generalizes Corollary \ref{15.2.6}. 
\begin{corollary} \label{15.4.6}
    Let $T_1$ and $T_2$ be par-operator gen-semigroups on $D_1$ and $D_2$, respectively, let $\theta \subseteq {{T}_{1}}\times {{T}_{2}}$ and let $U\subseteq {D_1}$. If $\operatorname{Dom}\theta ={{T}_{1}}$ and $U\subseteq {{\left\langle U \right\rangle }_{{{T}_{1}}}}$, then any $\theta$-morphism $\phi $ from ${{\left\langle U \right\rangle }_{{{T}_{1}}}}$ to a $T_2$-space is constructible by $\alpha :=\phi {{|}_{U}}$.
\end{corollary}
\begin{proof}
    Since $\operatorname{Dom}\theta ={{T}_{1}}$ and $U\subseteq {{\left\langle U \right\rangle }_{{{T}_{1}}}}$, by Definition \ref{10.4.2}, for all $n\in {{\mathbb{Z}}^{+}}$, $({{u}_{1}},\cdots ,{{u}_{n}})\in {{U}^{n}}$ and $f\in {{T}_{1}}$ such that $f({{u}_{1}},\cdots ,{{u}_{n}})$ is well-defined,
\[\phi (f({{u}_{1}},\cdots ,{{u}_{n}}))=\theta (f)(\phi ({{u}_{1}}),\cdots ,\phi ({{u}_{n}}))=\theta (f)(\alpha ({{u}_{1}}),\cdots ,\alpha ({{u}_{n}})).\]
\end{proof}
	As we said at the end of Subsection \ref{16 cons of T-mor from U}, we could not generalize Theorem \ref{5.2.10}. For the same reason, we could not generalize Theorem \ref{5.4.9}, either.

	For a par-operator gen-semigroup $T$, we could not generalize results in Subsection \ref{I Another construction of $T$-morphisms} because now any $f\in T$ may have more than one variable.

$ $

\addcontentsline{toc}{section}{Part III. \textbf{Solvability of equations}}

\begin{center}
Part III. SOLVABILITY OF EQUATIONS
\end{center}

	In Part III, we shall deviate from the topics of Parts I and II and study solvability of equations. A main goal of the classical Galois theory is to study the solvability by radicals of polynomial equations. In Section \ref{poly equ}, we shall introduce a new understanding of solvability of polynomial equations, which involves a composition series of the Galois group of the polynomial. Analogously, for homogeneous linear differential equations, in Section \ref{diff equ}, we shall introduce a solvability which involves a normal series of the differential Galois group of the differential equation. In Section \ref{equ sol}, we shall generalize our results to “general” equations in terms of the theory developed in Parts I and II. 
 
\section{A solvability of polynomial equations} \label{poly equ}
    In this section, we shall show that for any separable polynomial $p(x)$ over a field $B$ with a splitting field $E\supsetneq B$, there are $m\in {\mathbb{Z}}^{+}$ polynomials $p_1(x), \cdots, p_m(x)$ such that the Galois group of each ${{p}_{i}}(x)$ is simple and for any root of $p(x)$ in $E$, there is a formula which only involves rational functions over $B$ on the roots of $p_1(x),\cdots, p_m(x)$ in $E$.
 \subsection{Canonical extensions and canonical solvability}
\begin{lemma} \label{17.1.1}
    Let $E/B$ be an algebraic field extension. Suppose $B={{E}^{\operatorname{Gal}(E/B)}}$, where ${{E}^{\operatorname{Gal}(E/B)}}$ denotes the fixed field of the Galois group $\operatorname{Gal}(E/B)$, and $G:=\operatorname{Gal}(E/B)$ has a composition series $G={{G}_{0}}\,\triangleright\, {{G}_{1}}\,\triangleright\, \cdots \,\triangleright\, {{G}_{m}}=\{1\}$. Then there are intermediate fields ${{B}_{0}},{{B}_{1}},\cdots ,{{B}_{m}}$ such that $B={{B}_{0}}\subseteq {{B}_{1}}\subseteq \cdots \subseteq {{B}_{m}}=E$, ${{B}_{i}}/{{B}_{i-1}}$ is Galois, and $\operatorname{Gal}({{B}_{i}}/{{B}_{i-1}})\cong {{G}_{i-1}}/{{G}_{i}}$ is a simple group or the trivial group $\{1\}$, $\forall i=1, \cdots, m$.
\end{lemma}
\begin{proof}
    $\forall i=0, \cdots, m$, let ${{B}_{i}}={{E}^{{{G}_{i}}}}$. Then $B={{B}_{0}}\subseteq {{B}_{1}}\subseteq \cdots \subseteq {{B}_{m}}=E$ and $\forall i=0, \cdots, m$,
$\operatorname{Gal}(E/{{B}_{i}})=\operatorname{Gal}(E/{{E}^{{{G}_{i}}}})={{G}_{i}}$, and hence
\[G=\operatorname{Gal}(E/{{B}_{0}})\,\triangleright\, \operatorname{Gal}(E/{{B}_{1}})\,\triangleright\, \cdots \,\triangleright\, \operatorname{Gal}(E/{{B}_{m}})=\{1\}.\]

Since the normal series is a composition series, by the fundamental theorem of the classical Galois theory or infinite Galois theory, $\forall i=1, \cdots, m$, ${{B}_{i}}/{{B}_{i-1}}$ is Galois and
\[\operatorname{Gal}({{B}_{i}}/{{B}_{i-1}})\cong \operatorname{Gal}(E/{{B}_{i-1}})/\operatorname{Gal}(E/{{B}_{i}})={{G}_{i-1}}/{{G}_{i}}\] 
is a simple group or the trivial group.
\end{proof}
	Lemma \ref{17.1.1} implies that we may define a notion of solvability other than the solvability by radicals. In Subsection \ref{Formulas for the roots}, it will be clear why we introduce the notions as follows.
\begin{definition} \label{17.1.2}
    A field extension $F/B$ is called an \emph{irreducible Galois extension} if $F/B$ is Galois and $\operatorname{Gal}(F/B)$ is a simple group or the trivial group.  A field extension $E/B$ is called a \emph{canonical $($field$)$ extension} if there is a tower of fields $B={{B}_{0}}\subseteq {{B}_{1}}\subseteq \cdots \subseteq {{B}_{m}}=E$, where $m\in {{\mathbb{Z}}^{+}}$ and $\forall i=1, \cdots, m$, ${{B}_{i}}/{{B}_{i-1}}$ is an irreducible Galois extension.
\end{definition}
\begin{definition} \label{17.1.3}
    A polynomial $p(x)$ is said to be \emph{canonically solvable} if there is a canonical field extension $E/B$ such that $p(x)$ is over $B$, $p(x)$ is not over any proper subfield of $B$, and $E$ is a splitting field of $p(x)$.
\end{definition}
\begin{proposition} \label{17.1.4}
    Every separable polynomial is canonically solvable, and so is every polynomial over a field of characteristic \emph{0}.
\end{proposition}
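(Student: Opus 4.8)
The plan is to exhibit, for a given separable polynomial $p(x)$, an explicit canonical field extension witnessing Definition \ref{17.1.3}. First I would let $B$ be the subfield generated over the prime field by the coefficients of $p(x)$. Any field over which $p(x)$ is defined contains all these coefficients and hence contains $B$; therefore $p(x)$ is over $B$ but over no proper subfield of $B$, so $B$ is the unique minimal field of definition of $p(x)$. Next I would take $E$ to be a splitting field of $p(x)$ over $B$. Since $p(x)$ has only finitely many roots, each of them algebraic over $B$, the extension $E/B$ is finite, and it is normal because $E$ is a splitting field.

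The key point is that $E/B$ is Galois. Because $p(x)$ is separable, the minimal polynomial over $B$ of each root of $p(x)$ is one of the (separable) irreducible factors of $p(x)$, hence is itself separable; thus $E$, being generated over $B$ by these roots, is a separable extension of $B$. A finite normal separable extension is Galois, so $E^{\operatorname{Gal}(E/B)}=B$. Now $G:=\operatorname{Gal}(E/B)$ is a finite group and therefore admits a composition series $G=G_0\triangleright G_1\triangleright\cdots\triangleright G_m=\{1\}$. Lemma \ref{17.1.1} then applies and yields intermediate fields $B=B_0\subseteq B_1\subseteq\cdots\subseteq B_m=E$ with each $B_i/B_{i-1}$ Galois and $\operatorname{Gal}(B_i/B_{i-1})\cong G_{i-1}/G_i$ simple or trivial, i.e. each $B_i/B_{i-1}$ is an irreducible Galois extension in the sense of Definition \ref{17.1.2}. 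Hence $E/B$ is a canonical field extension and $p(x)$ is canonically solvable. In the one degenerate case $E=B$ the composition series has length $m=0$; there I would instead use the one-step tower $B\subseteq B$, which is an irreducible Galois extension with trivial Galois group, so that the defining clause $m\in\mathbb{Z}^{+}$ of Definition \ref{17.1.2} is satisfied.

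For the characteristic $0$ statement I would reduce to the same argument: let $B$ be the minimal field of definition of $p(x)$ (which again has characteristic $0$) and $E$ a splitting field over $B$. Every root of $p(x)$ has an irreducible minimal polynomial over $B$, and in characteristic $0$ an irreducible polynomial has nonzero derivative and hence no repeated roots, so it is separable. Thus each root is separable over $B$, whence $E/B$ is a finite, normal, separable extension, i.e. Galois, and the argument of the previous paragraph goes through verbatim. (A constant polynomial is covered by the one-step trivial tower above.)

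The main obstacle I anticipate is bookkeeping rather than anything deep: one must check carefully that the chosen $B$ really is minimal, so that the hypothesis ``$p(x)$ is not over any proper subfield of $B$'' holds by construction, and that the two hypotheses of Lemma \ref{17.1.1} — namely $B=E^{\operatorname{Gal}(E/B)}$ and the existence of a composition series of $\operatorname{Gal}(E/B)$ — are genuinely in force, together with the small edge case $E=B$ needed to meet the requirement $m\geq 1$. Everything else is a direct invocation of standard facts (a splitting field of a separable polynomial is a finite Galois extension; every finite group has a composition series) followed by an application of Lemma \ref{17.1.1}.
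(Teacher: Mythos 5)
Your proof is correct and follows essentially the same route as the paper: take a splitting field $E$ over the minimal field of definition $B$, note $E/B$ is Galois with $B=E^{\operatorname{Gal}(E/B)}$, use finiteness of $\operatorname{Gal}(E/B)$ to get a composition series, and apply Lemma \ref{17.1.1} together with Definition \ref{17.1.3}. Your additional bookkeeping (explicit construction of $B$ from the coefficients, the $E=B$ edge case, and the characteristic-$0$ separability argument) merely fills in details the paper leaves implicit.
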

\begin{proof}
    Let $p(x)$ be a separable polynomial over a field $B$ with a splitting field $E$ such that $p(x)$ is not over any proper subfield of $B$. Then $E/B$ is Galois and $B={{E}^{\operatorname{Gal}(E/B)}}$. Since $\operatorname{Gal}(E/B)$ is finite as the Galois group of a polynomial, it has a composition series. Then by Lemma \ref{17.1.1} and Definition \ref{17.1.3}, $p(x)$ is canonically solvable.
\end{proof}

\subsection{Formulas for roots of separable polynomials} \label{Formulas for the roots}
Now let’s explain why we introduced the notion of canonical solvability.

As is well-known, when we say that a polynomial over a field is solvable by radicals, we imply that there are formulas for the roots of the polynomial which may involve extraction of roots ($\sqrt[n]{a},n\in {{\mathbb{Z}}^{+}}$), rational functions, and no operation else. Thus actually we assume that solving equations in the form of ${X}^{n}=a$ $(n\in {{\mathbb{Z}}^{+}})$ is a basic operation. Analogous to this assumption, now we are assuming that solving a separable polynomial whose Galois group is simple is a basic operation. Said differently, we assume that the roots of any separable polynomial with a simple Galois group can be determined (by symbols, formulas, or values). Then theoretically, there exists a formula for any root of any separable polynomial, explained below by a process described in steps.
\begin{step} \label{17.2.1}
Given a separable polynomial $p(x)$ over a field $B$ with a splitting field $E$, by Proposition \ref{17.1.4} and Lemma \ref{17.1.1}, there exist intermediate fields ${{B}_{0}},{{B}_{1}},\cdots ,{{B}_{m}}$ such that $B={{B}_{0}}\subseteq {{B}_{1}}\subseteq \cdots \subseteq {{B}_{m}}=E$, ${{B}_{i}}/{{B}_{i-1}}$ is Galois, and $\operatorname{Gal}({{B}_{i}}/{{B}_{i-1}})\simeq {{G}_{i-1}}/{{G}_{i}}$ is a simple group or the trivial group, $\forall i=1, \cdots, m(\in {{\mathbb{Z}}^{+}})$. Obviously we only need to deal with the case where each  $\operatorname{Gal}({{B}_{i}}/{{B}_{i-1}})$ is a simple group, or equivalently, where each ${{B}_{i}}/{{B}_{i-1}}$ is an irreducible Galois extension (Definition \ref{17.1.2}) and ${{B}_{i}}\ne {{B}_{i-1}}$. 
\end{step}
\begin{step} \label{17.2.2}
    Since ${{B}_{1}}/{{B}_{0}}$ is an irreducible Galois extension and ${{B}_{1}}\ne {{B}_{0}}$, there is a separable polynomial ${{p}_{1}}(x)\in {{B}_{0}}[x]$ such that ${{B}_{1}}$ is the splitting field of ${{p}_{1}}(x)$ in $E$ and the Galois group of ${{p}_{1}}(x)$ is simple. Now we assume that every root of ${{p}_{1}}(x)$ can be determined (by a symbol, a formula, or a value). Then ${{B}_{1}}$, which is a splitting field of ${{p}_{1}}(x)$, can be generated over $B$ by the roots of ${{p}_{1}}(x)$ in $E$. Hence every element of ${{B}_{1}}$ can be determined. Said precisely, $\forall b\in {{B}_{1}}$, there is a formula for $b$ which only involves rational functions over $B$ on the roots of ${{p}_{1}}(x)$ in $E$.
\end{step}
\begin{step} \label{17.2.3}
    Similarly, because ${{B}_{2}}/{{B}_{1}}$ is an irreducible Galois extension and ${{B}_{2}}\ne {{B}_{1}}$, there is a separable polynomial ${{p}_{2}}(x)$ over ${{B}_{1}}$ such that ${{B}_{2}}$ is the splitting field of ${{p}_{2}}(x)$ in $E$ and the Galois group of ${{p}_{2}}(x)$ is simple. Again we assume that every root of ${{p}_{2}}(x)$ in $E$ can be determined. Then ${{B}_{2}}$, which is a splitting field of ${{p}_{2}}(x)$, can be generated over ${{B}_{1}}$ by the roots of ${{p}_{2}}(x)$ in $E$. And as was shown in Step \ref{17.2.2}, ${{B}_{1}}$ can be generated over $B$ by the roots of ${{p}_{1}}(x)$ in $E$. Hence by substitutions, ${{B}_{2}}$ can be generated over $B$ by the roots of ${{p}_{1}}(x)$ and ${{p}_{2}}(x)$ in $E$. Thus, $\forall b\in {{B}_{2}}$, there is a formula for $b$ which only involves rational functions over $B$ on the roots of ${{p}_{1}}(x)$ and ${{p}_{2}}(x)$ in $E$.
\end{step}
\begin{step} \label{17.2.4}
    By induction, eventually for each element of ${{B}_{m}}=E$, there is a formula which only involves rational functions over $B$ on the roots of $p_1(x),\cdots, p_m(x)$ in $E$, where each ${{p}_{i}}(x)$ is a separable polynomial over ${{B}_{i-1}}$ whose splitting field in $E$ is ${{B}_{i}}$ and whose Galois group is simple. Since $E$ is a splitting field of $p(x)$, for any root of $p(x)$ in $E$, there is a formula which only involves rational functions over $B$ on the roots of $p_1(x),\cdots, p_m(x)$ in $E$.
\end{step}
In fact, in the above process, the operation of solving equation $p(x)=0$ is “reduced” to the operations of solving $p_1(x)=0,\cdots, p_m(x)=0$, where the Galois group of each ${{p}_{i}}(x)$ is simple, which implies that the operation of solving ${{p}_{i}}(x)=0$ is “irreducible”. This is why we use the terminology “canonical”.

The above process applies to any polynomial which is canonically solvable. Therefore, the following is obvious, which also explains why we introduced the notion of canonical solvability.

\begin{corollary} \label{17.2.5}
    Let $p(x)$ be a polynomial over a field $B$ with a splitting field $E$ such that $E\supsetneq B$. If $p(x)$ is separable, or more generally, $p(x)$ is canonically solvable, then there exist $m(\in {{\mathbb{Z}}^{+}})$ polynomials $p_1(x),\cdots, p_m(x)$ such that the Galois group of each ${{p}_{i}}(x)$ is simple and for any root of $p(x)$ in $E$, there is a formula which only involves rational functions over $B$ on the roots of $p_1(x),\cdots, p_m(x)$ in $E$.
\end{corollary}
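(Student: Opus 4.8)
The plan is to derive Corollary \ref{17.2.5} as an essentially immediate consequence of Proposition \ref{17.1.4}, Lemma \ref{17.1.1}, and the step-by-step analysis already carried out in Steps \ref{17.2.1}--\ref{17.2.4}. First I would note that it suffices to treat the canonically solvable case, since Proposition \ref{17.1.4} tells us every separable polynomial is canonically solvable. So assume $p(x)$ is over $B$, not over any proper subfield of $B$, with splitting field $E \supsetneq B$, and $p(x)$ is canonically solvable; by Definition \ref{17.1.3} there is a tower $B = B_0 \subseteq B_1 \subseteq \cdots \subseteq B_m = E$ with each $B_i/B_{i-1}$ an irreducible Galois extension (Definition \ref{17.1.2}). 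After discarding trivial steps where $B_i = B_{i-1}$, we may assume each $B_i \supsetneq B_{i-1}$ and each $\operatorname{Gal}(B_i/B_{i-1})$ is a (nontrivial) simple group; since $E \supsetneq B$, at least one step remains, so $m \in \mathbb{Z}^+$.

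Next I would produce the polynomials $p_1(x), \cdots, p_m(x)$. For each $i$, since $B_i/B_{i-1}$ is a finite Galois extension with $B_i \ne B_{i-1}$, by the primitive element theorem (finite separable extensions are simple) we may write $B_i = B_{i-1}(\alpha_i)$ and take $p_i(x) = \min(B_{i-1}, \alpha_i)$; then $B_i$ is the splitting field of $p_i(x)$ over $B_{i-1}$ inside $E$ (it is normal over $B_{i-1}$), so $p_i(x)$ is separable with splitting field $B_i$, and its Galois group $\operatorname{Gal}(B_i/B_{i-1})$ is simple by construction. Alternatively, one can simply invoke the statement already made inside Step \ref{17.2.2}: an irreducible Galois extension with nontrivial Galois group is the splitting field of a separable polynomial over the base whose Galois group is simple.

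Then I would run the induction on $i$ exactly as in Steps \ref{17.2.2}--\ref{17.2.4}: the base case is that $B_1$ is generated over $B_0 = B$ by the roots of $p_1(x)$ in $E$, so every element of $B_1$ is a rational function over $B$ in those roots; the inductive step uses that $B_{i}$ is generated over $B_{i-1}$ by the roots of $p_i(x)$ in $E$, and substituting the inductively-obtained expressions for elements of $B_{i-1}$ in terms of the roots of $p_1(x), \cdots, p_{i-1}(x)$ yields, for every element of $B_i$, a formula which is a rational function over $B$ in the roots of $p_1(x), \cdots, p_i(x)$ in $E$. At $i = m$ we get that every element of $B_m = E$ is such a rational expression, and since every root of $p(x)$ lies in $E = B_m$, we are done. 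The only mild subtlety to be careful about is that the coefficients of $p_i(x)$ lie in $B_{i-1}$, not in $B$; but because we are tracking formulas over $B$ and rational functions compose to rational functions, substituting the expressions for those coefficients into the "roots of $p_i$" — which a priori are algebraic over $B_{i-1}$ — does not by itself give rational-over-$B$ formulas for the roots of $p_i$. This is exactly why the statement (and Steps \ref{17.2.2}--\ref{17.2.4}) treats the roots of $p_1, \cdots, p_m$ as the given inputs and only asserts that elements of the $B_i$, in particular the roots of $p(x)$, are rational over $B$ in all of these roots; so the genuine content is merely the bookkeeping of the iterated substitutions, and the "main obstacle" is just stating this carefully rather than any real difficulty — the result is a direct packaging of Steps \ref{17.2.1}--\ref{17.2.4}.
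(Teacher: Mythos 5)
Your proposal is correct and follows essentially the same route as the paper: the paper derives Corollary \ref{17.2.5} directly from Proposition \ref{17.1.4}, Lemma \ref{17.1.1}, and the iterated-substitution argument of Steps \ref{17.2.1}--\ref{17.2.4}, which is exactly what you reproduce (with the minor added detail of invoking the primitive element theorem to exhibit the $p_i$, which the paper leaves implicit in Step \ref{17.2.2}).
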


\section{A solvability of homogeneous linear differential equations} \label{diff equ}
	For homogeneous linear differential equations, we can do a similar thing as we just did for polynomial equations, although the process will be relatively complicated. 
\subsection{Normal Zariski-closed series and composition Zariski-closed series}
In differential Galois theory, the Picard-Vessiot extension of a homogeneous linear differential equation is the analogue of a splitting field of a polynomial (see e.g. \cite{1,4,9}). However, by the fundamental theorem of differential Galois theory, only Zariski-closed subgroups (not necessarily all subgroups) of a differential Galois group have a bijective correspondence with intermediate differential fields of a Picard-Vessiot extension. Hence we first give the following definition.
\begin{definition} \label{18.1.1}
    Let $G={{G}_{0}}\,\triangleright\, {{G}_{1}}\,\triangleright\, \cdots \,\triangleright\, {{G}_{m}}=\{1\}\,(m\in {{\mathbb{Z}}^{+}})$ be a normal series where every ${{G}_{i}}$ is a Zariski-closed subgroup of a general linear group over a field. Then we call it a \emph{normal Zariski-closed series} of $G$. Moreover, if any possible proper refinement of this normal series is not a normal Zariski-closed one, then we call it a \emph{composition Zariski-closed series} of $G$.
\end{definition}
\begin{remark} 
   We do not require that a composition Zariski-closed series to be a composition series.
\end{remark}
	The following is comparable with Lemma \ref{17.1.1}.
 \begin{lemma} \label{18.1.2}
     Let a differential field extension $B\subseteq E$ be a Picard-Vessiot extension such that the constant field of $B$ is algebraically closed. Suppose that the differential Galois group $\operatorname{Gal}(E/B)$ has a normal Zariski-closed series 
     \[\operatorname{Gal}(E/B)=:{G_0} \,\triangleright\, G_1 \,\triangleright\, \cdots \,\triangleright\, {{G}_{m}}=\{1\}.\] 
     Then there are intermediate differential fields ${{B}_{0}},{{B}_{1}},\cdots ,{{B}_{m}}$ such that 
     \[B={{B}_{0}}\subseteq {{B}_{1}}\subseteq \cdots \subseteq {{B}_{m}}=E,\]
     $\forall i=1,\cdots, m$, ${{B}_{i-1}}\subseteq {{B}_{i}}$ is a Picard-Vessiot extension and $\operatorname{Gal}({{B}_{i}}/{{B}_{i-1}})\cong {{G}_{i-1}}/{{G}_{i}}$, and $\forall i=0,\cdots, m$, $\operatorname{Gal}(E/{{B}_{i}})={{G}_{i}}$.
 \end{lemma}
\begin{proof}
    Let $C$ be the constant field of $B$, which is algebraically closed. Hence we can apply the fundamental theorem of differential Galois theory (see e.g. \cite{1,4,9}) for any Picard-Vessiot extension of $B$.

    $\forall i=0,\cdots, m$, let ${{B}_{i}}={{E}^{{{G}_{i}}}}$. Then 
    \[B={{E}^{\operatorname{Gal}(E/B)}}={{E}^{{{G}_{0}}}}={{B}_{0}}\subseteq {{B}_{1}}\subseteq \cdots \subseteq {{B}_{m}}=E\]
    and, by the fundamental theorem, each $\operatorname{Gal}(E/{{B}_{i}})=\operatorname{Gal}(E/{{E}^{{{G}_{i}}}})={{G}_{i}}$ because each ${{G}_{i}}$ is a Zariski-closed subgroup of $\operatorname{Gal}(E/B)$. Hence
\[\operatorname{Gal}(E/B)=\operatorname{Gal}(E/{{B}_{0}})\,\triangleright\, \operatorname{Gal}(E/{{B}_{1}})\,\triangleright\, \cdots \,\triangleright\, \operatorname{Gal}(E/{{B}_{m}})=\{1\}.\]

Moreover, $\forall i=0,\cdots, m$, ${{B}_{i}}={{E}^{{{G}_{i}}}}$ is a differential field with $B\subseteq {{B}_{i}}\subseteq E$. And since the constant field $C$ of $B={{B}_{0}}$ is algebraically closed, by the fundamental theorem, ${{B}_{0}}\subseteq {{B}_{1}}$ is a Picard-Vessiot extension because $\operatorname{Gal}(E/{{B}_{0}})\,\triangleright\, \operatorname{Gal}(E/{{B}_{1}})$. Hence by the definition of Picard-Vessiot extensions (see e.g. \cite{1}), $C$ is also the constant field of ${{B}_{1}}$. Again by the fundamental theorem, ${{B}_{1}}\subseteq {{B}_{2}}$ is also a Picard-Vessiot extension because $\operatorname{Gal}(E/{{B}_{1}})\,\triangleright\, \operatorname{Gal}(E/{{B}_{2}})$. By induction, eventually each ${{B}_{i-1}}\subseteq {{B}_{i}}$ is a Picard-Vessiot extension and each $\operatorname{Gal}({{B}_{i}}/{{B}_{i-1}})\cong \operatorname{Gal}(E/{{B}_{i-1}})/\operatorname{Gal}(E/{{B}_{i}})={{G}_{i-1}}/{{G}_{i}}$ by the fundamental theorem.			
\end{proof}
Because $\operatorname{Gal}(E/B)\,\triangleright\, \operatorname{Gal}(E/E)=\{1\}$, $\operatorname{Gal}(E/B)$ always has a normal Zariski-closed series. Thus we can employ Lemma \ref{18.1.2} to any Picard-Vessiot extension whenever the constant field is algebraically closed. 

\subsection{Canonical extensions and canonical solvability}
	Let $L(Y)=0$ be a homogeneous linear differential equation over $B$ for which $B\subseteq E$ is a Picard-Vessiot extension with an algebraically closed field of constants. Then Lemma \ref{18.1.2} implies that $L(Y)=0$ can be “decomposed” into homogeneous linear differential equations ${{L}_{1}}(Y)=0, \cdots, {{L}_{m}}(Y)=0$ such that $\forall i=1,\cdots,m$,  ${{B}_{i-1}}\subseteq {{B}_{i}}$ is a Picard-Vessiot extension for ${{L}_{i}}(Y)=0$, and hence we can solve $L(Y)=0$ by solving ${{L}_{1}}(Y)=0, \cdots, {{L}_{m}}(Y)=0$. We shall explain this process in Subsection \ref{Formulas for the solutions}. In light of this idea, we define a notion of solvability other than solvability by quadratures.
\begin{definition} \label{18.2.1}
    A Picard-Vessiot extension $B\subseteq E$ is said to be \emph{irreducible} if for any $K$ with $B\subsetneq K\subsetneq E$, $K$ is not a Picard-Vessiot extension of $B$. Otherwise it is said to be \emph{reducible}.
\end{definition}
\begin{remark}
    Trivially, $B=E$ is an irreducible Picard-Vessiot extension.
\end{remark}
	Then by the fundamental theorem of differential Galois theory, the following, which characterizes reducible Picard-Vessiot extensions, is obvious.
 \begin{corollary} \label{18.2.2}
     Let $B\subseteq E$ be a Picard-Vessiot extension with the constant field of $B$ being algebraically closed. Then $B\subseteq E$ is reducible if and only if $\operatorname{Gal}(E/B)$ has a proper nontrivial normal Zariski-closed subgroup.
 \end{corollary}
The following two definitions are comparable with Definitions \ref{17.1.2} and \ref{17.1.3}, respectively.
\begin{definition} \label{18.2.3}
A differential field extension $B\subseteq E$ is called a \emph{canonical extension} if there is a tower of differential fields $B={{B}_{0}}\subseteq {{B}_{1}}\subseteq \cdots \subseteq {{B}_{m}}=E$, where $m\in {{\mathbb{Z}}^{+}}$, such that $\forall i=1,\cdots,m$, ${{B}_{i-1}}\subseteq {{B}_{i}}$ is an irreducible Picard-Vessiot extension.
\end{definition}
\begin{definition} \label{18.2.4}
    Let $L(Y)=0$ be a homogeneous linear differential equation. If there is a Picard-Vessiot extension $B\subseteq E$ for $L(Y)=0$ such that
\begin{enumerate}
 \item [(i)] the constant field of $B$ is algebraically closed;
\item [(ii)] there does not exist any proper differential subfield ${B}'$ of $B$ such that $L(Y)=0$ is over ${B}'$ and the constant field of ${B}'$ is algebraically closed; and
\item [(iii)] $B\subseteq E$ is a canonical extension; 
\end{enumerate}
then $L(Y)=0$ is said to be \emph{canonically solvable}.
\end{definition}
	The following characterizes canonical solvability in terms of a composition Zariski-closed series of the Galois group.
 \begin{proposition} \label{18.2.5}
 Let $L(Y)=0$ be a homogeneous linear differential equation. Then $L(Y)=0$ is canonically solvable if and only if there is a Picard-Vessiot extension $B\subseteq E$ for $L(Y)=0$ such that conditions \emph{(i)} and \emph{(ii)} in Definition \ref{18.2.4} are satisfied and $\operatorname{Gal}(E/B)$ has a composition Zariski-closed series.
 \end{proposition}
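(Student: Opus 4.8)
The plan is to prove both implications by translating, via the fundamental theorem of differential Galois theory (see e.g.\ \cite{3,8,13}), between towers of intermediate differential fields of a Picard--Vessiot extension $B\subseteq E$ and normal Zariski-closed series of $\operatorname{Gal}(E/B)$, keeping the base field $B$ and the extension $E$ fixed throughout, so that conditions (i) and (ii) of Definition \ref{18.2.4}, which appear verbatim on both sides, transfer for free. The backbone is an auxiliary observation that I would record first: if $B\subseteq E$ is a Picard--Vessiot extension whose field of constants is algebraically closed and $\operatorname{Gal}(E/B)=G_0\triangleright G_1\triangleright\cdots\triangleright G_m=\{1\}$ is a normal Zariski-closed series, then this series is a composition Zariski-closed series (Definition \ref{18.1.1}) if and only if each quotient $G_{i-1}/G_i$ has no proper nontrivial normal Zariski-closed subgroup. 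The point is that, $G_i$ being closed and normal, $G_{i-1}/G_i$ is again (isomorphic to) a Zariski-closed subgroup of a general linear group, and the quotient map $G_{i-1}\to G_{i-1}/G_i$ induces an inclusion- and normality-preserving bijection between closed subgroups of $G_{i-1}$ containing $G_i$ and closed subgroups of $G_{i-1}/G_i$; hence inserting a term strictly between $G_{i-1}$ and $G_i$ (which is what a proper refinement amounts to) is possible exactly when such a quotient subgroup exists.

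For the direction ``canonically solvable $\Rightarrow$ the series exists'', given a Picard--Vessiot extension $B\subseteq E$ for $L(Y)=0$ witnessing Definition \ref{18.2.4}, condition (iii) supplies a tower $B=B_0\subseteq B_1\subseteq\cdots\subseteq B_m=E$ with each $B_{i-1}\subseteq B_i$ an irreducible Picard--Vessiot extension (Definition \ref{18.2.1}). First I would propagate condition (i) up the tower: since a Picard--Vessiot extension adds no new constants, the field of constants of every $B_i$ equals that of $B$ and is therefore algebraically closed, so the fundamental theorem is available at every level. Setting $G_i:=\operatorname{Gal}(E/B_i)$, each $G_i$ is a Zariski-closed subgroup of $G_0=\operatorname{Gal}(E/B)$, and $G_i\triangleleft G_{i-1}$ because $B_{i-1}\subseteq B_i$ is Picard--Vessiot; thus $G_0\triangleright G_1\triangleright\cdots\triangleright G_m=\{1\}$ is a normal Zariski-closed series. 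By Corollary \ref{18.2.2}, irreducibility of $B_{i-1}\subseteq B_i$ says $\operatorname{Gal}(B_i/B_{i-1})\cong G_{i-1}/G_i$ has no proper nontrivial normal Zariski-closed subgroup, and the auxiliary observation then upgrades the series to a composition Zariski-closed series.

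For the converse I would run the same dictionary in reverse: given a Picard--Vessiot extension $B\subseteq E$ for $L(Y)=0$ satisfying (i) and (ii) with $\operatorname{Gal}(E/B)$ admitting a composition Zariski-closed series $G_0\triangleright\cdots\triangleright G_m=\{1\}$, Lemma \ref{18.1.2} produces intermediate differential fields $B=B_0\subseteq\cdots\subseteq B_m=E$ with each $B_{i-1}\subseteq B_i$ Picard--Vessiot and $\operatorname{Gal}(B_i/B_{i-1})\cong G_{i-1}/G_i$. The auxiliary observation gives that each $G_{i-1}/G_i$ has no proper nontrivial normal Zariski-closed subgroup, whence Corollary \ref{18.2.2} makes each $B_{i-1}\subseteq B_i$ an irreducible Picard--Vessiot extension; so $B\subseteq E$ is a canonical extension (Definition \ref{18.2.3}) and, with (i) and (ii) already in hand, $L(Y)=0$ is canonically solvable.

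I expect the main obstacle to be the auxiliary observation itself --- making precise and correct that quotients of linear algebraic groups by closed normal subgroups are again of the required type and that closed-subgroup lattices pass to quotients compatibly with normality, and then pinning down exactly what ``proper refinement'' should mean so that the stated equivalence holds on the nose. Everything else is a routine application of the fundamental theorem together with the no-new-constants property of Picard--Vessiot extensions; the one further point needing a word of care is checking that $B_i$ is Picard--Vessiot over each inserted intermediate field, so that the corresponding subgroup inclusions are genuinely normal, which again follows from no new constants together with the fact that $B_i$ is generated over the smaller field by a fundamental system of solutions of $L$.
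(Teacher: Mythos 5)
Your argument is correct and its skeleton (tower of irreducible Picard--Vessiot extensions $\leftrightarrow$ normal Zariski-closed series via the fundamental theorem, with the constants propagated up the tower so the theorem applies at every level) matches the paper's; the difference lies in how you match ``irreducible at level $i$'' with ``non-refinable at level $i$''. You factor this through the quotient: you pass to $G_{i-1}/G_i\cong\operatorname{Gal}(B_i/B_{i-1})$, invoke Corollary \ref{18.2.2}, and rely on an auxiliary lemma asserting that $G_{i-1}/G_i$ is again a linear algebraic group whose closed (normal) subgroups correspond to the closed (normal) subgroups of $G_{i-1}$ containing $G_i$ --- which in turn needs the Chevalley-type theory of quotients of algebraic groups and the fact that the restriction isomorphism $\operatorname{Gal}(E/B_{i-1})/\operatorname{Gal}(E/B_i)\cong\operatorname{Gal}(B_i/B_{i-1})$ is an isomorphism of algebraic groups, not merely of abstract groups. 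The paper never forms a quotient group and never uses Corollary \ref{18.2.2}: for necessity, a hypothetical closed $H$ with $\operatorname{Gal}(E/B_{i-1})\,\triangleright\,H\,\triangleright\,\operatorname{Gal}(E/B_i)$ strictly is sent by the Galois correspondence for $E/B_{i-1}$ to a Picard--Vessiot extension $E^H$ of $B_{i-1}$ strictly inside $B_i$, contradicting irreducibility; for sufficiency, a reducibility witness $B_{i-1}\subsetneq K\subsetneq B_i$ is sent to $\operatorname{Gal}(E/K)$, producing a forbidden proper normal Zariski-closed refinement. So your route buys a reusable group-theoretic criterion (``each factor has no proper nontrivial normal closed subgroup''), at the cost of exactly the machinery you flag as the main obstacle; the paper's route stays entirely inside the closed-subgroup/intermediate-field correspondence for $E$ over the fields $B_{i-1}$ and thus avoids that obstacle altogether. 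If you keep your version, do make the auxiliary lemma precise (quotient exists as a linear algebraic group, images and preimages of closed subgroups under the quotient morphism are closed, normality corresponds, and a proper refinement may be reduced to a single inserted term since $G_i\,\triangleleft\,H$ is automatic for $G_i\subseteq H\subseteq G_{i-1}$), and state explicitly that the isomorphism furnished by the fundamental theorem respects the Zariski topology, since that is what lets you transport ``no proper nontrivial normal Zariski-closed subgroup'' between $\operatorname{Gal}(B_i/B_{i-1})$ and $G_{i-1}/G_i$.
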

\begin{proof}
Assume that there is a Picard-Vessiot extension $B\subseteq E$ for $L(Y)=0$ such that conditions (i) and (ii) in Definition \ref{18.2.4} are satisfied. Then by Definition \ref{18.2.4}, it is sufficient to show that the following two statements are equivalent.

(a) $B\subseteq E$ is a canonical extension.

(b) $\operatorname{Gal}(E/B)$ has a composition Zariski-closed series.\\
1. (a) $\Rightarrow $ (b):
    
Suppose that $B\subseteq E$ is a canonical extension. By Definition \ref{18.2.3}, there is a tower of differential fields $B={{B}_{0}}\subseteq {{B}_{1}}\subseteq \cdots \subseteq {{B}_{m}}=E$ where each ${{B}_{i-1}}\subseteq {{B}_{i}}$ is an irreducible Picard-Vessiot extension. Hence by the definition of Picard-Vessiot extensions, the constant field of every ${{B}_{i}}$ is the same, which we assume to be $C$. Because condition (i) in Definition \ref{18.2.4} is assumed to be satisfied, $C$ is algebraically closed, and hence we can apply the fundamental theorem of differential Galois theory to any Picard-Vessiot extension of any ${{B}_{i}}$ as follows.

$(\operatorname{Gal}(E/B)=)\operatorname{Gal}(E/{{B}_{0}})\,\triangleright\, \operatorname{Gal}(E/{{B}_{1}})$ because both ${{B}_{0}}\subseteq E$ and ${{B}_{0}}\subseteq {{B}_{1}}$ are Picard-Vessiot extensions. Analogously, since $L(Y)=0$ can be viewed as defined over ${{B}_{1}}$, ${{B}_{1}}\subseteq E$ is a Picard-Vessiot extension, and hence $\operatorname{Gal}(E/{{B}_{1}})\,\triangleright\, \operatorname{Gal}(E/{{B}_{2}})$ because ${{B}_{1}}\subseteq {{B}_{2}}$ is also Picard-Vessiot. Applying this argument for each ${{B}_{i-1}}\subseteq {{B}_{i}}$, eventually we obtain
\begin{equation}
    \label{16.2(1)}
    \operatorname{Gal}(E/B)=\operatorname{Gal}(E/{{B}_{0}})\,\triangleright\, \operatorname{Gal}(E/{{B}_{1}})\,\triangleright\, \cdots \,\triangleright\, \operatorname{Gal}(E/{{B}_{m}})=\{1\}.
\end{equation}

Because $C$ is an algebraically closed field, each $\operatorname{Gal}(E/{{B}_{i}})$ is a Zariski-closed subgroup of the general linear group $\operatorname{GL}(n,C)$, where $n$ is the order of $L(Y)$. Thus by Definition \ref{18.1.1}, (\ref{16.2(1)}) is a normal Zariski-closed series of $\operatorname{Gal}(E/B)$. 

Suppose that (\ref{16.2(1)}) has a proper refinement which is still normal Zariski-closed. Then $\exists i\in \{1,\cdots,m\}$ and a Zariski-closed subgroup $H$ of $\operatorname{GL}(n,C)$ such that $\operatorname{Gal}(E/{{B}_{i-1}})\,\triangleright\, H\,\triangleright\, \operatorname{Gal}(E/{{B}_{i}})$ with $\operatorname{Gal}(E/{{B}_{i-1}})\ne H\ne \operatorname{Gal}(E/{{B}_{i}})$. Hence by the fundamental theorem of differential Galois theory, $H=\operatorname{Gal}(E/E_{{}}^{H})$, ${{B}_{i-1}}\subsetneq {{E}^{H}}\subsetneq {{B}_{i}}$, and ${{E}^{H}}$ is a Picard-Vessiot extension of ${{B}_{i-1}}$. But ${{B}_{i-1}}\subseteq {{B}_{i}}$ is an irreducible Picard-Vessiot extension, a contradiction. 

Therefore, any possible proper refinement of (\ref{16.2(1)}) is no longer a normal Zariski-closed one. By Definition \ref{18.1.1}, (\ref{16.2(1)}) is a composition Zariski-closed series of $\operatorname{Gal}(E/B)$. \\
2. (b) $\Rightarrow $ (a):

Suppose that the differential Galois group $\operatorname{Gal}(E/B)$ has a composition Zariski-closed series: $\operatorname{Gal}(E/B)=:{{G}_{0}}\,\triangleright\, {{G}_{1}}\,\triangleright\, \cdots \,\triangleright\, {{G}_{m}}=\{1\}$. 

By Lemma \ref{18.1.2}, there are intermediate differential fields ${{B}_{0}},{{B}_{1}},\cdots ,{{B}_{m}}$ such that $B={{B}_{0}}\subseteq {{B}_{1}}\subseteq \cdots \subseteq {{B}_{m}}=E$, where ${{B}_{i-1}}\subseteq {{B}_{i}}$, $\forall i=1, \cdots, m$, is a Picard-Vessiot extension and $\forall i=0, \cdots, m$, $\operatorname{Gal}(E/{{B}_{i}})={{G}_{i}}$. Thus,
\begin{equation} \label{16.2(2)}
    \operatorname{Gal}(E/B)=\operatorname{Gal}(E/{{B}_{0}})\,\triangleright\, \operatorname{Gal}(E/{{B}_{1}})\,\triangleright\, \cdots \,\triangleright\, \operatorname{Gal}(E/{{B}_{m}})=\{1\}
\end{equation}
is a composition Zariski-closed series. 

Besides, by the definition of Picard-Vessiot extensions, all ${{B}_{i}}$ share the same constant field, which we assume to be $C$. Because condition (i) in Definition \ref{18.2.4} is assumed to be satisfied, $C$ is algebraically closed. Hence we can apply the fundamental theorem of differential Galois theory to any Picard-Vessiot extension of ${{B}_{i}}$, $\forall i=0,\cdots, m$.

Suppose that there exists a reducible Picard-Vessiot extension ${{B}_{i-1}}\subsetneq {{B}_{i}}$, where $i\in \{1,\cdots, m\}$. Then by Definition \ref{18.2.1}, $\exists K$ with ${{B}_{i-1}}\subsetneq K\subsetneq {{B}_{i}}$ such that $K$ is a Picard-Vessiot extension of ${{B}_{i-1}}$. Hence $C$ is also the constant field of $K$. By the fundamental theorem, $\operatorname{Gal}(E/{{B}_{i-1}})\,\triangleright\, \operatorname{Gal}(E/K)$, where $\operatorname{Gal}(E/K)$ is also Zariski-closed. Moreover, since ${{B}_{i-1}}\subsetneq {{B}_{i}}$ is a Picard-Vessiot extension, so is $K\subsetneq {{B}_{i}}$. Then again by the fundamental theorem, $\operatorname{Gal}(E/K)\,\triangleright\, \operatorname{Gal}(E/{{B}_{i}})$. Therefore, $\operatorname{Gal}(E/{{B}_{i-1}})\,\triangleright\, \operatorname{Gal}(E/K)\,\triangleright\, \operatorname{Gal}(E/{{B}_{i}})$. And $\operatorname{Gal}(E/{{B}_{i-1}})\ne \operatorname{Gal}(E/K)\ne \operatorname{Gal}(E/{{B}_{i}})$ because ${{B}_{i-1}}\ne K\ne {{B}_{i}}$. But (\ref{16.2(2)}) is a composition Zariski-closed series, a contradiction. 

Thus, every (Picard-Vessiot extension) ${{B}_{i-1}}\subseteq {{B}_{i}}$ of $B={{B}_{0}}\subseteq {{B}_{1}}\subseteq \cdots \subseteq {{B}_{m}}=E$ is irreducible. By Definition \ref{18.2.3}, $B\subseteq E$ is a canonical extension.
\end{proof}

\subsection{Formulas for solutions of homogeneous linear differential equations} \label{Formulas for the solutions}
	Now we show that in a sense we may develop formulas for the solutions of any homogeneous linear differential equation by the following process described in steps. To simplify our descriptions, we introduce
\begin{terminology} \label{18.3.0}
    Let $B\subseteq B(U)$ be a differential field extension generated by $U$. Then by \emph{differential field operations over $B$ on $U$} we mean rational functions over $B$ on $U$ and the derivation function of $B(U)$.
\end{terminology}
\begin{step} \label{18.3.1}
    Let $L(Y)=0$ be a homogeneous linear differential equation for which $B\subseteq E$ is a Picard-Vessiot extension such that conditions (i) and (ii) in Definition \ref{18.2.4} are satisfied. Let $C$ be the algebraically closed field of constants of $B\subseteq E$. 
    
    Because $\operatorname{Gal}(E/B)\,\triangleright\, \operatorname{Gal}(E/E)=\{1\}$, there must be a normal Zariski-closed series of $\operatorname{Gal}(E/B)$ in the form $\operatorname{Gal}(E/B)=:{{G}_{0}}\,\triangleright\, {{G}_{1}}\,\triangleright\, \cdots \,\triangleright\, {{G}_{m}}=\{1\}$. By Lemma \ref{18.1.2}, there are intermediate differential fields ${{B}_{0}},{{B}_{1}},\cdots ,{{B}_{m}}$ such that $B={{B}_{0}}\subseteq {{B}_{1}}\subseteq \cdots \subseteq {{B}_{m}}=E$ with every ${{B}_{i-1}}\subseteq {{B}_{i}}$ being a Picard-Vessiot extension. Thus there exist homogeneous linear differential equations ${{L}_{1}}(Y)=0,\cdots, {{L}_{m}}(Y)=0$ such that $\forall i=1,\cdots,m$, ${{B}_{i-1}}\subseteq {{B}_{i}}$ is a Picard-Vessiot extension for ${{L}_{i}}(Y)=0$. 
    
    Moreover, if $\operatorname{Gal}(E/B)$ has a composition Zariski-closed series, then by Proposition \ref{18.2.5}, $L(Y)=0$ is canonically solvable, and hence by Definitions \ref{18.2.3} and \ref{18.2.4}, in such case we may assume that $\forall i=1,\cdots,m$, ${{B}_{i-1}}\subseteq {{B}_{i}}$ is an irreducible Picard-Vessiot extension.
\end{step}
\begin{step} \label{18.3.2}
    Now ${{B}_{0}}\subseteq {{B}_{1}}$ is a Picard-Vessiot extension for ${{L}_{1}}(Y)=0$. Just as the solutions of ${Y}'=a$ are assumed to be $\int{a}dx$, now the solutions of ${{L}_{1}}(Y)=0$ in $E$ are assumed to be determined (by symbols, formulas, or values). Then because ${{B}_{1}}$ can be generated over ${{B}_{0}}=B$ by the solutions of ${{L}_{1}}(Y)=0$ in $E$, every element of ${{B}_{1}}$ can be determined. Said precisely, $\forall b\in {{B}_{1}}$, there is a formula for $b$ which only involves differential field operations over $B$ on the solutions of ${{L}_{1}}(Y)=0$ in $E$ (\textit{cf.} Terminology \ref{18.3.0}).
\end{step}
\begin{step} \label{18.3.3}
    Analogously, because ${{B}_{1}}\subseteq {{B}_{2}}$ is a Picard-Vessiot extension for ${{L}_{2}}(Y)=0$, ${{B}_{2}}$ can be generated over ${{B}_{1}}$ by the solutions of ${{L}_{2}}(Y)=0$ in $E$. Hence $\forall b\in {{B}_{2}}$, there is a formula for $b$ which only involves differential field operations over ${{B}_{1}}$ on the solutions of ${{L}_{2}}(Y)=0$ in $E$. As was shown in Step \ref{18.3.2}, every element of ${{B}_{1}}$ can be obtained by a formula which only involves differential field operations over $B$ on the solutions of ${{L}_{1}}(Y)=0$ in $E$. Hence by substitutions, every element of ${{B}_{2}}$ can be determined by a formula which only involves differential field operations over $B$ on the solutions of ${{L}_{1}}(Y)=0$ and ${{L}_{2}}(Y)=0$ in $E$.
\end{step}
\begin{step} \label{18.3.4}
    By induction, eventually every element of ${{B}_{m}}=E$ can be determined by a formula which only involves differential field operations over $B$ on the solutions of equations ${{L}_{1}}(Y)=0,\cdots, {{L}_{m}}(Y)=0$ in $E$. Therefore, for each solution of $L(Y)=0$ in $E$, there is a formula which only involves differential field operations over $B$ on the solutions of equations ${{L}_{1}}(Y)=0,\cdots, {{L}_{m}}(Y)=0$ in $E$.
\end{step}
    In fact, in the above process, the operation of solving equation $L(Y)=0$ is “decomposed” into the operations of solving ${{L}_{1}}(Y)=0,\cdots, {{L}_{m}}(Y)=0$. Moreover, Proposition \ref{18.2.5} tells us that in the case where $\operatorname{Gal}(E/B)$ has a composition Zariski-closed series, $\forall i=1,\cdots,m$, ${{B}_{i-1}}\subseteq {{B}_{i}}$ is an irreducible Picard-Vessiot extension, which implies that any further “decomposition” of any ${{L}_{i}}(Y)=0$, $\forall i=1,\cdots,m$, would be trivial. This explains why in this case we say that $L(Y)=0$ is canonically solvable.

The following summarizes this subsection.
\begin{corollary} \label{18.3.5}
    Let $L(Y)=0$ be a homogeneous linear differential equation for which $B\subseteq E$ is a Picard-Vessiot extension such that the constant field is algebraically closed. 
    
    Suppose that $\operatorname{Gal}(E/B)=:{{G}_{0}}\,\triangleright\, {{G}_{1}}\,\triangleright\, \cdots \,\triangleright\, {{G}_{m}}=\{1\}$, where $m\in {{\mathbb{Z}}^{+}}$. Then there exist intermediate differential fields $(B=){{B}_{0}},{{B}_{1}},\cdots ,{{B}_{m}}(=E)$ such that $\operatorname{Gal}({{B}_{i}}/{{B}_{i-1}})\cong {{G}_{i-1}}/{{G}_{i}}$, $\forall i=1,\cdots, m$, and homogeneous linear differential equations ${{L}_{1}}(Y)=0,\cdots , {{L}_{m}}(Y)=0$ such that $\forall i=1,\cdots, m$, ${{B}_{i-1}}\subseteq {{B}_{i}}$ is a Picard-Vessiot extension for ${{L}_{i}}(Y)=0$. And for any solution of $L(Y)=0$ in E, there is a formula which only involves differential field operations over $B$ on the solutions of equations ${{L}_{1}}(Y)=0,\cdots, {{L}_{m}}(Y)=0$ in $E$ $($cf. Terminology \ref{18.3.0}$)$. 
    
    Moreover, if $\operatorname{Gal}(E/B)\,\triangleright\, {{G}_{1}}\,\triangleright\, \cdots \,\triangleright\, {{G}_{m}}=\{1\}$ is a composition Zariski-closed series $($cf. Definition \ref{18.1.1}$)$, then $\forall i=1,\cdots, m$, the Picard-Vessiot extension ${{B}_{i-1}}\subseteq {{B}_{i}}$ for ${{L}_{i}}(Y)=0$ is irreducible $($cf. Definition \ref{18.2.1}$)$.
\end{corollary}
 
\section{A possible strategy for equation solving} \label{equ sol}
 Now for “general” equations (with unknowns), we shall generalize our results (in the two preceding sections) in terms of the theory developed in Parts I and II. Because this section is only a sketch of some ideas, it is not developed in a rigorous way.
 \subsection{Splitting $T$-spaces, Galois $T$-groups and Galois $T$-monoids of equations}
To simplify our descriptions, we introduce
\begin{terminology} \label{19.1.2}
    Let $T$ be a par-operator gen-semigroup on a set $D$ and let $B$ be a set. If $\forall U\subseteq D$ and $a\in {{\left\langle U \right\rangle }_{T}}$, there is a formula for $a$ which only involves some given operations on elements of $B$ and $U$, then we say that \emph{T is $($defined on $D)$ over $B$}. 
\end{terminology}
\begin{remark}
    Note that we do not require $B\subseteq D$.
\end{remark}
For instance, the par-operator gen-semigroups defined in Examples \ref{T on field--1}, \ref{9.1.3}, \ref{10.1.5} and \ref{10.1.6} are all defined over $B$. 
\begin{notation} \label{19.1.1}
    Let $Q$ be an equation (with unknowns) and let $D$ be a set. We denote by $U_{Q}^{D}$ the set of all solutions of $Q$ in $D$. 
\end{notation}
\begin{example} \label{19.1.3}
    Let $F/B$ be a field extension and let $T$ be the par-operator gen-semigroup on $F$ defined in Example \ref{10.1.5}. Let $p(x)\in B[x]$ such that $p$ splits over $F$ and let $U$ be the set of roots of $p$ in $F$. Then ${{\left\langle U_{Q}^{F} \right\rangle }_{T}}={{\left\langle U \right\rangle }_{T}}$ is a splitting field of $p$ over $B$, where $Q$ denotes the equation $p(x)=0$.
\end{example} 
 \begin{example} \label{19.1.4}
    Let $F/B$ be a differential field extension such that every constant of $F$ lies in $B$. Let $T$ be the par-operator gen-semigroup on $F$ defined in Example \ref{10.1.6}. Let $L(Y)=0$ be a homogeneous linear differential equation over $B$ with $U$ being a fundamental set of solutions of it in $F$. Then $B\subseteq {{\left\langle U_{Q}^{F} \right\rangle }_{T}}(={{\left\langle U \right\rangle }_{T}})$ is a Picard-Vessiot extension (for $L$), where $Q$ denotes the equation $L(Y)=0$.
\end{example}
 	Inspired by the above two examples, we generalize the concepts of splitting fields and Picard-Vessiot extensions as follows.
\begin{definition} \label{19.1.5}
    Let $Q$ be an equation (with unknowns) and let $T$ be a par-operator gen-semigroup on a set $D$. With $U_{Q}^{D}$ defined by Notation \ref{19.1.1}, we call ${{\left\langle U_{Q}^{D} \right\rangle }_{T}}$ the \emph{splitting $T$-space of $Q$}. Moreover, if $T$ is defined over $B$ (\textit{cf.} Terminology \ref{19.1.2}), then we call ${{\left\langle U_{Q}^{D} \right\rangle }_{T}}$ the \emph{splitting $T$-space of $Q$ over $B$}. 
\end{definition}
\begin{remark} \begin{enumerate}
    \item If $Q$ has no solution in $D$, then of course ${{\left\langle U_{Q}^{D} \right\rangle }_{T}}=U_{Q}^{D}=\emptyset $. 
    \item We use the terminology “splitting” to be consistent with the classical Galois theory although maybe nothing of $Q$ “split” like a polynomial.
    \item It is possible that $U_{Q}^{D}\nsubseteq {{\left\langle U_{Q}^{D} \right\rangle }_{T}}$ and/or $B\nsubseteq {{\left\langle U_{Q}^{D} \right\rangle }_{T}}$.
    \item $U_{Q}^{D}\subseteq {{\left\langle U_{Q}^{D} \right\rangle }_{T}}$ if $\operatorname{Id}\in T$.
\end{enumerate}
\end{remark}
Moreover, we generalize the notions of Galois groups of polynomials and differential Galois groups of homogeneous linear differential equations as follows.
\begin{definition} \label{19.1.6}
    Let $Q$ be an equation (with unknowns) and let $T$ be a par-operator gen-semigroup on a set $D$ over a set $B$ (\textit{cf.} Terminology \ref{19.1.2}). If $B\subseteq {{\left\langle U_{Q}^{D} \right\rangle }_{T}}$, then the \emph{Galois $T$-group of $Q$} and the \emph{Galois $T$-monoid of $Q$} are $\operatorname{GGr}_{T}({{\left\langle U_{Q}^{D} \right\rangle }_{T}}/B)$ and $\operatorname{GMn}_{T}({{\left\langle U_{Q}^{D} \right\rangle }_{T}}/B)$ (\textit{cf.} Definition \ref{1.5.1}), respectively.
\end{definition}
In Example \ref{19.1.3}, we can tell from Proposition \ref{10.3.4} that $\operatorname{GGr}_{T}({{\left\langle U_{Q}^{F} \right\rangle }_{T}}/B)=\operatorname{GMn}_{T}({{\left\langle U_{Q}^{F} \right\rangle }_{T}}/B)$ and they are the Galois group $\operatorname{Gal}({{\left\langle U \right\rangle }_{T}}/B)$ of $p(x)$ over $B$, where $U$ is the set of roots of $p$ in $F$.  
 
 And in Example \ref{19.1.4}, it is not hard to tell from Proposition \ref{10.3.6} that $\operatorname{GGr}_{T}({{\left\langle U_{Q}^{F} \right\rangle }_{T}}/B)=\operatorname{GMn}_{T}({{\left\langle U_{Q}^{F} \right\rangle }_{T}}/B)$ and they are the differential Galois group $\operatorname{Gal}({{\left\langle U \right\rangle }_{T}}/B)$ of $L(Y)=0$ over $B$, where $U$ is a fundamental set of solutions of $L(Y)=0$ in $F$.

 \subsection{Formulas for elements of splitting $T$-spaces of “general” equations}
Generalizing the strategies used in Subsections \ref{Formulas for the roots} and \ref{Formulas for the solutions}, the following shows that we can determine the splitting $T$-space of an equation provided that some conditions are satisfied. 
\begin{proposition} \label{19.2.1}
    Given an equation $Q$, we are required to determine ${{\left\langle U_{Q}^{D} \right\rangle }_{T}}$, where $T$ is a par-operator gen-semigroup on a set $D$ over a set $B$. 
    
    If there exist $m(\in {{\mathbb{Z}}^{+}})$ equations $Q_1,\cdots,Q_m$, where every $U_{{{Q}_{i}}}^{D}$ is nonempty and is assumed to be determined (by a symbol, a formula, or a value), and $m$ par-operator gen-semigroups $T_1,\cdots,T_m$ on $D$ such that the following two conditions are satisfied,
\begin{enumerate}
    \item [(i)] $\forall i=1,\cdots,m$, ${{T}_{i}}$ is defined over ${{B}_{i-1}}$, where ${{B}_{0}}:=B$ and $\forall i=1,\cdots,(m-1)$, ${{B}_{i}}:={{\left\langle U_{{{Q}_{i}}}^{D} \right\rangle }_{{{T}_{i}}}}$ and all given operations on $B_i$ are also given as operations on $B$ (\textit{cf.} Terminology \ref{19.1.2}); 
\item [(ii)] ${{\left\langle U_{Q}^{D} \right\rangle }_{T}}\subseteq {{\left\langle U_{{{Q}_{m}}}^{D} \right\rangle }_{{{T}_{m}}}}$;
\end{enumerate}
then for each element of ${{\left\langle U_{Q}^{D} \right\rangle }_{T}}$, there is a formula which only involves the given operations on $B$ and $U_{Q_1}^D,\cdots,U_{Q_m}^D$.
\end{proposition}
\begin{proof}
    Since ${{\left\langle U_{{{Q}_{1}}}^{D} \right\rangle }_{{{T}_{1}}}}={{B}_{1}}$ and ${{T}_{1}}$ is defined over ${{B}_{0}}=B$ (condition (i)), by Terminology \ref{19.1.2}, $\forall b\in {{B}_{1}}$, there is a formula for $b$ which only involves the given operations on $B$ and $U_{{{Q}_{1}}}^{D}$.

Analogously, because ${{\left\langle U_{{{Q}_{2}}}^{D} \right\rangle }_{{{T}_{2}}}}={{B}_{2}}$ and ${{T}_{2}}$ is defined over ${{B}_{1}}$, $\forall b\in {{B}_{2}}$, there is a formula for $b$ which only involves the given operations on ${{B}_{1}}$ and $U_{{{Q}_{2}}}^{D}$. And as was shown above, for each element of ${{B}_{1}}$, there is a formula which only involves the given operations on $B$ and $U_{{{Q}_{1}}}^{D}$. Hence by substitutions and the assumption that all given operations on $B_1$ are also given operations on $B$, any $b\in {{B}_{2}}$ can be determined by a formula which only involves the given operations on $B$, $U_{{{Q}_{1}}}^{D}$, and $U_{{{Q}_{2}}}^{D}$.

    By induction, eventually every element of ${{\left\langle U_{{{Q}_{m}}}^{D} \right\rangle }_{{{T}_{m}}}}$ can be determined by a formula which only involves the given operations on $B$ and $U_{Q_1}^D,\cdots,U_{Q_m}^D$. Thus, $\forall a\in {{\left\langle U_{Q}^{D} \right\rangle }_{T}}\subseteq {{\left\langle U_{{{Q}_{m}}}^{D} \right\rangle }_{{{T}_{m}}}}$(condition (ii)), there is a formula for $a$ which only involves the given operations on $B$ and $U_{Q_1}^D,\cdots,U_{Q_m}^D$.
\end{proof}
	The strategy described by Proposition \ref{19.2.1} is actually “decomposing” $Q$ into ${{Q}_{1}},\cdots ,{{Q}_{m}}$ each of which the solutions are assumed to be known. This is just the strategy used in Sections \ref{poly equ} and \ref{diff equ}. 

   Note that in Sections \ref{poly equ} and \ref{diff equ}, the “decomposition” of an equation $Q$ into ${{Q}_{1}},\cdots ,{{Q}_{m}}$ depends on a normal series of the Galois $T$-group of $Q$ (\textit{cf.} Lemmas \ref{17.1.1} and \ref{18.1.2}). However, for any other equation $Q$, we do not know whether a similar situation arises, i.e. whether there is a connection between the “decomposition” of $Q$ described in Proposition \ref{19.2.1} and a normal series of the Galois $T$-group of $Q$ (defined by Definition \ref{19.1.6}). 
   
   Moreover, we do not know whether there exists any case where the “decomposition” of $Q$ is related to the Galois $T$-monoid of $Q$ (defined by Definition \ref{19.1.6}) rather than the Galois $T$-group of $Q$.

Of course, the possible strategy for equation solving described in this section is just a very rough sketch and much more work is needed.

$ $

\addcontentsline{toc}{section}{Part IV. \textbf{Other topics and future research}}

\begin{center}
Part IV. OTHER TOPICS AND FUTURE RESEARCH
\end{center}

In Part IV, we shall briefly introduce some more results which we think to be important or deserve deeper study in the future.
\section{Dualities of operator semigroups} \label{Duality}
    Let $T$ be an operator semigroup and let $S$ be a $T$-space. Then $\forall f\in T$, $\sigma \in \operatorname{End}_{T}(S)$ and $a\in S $, $\sigma (f(a))=f(\sigma (a))$. The symmetry in this equation implies that $T$ and $\operatorname{End}_{T}(S)$ may in a way exchange their roles with each other. Indeed, there exists a duality between $\operatorname{End}_{T}(S)$ and the maximum operator semigroup on $D$ which “accommodates” $\operatorname{End}_{T}(S)$, as explained below. 

    In the following two results, for any set $F$ of functions from $D$ to $D$ and $S\subseteq D$, we may denote by $F{{|}_{S}}$ the set $\{(f{{|}_{S}})\,|\,f\in F\}$.
\begin{theorem}
\label{dual End}
    Let $T$ be an operator semigroup on a set $D$, let $S$ be a $T$-space, let $T^*=\operatorname{End}_{T}(S)$, and let
    \begin{center}
        ${{T}_{\operatorname{End}(S)}}=\{f:D\to D\, |\,\operatorname{Im}(f{{|}_{S}})\subseteq S, \sigma (f(a))=f(\sigma (a)), \forall a\in S$ and $\sigma \in \operatorname{End}_{T}(S)\}$.
    \end{center}
\begin{enumerate}
     \item [(a)] $T^*$ is an operator semigroup on $S$, $S$ is a $T^*$-space, and ${{T}_{\operatorname{End}(S)}}$ is an operator semigroup on $D$. 
    \item [(b)] $\operatorname{End}_{T^*}(S)\supseteq T{{|}_{S}}$, where the equation holds if and only if $T{{|}_{S}}={{T}_{\operatorname{End}(S)}}{{|}_{S}}$. 
\end{enumerate}
\begin{remark}
    \begin{enumerate}
        \item Obviously $T\subseteq {{T}_{\operatorname{End}(S)}}$, and intuitively, ${{T}_{\operatorname{End}(S)}}$ can be interpreted as the maximum operator semigroup on $D$ which “accommodates” $\operatorname{End}_{T}(S)$ (i.e. $\forall \sigma \in \operatorname{End}_{T}(S)$, $\sigma$ would still be a $T$-endomorphism of $S$ if $T$ were extended to ${{T}_{\operatorname{End}(S)}}$).
        \item Since $T^*=\operatorname{End}_{T}(S)$, if $T{{|}_{S}}={{T}_{\operatorname{End}(S)}}{{|}_{S}}$, then (b) implies $\operatorname{End}_{\operatorname{End}_{T}(S)}(S)=\operatorname{End}_{T^*}(S)=T|_S$. That is, $\operatorname{End}_{\operatorname{End}_{T}(S)}(S)=T|_S$ when $T$ reaches its “maximum” ${T}_{\operatorname{End}(S)}$. This is why we said that there exists a duality between $\operatorname{End}_{T}(S)$ and ${T}_{\operatorname{End}(S)}$.
    \end{enumerate}
\end{remark}
\end{theorem}
\begin{proof} For (a):

It follows from Proposition \ref{1.3.10} and Definition \ref{Operator semigroup} that $T^*=\operatorname{End}_{T}(S)$ is an operator semigroup on $S$, and hence ${{\left\langle S \right\rangle }_{T^*}}\subseteq S$. Since Id on $S$ lies in $\operatorname{End}_{T}(S)$, ${{\left\langle S \right\rangle }_{T^*}}=S$. Hence the $T$-space $S$ is also a $T^*$-space. 

To show that ${{T}_{\operatorname{End}(S)}}$ is an operator semigroup on $D$, we only need to show that it is closed under composition. Let $h,g\in {{T}_{\operatorname{End}(S)}}$, $\sigma \in \operatorname{End}_{T}(S)$ and $a\in S$. Then from the definition of ${{T}_{\operatorname{End}(S)}}$, we can tell that $\operatorname{Im}((h\circ g){{|}_{S}})\subseteq S$ and
\[\sigma ((h\circ g)(a))=\sigma (h(g(a)))=h(\sigma (g(a)))=h(g(\sigma (a)))=(h\circ g)(\sigma (a)).\] 
Hence $h\circ g\in {{T}_{\operatorname{End}(S)}}$, and thus ${{T}_{\operatorname{End}(S)}}$ is an operator semigroup on $D$. 

For (b):

By Definition \ref{1.3.1}, $\forall f\in T$ and $\sigma \in \operatorname{End}_{T}(S)=T^*$, $f$ commutes with $\sigma $ on $S$, i.e. $\forall a\in S, \sigma (f(a))=f(\sigma (a))$, and hence $f{{|}_{S}}$ is a $T^*$-endomorphism (\textit{cf.} Definition \ref{1.3.2}) of the $T^*$-space $S$ (since $\operatorname{Im}(f{{|}_{S}})\subseteq S$ by Proposition \ref{<S> contained in S}). Therefore, $T{{|}_{S}}\subseteq \operatorname{End}_{T^*}(S)$.

We are going to prove that $\operatorname{End}_{T^*}(S)=T{{|}_{S}}$ if and only if ${{T}_{\operatorname{End}(S)}}{{|}_{S}}=T{{|}_{S}}$.

Suppose $\operatorname{End}_{T^*}(S)=T{{|}_{S}}$. $T{{|}_{S}}\subseteq {{T}_{\operatorname{End}(S)}}{{|}_{S}}$ because $T\subseteq {{T}_{\operatorname{End}(S)}}$. To show ${{T}_{\operatorname{End}(S)}}{{|}_{S}}\subseteq T{{|}_{S}}$, let $f\in {{T}_{\operatorname{End}(S)}}$. Then by the definition of ${{T}_{\operatorname{End}(S)}}$, $\operatorname{Im}(f{{|}_{S}})\subseteq S$ and $\sigma (f(a))=f(\sigma (a))$, $\forall a\in S$ and $\sigma \in T^*=\operatorname{End}_{T}(S)$, and thus $f{{|}_{S}}\in \operatorname{End}_{T^*}(S)=T{{|}_{S}}$. Hence ${{T}_{\operatorname{End}(S)}}{{|}_{S}}\subseteq T{{|}_{S}}$.  Thus, ${{T}_{\operatorname{End}(S)}}{{|}_{S}}=T{{|}_{S}}$.

Conversely, suppose ${{T}_{\operatorname{End}(S)}}{{|}_{S}}=T{{|}_{S}}$. We already showed $T{{|}_{S}}\subseteq \operatorname{End}_{T^*}(S)$. To show $\operatorname{End}_{T^*}(S)\subseteq T{{|}_{S}}$, let $f:D\to D$ such that $f{{|}_{S}}\in \operatorname{End}_{T^*}(S)$. Then $\operatorname{Im}(f{{|}_{S}})\subseteq S$ and $\sigma (f(a))=f(\sigma (a)),\forall a\in S$ and $\sigma \in T^*=\operatorname{End}_{T}(S)$. It follows that $f\in {{T}_{\operatorname{End}(S)}}$, and thus $f{{|}_{S}}\in {{T}_{\operatorname{End}(S)}}{{|}_{S}}=T{{|}_{S}}$. Hence $\operatorname{End}_{T^*}(S)\subseteq T{{|}_{S}}$. Therefore, $\operatorname{End}_{T^*}(S)=T{{|}_{S}}$.
\end{proof}
Analogously, there exists a duality between $\operatorname{Aut}_{T}(S)$ and the maximum operator semigroup which “accommodates” $\operatorname{Aut}_{T}(S)$, though an additional condition is required.
\begin{theorem}
    \label{dual Aut}
    Let $T$ be an operator semigroup on a set $D$, let $S$ be a $T$-space, let ${{T}^{\#}}=\operatorname{Aut}_{T}(S)$, and let
    \begin{center}
        ${{T}_{\operatorname{Aut}(S)}}=\{f:D\to D\, |\,\operatorname{Im}(f{{|}_{S}})\subseteq S, \sigma (f(a))=f(\sigma (a))$, $\forall a\in S$ and $\sigma \in \operatorname{Aut}_{T}(S)\}$.
    \end{center}
 \begin{enumerate}
    \item [(a)]  ${{T}^{\#}}$ is an operator semigroup on $S$, $S$ is a ${{T}^{\#}}\text{-}$space and ${{T}_{\operatorname{Aut}(S)}}$ is an operator semigroup on $D$. 
    \item [(b)] If $\forall f\in T$, $f{{|}_{S}}$ is bijective onto $S$, then $\operatorname{Aut}_{{{T}^{\#}}}(S)\supseteq T{{|}_{S}}$
  \item [(c)] Suppose that $\forall f\in {{T}_{\operatorname{Aut}(S)}}$, $f{{|}_{S}}$ is bijective onto $S$. Then $\operatorname{Aut}_{{{T}^{\#}}}(S)\supseteq T{{|}_{S}}$ and the equality holds if and only if $T{{|}_{S}}={{T}_{\operatorname{Aut}(S)}}{{|}_{S}}$.
\end{enumerate}
\end{theorem}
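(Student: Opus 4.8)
The plan is to prove Theorem \ref{dual Aut} in close parallel with the proof of Theorem \ref{dual End}, importing the modifications needed to handle the fact that automorphisms form a group rather than a monoid and that an element $f \in T$ need not restrict to a bijection of $S$.

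For part (a), I would first observe that $T^{\#} = \operatorname{Aut}_T(S)$ is an operator semigroup on $S$: by Proposition \ref{1.3.11} it is a group under composition, hence in particular closed under composition, and every $\sigma \in \operatorname{Aut}_T(S)$ maps $S$ onto $S$, so composites stay within functions $S \to S$. Since $\operatorname{Id}$ on $S$ lies in $\operatorname{Aut}_T(S)$, we get ${\left\langle S \right\rangle}_{T^{\#}} = S$, so the $T$-space $S$ is a $T^{\#}$-space. For ${{T}_{\operatorname{Aut}(S)}}$, exactly as in Theorem \ref{dual End}(a), it suffices to check closure under composition: if $h,g \in {{T}_{\operatorname{Aut}(S)}}$, then $\operatorname{Im}((h\circ g)|_S) \subseteq \operatorname{Im}(h|_S) \subseteq S$, and for $\sigma \in \operatorname{Aut}_T(S)$ and $a \in S$ one computes $\sigma((h\circ g)(a)) = h(g(\sigma(a))) = (h\circ g)(\sigma(a))$ using the commutation property twice; hence $h \circ g \in {{T}_{\operatorname{Aut}(S)}}$.

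For part (b), let $f \in T$. By Definition \ref{1.3.1}, $f$ commutes on $S$ with every $\sigma \in \operatorname{Aut}_T(S) = T^{\#}$, and by Proposition \ref{<S> contained in S}, $\operatorname{Im}(f|_S) \subseteq S$, so $f|_S$ is a $T^{\#}$-endomorphism of the $T^{\#}$-space $S$. The extra hypothesis that $f|_S$ is bijective onto $S$ then upgrades $f|_S$ to a $T^{\#}$-automorphism, i.e. $f|_S \in \operatorname{Aut}_{T^{\#}}(S)$; thus $T|_S \subseteq \operatorname{Aut}_{T^{\#}}(S)$. For part (c), the containment $T|_S \subseteq \operatorname{Aut}_{T^{\#}}(S)$ follows from (b), since the hypothesis of (c) (every $f \in {{T}_{\operatorname{Aut}(S)}}$ restricts to a bijection onto $S$) implies the hypothesis of (b) because $T \subseteq {{T}_{\operatorname{Aut}(S)}}$. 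For the equivalence, I would mimic the "if and only if" argument of Theorem \ref{dual End}(b): assuming $\operatorname{Aut}_{T^{\#}}(S) = T|_S$, take any $f \in {{T}_{\operatorname{Aut}(S)}}$; then $\operatorname{Im}(f|_S) \subseteq S$, $f|_S$ is bijective onto $S$ by hypothesis, and $f|_S$ commutes with every element of $\operatorname{Aut}_T(S)$, so $f|_S \in \operatorname{Aut}_{T^{\#}}(S) = T|_S$, giving ${{T}_{\operatorname{Aut}(S)}}|_S \subseteq T|_S$; the reverse inclusion is immediate from $T \subseteq {{T}_{\operatorname{Aut}(S)}}$. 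Conversely, if ${{T}_{\operatorname{Aut}(S)}}|_S = T|_S$, take $f: D \to D$ with $f|_S \in \operatorname{Aut}_{T^{\#}}(S)$; then $\operatorname{Im}(f|_S) \subseteq S$ and $f$ commutes on $S$ with every element of $\operatorname{Aut}_T(S)$, so $f \in {{T}_{\operatorname{Aut}(S)}}$ and hence $f|_S \in {{T}_{\operatorname{Aut}(S)}}|_S = T|_S$, yielding $\operatorname{Aut}_{T^{\#}}(S) \subseteq T|_S$, and combining with (b) gives equality.

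The main obstacle — and the reason (b) and (c) are stated separately with bijectivity hypotheses rather than unconditionally — is precisely that a $T$-morphism need not be surjective or injective, so $f|_S$ for $f \in T$ is only guaranteed to be a $T^{\#}$-\emph{endomorphism}, not a $T^{\#}$-automorphism; without the bijectivity assumption the inclusion $T|_S \subseteq \operatorname{Aut}_{T^{\#}}(S)$ can fail. I expect the bulk of the care to go into keeping straight which hypothesis ((b)'s "$f \in T$" versus (c)'s "$f \in {{T}_{\operatorname{Aut}(S)}}$") is in force at each step of the equivalence in (c), and into verifying that membership in $\operatorname{Aut}_{T^{\#}}(S)$ genuinely requires only the commutation-on-$S$ condition plus bijectivity onto $S$ — which it does, by Definitions \ref{1.3.1}, \ref{1.3.2} and \ref{$T$-isomorphisms} applied to the operator semigroup $T^{\#}$ on $S$. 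Everything else is a routine transcription of the proof of Theorem \ref{dual End}.
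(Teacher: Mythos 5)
Your proposal is correct and follows essentially the same route as the paper: part (a) via Proposition \ref{1.3.11} plus the closure-under-composition check for ${T}_{\operatorname{Aut}(S)}$, part (b) by noting $f|_S$ commutes with all of $T^{\#}$ and the bijectivity hypothesis upgrades it to a $T^{\#}$-automorphism, and part (c) by the same two-directional inclusion argument transcribed from Theorem \ref{dual End}(b). No gaps.
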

\begin{remark}
    \begin{enumerate}
        \item Apparently $T\subseteq {{T}_{\operatorname{Aut}(S)}}$, and intuitively, ${{T}_{\operatorname{Aut}(S)}}$ can be interpreted as the maximum operator semigroup on $D$ which “accommodates” $\operatorname{Aut}_{T}(S)$ (i.e. $\forall \sigma \in \operatorname{Aut}_{T}(S)$, $\sigma$ would still be a $T$-automorphism of $S$ if $T$ were extended to ${{T}_{\operatorname{Aut}(S)}}$).
        \item Since ${{T}^{\#}}=\operatorname{Aut}_{T}(S)$, if $T{{|}_{S}}={{T}_{\operatorname{Aut}(S)}}{{|}_{S}}$ and $\forall f\in {{T}_{\operatorname{Aut}(S)}}$, $f{{|}_{S}}$ is bijective onto $S$, then (c) implies $\operatorname{Aut}_{\operatorname{Aut}_{T}(S)}(S)=\operatorname{Aut}_{{{T}^{\#}}}(S)=T|_S={{T}_{\operatorname{Aut}(S)}}{{|}_{S}}$. This is why we said that there exists a duality between $\operatorname{Aut}_{T}(S)$ and $(T\subseteq){{T}_{\operatorname{Aut}(S)}}$.
    \end{enumerate}
\end{remark}
\begin{proof} For (a):

It follows from Proposition \ref{1.3.11} and Definition \ref{Operator semigroup} that ${{T}^{\#}}=\operatorname{Aut}_{T}(S)$ is an operator semigroup on $S$, and hence ${{\left\langle S \right\rangle }_{{{T}^{\#}}}}\subseteq S$. Because Id on $S$ lies in $\operatorname{Aut}_{T}(S)$, ${{\left\langle S \right\rangle }_{{{T}^{\#}}}}=S$. Hence the $T$-space $S$ is also a ${{T}^{\#}}\text{-}$space. 

To show that ${{T}_{\operatorname{Aut}(S)}}$ is an operator semigroup on $D$, it suffices to show that it is closed under composition. Let $h,g\in {{T}_{\operatorname{Aut}(S)}}$, $\sigma \in \operatorname{Aut}_{T}(S)$ and $a\in S$. Then from the definition of ${{T}_{\operatorname{Aut}(S)}}$, we can tell that $\operatorname{Im}((h\circ g){{|}_{S}})\subseteq S$ and
	\[\sigma ((h\circ g)(a))=\sigma (h(g(a)))=h(\sigma (g(a)))=h(g(\sigma (a)))=(h\circ g)(\sigma (a)).\] 
Hence $h\circ g\in {{T}_{\operatorname{Aut}(S)}}$. Thus, ${{T}_{\operatorname{Aut}(S)}}$ is an operator semigroup on $D$. 

For (b):

Assume that $\forall f\in T$, $f{{|}_{S}}$ is bijective onto $S$. By Definition \ref{1.3.1}, $\forall f\in T$ and $\sigma \in \operatorname{Aut}_{T}(S)={{T}^{\#}}$, $f$ commutes with $\sigma $ on $S$, i.e. $\forall a\in S$, $\sigma (f(a))=f(\sigma (a))$, and hence $f{{|}_{S}}$ is a ${{T}^{\#}}\text{-}$automorphism of the ${{T}^{\#}}\text{-}$space $S$ (since $f{{|}_{S}}$ is bijective onto S). Therefore, $\operatorname{Aut}_{{{T}^{\#}}}(S)\supseteq T{{|}_{S}}$.

For (c): 

Assume that $\forall f\in {{T}_{\operatorname{Aut}(S)}}$, $f{{|}_{S}}$ is bijective onto $S$. 
	Since $T\subseteq {{T}_{\operatorname{Aut}(S)}}$, $\forall f\in T$, $f{{|}_{S}}$ is bijective onto $S$. Then by (b), $\operatorname{Aut}_{{{T}^{\#}}}(S)\supseteq T{{|}_{S}}$.

We are going to prove that $\operatorname{Aut}_{{{T}^{\#}}}(S)=T{{|}_{S}}$ if and only if ${{T}_{\operatorname{Aut}(S)}}{{|}_{S}}=T{{|}_{S}}$.

Suppose $\operatorname{Aut}_{{{T}^{\#}}}(S)=T{{|}_{S}}$. $T{{|}_{S}}\subseteq {{T}_{\operatorname{Aut}(S)}}{{|}_{S}}$ because $T\subseteq {{T}_{\operatorname{Aut}(S)}}$. To show ${{T}_{\operatorname{Aut}(S)}}{{|}_{S}}\subseteq T{{|}_{S}}$, let $f\in {{T}_{\operatorname{Aut}(S)}}$. Then by the definition of ${{T}_{\operatorname{Aut}(S)}}$, $\sigma (f(a))=f(\sigma (a))$, $\forall a\in S$ and $\sigma \in {{T}^{\#}}=\operatorname{Aut}_{T}(S)$, and hence $f{{|}_{S}}\in \operatorname{Aut}_{{{T}^{\#}}}(S)=T{{|}_{S}}$ (because $f{{|}_{S}}$ is bijective onto S). Hence ${{T}_{\operatorname{Aut}(S)}}{{|}_{S}}\subseteq T{{|}_{S}}$. Thus, ${{T}_{\operatorname{Aut}(S)}}{{|}_{S}}=T{{|}_{S}}$.

Conversely, suppose ${{T}_{\operatorname{Aut}(S)}}{{|}_{S}}=T{{|}_{S}}$. We already showed $\operatorname{Aut}_{{{T}^{\#}}}(S)\supseteq T{{|}_{S}}$. To show $\operatorname{Aut}_{{{T}^{\#}}}(S)\subseteq T{{|}_{S}}$, let $f:D\to D$ such that $f{{|}_{S}}\in \operatorname{Aut}_{{{T}^{\#}}}(S)$. Then $\operatorname{Im}(f{{|}_{S}})\subseteq S$ and $\sigma (f(a))=f(\sigma (a)),\forall a\in S$ and $\sigma \in {{T}^{\#}}=\operatorname{Aut}_{T}(S)$. It follows that $f\in {{T}_{\operatorname{Aut}(S)}}$, and hence $f{{|}_{S}}\in {{T}_{\operatorname{Aut}(S)}}{{|}_{S}}=T{{|}_{S}}$. Thus, $\operatorname{Aut}_{{{T}^{\#}}}(S)\subseteq T{{|}_{S}}$. Therefore, $\operatorname{Aut}_{{{T}^{\#}}}(S)=T{{|}_{S}}$.
\end{proof}

    In Theorem \ref{dual End}, if $T$ were generalized to be a (par-)operator gen-semigroup, then statement (a) would still hold (though generally it would be false that $T\subseteq {{T}_{\operatorname{End}(S)}}$ because ${T}_{\operatorname{End}(S)}$ is always an operator semigroup). However, since the elements of End$_{T^*}(S)$ are all unary functions but the elements of a (par-)operator gen-semigroup are possibly not, statement (b) in Theorem \ref{dual End} would no longer hold.

Analogously, in Theorem \ref{dual Aut}, if $T$ were generalized to be a (par-)operator gen-semigroup, then statement (a) would still hold (though generally it would be false that $T\subseteq {{T}_{\operatorname{Aut}(S)}}$ because ${T}_{\operatorname{Aut}(S)}$ is always an operator semigroup), but statements (b) and (c) would no longer hold.

\section{Fixed sets and transitive actions of $\operatorname{End}_{T}(S)$ and $\operatorname{Aut}_{T}(S)$} \label{Transitive}
In this section we develop, among other results, two analogues of the well-known fact that the Galois group of an irreducible polynomial acts transitively on its roots.

In this section, $T$ is always a par-operator gen-semigroup unless otherwise specified. 

In the classical Galois theory, the fixed field of $\operatorname{Aut}(F)$, where $F$ is a field, is denoted by ${{F}^{\operatorname{Aut}(F)}}$. To study ${{S}^{\operatorname{End}_{T}(S)}}$ and ${{S}^{\operatorname{Aut}_{T}(S)}}$, which generalize the notion of ${{F}^{\operatorname{Aut}(F)}}$, we first give
\begin{notation} \label{5.1.13}
    Let $A\subseteq D$, where $D$ is the domain of $T$. Let
\[C_{A}^{\operatorname{End}}=\{u\in A\,|\,{{[u)}_{T}}\bigcap A=\{u\}\},\] 
i.e. $C_{A}^{\operatorname{End}}$ is the set of $u\in A$ where $u$ is the only $v\in A$ such that $u\xrightarrow{T}v$. And let
\[C_{A}^{\operatorname{Aut}}=\{u\in A\,|\,{{[u]}_{T}}\bigcap A=\{u\}\},\]
i.e. $C_{A}^{\operatorname{Aut}}$ is the set of $u\in A$ where $u$ is the only $v\in A$ such that $u\overset{T}{\longleftrightarrow}v$.
\end{notation}
\begin{remark}
    For an operator semigroup $T$, see Notation \ref{5.1.1} for the notations involved. More generally, for a par-operator gen-semigroup $T$, see Notation \ref{15.1.1} for the notations involved.
\end{remark}  

For example, let $T$, $F$ and $B$ be defined as in Proposition \ref{5.1.2}, then it is not hard to tell that $C_{F}^{\operatorname{End}}=C_{F}^{\operatorname{Aut}}=B$.

\subsection{Fixed sets and transitive actions of $\operatorname{End}_{T}(S)$}
The following explains why we have “End” in notation $C_{A}^{\operatorname{End}}$.
\begin{lemma} \label{5.1.14}
Let $S$ be a $T$-space. Then $C_{S}^{\operatorname{End}}\subseteq {{S}^{\operatorname{End}_{T}(S)}}$, and hence $\operatorname{End}_{T}(S)=\operatorname{GMn}_{T}(S/C_{S}^{\operatorname{End}})$.
\end{lemma}
\begin{proof}
    Let $\sigma \in \operatorname{End}_{T}(S)$. By Proposition \ref{15.1.2}, $\forall u\in S$, $u\xrightarrow{T}\sigma (u)$. Let $u\in C_{S}^{\operatorname{End}}$. Then by Notation \ref{5.1.13}, $u=\sigma (u)$, and hence $u\in {{S}^{\sigma }}$. It follows that $u\in {{S}^{\operatorname{End}_{T}(S)}}$. Thus $C_{S}^{\operatorname{End}}\subseteq {{S}^{\operatorname{End}_{T}(S)}}$. 
    
    Hence by Propositions \ref{1.5.5} and \ref{1.5.6}, $\operatorname{End}_{T}(S)  \subseteq \operatorname{GMn}_{T}(S/{{S}^{\operatorname{End}_{T}(S)}}) \subseteq \operatorname{GMn}_{T}(S/C_{S}^{\operatorname{End}})$. Since $\operatorname{End}_{T}(S)  \supseteq \operatorname{GMn}_{T}(S/C_{S}^{\operatorname{End}})$, $\operatorname{End}_{T}(S)=\operatorname{GMn}_{T}(S/C_{S}^{\operatorname{End}})$.
\end{proof}
 Recall a well-known fact in the classical Galois theory: the Galois group of an irreducible polynomial acts transitively on its roots. By Proposition \ref{5.1.2}, the following is an analogue of this fact.
\begin{theorem} \label{5.1.15}
    Let $S$ be a $T$-space. Suppose that the identity function lies in $T$ and $\forall a\in S\backslash C_{S}^{\operatorname{End}}$, ${{\left\langle a \right\rangle }_{T}}=S$. Then $\forall u,v\in S$ with $u\xrightarrow{T}v$, $\exists \sigma \in \operatorname{GMn}_{T}(S/C_{S}^{\operatorname{End}})(=\operatorname{End}_{T}(S))$ such that $\sigma (u)=v$.
\end{theorem}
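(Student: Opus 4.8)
The plan is to split on whether $u$ lies in $C_S^{\operatorname{End}}$ and, in the nontrivial case, produce $\sigma$ directly from Theorem \ref{15.1.5}. First I would dispose of the easy case $u\in C_S^{\operatorname{End}}$: since $u\xrightarrow{T}v$ we have $v\in {{[u)}_{T}}$ by Notation \ref{15.1.1}, and $v\in S$ by hypothesis, so $v\in {{[u)}_{T}}\bigcap S=\{u\}$, forcing $v=u$; then $\sigma:=\operatorname{Id}$ on $S$ works (it lies in $\operatorname{End}_{T}(S)$ and fixes everything, in particular $C_S^{\operatorname{End}}$).

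Next I would treat the case $u\in S\backslash C_S^{\operatorname{End}}$. By hypothesis ${{\left\langle u \right\rangle }_{T}}=S$. Since $u\xrightarrow{T}v$, Theorem \ref{15.1.5}(a) gives a well-defined map $\sigma :{{\left\langle u \right\rangle }_{T}}\to {{\left\langle v \right\rangle }_{T}}$ sending $f({{u}^{n}})\mapsto f({{v}^{n}})$ for each $n$-variable $f\in T$ with $f({{u}^{n}})$ well-defined, and Theorem \ref{15.1.5}(b) says this $\sigma$ is a $T$-morphism. Here ${{\left\langle u \right\rangle }_{T}}=S$, and ${{\left\langle v \right\rangle }_{T}}={{\left\langle \{v\} \right\rangle }_{T}}\le S$ by Proposition \ref{11.1.1} (as $v\in S$). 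So $\operatorname{Im}\sigma\subseteq {{\left\langle v \right\rangle }_{T}}\subseteq S$, and since the defining condition of a $T$-morphism in Definition \ref{10.3.1} only refers to values of $f$ on tuples from $S$ and from $\operatorname{Im}\sigma$ (both landing in $S$ by Proposition \ref{11.1.1}), the map $\sigma$ is equally a $T$-morphism from $S$ to $S$; that is, $\sigma\in\operatorname{End}_{T}(S)$. Taking $f=\operatorname{Id}\in T$ (unary, $n=1$), $f(u)=u$ is well-defined, so $\sigma(u)=f(v)=v$.

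Finally I would invoke Proposition \ref{5.1.14}: since $\operatorname{Id}\in T$ and $S$ is a $T$-space, $C_S^{\operatorname{End}}\subseteq {{S}^{\operatorname{End}_{T}(S)}}$, whence $\operatorname{GMn}_{T}(S/C_S^{\operatorname{End}})=\operatorname{End}_{T}(S)$; thus the $\sigma$ produced in either case lies in $\operatorname{GMn}_{T}(S/C_S^{\operatorname{End}})$ and satisfies $\sigma(u)=v$, as required.

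The argument is short because the construction machinery of Subsection \ref{16 cons of T-mor from U} does the heavy lifting; the only point that needs care is the step where a $T$-morphism ${{\left\langle u \right\rangle }_{T}}\to{{\left\langle v \right\rangle }_{T}}$ is legitimately reinterpreted as a $T$-endomorphism of $S$, which relies on ${{\left\langle u \right\rangle }_{T}}=S$ together with ${{\left\langle v \right\rangle }_{T}}\le S$. I expect the hypothesis "$\forall a\in S\backslash C_S^{\operatorname{End}}$, ${{\left\langle a \right\rangle }_{T}}=S$" is exactly what is needed to guarantee the domain of the Theorem \ref{15.1.5} map is all of $S$, so the main (minor) obstacle is simply organizing the case split cleanly and confirming the codomain issue rather than any computation.
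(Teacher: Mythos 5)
Your proposal is correct and follows essentially the same route as the paper: the same case split on whether $u\in C_{S}^{\operatorname{End}}$, the same use of Theorem \ref{15.1.5} (with $\left\langle u \right\rangle_{T}=S$ and $f=\operatorname{Id}$ to get $\sigma(u)=v$), and the same appeal to Proposition \ref{5.1.14} to identify $\operatorname{End}_{T}(S)$ with $\operatorname{GMn}_{T}(S/C_{S}^{\operatorname{End}})$. Your explicit justification that $\left\langle v \right\rangle_{T}\le S$ lets $\sigma$ be read as a $T$-endomorphism of $S$ only spells out a step the paper leaves implicit.
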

\begin{proof}
    Let $u,v\in S$ with $u\xrightarrow{T}v$. There are two cases as follows.

Suppose $u\in S\backslash C_{S}^{\operatorname{End}}$. It is assumed that ${{\left\langle a \right\rangle }_{T}}=S$, $\forall a\in S\backslash C_{S}^{\operatorname{End}}$. Hence ${{\left\langle u \right\rangle }_{T}}=S$. By Proposition \ref{15.1.5}, the map $\sigma :{{\left\langle u \right\rangle }_{T}}\to {{\left\langle v \right\rangle }_{T}}$ given by $f({{u}^{n}})\mapsto f({{v}^{n}})$, $\forall n\in {{\mathbb{Z}}^{+}}$ and $f\in T$ of $n$ variables such that $f({{u}^{n}})$ is well-defined, is a $T$-morphism from $S$ to itself (because $\left\langle v \right\rangle _T \subseteq S$ by Proposition \ref{11.1.1}). Then by Lemma \ref{5.1.14}, $\sigma \in \operatorname{End}_{T}(S)=\operatorname{GMn}_{T}(S/C_{S}^{\operatorname{End}})$. Moreover, if let $f=\operatorname{Id}$, then by the definition of $\sigma$, $\sigma (u)=\sigma (\operatorname{Id}(u))=\operatorname{Id}(v)=v$, as desired.

Suppose $u\in C_{S}^{\operatorname{End}}$. Then by Notation \ref{5.1.13}, $v=u$ because $u\xrightarrow{T}v$. If let $\sigma $ be the identity function on $S$, then apparently $\sigma \in \operatorname{GMn}_{T}(S/C_{S}^{\operatorname{End}})$ and $\sigma (u)=u=v$, as desired.
\end{proof}
	Combining the above two results, we have
\begin{corollary} \label{5.1.16}
    Let $S$ be a $T$-space. Suppose that the identity function lies in $T$ and $\forall a\in S\backslash C_{S}^{\operatorname{End}}$, ${{\left\langle a \right\rangle }_{T}}=S$. Then $C_{S}^{\operatorname{End}}={{S}^{\operatorname{End}_{T}(S)}}={{S}^{\operatorname{GMn}_{T}(S/C_{S}^{\operatorname{End}})}}$.
\end{corollary}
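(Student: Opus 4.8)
The plan is to obtain the statement directly by combining Proposition \ref{5.1.14} and Theorem \ref{5.1.15}, noting that the two hypotheses of the corollary ($\operatorname{Id}\in T$ and ${{\left\langle a \right\rangle }_{T}}=S$ for every $a\in S\backslash C_{S}^{\operatorname{End}}$) are precisely the hypotheses under which Theorem \ref{5.1.15} holds.

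First I would record what is immediate. By Proposition \ref{5.1.14}, $C_{S}^{\operatorname{End}}\subseteq {{S}^{\operatorname{End}_{T}(S)}}$ and $\operatorname{End}_{T}(S)=\operatorname{GMn}_{T}(S/C_{S}^{\operatorname{End}})$; applying the fixed-set construction to the latter equality of subsets of $\operatorname{End}_{T}(S)$ gives ${{S}^{\operatorname{End}_{T}(S)}}={{S}^{\operatorname{GMn}_{T}(S/C_{S}^{\operatorname{End}})}}$. So it only remains to establish the reverse inclusion ${{S}^{\operatorname{End}_{T}(S)}}\subseteq C_{S}^{\operatorname{End}}$, after which all three sets coincide.

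For that inclusion, let $u\in {{S}^{\operatorname{End}_{T}(S)}}$. Since $u\xrightarrow{T}u$ holds trivially, $u\in {{[u)}_{T}}\bigcap S$, so by Notation \ref{5.1.13} it suffices to show ${{[u)}_{T}}\bigcap S\subseteq \{u\}$. Take any $v\in {{[u)}_{T}}\bigcap S$; that is, $v\in S$ and $u\xrightarrow{T}v$. Because $\operatorname{Id}\in T$ and ${{\left\langle a \right\rangle }_{T}}=S$ for every $a\in S\backslash C_{S}^{\operatorname{End}}$, Theorem \ref{5.1.15} applies and yields $\sigma \in \operatorname{GMn}_{T}(S/C_{S}^{\operatorname{End}})=\operatorname{End}_{T}(S)$ with $\sigma (u)=v$. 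But $u\in {{S}^{\operatorname{End}_{T}(S)}}$ forces $\sigma (u)=u$, hence $v=u$. Thus ${{[u)}_{T}}\bigcap S=\{u\}$, i.e. $u\in C_{S}^{\operatorname{End}}$, completing the chain $C_{S}^{\operatorname{End}}={{S}^{\operatorname{End}_{T}(S)}}={{S}^{\operatorname{GMn}_{T}(S/C_{S}^{\operatorname{End}})}}$.

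There is essentially no obstacle here: the substantive content is already contained in Proposition \ref{5.1.14} and Theorem \ref{5.1.15}, and the argument is bookkeeping that turns a chain of inclusions into a chain of equalities. The only points worth double-checking are that $u$ always lies in ${{[u)}_{T}}$, so that the defining condition of $C_{S}^{\operatorname{End}}$ reduces exactly to ``$u$ is the sole element of ${{[u)}_{T}}\bigcap S$'', and that the hypotheses invoked for Theorem \ref{5.1.15} are verbatim the hypotheses of the corollary, so that nothing extra must be assumed.
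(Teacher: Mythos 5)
Your proposal is correct and follows essentially the same route as the paper: Proposition \ref{5.1.14} gives $C_{S}^{\operatorname{End}}\subseteq S^{\operatorname{End}_{T}(S)}=S^{\operatorname{GMn}_{T}(S/C_{S}^{\operatorname{End}})}$, and Theorem \ref{5.1.15} yields the reverse inclusion $S^{\operatorname{End}_{T}(S)}\subseteq C_{S}^{\operatorname{End}}$. The only cosmetic difference is that the paper argues that inclusion by contradiction, while you argue it directly by showing $[u)_{T}\cap S=\{u\}$; the two are equivalent.
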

\begin{proof}
    Let $u\in {{S}^{\operatorname{End}_{T}(S)}}$. Assume $u\notin C_{S}^{\operatorname{End}}$, i.e. $\exists v\in S$ such that $v\ne u$ and $u\xrightarrow{T}v$. Then by Theorem \ref{5.1.15}, $\exists \sigma \in \operatorname{GMn}_{T}(S/C_{S}^{\operatorname{End}})$ such that $\sigma (u)=v$, which is contrary to $u\in {{S}^{\operatorname{End}_{T}(S)}}$. Hence ${{S}^{\operatorname{End}_{T}(S)}}\subseteq C_{S}^{\operatorname{End}}$.

    By Lemma \ref{5.1.14}, $C_{S}^{\operatorname{End}}\subseteq {{S}^{\operatorname{End}_{T}(S)}}={{S}^{\operatorname{GMn}_{T}(S/C_{S}^{\operatorname{End}})}}$. Therefore, $C_{S}^{\operatorname{End}}={{S}^{\operatorname{End}_{T}(S)}}={{S}^{\operatorname{GMn}_{T}(S/C_{S}^{\operatorname{End}})}}$.
\end{proof}
\subsection{Fixed sets and transitive actions of $\operatorname{Aut}_{T}(S)$}
	For $C_{S}^{\operatorname{Aut}}$ (\textit{cf.} Notation \ref{5.1.13}), the analogue of Lemma \ref{5.1.14} is
\begin{lemma} \label{5.1.17}
Let $S$ be a $T$-space. Then $C_{S}^{\operatorname{Aut}}\subseteq {{S}^{\operatorname{Aut}_{T}(S)}}$, and hence $\operatorname{Aut}_{T}(S)=\operatorname{GGr}_{T}(S/C_{S}^{\operatorname{Aut}})$.
\end{lemma}
\begin{proof}
    Recall that Proposition \ref{5.1.4} still holds for $T$ being a par-operator gen-semigroup (\textit{cf.} Subsection \ref{A construction of T-morphisms}). Let $\sigma \in \operatorname{Aut}_{T}(S)$. Then by Proposition \ref{5.1.4}, $\forall u\in S$, $u\overset{T}{\longleftrightarrow}\sigma (u)$. Let $u\in C_{S}^{\operatorname{Aut}}$. Then by Notation \ref{5.1.13}, $u=\sigma (u)$, and hence $u\in {{S}^{\sigma }}$. It follows that $u\in {{S}^{\operatorname{Aut}_{T}(S)}}$. Hence $C_{S}^{\operatorname{Aut}}\subseteq {{S}^{\operatorname{Aut}_{T}(S)}}$. 
    
    Then by Propositions \ref{1.5.5} and \ref{1.5.6}, $\operatorname{Aut}_{T}(S)  \subseteq \operatorname{GGr}_{T}(S/{{S}^{\operatorname{Aut}_{T}(S)}}) \subseteq \operatorname{GGr}_{T}(S/C_{S}^{\operatorname{Aut}})$. Since $\operatorname{Aut}_{T}(S)  \supseteq \operatorname{GGr}_{T}(S/C_{S}^{\operatorname{Aut}})$, $\operatorname{Aut}_{T}(S)=\operatorname{GGr}_{T}(S/C_{S}^{\operatorname{Aut}})$.
\end{proof}
And for $C_{S}^{\operatorname{Aut}}$, the analogue of Theorem \ref{5.1.15} is as follows, which, by Proposition \ref{5.1.2}, is also an analogue of the well-known fact that the Galois group of an irreducible polynomial acts transitively on its roots.
\begin{theorem} \label{5.1.18}
    Let $S$ be a $T$-space. Suppose that the identity function lies in $T$ and $\forall a\in S\backslash C_{S}^{\operatorname{Aut}}$, ${{\left\langle a \right\rangle }_{T}}=S$. Then $\forall u,v\in S$ with $u\overset{T}{\longleftrightarrow}v$, $\exists \sigma \in \operatorname{GGr}_{T}(S/C_{S}^{\operatorname{Aut}})(=\operatorname{Aut}_{T}(S))$ such that $\sigma (u)=v$.
\end{theorem}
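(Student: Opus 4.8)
The plan is to transcribe the proof of Theorem \ref{5.1.15} into the automorphism setting, using Corollary \ref{15.1.6} in place of Theorem \ref{15.1.5} and Proposition \ref{5.1.17} in place of Proposition \ref{5.1.14}, with one additional observation that is specific to the bijective case. First I would fix $u,v\in S$ with $u\overset{T}{\longleftrightarrow}v$ and split into the two cases $u\in C_S^{\operatorname{Aut}}$ and $u\in S\backslash C_S^{\operatorname{Aut}}$.

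For the case $u\in C_S^{\operatorname{Aut}}$: by Notation \ref{5.1.13} we have $[u]_T\cap S=\{u\}$; since $u\overset{T}{\longleftrightarrow}v$ gives $v\in[u]_T$ and $v\in S$, it follows that $v=u$. Taking $\sigma$ to be the identity function on $S$ — which lies in $\operatorname{Aut}_{T}(S)=\operatorname{GGr}_{T}(S/C_S^{\operatorname{Aut}})$ by Proposition \ref{5.1.17} — we get $\sigma(u)=u=v$, as desired.

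For the case $u\in S\backslash C_S^{\operatorname{Aut}}$: by hypothesis $\langle u\rangle_T=S$, and the crucial extra step (which has no counterpart in the proof of Theorem \ref{5.1.15}) is to show that $\langle v\rangle_T=S$ as well, so that the map produced below is genuinely an automorphism of $S$ rather than merely an isomorphism onto a proper $T$-subspace. Since $\overset{T}{\longleftrightarrow}$ is an equivalence relation on $D$ (the remark under Notation \ref{15.1.1}), $[u]_T=[v]_T$, so $[v]_T\cap S=[u]_T\cap S$; as $u\notin C_S^{\operatorname{Aut}}$, this common set contains $u$ together with at least one other element, hence has at least two elements, and so it cannot equal $\{v\}$; therefore $v\in S\backslash C_S^{\operatorname{Aut}}$, and $\langle v\rangle_T=S$ by hypothesis. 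Now, applying Corollary \ref{15.1.6} to $u\overset{T}{\longleftrightarrow}v$, there is a bijective map $\sigma:\langle u\rangle_T\to\langle v\rangle_T$ given by $f(u^n)\mapsto f(v^n)$ (for $f\in T$ with $f(u^n)$ well-defined), and it is a $T$-isomorphism; since $\langle u\rangle_T=\langle v\rangle_T=S$, this $\sigma$ is a $T$-automorphism of $S$, so $\sigma\in\operatorname{Aut}_{T}(S)=\operatorname{GGr}_{T}(S/C_S^{\operatorname{Aut}})$ by Proposition \ref{5.1.17}. Finally, since $\operatorname{Id}\in T$ by hypothesis, taking $f=\operatorname{Id}$ in the defining rule of $\sigma$ gives $\sigma(u)=\sigma(\operatorname{Id}(u))=\operatorname{Id}(v)=v$.

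The main obstacle — and essentially the only place the argument differs from the endomorphism case — is the verification that $v\notin C_S^{\operatorname{Aut}}$; this is what promotes the $T$-isomorphism delivered by Corollary \ref{15.1.6} to an actual element of $\operatorname{Aut}_{T}(S)$, and it is precisely here that the equivalence-relation structure of $\overset{T}{\longleftrightarrow}$ (rather than merely the preorder $\xrightarrow{T}$) is used. Everything else is a routine transcription of the proof of Theorem \ref{5.1.15}.
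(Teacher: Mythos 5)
Your proof is correct and follows essentially the same route as the paper's: the same two-case split, Corollary \ref{15.1.6} for the non-$C_S^{\operatorname{Aut}}$ case, Proposition \ref{5.1.17} to land in $\operatorname{GGr}_{T}(S/C_{S}^{\operatorname{Aut}})$, and $f=\operatorname{Id}$ to get $\sigma(u)=v$. The only difference is that you explicitly verify $v\in S\backslash C_{S}^{\operatorname{Aut}}$ so that $\langle v\rangle_T=S$, a step the paper's proof leaves implicit when it asserts the map is a $T$-isomorphism "from $S$ to itself"; your addition is sound and fills that small gap.
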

\begin{proof}
    Let $u,v\in S$ with $u\overset{T}{\longleftrightarrow}v$. There are two cases as follows.
    
Suppose $u\in S\backslash C_{S}^{\operatorname{Aut}}$. It is assumed that ${{\left\langle a \right\rangle }_{T}}=S$, $\forall a\in S\backslash C_{S}^{\operatorname{Aut}}$. Hence ${{\left\langle u \right\rangle }_{T}}=S$. By Corollary \ref{15.1.6}, the map $\sigma :{{\left\langle u \right\rangle }_{T}}\to {{\left\langle v \right\rangle }_{T}}$ given by $f({{u}^{n}})\mapsto f({{v}^{n}})$, $\forall n\in {{\mathbb{Z}}^{+}}$ and $f\in T$ of $n$ variables such that $f({{u}^{n}})$ is well-defined, is a $T$-isomorphism from $S$ to itself (because $\left\langle v \right\rangle _T \subseteq S$ by Proposition \ref{11.1.1}). Then by Lemma \ref{5.1.17}, $\sigma \in \operatorname{Aut}_{T}(S)=\operatorname{GGr}_{T}(S/C_{S}^{\operatorname{Aut}})$. Moreover, if let $f=\operatorname{Id}$, then by the definition of $\sigma$, $\sigma (u)=\sigma (\operatorname{Id}(u))=\operatorname{Id}(v)=v$, as desired.

Suppose $u\in C_{S}^{\operatorname{Aut}}$. Then by Notation \ref{5.1.13}, $v=u$ because $u\overset{T}{\longleftrightarrow}v$. If let $\sigma $ be the identity function on $S$, then $\sigma \in \operatorname{GGr}_{T}(S/C_{S}^{\operatorname{Aut}})$ and $\sigma (u)=u=v$, as desired.
\end{proof}
	Combining the above two results, we have
 \begin{corollary} \label{5.1.19}
     Let $S$ be a $T$-space. Suppose that the identity function lies in $T$ and $\forall a\in S\backslash C_{S}^{\operatorname{Aut}}$, ${{\left\langle a \right\rangle }_{T}}=S$. Then $C_{S}^{\operatorname{Aut}}={{S}^{\operatorname{Aut}_{T}(S)}}={{S}^{\operatorname{GGr}_{T}(S/C_{S}^{\operatorname{Aut}})}}$.
 \end{corollary}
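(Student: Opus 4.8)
The plan is to mirror the proof of Corollary \ref{5.1.16} step for step, replacing the ``End''-flavoured ingredients by their ``Aut''-flavoured counterparts, namely Proposition \ref{5.1.17} in place of Proposition \ref{5.1.14} and Theorem \ref{5.1.18} in place of Theorem \ref{5.1.15}. The statement asserts two equalities, but since Proposition \ref{5.1.17} already records that $\operatorname{Aut}_{T}(S)=\operatorname{GGr}_{T}(S/C_{S}^{\operatorname{Aut}})$, the second equality ${{S}^{\operatorname{Aut}_{T}(S)}}={{S}^{\operatorname{GGr}_{T}(S/C_{S}^{\operatorname{Aut}})}}$ will be immediate; so the real content is the first equality $C_{S}^{\operatorname{Aut}}={{S}^{\operatorname{Aut}_{T}(S)}}$, which I would establish by two inclusions.

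First I would prove ${{S}^{\operatorname{Aut}_{T}(S)}}\subseteq C_{S}^{\operatorname{Aut}}$ by contradiction. Suppose $u\in {{S}^{\operatorname{Aut}_{T}(S)}}$ but $u\notin C_{S}^{\operatorname{Aut}}$. By Notation \ref{5.1.13}, the latter means there is some $v\in S$ with $v\neq u$ and $u\overset{T}{\longleftrightarrow}v$. The hypotheses of the corollary --- that $\operatorname{Id}\in T$ and ${{\left\langle a \right\rangle }_{T}}=S$ for every $a\in S\backslash C_{S}^{\operatorname{Aut}}$ --- are exactly the hypotheses of Theorem \ref{5.1.18}, so that theorem produces $\sigma\in \operatorname{GGr}_{T}(S/C_{S}^{\operatorname{Aut}})(=\operatorname{Aut}_{T}(S))$ with $\sigma(u)=v\neq u$, contradicting $u\in {{S}^{\operatorname{Aut}_{T}(S)}}$. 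Hence ${{S}^{\operatorname{Aut}_{T}(S)}}\subseteq C_{S}^{\operatorname{Aut}}$.

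For the reverse inclusion I would simply invoke Proposition \ref{5.1.17}, which gives $C_{S}^{\operatorname{Aut}}\subseteq {{S}^{\operatorname{Aut}_{T}(S)}}$, together with the identity $\operatorname{Aut}_{T}(S)=\operatorname{GGr}_{T}(S/C_{S}^{\operatorname{Aut}})$ from the same proposition, which forces ${{S}^{\operatorname{Aut}_{T}(S)}}={{S}^{\operatorname{GGr}_{T}(S/C_{S}^{\operatorname{Aut}})}}$. Combining the two inclusions then yields $C_{S}^{\operatorname{Aut}}={{S}^{\operatorname{Aut}_{T}(S)}}={{S}^{\operatorname{GGr}_{T}(S/C_{S}^{\operatorname{Aut}})}}$. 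There is essentially no obstacle here: the entire difficulty has already been packaged into Theorem \ref{5.1.18} (the transitivity of $\operatorname{Aut}_{T}(S)$ on $\overset{T}{\longleftrightarrow}$-classes, which itself rests on Corollary \ref{15.1.6}) and Proposition \ref{5.1.17}. The only point requiring care is to confirm that the corollary's hypotheses coincide verbatim with those of Theorem \ref{5.1.18}, which they do.
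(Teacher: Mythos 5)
Your proposal is correct and follows essentially the same route as the paper's own proof: one inclusion by contradiction via Theorem \ref{5.1.18}, and the other inclusion together with the identification ${{S}^{\operatorname{Aut}_{T}(S)}}={{S}^{\operatorname{GGr}_{T}(S/C_{S}^{\operatorname{Aut}})}}$ via Proposition \ref{5.1.17}. Nothing further is needed.
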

\begin{proof}
    Let $u\in {{S}^{\operatorname{Aut}_{T}(S)}}$. Assume $u\notin C_{S}^{\operatorname{Aut}}$, i.e. $\exists v\in S$ such that $v\ne u$ and $u\overset{T}{\longleftrightarrow}v$. Then by Theorem \ref{5.1.18}, $\exists \sigma \in \operatorname{GGr}_{T}(S/C_{S}^{\operatorname{Aut}})$ such that $\sigma (u)=v$, which is contrary to $u\in {{S}^{\operatorname{Aut}_{T}(S)}}$. Hence ${{S}^{\operatorname{Aut}_{T}(S)}}\subseteq C_{S}^{\operatorname{Aut}}$.

By Lemma \ref{5.1.17}, $C_{S}^{\operatorname{Aut}}\subseteq {{S}^{\operatorname{Aut}_{T}(S)}}={{S}^{\operatorname{GGr}_{T}(S/C_{S}^{\operatorname{Aut}})}}$. Therefore, $C_{S}^{\operatorname{Aut}}={{S}^{\operatorname{Aut}_{T}(S)}}={{S}^{\operatorname{GGr}_{T}(S/C_{S}^{\operatorname{Aut}})}}$.
\end{proof}

\section{Two questions on Galois $T$-extensions and normal subgroups of Galois $T$-groups} \label{Two questions}
In this section, $T$ is always a par-operator gen-semigroup unless otherwise specified.

As was promised at the end of Section \ref{I Galois corr}, this section focuses on the following.   
\begin{problem} \label{6.0}
Let $S$ be a $T$-space and let $B\subseteq K\subseteq S$. If $B={{S}^{\operatorname{GGr}_{T}(S/B)}}$ and $K={{S}^{\operatorname{GGr}_{T}(S/K)}}$, then 
\begin{enumerate}
    \item when does $B={{K}^{\operatorname{GGr}_{T}(K/B)}}$?
    \item when do $\operatorname{GGr}_{T}(S/K)\,\triangleleft \, \operatorname{GGr}_{T}(S/B)$ and 
    \[\operatorname{GGr}_{T}(S/B)/\operatorname{GGr}_{T}(S/K)\cong \operatorname{GGr}_{T}(K/B)?\]
\end{enumerate}
\end{problem}
In particular, if let $B\subseteq K\subseteq S$ be fields such that $S$ is a Galois extension of $B$, then, as is well-known, $K$ is Galois over $B$ (i.e. $B={{K}^{\operatorname{Gal}(K/B)}}$) if and only if $\operatorname{Gal}(S/K)$ is normal in $\operatorname{Gal}(S/B)$ (that is, (1) is true if and only if (2) is true). However, for a general $T$-space, the answer to Problem \ref{6.0} seems to be not so simple. We can only give some sufficient conditions for the two questions in Problem \ref{6.0}. Nevertheless, one still can see that some, if not all, results and notions introduced in this section have analogues in well-known Galois theories.

\subsection{$H$-stable subsets of a $T$-space}
\begin{definition} \label{6.2.1}
    Let $S$ be a $T$-space with $K\subseteq S$ and $H\subseteq \operatorname{End}_{T}(S)$. If $\forall \sigma \in H$ and $a\in K$, $\sigma (a)\in K, $ then we say that $K$ is \emph{stable under} $H$ or $K$ is \emph{H-stable}. 
\end{definition}
\begin{remark}
 Another choice of this terminology is $H$-invariant.
\end{remark}
\begin{lemma} \label{6.2.2}
    Let $S$ be a $T$-space with $B\subseteq S$. Let $H\,\triangleleft \, \operatorname{GGr}_{T}(S/B)$. Then ${{S}^{H}}$ is $\operatorname{GGr}_{T}(S/B)$-stable.
\end{lemma}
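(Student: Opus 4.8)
## Proof proposal for Lemma \ref{6.2.2}

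The plan is to show directly that for every $\tau \in \operatorname{GGr}_{T}(S/B)$ and every $a \in {{S}^{H}}$, the image $\tau(a)$ again lies in ${{S}^{H}}$; that is, $\sigma(\tau(a)) = \tau(a)$ for all $\sigma \in H$. The only tool I expect to need is the normality hypothesis $H \,\triangleleft\, \operatorname{GGr}_{T}(S/B)$, together with the fact (Proposition \ref{1.3.11}, which still holds for partial-operator generalized-semigroups) that $\operatorname{Aut}_{T}(S)$, and hence its subgroup $\operatorname{GGr}_{T}(S/B)$, is a group under composition, so that $\tau^{-1}$ exists and lies in $\operatorname{GGr}_{T}(S/B)$.

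First I would fix $\tau \in \operatorname{GGr}_{T}(S/B)$, $a \in {{S}^{H}}$, and an arbitrary $\sigma \in H$. Since $H$ is normal in $\operatorname{GGr}_{T}(S/B)$ and $\tau^{-1} \in \operatorname{GGr}_{T}(S/B)$, the conjugate $\tau^{-1} \circ \sigma \circ \tau$ lies in $H$. By Definition \ref{1.4.1}, $a$ is fixed by every element of $H$, so in particular $(\tau^{-1} \circ \sigma \circ \tau)(a) = a$. Applying $\tau$ to both sides and using that $\tau \circ \tau^{-1}$ is the identity on $S$, I obtain $(\sigma \circ \tau)(a) = \tau(a)$, i.e. $\sigma(\tau(a)) = \tau(a)$. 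Since $\sigma \in H$ was arbitrary, $\tau(a) \in {{S}^{H}}$; and since $\tau \in \operatorname{GGr}_{T}(S/B)$ and $a \in {{S}^{H}}$ were arbitrary, Definition \ref{6.2.1} gives that ${{S}^{H}}$ is $\operatorname{GGr}_{T}(S/B)$-stable.

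There is really no serious obstacle here — the argument is the standard "normal subgroup $\Rightarrow$ fixed set is invariant under the larger group" computation, and every ingredient (group structure of $\operatorname{GGr}_{T}(S/B)$, the definition of ${{S}^{H}}$, the definition of stability) is already in place in the excerpt. The only point worth a sentence of care is that $\tau^{-1}$ is genuinely available: this is exactly the content of Proposition \ref{1.3.a}/Proposition \ref{1.3.11} (or their partial-function analogues Proposition \ref{11.2.a} and the remark that Proposition \ref{1.3.11} still holds), so I would cite that to justify $\tau^{-1} \in \operatorname{GGr}_{T}(S/B)$ before forming the conjugate. I do not anticipate needing any property specific to partial-operator generalized-semigroups, since the whole proof takes place inside the abstract group $\operatorname{GGr}_{T}(S/B)$ acting on the set $S$.
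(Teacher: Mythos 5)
Your proof is correct and is essentially the same argument as the paper's: both rest on the observation that normality puts the conjugate $\tau^{-1}\circ\sigma\circ\tau$ in $H$, so it fixes $a$, and applying $\tau$ yields $\sigma(\tau(a))=\tau(a)$. The paper merely phrases this as a proof by contradiction, while you state it directly.
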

\begin{proof}
    Suppose that $\exists \sigma \in \operatorname{GGr}_{T}(S/B)$ and $a\in {{S}^{H}}$ such that $\sigma (a)\notin {{S}^{H}}$. Then $\sigma (a)\in S\backslash {{S}^{H}}$ and $a\in {{S}^{H}}\backslash B$ (otherwise $a \in B$ and $\sigma (a)=a \in S^H$). Assume that $\forall \tau \in H$, $\tau (\sigma (a))=\sigma (a)$, then $\sigma (a)\in {{S}^{H}}$, a contradiction. It follows that there is ${{\tau }_{a}}\in H$ such that ${{\tau }_{a}}(\sigma (a))\ne \sigma (a)$. However, since $H\,\triangleleft \, \operatorname{GGr}_{T}(S/B)$, ${{\sigma }^{-1}}{{\tau }_{a}}\sigma \in H$. Then ${{\sigma }^{-1}}({{\tau }_{a}}(\sigma (a)))=a$ because $a\in {{S}^{H}}$, and hence ${{\tau }_{a}}(\sigma (a))=\sigma (a)$, a contradiction again.
\end{proof}
\begin{lemma} \label{6.2.3}
    Let $S$ be a $T$-space with $B\subseteq K\subseteq S$. Suppose that $K={{S}^{\operatorname{GGr}_{T}(S/K)}}$, $K$ is $\operatorname{GGr}_{T}(S/B)$-stable, and every element of $\operatorname{GGr}_{T}(K/B)$ can be extended to an element of $\operatorname{GGr}_{T}(S/B)$. Then ${{S}^{\operatorname{GGr}_{T}(S/B)}}={{K}^{\operatorname{GGr}_{T}(K/B)}}$.
\end{lemma}
\begin{proof}
    $K={{S}^{\operatorname{GGr}_{T}(S/K)}}$ implies that no element of $S\backslash K$ is fixed under the action of $\operatorname{GGr}_{T}(S/K)(\subseteq \operatorname{GGr}_{T}(S/B))$. Then no element of $S\backslash K$ is fixed under the action of $\operatorname{GGr}_{T}(S/B)$, and hence ${{S}^{\operatorname{GGr}_{T}(S/B)}}={{K}^{\operatorname{GGr}_{T}(S/B)}}$ (\textit{cf.} Definition \ref{1.4.1} for ${{K}^{\operatorname{GGr}_{T}(S/B)}}$).

    $K$ is $\operatorname{GGr}_{T}(S/B)$-stable, and hence $\forall \sigma \in \operatorname{GGr}_{T}(S/B)(\subseteq \operatorname{Aut}_{T}S), $ $\sigma {{|}_{K}}\in \operatorname{GGr}_{T}(K/B)$. Hence $\operatorname{GGr}_{T}(S/B){{|}_{K}}\subseteq \operatorname{GGr}_{T}(K/B)$, and therefore ${{K}^{\operatorname{GGr}_{T}(S/B)}}\supseteq {{K}^{\operatorname{GGr}_{T}(K/B)}}$.

    On the other hand, since every element of $\operatorname{GGr}_{T}(K/B)$ can be extended to an element of $\operatorname{GGr}_{T}(S/B)$, ${{K}^{\operatorname{GGr}_{T}(K/B)}}\supseteq {{K}^{\operatorname{GGr}_{T}(S/B)}}$. Thus ${{K}^{\operatorname{GGr}_{T}(K/B)}}={{K}^{\operatorname{GGr}_{T}(S/B)}}$.
    
Therefore, ${{S}^{\operatorname{GGr}_{T}(S/B)}}={{K}^{\operatorname{GGr}_{T}(S/B)}}={{K}^{\operatorname{GGr}_{T}(K/B)}}$.	
\end{proof}
Now we obtain our first answer to the first question in Problem \ref{6.0} as follows, where an “extra” condition is that “every element of $\operatorname{GGr}_{T}(K/B)$ can be extended to an element of $\operatorname{GGr}_{T}(S/B)$”. 
\begin{corollary} \label{6.2.4}
    Let $S$ be a $T$-space with $B\subseteq K\subseteq S$. Suppose that $B={{S}^{\operatorname{GGr}_{T}(S/B)}}$, $K={{S}^{\operatorname{GGr}_{T}(S/K)}}$, $\operatorname{GGr}_{T}(S/K)\,\triangleleft \, \operatorname{GGr}_{T}(S/B)$, and every element of $\operatorname{GGr}_{T}(K/B)$ can be extended to an element of $\operatorname{GGr}_{T}(S/B)$. Then $B={{K}^{\operatorname{GGr}_{T}(K/B)}}$.
\end{corollary}
\begin{proof}
    By Lemma \ref{6.2.2}, $K$($={{S}^{\operatorname{GGr}_{T}(S/K)}}$) is $\operatorname{GGr}_{T}(S/B)$-stable. Then by Lemma \ref{6.2.3}, $B={{S}^{\operatorname{GGr}_{T}(S/B)}}={{K}^{\operatorname{GGr}_{T}(K/B)}}$.
\end{proof}
	A converse of Lemma \ref{6.2.2} is as follows, which is our first answer to the second question in Problem \ref{6.0}.
\begin{proposition} \label{6.2.5}
    Let $S$ be a $T$-space with $B\subseteq K\subseteq S$. Suppose that $K$ is $\operatorname{GGr}_{T}(S/B)$-stable. Then there exists a group homomorphism
$\gamma :\operatorname{GGr}_{T}(S/B)\to \operatorname{GGr}_{T}(K/B)$ such that $\ker \gamma =\operatorname{GGr}_{T}(S/K)$, and so
$\operatorname{GGr}_{T}(S/K)\,\triangleleft \, \operatorname{GGr}_{T}(S/B)$ and
$\operatorname{GGr}_{T}(S/B)/\operatorname{GGr}_{T}(S/K)\cong \operatorname{GGr}_{T}(K/B)$.
\end{proposition}
\begin{proof}
    Let $\gamma :\operatorname{GGr}_{T}(S/B)\to \operatorname{GGr}_{T}(K/B)$ be given by	$\gamma (\sigma )=\sigma {{|}_{K}}$.	

Since $K$ is $\operatorname{GGr}_{T}(S/B)\text{-}$stable, $\forall \sigma \in \operatorname{GGr}_{T}(S/B)$, $\sigma (K)\subseteq K$. And $\sigma (K)=K$ because $\sigma \in \operatorname{GGr}_{T}(S/B)\subseteq \operatorname{Aut}_{T}(S)$. Hence $\gamma (\sigma )=\sigma {{|}_{K}}\in \operatorname{GGr}_{T}(K/B)$, which implies that $\gamma $ is a well-defined map.

Moreover, if ${{\sigma }_{1}},{{\sigma }_{2}}\in \operatorname{GGr}_{T}(S/B)$, then $({{\sigma }_{1}}{{|}_{K}})\circ ({{\sigma }_{2}}{{|}_{K}})=({{\sigma }_{1}}\circ {{\sigma }_{2}}){{|}_{K}}$, and so
$\gamma ({{\sigma }_{1}})\circ \gamma ({{\sigma }_{2}})=\gamma ({{\sigma }_{1}}\circ {{\sigma }_{2}})$, which implies that $\gamma $ is a group homomorphism. 

Finally, because $\gamma $ is defined as the restriction to $K$, $\ker \gamma =\operatorname{GGr}_{T}(S/K)$:
\end{proof}
Combining Proposition \ref{6.2.5} with Corollary \ref{6.2.4}, we immediately obtain our second answer to the first question in Problem \ref{6.0}.
\begin{corollary} \label{6.2.6}
    Let $S$ be a $T$-space with $B\subseteq K\subseteq S$. If $B={{S}^{\operatorname{GGr}_{T}(S/B)}}$, $K={{S}^{\operatorname{GGr}_{T}(S/K)}}$, $K$ is $\operatorname{GGr}_{T}(S/B)$-stable, and each element of $\operatorname{GGr}_{T}(K/B)$ can be extended to an element of $\operatorname{GGr}_{T}(S/B)$, then $B={{K}^{\operatorname{GGr}_{T}(K/B)}}$.
\end{corollary}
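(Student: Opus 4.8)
The plan is to derive Corollary \ref{6.2.6} directly by combining the two main results that precede it, namely Theorem \ref{6.2.5} and Theorem \ref{6.2.4}. The statement to prove is: under the hypotheses that $B = S^{\operatorname{GGr}_{T}(S/B)}$, $K = S^{\operatorname{GGr}_{T}(S/K)}$, $K$ is $\operatorname{GGr}_{T}(S/B)$-stable, and each element of $\operatorname{GGr}_{T}(K/B)$ extends to an element of $\operatorname{GGr}_{T}(S/B)$, we have $B = K^{\operatorname{GGr}_{T}(K/B)}$.

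First I would observe that three of the four hypotheses of Theorem \ref{6.2.4} are literally among our hypotheses: $B = S^{\operatorname{GGr}_{T}(S/B)}$, $K = S^{\operatorname{GGr}_{T}(S/K)}$, and the extension property for elements of $\operatorname{GGr}_{T}(K/B)$. The only missing hypothesis is the normality condition $\operatorname{GGr}_{T}(S/K) \triangleleft \operatorname{GGr}_{T}(S/B)$. But this is exactly what Theorem \ref{6.2.5} provides: since $K$ is assumed $\operatorname{GGr}_{T}(S/B)$-stable, Theorem \ref{6.2.5} yields a group homomorphism $\gamma : \operatorname{GGr}_{T}(S/B) \to \operatorname{GGr}_{T}(K/B)$ with kernel $\operatorname{GGr}_{T}(S/K)$, and in particular $\operatorname{GGr}_{T}(S/K) \triangleleft \operatorname{GGr}_{T}(S/B)$. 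So the second step is to invoke Theorem \ref{6.2.5} to upgrade the stability hypothesis into the needed normality statement.

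The third and final step is then a direct application of Theorem \ref{6.2.4}: all four of its hypotheses are now in force — the two fixed-set equalities, the normality $\operatorname{GGr}_{T}(S/K) \triangleleft \operatorname{GGr}_{T}(S/B)$ (just obtained), and the extension property — so Theorem \ref{6.2.4} gives $B = K^{\operatorname{GGr}_{T}(K/B)}$, which is precisely the conclusion. No genuine obstacle is expected here; the corollary is a straightforward concatenation of the two preceding theorems, and the only thing to be careful about is checking that the stability hypothesis of the corollary is exactly the form required by Theorem \ref{6.2.5} (it is, verbatim) and that Theorem \ref{6.2.4}'s hypotheses are matched one for one. A one- or two-line proof suffices, essentially: "By Theorem \ref{6.2.5}, $K$ being $\operatorname{GGr}_{T}(S/B)$-stable implies $\operatorname{GGr}_{T}(S/K) \triangleleft \operatorname{GGr}_{T}(S/B)$. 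Then by Theorem \ref{6.2.4}, $B = K^{\operatorname{GGr}_{T}(K/B)}$."
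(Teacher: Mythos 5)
Your proposal is correct and matches the paper's own argument exactly: the paper states the corollary is obtained "combining Theorem \ref{6.2.5} with Theorem \ref{6.2.4}", i.e.\ stability yields the normality $\operatorname{GGr}_{T}(S/K)\,\triangleleft\,\operatorname{GGr}_{T}(S/B)$ via Theorem \ref{6.2.5}, and then Theorem \ref{6.2.4} gives $B={{K}^{\operatorname{GGr}_{T}(K/B)}}$. Nothing further is needed.
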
 

\subsection{Normal $T$-subsets of a $T$-space} 
\begin{definition} \label{normal $T$-subset}
Let $K$ be a subset of a $T$-space $S$. If $\forall a\in K$, ${{[a]}_{T}}\bigcap S\subseteq K$ (\textit{cf.} Notation \ref{15.1.1} for ${{[a]}_{T}}$), then we call $K$ a \emph{normal $T$-subset} of $S$ and write $K\,\triangleleft \, S$ or $S\,\triangleright\, K$.
\end{definition}
\begin{remark}
 We use this terminology because there is a correspondence between normal $T$-subsets of $S$ and normal subgroups of a Galois $T$-group of $S$, which will be shown in Corollary \ref{6.3.3} and Proposition \ref{6.3.4}.
\end{remark}
	Any normal $T$-subset of a $T$-space $S$ is $\operatorname{Aut}_{T}(S)$-stable, as shown below.
 \begin{lemma} \label{6.3.2}
     Let $S$ be a $T$-space. Suppose that $K$ is a normal $T$-subset of $S$. Then $K$ is stable under $\operatorname{Aut}_{T}(S)$.
 \end{lemma}
\begin{proof}
    Recall that Proposition \ref{5.1.4} still holds for $T$ being a par-operator gen-semigroup (\textit{cf.} Subsection \ref{A construction of T-morphisms}). Hence by Proposition \ref{5.1.4}, $a\overset{T}{\longleftrightarrow}\sigma (a)$, $\forall a\in S$ and $\sigma \in \operatorname{Aut}_{T}(S)$. 
    
    Let $a\in K$ and $\sigma \in \operatorname{Aut}_{T}(S)$. Then $a\overset{T}{\longleftrightarrow}\sigma (a)$, and thus $\sigma (a)\in {{[a]}_{T}}\bigcap S\subseteq K$ because $K\,\triangleleft \, S$. Hence by Definition \ref{6.2.1}, $K$ is stable under $\operatorname{Aut}_{T}(S)$.
\end{proof}
	The combination of Lemma \ref{6.3.2} and Proposition \ref{6.2.5} yields the following, which is obvious because $\operatorname{GGr}_{T}(S/B)\subseteq \operatorname{Aut}_{T}(S)$. It is our second answer to the second question in Problem \ref{6.0}.
\begin{corollary} \label{6.3.3}
    Let $S$ be a $T$-space with $B\subseteq K\subseteq S$. Suppose that $K$ is a normal $T$-subset of $S$. Then there exists a group homomorphism $\gamma :\operatorname{GGr}_{T}(S/B)\to \operatorname{GGr}_{T}(K/B)$ such that $\ker \gamma =\operatorname{GGr}_{T}(S/K)$, and hence $\operatorname{GGr}_{T}(S/K)\,\triangleleft \, \operatorname{GGr}_{T}(S/B)$ and $\operatorname{GGr}_{T}(S/B)/\operatorname{GGr}_{T}(S/K)\cong \operatorname{GGr}_{T}(K/B)$.
\end{corollary}
A converse of Corollary \ref{6.3.3} is as follows, which also explains the terminology “normal $T$-subsets”. 
\begin{proposition} \label{6.3.4}
    Let $S$ be a $T$-space. Suppose that the identity function lies in $T$, $\forall a\in S\backslash C_{S}^{\operatorname{Aut}}$, ${{\left\langle a \right\rangle }_{T}}=S$, and $H\,\triangleleft \, \operatorname{Aut}_{T}(S)$. Then ${{S}^{H}}\,\triangleleft \, S$.
\end{proposition}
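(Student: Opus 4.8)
The goal is to show that $S^H$ is a normal $T$-subset of $S$, i.e. (Definition \ref{6.3.1}) that $[a]_T\cap S\subseteq S^H$ for every $a\in S^H$. So I would fix $a\in S^H$ and an arbitrary $v\in [a]_T\cap S$, which by Notation \ref{15.1.1} means $a\overset{T}{\longleftrightarrow}v$, and aim to prove $v\in S^H$; that is, $\tau(v)=v$ for every $\tau\in H$.

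The plan is to split into two cases according to whether $a$ lies in $C_S^{\operatorname{Aut}}$ (Notation \ref{5.1.13}). If $a\in C_S^{\operatorname{Aut}}$, then by definition $[a]_T\cap S=\{a\}$, so $v=a\in S^H$ and there is nothing more to do. The substantial case is $a\in S\setminus C_S^{\operatorname{Aut}}$. Here the hypothesis gives $\langle a\rangle_T=S$, and the standing hypotheses of the proposition (the identity function lies in $T$, and $\langle a\rangle_T=S$ for every $a\in S\setminus C_S^{\operatorname{Aut}}$) are exactly those of Theorem \ref{5.1.18}. Applying that theorem to the relation $a\overset{T}{\longleftrightarrow}v$ (with $v\in S$) produces a $T$-automorphism $\sigma\in\operatorname{GGr}_T(S/C_S^{\operatorname{Aut}})=\operatorname{Aut}_T(S)$ with $\sigma(a)=v$.

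It then remains to exploit the normality of $H$, just as in the proof of Lemma \ref{6.2.2}. Let $\tau\in H$ be arbitrary. Since $\operatorname{Aut}_T(S)$ is a group (Proposition \ref{1.3.11}), $\sigma^{-1}$ lies in $\operatorname{Aut}_T(S)$, and since $H\,\triangleleft\,\operatorname{Aut}_T(S)$ we have $\sigma^{-1}\tau\sigma\in H$. Because $a\in S^H$, this forces $(\sigma^{-1}\tau\sigma)(a)=a$, hence $\tau(\sigma(a))=\sigma(a)$, i.e. $\tau(v)=v$. As $\tau$ was arbitrary in $H$, we conclude $v\in S^H$, and therefore $S^H\,\triangleleft\,S$.

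I do not expect a genuine obstacle here: the argument is a short combination of Theorem \ref{5.1.18} (transitivity of the automorphism action on $\overset{T}{\longleftrightarrow}$-classes) with the normal-subgroup conjugation trick already used in Lemma \ref{6.2.2}. The only point needing care is the bookkeeping in the degenerate case $a\in C_S^{\operatorname{Aut}}$, where Theorem \ref{5.1.18} supplies only the identity automorphism — but there $[a]_T\cap S$ is the singleton $\{a\}$, so the containment $[a]_T\cap S\subseteq S^H$ is immediate. One should also note that Theorem \ref{5.1.18}, Notation \ref{5.1.13}, and the group structure of $\operatorname{Aut}_T(S)$ all remain valid for $T$ a partial-operator generalized-semigroup, as recorded in Sections \ref{Transitive} and \ref{12 Basic properties}.
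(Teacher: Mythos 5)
Your proof is correct and follows essentially the same route as the paper: both invoke Theorem \ref{5.1.18} to produce $\sigma\in\operatorname{Aut}_T(S)$ with $\sigma(a)=v$ and then use normality of $H$ to conclude $\tau(v)=v$ for all $\tau\in H$. The only cosmetic difference is that you re-derive the conjugation argument inline and treat the degenerate case $a\in C_{S}^{\operatorname{Aut}}$ separately, whereas the paper simply cites Lemma \ref{6.2.2} for the $\operatorname{Aut}_T(S)$-stability of $S^{H}$ and lets Theorem \ref{5.1.18} absorb the degenerate case.
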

\begin{proof}
    Let $ u\in {{S}^{H}}$ and $v\in {{[u]}_{T}}\bigcap S$. By Theorem \ref{5.1.18}, $\exists \sigma \in \operatorname{Aut}_{T}(S)$ such that $\sigma (u)=v$. By Lemma \ref{6.2.2} (with $B=\emptyset$), ${{S}^{H}}$ is $\operatorname{Aut}_{T}(S)$-stable, and hence $v\in {{S}^{H}}$. Thus $\forall u\in {{S}^{H}}$, ${{[u]}_{T}}\bigcap S\subseteq {{S}^{H}}$. Hence by Definition \ref{normal $T$-subset}, ${{S}^{H}}\,\triangleleft \, S$.
\end{proof}
Finally, combining Corollary \ref{6.3.3} with Corollary \ref{6.2.4}, we immediately obtain our third answer to the first question in Problem \ref{6.0}:
\begin{corollary} \label{6.3.5}
    Let $S$ be a $T$-space with $B\subseteq K\subseteq S$. If $B={{S}^{\operatorname{GGr}_{T}(S/B)}}$, $K={{S}^{\operatorname{GGr}_{T}(S/K)}}$, $K\,\triangleleft \, S$, and each element of $\operatorname{GGr}_{T}(K/B)$ can be extended to an element of $\operatorname{GGr}_{T}(S/B)$, then $B={{K}^{\operatorname{GGr}_{T}(K/B)}}$.
\end{corollary}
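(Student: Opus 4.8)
The statement is a direct combination of Corollary \ref{6.3.3} and Theorem \ref{6.2.4}, so the plan is simply to check that the hypotheses of Corollary \ref{6.3.5} feed into those two results in the right order. The one substantive input that is \emph{not} already hypothesized is the normality $\operatorname{GGr}_{T}(S/K)\,\triangleleft \,\operatorname{GGr}_{T}(S/B)$, and that is exactly what Corollary \ref{6.3.3} supplies from the assumption $K\,\triangleleft \,S$.

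\textbf{Steps.} First I would invoke Corollary \ref{6.3.3} with the hypothesis $K\,\triangleleft \,S$: this produces a group homomorphism $\gamma\colon\operatorname{GGr}_{T}(S/B)\to\operatorname{GGr}_{T}(K/B)$ with $\ker\gamma=\operatorname{GGr}_{T}(S/K)$, and in particular gives $\operatorname{GGr}_{T}(S/K)\,\triangleleft\,\operatorname{GGr}_{T}(S/B)$. Second, I would observe that the remaining hypotheses of Corollary \ref{6.3.5} are precisely the remaining hypotheses of Theorem \ref{6.2.4}: namely $B={{S}^{\operatorname{GGr}_{T}(S/B)}}$, $K={{S}^{\operatorname{GGr}_{T}(S/K)}}$, and the condition that every element of $\operatorname{GGr}_{T}(K/B)$ extends to an element of $\operatorname{GGr}_{T}(S/B)$. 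Third, having now verified all four hypotheses of Theorem \ref{6.2.4}, I would apply it to conclude $B={{K}^{\operatorname{GGr}_{T}(K/B)}}$, which is the desired equality.

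\textbf{Main obstacle.} There is essentially no obstacle here beyond careful bookkeeping: the real content was already established in Lemmas \ref{6.2.2} and \ref{6.2.3}, Theorem \ref{6.2.4}, and the passage from $K\,\triangleleft\,S$ (via Proposition \ref{6.3.2} on $\operatorname{Aut}_T(S)$-stability) to Corollary \ref{6.3.3}. The only point worth stating explicitly in the write-up is that the normality conclusion of Corollary \ref{6.3.3} is what upgrades the situation so that Theorem \ref{6.2.4} becomes applicable; once that is noted, the proof is a one-line deduction.

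\begin{proof}
Since $K\,\triangleleft \,S$, Corollary \ref{6.3.3} yields a group homomorphism $\gamma\colon\operatorname{GGr}_{T}(S/B)\to\operatorname{GGr}_{T}(K/B)$ with $\ker\gamma=\operatorname{GGr}_{T}(S/K)$; in particular $\operatorname{GGr}_{T}(S/K)\,\triangleleft \,\operatorname{GGr}_{T}(S/B)$. Together with the hypotheses $B={{S}^{\operatorname{GGr}_{T}(S/B)}}$, $K={{S}^{\operatorname{GGr}_{T}(S/K)}}$, and the assumption that each element of $\operatorname{GGr}_{T}(K/B)$ can be extended to an element of $\operatorname{GGr}_{T}(S/B)$, all four hypotheses of Theorem \ref{6.2.4} are satisfied. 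Hence Theorem \ref{6.2.4} gives $B={{K}^{\operatorname{GGr}_{T}(K/B)}}$.
\end{proof}
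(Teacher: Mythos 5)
Your proof is correct and follows exactly the paper's own route: the paper obtains Corollary \ref{6.3.5} by combining Corollary \ref{6.3.3} (which converts $K\,\triangleleft\,S$ into $\operatorname{GGr}_{T}(S/K)\,\triangleleft\,\operatorname{GGr}_{T}(S/B)$) with Theorem \ref{6.2.4}, just as you do. Nothing is missing.
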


\section{$\mathcal{F}$-transcendental elements and $\mathcal{F}$-transcendental subsets} \label{transcendental}
   In this section, the algebraic notions of transcendental elements over a field and purely transcendental field extensions are generalized for formal partial functions (\textit{cf.} Definition \ref{10.3.2}). To simplify our descriptions, we introduce
\begin{definition} \label{20.1}
  Let $f$ and $g$ be formal partial functions of $n$ variables. If for each set $D$,
\begin{enumerate}
    \item [(i)] $D$ is a domain of $f$ if and only if $D$ is a domain of $g$; and
    \item [(ii)] $\forall ({{u}_{1}},\cdots ,{{u}_{n}})\in {{D}^{n}}$, neither $f({{u}_{1}},\cdots ,{{u}_{n}})$ nor $g({{u}_{1}},\cdots ,{{u}_{n}})$ is well-defined or $f({{u}_{1}},\cdots ,{{u}_{n}})=g({{u}_{1}},\cdots ,{{u}_{n}})$;
\end{enumerate}
then we say that $f$ is \emph{equivalent to} $g$ and write $f=g$.
 \end{definition}
  Moreover, throughout this section, unless otherwise specified, $D$ always stands for a set and $\mathcal{F}$ denotes a set of formal partial functions.
 \subsection{$\mathcal{F}$-transcendental elements}
To generalize the concept of transcendental elements in field theory, we give
\begin{definition} \label{20.2}
    Let $n\in {{\mathbb{Z}}^{+}}$ and let $({{u}_{1}},\cdots ,{{u}_{n}})\in {{D}^{n}}$. If $f({{u}_{1}},\cdots ,{{u}_{n}})=g({{u}_{1}},\cdots ,{{u}_{n}})$ implies $f=g$, $\forall f,g\in \mathcal{F}$, then we say that $({{u}_{1}},\cdots ,{{u}_{n}})$ is \emph{$\mathcal{F}$-transcendental}.
\end{definition}
  The above definition of transcendental elements coincides with that in field theory, as shown below.
  \begin{proposition} \label{20.3}
      Let $B$ be a subfield of a field $F$ and let $\mathcal{F}$ be the polynomial ring $B[x]$. Then $\forall a\in F$, $a$ is transcendental over $B$ if and only if $a$ is $\mathcal{F}$-transcendental.
  \end{proposition}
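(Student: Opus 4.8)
The plan is to reduce everything to the standard fact that for polynomials over a field, being equivalent as formal partial functions (Definition \ref{20.1}) is the same as being equal as elements of $B[x]$. So the first step would be to establish the auxiliary claim: for $p,q\in B[x]$, one has $p=g$ as formal functions if and only if $p=q$ as polynomials in $B[x]$. One direction is immediate, since two identical polynomials are literally the same formal function, hence equivalent. For the converse I would exhibit a single domain big enough to witness any inequality: the field $B(t)$ of rational functions in an indeterminate $t$ over $B$ is a field containing $B$, hence a domain of every element of $B[x]$, and $t\in B(t)$ is transcendental over $B$; therefore, if $p$ and $q$ agree as formal functions then in particular $p(t)=q(t)$ in $B(t)$, which forces $p=q$ as polynomials. (Equivalently: $B(t)$ is an infinite integral domain, and a polynomial vanishing on all of it is the zero polynomial; this is precisely the step that rules out the finite-field pathology where the polynomial function on $B$ alone need not determine the polynomial.)

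Granting the auxiliary claim, the proposition follows by unwinding Definition \ref{20.2} in the case $n=1$, $D=F$, $\mathcal{F}=B[x]$, so that "$a$ is $\mathcal{F}$-transcendental'' reads: for all $p,q\in B[x]$, $p(a)=q(a)$ implies $p=q$ as formal functions. For the forward implication, I would assume $a$ is transcendental over $B$ and take $p,q\in B[x]$ with $p(a)=q(a)$; then $(p-q)(a)=0$, so $p-q$ is the zero polynomial by the usual definition of transcendence, i.e. $p=q$ in $B[x]$, and hence $p=q$ as formal functions by the auxiliary claim — so $a$ is $\mathcal{F}$-transcendental. For the reverse implication, I would assume $a$ is $\mathcal{F}$-transcendental and suppose, toward a contradiction, that $a$ is algebraic over $B$; choosing a nonzero $m\in B[x]$ with $m(a)=0$, we get $m(a)=z(a)$ where $z$ denotes the zero polynomial, so $\mathcal{F}$-transcendence gives $m=z$ as formal functions, and then the auxiliary claim forces $m$ to be the zero polynomial, contradicting $m\neq 0$. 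Hence $a$ is transcendental over $B$.

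The only genuinely non-routine point is the converse half of the auxiliary claim: one must check that the equivalence relation on formal partial functions in Definition \ref{20.1} — agreement on all domains and on the domains of definition — is strong enough to coincide with equality of polynomials, and the cleanest route is to produce one infinite domain, namely $B(t)$, on which mere agreement already forces polynomial equality. Everything else is a direct translation among Definitions \ref{20.1}, \ref{20.2} and the classical definition of a transcendental element over a field; I expect no further obstacles.
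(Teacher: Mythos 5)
Your proof is correct and follows essentially the same route as the paper's: both reduce $\mathcal{F}$-transcendence of $a$ to the nonexistence of a nonzero polynomial annihilating $a$, by identifying equivalence of polynomials as formal functions with equality in $B[x]$. The only difference is that you justify that identification explicitly (via the witness domain $B(t)$), a step the paper's short chain of equivalences treats as immediate.
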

  \begin{proof}
      Let $a\in F$. Then

$a$ is algebraic over $B$;\\*
$\Leftrightarrow \exists $ nonzero $p(x)\in B[x]$ such that $p(a)=0$; \\*
$\Leftrightarrow \exists f,g\in \mathcal{F}(=B[x])$ such that $f\ne g$ and $f(a)=g(a)$;\\*
$\Leftrightarrow a$ is not $\mathcal{F}$-transcendental.
  \end{proof}
  \subsection{$\mathcal{F}$-transcendental subsets}
	Definition \ref{20.2} may be generalized in more than one way. One of them is as follows.
\begin{definition} \label{20.4}
    Let $U\subseteq D$. If $\forall n\in {{\mathbb{Z}}^{+}}$ and $\{{{u}_{1}},\cdots ,{{u}_{n}}\}\subseteq U$, $({{u}_{1}},\cdots ,{{u}_{n}})$ is $\mathcal{F}$-transcendental, i.e. $f({{u}_{1}},\cdots ,{{u}_{n}})=g({{u}_{1}},\cdots ,{{u}_{n}})$ implies $f=g$, $\forall f,g\in \mathcal{F}$, then we say that $U$ is \emph{$\mathcal{F}$-transcendental}.
\end{definition}   
\begin{remark}
    By $\{{{u}_{1}},\cdots ,{{u}_{n}}\}\subseteq U$ we imply that $\forall 1\le i < j \le n$, $u_i\ne u_j$.
\end{remark}
The following shows that the above definition of $\mathcal{F}$-transcendental subsets of $D$ coincides with the concept of purely transcendental field extension in field theory.
\begin{proposition} \label{20.5}
    Let $B$ be a subfield of a field $F$, let \[\mathcal{F}\text{=}\bigcup\nolimits_{n\in {{\mathbb{Z}}^{+}}}{\text{Frac}(B[{{x}_{1}},\cdots ,{{x}_{n}}])},\] let $T$ be the par-operator gen-semigroup on $F$ defined in Example \ref{10.1.5}, and let nonempty $U\subseteq F$. Then ${{\left\langle U \right\rangle }_{T}}$ is a purely transcendental field over $B$ if and only if ${{\left\langle U \right\rangle }_{T}}=B$ or $U$ is $\mathcal{F}$-transcendental.
\end{proposition}
\begin{proof}
    Since ${{\left\langle U \right\rangle }_{T}}=\bigcup\nolimits_{n\in {{\mathbb{Z}}^{+}}}{\{B({{u}_{1}},\cdots ,{{u}_{n}})|{{u}_{1}},\cdots ,{{u}_{n}}\in U\}}=B(U)$, the field ${{\left\langle U \right\rangle }_{T}}$ is purely transcendental over $B$ if and only if ${{\left\langle U \right\rangle }_{T}}=B$ or $U$ is algebraically independent over $B$. Hence we only need to show that $U$ is $\mathcal{F}$-transcendental if and only if $U$ is algebraically independent over $B$.

$U$ is $\mathcal{F}$-transcendental ;\\*
$\Leftrightarrow \forall n\in {{\mathbb{Z}}^{+}}$, $\{{{u}_{1}},\cdots ,{{u}_{n}}\}\subseteq U$ and $f,g\in \mathcal{F}$, 
\begin{center}
    $f({{u}_{1}},\cdots ,{{u}_{n}})=g({{u}_{1}},\cdots ,{{u}_{n}})$ implies $f=g$;
\end{center}
$\Leftrightarrow \forall n\in {{\mathbb{Z}}^{+}}$, $\{{{u}_{1}},\cdots ,{{u}_{n}}\}\subseteq U$ and $f,g\in \mathcal{F}$, 
\begin{center}
    $f({{u}_{1}},\cdots ,{{u}_{n}})-g({{u}_{1}},\cdots ,{{u}_{n}})=0$ implies that $f-g$ is a zero polynomial;
\end{center}
$\Leftrightarrow \forall n\in {{\mathbb{Z}}^{+}}$, $\{{{u}_{1}},\cdots ,{{u}_{n}}\}\subseteq U$ and $h\in \mathcal{F}$, 
\begin{center}
    $h({{u}_{1}},\cdots ,{{u}_{n}})=0$ implies that $h$ is a zero polynomial;
\end{center}
$\Leftrightarrow U$ is algebraically independent over $B$.						
\end{proof}
 
\section{A generalized first isomorphism theorem} \label{first iso}
We shall derive a theorem which generalizes the first isomorphism theorems for groups, rings, and modules. 
\begin{notation}
    In this section, a $\theta$-morphism is always defined by Definition \ref{10.4.2}. Moreover, $\phi$ is always a $\theta$-morphism from a $T$-space $S$ to a $T'$-space $S'$.
\end{notation}
\subsection{“Quotient space” $Q_\phi$}
\begin{notation} \label{24.2}
    For $\phi$, we denote by $Q_\phi$ the set $\{\phi^{-1}(z)\,|\,z \in \operatorname{Im}\phi \}$.  
\end{notation}
\begin{remark}
    Obviously, $Q_\phi$ is a partition of $S$. In fact, we may regard $Q_\phi$ as the “quotient space” induced by $\phi$ (\textit{cf.} Corollary \ref{quotient space} and Example \ref{example of quotient space}).
\end{remark}
To define a par-operator gen-semigroup (corresponding to $\phi$) on $Q_\phi$, which we shall denote by $T_{\phi}^*$ (Notation \ref{T_phi^*}), we define the elements $f_{\phi}^*$ (or $f^*$ for brevity) of $T_{\phi}^*$ as follows.
\begin{notation} \label{24.3}
    Let $n$-variable $f \in T$. Then by $f_{\phi}^*$ or $f^*$ ($\subseteq Q_\phi \times \cdots \times Q_\phi$) we denote the relation
    \begin{center}
        $\{(\phi^{-1}(\phi(x_1)),\cdots,\phi^{-1}(\phi(x_n)),\phi^{-1}(\phi(f(x_1,\cdots,x_n)))) \,|\, (x_1,\cdots,x_n)\in S^n$ and $f(x_1,\cdots,x_n)$ is well-defined\}.
    \end{center}
 \end{notation}
 \begin{remark}
     If $f(x_1,\cdots,x_n)$ is well-defined, by Proposition \ref{11.1.1}, $f(x_1,\cdots,x_n) \in S$, and hence $\phi(f(x_1,\cdots,x_n))$ is well-defined.
 \end{remark}
 To better understand the notation, we give
 \begin{example} \label{example of cosets}
     Let $\phi$ be a group homomorphism from a group $S$ to a group $S'$ (\textit{cf.} Proposition \ref{9.4.8}). Let $e$ be the identity element of $S$. Then $\phi^{-1}(\phi(e))$ is the kernel of $\phi$, $\forall x \in S$, $\phi^{-1}(\phi(x))$ is a coset of the kernel, and $Q_\phi$ is the set of all cosets of the kernel. Moreover, we shall see that $f_{\phi}^*$ is a group operation among elements of $Q_\phi$.
 \end{example}
 \begin{proposition} \label{24.4}
        Let $n$-variable $f \in T$. Then $f_{\phi}^*$ is a partial function from $Q_\phi^n$ to $Q_\phi$, where $Q_\phi^n$ denotes the cartesian product of $n$ copies of $Q_\phi$.
\end{proposition}
\begin{proof}
    It suffices to show that $\forall (C_1,\cdots,C_n)\in Q_\phi^n$, the set $\{C\,|\,(C_1,\cdots,C_n, C)\in f_{\phi}^*\}$ has at most one element. 

    For this purpose, suppose that $(a_1,\cdots,a_n),(b_1,\cdots,b_n) \in S^n$ such that both $f(a_1,\cdots,a_n)$ and $f(b_1,\cdots,b_n)$ are well-defined and $\phi(a_i)=\phi(b_i),\forall i=1,\cdots,n$. By Notation \ref{24.3}, it suffices to show $\phi(f(a_1,\cdots,a_n))=\phi(f(b_1,\cdots,b_n))$.
     
    Since $\phi$ is a $\theta$-morphism, by Definition \ref{10.4.2},
    \begin{align*}
        \phi(f(a_1,\cdots,a_n))&=\theta(f)(\phi(a_1),\cdots,\phi(a_n))\\
        &=\theta(f)(\phi(b_1),\cdots,\phi(b_n))\\
        &=\phi(f(b_1,\cdots,b_n)),
    \end{align*}
    as desired.
\end{proof}
Then the following is obvious. 
\begin{corollary} \label{24.5}
    Let $n$-variable $f \in T$. Then $f_{\phi}^*:Q_\phi^n \to Q_\phi$ given by 
    \[(\phi^{-1}(\phi(x_1)),\cdots,\phi^{-1}(\phi(x_n))) \mapsto \phi^{-1}(\phi(f(x_1,\cdots,x_n))),\]
    $\forall (x_1,\cdots,x_n) \in S^n$ such that $f(x_1,\cdots,x_n)$ is well-defined, is a well-defined partial function.
 \end{corollary}
 \begin{notation} \label{T_phi^*}
      We denote by $T_{\phi}^*$ the set $\{f_{\phi}^*\,|\, f \in T\}$, where $f_{\phi}^*:Q_\phi^n \to Q_\phi$ is given in Corollary \ref{24.5}.
 \end{notation}
\begin{proposition} \label{24.7}
    $T_{\phi}^*$ is a par-operator gen-semigroup on $Q_\phi$.
\end{proposition}
\begin{remark}
    If the proposition holds, then ${{\left\langle Q_\phi \right\rangle }_{T_{\phi}^*}} \subseteq Q_\phi $, and thus by Definition \ref{quasi-T-space}, $Q_\phi$ is a quasi-$T_{\phi}^*$-space.
\end{remark}
\begin{proof}
    By Corollary \ref{24.5}, $\forall f_{\phi}^* \in T_{\phi}^*$, $f_{\phi}^*$ is a partial function from some $Q_\phi^n$ to $Q_\phi$.  

    Let $f\in T$ have $n$ variables, and $\forall i=1, \cdots, n$, let ${g}_{i}\in T$ have $n_i$ variables. To show that $T_{\phi}^*$ is a par-operator gen-semigroup on $Q_\phi$, by Definition \ref{10.1.3}, we only need to show that the composite $f^*\circ ({{g}_{1}^*},\cdots ,{{g}_{n}^*})$ is a restriction of some element of $T_{\phi}^*$. For this purpose, since $f\circ ({{g}_{1}},\cdots ,{{g}_{n}}) \in T$ and hence $(f\circ ({{g}_{1}},\cdots ,{{g}_{n}}))^* \in T_{\phi}^*$, it suffices to show
    \begin{equation} \label{Eq24.1}
        f^*\circ ({{g}_{1}^*},\cdots ,{{g}_{n}^*})=(f\circ ({{g}_{1}},\cdots ,{{g}_{n}}))^*.
    \end{equation}
   
    Let $D$ be the domain of $T$. Then by Definition \ref{10.1.1}, $f\circ ({{g}_{1}},\cdots ,{{g}_{n}})$ is the partial function from ${{D}^{m}}$ to $D$ given by 
    \[({{x}_{1}},\cdots ,{{x}_{m}})\mapsto f({{g}_{1}}({{\text{v}}_{1}}),\cdots ,{{g}_{n}}({{\text{v}}_{n}})),\] 
    where $m:=\sum\nolimits_{i}{{{n}_{i}}}$, ${{\text{v}}_{i}}:=({{x}_{{{m}_{i}}+1}},\cdots , {{x}_{{{m}_{i}}+{{n}_{i}}}})\in {{D}^{{{n}_{i}}}},\forall i=1,\cdots ,n$, ${{m}_{1}}:=0$ and ${{m}_{i}}:=\sum\nolimits_{k=1}^{i-1}{{{n}_{k}}},\forall i=2,\cdots ,n$.
    
    Then by Corollary \ref{24.5}, $(f\circ ({{g}_{1}},\cdots ,{{g}_{n}}))^*$ is the partial function from $Q_\phi^m$ to $Q_\phi$ given by
    \[(\phi^{-1}(\phi(x_1)),\cdots ,\phi^{-1}(\phi(x_m)))\mapsto \phi^{-1}(\phi(f({{g}_{1}}({{\text{v}}_{1}}),\cdots ,{{g}_{n}}({{\text{v}}_{n}})))),\] 
    $\forall (x_1,\cdots,x_m) \in S^m$ such that $f({{g}_{1}}({{\text{v}}_{1}}),\cdots ,{{g}_{n}}({{\text{v}}_{n}}))$ is well-defined. 
    
    To prove Equation (\ref{Eq24.1}), now we are showing that $f^*\circ ({{g}_{1}^*},\cdots ,{{g}_{n}^*})$ can be given exactly the same as above.

    By Definition \ref{10.1.1}, $f^*\circ ({{g}_{1}^*},\cdots ,{{g}_{n}^*})$ is the partial function from $Q_\phi^m$ to $Q_\phi$ given by 
    \[({C_{1}},\cdots ,{C_{m}})\mapsto f^*({{g}_{1}^*}({{\text{V}}_{1}}),\cdots ,{{g}_{n}^*}({{\text{V}}_{n}})),\] 
    where ${{\text{V}}_{i}}:=({C_{{{m}_{i}}+1}},\cdots , {C_{{{m}_{i}}+{{n}_{i}}}})\in {{(Q_\phi)}^{n_i}},\forall i=1,\cdots ,n$.

    By Corollary \ref{24.5},  $\forall i=1,\cdots,n$, $g_i^*$ can be given by 
    \[(\phi^{-1}(\phi({{x}_{{{m}_{i}}+1}})),\cdots , \phi^{-1}(\phi({{x}_{{{m}_{i}}+{{n}_{i}}}}))) \mapsto \phi^{-1}(\phi(g_i(\text{v}_i))),\] 
    $\forall \text{v}_i \in S^{n_i}$ such that $g_i(\text{v}_i)$ is well-defined. Combining this with the preceding paragraph, we can tell that $f^*\circ ({{g}_{1}^*},\cdots ,{{g}_{n}^*})$ can be given by 
     \[(\phi^{-1}(\phi(x_1)),\cdots ,\phi^{-1}(\phi(x_m)))\mapsto f^*(\phi^{-1}(\phi(g_1(\text{v}_1))),\cdots ,\phi^{-1}(\phi(g_n(\text{v}_n)))),\]
     $\forall (x_1,\cdots,x_m) \in S^m$ such that $f^*(\phi^{-1}(\phi(g_1(\text{v}_1))),\cdots ,\phi^{-1}(\phi(g_n(\text{v}_n))))$ is well-defined.
    
    By Corollary \ref{24.5}, a well-defined $f^*(\phi^{-1}(\phi(g_1(\text{v}_1))),\cdots ,\phi^{-1}(\phi(g_n(\text{v}_n))))$ is given the value $\phi^{-1}(\phi(f(g_1(\text{v}_1),\cdots ,g_n(\text{v}_n))))$ with $f({{g}_{1}}({{\text{v}}_{1}}),\cdots ,{{g}_{n}}({{\text{v}}_{n}}))$ being well-defined.
    Combining this with the preceding paragraph, we see that $f^*\circ ({{g}_{1}^*},\cdots ,{{g}_{n}^*})$ can be given by 
     \[(\phi^{-1}(\phi(x_1)),\cdots ,\phi^{-1}(\phi(x_m)))\mapsto \phi^{-1}(\phi(f({{g}_{1}}({{\text{v}}_{1}}),\cdots ,{{g}_{n}}({{\text{v}}_{n}})))),\] 
    $\forall (x_1,\cdots,x_m) \in S^m$ such that $f({{g}_{1}}({{\text{v}}_{1}}),\cdots ,{{g}_{n}}({{\text{v}}_{n}}))$ is well-defined. 
    
    Since we showed that $(f\circ ({{g}_{1}},\cdots ,{{g}_{n}}))^*$ can be given exactly the same as above, Equation (\ref{Eq24.1}) holds, as desired. 
\end{proof}
\begin{notation}
   By $T_{\phi}^{\#}$ we denote the set $\{$Id on $Q_\phi\}\bigcup T_{\phi}^*$.
\end{notation}
\begin{corollary} \label{quotient space}
    $T_{\phi}^{\#}$ is a par-operator gen-semigroup on $Q_\phi$ and $Q_\phi$ is a $T_{\phi}^{\#}$-space. Specifically, $Q_\phi =  {{\left\langle Q_\phi \right\rangle }_{T_{\phi}^{\#}}}$.
\end{corollary}
\begin{proof}
    By Proposition \ref{24.7}, $T_{\phi}^*$ is a par-operator gen-semigroup on $Q_\phi$, so is $T_{\phi}^{\#}(=\{$Id on $Q_\phi\}\bigcup T_{\phi}^*)$. Hence $Q_\phi \supseteq  {{\left\langle Q_\phi \right\rangle }_{T_{\phi}^{\#}}}$. Since Id $\in T_{\phi}^{\#}$, $Q_\phi \subseteq  {{\left\langle Q_\phi \right\rangle }_{T_{\phi}^{\#}}}$. Thus, $Q_\phi =  {{\left\langle Q_\phi \right\rangle }_{T_{\phi}^{\#}}}$. 
\end{proof}
$Q_\phi$ in Corollary \ref{quotient space} is actually a “quotient space” of $S$ corresponding to $\phi$. To see this, we give
\begin{example} \label{example of quotient space}
Let $\phi$ be the same as in Example \ref{example of cosets}. Let $K$ be the kernel of $\phi$. Then the $T_{\phi}^{\#}$-space $Q_\phi$ is the quotient group $S/K$, where $T_{\phi}^{\#}$ is the set of all group operations among (a finite number of) elements of $S/K$. Moreover, $\phi^*$ defined as follows is the group isomorphism from $S/K$ to $\operatorname{Im} \phi$ induced by $\phi$.
\end{example}
\subsection{A generalized first isomorphism theorem}
\begin{notation}
    Let $\phi^*:Q_\phi \to \operatorname{Im}\phi$ be given by $\phi^{-1}(z) \mapsto z$ and let $\theta^*=\{(f^*,g)\,|\,(f,g)\in \theta\}$. 
\end{notation}
     Then $\theta^* \subseteq {T_{\phi}^{\#}}\times {T'}$. We are going to show that $\phi^*$  is a $\theta^*$-isomorphism from the $T_{\phi}^{\#}$-space $Q_\phi$ to $\operatorname{Im}\phi$ if $\operatorname{Im}\phi $ is a $T'$-space. But first, we need to show that $\phi^*$ is a $\theta^*$-morphism, which requires a lemma as follows.
\begin{lemma} \label{24.10}
    Suppose that $\theta {{|}_{\operatorname{Im}\phi }}$ is a map. Then $\theta^* {{|}_{\operatorname{Im}\phi^*}}$ is also a map. Moreover, $\theta^*(f^*)|_{\operatorname{Im}\phi^*}=\theta(f)|_{\operatorname{Im}\phi},\forall f \in \operatorname{Dom}\theta$.
\end{lemma}
\begin{proof}
      Apparently $\operatorname{Im}\phi^*=\operatorname{Im}\phi$. Thus, to show that $\theta^* {{|}_{\operatorname{Im}\phi^*}}$ is a map, it suffices to show that $\theta^* {{|}_{\operatorname{Im}\phi}}$ is a map.  
     
      For this purpose, let $f_1,f_2 \in T$ such that $f_1^*=f_2^*$. Let $(f_1,g_1),(f_2,g_2)\in \theta$, and hence $(f_1^*,g_1),(f_2^*,g_2)\in \theta^*$. Let $n\in {{\mathbb{Z}}^{+}}$ and $(a_1, \cdots, a_n),(b_1, \cdots, b_n) \in S^n$ such that $\phi(a_i)=\phi(b_i),\forall i=1,\cdots,n$.
      
      Then to show that $\theta^* {{|}_{\operatorname{Im}\phi}}$ is a map, by Proposition \ref{10.4.b}, it suffices to show that
     neither $g_{1}(\phi(a_1), \cdots, \phi(a_n))$ nor $g_2(\phi(b_1), \cdots, \phi(b_n))$ is well-defined or $g_{1}(\phi(a_1), \cdots, \phi(a_n))=g_2(\phi(b_1), \cdots, \phi(b_n))$:\\
    $g_{1}(\phi(a_1), \cdots, \phi(a_n))$ is well-defined;\\
    $\Leftrightarrow$ $\theta(f_1)(\phi(a_1), \cdots, \phi(a_n))$ is well-defined (since $(f_1,g_1)\in \theta$ and $\theta {{|}_{\operatorname{Im}\phi }}$ is a map);\\
    $\Leftrightarrow$ $f_1(a_1, \cdots, a_n)$ is well-defined (by (ii) in Definition \ref{10.4.2});\\
    $\Leftrightarrow$ $f_1^*(\phi^{-1}(\phi(a_1)), \cdots, \phi^{-1}(\phi(a_n)))$ is well-defined (by Corollary \ref{24.5});\\
    $\Leftrightarrow$ $f_2^*(\phi^{-1}(\phi(b_1)), \cdots, \phi^{-1}(\phi(b_n)))$ is well-defined (since $f_1^*=f_2^*$ and $\phi(a_i)=\phi(b_i),\forall i=1,\cdots,n$);\\
    $\Leftrightarrow$ $f_2(b_1, \cdots, b_n)$ is well-defined (by Corollary \ref{24.5});\\
    $\Leftrightarrow$ $\theta(f_2)(\phi(b_1), \cdots, \phi(b_n))$ is well-defined (by (ii) in Definition \ref{10.4.2});\\
    $\Leftrightarrow$ $g_{2}(\phi(b_1), \cdots, \phi(b_n))$ is well-defined (since $(f_2,g_2)\in \theta$ and $\theta {{|}_{\operatorname{Im}\phi }}$ is a map).

    And in the case where the above equivalent conditions are satisfied,
    \begin{align*}
        g_{1}(\phi(a_1), \cdots, \phi(a_n))&=\theta(f_1)(\phi(a_1), \cdots, \phi(a_n))\\
        (\text{by (ii) in Definition \ref{10.4.2}}) &=\phi(f_1(a_1, \cdots, a_n))\\
        (\text{by the definition of }\phi^*) &=\phi^*(\phi^{-1}(\phi(f_1(a_1, \cdots, a_n))))\\
        (\text{by Corollary \ref{24.5}}) &=\phi^*(f_1^*(\phi^{-1}(\phi(a_1)), \cdots, \phi^{-1}(\phi(a_n))))\\
        (\text{since } f_1^*=f_2^* \text{ and } \phi(a_i)=\phi(b_i),\forall i=1,\cdots,n) &=\phi^*(f_2^*(\phi^{-1}(\phi(b_1)), \cdots, \phi^{-1}(\phi(b_n))))\\
        (\text{by Corollary \ref{24.5}}) &=\phi^*(\phi^{-1}(\phi(f_2(b_1, \cdots, b_n))))\\
        (\text{by the definition of }\phi^*) &=\phi(f_2(b_1, \cdots, b_n))\\       
        (\text{by (ii) in Definition \ref{10.4.2}}) &=\theta(f_2)(\phi(b_1), \cdots, \phi(b_n))\\
        &=g_{2}(\phi(b_1), \cdots, \phi(b_n)),
    \end{align*}
    as desired. 
    
    Therefore, $\theta^* {{|}_{\operatorname{Im}\phi}}$ is a map, so is $\theta^* {{|}_{\operatorname{Im}\phi^*}}$.

    Moreover, let $(f,g)\in \theta$. Then $(f^*,g)\in \theta^*$. Since both $\theta^* {{|}_{\operatorname{Im}\phi^*}}$ and $\theta {{|}_{\operatorname{Im}\phi }}$ are maps,  
    \[ \theta^*(f^*)|_{\operatorname{Im}\phi^*}=g|_{\operatorname{Im}\phi^*}=g|_{\operatorname{Im}\phi}=\theta(f)|_{\operatorname{Im}\phi}.\]
\end{proof}
\begin{proposition}\label{24.11}
    $\phi^*$ is a $\theta^*$-morphism from the $T_{\phi}^{\#}$-space $Q_\phi$ to the $T'$-space $S'$.
\end{proposition}  
\begin{proof}
     Since $\phi$ is a $\theta$-morphism from $S$ to a $T'$-space, by Definition \ref{10.4.2},
      \begin{enumerate}
        \item [(i)] $\theta {{|}_{\operatorname{Im}\phi }}$ is a map and
         \item [(ii)] $\forall n\in {{\mathbb{Z}}^{+}}$, $(a_1,\cdots ,a_n)\in {{S}^{n}}$ and $f\in \operatorname{Dom}\theta $, neither $f(a_1,\cdots ,a_n)$ nor $\theta (f)(\phi ({{a}_{1}}),\cdots ,\phi ({{a}_{n}}))$ is well-defined or
    \[\phi (f(a_1,\cdots ,a_n))=\theta (f)(\phi ({{a}_{1}}),\cdots ,\phi ({{a}_{n}})).\]
    \end{enumerate}
    Then by Lemma \ref{24.10}, $\theta^* {{|}_{\operatorname{Im}\phi^*}}$ is a map and 
    $\theta^*(f^*)|_{\operatorname{Im}\phi^*}=\theta(f)|_{\operatorname{Im}\phi},\forall f \in \operatorname{Dom}\theta$. 
    Hence to show that $\phi^*$ is a $\theta^*$-morphism, again by Definition \ref{10.4.2}, we only need to show that      
       $\forall n\in {{\mathbb{Z}}^{+}}$, $(C_1,\cdots ,C_n)\in {Q_\phi^n}$ and $f^*\in \operatorname{Dom}\theta^* $, neither $f^*(C_1,\cdots ,C_n)$ nor $\theta^* (f^*)(\phi^* ({C_{1}}),\cdots ,\phi^* ({C_{n}}))$ is well-defined or
    \[\phi^* (f^*(C_1,\cdots ,C_n))=\theta^* (f^*)(\phi^* ({C_{1}}),\cdots ,\phi^* ({C_{n}})).\]

    For this purpose, let $n\in {{\mathbb{Z}}^{+}}$, $(a_1, \cdots, a_n) \in S^n$ and $f\in \operatorname{Dom}\theta$. 
    
    Then it suffices to show that neither $f^*(\phi^{-1}(\phi(a_1)), \cdots, \phi^{-1}(\phi(a_n)))$ nor $\theta^* (f^*)(\phi(a_1),\cdots ,\phi(a_n))$ is well-defined or
    \[\phi^* (f^*(\phi^{-1}(\phi(a_1)), \cdots, \phi^{-1}(\phi(a_n))))=\theta^* (f^*)(\phi(a_1),\cdots ,\phi(a_n)):\]
   $f^*(\phi^{-1}(\phi(a_1)), \cdots, \phi^{-1}(\phi(a_n)))$ is well-defined;\\
   $\Leftrightarrow$ $f(a_1, \cdots, a_n)$ is well-defined (by Corollary \ref{24.5});\\
   $\Leftrightarrow$  $\theta(f)(\phi(a_1), \cdots, \phi(a_n))$ is well-defined (by (ii) in Definition \ref{10.4.2});\\
   $\Leftrightarrow$ $\theta^* (f^*) (\phi(a_1), \cdots, \phi(a_n))$ is well-defined (since $\theta^*(f^*)|_{\operatorname{Im}\phi^*}=\theta(f)|_{\operatorname{Im}\phi}$).
   
   And in the case where the above equivalent conditions are satisfied,\\
   $\phi^*(f^*(\phi^{-1}(\phi(a_1)), \cdots, \phi^{-1}(\phi(a_n))))$\\
   $=\phi^*(\phi^{-1}(\phi(f(a_1, \cdots, a_n))))$ (by Corollary \ref{24.5}) \\
   $=\phi(f(a_1, \cdots, a_n))$ (by the definition of $\phi^*$) \\
   $=\theta(f)(\phi(a_1), \cdots, \phi(a_n))$ (by (ii) in Definition \ref{10.4.2}) \\
   $=\theta^* (f^*)(\phi(a_1), \cdots, \phi(a_n))$ (because $\theta^*(f^*)|_{\operatorname{Im}\phi^*}=\theta(f)|_{\operatorname{Im}\phi}$),\\ 
    as desired.
\end{proof}
   Then we have the main theorem of this section as follows.
\begin{theorem} \label{24main}
    Suppose that $\operatorname{Im}\phi $ is a $T'$-space. Then $\phi^*$ is a $\theta^*$-isomorphism from the $T_{\phi}^{\#}$-space $Q_\phi$ to the $T'$-space $\operatorname{Im}\phi$. 
\end{theorem}
\begin{proof}
     Since  $\operatorname{Im}\phi $ is a $T'$-space, $\phi$ is also a $\theta$-morphism to the $T'$-space $\operatorname{Im}\phi$. Then by Proposition \ref{24.11}, $\phi^*$ is a $\theta^*$-morphism from the $T_{\phi}^{\#}$-space $Q_\phi$ to the $T'$-space $\operatorname{Im}\phi$. Moreover, because $\phi^*$ is bijective, $\phi^*$ is a $\theta^*$-isomorphism from the $T_{\phi}^{\#}$-space $Q_\phi$ to the $T'$-space $\operatorname{Im}\phi$.
\end{proof}
Then by Proposition \ref{image theta space for multivariable}, the following is immediate.
\begin{corollary} \label{1st isomorphi}
    If $\operatorname{Im}\theta = T'$ and $\operatorname{Id} \in T'$, then $\operatorname{Im}\phi$ is a $T'$-space and $\phi^*$ is a $\theta^*$-isomorphism from the $T_{\phi}^{\#}$-space $Q_\phi$ to the $T'$-space $\operatorname{Im}\phi$. 
\end{corollary}
From the corollary together with Propositions \ref{9.4.8}, \ref{9.4.5} and \ref{9.4.6}, it is not hard to deduce the first isomorphism theorems for groups, rings, and modules (\textit{cf.} Example \ref{example of quotient space}).

\section{On topological spaces} \label{App to topo}
 In Subsection \ref{Topo Basic notions and properties}, we introduce some notions and properties of topological spaces which are related to our theory. To show an application, in Subsection \ref{“General” Galois correspondences}, we employ Corollaries \ref{2.2.5} and \ref{2.2.6} and obtain the Galois correspondences on topological spaces. 
\subsection{Basic notions and properties} \label{Topo Basic notions and properties}
Most results in this subsection will not be employed in Subsection \ref{“General” Galois correspondences}, but they may be useful for future research on topological spaces.

 To simplify our descriptions, we define some notations for Section \ref{App to topo} as follows.
\begin{notation} \label{7.1.1}
    Unless otherwise specified, $X$ and $Y$ always denote topological spaces. Let $\mathcal{P}(X)$ and $\mathcal{P}(Y)$ denote the power sets of $X$ and $Y$, respectively. Moreover,
\begin{center}
    $T:={{T}_{X}}:=\{\text{I}{{\text{d}}_{X}}$, $\text{C}{{\text{l}}_{X}}:\mathcal{P}(X)\to \mathcal{P}(X)$ given by ${{A}_{X}}\mapsto \overline{{{A}_{X}}}\}$
\end{center}
 and
 \begin{center}
     ${{T}_{Y}}:=\{\text{I}{{\text{d}}_{Y}}$, $\text{C}{{\text{l}}_{Y}}:\mathcal{P}(Y)\to \mathcal{P}(Y)$ given by ${{A}_{Y}}\mapsto \overline{{{A}_{Y}}}\}$,
 \end{center}
where $\text{I}{{\text{d}}_{X}}$ and $\text{I}{{\text{d}}_{Y}}$ denote the identity functions on $\mathcal{P}(X)$ and $\mathcal{P}(Y)$, respectively.
\end{notation}
  \begin{remark}
      By Definition \ref{Operator semigroup}, it is easy to see that $T(={{T}_{X}})$ and ${{T}_{Y}}$ are operator semigroups on $\mathcal{P}(X)$ and $\mathcal{P}(Y)$, respectively.
  \end{remark}
Then we have
\begin{proposition} \label{7.1.2}
    $\mathcal{P}(X)$ is a $T$-space and
\begin{center}
    $\operatorname{End}_{T}(\mathcal{P}(X))=\{\sigma :\mathcal{P}(X)\to \mathcal{P}(X)\,|\,\sigma (\overline{A})=\overline{\sigma (A)},\forall A\in \mathcal{P}(X)\}$.
\end{center}
\end{proposition}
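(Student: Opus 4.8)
The plan is to check the two assertions directly against the definitions, since both amount to bookkeeping once the (two-element) operator semigroup $T$ is made explicit. First I would record that $\mathcal{P}(X)$ is a $T$-space, which is already the content of Example \ref{$T$-space for topo--1}: because $\operatorname{Id}_X\in T$, every $A\in\mathcal{P}(X)$ equals $\operatorname{Id}_X(A)$ and hence belongs to $\langle\mathcal{P}(X)\rangle_T$, while the reverse inclusion $\langle\mathcal{P}(X)\rangle_T\subseteq\mathcal{P}(X)$ is immediate because every element of $T$ is a map from $\mathcal{P}(X)$ to $\mathcal{P}(X)$. Thus $\langle\mathcal{P}(X)\rangle_T=\mathcal{P}(X)$, a $T$-space.

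For the description of $\operatorname{End}_T(\mathcal{P}(X))$ I would unwind Definitions \ref{1.3.1} and \ref{1.3.2}. A map $\sigma\colon\mathcal{P}(X)\to\mathcal{P}(X)$ lies in $\operatorname{End}_T(\mathcal{P}(X))$ exactly when it is a $T$-morphism from the $T$-space $\mathcal{P}(X)$ to itself, i.e.\ when $\sigma(f(A))=f(\sigma(A))$ for all $f\in T$ and all $A\in\mathcal{P}(X)$. Since $T$ consists precisely of the two functions $\operatorname{Id}_X$ and $\operatorname{Cl}_X$, the requirement for $f=\operatorname{Id}_X$ is vacuous, while the requirement for $f=\operatorname{Cl}_X$ reads $\sigma(\overline{A})=\overline{\sigma(A)}$ for every $A\in\mathcal{P}(X)$. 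Conversely, any $\sigma$ with $\sigma(\overline{A})=\overline{\sigma(A)}$ for all $A$ commutes with both elements of $T$ and hence is a $T$-endomorphism. This yields the displayed equality.

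I do not anticipate any genuine obstacle here: the proposition is a straightforward translation of the general notion of $T$-endomorphism to the concrete two-element semigroup $T=\{\operatorname{Id}_X,\operatorname{Cl}_X\}$, combined with the observation (valid because $\operatorname{Id}_X\in T$) that $\mathcal{P}(X)$ is itself a $T$-space. If desired, one may additionally remark that $\operatorname{Im}\sigma\le\mathcal{P}(X)$ by Proposition \ref{1.3.8} since $\operatorname{Id}_X\in T$, but this is not needed for the statement as phrased.
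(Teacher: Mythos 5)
Your proof is correct and follows essentially the same route as the paper: observe that $\langle\mathcal{P}(X)\rangle_T=\mathcal{P}(X)$ (so it is a $T$-space), then unwind Definitions \ref{1.3.1} and \ref{1.3.2} for the two-element semigroup $T=\{\operatorname{Id}_X,\operatorname{Cl}_X\}$, where commuting with $\operatorname{Id}_X$ is automatic and commuting with $\operatorname{Cl}_X$ is exactly the displayed condition. The extra level of detail you give is fine but adds nothing beyond the paper's argument.
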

\begin{proof}
    By Definition \ref{$T$-space}, $\mathcal{P}(X)$ is a $T$-space because ${{\left\langle \mathcal{P}(X) \right\rangle }_{T}}=P(X)$.

Since $T=\{\text{I}{{\text{d}}_{X}}$, $\text{C}{{\text{l}}_{X}}:\mathcal{P}(X)\to \mathcal{P}(X)$ given by ${{A}_{X}}\mapsto \overline{{{A}_{X}}}\}$, by Definition \ref{1.3.1}, a map $\sigma :\mathcal{P}(X)\to \mathcal{P}(X)$ is a $T$-morphism if and only if
\begin{center}
    $(\sigma (\text{C}{{\text{l}}_{X}}(A))=)\sigma (\overline{A})=\overline{\sigma (A)}(=\text{C}{{\text{l}}_{X}}(\sigma (A))),\forall A\in \mathcal{P}(X)$. 
\end{center}
Thus,
\[\operatorname{End}_{T}(\mathcal{P}(X))=\{\sigma :\mathcal{P}(X)\to \mathcal{P}(X)\,|\,\sigma (\overline{A})=\overline{\sigma (A)},\forall A\in \mathcal{P}(X)\}.\]
\end{proof}
For brevity, we take out a part from Proposition \ref{5.3.3} as follows.
\begin{definition} \label{7.1.3}
    Let $f:X\to Y$ be a map. Then we define its \emph{induced map} $f^*:\mathcal{P}(X)\to \mathcal{P}(Y)$ by 
\begin{enumerate}
    \item $\forall A\in \mathcal{P}(X)$ that is closed in $X$, let $f^*(A)=\overline{f(A)}$, and
    \item $\forall A\in \mathcal{P}(X)$ that is not closed in $X$, let $f^*(A)=f(A)$,
\end{enumerate}
where $f(A)$ denotes the set $\{f(x)\,|\,x\in A\}$ for convenience.
\end{definition}
\begin{remark}
    In Section \ref{App to topo}, unless otherwise specified, $f^*$ is defined by Definition \ref{7.1.3} whenever $f$ is a map between topological spaces.
\end{remark}
 With Definition \ref{7.1.3}, we can restate Proposition \ref{1.3.7} as follows.
\begin{proposition} \label{7.1.4}
   A map $f:X\to X$ is continuous if and only if its induced map $f^*:\mathcal{P}(X)\to \mathcal{P}(X)$ is a $T$-morphism.
\end{proposition}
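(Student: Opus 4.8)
The plan is to obtain Proposition \ref{7.1.4} directly from Theorem \ref{1.3.7}, since it is nothing more than a restatement of that theorem in the notation fixed for this section. First I would recall that, by Notation \ref{7.1.1}, the operator semigroup under consideration is
\[T=T_X=\{\operatorname{Id}_X,\ \operatorname{Cl}_X:\mathcal{P}(X)\to\mathcal{P}(X)\text{ given by }A\mapsto\overline{A}\},\]
which is precisely the operator semigroup $T$ appearing in the statement of Theorem \ref{1.3.7}; and by Proposition \ref{7.1.2} (equivalently, Examples \ref{T on topo--1} and \ref{$T$-space for topo--1}), $\mathcal{P}(X)$ is a $T$-space, so the phrase ``$T$-morphism'' is meaningful for a map $\mathcal{P}(X)\to\mathcal{P}(X)$.

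Next I would observe that, for a map $f:X\to X$, the induced map $f^*:\mathcal{P}(X)\to\mathcal{P}(X)$ of Definition \ref{7.1.3} is, verbatim, the map $p^*$ constructed in Theorem \ref{1.3.7} upon setting $p:=f$ and taking the codomain to be $X$: both send a closed set $A$ to $\overline{f(A)}$ and a non-closed set $A$ to $f(A)$. Once these identifications are in place, the asserted equivalence ``$f$ is continuous $\iff$ $f^*$ is a $T$-morphism'' is exactly the equivalence already proved in Theorem \ref{1.3.7}, so the proof consists simply of quoting that theorem.

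There is no genuine obstacle here: the only work is the bookkeeping of matching the notation of Definition \ref{7.1.3} and Notation \ref{7.1.1} against that of Theorem \ref{1.3.7}, which is immediate. Should one prefer a self-contained argument, one could instead reproduce the proof of Theorem \ref{1.3.7} — necessity via $p(\overline{A})\subseteq\overline{p(A)}$ and hence $\overline{p(\overline{A})}=\overline{p(A)}$, and sufficiency via the contrapositive applied to $B=f^{-1}(A)$ for a closed set $A$ — but this would merely duplicate earlier work, so I would not do so.
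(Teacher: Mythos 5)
Your proposal is correct and matches the paper exactly: the paper presents Proposition \ref{7.1.4} as a restatement of Theorem \ref{1.3.7} via Definition \ref{7.1.3}, with no further argument needed. Your notational bookkeeping (identifying $T=T_X$ and $f^*$ with the $p^*$ of Theorem \ref{1.3.7}) is precisely the intended justification.
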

	To facilitate our following discussion, we give
\begin{notation} \label{7.1.5}
    Let $F(X,Y)$ denote the set of all functions from $X$ to $Y$ and let $F^*(X,Y)$ denote the set $\{f^*\,|\,f\in F(X,Y)\}$. Let $F(X)=F(X,X)$ and let $F^*(X)=F^*(X,X)$. Moreover, let $C(X)$ denote the set of all continuous functions from $X$ to $X$, and let $C^*(X)$ denote the set $\{f^*\,|\,f\in C(X)\}$.
\end{notation}
Then by Proposition \ref{7.1.4}, the following is obvious.
\begin{corollary} \label{7.1.6}
    $F^*(X)\bigcap \operatorname{End}_{T}(\mathcal{P}(X))=C^*(X)$. 
\end{corollary}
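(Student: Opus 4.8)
The plan is to unwind the three notations $F^*(X)$, $\operatorname{End}_T(\mathcal{P}(X))$, and $C^*(X)$ in Notations \ref{7.1.5}, Proposition \ref{7.1.2}, and then match them up using Proposition \ref{7.1.4}. The key observation is that a map $\sigma\in F^*(X)$ has the form $\sigma = f^*$ for some $f\in F(X)$, and by Proposition \ref{7.1.4}, $f^*$ lies in $\operatorname{End}_T(\mathcal{P}(X))$ if and only if $f$ is continuous, i.e. if and only if $f\in C(X)$, which is exactly what is needed for $f^*\in C^*(X)$.

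First I would prove the inclusion $F^*(X)\cap\operatorname{End}_T(\mathcal{P}(X))\subseteq C^*(X)$. Let $\sigma$ belong to the left-hand side. Since $\sigma\in F^*(X)$, by Notation \ref{7.1.5} there is some $f\in F(X)$ with $\sigma = f^*$. Since $\sigma = f^*\in\operatorname{End}_T(\mathcal{P}(X))$, i.e. $f^*$ is a $T$-morphism from $\mathcal{P}(X)$ to $\mathcal{P}(X)$, Proposition \ref{7.1.4} gives that $f$ is continuous, so $f\in C(X)$, and therefore $\sigma = f^*\in C^*(X)$ by Notation \ref{7.1.5}.

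Conversely, for $C^*(X)\subseteq F^*(X)\cap\operatorname{End}_T(\mathcal{P}(X))$: let $\sigma\in C^*(X)$, so $\sigma = f^*$ for some $f\in C(X)\subseteq F(X)$; hence $\sigma\in F^*(X)$. Because $f$ is continuous, Proposition \ref{7.1.4} yields that $f^*=\sigma$ is a $T$-morphism, i.e. $\sigma\in\operatorname{End}_T(\mathcal{P}(X))$ by Definition \ref{1.3.2} and Proposition \ref{7.1.2}. Combining the two inclusions gives the claimed equality. I do not anticipate any real obstacle here; the statement is essentially a bookkeeping corollary of Proposition \ref{7.1.4}, and the only mild subtlety is being careful that membership in $F^*(X)$ guarantees a preimage $f\in F(X)$ to which Proposition \ref{7.1.4} can be applied — which is immediate from the definition of $F^*(X)$.
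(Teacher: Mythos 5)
Your proof is correct and follows exactly the route the paper intends: the paper states Corollary \ref{7.1.6} as an immediate consequence of Proposition \ref{7.1.4}, and your two inclusions are just that consequence spelled out. No gaps; the bookkeeping with preimages under $I_X$ is handled correctly (any preimage $f\in F(X)$ of $\sigma$ suffices, so injectivity of $I_X$ is not even needed).
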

	It would be desirable if $F^*(X)\bigcap \operatorname{End}_{T}(\mathcal{P}(X))=\operatorname{End}_{T}(\mathcal{P}(X))$. However, it is possible that $F^*(X)\bigcap \operatorname{End}_{T}(\mathcal{P}(X))\subsetneq \operatorname{End}_{T}(\mathcal{P}(X))$ (and so $C^*(X)\subsetneq \operatorname{End}_{T}(\mathcal{P}(X))$). That is, there may exist a $T$-morphism $\sigma $ from $\mathcal{P}(X)$ to $\mathcal{P}(X)$ such that $\sigma $ is not the induced map of any map from $X$ to $X$, as shown below.
 \begin{example} \label{7.1.7}
     Let $X$ be any topological space with the discrete topology. Then it follows from Proposition \ref{7.1.2} that any map from $\mathcal{P}(X)$ to $\mathcal{P}(X)$ is a $T$-morphism. Let $\sigma :\mathcal{P}(X)\to \mathcal{P}(X)$ be given by $A\mapsto \emptyset ,\forall A\in \mathcal{P}(X)$. If $X$ is nonempty, then by Definition \ref{7.1.3}, it is not hard to see that $\sigma \notin F^*(X)$.
 \end{example}
 
	Before going further, we define  
\begin{notation} \label{7.1.8}
    We denote by ${{I}_{X,Y}}$ the map from $F(X,Y)$ to $F^*(X,Y)$ given by $f\mapsto f^*$. Besides, let ${{I}_{X}}={{I}_{X,X}}$. 
\end{notation} 
	Obviously ${{I}_{X,Y}}$ is surjective. It would be desirable if ${{I}_{X,Y}}$ were always injective. However, there are cases where ${{I}_{X,Y}}$ is not injective, as shown below.
 \begin{example} \label{7.1.9}
     Let $X=\{1\}$ and $Y=\{2,3\}$. Let ${{\mathcal{T}}_{X}}=\{\emptyset ,X\}$ and ${{\mathcal{T}}_{Y}}=\{\emptyset ,Y\}$ be the (trivial) topologies on $X$ and $Y$, respectively. Let two maps $f,g:X\to Y$ be given by $f(1)=2$ and $g(1)=3$, respectively. Then $F(X,Y)=\{f,g\}$. And by Definition \ref{7.1.3}, $f^*(\emptyset )=\emptyset =g^*(\emptyset )$ and $f^*(\{1\})=\overline{\{f(1)\}}=\overline{\{2\}}=\{2,3\}=\overline{\{3\}}=\overline{\{g(1)\}}=g^*(\{1\})$. Thus $f^*=g^*$. Hence the map ${{I}_{X,Y}}$ from $F(X,Y)$ to $F^*(X,Y)=\{f^*(=g^*)\}$ is not injective.
 \end{example}
	Surprisingly, there are two important cases where ${{I}_{X,Y}}$ is injective, as shown by the following two results.
\begin{proposition} \label{7.1.10}
   Suppose that every finite point set in $Y$ is closed, i.e. $Y$ satisfies the ${{T}_{1}}$ axiom. Then ${{I}_{X,Y}}$ is injective.
\end{proposition}
\begin{proof}
    Assume that ${{I}_{X,Y}}$ is not injective. Then there are two different maps $f,g:X\to Y$ such that $f^*=g^*$. Hence $\exists x\in X$ such that $f(x)\ne g(x)$. 

From Definition \ref{7.1.3}, we can tell that $\{x\}$ is closed in $X$ (otherwise $f^*(\{x\})=\{f(x)\}\ne \{g(x)\}=g^*(\{x\})$, which is contrary to the assumption $f^*=g^*$). Then again by Definition \ref{7.1.3}, $\overline{\{f(x)\}}=f^*(\{x\})=g^*(\{x\})=\overline{\{g(x)\}}$. However, since every finite point set in $Y$ is closed, $\overline{\{f(x)\}}=\{f(x)\}$ and $\overline{\{g(x)\}}=\{g(x)\}$. Thus $f(x)=g(x)$, which is contrary to $f(x)\ne g(x)$.
\end{proof}
\begin{proposition} \label{7.1.11}
   ${{I}_{X}}$ is injective.
\end{proposition}
\begin{proof}
    Assume that ${{I}_{X}}$ is not injective. Then there are two different maps $f,g:X\to X$ such that $f^*=g^*$. Hence $\exists x\in X$ such that $f(x)\ne g(x)$. Then by the same argument as in the proof of Proposition \ref{7.1.10}, $\{x\}$ is closed in $X$ and $\overline{\{f(x)\}}=\overline{\{g(x)\}}$. 
    
    It follows that $\overline{\{f(x)\}}=\overline{\{g(x)\}}\supseteq \{f(x),g(x)\}$. Hence $\overline{\{f(x)\}}\ne \{f(x)\}$ and $\overline{\{g(x)\}}\ne \{g(x)\}$ (because $f(x)\ne g(x)$), and thus $f(x)\ne x$ (otherwise  $\overline{\{f(x)\}}=\overline{\{x\}}=\{x\}=\{f(x)\}$, a contradiction). Analogously, $g(x)\ne x$.

	Then $\overline{\{x,f(x)\}}\supseteq \{x,f(x),g(x)\}\supsetneq \{x,f(x)\}$ because $\overline{\{f(x)\}}\supseteq \{f(x),g(x)\}$ and $f(x)\ne g(x)\ne x$. Hence $\overline{\{x,f(x)\}}\ne \{x,f(x)\}$; that is, $\{x,f(x)\}$ is not closed in $X$. Hence by Definition \ref{7.1.3}, 
\begin{center}
     $f^*(\{x,f(x)\})=\{f(x),f(f(x))\}$ and $g^*(\{x,f(x)\})=\{g(x),g(f(x))\}$. 
\end{center}	

 Because it is assumed that $f^*=g^*$, $\{f(x),f(f(x))\}=\{g(x),g(f(x))\}$. 

 Since $f(x)\ne g(x)$, we can tell from the above equation that $f(f(x))=g(x)\ne f(x)=g(f(x))$. Then because it was shown that $\overline{\{f(x)\}}\ne \{f(x)\}$, by Definition \ref{7.1.3}, $f^*(\{f(x)\})=\{f(f(x))\}\ne \{g(f(x))\}=g^*(\{f(x)\})$, which is contrary to the assumption that $f^*=g^*$.
\end{proof}
	Moreover, there is an important case where ${{I}_{X}}$ commutes with composition of functions, as shown below.
 \begin{lemma} \label{7.1.12}
    Let $f$ and $g$ be continuous maps from $X$ to $X$. Suppose that $g$ is injective. Then $(f\circ g)^*=f^*\circ g^*$. 
 \end{lemma}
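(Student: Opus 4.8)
The plan is to unpack both sides on each subset $A \in \mathcal{P}(X)$ according to whether $A$ is closed in $X$, whether $g(A)$ is closed in $X$, and whether $(f\circ g)(A) = f(g(A))$ is closed in $X$, using Definition \ref{7.1.3}. The key simplifying observation is that $g$ continuous and injective behaves well with respect to closures: on one hand $g(\overline{A}) \subseteq \overline{g(A)}$ by continuity (as used in the proof of Theorem \ref{1.3.7}), but on the other hand, because $g$ is injective we should be able to say more — in particular that $g(A)$ closed forces control over whether $A$ is closed. The heart of the matter is the following dichotomy: either $A$ is closed in $X$, or $A$ is not closed; I would show that in the first case $g^*(A) = \overline{g(A)}$ and $(f\circ g)^*(A) = \overline{f(g(A))} = \overline{f(\overline{g(A)})}$ (the last equality exactly as in Theorem \ref{1.3.7}, using $f$ continuous), which equals $f^*(\overline{g(A)}) = f^*(g^*(A))$; and in the second case $g^*(A) = g(A)$, and I must match $(f\circ g)^*(A)$ with $f^*(g(A))$.

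First I would dispose of the closed case as just sketched — it uses only continuity of $f$ and $g$, not injectivity, and mirrors the computation in Theorem \ref{1.3.7} essentially verbatim. Next, for the non-closed case, $A \neq \overline{A}$, I would argue that $g(A)$ is also not closed in $X$: this is where injectivity of $g$ enters. The clean way is to note that if $g(A)$ were closed, then since $g$ is continuous and injective, $g^{-1}(g(A)) = A$ would be closed (preimage of a closed set under a continuous map), contradicting $A \neq \overline{A}$. Hence $g^*(A) = g(A)$, and $g(A)$ non-closed gives $f^*(g(A)) = f(g(A))$. Simultaneously, I would show $(f\circ g)(A) = f(g(A))$ is non-closed so that $(f\circ g)^*(A) = f(g(A))$ as well — wait, that is not automatic, so here one must be careful: if $f(g(A))$ happens to be closed, then $(f \circ g)^*(A) = \overline{f(g(A))} = f(g(A))$ since it is already closed, and $f^*(g(A)) = f(g(A))$ too because $g(A)$ is not closed. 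So in either sub-case the two sides agree, and the value is simply $f(g(A))$.

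The main obstacle I anticipate is precisely the bookkeeping in the non-closed case: one must confirm that $g(A)$ inherits non-closedness from $A$ (this is the one place injectivity of $g$ is genuinely needed, via $A = g^{-1}(g(A))$), and then check that the definition of $(\,\cdot\,)^*$ applied to $f \circ g$ at $A$ — which branches on whether $f(g(A))$ is closed in $X$ — always produces the same element of $\mathcal{P}(X)$ as $f^*$ applied to $g^*(A) = g(A)$, which branches on whether $g(A)$ is closed (it is not). Since $g(A)$ is not closed, $f^*(g(A)) = f(g(A))$ unconditionally, and $(f\circ g)^*(A)$ equals $\overline{f(g(A))}$ if $f(g(A))$ is closed (which then equals $f(g(A))$) and equals $f(g(A))$ otherwise; either way it is $f(g(A))$. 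Collecting the closed case and the non-closed case proves $(f \circ g)^*(A) = (f^* \circ g^*)(A)$ for all $A \in \mathcal{P}(X)$, hence $(f\circ g)^* = f^* \circ g^*$. I would also double-check the degenerate instance $A = \emptyset$, where $g(A) = \emptyset = \overline{\emptyset}$ is closed, falling under the closed case, so it needs no separate treatment.
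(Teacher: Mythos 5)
Your proposal is correct and follows essentially the same route as the paper: a case split on whether $A$ is closed, with the closed case settled by continuity of $f$ (giving $\overline{f(\overline{g(A)})}=\overline{f(g(A))}$, so both sides equal $\overline{f(g(A))}$) and the non-closed case settled by the observation that $g$ continuous and injective forces $g(A)$ to be non-closed, since $g^{-1}(g(A))=A$. The one blemish is your sub-case analysis on whether $f(g(A))$ is closed: Definition \ref{7.1.3} branches on whether the \emph{input} set is closed, not the image, so once $A\ne\overline{A}$ you get $(f\circ g)^*(A)=(f\circ g)(A)=f(g(A))$ immediately; your detour happens to land on the same value, so nothing breaks, but that step misstates the definition and is unnecessary.
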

\begin{proof}
    It suffices to show that $(f\circ g)^*(A)=f^*(g^*(A)),\forall A\in \mathcal{P}(X)$. There are two cases as follows.

(1) $A=\overline{A}$:

	By Definition \ref{7.1.3}, 
     \[(f\circ g)^*(A)=\overline{(f\circ g)(A)}=\overline{f(g(A))}\] and 
     \[f^*(g^*(A))=f^*(\overline{g(A)})=\overline{f(\overline{g(A)})}.\]

   	Then it suffices to show $\overline{f(\overline{g(A)})}=\overline{f(g(A))}$. 
    
    Since $f$ is continuous, by e.g. \cite{6}, $f(\overline{g(A)})\subseteq \overline{f(g(A))}$. Hence $\overline{f(\overline{g(A)})}\subseteq \overline{f(g(A))}$. On the other hand, $f(\overline{g(A)})\supseteq f(g(A))$, and hence $\overline{f(\overline{g(A)})}\supseteq \overline{f(g(A))}$. Therefore, $\overline{f(\overline{g(A)})}=\overline{f(g(A))}$, as desired.

(2)  $A\ne \overline{A}$:

Assume that $g(A)$ is closed. Since $g$ is continuous, ${{g}^{-1}}(g(A))$ is also closed. However, since $g$ is injective, ${{g}^{-1}}(g(A))=A$. Hence ${{g}^{-1}}(g(A))$ is not closed (because $A\ne \overline{A}$), a contradiction. Thus $g(A)$ is not closed. Then by Definition \ref{7.1.3},
\[f^*(g^*(A))=f^*(g(A))=f(g(A))=(f\circ g)(A)=(f\circ g)^*(A),\] 
as desired.
\end{proof}

\begin{notation} \label{7.1.13}
    We denote by $\text{Hom}(X)$ the set of all homeomorphisms from $X$ to $X$. Let $\text{Hom}^*(X)=\{f^*\,|\,f\in \text{Hom}(X)\}$.
\end{notation}
	Then we immediately have
\begin{corollary} \label{7.1.14}
    $\emph{Hom}(X)$ constitutes a group, which we still denote by $\emph{Hom}(X)$, with composition of functions as the binary operation.
\end{corollary}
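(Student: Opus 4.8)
The plan is to simply verify the four group axioms for the set $\text{Hom}(X)$ equipped with composition of functions as the binary operation; the statement is elementary and the only substantive points are that $\text{Hom}(X)$ is closed under composition and under taking inverses.

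First I would observe that $\text{Id}_X$ is a homeomorphism of $X$ (it is a bijection that is continuous and whose inverse, namely itself, is continuous), so $\text{Id}_X\in\text{Hom}(X)$, and for every $f\in\text{Hom}(X)$ we have $f\circ\text{Id}_X=\text{Id}_X\circ f=f$; hence $\text{Id}_X$ serves as an identity element. Next I would check closure: if $f,g\in\text{Hom}(X)$, then $f\circ g$ is a bijection of $X$ (a composite of bijections), $f\circ g$ is continuous (a composite of continuous maps), and $(f\circ g)^{-1}=g^{-1}\circ f^{-1}$ is continuous because $f^{-1}$ and $g^{-1}$ are continuous; therefore $f\circ g\in\text{Hom}(X)$. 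Associativity of the operation is automatic, since composition of functions is always associative.

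Finally, for inverses, I would note that if $f\in\text{Hom}(X)$ then $f^{-1}:X\to X$ is a bijection, $f^{-1}$ is continuous by the definition of a homeomorphism, and the inverse of $f^{-1}$ is $f$, which is continuous; hence $f^{-1}\in\text{Hom}(X)$, and $f\circ f^{-1}=f^{-1}\circ f=\text{Id}_X$. Combining these observations shows that $\text{Hom}(X)$ is a group under composition of functions. There is no real obstacle here: the proof is a direct check of the axioms, resting only on the standard facts that a composite of bijections (resp.\ continuous maps) is again a bijection (resp.\ continuous) and that the inverse of a homeomorphism is a homeomorphism.
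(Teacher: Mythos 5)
Your proof is correct, and it matches the paper's treatment: the paper states Corollary \ref{7.1.14} as an immediate consequence of the definition of $\operatorname{Hom}(X)$ (it offers no written proof), and your argument is exactly the routine verification of the group axioms that is being taken for granted there.
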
   
$\text{Hom}^*(X)$ also constitutes a group, as shown below.
\begin{proposition} \label{7.1.15}
    With composition of functions as the binary operation, $\emph{Hom}^*(X)$ constitutes a subgroup, which we still denote by $\emph{Hom}^*(X)$, of $\operatorname{Aut}_{T}(\mathcal{P}(X))$ $($defined in Proposition \ref{1.3.11}$)$.
\end{proposition}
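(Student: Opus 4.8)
The plan is to verify the group axioms for $\operatorname{Hom}^*(X)$ as a subset of $\operatorname{Aut}_{T}(\mathcal{P}(X))$, using the subgroup criterion: namely that $\operatorname{Hom}^*(X)$ is nonempty, closed under composition, and closed under taking inverses. First I would establish that $\operatorname{Hom}^*(X) \subseteq \operatorname{Aut}_{T}(\mathcal{P}(X))$. Given $f \in \operatorname{Hom}(X)$, both $f$ and $f^{-1}$ are continuous, so by Proposition \ref{7.1.4}, $f^*$ and $(f^{-1})^*$ are $T$-morphisms from $\mathcal{P}(X)$ to $\mathcal{P}(X)$, i.e., $T$-endomorphisms. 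To conclude $f^* \in \operatorname{Aut}_{T}(\mathcal{P}(X))$, I need $f^*$ to be bijective with $T$-morphism inverse; the natural candidate for the inverse is $(f^{-1})^*$, so it suffices to show $f^* \circ (f^{-1})^* = (f^{-1})^* \circ f^* = \operatorname{Id}_{\mathcal{P}(X)}$. Since $f^{-1}$ is a continuous injective (indeed bijective) map, Lemma \ref{7.1.12} gives $(f \circ f^{-1})^* = f^* \circ (f^{-1})^*$, and likewise $(f^{-1} \circ f)^* = (f^{-1})^* \circ f^*$; both composites are $(\operatorname{Id}_X)^*$, which one checks directly from Definition \ref{7.1.3} equals $\operatorname{Id}_{\mathcal{P}(X)}$ (a closed set $A$ maps to $\overline{A} = A$, a non-closed set maps to itself). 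Hence $f^*$ is a $T$-automorphism.

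Next I would check closure under composition within $\operatorname{Hom}^*(X)$: given $f, g \in \operatorname{Hom}(X)$, the map $f \circ g$ is again a homeomorphism, so $(f\circ g)^* \in \operatorname{Hom}^*(X)$; and by Lemma \ref{7.1.12} (applicable since $g$ is continuous and injective) this equals $f^* \circ g^*$. Thus the composite of two elements of $\operatorname{Hom}^*(X)$, as functions on $\mathcal{P}(X)$, lies in $\operatorname{Hom}^*(X)$ — this is the point where it matters that composition in $\operatorname{Aut}_T(\mathcal{P}(X))$ agrees with composition of the underlying set maps, which Lemma \ref{7.1.12} guarantees. Closure under inverses is immediate: for $f \in \operatorname{Hom}(X)$, the inverse of $f^*$ in the group $\operatorname{Aut}_T(\mathcal{P}(X))$ was just identified as $(f^{-1})^* \in \operatorname{Hom}^*(X)$. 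Nonemptiness follows since $\operatorname{Id}_X \in \operatorname{Hom}(X)$ gives $\operatorname{Id}_{\mathcal{P}(X)} = (\operatorname{Id}_X)^* \in \operatorname{Hom}^*(X)$. Applying the subgroup criterion, $\operatorname{Hom}^*(X)$ is a subgroup of $\operatorname{Aut}_{T}(\mathcal{P}(X))$.

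The main obstacle is making sure the group operation on $\operatorname{Hom}^*(X)$ is genuinely the ambient composition in $\operatorname{Aut}_T(\mathcal{P}(X))$ — a priori the element $(f\circ g)^*$ need not be the function-composite $f^* \circ g^*$, since the star construction is not in general functorial (see Example \ref{7.1.9} and the failure without injectivity), and this is exactly why Proposition \ref{7.1.11} (injectivity of $I_X$) and especially Lemma \ref{7.1.12} are needed. I would state this identification explicitly as the crux of the argument. Everything else — the identity computation $(\operatorname{Id}_X)^* = \operatorname{Id}_{\mathcal{P}(X)}$, associativity (inherited from $\operatorname{Aut}_T(\mathcal{P}(X))$, itself a group by Proposition \ref{1.3.11}) — is routine and can be dispatched in a line or two.

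\begin{proof}
The identity map $\operatorname{Id}_X$ on $X$ is a homeomorphism, and by Definition \ref{7.1.3} its induced map $(\operatorname{Id}_X)^*$ is $\operatorname{Id}_{\mathcal{P}(X)}$, which lies in $\operatorname{Aut}_{T}(\mathcal{P}(X))$. So $\operatorname{Hom}^*(X)$ is nonempty.

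Let $f\in \operatorname{Hom}(X)$. Then $f$ and $f^{-1}$ are continuous, so by Proposition \ref{7.1.4}, $f^*$ and $(f^{-1})^*$ are $T$-endomorphisms of $\mathcal{P}(X)$. Since $f^{-1}$ is continuous and injective, Lemma \ref{7.1.12} gives $f^*\circ (f^{-1})^* = (f\circ f^{-1})^* = (\operatorname{Id}_X)^* = \operatorname{Id}_{\mathcal{P}(X)}$, and similarly $(f^{-1})^*\circ f^* = (f^{-1}\circ f)^* = \operatorname{Id}_{\mathcal{P}(X)}$. Hence $f^*$ is bijective with inverse $(f^{-1})^*$, and since both $f^*$ and its inverse are $T$-morphisms, $f^*\in \operatorname{Aut}_{T}(\mathcal{P}(X))$. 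Therefore $\operatorname{Hom}^*(X)\subseteq \operatorname{Aut}_{T}(\mathcal{P}(X))$, and the above also shows that the inverse of $f^*$ in $\operatorname{Aut}_{T}(\mathcal{P}(X))$ is $(f^{-1})^*\in \operatorname{Hom}^*(X)$.

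Let $f,g\in \operatorname{Hom}(X)$. Then $f\circ g\in \operatorname{Hom}(X)$, and since $g$ is continuous and injective, Lemma \ref{7.1.12} gives $f^*\circ g^* = (f\circ g)^*\in \operatorname{Hom}^*(X)$. Thus $\operatorname{Hom}^*(X)$ is closed under composition of functions.

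Since $\operatorname{Hom}^*(X)$ is a nonempty subset of the group $\operatorname{Aut}_{T}(\mathcal{P}(X))$ (Proposition \ref{1.3.11}) that is closed under composition and under taking inverses, it is a subgroup of $\operatorname{Aut}_{T}(\mathcal{P}(X))$.
\end{proof}
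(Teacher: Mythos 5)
Your proof is correct and follows essentially the same route as the paper: both rely on Lemma \ref{7.1.12} for the identities $(f\circ g)^*=f^*\circ g^*$ and $(f^{-1})^*\circ f^*=f^*\circ(f^{-1})^*=(\operatorname{Id}_X)^*=\operatorname{Id}_{\mathcal{P}(X)}$, and on Proposition \ref{7.1.4} to place the induced maps in $\operatorname{End}_T(\mathcal{P}(X))$. The only difference is organizational — you package it as the subgroup criterion inside $\operatorname{Aut}_T(\mathcal{P}(X))$ and get bijectivity by exhibiting $(f^{-1})^*$ as an explicit two-sided inverse, whereas the paper first verifies that $\operatorname{Hom}^*(X)$ is a group and then checks containment in $\operatorname{Aut}_T(\mathcal{P}(X))$ — which is a cosmetic, not substantive, distinction.
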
 
\begin{remark}
    Recall that $T$ was defined in Notation \ref{7.1.1}.
\end{remark} 
\begin{proof}
    It follows from Lemma \ref{7.1.12} that $\text{Hom}^*(X)$ is closed under composition of functions as the binary operation; that is, $\forall f^*,g^*\in \text{Hom}^*(X)$, $f^*\circ g^*\in \text{Hom}^*(X)$. Composition of functions is associative. Moreover, by Definition \ref{7.1.3}, the identity map on $X$ induces the identity map on $\mathcal{P}(X)$, which we denote by $\operatorname{Id}^*(\in \text{Hom}^*(X))$. 
    
    By Definition \ref{7.1.3}, it is not hard to see that $\forall f^* \in \text{Hom}^*(X)$, $f^*$ is bijective. $\forall f\in \text{Hom}(X)$, let ${{(f^*)}^{-1}}=({{f}^{-1}})^*$. Then $\forall f\in \text{Hom}(X)$, by Lemma \ref{7.1.12}, ${{(f^*)}^{-1}}\circ f^*=({{f}^{-1}})^*\circ f^*=({{f}^{-1}}\circ f)^*=\operatorname{Id}^*$, and analogously, $f^*\circ {{(f^*)}^{-1}}=\operatorname{Id}^*$. 
    
    Therefore, $\text{Hom}^*(X)$ constitutes a group with composition of functions as the binary operation. 

 Then from Proposition \ref{7.1.4}, we can tell that $\text{Hom}^*(X)\subseteq \operatorname{Aut}_{T}(\mathcal{P}(X))$. By Proposition \ref{1.3.11}, $\operatorname{Aut}_{T}(\mathcal{P}(X))$ constitutes a group with composition of functions as the binary operation. Thus $\text{Hom}^*(X)$ constitutes a subgroup of $\operatorname{Aut}_{T}(\mathcal{P}(X))$.	
\end{proof}
 Moreover, $\text{Hom}^*(X)$ is isomorphic to $\text{Hom}(X)$, as shown below. 
\begin{proposition} \label{7.1.16}
    ${{I}_{X}}{{|}_{\emph{Hom}(X)}}$, which is the restriction of ${{I}_{X}}$ to $\emph{Hom}(X)$, is a group isomorphism from $\emph{Hom}(X)$ to $\emph{Hom}^*(X)$.
\end{proposition}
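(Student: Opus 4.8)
The plan is to verify the three defining properties of a group isomorphism in turn: that $I_X|_{\operatorname{Hom}(X)}$ is a well-defined surjection onto $\operatorname{Hom}^*(X)$, that it is a homomorphism of groups, and that it is injective. By Corollary \ref{7.1.14} the domain $\operatorname{Hom}(X)$ is a group under composition, and by Proposition \ref{7.1.15} the codomain $\operatorname{Hom}^*(X)$ is a group under composition, so it remains only to establish these three points.

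First I would dispatch the easy ones. Surjectivity is immediate from Notation \ref{7.1.13}: $\operatorname{Hom}^*(X)$ is \emph{defined} to be $\{f^*\,|\,f\in\operatorname{Hom}(X)\}$, so $I_X$ carries $\operatorname{Hom}(X)$ exactly onto $\operatorname{Hom}^*(X)$; in particular $I_X|_{\operatorname{Hom}(X)}$ is a well-defined map into $\operatorname{Hom}^*(X)$. Injectivity is equally quick: Proposition \ref{7.1.11} says $I_X$ is injective on all of $F(X)$, and the restriction of an injective map to a subset is injective.

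The substantive step — though still short — is the homomorphism property: $(f\circ g)^*=f^*\circ g^*$ for all $f,g\in\operatorname{Hom}(X)$. Here I would invoke Lemma \ref{7.1.12}, whose hypotheses are that $f$ and $g$ are continuous maps $X\to X$ with $g$ injective. Every homeomorphism is in particular a continuous bijection, so $f$ and $g$ are continuous and $g$ is injective, and Lemma \ref{7.1.12} gives exactly $(f\circ g)^*=f^*\circ g^*$, i.e. $I_X(f\circ g)=I_X(f)\circ I_X(g)$. Thus $I_X|_{\operatorname{Hom}(X)}$ is a group homomorphism, and combined with the bijectivity established above it is a group isomorphism from $\operatorname{Hom}(X)$ to $\operatorname{Hom}^*(X)$.

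I do not anticipate any real obstacle: the only point requiring care is to confirm that the hypotheses of Lemma \ref{7.1.12} are met when we pass from arbitrary continuous maps to homeomorphisms, and this is automatic since homeomorphisms are bijective (hence injective) continuous maps. Everything else is a direct appeal to results already proved earlier in the section (Proposition \ref{7.1.11}, Lemma \ref{7.1.12}, Corollary \ref{7.1.14}, Proposition \ref{7.1.15}) together with the definition of $\operatorname{Hom}^*(X)$.
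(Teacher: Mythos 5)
Your proposal is correct and follows essentially the same route as the paper: injectivity from Proposition \ref{7.1.11}, surjectivity from the definition of $\operatorname{Hom}^*(X)$, and the homomorphism property from Lemma \ref{7.1.12}. Your explicit check that homeomorphisms satisfy the hypotheses of Lemma \ref{7.1.12} (continuity and injectivity of $g$) is a welcome detail that the paper leaves implicit.
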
   
\begin{remark}
   $I_X$ was defined by Notation \ref{7.1.8}.
\end{remark}
\begin{proof}
    By Proposition \ref{7.1.11}, ${{I}_{X}}{{|}_{\text{Hom}(X)}}$ is an injective map from $\text{Hom}(X)$ to $\text{Hom}^*(X)(=\{f^*\,|\,f\in \text{Hom}(X)\})$. It follows that ${{I}_{X}}{{|}_{\text{Hom}(X)}}$ is a bijective map between the two groups. And by Lemma \ref{7.1.12},
    \[{{I}_{X}}(f\circ g)={{I}_{X}}(f)\circ {{I}_{X}}(g),\forall f,g\in \text{Hom}(X).\]
Therefore, ${{I}_{X}}{{|}_{\text{Hom}(X)}}$ is a group isomorphism.	
\end{proof}

\subsection{Galois correspondences on topological spaces} \label{“General” Galois correspondences}
Firstly, we need to recall some notations. By Definitions \ref{1.4.1}, \ref{1.5.1}, \ref{2.2.1} and \ref{2.2.3}, we immediately have
\begin{proposition} \label{7.2.1}
    Let $B\subseteq \mathcal{P}(X)$ and let $H\subseteq \operatorname{End}_{T}(\mathcal{P}(X))$. Then
\[\mathcal{P}{{(X)}^{H}}=\{A\in \mathcal{P}(X)\,|\,\sigma (A)=A,\forall \sigma \in H\},\]
\[\operatorname{GMn}_{T}(\mathcal{P}(X)/B)=\{\sigma \in \operatorname{End}_{T}(\mathcal{P}(X))\,|\,\sigma (A)=A,\forall A\in B\},\]
\[\operatorname{Int}_{T}^{\operatorname{End}}(\mathcal{P}(X)/B)=\{\mathcal{P}{{(X)}^{L}}\supseteq B\,|\, L\subseteq \operatorname{End}_{T}(\mathcal{P}(X))\},\] and
\[\operatorname{GSMn}_{T}(\mathcal{P}(X)/B)=\{\operatorname{GMn}_{T}(\mathcal{P}(X)/K)\,|\,B\subseteq K\subseteq \mathcal{P}(X)\}.\]
\end{proposition}
\begin{remark}
    $\operatorname{End}_{T}(\mathcal{P}(X))$ was given in Proposition \ref{7.1.2}.
\end{remark}
Then by applying Corollary \ref{2.2.5} to a topological space $X$, we immediately get the following, which, roughly speaking, shows the Galois correspondence between the Galois $T$-monoids of $\mathcal{P}(X)$ and the fixed subsets of $\mathcal{P}(X)$ under the actions of the $T$-endomorphisms of $\mathcal{P}(X)$.
\begin{corollary} \label{7.2.2}
    Let $X$ be a topological space, let $\mathcal{P}(X)$ be the power set of $X$, and let $B\subseteq \mathcal{P}(X)$. Then the correspondences
    \begin{center}
   $\gamma :H\mapsto \mathcal{P}{{(X)}^{H}}$ and $\delta :K\mapsto \operatorname{GMn}_{T}(\mathcal{P}(X)/K)$ 
    \end{center}
	define inclusion-inverting mutually inverse bijective maps between $\operatorname{GSMn}_{T}(\mathcal{P}(X)/B)$ and $\operatorname{Int}_{T}^{\operatorname{End}}(\mathcal{P}(X)/B)$. 
\end{corollary}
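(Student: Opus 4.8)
The statement is a direct specialization of Theorem \ref{2.2.5}, so the plan is simply to verify that the hypotheses of that theorem are met in the present setting and then invoke it. First I would recall from Notation \ref{7.1.1} (and the remark following it) that $T=T_X$ is an operator semigroup on $\mathcal{P}(X)$, and from Proposition \ref{7.1.2} that $\mathcal{P}(X)$ is a $T$-space, since ${{\left\langle \mathcal{P}(X) \right\rangle }_{T}}=\mathcal{P}(X)$. Thus, taking $S:=\mathcal{P}(X)$ in Theorem \ref{2.2.5}, all the hypotheses of that theorem ($S$ a $T$-space, $B\subseteq S$) are satisfied by the $T$-space $\mathcal{P}(X)$ and the given subset $B\subseteq \mathcal{P}(X)$.

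Then I would apply Theorem \ref{2.2.5} verbatim with $S=\mathcal{P}(X)$: it yields that the correspondences $\gamma:H\mapsto S^{H}$ and $\delta:K\mapsto \operatorname{GMn}_{T}(S/K)$ define inclusion-inverting mutually inverse bijective maps between $\operatorname{GSMn}_{T}(S/B)$ and $\operatorname{Int}_{T}^{\operatorname{End}}(S/B)$. Rewriting $S$ as $\mathcal{P}(X)$ throughout — so that $S^{H}=\mathcal{P}(X)^{H}$, $\operatorname{GMn}_{T}(S/K)=\operatorname{GMn}_{T}(\mathcal{P}(X)/K)$, $\operatorname{GSMn}_{T}(S/B)=\operatorname{GSMn}_{T}(\mathcal{P}(X)/B)$, and $\operatorname{Int}_{T}^{\operatorname{End}}(S/B)=\operatorname{Int}_{T}^{\operatorname{End}}(\mathcal{P}(X)/B)$, with these objects described explicitly as in Proposition \ref{7.2.1} — gives precisely the asserted correspondence, completing the proof.

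There is essentially no obstacle here: the only content is checking that $\mathcal{P}(X)$ qualifies as a $T$-space, which is already recorded in Proposition \ref{7.1.2}. Accordingly the write-up can be just one or two sentences, and I would keep it at that length rather than reproducing the argument of Theorem \ref{2.2.5}.
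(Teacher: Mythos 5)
Your proposal is correct and matches the paper's argument exactly: the paper obtains Corollary \ref{7.2.2} precisely by applying Theorem \ref{2.2.5} with $S=\mathcal{P}(X)$, which is a $T$-space by Proposition \ref{7.1.2}, and reading off the objects via Proposition \ref{7.2.1}. Nothing further is needed.
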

Moreover, we may employ Theorem \ref{4.1.2} (resp. Theorem \ref{4.3.2}) to investigate whether we can endow $\mathcal{P}(X)$ with a topology such that 
\begin{center}
  $\operatorname{Int}_{T}^{\operatorname{End}}(\mathcal{P}(X)/B)\backslash \{\emptyset \}=$\{$B\subseteq {S}' \le _q \mathcal{P}(X)\,|\,{S}'$ is closed and nonempty\}   
\end{center}
(resp. whether we can define a topology on End$_{T}(\mathcal{P}(X))$ such that $\operatorname{GSMn}_{T}(\mathcal{P}(X)/B)$ is the set of all closed submonoids of $\operatorname{GMn}_{T}(\mathcal{P}(X)/B)$).

We will not go further for this subject.

$ $

Analogously, by Proposition \ref{7.1.2} and Definition \ref{$T$-isomorphisms}, we have
\begin{center}
    Aut$_{T}(\mathcal{P}(X))=\{$bijective $\sigma :\mathcal{P}(X)\to \mathcal{P}(X)\,|\,\sigma (\overline{A})=\overline{\sigma (A)},\forall A\in \mathcal{P}(X)\}$.
\end{center}
And by Definitions \ref{1.5.1}, \ref{2.2.1} and \ref{2.2.3}, we immediately have
\begin{proposition} \label{7.2.3}
    Let $B\subseteq \mathcal{P}(X)$. Then
	\[\operatorname{GGr}_{T}(\mathcal{P}(X)/B)=\{\sigma \in \operatorname{Aut}_{T}(\mathcal{P}(X))\,|\,\sigma (A)=A,\forall A\in B\},\] 
\[\operatorname{Int}_{T}^{\operatorname{Aut}}(\mathcal{P}(X)/B)=\{\mathcal{P}{{(X)}^{H}}\supseteq B\,|\, H\subseteq \operatorname{Aut}_{T}(\mathcal{P}(X))\},\] and
\[\operatorname{GSGr}_{T}(\mathcal{P}(X)/B)=\{\operatorname{GGr}_{T}(\mathcal{P}(X)/K)\,|\,B\subseteq K\subseteq \mathcal{P}(X)\}.\]
\end{proposition}
Then by applying Corollary \ref{2.2.6} to a topological space $X$, we immediately have the following, which shows the Galois correspondence between the Galois $T$-groups of $\mathcal{P}(X)$ and the fixed subsets of $\mathcal{P}(X)$ under the actions of the $T$-automorphisms of $\mathcal{P}(X)$.
\begin{corollary} \label{7.2.4}
    Let $X$ be a topological space, let $\mathcal{P}(X)$ be the power set of $X$, and let $B\subseteq \mathcal{P}(X)$. Then the correspondences
    \begin{center}
    	$\gamma :H\mapsto \mathcal{P}{{(X)}^{H}}$ and $\delta :K\mapsto \operatorname{GGr}_{T}(\mathcal{P}(X)/K)$  
    \end{center}
    define inclusion-inverting mutually inverse bijective maps between $\operatorname{GSGr}_{T}(\mathcal{P}(X)/B)$ and $\operatorname{Int}_{T}^{\operatorname{Aut}}(\mathcal{P}(X)/B)$.
\end{corollary}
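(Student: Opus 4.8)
The plan is to obtain this corollary as a direct specialization of Theorem \ref{2.2.6}, the general Galois correspondence for Galois $T$-groups. The only thing one needs in order to invoke that theorem is that $\mathcal{P}(X)$ is a $T$-space (for the operator semigroup $T = T_X$ fixed in Notation \ref{7.1.1}) and that $B$ is an arbitrary subset of it. The first point is already recorded in Proposition \ref{7.1.2}: since $\langle \mathcal{P}(X) \rangle_{T} = \mathcal{P}(X)$, Definition \ref{$T$-space} shows $\mathcal{P}(X)$ is a $T$-space. The second point is the hypothesis $B \subseteq \mathcal{P}(X)$.

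So the first step I would take is simply to cite Proposition \ref{7.1.2} to justify that $S := \mathcal{P}(X)$ is a legitimate choice of $T$-space. The second step is to observe that, with $S = \mathcal{P}(X)$, the objects named in the statement of this corollary are exactly the instances of the objects appearing in Theorem \ref{2.2.6}: $\operatorname{GSGr}_{T}(\mathcal{P}(X)/B)$ and $\operatorname{Int}_{T}^{\operatorname{Aut}}(\mathcal{P}(X)/B)$ are the sets $\operatorname{GSGr}_{T}(S/B)$ and $\operatorname{Int}_{T}^{\operatorname{Aut}}(S/B)$ of Definitions \ref{2.2.1} and \ref{2.2.3}, and the maps $\gamma: H \mapsto \mathcal{P}(X)^{H}$ and $\delta: K \mapsto \operatorname{GGr}_{T}(\mathcal{P}(X)/K)$ are the maps $\gamma$ and $\delta$ of Theorem \ref{2.2.6}. (Proposition \ref{7.2.3} already spells out these specializations, so I would reference it for the explicit descriptions, though strictly it is not logically needed.) The third step is then to quote Theorem \ref{2.2.6} verbatim: $\gamma$ and $\delta$ are inclusion-inverting, mutually inverse bijections between the two sets, which is precisely the claimed conclusion.

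I do not anticipate a genuine obstacle here; the content of the corollary is entirely carried by Theorem \ref{2.2.6}, and the proof is of the same one-line flavour as the remark immediately after Proposition \ref{7.2.1} that "applying Theorem \ref{2.2.5} to a topological space $X$, we immediately get" Corollary \ref{7.2.2}. If anything deserves a word of care, it is only the bookkeeping check that the decorated notations $\operatorname{GGr}_{T}(\mathcal{P}(X)/K)$ and $\mathcal{P}(X)^{H}$ in the topological setting agree with the abstract ones of Definitions \ref{1.5.1} and \ref{1.4.1} once $\operatorname{Aut}_{T}(\mathcal{P}(X))$ is described as in Proposition \ref{7.1.2} — and that check is routine. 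Hence the proof can legitimately be written as: "By Proposition \ref{7.1.2}, $\mathcal{P}(X)$ is a $T$-space; the conclusion is Theorem \ref{2.2.6} applied with $S = \mathcal{P}(X)$."
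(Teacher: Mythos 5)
Your proposal is correct and matches the paper's own treatment: the paper states that Corollary \ref{7.2.4} follows immediately by applying Theorem \ref{2.2.6} with $S=\mathcal{P}(X)$, relying on Proposition \ref{7.1.2} for $\mathcal{P}(X)$ being a $T$-space and Proposition \ref{7.2.3} for the explicit specializations, exactly as you do.
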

Moreover, we may employ Theorem \ref{4.2.2} (resp. Theorem \ref{4.4.2}) to investigate whether we can endow $\mathcal{P}(X)$ with a topology such that 
\begin{center}
    $\operatorname{Int}_{T}^{\operatorname{Aut}}(\mathcal{P}(X)/B)\backslash \{\emptyset \}=$\{$B\subseteq {S}'\le _q \mathcal{P}(X)\,|\,{S}'$ is closed and nonempty\} 
\end{center} 
(resp. whether we can define a topology on Aut$_{T}(\mathcal{P}(X))$ such that
\begin{center}
 $\operatorname{GSGr}_{T}(\mathcal{P}(X)/B)=\{$all closed subgroups of $\operatorname{GGr}_{T}(\mathcal{P}(X)/B)\}$).   
\end{center}

\section{On dynamical systems} \label{App to dyn}
	In this section, we first show a close relation between our theory and dynamical systems. Then, to give an example of it, we obtain the Galois correspondences on dynamical systems.
\subsection{Basic notions and properties}
	Recall that a dynamical system can be defined as follows (see e.g. \cite{8}).
\begin{definition} \label{8.1.1}
    Let $M$ be a monoid with identity element $e$ acting on a set $S$. That is, there is a map $\Phi :M\times S\to S$ given by $(g,x)\mapsto {{\Phi }_{g}}(x)$ such that ${{\Phi }_{g}}\circ {{\Phi }_{h}}={{\Phi }_{g\circ h}}$ and ${{\Phi }_{e}}=\operatorname{Id}$, where ${{\Phi }_{f}},\forall f\in M$, is a map from $S$ to $S$. Then the triple $(M,S,\Phi )$ is called a \emph{dynamical system}.
\end{definition} 
\begin{proposition} \label{8.1.2}
     Let $(M,S,\Phi )$ be a dynamical system and let $T=\{{{\Phi }_{g}}\,|\,g\in M\}$. Then $T$ is an operator semigroup on $S$, $\operatorname{Id}\in T$ and $S$ is a $T$-space. 
\end{proposition}
\begin{proof}
    By Definition \ref{8.1.1}, $\forall {{\Phi }_{g}}\in T$, ${{\Phi }_{g}}$ is a map from $S$ to $S$. And $\forall {{\Phi }_{g}},{{\Phi }_{h}}\in T$, ${{\Phi }_{g}}\circ {{\Phi }_{h}}={{\Phi }_{g\circ h}}\in T$. Hence by Definition \ref{Operator semigroup}, $T$ is an operator semigroup on $S$. Then ${{\left\langle S \right\rangle }_{T}}\subseteq S$. Moreover, ${{\left\langle S \right\rangle }_{T}}\supseteq S$ since ${{\Phi }_{e}}=\operatorname{Id}\in T$. Thus ${{\left\langle S \right\rangle }_{T}}=S$ is a $T$-space.
\end{proof}
	Proposition \ref{8.1.2} implies that we can apply our theory for operator semigroups to dynamical systems.
 
	Conversely, for any operator semigroup $T$ with $\operatorname{Id}\in T$, each $T$-space induces a dynamical system, as shown below.
\begin{proposition} \label{8.1.4}
    Let $T$ be an operator semigroup on a set $D$ with $\operatorname{Id}\in T$, let $S$ be a $T$-space, and let a map $\Phi :T\times S\to S$ be given by $(f,x)\mapsto f(x)$. Then $\Phi $ is well-defined and $(T,S,\Phi )$ is a dynamical system.
\end{proposition}  
\begin{proof}
    By Proposition \ref{<S> contained in S}, ${{\left\langle S \right\rangle }_{T}}\subseteq S$, and hence $\Phi $ is well-defined. $\forall f\in T$, let ${{\Phi }_{f}}=f{{|}_{S}}$. Then $\Phi $ can be given equivalently by $(f,x)\mapsto {{\Phi }_{f}}(x)$. Hence from Definition \ref{8.1.1}, we can tell that $(T,S,\Phi )$ is a dynamical system. 
\end{proof}
Proposition \ref{8.1.4} implies that we can apply theories of dynamical systems to $T$-spaces with $T$ being an operator semigroup and $\operatorname{Id}\in T$.

\subsection{Galois correspondences on dynamical systems} \label{dyn Galois correspondences}
	In this subsection, unless otherwise specified, we use the following notation.
\begin{notation} \label{8.2.1}
    Let $M$, $S$, $\Phi $ and $(M,S,\Phi )$ be defined as in Definition \ref{8.1.1}. Let $T=\{{{\Phi }_{g}}\,|\,g\in M\}$.
\end{notation}
\begin{remark}
    By Proposition \ref{8.1.2}, $T$ is an operator semigroup on $S$, $\operatorname{Id}\in T$ and $S$ is a $T$-space.
\end{remark}
With Notation \ref{8.2.1} and Proposition \ref{8.1.2}, we may apply our theory for operator semigroups to a dynamical system $(M,S,\Phi )$ quite straightforwardly. We take Galois correspondences as our example.

$ $

Firstly, we need to recall some notations.

By Definitions \ref{1.3.1} and \ref{1.3.2}, we immediately have
\begin{proposition} \label{8.2.2}
$ $
    \begin{center}
        $\operatorname{End}_{T}(S)=\{\sigma :S\to S\,|\,\sigma (f(x))=f(\sigma (x)),\forall x\in S \,and\, f\in T\}$.
    \end{center}
\end{proposition}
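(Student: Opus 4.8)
The statement to prove is Proposition \ref{8.2.2}, which asserts that $\operatorname{End}_{T}(S) = \{\sigma : S \to S \mid \sigma(f(x)) = f(\sigma(x)),\ \forall x \in S \text{ and } f \in T\}$ in the setting of Notation \ref{8.2.1}, where $(M,S,\Phi)$ is a dynamical system and $T = \{\Phi_g \mid g \in M\}$.

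The plan is to observe that this is essentially an immediate unpacking of the relevant definitions, so the proof will be short. First I would recall from Theorem \ref{8.1.2} that $T$ is an operator semigroup on $S$ and that $S$ is a $T$-space, so that the notions of $T$-morphism and $T$-endomorphism apply here. Then I would invoke Definition \ref{1.3.2}, which says that $\operatorname{End}_{T}(S)$ is by definition the set of $T$-morphisms from $S$ to $S$, i.e. the set of $T$-endomorphisms of $S$. Next I would invoke Definition \ref{1.3.1}: a map $\sigma$ from the $T$-space $S$ to a $T$-space is a $T$-morphism precisely when $\sigma$ commutes with every $f \in T$ on $S$, that is, when $\sigma(f(a)) = f(\sigma(a))$ for all $a \in S$ and all $f \in T$. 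Combining these two definitions with the fact that $S$ is a $T$-space immediately yields the claimed description of $\operatorname{End}_{T}(S)$.

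I do not anticipate any genuine obstacle here; the only thing worth being slightly careful about is the well-definedness of the expression $\sigma(f(x))$, but this is handled by Proposition \ref{<S> contained in S} (via Theorem \ref{8.1.2}, $S$ is a $T$-space, so $f(x) \in S$ for all $x \in S$ and $f \in T$, and hence $\sigma(f(x))$ makes sense). So the "hard part", if any, is merely making sure the citations line up correctly; there is no real mathematical content beyond definition-chasing. The proof I would write is essentially: "By Theorem \ref{8.1.2}, $S$ is a $T$-space, so by Definition \ref{1.3.2} the set $\operatorname{End}_{T}(S)$ consists of the $T$-morphisms from $S$ to $S$. By Definition \ref{1.3.1}, a map $\sigma : S \to S$ is such a $T$-morphism if and only if $\sigma(f(x)) = f(\sigma(x))$ for all $x \in S$ and $f \in T$ (the left side being well-defined by Proposition \ref{<S> contained in S}). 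This is exactly the asserted equality."
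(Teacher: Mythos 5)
Your proposal is correct and matches the paper's treatment: the paper presents Proposition \ref{8.2.2} as an immediate consequence of Definitions \ref{1.3.1} and \ref{1.3.2} (with $S$ a $T$-space by Theorem \ref{8.1.2}), which is exactly the definition-unpacking you describe. The extra remark on well-definedness of $\sigma(f(x))$ via Proposition \ref{<S> contained in S} is a harmless and correct addition.
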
 	
	To get some more idea about $\operatorname{End}_{T}(S)$, we take the prototypical example of a discrete dynamical system as our example:
\begin{proposition} \label{8.2.3}
    Let $({{\mathbb{N}}_{0}},I,\Phi )$ be an iterated map $($as a dynamical system$)$ on interval $I\subseteq \mathbb{R}$, where ${{\Phi }_{0}}=\operatorname{Id}$, ${{\Phi }_{n}}$ is the composite of $n$ copies of $h$, $\forall n\in {{\mathbb{Z}}^{+}}$, and $h$ is a map from $I$ to itself. Let $T=\{{{\Phi }_{n}}\,|\,n\in {{\mathbb{N}}_{0}}\}$. Then $T\subseteq \operatorname{End}_{T}(I)$.
\end{proposition}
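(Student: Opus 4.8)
The plan is to show that each $\Phi_n \in T$ is a $T$-morphism from the $T$-space $I$ to itself, which by Definition \ref{1.3.2} is exactly the assertion $T \subseteq \operatorname{End}_T(I)$. First I would note that by Theorem \ref{8.1.2}, $T = \{\Phi_n \mid n \in \mathbb{N}_0\}$ is an operator semigroup on $I$ with $\operatorname{Id} = \Phi_0 \in T$, and $I$ is a $T$-space; so the statement is well-posed. Fix $m \in \mathbb{N}_0$; I must check that $\Phi_m$ commutes on $I$ with every element of $T$, i.e. that $\Phi_m(\Phi_n(x)) = \Phi_n(\Phi_m(x))$ for all $x \in I$ and all $n \in \mathbb{N}_0$.

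The key point is that $T$ is \emph{commutative}: since $\Phi_n$ is the $n$-fold composite of the single map $h$ (with $\Phi_0 = \operatorname{Id}$), we have $\Phi_m \circ \Phi_n = h^{m+n} = \Phi_{m+n} = \Phi_{n+m} = \Phi_n \circ \Phi_m$ by Definition \ref{8.1.1} (or simply by associativity of composition and the fact that all the $\Phi_n$ are powers of one map). Hence for every $x \in I$ and $n \in \mathbb{N}_0$,
\[
\Phi_m(\Phi_n(x)) = (\Phi_m \circ \Phi_n)(x) = (\Phi_n \circ \Phi_m)(x) = \Phi_n(\Phi_m(x)),
\]
so $\Phi_m$ commutes with every $f \in T$ on $I$. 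By Definition \ref{1.3.1}, $\Phi_m$ (restricted to $I$) is a $T$-morphism from $I$ to $I$, hence $\Phi_m \in \operatorname{End}_T(I)$ by Definition \ref{1.3.2}. Since $m$ was arbitrary, $T \subseteq \operatorname{End}_T(I)$.

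There is essentially no obstacle here: the only thing to be a little careful about is that $\operatorname{End}_T(I)$ consists of maps from $I$ to $I$, so one should record that each $\Phi_n$ indeed maps $I$ into $I$ (which is part of the hypothesis that $h: I \to I$ and hence $\Phi_n = h^n : I \to I$), and that a $T$-morphism is by definition a map between $T$-spaces — both $I$'s here, which we already know are $T$-spaces. The substantive content is just the commutativity of the monoid generated by a single transformation, which is immediate. In fact the same argument shows more generally that whenever $T = \langle g \rangle$ for a single function $g$ (as in Example \ref{5.1.12}), one has $T \subseteq \operatorname{End}_T(S)$ for any $T$-space $S$; the present proposition is the special case $g = h$, $S = I$.
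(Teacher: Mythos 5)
Your proof is correct and follows essentially the same route as the paper: both rest on the observation that all elements of $T$ are powers of the single map $h$ (together with $\operatorname{Id}$), hence commute with one another, so every $\Phi_m$ satisfies the $T$-morphism condition on the $T$-space $I$ guaranteed by Theorem \ref{8.1.2}. The paper merely packages the final step via Proposition \ref{8.2.2}, which is just the unwinding of Definitions \ref{1.3.1} and \ref{1.3.2} that you invoke directly.
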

\begin{proof}
    By Proposition \ref{8.1.2}, $T$ is an operator semigroup on $I$, $\operatorname{Id}\in T$ and $I$ is a $T$-space. Since $T=\{\operatorname{Id}\}\bigcup \left\langle h \right\rangle $, $\sigma (f(x))=f(\sigma (x))$, $\forall x\in I$ and $\sigma ,f\in T$. Hence by Proposition \ref{8.2.2}, $T\subseteq \operatorname{End}_{T}(I)$.
\end{proof}
By Definitions \ref{1.4.1}, \ref{1.5.1}, \ref{2.2.1} and \ref{2.2.3}, we immediately have
\begin{proposition} \label{8.2.4}
    Let $B\subseteq S$ and let $H\subseteq \operatorname{End}_{T}(S)$. Then
\[{{S}^{H}}=\{x\in S\,|\,\sigma (x)=x,\forall \sigma \in H\},\]
\[\operatorname{GMn}_{T}(S/B)=\{\sigma \in \operatorname{End}_{T}(S)\,|\,\sigma (x)=x,\forall x\in B\},\]
\[\operatorname{Int}_{T}^{\operatorname{End}}(S/B)=\{{{S}^{L}}\supseteq B\,|\,L\subseteq \operatorname{End}_{T}(S)\},\] and
\[\operatorname{GSMn}_{T}(S/B)=\{\operatorname{GMn}_{T}(S/K)\,|\,B\subseteq K\subseteq S\}.\]
\end{proposition} 
	Then the following is a straightforward result by applying Corollary \ref{2.2.5} to a dynamical system.
\begin{corollary} \label{8.2.5}
    Let $(M,S,\Phi )$ be a dynamical system, let $T=\{{{\Phi }_{g}}\,|\,g\in M\}$, and let $B\subseteq S$. Then the correspondences
    				\[\gamma :H\mapsto {{S}^{H}}\, and \, \delta :K\mapsto \operatorname{GMn}_{T}(S/K) \]
define inclusion-inverting mutually inverse bijective maps between $\operatorname{GSMn}_{T}(S/B)$ and $\operatorname{Int}_{T}^{\operatorname{End}}(S/B)$.
\end{corollary}
Moreover, we may apply Theorem \ref{4.1.2} (resp. Theorem \ref{4.3.2}) to investigate whether we can endow $S$ with a topology such that
\begin{center}
    $\operatorname{Int}_{T}^{\operatorname{End}}(S/B)\backslash \{\emptyset \}=\{B\subseteq {S}'\le_q S\,|\,{S}'$ is closed and nonempty\}
\end{center}
(resp. whether we can endow $\operatorname{End}_{T}(S)$ with a topology such that $\operatorname{GSMn}_{T}(S/B)$ is the set of all closed submonoids of $\operatorname{GMn}_{T}(S/B)$).

$ $

Analogously, by Definitions \ref{1.3.1} and \ref{$T$-isomorphisms}, we have
\begin{proposition} \label{8.2.6}
$ $
\begin{center}
     $\operatorname{Aut}_{T}(S)=\{$bijective $\sigma :S\to S\, |\,\sigma (f(x))=f(\sigma (x)),\forall x\in S \,and\,f\in T\}$.
\end{center}
\end{proposition}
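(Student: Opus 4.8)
The plan is to reduce the statement directly to Definition \ref{$T$-isomorphisms} together with the characterization of $\operatorname{End}_{T}(S)$ already recorded in Proposition \ref{8.2.2}; the argument is the exact analogue of the one underlying Proposition \ref{8.2.2}, with the single extra ingredient that $T$-automorphisms are by definition the bijective $T$-endomorphisms.

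First I would invoke Theorem \ref{8.1.2}: since $(M,S,\Phi )$ is a dynamical system and $T=\{{{\Phi }_{g}}\,|\,g\in M\}$, $T$ is an operator semigroup on $S$, $\operatorname{Id}\in T$, and $S={{\left\langle S \right\rangle }_{T}}$ is a $T$-space. Hence the notions of $T$-morphism (Definition \ref{1.3.1}), $T$-endomorphism (Definition \ref{1.3.2}) and $T$-automorphism (Definition \ref{$T$-isomorphisms}) all apply to $S$, and Proposition \ref{8.2.2} gives
\[\operatorname{End}_{T}(S)=\{\sigma :S\to S\,|\,\sigma (f(x))=f(\sigma (x)),\ \forall x\in S\text{ and }f\in T\}.\]

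Next I would unwind Definition \ref{$T$-isomorphisms}: a $T$-automorphism of $S$ is a $T$-isomorphism from $S$ to itself, i.e.\ a bijective $T$-morphism from $S$ to $S$; and a $T$-morphism from $S$ to $S$ is precisely an element of $\operatorname{End}_{T}(S)$. Therefore $\operatorname{Aut}_{T}(S)$ is exactly the set of bijective elements of $\operatorname{End}_{T}(S)$, and combining this with the displayed description of $\operatorname{End}_{T}(S)$ yields
\[\operatorname{Aut}_{T}(S)=\{\text{bijective }\sigma :S\to S\,|\,\sigma (f(x))=f(\sigma (x)),\ \forall x\in S\text{ and }f\in T\},\]
which is the assertion. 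There is no real obstacle here; the only point worth noting is that the bijectivity requirement in Definition \ref{$T$-isomorphisms} and the commutation condition in Definition \ref{1.3.1} are imposed independently, so they simply conjoin, and no compatibility check beyond that is needed.
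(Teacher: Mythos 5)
Your proposal is correct and matches the paper's treatment: the paper states Proposition \ref{8.2.6} as an immediate consequence of Definitions \ref{1.3.1} and \ref{$T$-isomorphisms} (with $S$ a $T$-space by Theorem \ref{8.1.2}), exactly the definition-unwinding you carry out. Your extra citation of Proposition \ref{8.2.2} is harmless and changes nothing of substance.
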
 	
Then by Definitions \ref{1.5.1}, \ref{2.2.1} and \ref{2.2.3}, we immediately have
\begin{proposition} \label{8.2.7}
    Let $B\subseteq S$. Then
	\[\operatorname{GGr}_{T}(S/B)=\{\sigma \in \operatorname{Aut}_{T}(S)\, |\, \sigma (x)=x,\forall x\in B\},\] 
\[\operatorname{Int}_{T}^{\operatorname{Aut}}(S/B)=\{{{S}^{H}}\supseteq B\,|\,H\subseteq \operatorname{Aut}_{T}(S)\},\] and
\[\operatorname{GSGr}_{T}(S/B)=\{\operatorname{GGr}_{T}(S/K)\,|\,B\subseteq K\subseteq S\}.\]
\end{proposition}   
Then by applying Corollary \ref{2.2.6} to a dynamical system, we immediately get
\begin{corollary} \label{8.2.8}
    Let $(M,S,\Phi )$ be a dynamical system, let $T=\{{{\Phi }_{g}}\,|\,g\in M\}$, and let $B\subseteq S$. Then the correspondences
				\[\gamma :H\mapsto {{S}^{H}} \, and \, \delta :K\mapsto \operatorname{GGr}_{T}(S/K)\] 
define inclusion-inverting mutually inverse bijective maps between $\operatorname{GSGr}_{T}(S/B)$ and $\operatorname{Int}_{T}^{\operatorname{Aut}}(S/B)$. 
\end{corollary}
Moreover, we may employ Theorem \ref{4.2.2} (resp. Theorem \ref{4.4.2}) to investigate whether we can endow $S$ with a topology such that
\begin{center}
    $\operatorname{Int}_{T}^{\operatorname{Aut}}(S/B)\backslash \{\emptyset \}=$\{$B\subseteq {S}'\le_q S\,|\,{S}'$ is closed and nonempty\}
\end{center}
 (resp. whether we can endow Aut$_{T}(S)$ with a topology such that $\operatorname{GSGr}_{T}(S/B)$ is the set of all closed subgroups of $\operatorname{GGr}_{T}(S/B)$).

\section{Other topics for future research} \label{Other topics}
	Besides the subjects discussed in Sections \ref{Duality} to \ref{App to dyn}, future research may include the following. 
\begin{enumerate}
    \item More research on the structures of $T$-spaces and quasi-$T$-spaces and the properties of $T$-morphisms and $\theta$-morphisms is needed.
    \item Operator semigroups and par-operator gen-semigroups can be generated rather arbitrarily (\textit{cf.} Definitions \ref{generated operator semigroup} and \ref{10.1.4}). Therefore there must be much more examples of $T$-morphisms or $\theta$-morphisms than we gave in this paper. 
    \item Since $T$-morphisms and $\theta$-morphisms preserve structures of mathematical objects, more research on constructions of $T$-morphisms and $\theta$-morphisms are needed.
    \item More research on the subjects discussed in Sections \ref{poly equ} to \ref{equ sol} is needed.
    \item The unsolved problem at the end of Subsection \ref{I Topologies on Aut}.
    \item To incorporate the notion of contravariant functors, the notions of “contravariant $T$-morphisms” and “contravariant $\theta$-morphisms” are needed.
\end{enumerate}

$ $

\bibliographystyle{elsarticle-num}

\end{document}